\numberwithin{equation}{section}
\def\pa{\partial}
\newcommand{\R}{\mathbb{R}}
\newcommand{\C}{\mathbb{C}}
\newcommand{\eps}{\varepsilon}
\newcommand{\N}{\mathcal{N}}
\newcommand{\abs}[1]{\left\lvert #1\right\rvert}
\newcommand{\al}{\alpha}
\newtheorem{theorem}{Theorem}[section]
\newtheorem{prop}[theorem]{Proposition}
\newtheorem{lemma}[theorem]{Lemma}
\newtheorem{corollary}[theorem]{Corollary}
\newtheorem{conjecture}[theorem]{Conjecture}
\newtheorem{proposition}[theorem]{Proposition}
\newcommand{\la}{\lambda}
\theoremstyle{definition}
\newtheorem{definition}[theorem]{Definition}
\newtheorem{remark}[theorem]{Remark}
\newcommand{\Extend}[5]{\ext@arrow0099{\arrowfill@#1#2#3}{#4}{#5}}
\begin{document}

\title[Dynamics of  NLW ]{Dynamics of nonlinear wave equations}

\author{Changxing Miao  and  Jiqiang Zheng}
\address{Institute of Applied Physics and Computational Mathematics,
P. O. Box 8009,\ Beijing,\ China,\ 100088}
\email{miao\_changxing@iapcm.ac.cn,  zhengjiqiang@gmail.com}

\begin{abstract}
In this lecture, we will survey the study of dynamics of the  nonlinear wave equation in recent years.
We refer to some lecture notes including such as  C. Kenig \cite{Kenig01,Kenig02}, C. Kenig and F. Merle \cite{KM1} J. Shatah and M. Struwe \cite{SS98},  and C. Sogge \cite{sogge:wave} etc. This lecture was written for LIASFMA School and Workshop on Harmonic Analysis  and Wave Equations in Fudan universty.
\end{abstract}
%
%
%

 \maketitle

 \tableofcontents 

\section{Introduction}

\noindent

\noindent We study the initial-value problem for
nonlinear
 wave equations as following
\begin{align} \label{equ:nlw}
({\rm NLW})\quad \begin{cases}    \partial_{tt}u-\Delta u= \mu|u|^{p-1}u,\quad
(t,x)\in\R\times\R^d,
\\
(u,\pa_tu)(0,x)=(u_0,u_1)(x),
\end{cases}
\end{align}
where $u:\R_t\times\R_x^d\to \R$, $\mu=\pm 1$ with $\mu=-1$ known as the defocusing case and $\mu=1$
as the focusing case.
Equation \eqref{equ:nlw} admits a number of symmetries, explicitly:
\vskip0.15cm

$\bullet$ {\bf Space-time translation invariance:} if $u(t,x)$ solves \eqref{equ:nlw}, then so does $u(t+t_0,x+x_0),~(t_0,x_0)\in\R\times\R^d$;

\vskip0.15cm

$\bullet$ {\bf Lorentz transformations invariance:} if $u(t,x)$ solves \eqref{equ:nlw}, then so does
\begin{equation}
\label{travelling_waves}
u\left(\frac{t-\ell\cdot x}{\sqrt{1-|\ell|^2}},\left(-\frac{t}{\sqrt{1-|\ell|^2}}+\frac{1}{|\ell|^2} \left(\frac{1}{\sqrt{1-|\ell|^2}}-1\right)\ell\cdot x\right)\ell+x\right)
\end{equation}
where $\ell \in \R^d$, $|\ell|<1$.
\vskip0.15cm

$\bullet$ {\bf Scaling invariance:} if $u(t,x)$ solves \eqref{equ:nlw}, then so does $u_\lambda(t,x)$ defined by
\begin{equation}\label{equ:scale}
u_\lambda(t,x)=\lambda^{\frac2{p-1}}u(\lambda t, \lambda x),\quad\lambda>0.
\end{equation}
This scaling defines a notion of \emph{criticality} for \eqref{equ:nlw}. In particular, one can check that
the only homogeneous $L_x^2$-based Sobolev space that is left invariant under \eqref{equ:scale} is $\dot{H}_x^{s_c}(\R^d)$, where the \emph{critical regularity} $s_c$ is given by $s_c:=\frac{d}2-\frac2{p-1}$.
\vskip0.15cm

 $\bullet$ For $s_c=1$, we call the problem \eqref{equ:nlw} \emph{\bf energy-critical};
\vskip0.1cm

 $\bullet$   For $s_c<1,$ we call the problem \eqref{equ:nlw} \emph{\bf energy-subcritical};

\vskip0.1cm
 $\bullet$    For $s_c>1$,  we call the problem \eqref{equ:nlw} \emph{\bf energy supercritical}.

\vskip0.1cm
\noindent From the Ehrenfest law or direct computation, the above symmetries induce invariances in the energy space, namely
\begin{align*}
&\text{\bf Energy:}~E(u)(t)\triangleq\frac12\int_{\R^d}\big(|\nabla u|^2+|\pa_tu|^2\big)\;dx-\frac{\mu}{p+1}\int_{\R^d}|u(t,x)|^{p+1}\;dx=E(u_0,u_1),\\
&\text{\bf Momentum:}~P(u)(t)\triangleq\int\nabla uu_t\;dx=P(u_0,u_1).
\end{align*}

To begin, we need a few definitions.
\begin{definition}[classical, weak, strong, strong Strichartz solution]\label{def:solution}.
\begin{itemize}
\item {\bf Classical solution:} A function $u$ is a {\it classical} solution of \eqref{equ:nlw} on a time interval $I$ containing $0$
 if $u\in C^2(I\times\R^d)$ and solves \eqref{equ:nlw} in the classical sense.
\vskip0.15cm

\item {\bf Weak solution:} A function $u$ is a {\it weak} solution of \eqref{equ:nlw} if $(\pa_tu,\nabla_xu)\in L_t^\infty(\R,L_x^2),~u\in L_t^\infty L_x^{p+1}$, and $u$
 solves \eqref{equ:nlw} in distribution sense, namely
\begin{align}
\nonumber\int_0^{+\infty}\int_{\R^d}\,u\Box\varphi\,dx\,dt&+\int_0^{+\infty}\int_{\R^d}\,f(u)\varphi\,dx\,dt=
-\int_{\R^d}\,u_0(x)\partial_t\varphi(0,x)\,dx\\
&+\int_{\R^d}\,u_1(x)\varphi(0,x)\,dx,\;\;\;\forall\;\;\;\varphi\in{\mathcal D}(\R\times\R^d),\label{weaksense}
\end{align}
and the energy inequality
\begin{equation}
\label{energy1}
E(u(t))\leq E(u(0)),\;\;\; \forall\;\;t\in \R
\end{equation} holds. Here $f(u)=\pm|u|^{p-1}u$ and $\Box=\partial_t^2-\Delta_x$ is the d'Alembertian operator.
\vskip0.15cm

\item {\bf Strong solution:}  $u:I\times\R^d\to\R$ is a \emph{strong  solution} of \eqref{equ:nlw} with data
$(f,g)\in H^s\times H^{s-1}(\R^d)$, $s\in \R$,
if $u$ satisfies the Duhamel formula
\begin{align}u(t,\cdot)=&\cos(t|\nabla|)f
+\frac{\sin(t|\nabla|)}{|\nabla|}g \nonumber \\
&-\mu\int_0^t \frac{\sin((t-s)|\nabla|)}{|\nabla|}
\left [u(s,\cdot)|u(s,\cdot)|^{p-1} \right ] ds \label{eq:Duhamel}
\end{align}
for $t$ in time interval $I$ containing $0$.

\vskip0.15cm
\item {\bf Strong Strichartz solution:} we say that $u$ is a \emph{strong Strichartz solution} of \eqref{equ:nlw}, if $u$ is a strong solution and $u$ belongs to some auxiliary spaces associated with the Strichartz estimate, such as  some spatial-time space $L_t^q(I,L_x^r(\R^d)).$

\end{itemize}

\end{definition}

\begin{remark}
The existence of global weak solutions is proved by Strauss \cite{Strauss}.
\end{remark}

\begin{definition}[Well-posedness]\label{def:wellposed}
We say that the Cauchy Problem \eqref{equ:nlw} for (NLW) is \emph{locally well-posed} ({\bf LWP}) in $H^s$ , if for every $(u_0,u_1) \in H^s_x \times H^{s-1}_x$

\vskip0.15cm
{\rm (i)}\; There exist a time $T = T (u_0,u_1)>0$ and a distributional solution
$u: [-T,T] \times \R^d \to \R$ to \eqref{equ:nlw} which is in
the space $C^0 ([-T,T]; H^s_x)\cap C^1([-T,T];H^{s-1}_x)$;

\vskip0.15cm
{\rm (ii)}\;  The solution map $(u_0,u_1) \mapsto
u$ is uniformly continuous\footnote{Here, we require
the solution mapping to be continuous but not necessarily uniformly continuous,
as is the case for certain equations, such as Burgers' equation, the Korteweg-de Vries
and Benjamin-Ono equations in the periodic case.}
from $H^s\times H^{s-1}$ to $C^0 ([-T,T]; H^s_x)\cap C^1([-T,T];H^{s-1}_x)$;

\vskip0.15cm
{\rm (iii)}\;
Furthermore, there is an additional space $X$ in which $u$ lies, such
that $u$ is the unique solution to the Cauchy problem in $C^0 ([-T,T];
H^s_x)\cap C^1([-T,T];H^{s-1}_x)\cap X$; and
$X\subset L^p_{t,x,\text{loc}}$,
so $|u|^{p-1} u$ is a well-defined spacetime distribution.
\vskip0.15cm

Moreover, if $T=+\infty$, we say that equation \eqref{equ:nlw} is \emph{globally well-posed}  ({\bf GWP}) in $H^s$.
\end{definition}

   \underline{\bf Basic mathematical problems in Nonlinear PDEs}

   \begin{enumerate}
 \item[$\blacksquare$]\; {\bf Wellposedness}:\;Existence, uniqueness, continuous dependence on the data, persistence of regularity.
 The first problem is  to understand this locally in time.

\item[$\blacksquare$] \; {\bf Global behavior}:\;Global existence, or finite time break down(In some norm such as
 $\|\cdot\|_{H^1_x(\R^d)}$, becomes unbounded
in the finite time)?  

\item[$\blacksquare$]  {\bf Blow-up dynamics}:\;If the solution breaks down in finite time, can we describe the mechanism by which it dose
so?  For example, via energy concentration at the tip of a light cone? Usually, symmetries play a crucial role. How about the qualitative description of singularity formation and blowup rate? Solitary wave conjecture, solitary resolution conjecture.

\item[$\blacksquare$]  {\bf Scattering theory}:\;If the solutions
 exist for all $t\in \mathbb R$, does it approach a free solution?{Critical norm conjecture}.

\item[$\blacksquare$]\; {\bf Special solution}:\;If the solution does not approach  a free solution, does it scatter to something else?
A stationary nonzero solution, for example?  Some physical equations exhibit nonlinear bound states, which represent elementary particles.

\item[$\blacksquare$] \;  {\bf Stability  theory}:\;If special solutions
exist such as stationary or time-periodic solutions, are they orbitally stable?  are they asymptotically stable?

\item[$\blacksquare$]  {\bf Multi-bump solutions}:\;Is it possible to construct solutions which are
asymptotically splited into moving `` solitons "
plus radiation? { Lorentz invariance dictates  the dynamics of single solitons}.

\item[$\blacksquare$]  {\bf Resolution into Multi-bumps}:\;Do all solutions decompose in this fashion?
Suppose solutions exist for all $t\ge0$:\;Either scatter to a free wave, or the energy collects in ``pockets" formed by such ``solitons" ?
Quantization of energy.
 \end{enumerate}

Next, we introduce some basic analysis tools.

\subsection{Behavior of linear solution}\label{subsec:lineasolu}

\begin{lemma}[Dispersive estimate, Shatah-Struwe \cite{SS98}] If $d\geq1,~2\leq
q<\infty$,then
\begin{equation}\label{dispersive-estimate-1}
\Big\|\frac{\sin(t|\nabla|)}{|\nabla|}f\Big\|_{L^q_x(\R^d)}\leq
C|t|^{-(d-1)\big(\frac12-\frac1q\big)}\big\||\nabla|^{\frac{d-3}2-\frac{d+1}q}\nabla
f\big\|_{L^{q'}_x(\R^d)}.
\end{equation}
Especially for $d=$ odd number,  \eqref{dispersive-estimate-1} holds for $2\leq q\leq \infty$.
\end{lemma}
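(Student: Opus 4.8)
The plan is to reduce the estimate, via a Littlewood--Paley decomposition $f=\sum_{\lambda\in2^{\Z}}P_\lambda f$ with $P_\lambda$ localizing the Fourier support to $|\xi|\sim\lambda$, to a single dyadic block, on which two endpoint bounds are available: the trivial $L^2_x\to L^2_x$ bound from Plancherel (since $|\sin(t|\xi|)|/|\xi|\lesssim\lambda^{-1}$ on the support of $\widehat{P_\lambda f}$, with no decay in $t$) and a dispersive $L^1_x\to L^\infty_x$ bound with decay $|t|^{-(d-1)/2}$. Writing $\sin(t|\nabla|)/|\nabla|=\tfrac1{2i}\bigl(e^{it|\nabla|}-e^{-it|\nabla|}\bigr)|\nabla|^{-1}$, it suffices to treat $e^{\pm it|\nabla|}P_1$; Riesz--Thorin interpolation between $\|e^{\pm it|\nabla|}P_1\|_{L^2\to L^2}\lesssim1$ and $\|e^{\pm it|\nabla|}P_1\|_{L^1\to L^\infty}\lesssim|t|^{-(d-1)/2}$ yields $\|e^{\pm it|\nabla|}P_1\|_{L^{q'}\to L^q}\lesssim|t|^{-(d-1)(\frac12-\frac1q)}$ for $2\le q\le\infty$, and a rescaling $\xi\mapsto\lambda\xi$, $x\mapsto\lambda x$, $t\mapsto\lambda t$ upgrades this to $\|e^{\pm it|\nabla|}P_\lambda\|_{L^{q'}\to L^q}\lesssim\lambda^{(d+1)(\frac12-\frac1q)}|t|^{-(d-1)(\frac12-\frac1q)}$; dividing by $|\nabla|\sim\lambda$ gives $\|\sin(t|\nabla|)|\nabla|^{-1}P_\lambda\|_{L^{q'}\to L^q}\lesssim\lambda^{\frac{d-1}2-\frac{d+1}q}|t|^{-(d-1)(\frac12-\frac1q)}$, which is exactly the frequency-$\lambda$ form of the claim, since $\|\nabla f\|\sim\lambda\|f\|$ there.

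The heart of the matter is the dispersive bound, i.e. the pointwise estimate of the kernel $K_t(x)=\int_{\R^d}e^{i(x\cdot\xi+t|\xi|)}\chi(\xi)\,d\xi$ with $\chi\in C_c^\infty(\{\tfrac12\le|\xi|\le2\})$. I would pass to polar coordinates $\xi=r\omega$ and first integrate in $\omega$: the inner integral is a smooth cutoff of $\widehat{d\sigma}_{S^{d-1}}(rx)$, and the nonvanishing Gaussian curvature of the sphere gives $\bigl|\int_{S^{d-1}}e^{irx\cdot\omega}\chi(r\omega)\,d\omega\bigr|\lesssim(1+r|x|)^{-(d-1)/2}$ together with an asymptotic expansion whose leading terms are $e^{\pm ir|x|}$ times a symbol of order $-(d-1)/2$ in $r|x|$. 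Substituting this back, the $r$-integral becomes a one-dimensional oscillatory integral with phase $r(t\pm|x|)$, and I would split into the regimes $|x|\le\tfrac12|t|$ and $|x|\ge2|t|$ (non-stationary phase: integrate by parts), and $|x|\sim|t|$ (the only region where $t\pm|x|$ can vanish, so one-dimensional stationary phase applies, but the sphere's curvature has already supplied the factor $(r|x|)^{-(d-1)/2}\sim|t|^{-(d-1)/2}$). In every regime this gives $|K_t(x)|\lesssim|t|^{-(d-1)/2}$ for $|t|\ge1$, uniformly in $x$ and in all dimensions (and $|K_t|\lesssim1$ for $|t|\le1$). This regime-by-regime stationary/non-stationary phase analysis near the light cone is the step I expect to be the main obstacle.

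Finally I would reassemble the dyadic pieces. For $2\le q<\infty$ one may sum using the Littlewood--Paley square function: with $\sigma=\frac{d-1}2-\frac{d+1}q$, one has $\|\sin(t|\nabla|)|\nabla|^{-1}f\|_{L^q}\lesssim\bigl\|\bigl(\sum_\lambda|\sin(t|\nabla|)|\nabla|^{-1-\sigma}P_\lambda(|\nabla|^\sigma f)|^2\bigr)^{1/2}\bigr\|_{L^q}$, and an $\ell^2$-valued version of the dispersive bound of the previous step (interpolating the obvious $\ell^2$-valued $L^2$ estimate with an $\ell^2$-valued $L^1\to L^\infty$ estimate) reduces this to $|t|^{-(d-1)(\frac12-\frac1q)}\bigl\|\bigl(\sum_\lambda|P_\lambda(|\nabla|^\sigma f)|^2\bigr)^{1/2}\bigr\|_{L^{q'}}\lesssim|t|^{-(d-1)(\frac12-\frac1q)}\||\nabla|^\sigma f\|_{L^{q'}}$; and $|\nabla|^\sigma f=-\sum_jR_j\bigl(|\nabla|^{\frac{d-3}2-\frac{d+1}q}\partial_j f\bigr)$ is controlled by $\||\nabla|^{\frac{d-3}2-\frac{d+1}q}\nabla f\|_{L^{q'}}$ via boundedness of the Riesz transforms $R_j$ on $L^{q'}$ — legitimate precisely because $q'>1$, i.e. $q<\infty$. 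This is exactly why the argument as stated stops short of $q=\infty$: both the Littlewood--Paley summation and the Riesz transforms fail on $L^1$. When $d$ is odd one can bypass the dyadic decomposition and invoke the strong Huygens principle instead: the fundamental solution of $\Box$ is then an explicit finite-order distribution supported on the cone $\{|x|=|t|\}$, from which the $L^1\to L^\infty$ bound with $|t|^{-(d-1)/2}$ decay and exactly $\frac{d-1}2$ derivatives (i.e. $\nabla$ together with the integer power $|\nabla|^{(d-3)/2}$ realized through spherical means) can be read off directly, recovering the full range $2\le q\le\infty$.
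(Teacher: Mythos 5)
The paper itself does not prove this lemma --- it simply attributes it to Shatah--Struwe \cite{SS98} --- so there is no internal proof to compare against. Your sketch is nonetheless essentially the canonical argument and the exponents check out: $(d+1)(\tfrac12-\tfrac1q)-1=\tfrac{d-1}{2}-\tfrac{d+1}{q}$ matches the derivative count in $|\nabla|^{\frac{d-3}{2}-\frac{d+1}{q}}\nabla$, the restriction $q<\infty$ for general $d$ is correctly pinned on the Littlewood--Paley square function and the Riesz transforms failing at $L^1$, and the odd-dimensional Huygens workaround for $q=\infty$ is right. The unit-frequency kernel bound you flag as the main obstacle --- non-stationary phase off the light cone, stationary phase plus the $(r|x|)^{-(d-1)/2}$ spherical decay in the region $|x|\sim|t|$ --- is precisely what the paper \emph{does} prove in Lemma~\ref{lem:semidises}: its estimate $\|\psi(h\sqrt{-\Delta})e^{it\sqrt{-\Delta}}\|_{L^1\to L^\infty}\lesssim h^{-d}\min\{1,(h/|t|)^{(d-1)/2}\}$ is your dyadic $L^1\to L^\infty$ bound after the substitution $h=1/\lambda$, and its proof uses exactly the decomposition you describe.

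The one inaccuracy is in the reassembly paragraph. Vector-valued Riesz--Thorin, as you invoke it, requires endpoint operator norms that are uniform across the $\ell^2$-components; but with $\sigma=\tfrac{d-1}{2}-\tfrac{d+1}{q}$ fixed, the componentwise $L^2\to L^2$ norm of $\sin(t|\nabla|)|\nabla|^{-1-\sigma}P_\lambda$ is $\sim\lambda^{-1-\sigma}$ and the componentwise $L^1\to L^\infty$ norm is $\sim\lambda^{(d+1)/q}|t|^{-(d-1)/2}$, neither of which is $\lambda$-uniform; the $\lambda$-weights only cancel \emph{after} interpolation, so the vector-valued convexity theorem does not apply in the form stated. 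The repair is standard and does not change anything downstream: either (i) abandon vector-valued interpolation and sum the scalar dyadic bound directly via Minkowski on both sides, $\|g\|_{L^q}\lesssim\bigl(\sum_\lambda\|P_\lambda g\|_{L^q}^2\bigr)^{1/2}$ for $q\ge 2$ and $\bigl(\sum_\lambda\|P_\lambda h\|_{L^{q'}}^2\bigr)^{1/2}\lesssim\|h\|_{L^{q'}}$ for $q'\le 2$, which is precisely what imposes $2\le q<\infty$; or (ii) apply Stein's analytic interpolation to the family $T_z=\sin(t|\nabla|)\,|\nabla|^{-z}$, whose $\ell^2$-valued endpoint bounds at $\Re z=0$ (on $L^2$) and $\Re z=\tfrac{d+1}{2}$ (on $L^1\to L^\infty$) \emph{are} $\lambda$-uniform. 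This is a bookkeeping slip rather than a gap in the idea.
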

\begin{lemma}[Semi-classical dispersive estimate]\label{lem:semidises}
Let $\psi(r)\in C_c^\infty(\R)$ and ${\rm
supp}~\psi\subset\{r:~1\leq r\leq2\}.$ Then,
\begin{equation}\label{equ:wavedis}
\big\|\psi(h\sqrt{-\Delta})e^{it\sqrt{-\Delta}}\big\|_{L^1_x(\R^d)\to
L^\infty_x(\R^d)}\leq
ch^{-d}\min\Big\{1,\big(\tfrac{h}{t}\big)^{\frac{d-1}2}\Big\}.
\end{equation}
\end{lemma}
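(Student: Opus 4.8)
The plan is to prove the semi-classical dispersive estimate by writing the operator as an oscillatory integral and splitting into two regimes according to whether $t/h$ is small or large.

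First I would write the kernel of $\psi(h\sqrt{-\Delta})e^{it\sqrt{-\Delta}}$ via the Fourier transform:
\[
K_{t,h}(x) = \frac{1}{(2\pi)^d}\int_{\R^d} e^{i x\cdot\xi} e^{it|\xi|}\psi(h|\xi|)\,d\xi.
\]
A change of variables $\xi \mapsto \xi/h$ gives a factor $h^{-d}$ out front and reduces matters to bounding
\[
I(y,s) = \int_{\R^d} e^{i(y\cdot\xi + s|\xi|)}\psi(|\xi|)\,d\xi,\qquad y = x/h,\ s = t/h,
\]
uniformly in $y$, with the claimed decay $\min\{1,|s|^{-(d-1)/2}\}$ in $s$. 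Since $L^1\to L^\infty$ bounds for a convolution operator are exactly the sup-norm of the kernel, the estimate \eqref{equ:wavedis} reduces to $\|I(\cdot,s)\|_{L^\infty}\lesssim \min\{1,|s|^{-(d-1)/2}\}$.

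The regime $|s|\le 1$ (equivalently $|t|\le h$) is trivial: $\psi$ is compactly supported and smooth, so $I(y,s)$ is bounded by $\|\psi\|_{L^1}$ uniformly, giving the constant bound. For $|s|\gtrsim 1$ I would pass to polar coordinates, $\xi = r\omega$ with $r\in[1,2]$ and $\omega\in S^{d-1}$, so that
\[
I(y,s) = \int_1^2 \psi(r) r^{d-1}\Big(\int_{S^{d-1}} e^{ir\,y\cdot\omega}\,d\sigma(\omega)\Big) e^{irs}\,dr.
\]
The inner spherical integral is (a constant times) $|ry|^{-(d-2)/2}J_{(d-2)/2}(r|y|)$, and the known asymptotics of Bessel functions give, for $r|y|\gtrsim 1$, an expansion $|ry|^{-(d-1)/2}\big(a_+ e^{ir|y|} + a_- e^{-ir|y|}\big)$ plus lower-order terms. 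Substituting this, $I$ becomes a sum of one-dimensional oscillatory integrals in $r$ over $[1,2]$ with phases $s \pm |y|$, amplitude $\psi(r)r^{d-1}|ry|^{-(d-1)/2}$. Two sub-cases arise: if $|y|$ is comparable to $|s|$ near the critical value $|s|$, the phase $rs - r|y|$ is nonstationary on $[1,2]$ unless $|y|\approx |s|$, but then $|ry|^{-(d-1)/2}\approx |s|^{-(d-1)/2}$ gives the gain directly; if $|y|$ is far from $|s|$, repeated integration by parts in $r$ (the phase derivative is $s\pm|y|$, bounded below) produces rapid decay. When $r|y|\lesssim 1$ one uses instead the trivial bound on the spherical integral together with nonstationary phase in $r$ (the derivative of $rs$ is $\gtrsim |s|$) to again integrate by parts and win. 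Assembling these cases yields the $|s|^{-(d-1)/2}$ bound.

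The main obstacle is the careful bookkeeping of the Bessel asymptotics and the stationary-phase analysis near the light cone $|y|=|s|$, where one must track that the loss from the stationary point of the $r$-integral is at most $O(1)$ (the $r$-interval has fixed length and the phase $rs\mp r|y|$ is actually linear in $r$, so there is no genuine stationary point — the decay comes entirely from the Bessel factor $|ry|^{-(d-1)/2}$ evaluated at $|y|\sim|s|$). Alternatively, and perhaps more cleanly, one can avoid Bessel functions entirely by invoking stationary phase directly on the $d$-dimensional integral $I(y,s)$: the phase $y\cdot\xi + s|\xi|$ has gradient $y + s\xi/|\xi|$, which vanishes only when $\xi/|\xi| = -y/s$ and $|y|=|s|$, i.e. on a $(d-1)$-dimensional critical manifold with $(d-1)$ nondegenerate transverse directions (the radial direction is degenerate but harmless after peeling it off), giving exactly the $|s|^{-(d-1)/2}$ decay by the method of stationary phase with the remaining radial integral contributing $O(1)$. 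I would present this second route as the main argument and relegate the Bessel computation to a remark.
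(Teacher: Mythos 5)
Your proposal is correct and follows essentially the same route as the paper's proof: rescale to pull out the factor $h^{-d}$, dispose of the regime $|t|\le h$ trivially, and for $|t|>h$ split according to whether $|x|/t$ is away from $1$ (nonstationary phase in $\xi$, arbitrary decay) or close to $1$ (polar coordinates, spherical integral identified with a Bessel function, decay $(h/t)^{(d-1)/2}$ from the Bessel asymptotics with $|x|\sim t$). The paper stops at the crude bound $|J_{(d-2)/2}(z)|\lesssim z^{-1/2}$ rather than the full Bessel expansion or the direct $d$-dimensional stationary phase you sketch, but these are only packaging differences; one small slip on your side is calling the critical set of the phase a $(d-1)$-dimensional manifold when it is in fact a one-dimensional ray (your parenthetical about $d-1$ nondegenerate transverse directions already corrects this).
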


\begin{proof}
Denote
$$I_h(t)\triangleq\psi(h\sqrt{-\Delta})e^{it\sqrt{-\Delta}}\delta(x)=h^{-d}\int_{\R^d}
e^{\frac{it}{h}\Phi(x,t,\xi)}\psi(|\xi|)\;d\xi,$$ where
$\Phi(x,t,\xi)=\frac{x}t\cdot\xi+|\xi|$.

It is easy to see that
$$|I_h(t)|\lesssim h^{-d}.$$
Hence, it suffices to consider the case $|t|>h$. Without loss of
generality, we may assume $t>h$.

Since $\nabla_\xi\Phi=\frac{x}{t}+\frac{\xi}{|\xi|},$ we know that
$$|\nabla_\xi\Phi|\geq\frac12, \;\; {\rm when}\;\; |x|\geq 2t\;\; {\rm or}\;\; |x|\leq\tfrac{t}2.$$
In this case, we have by
stationary phase argument
$$|I_h(t)|\lesssim_kh^{-d}\big(\tfrac{t}h\big)^{-k},\quad\forall~k\in\mathbb{N}.$$
If $\frac12t\leq |x|\leq2t$, then, we get
\begin{align*}
|I_h(t)|=&h^{-d}\Big|\int_0^\infty
e^{\frac{it}h\rho}\rho^{d-1}\psi(\rho)\;d\rho\int_{\mathbb{S}^{d-1}}e^{i\frac{x}{h}\cdot
w\rho}\;d\sigma(w)\Big|\\
\lesssim&
h^{-d}\big(\tfrac{|x|}h\big)^{-\frac{d-1}2}\lesssim
h^{-d}\big(\tfrac h{t}\big)^{\frac{d-1}2},\end{align*}
 where we use the fact
$$\int_{\mathbb{S}^{d-1}}e^{i\frac{x}{h}\cdot
w\rho}d\sigma(w)=2\pi\big(\tfrac{|x|\rho}{h}\big)^{-\frac{d-2}2}J_{\frac{d-2}2}\big(2\pi\tfrac{|x|\rho}{h}\big).$$
Thus, we obtain \eqref{equ:wavedis}.

\end{proof}

\underline{\bf Pointwise convergence:} The pointwise convergence problem is  to ask what the minimal $s$ is
to ensure
\begin{equation}\label{equ:converpo}
 \lim_{t\to 0}e^{it\sqrt{-\Delta}}f(x)=f(x),\quad\text{a.e.}\;x,\quad \forall\, f\in H^s(\R^d).
\end{equation}
By a standard process of approximation
and Fatou's lemma, \eqref{equ:converpo} follows from the local maximal inequality
\begin{equation}\label{equ:locmax}
 \Bigl\|\sup_{|t|<1}|e^{it\sqrt{-\Delta}}f|\Bigr\|_{L^2(B(0,1))}\leq C_{s,d}\|f\|_{H^{s}(\R^d)}.
\end{equation}
If $s>\frac{d}{2}$, we obtain \eqref{equ:locmax} immediately from Sobolev's
embedding.  It has been known that \eqref{equ:locmax} holds for $s>\frac12$ in \cite{PSJ,Vega}
and that
this is not true when $s\leq\frac12$ is due to B. G. Walther \cite{Wal}.

 \subsection{Strichartz estimates}\label{sze}

To obtain the well-posedness, the key estimate is the Strichartz estimate.

\begin{definition}[Wave-admissible pair]\label{def:wadpair}
We say that the exponent pair $(q,r)\in\tilde{\Lambda}$ is wave-admissible, if $2\leq q,r\leq\infty$ and
\begin{equation}\label{equ:wapair}
\frac2q\leq(d-1)\Big(\frac12-\frac1r\Big),~(q,r,d)\neq(2,\infty,3).
\end{equation}
If  equality holds in \eqref{equ:wapair}, we say that $(q,r)$ is sharp wave-admissible, otherwise we say that $(q,r)$ is non-sharp wave-admissible.
\end{definition}

Define
\begin{equation}
\beta(r)\triangleq\frac{d+1}2\Big(\frac12-\frac1r\Big),~\delta(r)\triangleq d\Big(\frac12-\frac1r\Big),~\gamma(r)\triangleq(d-1)\Big(\frac12-\frac1r\Big).
\end{equation}

\begin{theorem}[Strichartz estimate]\label{thm;stricest}
Let $d\geq2$ and $u:~I\times\R^d\to\R$ be the solution to $\pa_{tt}u-\Delta u=F$ with initial data $(u,\pa_tu)(0)=(f,g)$. Then, we have
\begin{equation}\label{equ:stricest}
\|u(t,x)\|_{L_t^{q_1}(I,L_x^{r_1}(\R^d))}\lesssim\big\|(f,g)\big\|_{\dot{H}^s\times\dot{H}^{s-1}}+\|F\|_{L_t^{q_2'}(I,L_x^{r_2'})}
\end{equation}
where $(q_1,r_1),~(q_2,r_2)\in\tilde{\Lambda}$  satisfy scaling gap condition
\begin{equation}\label{equ:sgapcon}
\frac1{q_1}+\frac{d}{r_1}=\frac{d}2-s=\frac1{q_2'}+\frac{d}{r_2'}-2.
\end{equation}

In level of frequency, we have for $\forall (q_1,r_1),~(q_2,r_2)\in\tilde{\Lambda}$
\begin{equation*}
\|\Delta_ju\|_{L_t^{q_1}(I,L_x^{r_1}(\R^d))}\leq C2^{j(\mu_1-1)}\big\|(\Delta_jf,\Delta_jg)\big\|_{\dot{H}^1\times L^2}+C2^{j(\mu_{12}-1)}\|\Delta_jF\|_{L_t^{q_2'}(I,L_x^{r_2'})},
\end{equation*}
with
$$\mu_1=\delta(r_1)-\frac1{q_1},\quad \mu_{12}=\delta(r_1)+\delta(r_2)-\frac1{q_1}-\frac1{q_2}.$$
\end{theorem}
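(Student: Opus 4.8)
\medskip
\noindent\textbf{Proof proposal.} The plan is to run the standard $TT^\ast$ scheme. First I would reduce the full inhomogeneous, two-parameter statement \eqref{equ:stricest} to a homogeneous estimate for the half-wave propagators $e^{\pm it|\nabla|}$: writing $\cos(t|\nabla|)$ and $\sin(t|\nabla|)/|\nabla|$ in terms of $e^{\pm it|\nabla|}$ and treating the forcing term by Duhamel's formula, it is enough to bound $e^{it|\nabla|}$ acting on $\dot H^{s}$ together with its formal $L^2_x$-adjoint. I would also record at the outset that the scaling relation \eqref{equ:sgapcon} is forced rather than assumed: applying $\|e^{it|\nabla|}f\|_{L^{q_1}_tL^{r_1}_x}\lesssim\|f\|_{\dot H^{s}}$ to the dilated data coming from \eqref{equ:scale} and letting $\lambda\to0^+$ and $\lambda\to\infty$ shows the two sides transform with the same power of $\lambda$ precisely when $\frac1{q_1}+\frac d{r_1}=\frac d2-s$, so no generality is lost by imposing it.

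The heart of the matter is the $TT^\ast$ estimate for a \emph{sharp} wave-admissible pair $(q,r)$ with $q>2$. Put $Tf:=e^{it|\nabla|}|\nabla|^{-\beta(r)}f$; then $TT^\ast g=\int_{\R}e^{i(t-\tau)|\nabla|}|\nabla|^{-2\beta(r)}g(\tau)\,d\tau$, and the dispersive estimate \eqref{dispersive-estimate-1}, used with the derivative count $2\beta(r)=(d+1)(\tfrac12-\tfrac1r)$, gives the fixed-time bound
\[
\bigl\|e^{i(t-\tau)|\nabla|}|\nabla|^{-2\beta(r)}g(\tau)\bigr\|_{L^r_x}\lesssim|t-\tau|^{-\gamma(r)}\|g(\tau)\|_{L^{r'}_x}.
\]
Because $(q,r)$ is sharp we have $\gamma(r)=\tfrac2q\in(0,1)$, so the one-dimensional Hardy--Littlewood--Sobolev inequality in $t$ yields $\|TT^\ast g\|_{L^q_tL^r_x}\lesssim\|g\|_{L^{q'}_tL^{r'}_x}$ and hence $\|e^{it|\nabla|}f\|_{L^q_tL^r_x}\lesssim\|f\|_{\dot H^{\beta(r)}}$; under \eqref{equ:sgapcon} for a sharp pair one has $\beta(r)=s$, which is \eqref{equ:stricest}. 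Interpolating with the energy identity $\|e^{it|\nabla|}f\|_{L^\infty_tL^2_x}=\|f\|_{L^2_x}$ then covers every sharp pair with $q>2$.

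The remaining cases I would obtain by soft reductions. A non-sharp admissible pair $(q,r)$ admits a companion sharp pair $(q,\tilde r)$ with $\tilde r<r$; on each Littlewood--Paley block, Bernstein's inequality produces a factor $2^{jd(1/\tilde r-1/r)}$ that is exactly cancelled by the shift of Sobolev exponent dictated by \eqref{equ:sgapcon}, so the block estimate for $(q,r)$ follows from the one for $(q,\tilde r)$, after which one square-sums over $j$ (legitimate since $q,r\ge2$). The endpoint sharp pair $q=2$ — which occurs only for $d\ge4$, where it reads $r=\tfrac{2(d-1)}{d-3}$ — is out of reach of Hardy--Littlewood--Sobolev, and there I would invoke the Keel--Tao machinery: dyadically decompose $|t-\tau|$, bound each dyadic piece by interpolating the $L^2$ estimate against the dispersive estimate, and sum by means of atomic decompositions in Lorentz spaces. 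This is also where one sees the argument, and indeed the estimate itself, break down exactly at $(q,r,d)=(2,\infty,3)$, which is why that triple is excluded in \eqref{equ:wapair}.

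Finally, the inhomogeneous bound with two \emph{different} admissible pairs follows by composing the homogeneous estimate for $(q_1,r_1)$ with the dual of the one for $(q_2,r_2)$ to control the untruncated operator $\int_{\R}e^{i(t-\tau)|\nabla|}(\cdots)\,d\tau$, and then restoring the retarded integral $\int_0^t$ by the Christ--Kiselev lemma whenever $q_2'<q_1$; the double-endpoint case is again swallowed by the Keel--Tao framework. The frequency-localized inequality is the same computation carried out on a single block: with $h=2^{-j}$ the operator $\psi(h\sqrt{-\Delta})e^{it\sqrt{-\Delta}}$ obeys the semiclassical decay \eqref{equ:wavedis} of Lemma~\ref{lem:semidises}, so rerunning $TT^\ast$ (and, at $q=2$, Keel--Tao) for this $h$-dependent kernel and tracking how the normalizations $h^{-d}$ and $|\nabla|^{-\beta(r_i)}$ enter reproduces precisely the weights $2^{j(\mu_1-1)}$ and $2^{j(\mu_{12}-1)}$ — here $\mu_1=\delta(r_1)-\tfrac1{q_1}$ is nothing but the scaling-mandated Sobolev index $s$, and $\mu_{12}=\delta(r_1)+\delta(r_2)-\tfrac1{q_1}-\tfrac1{q_2}$ comes out as the natural combination for the retarded term. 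I expect the only genuine difficulty to lie in the endpoint analysis of the preceding paragraph; everything else is the classical $TT^\ast$/Hardy--Littlewood--Sobolev/Christ--Kiselev package, the one recurring point of care being to keep the derivative counts $\beta,\gamma,\delta$ mutually consistent with \eqref{equ:sgapcon} at each step.
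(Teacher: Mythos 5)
The paper states Theorem~\ref{thm;stricest} without proof; it is quoted as a classical result (due to Ginibre--Velo, Lindblad--Sogge, Keel--Tao, Christ--Kiselev, etc.\ in the references), so there is no internal argument to compare against. On its own terms your proposal correctly reproduces the standard proof: the reduction to the half-wave propagator, the $TT^\ast$ argument combining the fixed-time dispersive estimate \eqref{dispersive-estimate-1} (with $2\beta(r)=(d+1)(\tfrac12-\tfrac1r)$ derivatives) with Hardy--Littlewood--Sobolev for sharp pairs with $q>2$, interpolation with energy conservation, the passage to non-sharp pairs by Bernstein on Littlewood--Paley blocks combined with the Sobolev shift forced by \eqref{equ:sgapcon}, Keel--Tao at the $q=2$ endpoint ($r=\tfrac{2(d-1)}{d-3}$, present only for $d\ge 4$, consistent with the exclusion of $(2,\infty,3)$), and Christ--Kiselev to restore the retarded integral when $q_2'<q_1$. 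Your computation of the frequency-localized weights is also right: $\mu_1=\delta(r_1)-\tfrac1{q_1}$ is the Sobolev index $s$ dictated by scaling, and composing the block estimate for $(q_1,r_1)$ with the dual block estimate for $(q_2,r_2)$, then adding the extra $2^{-j}$ from $\sin((t-\tau)|\nabla|)/|\nabla|$, yields exactly the exponent $\mu_{12}-1$ with $\mu_{12}=\delta(r_1)+\delta(r_2)-\tfrac1{q_1}-\tfrac1{q_2}$. The one point you identify as the genuine difficulty --- the Keel--Tao endpoint analysis --- is indeed where all the work resides; as a sketch your outline is sound and complete.
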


\subsection{Finite speed of propagation}

\begin{lemma}[Huygens principal]\label{hgp}

{\rm (i)} Let $d=2k+1,~k\in\mathbb{N}$, then
\begin{equation}\label{odd}
\frac{\sin(t|\nabla|)}{|\nabla|}f(x)=c_d(t^{-1}\pa_t)^{\frac{d-3}{2}}\Big(t^{-1}\int_{|y-x|=t}f(y)d\sigma(y)\Big),
\end{equation}
In particular, when $d=3$, we have
\begin{equation}\label{d3}
\frac{\sin(t|\nabla|)}{|\nabla|}f(x)=\frac1{4\pi
t}\int_{|y-x|=t}f(y)d\sigma(y).
\end{equation}
Let $d=2k,~k\in\mathbb{N}$, then
\begin{equation}\label{even}
\frac{\sin(t|\nabla|)}{|\nabla|}f(x)=c_d(t^{-1}\pa_t)^{\frac{d-2}{2}}\Big(t^{-1}\int_{|y-x|\leq
t}f(y)d\sigma(y)\Big).
\end{equation}
\vskip0.2cm

{\rm (ii)} {\rm(Domain of influence)} Let $d=2k+1,~k\in\mathbb{N}$, then
\begin{equation}\label{yinx}
\frac{\sin(t|\nabla|)}{|\nabla|}\Big(\chi_{|x|\leq
R}f\Big)=\chi_{\big||x|-|t|\big|\leq
R}\frac{\sin(t|\nabla|)}{|\nabla|}\Big(\chi_{|x|\leq R}f\Big).
\end{equation}

\vskip0.2cm

{\rm (iii)} {\rm( Domain of determincy)} Let $d=2k+1,~k\in\mathbb{N}$, then
\begin{equation}\label{jued}
\chi_{|x|\leq
R}\Big(\frac{\sin(t|\nabla|)}{|\nabla|}f\Big)=\chi_{|x|\leq
R}\Big(\frac{\sin(t|\nabla|)}{|\nabla|}\chi_{\big||x|-|t|\big|\leq
R}f\Big).
\end{equation}
\end{lemma}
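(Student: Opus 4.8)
The plan is to prove the explicit representation in part~(i) by the classical method of spherical means, and then to extract the domain-of-influence and domain-of-determinacy statements (ii)--(iii) from the support of the wave kernel that (i) exhibits in odd space dimensions.

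By density it is enough to prove (i) for $f\in\mathcal{S}(\R^d)$, so that $u(t,x):=\frac{\sin(t|\nabla|)}{|\nabla|}f(x)$ is the smooth solution of $\Box u=0$ with $(u,\pa_tu)(0)=(0,f)$. The first step is to pass to the spherical mean of $u$ in the spatial variable,
\begin{equation*}
U(x;r,t):=\frac1{|\mathbb{S}^{d-1}|}\int_{\mathbb{S}^{d-1}}u(x+r\omega,t)\,d\sigma(\omega),
\end{equation*}
and to check, by differentiating under the integral sign and using the divergence theorem, that $U$ solves the Euler--Poisson--Darboux equation $\pa_{tt}U=\pa_{rr}U+\frac{d-1}{r}\pa_rU$ with $U(x;r,0)=0$ and $\pa_tU(x;r,0)=\frac1{|\mathbb{S}^{d-1}|}\int_{\mathbb{S}^{d-1}}f(x+r\omega)\,d\sigma(\omega)$. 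When $d=2k+1$ is odd, the second step is the change of unknown $\widetilde U:=\big(\tfrac1r\pa_r\big)^{k-1}\big(r^{2k-1}U\big)$, which turns the Euler--Poisson--Darboux operator into the one-dimensional wave operator, $\pa_{tt}\widetilde U=\pa_{rr}\widetilde U$; this rests on the algebraic identity $\pa_{rr}\big(\tfrac1r\pa_r\big)^{k-1}\big(r^{2k-1}\phi\big)=\big(\tfrac1r\pa_r\big)^{k-1}\big(r^{2k-1}(\pa_{rr}\phi+\tfrac{2k}{r}\pa_r\phi)\big)$, proved by induction on $k$. Since $\widetilde U$ is smooth and odd in $r$, it vanishes at $r=0$; solving the half-line problem by d'Alembert's formula with this boundary condition and the above initial data, and letting $r\to0^+$, recovers $u(t,x)=U(x;0^+,t)$ as the stated combination of $t$-derivatives of $t^{-1}\int_{|y-x|=t}f\,d\sigma$, the constant $c_d$ emerging from the expansion of $\widetilde U$ near $r=0$; the case $k=1$ is Kirchhoff's formula \eqref{d3}. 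For even $d=2k$ one applies Hadamard's method of descent: view $f$ as a function on $\R^{2k+1}$ independent of $x_{2k+1}$, apply the odd-dimensional formula \eqref{odd} in dimension $2k+1$, and integrate out the extra variable, which converts the sphere integral into the solid-ball integral of \eqref{even}. (Equivalently, one may verify directly, using the same algebraic identity, that the right-hand sides of \eqref{odd} and \eqref{even} solve $\Box u=0$ with data $(0,f)$ and conclude by the uniqueness furnished by the energy identity; a Fourier-analytic proof, computing the radial inverse Fourier transform of $\sin(t|\xi|)/|\xi|$ through half-integer order Bessel functions, is also possible.)

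For (ii) and (iii), expanding $(t^{-1}\pa_t)^{(d-3)/2}$ in \eqref{odd} shows that, for odd $d$, $\frac{\sin(t|\nabla|)}{|\nabla|}f(x)$ is a linear combination of integrals over the sphere $\{y:|y-x|=|t|\}$ of $f$ and of its first $\tfrac{d-3}2$ normal derivatives; equivalently, the kernel $E_t$ defined by $\frac{\sin(t|\nabla|)}{|\nabla|}f=E_t*f$ is supported in $\{|x|=|t|\}$. Hence if $g=\chi_{|x|\le R}f$, then $(E_t*g)(x)$ can be nonzero only if the sphere $\{y:|y-x|=|t|\}$ meets $\{|y|\le R\}$, i.e. only if $||x|-|t||\le R$, which is \eqref{yinx}. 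For \eqref{jued} one argues from the target point: for $|x|\le R$ the value $(E_t*f)(x)$ depends only on $f$ near the sphere $\{y:|y-x|=|t|\}$, and the union of these spheres over $\{|x|\le R\}$ is exactly $\{y:||y|-|t||\le R\}$, since the sphere of radius $|t|$ centered at $y$ meets $B(0,R)$ precisely when $||y|-|t||\le R$; therefore $\chi_{||y|-|t||\le R}f$ and $f$ yield the same value of $E_t*f$ at every point of $\{|x|\le R\}$, which is \eqref{jued}. (One may also deduce (ii)--(iii) from finite speed of propagation via the energy-flux inequality, but the sharp, exact-sphere support used here is special to odd dimensions.)

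The only genuinely delicate point is the second step of (i): the inductive proof of the identity for $\big(\tfrac1r\pa_r\big)^{k-1}(r^{2k-1}\,\cdot\,)$ and the careful bookkeeping of the initial data and of the constant $c_d$ in the limit $r\to0^+$. The even-dimensional descent, and the passage from smooth $f$ to the distributional statements needed in (ii)--(iii), are then routine.
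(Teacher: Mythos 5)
Your proof is correct, and where it overlaps with the paper it follows the same line of reasoning. The paper in fact offers essentially no argument: it treats the representation formulas in~(i) as classical facts stated without proof, and dispatches~(ii) and~(iii) with the single remark that they are ``direct consequences of \eqref{odd}.'' What that remark means is precisely the kernel-support argument you spell out: in odd dimensions \eqref{odd} exhibits $E_t$ with $\frac{\sin(t|\nabla|)}{|\nabla|}f=E_t*f$ as a distribution supported on the sphere $\{|z|=|t|\}$ (a combination of surface measure and its normal derivatives up to order $(d-3)/2$), so that $\operatorname{supp}(E_t*g)\subseteq\operatorname{supp}E_t+\operatorname{supp}g$ gives \eqref{yinx}, and reading the convolution from the target point gives \eqref{jued}. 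Your additional contribution is the full classical derivation of~(i) --- spherical means, the Euler--Poisson--Darboux equation, the Darboux substitution $\widetilde U=(r^{-1}\partial_r)^{k-1}(r^{2k-1}U)$ reducing to the one-dimensional wave equation, d'Alembert with the odd reflection, and Hadamard descent for even $d$ --- which the paper simply assumes. The inductive identity you isolate and the bookkeeping of $c_d$ are the genuinely nontrivial points, and you have correctly identified them as such. One cosmetic remark: descent produces the usual weight $(t^2-|y-x|^2)^{-1/2}$ in the even-dimensional ball integral; the paper's statement \eqref{even} writes $d\sigma$ over a ball without this weight, which is a notational slip in the source, not a gap in your argument.
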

\begin{proof} (ii) and (iii) are the direct consequence of \eqref{odd}.
\end{proof}

\begin{center}
\includegraphics[width=3in,height=1.75in]{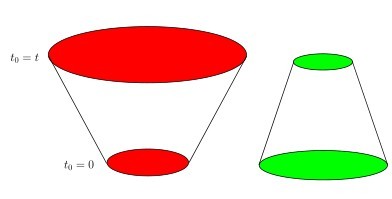}
\end{center}

\vskip0.2cm

\begin{lemma}[Finite propagation speed]\label{fsprop}
Let $u:~I_u\times \R^d\to\R$ and $v:~I_v\times\R^d\to\R$ solve $\phi_{tt}-\Delta\phi+|\phi|^p\phi=0$,
with initial data $(u_0,u_1),~(v_0,v_1)\in\dot{H}^{s_c}\times\dot{H}^{s_c-1}$, respectively, such that for $J\subset\subset I_u\cap
I_v$
$$\|u\|_{L_{tx}^{p_c}(J\times\R^d)}+\|v\|_{L_{tx}^{p_c}(J\times\R^d)}<\infty,~p_c=\frac{d+1}2p.$$
then we have following properties
\begin{enumerate}
\item  If initial data satisfies
$$(u_0,u_1)=(v_0,v_1),~x\in B(x_0,M),$$
then  for any  $t\in J$,
$$u(t,\cdot)= v(t,\cdot),~~x\in B(x_0,M-|t|),\;\;|t|\leq M.$$

\item If initial data satisfies
$$(u_0,u_1)=(v_0,v_1),~x\in B(x_0,M)^c,$$
then
$$u(t,\cdot)= v(t,\cdot),~x\in B(x_0,M+|t|)^c, \;\; \forall\; t\in J.$$
\end{enumerate}
where $(p_c,p_c)\in\tilde\Lambda$ with
$$\delta(p_c)-\frac1{p_c}=s_c\triangleq\frac{d}2-\frac2p.$$
\end{lemma}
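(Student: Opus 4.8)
The plan is to reduce the claim to the linear finite speed of propagation encoded in the Huygens-type identities of Lemma~\ref{hgp}, promoted to the nonlinear setting via the Duhamel formula \eqref{eq:Duhamel} and a continuity/bootstrap argument in the Strichartz norm $L^{p_c}_{tx}$. By translation invariance we may take $x_0=0$. Set $w=u-v$; then $w$ solves the linear wave equation with source $F=-(|u|^{p-1}u-|v|^{p-1}v)$ and with initial data that either vanishes on $B(0,M)$ (case (1)) or on $B(0,M)^c$ (case (2)). The first step is to record the pointwise nonlinear estimate $|F|\lesssim (|u|^{p-1}+|v|^{p-1})|w|$, so that on any region $\Omega$ one has, by H\"older in space-time with the exponent $p_c=\tfrac{d+1}{2}p$ chosen precisely so that $(p_c,p_c)$ is the scaling-critical admissible pair with $\delta(p_c)-\tfrac1{p_c}=s_c$,
\begin{equation*}
\|F\|_{L^{p_c'}_{tx}(\Omega)}\lesssim\big(\|u\|_{L^{p_c}_{tx}}^{p-1}+\|v\|_{L^{p_c}_{tx}}^{p-1}\big)\|w\|_{L^{p_c}_{tx}(\Omega)}.
\end{equation*}

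The second step handles case (1), the domain of determinacy. Fix $|t_0|\le M$ and consider the backward light cone $\mathcal C=\{(t,x):0\le t\le t_0,\ |x|\le M-t\}$ (and symmetrically for $t_0<0$). Applying the Strichartz estimate of Theorem~\ref{thm;stricest} to $w$ on a thin time slab $[0,\tau]$, and using that the free evolution of the (compactly supported, vanishing-on-$B(0,M)$) data contributes nothing inside $\mathcal C$ by the finite-speed statement \eqref{jued} of Lemma~\ref{hgp}, we obtain
\begin{equation*}
\|w\|_{L^{p_c}_{tx}(\mathcal C\cap([0,\tau]\times\R^d))}\lesssim \big(\|u\|_{L^{p_c}_{tx}(J)}^{p-1}+\|v\|_{L^{p_c}_{tx}(J)}^{p-1}\big)\|w\|_{L^{p_c}_{tx}(\mathcal C\cap([0,\tau]\times\R^d))}.
\end{equation*}
Since the global norms $\|u\|_{L^{p_c}_{tx}(J)},\|v\|_{L^{p_c}_{tx}(J)}$ are finite by hypothesis, we may partition $J$ into finitely many subintervals on each of which the prefactor is $\le\tfrac12$; on the first such slab this forces $w\equiv0$ on the truncated cone, and then one iterates up the cone slab by slab, re-initializing at each slab's bottom (where the data now genuinely vanish on the relevant ball, since the cone shrinks). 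This yields $u(t,\cdot)=v(t,\cdot)$ on $B(0,M-|t|)$ for all $t\in J$ with $|t|\le M$.

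Case (2), the exterior domain of influence, is entirely parallel: one works with the exterior region $\{|x|\ge M+t\}$, uses the complementary finite-speed statement \eqref{yinx} for the free part (the free evolution of exterior data stays in the exterior cone), and runs the same slab-by-slab bootstrap with the same smallness-by-subdivision of the Strichartz prefactor. The main obstacle — and the only genuinely delicate point — is the bookkeeping of the local-in-time Strichartz/contraction argument on truncated light cones: one must verify that the Strichartz estimate applies with the source supported in the cone (which is fine, as $\|F\|$ is only being taken over that region and Strichartz estimates respect such restrictions after extending $F$ by zero), and that the finitely-many-slabs subdivision can be chosen uniformly over the whole family of cones indexed by the vertex time $t_0$, which follows because the subdivision depends only on the \emph{global} norms on $J$, not on $t_0$. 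Everything else is the standard pointwise nonlinear bound plus Grönwall-type iteration.
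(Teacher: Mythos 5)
Your global architecture --- set $w=u-v$, use Duhamel, kill the free contribution inside (resp.\ outside) the relevant light cone via the Huygens identities of Lemma~\ref{hgp}, and bootstrap on short time slabs on which a Strichartz constant is made $\le\tfrac12$ by absolute continuity of the $L^{p_c}_{tx}$-integral --- is the standard one and is sound in outline, as is the slab-by-slab re-initialization. The genuine gap is in the nonlinear closure estimate. With the nonlinearity $|\phi|^p\phi$ the pointwise bound is $|F|\lesssim(|u|^{p}+|v|^{p})|w|$, and a product of $p{+}1$ factors each in $L^{p_c}_{tx}$ lies in $L^{p_c/(p+1)}_{tx}$; a direct computation gives $p_c/(p+1)=p_c'$ only when $p_c=p+2$, i.e.\ at the conformal exponent $(d-1)p=4$, $s_c=\tfrac12$, so the H\"older inequality you display is not even dimensionally consistent outside that case. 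More fundamentally, the diagonal pair $(q_2,r_2)$ with $q_2'=r_2'=p_c/(p+1)$ has $q_2=r_2=p_c/(p_c-p-1)$, and one checks that $q_2\ge\tfrac{2(d+1)}{d-1}$ if and only if $p\le\tfrac{4}{d-1}$, i.e.\ $s_c\le\tfrac12$; meanwhile $(p_c,p_c)\in\tilde\Lambda$ forces $s_c\ge\tfrac12$. Hence the derivative-free Strichartz estimate
\begin{equation*}
\|w\|_{L^{p_c}_{tx}}\lesssim\|(w_0,w_1)\|_{\dot H^{s_c}\times\dot H^{s_c-1}}+\|F\|_{L^{p_c/(p+1)}_{tx}}
\end{equation*}
exists only at the single conformal point $s_c=\tfrac12$. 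Throughout the superconformal range $s_c>\tfrac12$ --- which includes the energy-critical and energy-supercritical cases that are the principal concern of the paper --- the estimate you invoke fails, and the small-constant contraction never gets off the ground.

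The fix is to close in a scale-invariant space carrying fractional derivatives, e.g.\ the $\dot S^{s_c}$-type norm used elsewhere in the paper: control $\|w\|_{L^{p_c}_{tx}}$ together with, say, $\|D^{s_c-\tfrac12}w\|_{L^{2(d+1)/(d-1)}_{tx}}$, put the source into the dual space $\|D^{s_c-\tfrac12}F\|_{L^{2(d+1)/(d+3)}_{tx}}$ via the inhomogeneous Strichartz estimate, and then bound it by the fractional chain/difference rule of Lemma~\ref{lem:frderi} applied to $F(u)-F(v)$: the factors of $u,v$ land in $L^{p_c}_{tx}$ (which you can make small on short slabs), while the factor of $w$ carries the derivative and goes into the derivative Strichartz norm. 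Your use of \eqref{jued} and \eqref{yinx} for the free part and the cone bookkeeping are correct; only the nonlinear estimate needs to be replaced.
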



\subsection{Morawetz estimate}

\begin{proposition}[Morawetz estimate]\label{prop:Morawetz}
Let $d\geq3$,  and  u be a solution to \eqref{equ:nlw} with $\mu=-1$  on a spacetime slab $I\times
\mathbb{R}^d$. Then
\begin{eqnarray}\label{morawetzes}
\int_I\int_{\R^d}\frac{~~|u|^{p+1}}{|x|} dxdt\leq C(E),
\end{eqnarray}
where  $E$ is the energy $E(u_0,u_1)$.
\end{proposition}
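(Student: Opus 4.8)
\section*{Proof proposal for Proposition~\ref{prop:Morawetz}}

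The plan is to run the classical Morawetz multiplier argument, i.e.\ to contract the stress--energy tensor of \eqref{equ:nlw} with the radial vector field $a(x)=x/|x|$. Concretely I would multiply the equation, written as $\pa_{tt}u-\Delta u+|u|^{p-1}u=0$ (here $\mu=-1$), by the \emph{Morawetz multiplier}
\[
\mathcal{A}u:=a^j\pa_ju+\tfrac12(\nabla\cdot a)\,u=\frac{x}{|x|}\cdot\nabla u+\frac{d-1}{2|x|}\,u ,
\]
and reorganise the resulting pointwise identity into conservation form $\pa_t\big((\pa_tu)\,\mathcal{A}u\big)+\nabla\cdot(\mathrm{flux})+\mathcal{B}=0$, where all bulk terms are collected into a single $\mathcal{B}\ge0$. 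Integrating over the slab $I\times\R^d$ and discarding all but one nonnegative piece of $\mathcal{B}$ then produces \eqref{morawetzes}.

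The heart of the matter is the term-by-term bookkeeping. The $\pa_{tt}u$ contribution, after one integration by parts in $t$ and one in $x$, becomes the exact spacetime divergence $\pa_t\big((\pa_tu)\mathcal{A}u\big)-\tfrac12\nabla\cdot\big(\tfrac{x}{|x|}|\pa_tu|^2\big)$ --- the two dangerous $|\pa_tu|^2/|x|$ bulk terms cancel. The $-\Delta u$ contribution, after two integrations by parts, leaves the bulk $(\pa_ka^j)\pa_ku\,\pa_ju-\tfrac14\Delta(\nabla\cdot a)\,u^2$; inserting $a=x/|x|$ and $\pa_ka^j=|x|^{-1}\delta_{kj}-|x|^{-3}x_jx_k$ this equals $\tfrac1{|x|}\big(|\nabla u|^2-|\tfrac{x}{|x|}\cdot\nabla u|^2\big)\ge0$ plus, via $\Delta|x|^{-1}=-(d-3)|x|^{-3}$ for $d\ge4$ (resp.\ $\Delta|x|^{-1}=-4\pi\delta_0$ for $d=3$), the nonnegative term $\tfrac{(d-1)(d-3)}{4}\,u^2/|x|^3$ (resp.\ $2\pi\,u(t,0)^2$). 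Finally $|u|^{p-1}u\,\mathcal{A}u=\nabla\cdot\big(\tfrac{x}{|x|}\tfrac{|u|^{p+1}}{p+1}\big)+\tfrac{(d-1)(p-1)}{2(p+1)}\tfrac{|u|^{p+1}}{|x|}$, and it is exactly here that the defocusing sign $\mu=-1$ is used, so that this last, positive, term enters $\mathcal{B}$. Thus $\mathcal{B}=\tfrac1{|x|}\big(|\nabla u|^2-|\tfrac{x}{|x|}\cdot\nabla u|^2\big)+(\text{nonneg.\ }u^2\text{ term})+\tfrac{(d-1)(p-1)}{2(p+1)}\tfrac{|u|^{p+1}}{|x|}\ge0$.

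Integrating the identity in $x$ annihilates the spatial flux (for compactly supported data, $u(t,\cdot)$ is compactly supported for each $t$ by finite speed of propagation, Lemma~\ref{fsprop}; in the general case one uses decay at infinity), yielding $\tfrac{d}{dt}M(t)=-\int_{\R^d}\mathcal{B}\,dx\le0$ with $M(t):=\int_{\R^d}(\pa_tu)\,\mathcal{A}u\,dx$. Integrating over $I=[t_1,t_2]$ then gives
\[
\tfrac{(d-1)(p-1)}{2(p+1)}\int_I\!\!\int_{\R^d}\frac{|u|^{p+1}}{|x|}\,dx\,dt\ \le\ \int_I\!\!\int_{\R^d}\mathcal{B}\,dx\,dt\ =\ M(t_1)-M(t_2)\ \le\ 2\sup_{t\in I}|M(t)| .
\]
To close, I would bound $M(t)$ by the energy: Cauchy--Schwarz gives $|M(t)|\le\|\pa_tu\|_{L^2_x}\big(\|\nabla u\|_{L^2_x}+\tfrac{d-1}{2}\|\,|x|^{-1}u\,\|_{L^2_x}\big)$, and Hardy's inequality $\|\,|x|^{-1}u\,\|_{L^2}\le\tfrac{2}{d-2}\|\nabla u\|_{L^2}$ (valid since $d\ge3$) together with $\|\nabla u\|_{L^2}^2+\|\pa_tu\|_{L^2}^2\le 2E$ (recall $E\ge0$ when $\mu=-1$) give $|M(t)|\lesssim_d E$; hence \eqref{morawetzes} holds with $C(E)=c_{d,p}E$.

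The one real obstacle is legitimising the computation, not the computation itself. The multiplier $\mathcal{A}$ is singular at $x=0$, so the integrations by parts --- and the distributional identity for $\Delta|x|^{-1}$ when $d=3$ --- must be performed with a regularised vector field, e.g.\ $a_\eps(x)=x/\sqrt{|x|^2+\eps^2}$, followed by $\eps\to0$; the $\eps$-errors and the flux through $|x|=\eps$ are supported where $|x|\lesssim\eps$ and are controlled by the energy and the local $L^2_{t,x}$-bound on $u$, so they vanish in the limit. Moreover the pointwise manipulations are a priori valid only for smooth, spatially decaying solutions, so one first proves \eqref{morawetzes} in that class and then extends it to the solutions in the statement by approximation, using the well-posedness/stability theory for \eqref{equ:nlw}; none of these steps affects the final constant.
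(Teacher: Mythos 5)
Your argument is correct and it is precisely the classical Morawetz multiplier computation with $a(x)=x/|x|$: the bookkeeping of the three contributions (cancellation of the $|\partial_tu|^2/|x|$ bulk, the nonnegative $\frac1{|x|}\bigl(|\nabla u|^2-|\partial_ru|^2\bigr)$ and $\frac{(d-1)(d-3)}{4}u^2/|x|^3$ pieces, and the $\frac{(d-1)(p-1)}{2(p+1)}\frac{|u|^{p+1}}{|x|}$ contribution that survives only for the defocusing sign), the bound $|M(t)|\lesssim_d E$ via Cauchy--Schwarz and Hardy, and the regularisation/density remarks are all accurate. The paper states Proposition~\ref{prop:Morawetz} without proof (deferring to the cited references); your computation is exactly the standard argument one finds there, so there is nothing to contrast.
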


\subsection{Linear profile decomposition}

Let
$$S(t)(f,g):=\cos(t|\nabla|)f
+\frac{\sin(t|\nabla|)}{|\nabla|}g.$$
Then the operator  $S(t):~\dot{H}^1\times L_x^2(\R^d)\to L_{t,x}^\frac{2(d+1)}{d-2}(\R\times\R^d)$
is not compact. This fact shows the linear profile decomposition  will be useful in the critical or focusing problem.

\begin{theorem}[Linear profile decomposition in energy space, \cite{BG,Bu2010}]\label{thm:inepro}
Let $(u_{0,n},u_{1,n})_{n\in\mathbb{N}}$ be a bounded sequence in $\dot{H}^1\times L^2(\mathbb{R}^d)$ with $d\geq 3$.  Then there exists a subsequence of $(u_{0,n},u_{1,n})$ {\rm (still denoted by $(u_{0,n},u_{1,n})$)}, a sequence $(V_0^j,V_1^j)_{j\in\mathbb{N}}\subset \dot{H}^1\times L^2(\mathbb{R}^d)$ and a sequence of triples $(\lambda_n^j,x_n^j,t_n^j)\in \mathbb{R^+}\times\mathbb{R}^d\times\mathbb{R}$
such that for every $j\neq j'$,
\begin{align}
\label{lab5}\frac{\lambda_n^j}{\lambda_n^{j'}}+\frac{\lambda_n^{j'}}{\lambda_n^j}+\frac{|t_n^j-t_n^{j'}|}{\lambda_n^j}
+\frac{|x_n^j-x_n^{j'}|}{\lambda_n^j}\mathop{\longrightarrow}_{n\rightarrow\infty}
\infty,
\end{align}
and for every $\ell\geq 1$, if $V^j=S(t)(V_0^j,V_1^j)$ and
\begin{align}
\label{lab6}
V_n^j(t,x)\triangleq\frac{1}{(\lambda_n^j)^\frac{d-2}{2}}V^j\left(\frac{t-t_n^j}{\lambda_n^j},\frac{x-x_n^j}{\lambda_n^j}\right),\end{align}
with
\begin{equation}\left\{\begin{aligned}
&u_{0,n}(x)=\sum_{j=1}^\ell V_n^j(0,x)+w_{0,n}^\ell(x),\\
&u_{1,n}(x)=\sum_{j=1}^\ell \partial_t V_n^j(0,x)+w_{1,n}^\ell(x),
\end{aligned}\right.\label{lab7}\end{equation}
satisfying
\begin{align}
\label{lab8}\limsup_{n\rightarrow\infty}\lVert S(t)(w_{0,n}^\ell,w_{1,n}^\ell)\rVert_{L_{t,x}^{\frac{2(d+1)}{d-2}}}\mathop{\longrightarrow}_{\ell\rightarrow\infty}0,
\end{align}
where $S(t)(u_0,u_1)$ denotes the solution of the free wave equation with initial data $u(0)=u_0$ and $\partial_t u(0)=u_1$.
For every $\ell\geq 1$, we also have
\begin{align}
\|u_{0,n}\|_{\dot{H}^1}^2+\|u_{1,n}\|_{L^2}^2=\sum_{j=1}^\ell &\left(\|V^j_0\|_{\dot{H}^1}^2+\|V^j_1\|_{L^2}^2\right)
+\|w_{0,n}^\ell\|_{\dot{H}^1}^2\nonumber\\
&+\|w_{1,n}^\ell\|_{L^2}^2+o(1),\quad n\rightarrow\infty,\label{lab9}
\end{align}
and, for $j\neq k$,
\begin{align}
\label{lab11}\lim_{n\rightarrow\infty} \lVert V_n^jV_n^k\rVert_{L_{t,x}^{\frac{d+1}{d-2}}}=0.
\end{align}
\end{theorem}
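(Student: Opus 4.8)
The plan is to construct the decomposition by an iterative ``bubble extraction'' whose engine is an inverse Strichartz inequality. Write $\Gamma_n=\Gamma_{\lambda_n,x_n,t_n}$ for the operator sending $(f,g)$ to the time-$0$ data of the rescaled, translated free evolution, i.e.\ $\Gamma_n(f,g)=\big(V_n(0,\cdot),\pa_tV_n(0,\cdot)\big)$ with $V_n$ as in \eqref{lab6} and $V=S(t)(f,g)$; since $S(t)$ conserves the free energy and the dilation in \eqref{lab6} preserves $\dot H^1\times L^2(\R^d)$, each $\Gamma_n$ is unitary on $\dot H^1\times L^2$. I would first reduce everything to the claim: \emph{there is a nondecreasing $\eta:(0,\infty)\to(0,\infty)$ such that, whenever $(f_n,g_n)$ is bounded in $\dot H^1\times L^2$ with $A\triangleq\limsup_n\|(f_n,g_n)\|_{\dot H^1\times L^2}<\infty$ and $\eps\triangleq\limsup_n\|S(t)(f_n,g_n)\|_{L_{t,x}^{2(d+1)/(d-2)}}>0$, one can pass to a subsequence and find $(\lambda_n,x_n,t_n)$ and $(\phi,\psi)\in\dot H^1\times L^2$ with $\|(\phi,\psi)\|_{\dot H^1\times L^2}\ge\eta(\eps/A)$, with $\Gamma_n^{-1}(f_n,g_n)\rightharpoonup(\phi,\psi)$ weakly, and}
\begin{equation*}
\|(f_n,g_n)\|_{\dot H^1\times L^2}^2=\|(\phi,\psi)\|_{\dot H^1\times L^2}^2+\big\|(f_n,g_n)-\Gamma_n(\phi,\psi)\big\|_{\dot H^1\times L^2}^2+o(1).
\end{equation*}

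Granting this, the scheme is standard: set $w_n^0=(u_{0,n},u_{1,n})$; if $\limsup_n\|S(t)w_n^{\ell-1}\|_{L_{t,x}^{2(d+1)/(d-2)}}=0$ stop (and take $(V_0^j,V_1^j)=0$ for $j\ge\ell$), otherwise apply the claim to $w_n^{\ell-1}$ to obtain the profile $(V_0^\ell,V_1^\ell)$ and parameters $(\lambda_n^\ell,x_n^\ell,t_n^\ell)$, then put $w_n^\ell=w_n^{\ell-1}-V_n^\ell(0)$; this yields \eqref{lab6}--\eqref{lab7}. Proving the claim is the main obstacle. I would decompose $S(t)(f_n,g_n)=\sum_NS(t)P_N(f_n,g_n)$ into Littlewood--Paley pieces; since the full Strichartz norm is $\gtrsim\eps$ whereas the frequency-localized Strichartz bound of Theorem \ref{thm;stricest} controls each piece by $\lesssim A$, a pigeonhole together with a square-function estimate produces, along a subsequence, a dyadic $N_n$ with $\|S(t)P_{N_n}(f_n,g_n)\|_{L_{t,x}^{2(d+1)/(d-2)}}\gtrsim\eps(\eps/A)^a$ for some $a>0$. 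Interpolating this norm between $L_{t,x}^\infty$ and a space-time Lebesgue norm that is $\lesssim A$ (by \eqref{equ:stricest} and Bernstein), and using the dispersive decay of the half-wave propagator---Lemma \ref{lem:semidises}, equivalently \eqref{dispersive-estimate-1}---to bound the kernel, I would extract $(t_n,x_n)$ with $N_n^{-(d-2)/2}\big|S(t_n)P_{N_n}(f_n,g_n)(x_n)\big|\gtrsim\eta(\eps/A)$. With $\lambda_n=N_n^{-1}$ and rescaling as in \eqref{lab6}, the new sequence is bounded in $\dot H^1\times L^2$ and its free evolution has a space-time value bounded below at the origin, so (after a further subsequence) it converges weakly to $S(t)(\phi,\psi)$ with $(\phi,\psi)\ne0$ and $\|(\phi,\psi)\|_{\dot H^1\times L^2}\gtrsim\eta(\eps/A)$. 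The energy identity is then the Pythagorean relation for the orthogonal projection onto $(\phi,\psi)$, using unitarity of $\Gamma_n$ and $\langle\Gamma_n^{-1}(f_n,g_n),(\phi,\psi)\rangle\to\|(\phi,\psi)\|_{\dot H^1\times L^2}^2$.

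Next I would extract the global consequences. Summing the energy identities over the first $\ell$ stages gives, for each fixed $\ell$,
\begin{equation*}
\sum_{j=1}^\ell\big(\|V_0^j\|_{\dot H^1}^2+\|V_1^j\|_{L^2}^2\big)+\|w_n^\ell\|_{\dot H^1\times L^2}^2=\|(u_{0,n},u_{1,n})\|_{\dot H^1\times L^2}^2+o_n(1),
\end{equation*}
which is \eqref{lab9}; hence $\sum_j\|(V_0^j,V_1^j)\|_{\dot H^1\times L^2}^2<\infty$, so $\|(V_0^j,V_1^j)\|_{\dot H^1\times L^2}\to0$ and the $w_n^\ell$ stay bounded in $\dot H^1\times L^2$ uniformly in $\ell,n$. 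With $\eps_\ell\triangleq\limsup_n\|S(t)w_n^\ell\|_{L_{t,x}^{2(d+1)/(d-2)}}$, the construction forces the $(\ell+1)$-st profile to have size $\ge\eta(\eps_\ell/A)$; since the profile sizes tend to $0$ and $\eta$ is positive and nondecreasing, $\eps_\ell\to0$, i.e.\ \eqref{lab8} holds.

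Finally I would establish the orthogonality \eqref{lab5} and the decoupling \eqref{lab11}. For \eqref{lab5}: if some $j\ne j'$ violated it along a subsequence, then $\big(\Gamma_n^{(j')}\big)^{-1}\circ\Gamma_n^{(j)}$ would converge to an invertible symmetry of $\dot H^1\times L^2$, and testing the weak convergence defining $(V_0^{j'},V_1^{j'})$ against $S(t)(V_0^j,V_1^j)$ would force $(V_0^{j'},V_1^{j'})$ to retain a nonzero part of the $j$-th bubble---contradicting the fact that, viewed in the $j$-th frame, $w_n^{j}$ (from which $(V_0^j,V_1^j)$ was already subtracted) tends weakly to $0$. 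For \eqref{lab11}: by density I may take $(V_0^j,V_1^j),(V_0^k,V_1^k)$ smooth with compact support in space and in frequency, so that $V^j=S(t)(V_0^j,V_1^j)$ is essentially concentrated near its light cone with dispersive decay there by \eqref{dispersive-estimate-1}; after changing variables to the $j$-frame, $V_n^k$ becomes a bump whose space-time support---tied to the rescaled, translated light cone of the $k$-profile---either escapes to infinity or collapses to zero measure relative to the $j$-frame, precisely because of \eqref{lab5}, so $\|V_n^jV_n^k\|_{L_{t,x}^{(d+1)/(d-2)}}\to0$. (Throughout one keeps track of the three regimes according to whether $t_n^j/\lambda_n^j$ stays bounded or tends to $\pm\infty$, which only affects whether a profile is seen ``at finite time'' or ``asymptotically free''.) A diagonal argument over $\ell$ then yields a single subsequence along which \eqref{lab5}, \eqref{lab6}--\eqref{lab7}, \eqref{lab8}, \eqref{lab9} and \eqref{lab11} all hold, which proves the theorem.
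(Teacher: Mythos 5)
Your proposal follows the \emph{inverse Strichartz} route to profile decomposition: iterate a single extraction lemma asserting that a nontrivial $L^{2(d+1)/(d-2)}_{t,x}$ norm of a free wave forces one bubble, carrying a quantified fraction of the $\dot H^1\times L^2$ energy, obtained by frequency pigeonholing, interpolation of $L^{2(d+1)/(d-2)}_{t,x}$ between $L^\infty_{t,x}$ and the $\dot H^{1/2}$-level Strichartz norm $L^{2(d+1)/(d-1)}_{t,x}$, and the dispersive estimate to locate a space-time concentration point. As outlined, the Hilbert-space Pythagorean identity \eqref{lab9}, the vanishing of the remainder \eqref{lab8}, the parameter separation \eqref{lab5}, and the pointwise decoupling \eqref{lab11} all come out correctly, and the plan is sound. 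Two small imprecisions worth fixing: (1) in the argument for \eqref{lab5} with $j<j'$, passing from $\Gamma_n^{(j)^{-1}}w_n^j\rightharpoonup0$ to $\Gamma_n^{(j')^{-1}}w_n^{j'-1}\rightharpoonup0$ also requires that each intermediate piece $\Gamma_n^{(j')^{-1}}\Gamma_n^{(k)}(V_0^k,V_1^k)$, $j<k<j'$, tends weakly to $0$, so the argument must be run inductively on $\ell$ (using that if $(j,\ell)$ and $(k,\ell)$ both failed to separate then $(j,k)$ would not separate either, contradicting the earlier stage); (2) ``compact support in space and in frequency'' is impossible — for \eqref{lab11} one uses Schwartz or compactly supported \emph{spatial} approximants with rapid Fourier decay. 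The paper itself does not prove Theorem \ref{thm:inepro}; it quotes it from Bahouri--G\'erard \cite{BG} and Bulut \cite{Bu2010}. Those sources argue by a two-step scheme: first isolate the scales $\lambda_n^j$ through a refined Besov/Sobolev inequality (the $h$-oscillating decomposition of G\'erard), then extract the cores $(x_n^j,t_n^j)$ by a separate concentration-compactness step. Your lemma fuses scales and cores into one step. Both routes give the same theorem; the two-step route cleanly books scale orthogonality before translation orthogonality and does not require a sharp refined Strichartz estimate, while the one-step route is shorter and more uniform across dimensions once that quantitative refinement is established.
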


Applying the profile decomposition to an appropriate sequence, one can obtain the existence of maximizers for Strichartz estimate.
\begin{theorem}[Existence of maximizers for Strichartz estimate, \cite{Bu2010}]\label{thm:emaxstr}
Let $d\geq 3$, $(q,r)$ be a wave admissible pair with $q,r\in (2, \infty)$ and satisfying the $\dot{H}^1$-scaling condition: $\frac1q+\frac{d}r=\frac{d}2-1.$  Then there exists a maximizing pair $(\phi,\psi)\in \dot{H}^1\times L^2(\mathbb{R}^d)$ such that \begin{align*}\lVert S(t)(\phi,\psi)\rVert_{L_t^qL_x^r(\mathbb{R}\times\mathbb{R}^d)}=C_{q,r}\,\lVert (\phi, \psi)\rVert_{\dot{H}^1\times L^2(\mathbb{R}^d)}\end{align*} where
\begin{align*} C_{q,r}\triangleq\sup \big\{\lVert S(t)(\phi,\psi)\rVert_{L_t^qL_x^r}:(\phi,\psi)\in \dot{H}^1\times L^2,\,\, \lVert (\phi, \psi)\rVert_{\dot{H}^1\times L^2(\mathbb{R}^d)}=1\big\}
\end{align*}
is the sharp constant.
\end{theorem}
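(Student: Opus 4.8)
The plan is to adapt the concentration--compactness argument of \cite{Bu2010}: apply the linear profile decomposition of Theorem~\ref{thm:inepro} to a maximizing sequence and exploit the almost-orthogonality of both the energy and the Strichartz norm to show that all profiles but one must vanish, the surviving profile then attaining the sharp constant $C_{q,r}$. First I would note that $C_{q,r}\in(0,\infty)$ by the Strichartz estimate (Theorem~\ref{thm;stricest}), and fix $(\phi_n,\psi_n)$ with $\|(\phi_n,\psi_n)\|_{\dot{H}^1\times L^2}=1$ and $\|S(t)(\phi_n,\psi_n)\|_{L_t^qL_x^r}\to C_{q,r}$. Passing to a subsequence, Theorem~\ref{thm:inepro} produces profiles $(V_0^j,V_1^j)$, parameters $(\lambda_n^j,x_n^j,t_n^j)$ and remainders $(w_{0,n}^\ell,w_{1,n}^\ell)$ obeying \eqref{lab5}--\eqref{lab11}. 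Writing $A_j^2:=\|V_0^j\|_{\dot{H}^1}^2+\|V_1^j\|_{L^2}^2$ and $S_\ell:=\sum_{j=1}^\ell A_j^2$, the energy identity \eqref{lab9} gives, for each fixed $\ell$, $\lim_{n\to\infty}\bigl(\|w_{0,n}^\ell\|_{\dot{H}^1}^2+\|w_{1,n}^\ell\|_{L^2}^2\bigr)=1-S_\ell$, whence $S_\ell\le1$ and $S_\ell\uparrow S_\infty\le1$.

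The core step is an asymptotic decoupling of the Strichartz norm along the decomposition. Using the scale/space/time divergence \eqref{lab5}, the vanishing of the cross terms \eqref{lab11}, and the asymptotic orthogonality of each remainder to the profiles encoded in the construction, one shows that for a suitable $p_0\in(2,\infty)$ (for instance $p_0=\min(q,r)$),
\begin{equation*}
\limsup_{n\to\infty}\bigl\|S(t)(\phi_n,\psi_n)\bigr\|_{L_t^qL_x^r}^{p_0}\le\sum_{j=1}^\ell\bigl\|V^j\bigr\|_{L_t^qL_x^r}^{p_0}+\limsup_{n\to\infty}\bigl\|S(t)(w_{0,n}^\ell,w_{1,n}^\ell)\bigr\|_{L_t^qL_x^r}^{p_0}.
\end{equation*}
For the remainder term, since $(w_{0,n}^\ell,w_{1,n}^\ell)$ is uniformly bounded in $\dot{H}^1\times L^2$, interpolating the diagonal smallness \eqref{lab8} at the exponent $\tfrac{2(d+1)}{d-2}$ against a second admissible Strichartz bound --- which is legitimate precisely because the endpoints $q,r\in\{2,\infty\}$ are excluded by hypothesis --- gives $\limsup_n\|S(t)(w_{0,n}^\ell,w_{1,n}^\ell)\|_{L_t^qL_x^r}\to0$ as $\ell\to\infty$, while for fixed $\ell$ the Strichartz estimate bounds it by $C_{q,r}(1-S_\ell)^{1/2}$. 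Together with $\|V^j\|_{L_t^qL_x^r}\le C_{q,r}A_j$ and division by $C_{q,r}^{p_0}$ this yields $1\le\sum_{j\le\ell}A_j^{p_0}+(1-S_\ell)^{p_0/2}$ for every $\ell$, and letting $\ell\to\infty$ also gives $\sum_j\|V^j\|_{L_t^qL_x^r}^{p_0}\ge C_{q,r}^{p_0}>0$, so some $A_j>0$ and thus $S_\infty>0$.

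The endgame is elementary. Since $p_0/2>1$ and $\sum_j A_j^2=S_\infty$, the bounds $\sum_j A_j^{p_0}\le S_\infty^{p_0/2}$ and $S_\infty^{p_0/2}+(1-S_\infty)^{p_0/2}\le1$, combined with $1\le\sum_j A_j^{p_0}+(1-S_\infty)^{p_0/2}$ (the $\ell\to\infty$ limit above), force equality throughout; as $S_\infty>0$ this gives $S_\infty=1$ together with exactly one nonzero profile, say $A_1=1$ and $A_j=0$ for $j\ge2$. Then $\lim_n(\|w_{0,n}^\ell\|_{\dot{H}^1}^2+\|w_{1,n}^\ell\|_{L^2}^2)=1-S_\ell\to0$, so the remainder vanishes in $\dot{H}^1\times L^2$ and hence in $L_t^qL_x^r$, and $C_{q,r}^{p_0}\le\sum_j\|V^j\|_{L_t^qL_x^r}^{p_0}=\|V^1\|_{L_t^qL_x^r}^{p_0}\le C_{q,r}^{p_0}A_1^{p_0}=C_{q,r}^{p_0}$; thus $\|V^1\|_{L_t^qL_x^r}=C_{q,r}=C_{q,r}\|(V_0^1,V_1^1)\|_{\dot{H}^1\times L^2}$, and since the transformations \eqref{lab6} leave both norms invariant, $(\phi,\psi):=(V_0^1,V_1^1)$ is the desired maximizer. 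The main obstacle is the mixed-norm Strichartz decoupling used above: identifying the admissible power $p_0$ and handling the interaction of the differently scaled, spatially translated and time-translated profiles, together with upgrading the remainder smallness \eqref{lab8} from the diagonal exponent to a general admissible pair $(q,r)$ via interpolation.
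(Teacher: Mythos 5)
Your proposal matches the approach the paper alludes to in the one sentence preceding the theorem ("Applying the profile decomposition to an appropriate sequence, one can obtain the existence of maximizers"), and it fleshes out that hint correctly in the standard Bahouri--G\'erard/Keraani/Bulut style: normalize a maximizing sequence, apply Theorem~\ref{thm:inepro}, use the Pythagorean expansion \eqref{lab9} together with a mixed-norm Strichartz decoupling (the Brezis--Lieb/refined-Fatou orthogonality of the $L_t^qL_x^r$ norm along the profiles and remainder) to force a single surviving profile carrying all the mass, and identify that profile as a maximizer by norm invariance of the transformations \eqref{lab6}. You also correctly identify the one genuinely nontrivial ingredient, namely the quantitative decoupling lemma for the mixed norm $L_t^qL_x^r$ at an exponent $p_0>2$ and the interpolation needed to upgrade the smallness \eqref{lab8} from the diagonal pair to a general admissible $(q,r)$ in the open range $q,r\in(2,\infty)$; these are exactly the technical lemmas supplied in \cite{Bu2010}, so no gap remains beyond what is explicitly deferred to that reference.
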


\begin{theorem}[Linear profile decomposition in $\dot{H}^s\times\dot{H}^{s-1}$ with $s\geq1$, \cite{Bu2010}]
\label{thm:linprohs}
Let $s\geq 1$ be given and let $(u_{0,n},u_{1,n})_{n\in\mathbb{N}}$ be a bounded sequence in $\dot{H}^s\times \dot{H}^{s-1}(\mathbb{R}^d)$ with $d\geq 3$.  Then there exists a subsequence of $(u_{0,n},u_{1,n})$ (still denoted $(u_{0,n},u_{1,n})$), a sequence $(V_0^j,V_1^j)_{j\in \mathbb{N}}\subset \dot{H}^s\times\dot{H}^{s-1}(\mathbb{R}^d)$, and a sequence of triples $(\lambda_n^j,x_n^j,t_n^j)\in \mathbb{R^+}\times\mathbb{R}^d\times\mathbb{R}$ such that for every $j\neq j'$,
\begin{align*}
\frac{\lambda_n^j}{\lambda_n^{j'}}+\frac{\lambda_n^{j'}}{\lambda_n^j}+\frac{|t_n^j-t_n^{j'}|}{\lambda_n^j}
+\frac{|x_n^j-x_n^{j'}|}{\lambda_n^j}\mathop{\longrightarrow}_{n\rightarrow\infty}
\infty,
\end{align*}
and for every $\ell\geq 1$, if $V^j=S(t)(V_0^j,V_1^j)$ and
\begin{align}
\label{lab15}
V_n^j(t,x)\triangleq\frac{1}{(\lambda_n^j)^{\frac{d-2}{2}-(s-1)}}V^j\left(\frac{t-t_n^j}{\lambda_n^j},\frac{x-x_n^j}{\lambda_n^j}\right),
\end{align}
\begin{equation}\left\{\begin{aligned}
&u_{0,n}(x)=\sum_{j=1}^\ell V_n^j(0,x)+w_{0,n}^\ell(x),\\
&u_{1,n}(x)=\sum_{j=1}^\ell \partial_t V_n^j(0,x)+w_{1,n}^\ell(x),\end{aligned}\right.
\label{lab16}
\end{equation}
with
\begin{align}
\limsup_{n\rightarrow\infty}\lVert S(t)(w_{0,n}^\ell,w_{1,n}^\ell)\rVert_{L_t^qL_x^r}\mathop{\longrightarrow}_{\ell\rightarrow\infty}0\label{lab17}
\end{align}
for every $(q,r)$ a wave admissible pair with $q,r\in (2,\infty)$ and satisfying the $\dot{H}^s$-scaling condition. For every $\ell\geq 1$, we also have
\begin{align}
\|u_{0,n}\|_{\dot{H}^s}^2+\|u_{1,n}\|_{\dot{H}^{s-1}}^2=\sum_{j=1}^\ell &\left(\|V^j_0\|_{\dot{H}^s}^2+\|V^j_1\|_{\dot{H}^{s-1}}^2\right)
+\|w_{0,n}^\ell\|_{\dot{H}^s}^2\nonumber\\
&+\|w_{1,n}^\ell\|_{\dot{H}^{s-1}}^2+o(1),\quad n\rightarrow\infty.\label{lab18}
\end{align}
\end{theorem}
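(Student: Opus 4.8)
The plan is to deduce the statement from the energy-space decomposition, Theorem~\ref{thm:inepro}, by conjugating with the Fourier multiplier $|\nabla|^{s-1}$. This multiplier is an isometry of $\dot{H}^s\times\dot{H}^{s-1}(\R^d)$ onto $\dot{H}^1\times L^2(\R^d)$, it commutes with the free wave propagator $S(t)$ and with spatial translations, and it intertwines the $\dot{H}^s$-critical scaling with the $\dot{H}^1$-critical one. Accordingly, I would apply Theorem~\ref{thm:inepro} to the bounded sequence $\big(|\nabla|^{s-1}u_{0,n},|\nabla|^{s-1}u_{1,n}\big)$ in $\dot{H}^1\times L^2$, obtaining profiles $(\widetilde V_0^j,\widetilde V_1^j)$, triples $(\lambda_n^j,x_n^j,t_n^j)$, rescaled profiles $\widetilde V_n^j$ as in \eqref{lab6}, and remainders $(\widetilde w_{0,n}^\ell,\widetilde w_{1,n}^\ell)$ satisfying \eqref{lab5}, \eqref{lab8}, \eqref{lab9} and \eqref{lab11}. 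I would then set $V_i^j:=|\nabla|^{-(s-1)}\widetilde V_i^j$ and $w_{i,n}^\ell:=|\nabla|^{-(s-1)}\widetilde w_{i,n}^\ell$ for $i=0,1$, keeping the same triples.

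Next I would verify that the structure transfers. Because $|\nabla|^{s-1}$ commutes with $S(t)$, the profile $V^j=S(t)(V_0^j,V_1^j)$ satisfies $\widetilde V^j=|\nabla|^{s-1}V^j$; and since $|\nabla|^{-(s-1)}\big[\lambda^{-\frac{d-2}{2}}h((\cdot-x_0)/\lambda)\big]=\lambda^{-(\frac{d-2}{2}-(s-1))}\big(|\nabla|^{-(s-1)}h\big)((\cdot-x_0)/\lambda)$ with $\frac{d-2}{2}-(s-1)=\frac{d}{2}-s$, the objects $V_n^j$ defined by \eqref{lab15} coincide with $|\nabla|^{-(s-1)}\widetilde V_n^j$. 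Hence applying $|\nabla|^{-(s-1)}$ to the decomposition \eqref{lab7} of the conjugated data yields \eqref{lab16}; the orthogonality relation \eqref{lab5} involves only the triples and is therefore unchanged; and, $|\nabla|^{s-1}$ being an isometry on each factor, the Pythagorean expansion \eqref{lab18} is exactly \eqref{lab9} for the conjugated sequence.

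The one conclusion that is not automatic is the Strichartz smallness \eqref{lab17}, since Theorem~\ref{thm:inepro} supplies only \eqref{lab8}, i.e. $\big\||\nabla|^{s-1}S(t)(w_{0,n}^\ell,w_{1,n}^\ell)\big\|_{L_{t,x}^{2(d+1)/(d-2)}}\to 0$ as $n\to\infty$ and then $\ell\to\infty$. I would upgrade this in two steps. First, $|\nabla|^{s-1}S(t)(w_{0,n}^\ell,w_{1,n}^\ell)=S(t)(\widetilde w_{0,n}^\ell,\widetilde w_{1,n}^\ell)$ is bounded in $\dot{H}^1\times L^2$ uniformly in $n$ and $\ell$ by \eqref{lab9}, so by Theorem~\ref{thm;stricest} it is uniformly bounded in every $\dot{H}^1$-admissible Strichartz norm; interpolating these bounds against the vanishing in $L_{t,x}^{2(d+1)/(d-2)}$ gives $\big\||\nabla|^{s-1}S(t)(w_{0,n}^\ell,w_{1,n}^\ell)\big\|_{L_t^{q}L_x^{\rho}}\to 0$ for all wave-admissible $(q,\rho)$ on the $\dot{H}^1$-scaling line with $q,\rho\in(2,\infty)$. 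Second, for a target $\dot{H}^s$-admissible pair $(q,r)$ with $q,r\in(2,\infty)$, I would apply Sobolev embedding in the spatial variable, $\dot{W}_x^{s-1,\rho}\hookrightarrow L_x^{r}$ with $\tfrac1\rho=\tfrac1r+\tfrac{s-1}{d}$; the pair $(q,\rho)$ then lies on the $\dot{H}^1$-scaling line, and a short exponent check shows it is wave-admissible with $q,\rho\in(2,\infty)$ for $(q,r)$ in the asserted range, whence $\big\|S(t)(w_{0,n}^\ell,w_{1,n}^\ell)\big\|_{L_t^qL_x^r}\lesssim\big\||\nabla|^{s-1}S(t)(w_{0,n}^\ell,w_{1,n}^\ell)\big\|_{L_t^qL_x^{\rho}}\to0$, which is \eqref{lab17}. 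Keeping $(q,\rho)$ admissible for the whole allowed range of $(q,r)$ and $s$ is the one place where I would concentrate the care.

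As for the main obstacle: once Theorem~\ref{thm:inepro} is granted there is no new compactness extraction to perform, and the remaining work is precisely the interpolation-and-embedding bookkeeping above. If instead one wanted a self-contained argument not invoking Theorem~\ref{thm:inepro}, the genuine difficulty would be to establish a \emph{refined} Strichartz inequality at the $\dot{H}^s$ level, bounding $\|S(t)(f,g)\|_{L_t^qL_x^r}$ by a small power of $\|(f,g)\|_{\dot{H}^s\times\dot{H}^{s-1}}$ times a large power of a supremum, over the scaling--translation group, of a frequency-localized rescaling of the free solution; one would then run the Bahouri--G\'erard iteration, peeling off at each stage a profile realizing a fixed fraction of that supremum, subtracting it, and using Hilbert-space orthogonality to get \eqref{lab18}. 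The resulting finiteness $\sum_j\big(\|V_0^j\|_{\dot{H}^s}^2+\|V_1^j\|_{\dot{H}^{s-1}}^2\big)<\infty$ forces both the extracted masses and the Strichartz norm of the remainder to tend to $0$, closing the induction and producing \eqref{lab17}. I expect that refined inequality to be the crux of any direct proof.
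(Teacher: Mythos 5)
Your conjugation strategy is exactly the route used in \cite{Bu2010} (to which the paper delegates the proof): apply the energy‑space decomposition, Theorem~\ref{thm:inepro}, to $(|\nabla|^{s-1}u_{0,n},|\nabla|^{s-1}u_{1,n})$, pull back by $|\nabla|^{-(s-1)}$, and upgrade the remainder's Strichartz smallness by interpolation and Sobolev embedding. Everything you assert transfers as claimed; the only step that genuinely needs arithmetic is the admissibility of the shifted pair $(q,\rho)$, and that does close. Writing the $\dot H^s$-scaling relation as $\frac1q=\frac d2-s-\frac dr$, the condition $q>2$ forces $\frac{d+1}{r}>\frac{d+1}{2}-2s$ (and there is nothing to prove if the right side is nonpositive), while admissibility of $(q,\rho)$ on the $\dot H^1$-line is equivalent to $\frac{d+1}{r}\ge C_s$ with
\begin{equation*}
C_s=\frac{d+1}{2}-\frac{d-1}{d}-s\,\frac{d+1}{d}.
\end{equation*}
A one-line computation gives $\bigl(\frac{d+1}{2}-2s\bigr)-C_s=\frac{(d-1)(s-1)}{d}\ge 0$ for $s\ge1$, so $\frac{d+1}{r}>\frac{d+1}{2}-2s\ge C_s$ and $(q,\rho)$ is admissible; moreover $\rho>2$ is automatic from $\frac1q>0$ on the $\dot H^1$-line, and $\rho\le r<\infty$ from $s\ge1$, so the Sobolev embedding $\dot W^{s-1,\rho}_x\hookrightarrow L^r_x$ is legitimate. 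With that check in place, the interpolation between the vanishing diagonal norm $L^{2(d+1)/(d-2)}_{t,x}$ (an interior point of the $\dot H^1$-admissible segment for every $d\ge3$) and the uniformly bounded off-diagonal Strichartz norms, followed by the spatial embedding, yields \eqref{lab17}, and the remaining identifications ($V^j_i=|\nabla|^{-(s-1)}\widetilde V^j_i$ commuting with $S(t)$, translations, and intertwining the scalings; the Pythagorean identity being an isometry statement) are as you describe. So the proposal is correct and its one flagged gap is real but easily filled.
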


\begin{theorem}[Linear profile decomposition in $\dot{H}^{s_c}$, $s_c=\frac12$, Ramos \cite{Ramos}]\label{thm:lpds1}

Let $\{(u_{0,n},u_{1,n})\}_n$  be a bounded sequence in $\dot{H}^\frac12\times\dot{H}^{-\frac12}(\R^d)$ with $d\geq2$. Then, there exist a subsequence  (still denoted by $\{(u_{0,n},u_{1,n})\}_n$), a sequence
$(\phi_0^j,\phi_1^j)_{j\in\N}\subset\dot{H}^\frac12\times\dot{H}^{-\frac12}$ and a family of orthogonal sequences
 $\{(r_j^n,l_j^n,w_j^n,x_j^n,t_j^n)_{n\in\mathbb N}\}_{j\in\mathbb N}$ in $\R^+\setminus\{0\}\times[1,+\infty)\times\mathbb{S}^{d-1}\times\R^d\times\R$, such that for every $N\geq1$,
\begin{equation}\label{equ:lineqdecomp}
S(t)(u_{0,n},u_{1,n})=\sum_{j=1}^N\Gamma_j^nS(t)(\phi_0^j,\phi_1^j)+S(t)(R_{0,n}^N,R_{1,n}^N),
\end{equation}
with
\begin{equation}\label{equ:reamturnzer}
\lim_{N\to\infty}\limsup_{n\to\infty}\big\|S(t)(R_{0,n}^N,R_{1,n}^N)\big\|_{L_{t,x}^\frac{2(d+1)}{d-1}}=0,
\end{equation}
where
$$\Gamma_j^nF(x,t):=\Big(\frac{r_j^n}{l_j^n}\Big)^\frac{d-1}{2}F\big((T_{w_j^n}^{l_j^n})^{-1}r_j^n(x-x_j^n,t-t_j^n)\big),$$
and $T_{w_j^n}^{l_j^n}$ is the rescaled Lorentz transformation defined as
\begin{align*}
&T_w^{2^j}(w,1)=(w,1),\\
&T_w^{2^j}(w,-1)=2^{2j}(2,-1),\\
&T_w^{2^j}(x,t)=2^j(x,t)\quad \text{if}\;(x,t)\in\R^d\;\text{is orthogonal to}\;(w,1)\;\text{and}\;(w,-1).
\end{align*}
Furthermore, we also have for every $N\geq1$,
\begin{align}
\|u_{0,n}\|_{\dot{H}^\frac12}^2+\|u_{1,n}\|_{\dot{H}^{-\frac12}}^2=\sum_{j=1}^\ell &\left(\|\phi^j_0\|_{\dot{H}^\frac12}^2+\|\phi^j_1\|_{\dot{H}^{-\frac12}}^2\right)
+\|R_{0,n}^\ell\|_{\dot{H}^\frac12}^2\nonumber\\
&+\|R_{1,n}^\ell\|_{\dot{H}^{-\frac12}}^2+o(1),\quad n\rightarrow\infty.\label{lab18123}
\end{align}

\end{theorem}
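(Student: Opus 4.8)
The argument follows the Bahouri--G\'erard/Keraani scheme underlying Theorem~\ref{thm:inepro}, but the decisive new ingredient, due to Ramos, is a \emph{refined Strichartz inequality} adapted to the full symmetry group of the endpoint estimate
$$S(t):\ \dot H^{\frac12}\times\dot H^{-\frac12}(\R^d)\longrightarrow L_{t,x}^{\frac{2(d+1)}{d-1}}(\R\times\R^d).$$
One checks that $\big(\tfrac{2(d+1)}{d-1},\tfrac{2(d+1)}{d-1}\big)$ is sharp wave-admissible and that $\delta(r)-\tfrac1q=\tfrac12=s_c$ for this pair, so this is indeed the natural Strichartz norm at criticality. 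At $s_c=\tfrac12$ the symmetry group of this estimate contains, besides translations and dilations, the rescaled Lorentz boosts $T_w^{l}$; this is precisely why the profiles in \eqref{equ:lineqdecomp} are transported by the operators $\Gamma_j^n$ rather than by pure scalings and translations. Writing $v_n:=S(t)(u_{0,n},u_{1,n})$, Theorem~\ref{thm;stricest} gives that $\{v_n\}$ is bounded in $L^{\frac{2(d+1)}{d-1}}_{t,x}$ and that $\|u_{0,n}\|_{\dot H^{1/2}}^2+\|u_{1,n}\|_{\dot H^{-1/2}}^2$ is bounded.

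The first and main step is to prove a refined estimate of the form
$$\big\|S(t)(f,g)\big\|_{L_{t,x}^{\frac{2(d+1)}{d-1}}}\ \lesssim\ \big\|(f,g)\big\|_{\dot H^{1/2}\times\dot H^{-1/2}}^{\theta}\,\Big(\sup_{\Gamma}\ \big\|\Gamma^{-1}S(t)(f,g)\big\|_{\mathcal C}\Big)^{1-\theta},\qquad 0<\theta<1,$$
where the supremum runs over all transformations $\Gamma$ of the type occurring in the statement and $\|\cdot\|_{\mathcal C}$ is a single dyadic, Lorentz-invariant ``cap piece'' of the Strichartz norm. To prove it, I would decompose the free solution $S(t)(f,g)$, whose space--time Fourier transform lives on the light cone $\{\tau=|\xi|\}$, by a Whitney decomposition of the cone into pairs of comparably-sized caps $(\tau,\tau')$ that are either close or \emph{transverse}. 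For transverse pairs one invokes Tao's sharp bilinear restriction theorem for the cone in $\R^{1+d}$, which controls the product of the two frequency-localized pieces in $L^{q}_{t,x}$ with the gain $q=\tfrac{d+3}{d+1}<\tfrac{d+1}{d-1}$, together with a favorable power of the angle between the caps. Summing over the Whitney parameter with almost-orthogonality, squaring, and interpolating the resulting gain against the trivial bound $\|S(t)(f,g)\|_{L^{\frac{2(d+1)}{d-1}}}\lesssim\|(f,g)\|_{\dot H^{1/2}\times\dot H^{-1/2}}$ produces the weak factor on the right. The point is that both the exponent $q=\tfrac{d+3}{d+1}$ and the bilinear inequality are Lorentz-invariant, so the loss factor is genuinely a function on the quotient of phase space by the symmetry group, i.e.\ a supremum over the $\Gamma$'s.

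With the refined inequality in hand one runs the standard extraction/subtraction induction. If $\limsup_n\|v_n\|_{L^{2(d+1)/(d-1)}}>0$, the refined estimate yields triples $(r_1^n,l_1^n,w_1^n,x_1^n,t_1^n)$ for which $(\Gamma_1^n)^{-1}(u_{0,n},u_{1,n})$ does not tend to $0$ weakly; by weak sequential compactness in $\dot H^{1/2}\times\dot H^{-1/2}$ together with the local compactness of the cone extension operator restricted to a fixed cap, a subsequence converges weakly to a nonzero profile $(\phi_0^1,\phi_1^1)$. Subtracting $\Gamma_1^nS(t)(\phi_0^1,\phi_1^1)$ and iterating (with a diagonal argument to pass to a single subsequence working for all $j$) produces \eqref{equ:lineqdecomp}. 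At each stage, weak convergence together with the orthogonality of the parameter sequences yields the Pythagorean expansion \eqref{lab18123}; in particular $\sum_j\big(\|\phi_0^j\|_{\dot H^{1/2}}^2+\|\phi_1^j\|_{\dot H^{-1/2}}^2\big)<\infty$, so only finitely many profiles exceed any fixed energy threshold. Feeding this back into the refined estimate, now applied to the remainder, forces $\|S(t)(R_{0,n}^N,R_{1,n}^N)\|_{L^{2(d+1)/(d-1)}}\to0$ as $N\to\infty$, which is \eqref{equ:reamturnzer}.

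The main obstacle is the non-compactness introduced by the Lorentz boosts, which has no counterpart in the $\dot H^1$ case of Theorem~\ref{thm:inepro}. First, one must check that each $T_w^{l}$ acts as an isometry up to constants on $\dot H^{1/2}\times\dot H^{-1/2}$ and on $L^{\frac{2(d+1)}{d-1}}_{t,x}$ and intertwines correctly with the free flow $S(t)$; this uses that a Lorentz boost maps the light cone to itself and preserves the natural measure $d\xi/|\xi|$ there, but since the boost is an unbounded space--time shear its action on frequency caps must be tracked explicitly. Second --- and this is the technical heart --- one must establish the genuine \emph{decoupling}: whenever two parameter families are orthogonal in the sense built into the statement (the dilation ratios diverge, or the normalized space--time translations diverge, or the limiting boost directions in $\mathbb S^{d-1}$ differ, or the boost parameters separate), the corresponding bubbles are asymptotically orthogonal in $\dot H^{1/2}\times\dot H^{-1/2}$ and their products tend to $0$ in the $L^{\frac{d+1}{d-1}}_{t,x}$ norm, the exact analogue of \eqref{lab11}. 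Proving this requires a careful change of variables and stationary-phase analysis of how two boosted, rescaled and translated caps on the cone overlap, which is considerably subtler than in the dilation--translation setting. Once these two points are secured, the remaining bookkeeping --- organizing the profiles and terminating the induction --- is routine.
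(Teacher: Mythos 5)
The paper itself gives no proof of Theorem~\ref{thm:lpds1}; it states the result with a bare citation to Ramos \cite{Ramos}. Comparing your proposal with the argument in that reference rather than with anything in this text: you have correctly identified the architecture. Ramos's proof does proceed exactly by (a) a refined Strichartz inequality in which the weak factor is a supremum over cap-localized pieces transported by the full symmetry group of the $L^{2(d+1)/(d-1)}_{t,x}$ estimate, which at the $\dot H^{1/2}$ regularity includes the rescaled Lorentz boosts $T_w^{l}$ in addition to dilations and space--time translations; (b) a Whitney decomposition of the light cone into transverse pairs of caps, with Tao's bilinear restriction theorem for the cone providing the gain after interpolation against the linear estimate; and (c) the usual Bahouri--G\'erard extraction/iteration, with the Pythagorean expansion \eqref{lab18123} and the smallness \eqref{equ:reamturnzer} following in the standard way once the decoupling of asymptotically orthogonal bubbles is established. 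Your verification that $\big(\tfrac{2(d+1)}{d-1},\tfrac{2(d+1)}{d-1}\big)$ is sharp wave-admissible with $s_c=\tfrac12$ is correct, and your observation that this is precisely why Lorentz boosts must be built into the profile operators is the right motivation.

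Two caveats worth flagging, though you have correctly identified them as the technical heart rather than swept them under the rug. First, the refined inequality in Ramos carries a specific exponent in the cap-piece factor (the one coming from bilinear restriction and interpolation), and the subsequent step from ``a nontrivial cap contribution survives'' to ``a nonzero weak limit after renormalization'' requires not just weak compactness but a genuine local compactness statement for the extension operator microlocalized to a fixed cap; you gesture at this but do not prove it. Second, the asymptotic orthogonality of bubbles whose Lorentz parameters $(l^n_j, w^n_j)$ diverge or whose limiting directions in $\mathbb{S}^{d-1}$ differ is substantively harder than in the dilation--translation setting because the boosts are unbounded shears in frequency space; Ramos's treatment of this (in particular, verifying that the product of two such bubbles decays in $L^{(d+1)/(d-1)}_{t,x}$, the analogue of \eqref{lab11}) involves careful tracking of how caps are distorted under a large boost followed by a rescaling, and that accounting is where a careless version of the argument would break. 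As a sketch of the route your proposal is sound; as a proof it would require carrying those two items through in full.
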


\section{Main result}

\noindent\underline{\bf History of defocusing energy-subcritical NLW: $\mu=-1,~s_c<1$.}
\vskip0.2cm

For $d=3$ and $1<p<p_c=5$, in 1961, J\"orgen \cite{Jor} proved the global existence of {\bf smooth solution}.
For higher dimensional case such as $4\leq d\leq9$,  Brenner-Wahl and Pecher
 proved the global existence of {\bf smooth solution}, see \cite{BW,Pech,Wa}.
 Ginibre-Velo proved the global well-posedness of {\bf finite enengy solution}  with initial data $(u_0,u_1)\in H^1\times L^2$
 by  the compactness argument in \cite{GV1,GV2}.

\vskip0.15cm
\begin{remark}[Sketch proof of global existence of smoothing solution]\label{rem:glsmsub}   We take $d=3$ for example. For the detailed proof, we refer reader to Chapter 3 in Miao \cite{MC}.

\vskip0.15cm

{\bf Step 1:}  By the finite propagation
speed,  we can  reduce the above problem to the problem
with compact supported data $(u_0(x), u_1(x))$, i.e
$${\rm supp}~u_0, {\rm supp}~u_1\subset\{x:~|x|\leq R\}.$$

\vskip0.15cm
{\bf Step 2:} By local theory, if $T^\ast\triangleq\sup I<+\infty,$ then $u(t,x)\not\in L_{t,x}^\infty([0,T^\ast)\times\R^3)$. This can be further reduced to
 $$T^\ast<+\infty\Longrightarrow\|u(t,x)\|_{L_t^4([0,T^\ast);L_x^{12}(\R^3))}=+\infty.$$
 Hence, we are reduced to show
 \begin{equation}\label{equ:defsubclaim}
 u(t,x)\in L_t^4L_x^{12}([0,T^\ast)\times\R^3).
 \end{equation}
This follows from energy conservation, Strichartz estimate and continuous argument. In fact,  assume $T^\ast<+\infty$, then
for $0\leq t_0<s<T^\ast$, we have by the Strichartz estimate and H\"older inequality
\begin{align}
&\|u\|_{L_t^4L_x^{12}([t_0,s]\times\R^3)}\lesssim\|(u_0,u_1)\|_{\dot{H}^1\times L^2}+\big\||u|^{p-1}u\big\|_{L_t^1L_x^2([t_0,s]\times\R^3)}\nonumber\\
\leq&CE(u_0,u_1)^\frac12+C\|u\|_{L_t^\frac4{5-p}L_x^\frac{12}{7-p}([t_0,s]\times\R^3)}\|u\|_{L_t^4L_x^{12}([t_0,s]\times\R^3)}^{p-1}.\label{equ:l4l12}
\end{align}
On the other hand, by the finite propagation speed: ${\rm supp} u(t,x)\subset\{x:~|x|\leq t+R\},$ we obtain
\begin{align*}
\|u\|_{L_t^\frac4{5-p}L_x^\frac{12}{7-p}([t_0,s]\times\R^3)}\leq&(T^\ast-t_0)^\frac{5-p}4\sup_{t_0\leq t\leq s}\|u(t)\|_{L_x^\frac{12}{7-p}}\\
\lesssim&(T^\ast-t_0)^\frac{5-p}4(T^\ast+R)^{3(\frac{7-p}{12}-\frac1{p+1})}\|u(t)\|_{L_x^{p+1}}\\
\leq&\rho(R,T^\ast,E_0)(T^\ast-t_0)^\frac{5-p}4.
\end{align*}
Hence,
$$\|u\|_{L_t^4L_x^{12}([t_0,s]\times\R^3)}\leq CE(u_0,u_1)^\frac12+C\rho(R,T^\ast,E_0)(T^\ast-t_0)^\frac{5-p}4\|u\|_{L_t^4L_x^{12}([t_0,s]\times\R^3)}^{p-1}.$$
Note that
 $$\lim\limits_{t_0\to T^\ast}\rho(R,T^\ast,E_0)(T^\ast-t_0)^\frac{5-p}4=0, \;\;\; {\rm for}\;\;  p<5,$$
  we obtain \eqref{equ:defsubclaim} by the continuous argument.
\end{remark}

\noindent\underline{\bf History of energy-critical NLW: $s_c=1$.}

\vskip0.2cm

For defocusing case $\mu=-1$,
\begin{equation}\label{equ:nlwdefcri}
{\rm (DNLW)}\;\;\begin{cases}
u_{tt}-\Delta u+|u|^\frac4{d-2}u=0,~(t,x)\in\R\times\R^d,~d\geq3,\\
(u,u_t)(0)=(u_0,u_1).
\end{cases}
\end{equation}
\vskip-0.25cm
\begin{itemize}\rm
\item For $d=3$ and $p_c=5$, in 1988, Rauch \cite{Rauch} proved the global existence of {\bf radial smooth solution}
for (DNLW).  In 1990 Grillakis \cite{Gri90} remove radial assumption and proved the global existence of {\bf smooth solution}
for (DNLW) by the classical Morawetz estimate.
\item  For higher dimensional case such as $3\leq d\leq5$, Grillakis \cite{Gri92}
 proved the global existence of {\bf smooth solution} for (DNLW) by combining Strichartz estimates with the classical Morawetz estimate.
\item  Shatah-Struwe \cite{ShaStr94}  proved the global existence of {\bf smooth solution}
for (DNLW) for $d\le 7$. Moreover, they  proved the global existence of {\bf finite energy  solution} in $H^1\times L^2$
for (DNLW) in  \eqref{equ:nlwdefcri}.
\end{itemize}
\vskip0.2cm

The scattering result is obtained by
Bahouri-G\'{e}rard \cite{BG}, and Tao \cite{T07}. In particular, Tao in \cite{T07} derived a
exponential type spacetime bound as follows
$$ \|u\|_{L^{4}_tL^{12}_x(\R\times\R^3)}\leq C(1+E_0)^{CE_0^{105/2}},~E_0=E(u_0,u_1).$$

\begin{theorem}[Defocusing energy-critical NLW]\label{thm:defenrcri} Let $d\geq3$. Given $(u_0,u_1)\in\dot H^1(\R^d)\times L^2(\R^d)$. Then, there is a unique global strong solution $u$ to
\begin{equation}\label{equ:nlwedef}
\begin{cases}
u_{tt}-\Delta u+|u|^\frac4{d-2}u=0,\\
(u,\pa_tu)(0)=(u_0,u_1).
\end{cases}
\end{equation}
Moreover, the solution $u$  obeys the estimate
\begin{equation}\label{Gophergoal12}
\int_{\R}\int_{\R^d}|u(t,x)|^{\frac{2(d+1)}{d-2}}\,dx\,dt\leq C( \|(u_0,u_1)\|_{\dot H^1_x\times L^2_x} ).
\end{equation}
And so the solution scatters in the sense, there exists $(u_0^\pm,u_1^\pm)\in\dot H^1(\R^d)\times L^2(\R^d)$ such that
\begin{equation}\label{equ:scate}
\lim_{t\to\pm\infty}\big\|u(t,x)-S(t)(u_0^\pm,u_1^\pm)\big\|_{\dot H^1(\R^d)\times L^2(\R^d)}=0,
\end{equation}
where
$$S(t)(f,g)=\cos(t\sqrt{-\Delta})f+\frac{\sin(t\sqrt{-\Delta})}{\sqrt{-\Delta}}g.$$
\end{theorem}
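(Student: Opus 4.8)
The plan is to follow the now-standard concentration-compactness / rigidity scheme of Kenig--Merle, adapted to the energy-critical defocusing wave equation in all dimensions $d\geq 3$. The first step is a good local theory: using the Strichartz estimate (Theorem \ref{thm;stricest}) with the $\dot H^1$-admissible pair, one shows local well-posedness in $\dot H^1\times L^2$ with a small-data global existence and scattering statement, and a perturbation (long-time stability) lemma that says an approximate solution with small error and bounded scattering norm is close to a genuine solution. Then one \emph{defines}
\[
\Lambda(E_0)=\sup\Big\{\|u\|_{L_{t,x}^{2(d+1)/(d-2)}(I\times\R^d)}:\ E(u_0,u_1)\leq E_0,\ u \text{ a solution on } I\Big\},
\]
and the goal \eqref{Gophergoal12} is exactly the claim that $\Lambda(E_0)<\infty$ for every $E_0$. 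Suppose not; then there is a critical threshold $E_c<\infty$ below which everything is fine and at which $\Lambda$ first becomes infinite.

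The second step is to extract a \emph{minimal-energy blow-up solution} (a critical element). Take a sequence of solutions $u_n$ with $E(u_{0,n},u_{1,n})\to E_c$ and scattering norm $\to\infty$; apply the linear profile decomposition in the energy space (Theorem \ref{thm:inepro}) to the data. Using the energy (Pythagorean) decoupling \eqref{lab9}, the nonlinear profiles must carry essentially all the energy into a single profile — otherwise, by the subcriticality of each piece together with the nonlinear superposition / perturbation lemma, the scattering norm of $u_n$ would stay bounded, a contradiction. One then rescales and recenters so that the critical element $u_c$ has data precompact modulo the symmetry group: there is a continuous path $(\lambda(t),x(t))$ with
\[
K=\Big\{\big(\lambda(t)^{-(d-2)/2}u_c(t,\lambda(t)^{-1}\cdot+x(t)),\ \lambda(t)^{-d/2}\pa_t u_c(t,\lambda(t)^{-1}\cdot+x(t))\big):t\in I_{\max}\Big\}
\]
precompact in $\dot H^1\times L^2$. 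A further argument (controlling $\lambda(t)$ and $x(t)$ near the endpoints of $I_{\max}$, using finite speed of propagation, Lemma \ref{fsprop}) reduces to, say, a soliton-like solution defined for all $t\ge0$ with $\lambda(t)\gtrsim 1$ and $|x(t)|=o(t)$.

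The third step is rigidity: such a minimal element cannot exist in the \emph{defocusing} case. This is where the Morawetz estimate (Proposition \ref{prop:Morawetz}) does the work: for $\mu=-1$ one has the global-in-time bound $\int_I\int |u|^{p+1}/|x|\,dx\,dt\le C(E)$ with $p=\tfrac{d+2}{d-2}$. Plugging the compactness of $K$ into a localized/weighted Morawetz (or $\int\int |u|^{2(d+1)/(d-2)}$-type) estimate and using that $u_c$ does not disperse, one finds that the left side grows linearly in $|I|$ while the right side is bounded — forcing $u_c\equiv 0$, contradicting the positivity of its energy. (In dimensions where one prefers to avoid the $1/|x|$ singularity, one runs instead a truncated space-time Morawetz / interaction Morawetz inequality against the compactness set, which again gives $\|u_c\|_{L^{2(d+1)/(d-2)}}=0$.)

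I expect the main obstacle to be the \emph{extraction of the critical element} — specifically, verifying that in the profile decomposition the nonlinear evolutions $U_n^j$ of the profiles really do approximate $u_n$ in the scattering norm, so that energy decoupling forces concentration onto one profile. This requires: (i) a nonlinear profile for each linear profile (solving (DNLW) with data $(V_0^j,V_1^j)$, possibly after a time translation $t_n^j$, using the $d\ge6$ technical care that the nonlinearity $|u|^{4/(d-2)}u$ is not smooth, so one works with fractional-derivative Strichartz norms and exotic admissible pairs); (ii) the asymptotic orthogonality \eqref{lab11} to kill cross terms in $\||u|^{4/(d-2)}u-\sum|U_n^j|^{4/(d-2)}U_n^j\|$; and (iii) the long-time perturbation lemma to upgrade "approximate solution with small error" to "genuine solution with comparable scattering norm." Dimensions $d\ge 6$ and the low-regularity endpoint make the Strichartz bookkeeping delicate, but the scheme is robust; the conceptual core — profile decomposition plus Morawetz rigidity — is exactly as above.
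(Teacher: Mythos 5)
Your proof is correct in outline but follows a genuinely different route from the paper's. In Section~\ref{sec:defenecri} the authors use the classical scheme of Grillakis, Shatah--Struwe, and Bahouri--G\'erard: reduce to compactly supported data by finite speed of propagation; rule out concentration of potential energy at the vertex of a light cone by integrating the local form of the conformal (dilation) identity over truncated cones (their Basic Facts I--III), which together with Strichartz estimates, a Besov nonlinear estimate, and a continuity argument yields a global smooth solution; upgrade to energy-class solutions by compactness; and finally establish \eqref{Gophergoal12} and scattering by reducing to the decay $\int_{\R^d}|u(t)|^{2d/(d-2)}\,dx\to 0$, which is again proved by splitting inside/outside a light cone and using the conformal identity. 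Your proposal is the Kenig--Merle concentration-compactness/rigidity scheme---profile decomposition, minimal counterexample, almost periodicity, Morawetz rigidity---which is precisely the alternative proof the authors themselves mention as \cite{MZZ}. The trade-offs: the paper's route proves global well-posedness and scattering in two separate steps and exploits the wave-specific conformal structure, but avoids the abstract profile machinery; your route proves both simultaneously and transports readily to other dispersive models, at the cost of the nonlinear profile construction and the long-time stability lemma, which in $d\ge 6$ are delicate because the nonlinearity $|u|^{4/(d-2)}u$ is not $C^2$ (as you correctly flag). Two small corrections to your rigidity step: with $\lambda(t)\gtrsim 1$ and merely $|x(t)|=o(t)$, the Morawetz left side $\int_I\int|u_c|^{2d/(d-2)}/|x|\,dx\,dt$ grows only logarithmically in $T$ (not linearly), since the weight $1/|x|$ on the concentration set is only $\gtrsim 1/(\varepsilon t)$---this still diverges, so the contradiction holds---and you should state explicitly that the same Morawetz argument (after spatially translating the blow-up point to the origin and using compact support from finite speed of propagation) also rules out the self-similar scenario $T_+<\infty$, a case your write-up handles only implicitly in the reduction to the soliton-like regime.
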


\begin{enumerate}

\item Since there is no  {\bf pointwise criteria} for the
critical problem, it  seems that GWP and
{scattering result} are
simultaneously solved. However,
{the study history of the $\dot H^1$-critical wave equation} shows us  scattering result
is later than global well-posedness!

\item Recently, we give another proof by the concentration-compactness approach  to induction on energy,
where we show the GWP and scattering simultaneously   in \cite{MZZ}.

 \end{enumerate}

\underline{\bf Outline of proof:} ( For detail, see Section \ref{sec:defenecri}).\;

\vskip0.2cm

 By the {\bf finite propagation speed} of NLW, we can
consider the Cauchy problem with compact data.
\vskip0.15cm

\begin{enumerate}
\item  By ruling out the
accumulation of the energy at any time to show the existence of the
global smooth solution.

\vskip0.15cm

\item By {compactness argument}, we can show the global
existence and uniqueness of the energy solution.

\vskip0.15cm

\item By FPS and conformal scaling transform, we can prove the
{scattering result} of the energy solution.
\end{enumerate}

\begin{remark}
The scattering theory with {radial data} is more
easier than the general case because the {\bf energy
accumulation} may only occur near the origin in this case. Indeed, by Morawetz estimate
\begin{align}\label{ClassMoraEst}
\iint_{\R\times \R^d} \frac{|u(t,x)|^{2^*}}{|x|} \; dxdt \leq
C(E)<\infty,\quad 2^\ast=\frac{2d}{d-2}
\end{align}
and {\bf radial Sobolev embedding inequality}
\begin{align}\label{RadSobEmb}
\big\||x|^{\frac{d}{2}-1}u\big\|_{L^{\infty}_x(\R^d)} \leq
\big\|u\big\|_{\dot H^1(\R^d)},
\end{align}
 we have for
{$r_0=\frac{2(d+1)}{d-2}$}
\begin{align*}
\iint_{\R\times\R^d} |u(t,x)|^{r_0} \; dxdt \leq & \iint_{\R\times \R^d} \frac{|u(t,x)|^{2^*}}{|x|} \cdot |x||u|^{\frac{2}{d-2}} \; dxdt \\
\leq &  \big\|u\big\|^{\frac{2}{d-2}}_{\dot H^1(\R^d)} \iint_{\R
\times\R^d} \frac{|u|^{2^*}}{|x|} \; dxdt  \leq  C(E) < \infty,
\end{align*}
which implies that
\begin{align*}
\big\|u\big\|_{L^{q_0}_tL^{r_0}_{x}(\R\times \R^d)}<\infty,\qquad q_0=r_0=\frac{2(d+1)}{d-2}
\end{align*} with
\begin{align*}
\frac{2}{q_0} \leq \gamma(r_0), \quad \delta(r_0) -\frac{1}{q_0}=1 ,
\quad
 \frac{\delta(r_0)}{d} = \frac{\gamma(r_0)}{d-1} =
\frac12 - \frac{1}{r_0}.
\end{align*}

\end{remark}

We refer the reader to the proof of the {\bf nonradial case} in Section \ref{sec:defenecri}.

\vskip 0.2in

The dynamics of the focusing energy-critical equation
\begin{align} \label{equ:nlwecrifoc}
(\text{FNLW})\;\; \begin{cases}    \partial_{tt}u-\Delta u= |u|^{\frac4{d-2}}u,\quad
(t,x)\in\R\times\R^d,
\\
(u,\pa_tu)(0,x)=(u_0,u_1)(x),
\end{cases}
\end{align}is much richer: small data solutions are global and scatter,
however blow-up in finite time may occur  \cite{Le74}, where Levine  showed  that if
$$(u_0, u_1)\in\dot{H}^1\times L^2, \;\; u_0 \in L^2,\;\;  E(u_0, u_1)<0,$$
 there is always break-down in finite time.  Recently, Krieger-Schlag-Tataru \cite{KST09} have constructed explicit radial examples which break down in finite time. On the other hand, it is well-known that there exist solutions with compactly supported smooth initial data which
blow up in finite time.
This is most easily seen by observing that
\[ u(t,x)=\big(\tfrac{d(d-2)}{4}\big)^\frac{d-2}4 \big/(1-t)^{\frac{d-2}2} \]
is an explicit solution which, by finite speed of propagation, can be used to construct a blowup
solution of the aforementioned type.
This kind of breakdown is referred to as \emph{\bf ODE blowup} and it is conjectured to comprise the
``generic'' blowup
scenario \cite{BCT04}.

\begin{remark}
{\bf About the finite blow up solution:} Let $Q$ satisfy
$$-\Delta Q+Q=|Q|^{\frac4d}Q.$$
For the mass-critical focusing Schr\"odinger equation $i\pa_tu+\Delta u+|u|^\frac4du=0,$  the
pseudo-conformal transformation applied to the stationary solution $e^{it}Q$  yields an explicit blowup solution
\begin{equation}\label{equ:stx}
S(t,x)=\frac1{|t|^{d/2}}Q\Big(\frac{x}{t}\Big)e^{-i\frac{|x|^2}{4t}+\frac{i}{t}},~\;\;~\|S(t)\|_{L^2}=\|Q\|_{L^2}
\end{equation}
which scatters as $t\to-\infty$ since
 $$\|S(t,x)\|_{L_{t,x}^\frac{2(d+2)}d((-\infty,-1)\times\R^d)}^\frac{2(d+2)}d=\int_{-\infty}^{-1}|t|^{-2}\;dt
\|Q\|_{L_{x}^\frac{2(d+2)}{d}}^\frac{2(d+2)}d<+\infty,$$
 and  blows up at $T=0$ at the speed
 $$\|\nabla S(t)\|_{L^2}\sim\frac1{|t|}.$$
 An essential feature of \eqref{equ:stx} is compact up to the symmetries of the flow,which shows
that all the mass goes into the singularity formation
  \begin{equation}\label{equ:stdels0}
  |S(t)|^2\rightharpoonup\|Q\|_{L^2}^2\delta_{x=0}~\text{as}~t\to0.
  \end{equation}
\end{remark}

  \underline{\bf Non-scattering global solutions also exist}. \; Examples of such solutions  are the so-called
solitary waves and are  given by solutions of the elliptic equation:
\begin{equation}
 \label{Ell}
 -\Delta W=|W|^{\frac{4}{d-2}}W,\quad W\in \dot{H}^{1},
\end{equation}
 (see \cite{Ding86} for the existence of such solutions),
 \begin{equation}
\label{defW}
W(x)\triangleq\frac{1}{\left(1+\frac{|x|^2}{d(d-2)}\right)^{\frac{d-2}{2}}}\in  \dot{H}^1,
\end{equation}
 but in $L^2$ only if $d\geq 5$. The works of Aubin and Talenti \cite{Au76,Ta76}, give the following elliptic characterization of $W(x)$
\begin{align}
 &\|u\|_{L^{2^*}}\leq C_d\|u\|_{\dot{H}^1}, \quad\;\forall u\in\dot{H}^1;\label{SobolevIn}\\
\label{CarW}
&\|u\|_{L^{2*}}=C_d\|u\|_{\dot{H}^1}\Rightarrow \exists \;\lambda_0,x_0,\theta_0\quad u(x)=e^{i\theta_0}\lambda^{\frac{d-2}2}W\big(\lambda_0(x+x_0)\big)
\end{align}
where $C_d$ is the best Sobolev constant in dimension $d$.

\subsection{Below threshold of ground state}

\begin{theorem}[Scattering/Blowup dichotomy, Kenig-Merle\cite{KM1}]\label{thm:kmw08} Assume $3\leq d\leq5,~(u_0, u_1)\in\dot H^1\times L^2$ with $$E(u_0,u_1)<E(W,0).$$ Then
 \begin{enumerate}
 \item If $\|u_0\|_{\dot H^1}<\|W\|_{\dot H^1},$ then the solution $u$ of \eqref{equ:nlwecrifoc} is global and scatters.
 \vskip0.15cm

 \item If $\|u_0\|_{\dot H^1}>\|W\|_{\dot H^1},$ then the solutions $u$ of \eqref{equ:nlwecrifoc} blows up in finite time in both directions.
 \end{enumerate}
\end{theorem}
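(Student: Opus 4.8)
The plan is to follow the Kenig--Merle concentration-compactness/rigidity scheme. First I would establish the variational structure: using the sharp Sobolev inequality \eqref{SobolevIn} together with the Pohozaev/Nehari identities for \eqref{Ell}, one shows that the two sets
\[
\mathcal{K}^+ = \{(u_0,u_1): E(u_0,u_1)<E(W,0),\ \|u_0\|_{\dot H^1}<\|W\|_{\dot H^1}\},\qquad
\mathcal{K}^- = \{(u_0,u_1): E(u_0,u_1)<E(W,0),\ \|u_0\|_{\dot H^1}>\|W\|_{\dot H^1}\}
\]
are both invariant under the flow of \eqref{equ:nlwecrifoc}, and that on $\mathcal{K}^+$ the energy is coercive, i.e. $E(u_0,u_1)\simeq \|u_0\|_{\dot H^1}^2+\|u_1\|_{L^2}^2$ with a gap $\|u_0(t)\|_{\dot H^1}\le (1-\delta)\|W\|_{\dot H^1}$ uniformly in $t$, while on $\mathcal{K}^-$ one obtains a sign on a suitable virial-type quantity. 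These are purely elliptic/algebraic facts that use only $E<E(W,0)$ and continuity of $\|u_0(t)\|_{\dot H^1}$.

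For part (1), I would argue by contradiction: if scattering fails somewhere in $\mathcal{K}^+$, then by the coercivity there is a critical energy level $E_c<E(W,0)$ below which all solutions scatter but at which scattering fails. Using the linear profile decomposition in the energy space (Theorem \ref{thm:inepro}) together with the perturbation/stability theory for \eqref{equ:nlwecrifoc} and the nonlinear profiles, one extracts a \emph{critical element} $u_c$ whose trajectory $\{(u_c(t),\partial_t u_c(t))\}$ is precompact in $\dot H^1\times L^2$ modulo the scaling and translation symmetries; here the restriction $3\le d\le 5$ enters because one needs the nonlinearity $|u|^{4/(d-2)}u$ to be sufficiently smooth (Lipschitz-type) for the stability theory, which is delicate for $d\ge 6$. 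The rigidity step then shows such a compact-flow solution with $E<E(W,0)$ and $\|u_0\|_{\dot H^1}<\|W\|_{\dot H^1}$ must be identically zero: one runs a localized virial/Morawetz argument (a truncated version of $\int x\cdot\nabla u\,\partial_t u$), using the compactness to control the error terms coming from the truncation and the coercivity to extract a strictly negative upper bound on the time-derivative of the virial quantity, forcing $u_c\equiv 0$, a contradiction.

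For part (2), I would argue that on $\mathcal{K}^-$ the solution cannot be global in both time directions. Suppose, say, the solution exists globally forward in time. A convexity argument on $y(t)=\int_{\R^d}|u(t,x)|^2\,dx$ (in the spirit of Levine, but adapted to the energy-critical setting using the sign coming from $\|u_0\|_{\dot H^1}>\|W\|_{\dot H^1}$ and $E<E(W,0)$) shows $y''(t)$ is bounded below by a positive quantity controlled by the energy, while the $\dot H^1\times L^2$ bound forces subquadratic growth of $y$; these are incompatible unless the maximal interval is finite. To handle the case $u_0\notin L^2$ (recall $W\in L^2$ only for $d\ge 5$), one uses finite speed of propagation (Lemma \ref{fsprop}) to reduce to compactly supported, hence $L^2$, data, or alternatively one runs the concentration-compactness argument again to produce a compact-flow solution on $\mathcal{K}^-$ and rules it out by the same virial identity, whose sign is now reversed. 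I expect the rigidity step in part (1)---producing the compact critical element and then killing it by the truncated virial estimate---to be the main obstacle, since it is where the profile decomposition, the perturbation theory, and the sharp variational estimates must all be combined with care, and where the dimensional restriction $3\le d\le 5$ is genuinely used.
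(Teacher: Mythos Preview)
Your outline captures the Kenig--Merle scheme at the right level of abstraction, and part~(2) is essentially what the paper does (though note: for $u_0\notin L^2$ one does \emph{not} truncate the data---that would disturb the conditions $E<E(W,0)$ and $\|u_0\|_{\dot H^1}>\|W\|_{\dot H^1}$---but rather localizes the quantity $y_R(t)=\int |u|^2\phi(x/R)\,dx$ and controls the cutoff errors via finite speed of propagation). The genuine gaps are both in the rigidity step for part~(1).

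First, your truncated virial argument cannot close for non-radial data without knowing that the critical element has \emph{zero momentum}, $\int\nabla u_{0,c}\cdot u_{1,c}\,dx=0$. The paper devotes an entire subsection to this (Theorem~\ref{zeromome}): one applies a small Lorentz boost to $u_c$ and computes that the energy drops to first order in the boost parameter by exactly the momentum; if the momentum were nonzero this would produce, after some care with supports and a trace lemma (Lemma~\ref{lem:trace}), a solution below $E_c$ that still fails to scatter or still blows up, contradicting the minimality of $E_c$. Without zero momentum you cannot control the drift of $x(t)/\lambda(t)$, and the error terms in the truncated virial (Lemma~\ref{virial-identity-1}) do not stay small; indeed the paper's contradiction in the global case comes from playing two lemmas against each other (Lemmas~\ref{spatcon} and~\ref{timecont}), the second of which relies precisely on $\int\nabla u\,\partial_t u=0$.

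Second, and more seriously, your virial argument only treats the case $T_+(u_c)=+\infty$. A priori the critical element could blow up in finite time (with bounded $\dot H^1\times L^2$ norm, since it lives in $\mathcal K^+$). In that scenario compactness forces $\mathrm{supp}\,(u_c,\partial_t u_c)(\cdot,t)\subset B(0,T_+-t)$ and $\lambda(t)\simeq (T_+-t)^{-1}$, and the virial identity does not yield a contradiction: the boundary terms $z(t)$ vanish as $t\to T_+$ but so does the length of the remaining interval, and one only gets $z(t)\ge CE(1-t)$, which is consistent. The paper's argument here is entirely different: pass to self-similar variables $y=x/(T_+-t)$, $s=-\log(T_+-t)$, set $w(y,s)=(T_+-t)^{(d-2)/2}u(x,t)$, and introduce a weighted Lyapunov functional $\tilde E(w(s))$ with monotonicity $\partial_s\tilde E=\int_{B_1}(\partial_s w)^2(1-|y|^2)^{-3/2}$. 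Iterating this with a pigeonhole argument one extracts a nontrivial $s$-independent limit $w^*\in H^1_0(B_1)$ solving a \emph{degenerate} elliptic equation on the unit ball, with the crucial extra integrability $\int_{B_1}|w^*|^{2^*}(1-|y|^2)^{-1/2}<\infty$. This is then killed by desingularizing near $|y|=1$ via $r=a^2$ and applying Carleman unique continuation. This entire self-similar/unique-continuation mechanism (adapted from Merle--Zaag and Giga--Kohn) is the heart of the finite-time rigidity and is absent from your proposal.
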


\begin{remark}\label{rem:scablw}
{\rm (i)} The condition  $\|u_0\|_{\dot{H}^1}=\|W\|_{\dot{H}^1}$ is not compatible with $E(u_0,u_1)<E(W,0).$
 \vskip0.15cm

{\rm (ii)}   Bulut et.al \cite{BCLPZ} extended the above result to higher dimensions.
  \vskip0.15cm

 \vskip0.15cm
{\rm (iii)} \underline{\bf Sketch proof of scattering part:} We proceed ``concentration compactness"  and profile decomposition developed by  Bahouri-
G\'{e}rard \cite{BG} for $d=3$ (see also Bulut\cite{Bu2010} for $d\geq4$).
 Thus, arguing by contradiction, we find a number $E_c$, with $0<\eta_0\leq
E_c<E(W, 0)$ with the property that if
$$E(u_0, u_1)<E_c, \;~~\; \|\nabla u_0\|_2<\|\nabla W\|_2,$$
then $\|u\|_{S(I)}<\infty$. And $E_c$ is optimal with this property. We will see that this leads to
a contradiction.

 \vskip0.15cm
{\bf Step 1: The existence of critical element with compactness}.  There exists
$$(u_{0,c},u_{1,c})\in\dot H^1\times L^2,~\|\nabla u_{0,c}\|_2<\|\nabla W\|_2,~ E(u_{0,c},u_{1,c})=E_c, $$
such that for the corresponding solution $u_c(t)$, we have
$$\|u_c\|_{L_{t,x}^\frac{2(d+1)}{d-2}(I_+\times\R^d)}=+\infty,~I_+=I\cap[0,+\infty).$$
Moreover, there exist $x(t),~t\in I_+,~\la(t)\in\R^+,~t\in I_+,$ such that
\begin{equation*}
K=\bigg\{\vec{v}(x,t)=\Big(\frac1{\la(t)^\frac{d-2}2}u\Big(\frac{x-x(t)}{\la(t)},t\Big),
\frac1{\la(t)^\frac{d}2}\pa_tu\Big(\frac{x-x(t)}{\la(t)},t\Big)\Big),~t\in I_+\bigg\}
\end{equation*}
has compact closure in $\dot H^1\times L^2.$

 \vskip0.15cm
{\bf Step 2: Rigidity theorem.} $T_+<\infty$ is impossible, and if $T_+=+\infty$, then $u\equiv0$.

Please refer to  Section \ref{sec:focener} for complete proof .

\end{remark}

\begin{remark}\small\rm
{\rm (i)}\;  Kenig-Merle in \cite{KM1} claimed: Assume that $u$ is a radial solution or  $d\in\{3,4\}$ such that
\begin{equation}\label{eque1.3}
\sup_{t\in[0,T_+)}\Big[\|\nabla u\|_2^2+\|\pa_t
u\|_2^2\Big]<\|\nabla W\|_2^2,
\end{equation}
then $T_+=+\infty$ and $u$ is scatter forward in time. But when $d=5$, the condition \eqref{eque1.3}
 does not suffice to ensure the solution of (FNLW) scatter forward in time. Please refer the sharp
 condition in (ii).

 {\rm (ii)}\; In \cite{DKM12JEMS}, Duyckaerts-Kenig-Merle prove that:
assume $d\in\{3,4,5\}$.
Let $u$ be a solution of (FNLW) which satisfies
\begin{equation}
\label{bound_nabla_corol}
\limsup_{t\to T_+(u)} \big[\|\nabla u(t)\|_{L^2}^2+\tfrac{d-2}{2}\|\partial_tu(t)\|_{L^2}^2\big]<\|\nabla W\|_{L^2}^2.
\end{equation}
Then $T_+(u)=+\infty$ and $u$ scatters forward in time. If $u$ is radial, \eqref{bound_nabla_corol} can be replaced by the following bound
\begin{equation}
\label{bound_nabla_corol_radial}
\limsup_{t\to T_+(u)} \|\nabla u(t)\|_{L^2}^2<\|\nabla W\|_{L^2}^2.
\end{equation}
\end{remark}

\subsection{Scattering norm estimate}

Consider
\begin{align} \label{equ:nlwecri123}
(\text{FNLW})\;\;\begin{cases}    \partial_{tt}u-\Delta u= |u|^{\frac4{d-2}}u,\quad
(t,x)\in\R\times\R^d,
\\
(u,\pa_tu)(0,x)=(u_0,u_1)(x),
\end{cases}
\end{align}
where $u:\R_t\times\R_x^d\to \R$.

In Theorem \ref{thm:kmw08}, Kenig-Merle \cite{KM1} have described the dynamics of \eqref{equ:nlwecri123} below the energy threshold $E(W,0)$.
Theorem \ref{thm:kmw08} implies that for $\eps>0$ the following supremum is finite:
$$ \mathcal{I}_{\eps}=\sup_{u\in F_{\eps}} \int_{\R\times \R^d}|u(t,x)|^{\frac{2(d+1)}{d-2}}\,dtdx
=\sup_{u\in F_{\eps}} \|u\|_{S(\R)}^{\frac{2(d+1)}{d-2}}<\infty,$$
where
\begin{align*}
F_{\eps}\triangleq\bigg\{u:\;\; \text{ solution of }\eqref{equ:nlwecri123}\text{ such that }& E(u_0,u_1)\leq E(W,0)-\eps^2 \\
 \text{ and }&\int |\nabla u_0|^2<\int |\nabla W|^2\bigg\}.\end{align*}
Furthermore, the existence of the non-scattering solution $W$ at the energy threshold shows that
\begin{equation*}
\lim_{\eps\rightarrow 0^+} \mathcal{I}_{\eps}=+\infty.
\end{equation*}

 Consider the negative eigenvalue $-\omega^2$ ($\omega>0$) of the linearized operator associated to \eqref{equ:nlwecri123} around $W$:
$$-\omega^2=\inf_{\substack{u\in H^1,\\ \|u\|_2=1}} \Big[\int_{\R^d}|\nabla u|^2dx-\frac{d+2}{d-2}\int_{\R^d} W^{\frac{4}{d-2}}|u|^2dx\Big].$$
 Then
\begin{theorem}[Scattering norm estimate, Duyckaerts-Merle \cite{DuMe10}]\label{thm:scnorest} Let $d\in\{3,4,5\}$. Then, we have
$$\lim_{\eps\rightarrow 0^+} \frac{\mathcal{I}_{\eps}} {|\log \eps|}=\frac{2}{\omega}\int_{\R^d} W^{\frac{2(d+1)}{d-2}}dx.$$
\end{theorem}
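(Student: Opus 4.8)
\medskip

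\noindent\textbf{Proof strategy.} The plan is to show that the logarithmic growth of $\mathcal I_\eps$ is produced entirely by solutions which linger near the ground--state manifold $\mathcal M:=\{(\pm W_\la(\cdot-y),0):\ \la>0,\ y\in\R^d\}$, $W_\la(x):=\la^{(d-2)/2}W(\la x)$, and to compute matching upper and lower bounds for the time such a solution can spend near $\mathcal M$. Write $q:=\tfrac{2(d+1)}{d-2}$ and note the scaling law $\|W_\la\|_{L^q_x}^q=\la\,\|W\|_{L^q_x}^q$, so that a solution which is essentially equal to $W$ (after normalising the scale to $\la\equiv1$) on a time window of length $T$ contributes $\approx T\int_{\R^d}W^q\,dx$ to $\iint|u|^q$. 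Everything near $\mathcal M$ rests on the spectral picture of the linearised operator $\mathcal L:=-\Delta-\tfrac{d+2}{d-2}W^{4/(d-2)}$: it has exactly one negative eigenvalue $-\omega^2$ with a smooth rapidly decaying ground state $\mathcal Y$, a kernel spanned by the symmetry directions $\Lambda W$ and $\partial_jW$, and otherwise nonnegative spectrum; correspondingly the linearised flow $\partial_{tt}v+\mathcal L v=0$ around $W$ has precisely two exponential modes $a_\pm(t)$ with $\dot a_\pm=\pm\omega a_\pm+O(\|\text{error}\|^2+|a|^2)$, the rest of the linearisation being coercively controlled by the conserved energy. I would take these facts --- the variational/spectral input already used by Kenig--Merle and Duyckaerts--Merle --- as given.

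\smallskip
\noindent\textbf{Lower bound.} Fix small $\eps>0$ and take initial data $\vec u_0=(W,0)+\alpha\,\vec{\mathcal Y}_-+\beta\,\vec{\mathcal Y}_+$, with $(\alpha,\beta)$ chosen so that $E(u_0,\partial_tu_0)=E(W,0)-\eps^2$ --- possible since to second order $E=E(W,0)+c\,\alpha\beta+\cdots$ with $c\ne0$, so $|\alpha\beta|\sim\eps^2$ --- and with the overall sign of $(\alpha,\beta)$ fixed so that $\|\nabla u_0\|_2<\|\nabla W\|_2$. Then $u\in F_\eps$ and, by Theorem~\ref{thm:kmw08}, $u$ is global and scatters. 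By the modulation/ejection analysis $u$ stays within a fixed small $\delta_*$ of $\mathcal M$, with parameters $\la(t)\to\la_\infty\in(0,\infty)$ and $x(t)\to x_\infty$, on a time interval $[t_-,t_+]$ on which the unstable coordinate behaves like $|a(t)|\sim\eps\,e^{\omega|t-t_*|}$ away from its minimum time $t_*$; it leaves the $\delta_*$--neighbourhood when $|a|$ reaches $\sim\delta_*$, i.e.\ $t_\pm-t_*=\pm(\tfrac1\omega|\log\eps|+O(1))$, hence $t_+-t_-=\tfrac2\omega|\log\eps|+O(1)$. On $[t_-,t_+]$ one has $u(t)=W_{\la(t)}(\cdot-x(t))+g_1(t)$ with $\int\|g_1(t)\|_{\dot H^1}\,dt=O(\delta_*)$ and $\la(t)$ within $1+O(\delta_*)$ of $\la_\infty$, so $\iint_{[t_-,t_+]\times\R^d}|u|^q\ge(1-o_\eps(1))\tfrac2\omega|\log\eps|\int_{\R^d}W^q\,dx$ (rescaling $u$ by $\la_\infty$ if needed, which changes neither $E$ nor $\|\nabla u_0\|_2$). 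Therefore $\liminf_{\eps\to0}\mathcal I_\eps/|\log\eps|\ge\tfrac2\omega\int_{\R^d}W^q\,dx$.

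\smallskip
\noindent\textbf{Upper bound.} Fix a small $\delta_*>0$ independent of $\eps$. For $u\in F_\eps$ split $\R=\mathcal A\cup\mathcal B$ with $\mathcal A:=\{t:\ \operatorname{dist}_{\dot H^1\times L^2}(\vec u(t),\mathcal M)\le\delta_*\}$. On $\mathcal B$ one has $\|u\|_{S(\mathcal B)}\le C(\delta_*)$ uniformly over $u\in F_0$: otherwise a sequence $u_n\in F_{\eps_n}$, $\eps_n\downarrow0$, with $\|u_n\|_{S(\mathcal B_n)}\to\infty$ feeds the Bahouri--G\'erard/Kenig--Merle concentration--compactness scheme \cite{BG,KM1} and produces a non-scattering solution, compact modulo symmetries, with $E=E(W,0)$ and $\|\nabla u_0\|_2\le\|\nabla W\|_2$; by the classification of solutions at the energy threshold this solution equals $W$ or one of $W^\pm$ up to symmetries, each of which is within $\delta_*$ of $\mathcal M$ for all large $|t|$, contradicting its construction out of the sets $\mathcal B_n$. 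On $\mathcal A$ write $u(t)=W_{\la(t)}(\cdot-x(t))+g_1(t)$; the bound $E\le E(W,0)-\eps^2$ together with $\|\nabla u_0\|_2<\|\nabla W\|_2$ forces, via the same quadratic expansion and a virial-type monotonicity that rules out a second approach, that $\mathcal A$ is a single excursion on which $\la$ stays in a bounded ratio and $|a(t)|\gtrsim\eps$, whence $|\mathcal A|\le(1+o_\eps(1))\tfrac2\omega|\log\eps|$ after scale normalisation, and $\int_{\mathcal A}\|g_1(t)\|_{\dot H^1}\,dt=O(\delta_*)$. Hence $\iint_{\mathcal A\times\R^d}|u|^q\le(1+o_\eps(1))\tfrac2\omega|\log\eps|\int_{\R^d}W^q\,dx+O(\delta_*)$ and $\iint_{\mathcal B\times\R^d}|u|^q\le C(\delta_*)^q$. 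Dividing by $|\log\eps|$ and letting $\eps\to0$ (the fixed constants then disappear) gives $\limsup_{\eps\to0}\mathcal I_\eps/|\log\eps|\le\tfrac2\omega\int_{\R^d}W^q\,dx$, which with the lower bound proves the claim.

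\smallskip
\noindent\textbf{Main obstacle.} The heart of the argument is the local dynamical analysis near $\mathcal M$: the modulation decomposition with the sharp exponential rate $\omega$, the ejection lemma $|a(t)|\sim\eps\,e^{\omega|t-t_*|}$, and --- most delicate --- the fact that a solution below the energy threshold with $\|\nabla u_0\|_2<\|\nabla W\|_2$ can be $\delta_*$--close to $\mathcal M$ on only one excursion, of length at most $(1+o(1))\tfrac2\omega|\log\eps|$, with the scale essentially frozen. That last point combines the spectral/variational information with a virial or Morawetz-type monotonicity forbidding a return to $\mathcal M$ and with the rigidity theorem (Theorem~\ref{thm:kmw08}); once it is in place, the error bookkeeping ($\int_{\mathcal A}\|g_1\|_{\dot H^1}\,dt=O(\delta_*)$ and $\|u\|_{S(\mathcal B)}=O(1)$) is routine.
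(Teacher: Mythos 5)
The paper itself does not prove Theorem~\ref{thm:scnorest}; it cites \cite{DuMe10} and, in the subsequent remark, lists the three essential inputs: the Kenig--Merle compactness machinery (encoded here in Proposition~\ref{P:toW}), the classification of solutions at the threshold energy $E(W,0)$ (Theorems~\ref{thm:existec} and~\ref{thm:classif}), and the analysis of the linearised flow around $W$. Your decomposition into a lower bound from explicit data $(W,0)+\alpha\vec{\mathcal Y}_-+\beta\vec{\mathcal Y}_+$ and an upper bound from a split $\R=\mathcal A\cup\mathcal B$ into ``near'' and ``far'' from the ground--state manifold is exactly this strategy, and your identification of the exponential ejection rate $\omega$ as the source of the $|\log\eps|$ is correct, so in outline you have reconstructed the Duyckaerts--Merle argument.

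There is, however, a genuine gap in the error bookkeeping that you flag as ``routine'' but is not. You control the remainder $g_1(t)$ on $\mathcal A$ by $\int_{\mathcal A}\|g_1(t)\|_{\dot H^1}\,dt=O(\delta_*)$, and then claim this controls the deviation of $\iint_{\mathcal A}|u|^q$ from $|\mathcal A|\int W^q$. But $q=\frac{2(d+1)}{d-2}>2^\ast=\frac{2d}{d-2}$, so $L^q_x$ is \emph{not} controlled by $\dot H^1$, and a bound on $\int\|g_1\|_{\dot H^1}\,dt$ does not bound $\|g_1\|_{L^q_{t,x}(\mathcal A)}$. What you actually need is $\|g_1\|_{L^q_{t,x}(\mathcal A)}=o(|\log\eps|^{1/q})$, i.e.\ a Strichartz-type estimate for the error equation $\partial_{tt}g_1-\Delta g_1-\tfrac{d+2}{d-2}W_\lambda^{4/(d-2)}g_1=\text{(nonlinear)}+\text{(modulation)}$ on the long interval $\mathcal A$ of length $\sim|\log\eps|$; this cannot be obtained by a crude Gr\"onwall argument (the linear term is $O(1)$) but requires exploiting the spectral decomposition of the linearised operator --- the $\mathcal Y$ and kernel modes are tracked explicitly, and the coercive/dispersive complement is handled via Strichartz for the perturbed wave operator. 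This is precisely the delicate part of \cite{DuMe10}, and the same issue affects your lower bound, where you need $\|g_1\|_{L^q_{t,x}(\mathcal A)}\ll\|W_\lambda\|_{L^q_{t,x}(\mathcal A)}\sim|\log\eps|^{1/q}$, not merely integrated $\dot H^1$ smallness. A second, minor, point: in your upper bound the contradiction argument from Proposition~\ref{P:toW} is subtler than sketched --- when the extracted limit is $W^-(t_0)$ rather than $W$, the exit time $t_n$ lands you at distance exactly $\delta_*$ from $\mathcal M$, which does not immediately contradict anything; one must instead combine the scattering of $W^-$ for $t\le t_0$ with long-time perturbation (Theorem~\ref{thm:pertur}) to propagate the $S$-bound outward from the exit time. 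Both gaps are repairable, but they are the substantive content of the cited proof rather than routine bookkeeping.
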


\begin{remark}
{\rm (i)} It would be interesting to get an explicit value of the limit
$$\frac{2}{\omega}\int_{\R^d} W^{\frac{2(d+1)}{d-2}}dx.$$
A straightforward computation gives:
\begin{align*}
\int_{\R^d} W^{\frac{2(d+1)}{d-2}}\;dx&=\frac{(d(d-2))^{\frac{d}{2}}}{2^{2d+1}}\times \frac{d!}{\left((\frac d2)!\right)^2}\times \pi&&\text{ if }d\text{ is even,}\\
\int_{\R^d} W^{\frac{2(d+1)}{d-2}}\;dx&=\frac{(d(d-2))^{\frac{d}{2}}}{2}\times \frac{\left(\frac{d-1}{2}\right)!}{d!}&&\text{ if }d\text{ is odd}.
\end{align*}
However we do not know any explicit expression of $\omega$.

{\rm (ii)}  The proofs rely on the compactness argument of Kenig-Merle, on a classification result
at the energy level $E(W,0)$, and on the analysis of the linearized equation around $W$.

{\rm (iii)}  {\bf It seems to be possible to extend the above result to higher dimensions $d\geq6$}.
 The authors in \cite{DuMe10} also obtain the similar result for NLS,  how about the Klein-Gordon equation? and $\varepsilon\to0-?$
\end{remark}

\begin{prop}
\label{P:toW}
Let $u_n$ be a family of solutions of {\rm (FNLW)} such that
\begin{equation}
\label{below_threshold}
E\big(u_n(0),\partial_tu_n(0)\big)<E(W,0),\quad \|\nabla u_n(0)\|_2<\|\nabla W\|_2.
\end{equation}
and $\lim\limits_{n\rightarrow +\infty} \|u_n\|_{S(\R)}=+\infty$. Let
$(t_{n})_n$ be a time sequence.
\begin{enumerate}
\item \label{C:toW} Assume that
$$\lim_{n\rightarrow +\infty} \|u_n\|_{S(-\infty,t_n)}=\lim_{n\rightarrow +\infty} \|u_{n}\|_{S(t_n,+\infty)}=+\infty.$$
Then, up to the extraction of a subsequence there exist $\iota_0\in \{-1,+1\}$ and sequences of parameters $x_n\in \R^d$, $\lambda_n>0$ such that
\begin{equation*}
\lim_{n\rightarrow +\infty}
\left\|\frac{\iota_0}{\lambda_n^{d/2}}\nabla u_{n}\left(t_{n},\frac{\cdot-x_n}{\lambda_n}\right)-\nabla  W\right\|_{2}+\left\|\frac{\partial u_{n}}{\partial t}\left(t_{n}\right)\right\|_{2}=0.
\end{equation*}
\item \label{C:toW'} Assume that there exists $C_0\in (0,+\infty)$ such that
$$\lim_{n\rightarrow +\infty} \|u_n\|_{S(-\infty,t_n)}=+\infty \text{ and }\lim_{n\rightarrow +\infty} \|u_{n}\|_{S(t_n,+\infty)}=C_0.$$
Then, up to the extraction of a subsequence there exist $t_0\in \R$, $\iota_0,\iota_1\in\{-1,+1\}$, and sequences of parameters $x_n\in \R^d$, $\lambda_n>0$ such that
\begin{multline*}
\qquad \qquad \lim_{n\rightarrow +\infty} \left\|\frac{\iota_0}{\lambda_n^{d/2}}\nabla u_{n}\left(t_{n},\frac{\cdot-x_n}{\lambda_n}\right)-\nabla  W^-(t_0)\right\|_{2}
\\+\left\|\frac{\iota_1}{\lambda_n^{d/2}}\frac{\partial u_{n}}{\partial t}\left(t_{n},
\frac{\cdot-x_n}{\lambda_n}\right)-\frac{\partial W^-}{\partial t}(t_0)\right\|_{2}=0,\qquad\qquad
\end{multline*}
\end{enumerate}
where $W^-$ is as in Theorem \ref{thm:existec}.
\end{prop}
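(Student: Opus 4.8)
The plan is to deduce this proposition from the profile decomposition (Theorem~\ref{thm:inepro} combined with the concentration-compactness machinery of Bahouri--G\'erard and Kenig--Merle) together with the characterization of threshold solutions from Theorem~\ref{thm:existec} ($W^\pm$). First I would apply the linear profile decomposition to the bounded sequence $\big(u_n(t_n),\partial_tu_n(t_n)\big)$ in $\dot H^1\times L^2$. Since $E(u_n(0),\partial_tu_n(0))<E(W,0)$ and energy is conserved, each profile $(V_0^j,V_1^j)$ has energy $\le E(W,0)$, and the Pythagorean expansions \eqref{lab9} (for the energy, after an analogous expansion of the nonlinear potential term using \eqref{lab11}) force the \emph{total} profile energy $\le E(W,0)$; similarly $\|\nabla u_n(t_n)\|_2<\|\nabla W\|_2$ together with the Sobolev/variational trapping from \eqref{SobolevIn}--\eqref{CarW} places each nonlinear profile in the ``good'' region $\|\nabla V^j\|_2<\|\nabla W\|_2$. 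Now the hypothesis that the scattering norm blows up on \emph{both} sides of $t_n$ (case~\eqref{C:toW}) or on the left only (case~\eqref{C:toW'}) prevents the standard contradiction argument (which would show each nonlinear profile scatters and hence $u_n$ has bounded scattering norm): there must be exactly one profile, call it $j=1$, whose nonlinear evolution $U^1$ does \emph{not} scatter in the relevant time direction, all other profiles scatter, and the remainder goes to zero, so that $\|u_n\|_{S}\sim\|U^1\|_{S}$ near $t_n$.

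Next I would argue that this non-scattering nonlinear profile $U^1$, being below the threshold in energy but having infinite scattering norm on a half-line, must in fact \emph{equal} $E(W,0)$ in energy (otherwise Theorem~\ref{thm:kmw08}/Theorem~\ref{thm:scnorest}'s finiteness would apply) and must be, up to scaling, translation and sign, one of the threshold solutions classified in Theorem~\ref{thm:existec}. In case~\eqref{C:toW}, where the scattering norm is infinite on both $(-\infty,t_n)$ and $(t_n,+\infty)$ and the time-translation parameter $t_n^1/\lambda_n^1$ must then be bounded (it cannot run to $\pm\infty$, else one side would scatter), a renormalization around $t_n$ lands exactly on the static solution $W$: the energy is $E(W,0)$, $\|\nabla\cdot\|_2<\|\nabla W\|_2$ is saturated in the limit, and $\partial_t$ of the profile vanishes in $L^2$, i.e. the profile is $(\pm W,0)$ up to scaling and translation; extracting the parameters $\lambda_n=\lambda_n^1$, $x_n=x_n^1$ and the sign $\iota_0$ gives the stated convergence, with $\|\partial_tu_n(t_n)\|_2\to 0$ coming from the vanishing of $\partial_t$ of the single relevant profile plus \eqref{lab9} controlling the rest. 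In case~\eqref{C:toW'}, the finite limit $C_0$ on the right forces the relevant profile to be a threshold solution that scatters in positive time but not in negative time — precisely $W^-$ — evaluated at some fixed time $t_0$ determined by the (bounded) normalized time shift $-t_n^1/\lambda_n^1\to t_0$; this yields convergence of $\big(\nabla u_n(t_n),\partial_tu_n(t_n)\big)$ to $\big(\nabla W^-(t_0),\partial_tW^-(t_0)\big)$ up to scaling, translation and two signs $\iota_0,\iota_1$.

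For the bookkeeping I would: (i) record that all parameters $\lambda_n^1,x_n^1$ may be absorbed into $\lambda_n,x_n$ after passing to a subsequence; (ii) handle the signs by noting the nonlinearity $|u|^{4/(d-2)}u$ is odd, so $-W$ and $-W^-$ solve (FNLW) whenever $W,W^-$ do, hence the profile is determined only up to $\iota_0$ (resp.\ $\iota_0,\iota_1$ since $W^-$ and $\partial_tW^-$ need not have correlated signs under the reflection symmetries used to normalize); (iii) use the perturbation/stability theory for (FNLW) (Strichartz, Theorem~\ref{thm;stricest}, plus a long-time perturbation lemma) to justify that $\|u_n\|_{S(t_n,\pm\infty)}$ is comparable to $\|U^1\|_{S}$ on the corresponding half-line, which is what lets the half-line scattering-norm hypotheses pin down the profile's behavior. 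The main obstacle I expect is Step~2 — the \emph{rigidity input}: showing that the surviving non-scattering profile must be exactly a threshold element rather than merely ``some'' below-or-at-threshold solution. This requires invoking the classification theorem (Theorem~\ref{thm:existec}) that $W$, $W^+$, $W^-$ (up to symmetries) are the only solutions with energy $E(W,0)$, $\|\nabla\cdot\|_2\le\|\nabla W\|_2$, and at least one infinite scattering-norm direction; everything else is a careful but standard chase through the profile decomposition and the orthogonality of parameters \eqref{lab5}.
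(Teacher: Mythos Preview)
The paper does not actually contain a proof of this proposition: it is a survey, and Proposition~\ref{P:toW} is merely \emph{stated} (as an input to Theorem~\ref{thm:scnorest}) with the implicit reference to Duyckaerts--Merle \cite{DuMe10}. So there is no ``paper's own proof'' to compare against beyond the surrounding toolkit the paper assembles.

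That said, your outline is the correct one and is precisely the argument of \cite{DuMe10}. A few points worth tightening. First, the reason the surviving profile has energy \emph{exactly} $E(W,0)$ is a squeeze: non-scattering forces $E(U^1)\ge E(W,0)$ by Theorem~\ref{thm:kmw08}, while the Pythagorean expansion of the nonlinear energy (using \eqref{lab9} and the $L^{2^*}$ orthogonality from \eqref{lab11}) gives $E(U^1)\le\liminf E(u_n)\le E(W,0)$; coercivity (Lemma~\ref{lem:coe}) then kills every other profile and the remainder in $\dot H^1\times L^2$, not just in the $S$-norm. Second, in case~\eqref{C:toW} you should say explicitly why the profile cannot be $W^-$: any symmetry image of $W^-$ scatters on one time half-line (by \eqref{ex.sub}), so infinite $S$-norm on \emph{both} sides forces the profile into case~\eqref{theo.crit} of Theorem~\ref{thm:classif}, i.e.\ $\|\nabla U^1(0)\|_2=\|\nabla W\|_2$ and hence $U^1=\pm W$, giving $\partial_tU^1\equiv 0$. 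Third, your explanation of the two signs $\iota_0,\iota_1$ is slightly off: they are exactly the two signs appearing in the definition of ``equal up to symmetries'' after Theorem~\ref{thm:classif}, namely the overall sign and the time-orientation sign; the time-orientation in case~\eqref{C:toW'} is fixed by the requirement that the profile scatter \emph{forward} (to match $\|u_n\|_{S(t_n,\infty)}\to C_0<\infty$), and the finite $t_0$ is the limit of $-t_n^1/\lambda_n^1$, which you correctly argue is bounded. With these clarifications your sketch is complete; the perturbation step~(iii) is exactly Theorem~\ref{thm:pertur}.
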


\subsection{Threshold of solutions}

In this subsection, we consider $E(u_0,u_1)=E(W,0).$

\begin{theorem}[Connecting orbits, Duyckaerts-Merle \cite{DM08} for $3\leq d\leq5$,  Li-Zhang \cite{LZ10} for $d\geq6$]\label{thm:existec}
Let $d\geq3$. There exist radial solutions $W^-$ and $W^+$ of \eqref{equ:nlwecrifoc}, with initial conditions $\left(W^{\pm}_0,W^{\pm}_1\right)\in \dot{H}^1\times L^2$ such that
\begin{align}
&E(W,0)=E(W^+_0,W^+_1)=E(W^-_0,W^{-}_1),\label{ex.energy}\\
&T_+(W^-)=T_+(W^+)=+\infty \text{ and }\lim_{t\rightarrow +\infty} W^{\pm}(t)=W \text{ in } \dot{H}^1,\label{ex.lim}\\\
&\big\|\nabla W^{-}\big\|_{2}<\|\nabla W\|_{2},\; T_-(W^-)=+\infty,
\; \|W^-\|_{L^{\frac{2(d+1)}{d-2}}((-\infty,0)\times\R^d)}<\infty,\label{ex.sub}\\
&\big\|\nabla W^{+}\big\|_{2}>\|\nabla W\|_{2},\quad \;T_-(W^+)<+\infty.\label{ex.super}
\end{align}
\end{theorem}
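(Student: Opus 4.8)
## Proof proposal for Theorem \ref{thm:existec}

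The plan is to construct $W^-$ and $W^+$ as the solutions whose initial data are suitable perturbations of $(W,0)$ along the unstable/stable directions of the linearized flow, and then to upgrade the local behavior into global statements using the variational structure at the energy threshold. First I would analyze the linearized operator $\mathcal{L} = -\Delta - \tfrac{d+2}{d-2}W^{4/(d-2)}$ acting on $L^2$; by the definition of $-\omega^2$ given before Theorem \ref{thm:scnorest}, $\mathcal{L}$ has a unique negative eigenvalue $-\omega^2$ with a positive, radial, exponentially decaying eigenfunction $\mathcal{Y}$. Writing $u = W + v$, the equation \eqref{equ:nlwecrifoc} becomes $\partial_{tt}v + \mathcal{L}v = N(v)$ with $N$ at least quadratic in $v$, so the linearized flow has exactly one exponentially growing mode $e^{\omega t}\mathcal{Y}$ and one exponentially decaying mode $e^{-\omega t}\mathcal{Y}$, the remaining directions being (nonstrictly) oscillatory modulo the symmetry-induced kernel (generated by $\partial_\lambda$-scaling and spatial translations of $W$). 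I would then build, by a fixed-point / Lyapunov–Perron argument in an exponentially weighted norm, a one-parameter family of solutions $U^a(t)$ for small $a$ with $U^a(t) = W + a e^{-\omega t}\mathcal{Y} + o(e^{-\omega t})$ as $t\to+\infty$, so that $U^a(t)\to W$ in $\dot H^1$ and $\partial_t U^a(t)\to 0$ in $L^2$; this already gives \eqref{ex.lim} and, by time-translation invariance, such solutions are defined on $[T_0,\infty)$ for any $T_0$, hence $T_+=+\infty$.

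Next I would exploit the conservation of energy together with the sign of $a$. Since $\mathcal{L}$ restricted to $\mathcal{Y}$ is negative definite and the energy is stationary at $(W,0)$, expanding $E(U^a(t),\partial_t U^a(t))$ near $t=+\infty$ shows that the $a e^{-\omega t}\mathcal{Y}$ correction does not change the energy to the relevant order — in fact one arranges the construction so that $E(U^a_0,U^a_1)=E(W,0)$ exactly, using that the unstable/stable modes are ``null'' directions for $E$ (the energy's Hessian at $(W,0)$ is $\langle \mathcal{L}v,v\rangle + \|\partial_t v\|_2^2$, which on the mode $(\mathcal{Y},\mp\omega\mathcal{Y})$ equals $-\omega^2\|\mathcal{Y}\|_2^2 + \omega^2\|\mathcal{Y}\|_2^2 = 0$). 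This gives \eqref{ex.energy}. To fix the sign of $\|\nabla U^a(t)\|_2 - \|\nabla W\|_2$, I would compute the first nonvanishing term: along the solution, $\tfrac{d}{dt}\big(\|\nabla U^a(t)\|_2^2-\|\nabla W\|_2^2\big)$ is governed by $\langle \mathcal{L}(ae^{-\omega t}\mathcal{Y}), W\rangle$-type pairings, but since $W\notin L^2$ in low dimensions one must instead use the sign of the quantity $\int |\nabla U^a|^2 - \int |U^a|^{2^*}$, which by the variational characterization \eqref{CarW}–\eqref{SobolevIn} has a definite sign on each side of $W$. Choosing $a>0$ yields $\|\nabla U^a(t)\|_2 < \|\nabla W\|_2$ for $t$ large; call this solution $W^-$. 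Choosing $a<0$ yields $\|\nabla U^a(t)\|_2>\|\nabla W\|_2$; call this solution $W^+$.

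Finally I would propagate these properties backward in time. For $W^-$: once $\|\nabla W^-(t_0)\|_2 < \|\nabla W\|_2$ holds at one time with $E(W^-_0,W^-_1)=E(W,0)$, the trapping argument of Kenig–Merle (the same variational/concentration-compactness machinery underlying Theorem \ref{thm:kmw08}, applied at the threshold energy) shows the inequality $\|\nabla W^-(t)\|_2<\|\nabla W\|_2$ persists for all $t$ in the maximal interval, and moreover forces $T_-(W^-)=+\infty$ together with finite scattering norm on $(-\infty,0)$ — this is exactly the content one needs for \eqref{ex.sub} and is where one invokes that below the ground state in the ``$<$'' regime the solution is global. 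For $W^+$: the complementary Kenig–Merle dichotomy says that on the ``$>$'' side the solution blows up in finite time in each direction, giving $T_-(W^+)<+\infty$ and \eqref{ex.super}; uniqueness of $W^-,W^+$ up to sign/scaling/translation follows from the uniqueness in the Lyapunov–Perron construction together with the classification of threshold solutions.

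The main obstacle I expect is the threshold analysis itself: at $E=E(W,0)$ the variational argument is no longer strictly coercive (the functional $\int|\nabla u|^2 - \int|u|^{2^*}$ can vanish, and $W$ itself is an equilibrium), so one cannot simply quote Theorem \ref{thm:kmw08}; one must run a refined concentration-compactness argument showing that any threshold solution which does not scatter and does not blow up must, after modulation, converge exponentially to $W$, and then match this with the explicitly constructed $U^a$. Controlling the modulation parameters (scaling $\lambda(t)$, translation $x(t)$, and the sign) and showing the error decays \emph{exponentially} rather than merely to zero — which is what pins down the solution uniquely and lets the fixed-point construction close — is the delicate technical heart. The secondary difficulty is the low-dimensional issue that $W\notin L^2$ for $d=3,4$, which obstructs naive energy expansions and must be handled via $\dot H^1$-based quantities and the Aubin–Talenti rigidity \eqref{CarW} throughout.
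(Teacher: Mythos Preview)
The paper is a survey and does not supply its own proof of this theorem; it only states the result, cites \cite{DM08,LZ10}, and records in the subsequent remark the asymptotic expansion \eqref{as.dev}. That remark confirms that your construction strategy --- linearize around $W$, isolate the unique negative eigenvalue of $\mathcal{L}=-\Delta-\tfrac{d+2}{d-2}W^{4/(d-2)}$, and build $W^\pm$ by a contraction argument on the stable manifold so that $W^\pm(t)=W\mp e^{-e_0 t}\mathcal{Y}+O(e^{-2e_0 t})$ --- is exactly the approach of Duyckaerts--Merle. Your treatment of \eqref{ex.energy}--\eqref{ex.lim} and of the sign splitting is correct in spirit.

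The real gap is in your backward-in-time argument for $W^+$. You write that ``the complementary Kenig--Merle dichotomy'' gives $T_-(W^+)<+\infty$, but Theorem~\ref{thm:kmw08} requires the \emph{strict} inequality $E(u_0,u_1)<E(W,0)$, which fails here, and the blow-up half of the threshold classification (Theorem~\ref{thm:classif}\eqref{theo.super}) carries the hypothesis $u_0\in L^2$. Since $W\notin L^2$ for $d\in\{3,4\}$ and $W^+$ is an exponentially small perturbation of $W$, one has $W^+\notin L^2$ in those dimensions, so neither result can be quoted. The paper's own Remark~(ii) after the theorem flags precisely this: proving $T_-(W^+)<+\infty$ for $d=3$ is ``a nontrivial result'', in contrast to the NLS analogue where it is open for $d<5$. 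In \cite{DM08} this step is handled not by concentration-compactness but by a tailored convexity/virial argument that exploits the explicit exponential asymptotics of $W^+$ toward $W$ (so one has far more information than for a generic threshold solution). Your final paragraph correctly senses that the threshold regime is delicate, but misplaces the difficulty: the hard point is not matching with a classification, it is producing finite-time blow-up for $W^+$ without any $L^2$ control. A secondary point: for $W^-$, the energy trapping $\|\nabla W^-(t)\|_2<\|\nabla W\|_2$ does persist at threshold by the same convexity as in Lemma~\ref{lem:enetra}, but the backward \emph{scattering} statement in \eqref{ex.sub} again cannot be read off Theorem~\ref{thm:kmw08} and requires its own argument in \cite{DM08}.
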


\begin{remark}
{\rm (i)} The above construction gives a precise asymptotic development of $W^{\pm}$ near $t=+\infty$.
Indeed there exists an eigenvalue $e_0>0$ of the linearized operator near $W$, such that,
if $\mathcal{Y}\in \mathcal{S}$ is the corresponding eigenfunction with the appropriate normalization,
\begin{equation}\label{as.dev}
\big\|\nabla\big(W^{\pm}(t)-W\pm e^{-e_0 t}\mathcal{Y}\big)\big\|_{L^2}
+\big\|\partial_t\big(W^{\pm}(t)-W\pm e^{-e_0 t}\mathcal{Y}\big)\big\|_{L^2}\leq Ce^{-2 e_0t}.
\end{equation}

{\rm (ii)} Similar solutions were constructed for NLS in \cite{DuMe07,LZ09JFA}.
However, in the NLS case, it is not able to prove that $T_-(W^+)<\infty$
except in the case $d\geq5$. We see this fact, in particular in the case $d=3$, as a nontrivial result.
 Note that $W^+$ is not in $L^2$ except for $d\ge 5$, so that case \eqref{theo.super} of Theorem \ref{thm:classif} below does not apply.
\end{remark}

The next result is that $W$, $W^-$ and $W^+$ are, up to the symmetry of the equation, the only examples of new behavior at the critical level.
\begin{theorem}[Dynamical classification at the critical level \cite{DM08}, \cite{LZ10}]\label{thm:classif}
Let $d\geq3$. Let $(u_0,u_1)\in \dot{H}^1\times L^2$ such that
\begin{equation}\label{threshold}
E(u_0,u_1)=E(W,0)=\frac{1}{dC_d^d}.
\end{equation}
Let $u$ be the solution of \eqref{equ:nlwecrifoc} with initial conditions $(u_0,u_1)$
and $I$ its maximal interval of definition. Then the following holds:
\begin{enumerate}
\item \label{theo.sub} If $ \|\nabla u_0\|_2^2<\|\nabla W\|_2^2=1/{C_d^d}$, then $I=\R$.
 Furthermore, either $u=W^-$ up to the symmetries of the equation, or
 $$\|u\|_{L^{\frac{2(d+1)}{d-2}}_{t,x}(\mathbb R\times\mathbb R^d)}<\infty.$$
\item \label{theo.crit} If $\|\nabla u_0\|_2^2=\|\nabla W\|_2^2$, then $u=W$ up to the symmetries of the equation.
\vskip0.15cm

\item \label{theo.super} If $ \|\nabla u_0\|_2^2>\|\nabla W\|_2^2$ and $u_0\in L^2$,
 then either $u=W^+$ up to the symmetries of the equation, or $I$ is finite.
\end{enumerate}
\end{theorem}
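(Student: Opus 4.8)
The plan is to combine the variational estimates attached to the sharp Sobolev inequality \eqref{SobolevIn}--\eqref{CarW}, the concentration--compactness analysis that underlies Theorem \ref{thm:kmw08}, and a modulation analysis near the ground state $W$. First I would record the variational facts. Pairing the elliptic equation \eqref{Ell} with $W$ gives $\|\nabla W\|_2^2=\|W\|_{L^{2^*}}^{2^*}$, hence $E(W,0)=\tfrac1d\|\nabla W\|_2^2$. Using conservation of energy and \eqref{SobolevIn}, if $E(u_0,u_1)=E(W,0)$ then the sign of $\|\nabla u(t)\|_2^2-\|\nabla W\|_2^2$ is constant on the maximal interval $I$: if $\|\nabla u(t_*)\|_2=\|\nabla W\|_2$ for some $t_*\in I$, the energy identity together with the sharp Sobolev inequality forces $\pa_tu(t_*)=0$ and equality in \eqref{SobolevIn}, so by \eqref{CarW} $(u(t_*),\pa_tu(t_*))=(W,0)$ up to the symmetries of the equation, and uniqueness for the Cauchy problem yields $u\equiv W$ up to symmetries; this is exactly case \eqref{theo.crit}. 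Consequently, in case \eqref{theo.sub} (resp.\ \eqref{theo.super}) we have $\|\nabla u(t)\|_2<\|\nabla W\|_2$ (resp.\ $>$) throughout $I$. In case \eqref{theo.sub} this strict inequality together with energy conservation gives the coercive bound $\sup_{t\in I}\big(\|\nabla u(t)\|_2^2+\|\pa_tu(t)\|_2^2\big)\lesssim E(W,0)$, and inserting it in the concentration--compactness description of blow-up (no bubble of $\dot H^1$-size $\gtrsim\|\nabla W\|_2$ can concentrate) excludes finite-time breakdown, so $I=\R$. In case \eqref{theo.super} the hypothesis $u_0\in L^2$ makes the virial/Morawetz identity available, and the usual concavity argument gives the claimed dichotomy once the rigidity below is proved.

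The heart of the matter is the rigidity statement: in case \eqref{theo.sub}, if $u$ does \emph{not} scatter on $\R$ then $u=W^-$ up to symmetries (and symmetrically for \eqref{theo.super}). Assuming, say, $\|u\|_{S([0,+\infty))}=\infty$, I would first show there are times $t_n\to+\infty$, scales $\la_n>0$, centers $x_n$, and signs $\iota_n\in\{\pm1\}$ so that the renormalizations $\iota_n\la_n^{(d-2)/2}u(t_n,\la_n\cdot+x_n)$, together with the corresponding renormalizations of $\pa_tu(t_n)$, converge strongly in $\dot H^1\times L^2$ to $(W,0)$. This is the analogue of Proposition \ref{P:toW}: apply the linear profile decomposition (Theorem \ref{thm:inepro}) to $\vec u(t_n)$; the Pythagorean identity \eqref{lab9}, the constraint $E(u_0,u_1)=E(W,0)$, the variational trapping, and the nonlinear perturbation theory together force exactly one profile to survive, carrying the full energy $E(W,0)$ and $\dot H^1$-norm $\le\|\nabla W\|_2$, with vanishing remainder; since by Theorem \ref{thm:kmw08} every solution strictly below the threshold scatters, the surviving profile cannot be strictly below $W$, hence equals $W$. (One also has to discard profiles whose parameters escape spatially or become self-similar, which is done with localized Morawetz/virial identities.) In particular $T_+(u)=+\infty$ and $\vec u(t)\to(W,0)$, modulo symmetries, as $t\to+\infty$.

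It remains to run the modulation analysis near $W$. Write $u(t)=W_{\la(t),x(t)}+h(t)$, $W_{\la,x}(y)=\la^{(d-2)/2}W(\la(y-x))$, with $h(t)$ orthogonal to the kernel of the linearized operator $L_W=-\Delta-\tfrac{d+2}{d-2}W^{4/(d-2)}$, spanned by $\Lambda W=\tfrac{d-2}2W+y\cdot\nabla W$ and $\pa_{y_j}W$. The linearized wave evolution $\pa_t^2h+L_Wh=N(h)$ ($N$ at least quadratic) has a one-dimensional unstable mode and a one-dimensional stable mode with rates $\pm e_0$, eigenfunction $\mathcal Y$ as in \eqref{as.dev}; projecting $\vec h$ onto these modes and onto the modulation directions produces a closed system of differential inequalities which, combined with $\vec h(t)\to0$ from the previous step, forces the unstable component to vanish identically, yields $\|\vec h(t)\|_{\dot H^1\times L^2}\lesssim e^{-e_0t}$, and gives the expansion \eqref{as.dev}. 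The initial data generating solutions with this asymptotic behaviour form, modulo the symmetries and the one-parameter freedom absorbed by time translation, a single orbit, which by Theorem \ref{thm:existec} is the orbit of $W^-$ (the two signs of the $\mathcal Y$-coefficient producing $W^-$ and its reflection, which are the same orbit when $\|\nabla u_0\|_2<\|\nabla W\|_2$); hence $u=W^-$ up to symmetries, and since $W^-$ scatters backward by \eqref{ex.sub}, the only alternative in \eqref{theo.sub} is $\|u\|_{S(\R\times\R^d)}<\infty$. Case \eqref{theo.super} is entirely parallel, with the unstable manifold of $W$ --- the orbit of $W^+$ by Theorem \ref{thm:existec} --- replacing the stable manifold, and with $u_0\in L^2$ entering through the virial identity that forces $I$ to be finite whenever $u\neq W^+$. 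The main obstacle is the rigidity step of the second paragraph: extracting a strongly convergent subsequence from a merely non-scattering solution sitting \emph{exactly} at the threshold energy, and proving that its limit is the ground state rather than some other object compact up to symmetries --- this requires the full profile-decomposition and perturbation machinery together with the exclusion of the non-compact and self-similar scenarios, after which the finite-dimensional modulation system must be controlled precisely enough to extract the rate $e_0$ and to suppress the unstable mode.
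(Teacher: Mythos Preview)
The paper does not prove Theorem~\ref{thm:classif}; it is quoted from \cite{DM08,LZ10}, with only the remark that case~\eqref{theo.crit} follows from the variational characterization~\eqref{CarW} and a list of the main technical difficulties (nonradial space localization, the $L^2$ hypothesis in case~\eqref{theo.super}, the non-Lipschitz nonlinearity for $d\ge6$). Your outline is an accurate sketch of the Duyckaerts--Merle argument: energy trapping to fix the sign of $\|\nabla u(t)\|_2^2-\|\nabla W\|_2^2$, compactness of non-scattering threshold solutions via profile decomposition (the analogue of Proposition~\ref{P:toW} at energy exactly $E(W,0)$), exponential convergence to $W$ through the modulation/linearization analysis around the spectral decomposition of $-\Delta-\tfrac{d+2}{d-2}W^{4/(d-2)}$, and finally uniqueness of the center-stable (resp.\ unstable) manifold, identified with the orbit of $W^-$ (resp.\ $W^+$) from Theorem~\ref{thm:existec}.

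One organizational difference worth flagging: you obtain $I=\R$ in case~\eqref{theo.sub} \emph{before} the compactness step, by invoking the type~II blow-up description (``no bubble of $\dot H^1$-size $\gtrsim\|\nabla W\|_2$ can concentrate''). In \cite{DM08} global existence is not established that way; it emerges from the compactness/rigidity machinery itself --- one first shows that a non-scattering or finite-time blow-up solution at the threshold with $\|\nabla u(t)\|_2<\|\nabla W\|_2$ has pre-compact trajectory modulo symmetries, and then rules out $T_+<\infty$ for such trajectories by a rigidity argument in the spirit of Theorem~\ref{thm:rigthm} (self-similar variables included). Your shortcut is legitimate but leans on the later results \cite{DKM11,DKM12JEMS}; the original \cite{DM08} argument is self-contained at the threshold level. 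Apart from this reordering, your sketch matches the cited proofs.
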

 In the above  theorem, by \emph{$u$ equals $v$ up to the sym\-me\-try of the equation},
  we mean that there exists $t_0\in\R$, $x_0\in \R^N$, $\lambda_0>0$, $\iota_0,\iota_1\in \{-1,+1\}$ such that
$$ u(t,x)=\frac{\iota_0}{\lambda_0^{(d-2)/2}}v\Big(\frac{t_0+\iota_1 t}{\lambda_0},\frac{x+x_0}{\lambda_0}\Big).$$

\begin{remark}
{\rm (i)} Case \eqref{theo.crit} is a direct consequence of the variational characterization of $W$ given by \eqref{Ell}.
 Furthermore, using assumption \eqref{threshold}, it shows by continuity of $u$ in $\dot{H}^1$ and the conservation of energy
  that the assumptions
  $$ \|\nabla u(t_0)\|_2^2<\|\nabla W\|_2^2,\;\;\text{or}\;\; \|\nabla u(t_0)\|_2^2>\|\nabla W\|_2^2$$
do not depend on the choice of the initial time $t_0$. Of course, this dichotomy does not persist when $E(u_0,u_1)>E(W,0)$.

{\rm (ii)} Theorem \ref{thm:classif} is also the analoguous, for the wave equation, of Theorem 2 of \cite{DuMe07} for NLS,
but without any radial assumption. The nonradial situation carries various problems partially solved in \cite{KM1},
the major difficulty being a sharp control in time of the space localization of the energy.

{\rm (iii)} In dimension $d=3$ or $d=4$, $W^+$ is not in $L^2$. It is a delicate problem to get rid
of the assumption $u_0\in L^2$.

{\rm (iv)}  The main issue in
high dimensions ($d\geq6$) is the non-Lipschitz continuity of the nonlinearity
which they get around by making full use of the decay property of $W$.

\end{remark}

\subsection{Beyond threshold of ground state}

Next, we consider $E(u_0,u_1)>E(W,0).$ First, we introduce the definition of two type blow up solutions.

\begin{definition}[Type I blowup solutions] Let $u:~[0,T_+)\times\R^d\to\R$ be the maximal life-span solution of (FNLW) satisfying \begin{equation}\label{eque1.2123}
T_+<+\infty,~\text{和}~\sup_{t\in[0,T_+)}\Big[\|\nabla
u\|_2^2+\|\pa_t u\|_2^2\Big]=+\infty,
\end{equation}
then we call $u(t,x)$ to be  Type I-blowup solution for (FNLW)(unbounded solutions blowing up in finite time).
We call it blows up in infinity if
\begin{equation}\label{eque1.2123inf}
T_+=+\infty,~\text{和}~\sup_{t\in[0,T_+)}\Big[\|\nabla
u\|_2^2+\|\pa_t u\|_2^2\Big]=+\infty.
\end{equation}

\end{definition}

\begin{definition}[Type II blowup solutions] 若Let $u:~[0,T_+)\times\R^d\to\R$ be the maximal life-span solution of (FNLW) with
\begin{equation}\label{eque1.212312}
T_+<+\infty,~\text{和}~\sup_{t\in[0,T_+)}\Big[\|\nabla
u\|_2^2+\|\pa_t u\|_2^2\Big]<+\infty,
\end{equation}
then we call $u(t,x)$ to be  Type II-blowup solution for (FNLW) (bounded solutions blowing up in finite time).
\end{definition}

\subsection{Existence of type I blow up solutions}
We consider the initial value problem for the focusing nonlinear wave equation
\begin{equation}\label{Eq:NLW}\left\{\begin{aligned}
& \partial^2_{t} u- \Delta u = |u|^{p-1} u,\;\;  (t,x) \in I \times \R^{d};\\
& u|_{t=0} = u_0, \;\;  \partial_t u|_{t=0}= u_1,\;\; x\in\R^{d}
\end{aligned}\right.\end{equation}
for $d = 2k+1$ with  $k\ge 2$ and $I$ an interval with $0 \in I$.  \eqref{Eq:NLW} is conformally invariant
for $p = \frac{d+3}{d-1}$ and we restrict ourselves to the \textit{superconformal} case
\begin{align}\label{Eq:superconf}
 p > \frac{d+3}{d-1}.
 \end{align}

Explicit examples for singularity formation can be obtained by considering the so called ODE blowup solution
\begin{align}\label{Eq:BlowUpSol}
 	 u_T(t,x)= c_p (T-t)^{-\frac{2}{p-1}},  \quad   c_p := \left [\tfrac{2(p+1)}{(p-1)^2} \right]^{\frac{1}{p-1}},
\end{align}
which is independent of the space dimension and solves the ordinary differential equation
$u_{tt} = |u|^{p-1} u$ for $p >1$. By finite speed of propagation one can use $u_T$
to construct compactly supported smooth initial data such that the solution blows up as $t \to T$.

\begin{theorem}[Existence of type I blow up solution, \cite{DonningerSchorkhuber1}]\label{thm:typei}
 For $d= 2k+1$, $k \in \mathbb N$ with $k \geq 2$, fix $p > \frac{d+3}{d-1}$ and $T_0 > 0$.
 There are constants $M, \delta > 0$ such that if $u_0,u_1$ are radial functions with
\begin{align}\label{Eq:CondData}
\|(u_0,u_1)-  u_{T_0}[0] \|_{H^{\frac{d+1}{2}} \times H^{\frac{d-1}{2}}({B}_{T_0+\delta})} < \tfrac{\delta}{M},
\end{align}
where $B_{T_0+\delta}\triangleq\{x: |x|\le T_0+\delta\}$. Then
\begin{enumerate}
\item The blowup time at the origin $T :=T_{(u_0,u_1) }$ is contained in the interval $[T_0 - \delta, T_0 + \delta]$.
\item The solution
$u: \Gamma_T(\R^d)  \to \R$ satisfies
\begin{equation}\label{Eq:DecayRates_1}\left\{\begin{aligned}
&(T-t)^{\frac{2}{p-1}} \| u(t,\cdot) - u_T(t,\cdot) \|_{L^{\infty}({B}_{T-t})}  \lesssim (T-t)^{\mu_p}, \\
&(T- t)^{-s_p} \|u(t,\cdot) - u_T(t,\cdot) \|_{L^2({B}^d_{T-t}) }  \lesssim (T-t)^{\mu_p}, \\
&(T- t)^{-s_p + 1}  \|u[t] - u_T[t] \|_{\dot H^1 \times  L^{2}({B}_{T-t}) }  \lesssim (T-t)^{\mu_p}, \\
\end{aligned}\right.
\end{equation}
for $s_p = \frac{d}{2} - \frac{2}{p-1}$, $\mu_p = \mathrm{min} \{\frac{2}{p-1},1 \}
- \varepsilon$ and $\varepsilon >0$ small. Furthermore, for $j = 2, \dots, \frac{d+1}{2}$,
\begin{align}\label{Eq:DecayRates_2}
\begin{split}
(T- t)^{-s_p + j}  \|u[t] \|_{\dot H^j \times \dot H^{j-1}({B}_{T-t}) }  \lesssim  (T-t)^{\mu_p}. \\
\end{split}
\end{align}
where $$\Gamma_T(\R^d)\triangleq \big\{(t,r): \; t\in [T_0-\delta, T_0+\delta], \; r=|x|\in (0, T-t)\big\}.$$
\end{enumerate}
\end{theorem}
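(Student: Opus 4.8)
The plan is to recast the statement as an \emph{asymptotic stability} problem for a constant steady state in self-similar variables, and then combine a semigroup/spectral analysis of the linearized flow with a Lyapunov--Perron fixed-point argument. By finite speed of propagation (the smooth/subcritical analogue of Lemma \ref{fsprop}), the solution on a backward light cone with vertex $(T,0)$, $T\in[T_0-\delta,T_0+\delta]$, depends only on the data restricted to $B_{T_0+\delta}$, so it suffices to control $u$ on $\Gamma_T(\R^d)$; moreover, since $\tfrac{d+1}2>s_p$, the Cauchy problem is locally well posed in $H^{\frac{d+1}2}\times H^{\frac{d-1}2}$ and the issue is to propagate the solution up to the vertex and to determine the blowup time $T$. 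For such a $T$ introduce the similarity variables $\tau=-\log(T-t)$, $\rho=x/(T-t)$ and the rescaled unknown $\psi(\tau,\rho)=(T-t)^{\frac2{p-1}}u(t,x)$, mapping the truncated backward cone onto the fixed cylinder $\{\tau\ge-\log T\}\times\bar B_1$. Under this change of variables the ODE blowup solution $u_T$ becomes the \emph{constant} static solution $\psi\equiv c_p$, so Theorem \ref{thm:typei} is equivalent to: for all radial data close to $(c_p,\tfrac2{p-1}c_p)$ in $H^{\frac{d+1}2}\times H^{\frac{d-1}2}(B_1)$ there is a unique $T$ near $T_0$ for which the corresponding similarity-variable solution converges to $(c_p,\tfrac2{p-1}c_p)$ as $\tau\to+\infty$ at rate $e^{-\mu_p\tau}$. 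Undoing the rescaling and restricting to the spatial scale $|x|\le T-t$ (i.e.\ $|\rho|\le 1$) then yields \eqref{Eq:DecayRates_1}--\eqref{Eq:DecayRates_2}, the weight $s_p=\frac d2-\frac2{p-1}$ coming from the rescaling factor together with the Jacobian.

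Writing $\psi=c_p+\varphi$, one sets up the first-order evolution system $\partial_\tau\Phi=\mathbf L\Phi+\mathbf N(\Phi)$ for $\Phi=(\varphi,\partial_\tau\varphi)$ (with the standard first-order reduction adapted to the cone) on the Hilbert space $\mathcal H\triangleq H^{\frac{d+1}2}\times H^{\frac{d-1}2}(B_1)$. The super-conformal regularity $\tfrac{d+1}2=k+1\in\mathbb{N}$ is chosen so that the free part of the evolution generates a strongly continuous semigroup on $\mathcal H$ --- proven via Lumer--Phillips after passing to an equivalent inner product, as in Donninger's earlier work --- and so that $H^{\frac{d+1}2}(B_1)$ is a Banach algebra embedding into $L^\infty$. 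The operator $\mathbf L$ is this free generator plus a relatively compact perturbation containing the \emph{constant} potential $p\,c_p^{p-1}=\tfrac{2p(p+1)}{(p-1)^2}$; its constancy --- in sharp contrast with the stability analysis of the ground state $W$ --- is what later makes the spectral problem explicitly solvable. The nonlinear remainder satisfies $\mathbf N(0)=0$, $D\mathbf N(0)=0$ and is locally Lipschitz on $\mathcal H$ with small constant near the origin, using $H^{\frac{d+1}2}(B_1)\hookrightarrow L^\infty$ and Moser-type estimates for the non-polynomial map $u\mapsto|u|^{p-1}u$; this is where the condition $p>\frac{d+3}{d-1}$ enters.

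The core of the proof is the spectral analysis of $\mathbf L$: one must show that $\sigma(\mathbf L)\cap\{\Re\lambda\ge-\omega_0\}=\{1\}$ for some $\omega_0\in(0,\min\{\tfrac2{p-1},1\})$, that $\lambda=1$ is a simple eigenvalue whose eigenfunction is the explicit \emph{gauge mode} $\mathbf g_1$ arising by differentiating the family $\{u_T\}$ in $T$ and passing to similarity variables, and that the resolvent is uniformly bounded on $\{\Re\lambda\ge-\omega_0\}$ away from $\lambda=1$. By radiality the resolvent equation $(\lambda-\mathbf L)\Phi=\Psi$ reduces to a linear second-order ODE in $\rho\in(0,1)$ with regular singular points at $\rho=0$ and $\rho=1$; since the potential is constant this ODE is of hypergeometric type, so one can exhibit a fundamental system, compute the indicial exponents, and solve the connection problem to exclude all eigenvalues except $\lambda=1$ and to obtain the resolvent bounds. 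A Gearhart--Pr\"uss type argument then gives the decomposition $e^{\tau\mathbf L}=e^{\tau}\mathbf P+e^{\tau\mathbf L}(\mathbf I-\mathbf P)$ with $\mathbf P$ the rank-one spectral projection onto $\langle\mathbf g_1\rangle$ and $\|e^{\tau\mathbf L}(\mathbf I-\mathbf P)\|_{\mathcal H\to\mathcal H}\lesssim e^{-\omega_0\tau}$. I expect this spectral step --- making the connection-problem analysis rigorous and uniform in $\lambda$, and pinning down $\omega_0$ close to $\min\{\tfrac2{p-1},1\}$ --- to be the main obstacle.

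With the linear theory in hand, the nonlinear problem is closed by a by-now-standard Lyapunov--Perron scheme applied to the Duhamel formula $\Phi(\tau)=e^{\tau\mathbf L}\Phi(0)+\int_0^\tau e^{(\tau-s)\mathbf L}\mathbf N(\Phi(s))\,ds$: one seeks a trajectory in the ball $\{\sup_{\tau\ge0}e^{\omega\tau}\|\Phi(\tau)\|_{\mathcal H}\le C\delta\}$ with $0<\omega<\omega_0$, correcting the initial data by a term in $\operatorname{ran}\mathbf P$ that suppresses the unstable component, the contraction following from the small Lipschitz constant of $\mathbf N$ and the decay of $e^{\tau\mathbf L}(\mathbf I-\mathbf P)$. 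One then shows that this correction term, viewed as a function of $(u_0,u_1)$ and of the parameter $T$, vanishes for exactly one value $T=T_{(u_0,u_1)}\in[T_0-\delta,T_0+\delta]$ --- an implicit-function argument exploiting that $\partial_T$ of the data family equals, up to higher order, the projection onto $\langle\mathbf g_1\rangle$. This produces a solution defined on all of $\Gamma_T(\R^d)$ with $\|\Phi(\tau)\|_{\mathcal H}\lesssim\delta\,e^{-\mu_p\tau}$, and transferring this back through the similarity change of variables, together with the Sobolev embeddings on $B_1$, yields the stated $L^\infty$, $L^2$, $\dot H^1\times L^2$ and higher $\dot H^j\times\dot H^{j-1}$ bounds \eqref{Eq:DecayRates_1}--\eqref{Eq:DecayRates_2}.
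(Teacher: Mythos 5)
Your proposal reproduces, essentially step by step, the strategy of Donninger and Sch\"orkhuber in the cited reference: similarity coordinates turning the ODE blowup $u_T$ into the constant equilibrium $c_p$, a first-order formulation on the light cone with the operator acting on $H^{\frac{d+1}{2}}\times H^{\frac{d-1}{2}}(B_1)$, semigroup generation via Lumer--Phillips after an equivalent inner product, relative compactness of the potential term, reduction of the resolvent equation to a hypergeometric ODE (the potential being constant is precisely what makes this tractable), identification of $\lambda=1$ with the gauge mode as the only unstable eigenvalue, Gearhart--Pr\"uss to convert the resolvent bounds into semigroup decay, and a Lyapunov--Perron iteration with a one-dimensional correction that one then cancels by adjusting $T$. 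So this is the same approach as the paper's source.

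One small technical nuance worth flagging: to pin down $T=T_{(u_0,u_1)}$ you invoke an implicit-function argument via differentiating the family $\{u_T\}$ in $T$. The cited proof actually bypasses the smoothness in $T$ needed for the implicit function theorem and instead uses a Brouwer/degree-type topological argument: the map sending the shifted blowup time to the component of the residual initial datum along the gauge mode is shown to be continuous and to change sign, hence has a zero. This matters because $|u|^{p-1}u$ with non-integer $p$ lacks enough regularity to differentiate the fixed-point map cleanly. It does not affect the overall correctness of your outline, only the last closing step.
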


\subsection{Existence of type II blowup solutions}

Examples of radial type II blow-up solutions of (FNLW) were constructed in space dimension $d=3$
by Krieger, Schlag and Tataru \cite{KST09}, see Theorem \ref{thm:exblup} below.

\begin{theorem}[Existence of type II blowup solution in  $\R^3$,
\cite{KST09}]\label{thm:exblup}
Let $\nu>\frac12$ and $\delta>0$. Then there exists an energy
solution $u$ of {\rm (FNLW)} which blows up precisely at
$r=t=0$  and which has the following property: in the cone $|x|=r\le
t $ and for small times $t$ the solution has the form,
\[
u(x,t) = \lambda^\frac12(t) W(\lambda(t) r) + \eta(x,t), \;\;\lambda(t)=t^{-1-\nu},
\]
where ${E}_{\rm loc}(\eta(\cdot,t))\to0$ as $t\to0$ and outside
the cone $u(x,t)$ satisfies
\[
\int_{[|x|\ge t]} \big[|\nabla u(x,t)|^2+ |u_t(x,t)|^2 +
|u(x,t)|^6\big]\, dx <\delta
\]
for all sufficiently small $t>0$. In particular, the energy of these
blow-up solutions can be chosen arbitrarily close to~${E}(W,0)$,
i.e., the energy of the stationary solution.
\end{theorem}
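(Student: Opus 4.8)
The plan is the two-stage strategy of matched asymptotics plus a perturbative fixed point. \emph{Renormalization and leading profile.} Work in variables adapted to a concentrating soliton: fix $\lambda(t)=t^{-1-\nu}$ and take as leading term $u_{0}(t,r)=\lambda(t)^{1/2}W(\lambda(t)r)$, where $W(r)=(1+r^{2}/3)^{-1/2}$ solves $-\Delta W=W^{5}$ in $\R^{3}$ (here $p=5$, $s_c=1$). Substituting into (FNLW), the elliptic identity cancels the spatial part and leaves an error driven by $\partial_{tt}u_{0}$, which in the \emph{inner} self-similar variable $R=\lambda(t)r$ is a lower-order smooth profile whose amplitude is governed by the slowness of $\lambda$; the hypothesis $\nu>\tfrac12$ is precisely the threshold that makes the accumulated effect of all such errors controllable in the final step. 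One also introduces the \emph{outer} self-similar variable $a=r/t$, in which (FNLW) is essentially the free wave equation away from the light cone $a=1$, the only place where the inner and outer descriptions must be glued.

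\emph{Construction of an approximate solution.} Build corrections $u_{0}\to u_{1}\to\cdots\to u_{2k}$ by alternately (i) inverting the stationary linearized operator $\mathcal{L}:=-\Delta-5W^{4}$ in the inner region, using the explicit radial fundamental system of $\mathcal{L}$ in $\R^{3}$ — one solution being the scaling mode $\Lambda W=(\tfrac12+r\partial_{r})W$, which lies in the kernel of $\mathcal{L}$ but decays only like $R^{-1}$ (the zero-energy resonance) — and (ii) solving the resulting transport/ODE problem in the outer variable $a$, then matching the two expansions in the overlap region $1\ll R$, $a\ll 1$. Each round improves the power of $t$ in the error, so after $k$ rounds one obtains $u_{2k}$ solving (FNLW) with error $e_{2k}$ whose weighted norm (measuring decay as $t\to0$) is as small as desired, while $u_{2k}-u_{0}$ is concentrated inside the light cone and carries arbitrarily small local energy.

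\emph{Perturbative completion.} Write $u=u_{2k}+\eps$; then $\eps$ solves $\partial_{tt}\eps-\Delta\eps-5W_{\lambda(t)}^{4}\,\eps=N(\eps)+e_{2k}$, where $W_{\lambda(t)}(r)=\lambda(t)^{1/2}W(\lambda(t)r)$ and $N$ is the genuinely higher-order part of the nonlinearity. Construct a parametrix for $\partial_{tt}-\Delta-5W_{\lambda(t)}^{4}$ by passing to the distorted Fourier representation of $\mathcal{L}$ at the inner scale and conjugating by the (power-type) modulation, so that the potential term becomes a controllable perturbation; from this derive energy and dispersive bounds in a space built on $L^{\infty}_{t}$ of weighted Sobolev norms adapted to the cone, with $t$-weights chosen so that $e_{2k}$ lands in the space with room to spare. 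The contribution of the negative eigenvalue of $\mathcal{L}$ (the unstable direction of the linearized flow, since $\langle\mathcal{L}W,W\rangle=-4\int W^{6}<0$) is removed by a Lyapunov--Perron type argument that pins down one free parameter in the data for $\eps$ at the initial time. Close by contraction, taking $k$ large to absorb all losses, and then verify that the resulting $u$ is a finite-energy solution blowing up precisely at $r=t=0$, that $\eta=u-u_{0}$ satisfies $E_{\mathrm{loc}}(\eta(\cdot,t))\to0$, and — by cutting off the data outside a slightly larger cone and using finite speed of propagation — that the exterior energy is $<\delta$; the energy being close to $E(W,0)$ then follows because the correction carries little energy.

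\emph{Main obstacle.} The crux is the linear theory of the third step: building a parametrix for the wave equation with the \emph{time-dependent}, concentrating potential $5W_{\lambda(t)}^{4}$ that is bounded with the precise $t$-gains required, uniformly across the self-similar zone and the light cone, in the presence of the slowly decaying resonance $\Lambda W\sim R^{-1}$ of $\mathcal{L}$ and of its negative eigenvalue. By comparison the matched asymptotics and the final contraction are bookkeeping organized by the power counting in $t$ that the restriction $\nu>\tfrac12$ makes work.
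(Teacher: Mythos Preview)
The paper under review is a survey and does \emph{not} supply a proof of this theorem; it merely quotes the statement from Krieger--Schlag--Tataru \cite{KST09} and moves on (followed only by a brief remark on the later extension to $\nu>0$ and on the $L^\infty$ blowup rate). So there is no ``paper's own proof'' to compare against.

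That said, your outline is an accurate high-level summary of the KST09 argument itself: the two-zone matched asymptotic expansion (inner variable $R=\lambda(t)r$, outer variable $a=r/t$) producing an approximate solution $u_{2k}$ with error of arbitrarily high order in $t$, followed by a perturbative completion using a parametrix for $\partial_{tt}-\Delta-5W_{\lambda(t)}^4$ built from the distorted Fourier transform of $\mathcal{L}=-\Delta-5W^4$, with the zero-energy resonance $\Lambda W$ and the single negative eigenvalue of $\mathcal{L}$ as the main spectral obstacles. One small caveat: in the actual KST09 construction the negative eigenvalue is not handled by an explicit Lyapunov--Perron reduction of the kind you describe; rather, because the solution is built \emph{toward} the singular time (the renormalized time variable $\tau=\int^t\lambda(s)\,ds$ runs to $+\infty$ as $t\to0$), the would-be unstable mode is in fact decaying in the direction of the iteration, and the authors absorb it directly into the parametrix bounds and the transference operator estimates. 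Otherwise your identification of the crux --- the linear theory for the wave equation with the concentrating potential, uniform across the self-similar region and the light cone --- is exactly right.
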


\begin{remark}
{\rm (i)} Lately, Krieger and Schlag \cite{KrSc14} improved this to $\nu > 0$

{\rm (ii)} If $\nu>1$, then the solutions from Theorem~\ref{thm:exblup}
  belong to $L^\infty(\R^3)$ for all $t>0$ and blow up
at the rate
\[ \|u(\cdot,t)\|_\infty \asymp t^{-(1+\nu)/2}, \; \; {\rm as}\;\; t\to 0.\]
\end{remark}

In dimension five, denote $X^s := \dot H^{s+1}\cap \dot H^1$.
\begin{theorem}[Existence of smooth type II blowup solutions in $d=5$,\cite{moi15p}]
  \label{thm:non-deg}
  Let $(u^*_0, u^*_1) \in X^4\times H^4$ be any pair of radial functions with $u^*_0(0) > 0$.
  Let $(u^*(t), \partial_t u^*(t))$ be the solution of {\rm(FNLW)} with the initial data $(u^*(0), \partial_t u^*(0)) = (u^*_0, u^*_1)$.
  There exists a solution $(u, \partial_t u)$
  of {\rm (FNLW)} defined on a time interval $(0, T_0)$ and a $C^1$ function
  $\lambda(t): (0, T_0) \to (0, +\infty)$ such that
  \begin{equation}
    \label{eq:blowup-1}\left\{\begin{aligned}
    &\|(u(t) - W_{\lambda(t)} - u^*(t),\; \partial_t u(t) +\lambda_t(t) (\Lambda W)_{\lambda(t)}
    -\partial_t u^*(t))\|_{\dot{H}^1\times L^2} = O(t^{9/2});\\
  &\lambda(t) = \big(\frac{32}{315\pi}\big)^2\big(u^*(0, 0)\big)^2 t^4 + o(t^4),\;\;  \text {as }t \to 0^+.\end{aligned}\right.\end{equation}
\end{theorem}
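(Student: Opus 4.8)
The plan is to construct the blow-up solution via a modulation ansatz, writing
$$u(t,x) = W_{\lambda(t)}(x) + u^*(t,x) + \varepsilon(t,x),$$
where $W_\lambda = \lambda^{(d-2)/2} W(\lambda\,\cdot) = \lambda^{3/2} W(\lambda\, \cdot)$ in dimension $d=5$, $u^*$ is the prescribed regular background solution, and $\varepsilon$ is a small error in $\dot H^1\times L^2$ to be estimated. Substituting this ansatz into \eqref{equ:nlwecrifoc} and using that $u^*$ solves (FNLW) exactly, one derives an evolution equation for $\varepsilon$ of the form $\partial_{tt}\varepsilon - \Delta\varepsilon + V_{\lambda(t)}\varepsilon = \mathcal{N}(\varepsilon) + \mathcal{E}_{\mathrm{mod}}$, where $V_\lambda = -\frac{d+2}{d-2}W_\lambda^{4/(d-2)}$ is the linearized potential, $\mathcal{N}$ collects the genuinely nonlinear (superquadratic) terms, and the modulation error $\mathcal{E}_{\mathrm{mod}}$ involves $\lambda_t, \lambda_{tt}$ and the interaction between $W_{\lambda(t)}$ and $u^*(t)$. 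The key algebraic input is the expansion of this interaction term near $t=0$: since $u^*$ is regular with $u^*(0,0)>0$, one has $u^*(t,x)\approx u^*(0,0)$ on the relevant (shrinking, but at scale $1/\lambda(t)$ still large) region, and the leading interaction forces the relation $\lambda_{tt} \sim c\, u^*(0,0) \lambda^{(d-4)/2}$ type ODE — in $d=5$ this integrates (after identifying the constant via the explicit integrals $\int \Lambda W\, W^{4/(d-2)}$-type quantities) to the stated law $\lambda(t) = \big(\tfrac{32}{315\pi}\big)^2 (u^*(0,0))^2 t^4 + o(t^4)$.

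The core of the argument is then a bootstrap / fixed-point scheme for $\varepsilon$ together with the modulation parameter $\lambda(t)$. First I would set up adapted coordinates: rescale time via $s$ with $\tfrac{ds}{dt} = \lambda(t)$ (or keep physical time but work in self-similar-type variables adapted to the light cone as in Theorem \ref{thm:exblup} and Theorem \ref{thm:typei}), and decompose $\varepsilon$ orthogonally to the kernel directions of the linearized operator $\mathcal{L}_W = -\Delta + V$ — namely $\Lambda W$ and, in the nonradial case, $\partial_j W$, though here everything is radial so only $\Lambda W$ matters. The modulation equation is chosen precisely to kill the projection of the error onto $\Lambda W$, which determines $\lambda(t)$. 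One then needs coercivity of the linearized energy on the orthogonal complement; since $W$ is the ground state, $\mathcal{L}_W$ has a single negative eigenvalue (associated to the instability, i.e. the negative eigenvalue $-\omega^2$ appearing before Theorem \ref{thm:scnorest}) and a kernel spanned by the symmetry directions, so after projecting off the negative and null directions one obtains a positive-definite quadratic form controlling $\|\varepsilon\|_{\dot H^1}^2$. The negative direction is handled by an additional scalar modulation or by a topological (shooting) argument à la Krieger–Schlag–Tataru: one shows that for a suitable one-parameter family of initial perturbations the unstable mode can be tuned to vanish, yielding the desired solution.

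The heart of the estimates is an energy–virial Lyapunov functional $\mathcal{I}(t) = \mathcal{E}_{\mathrm{lin}}(\varepsilon(t)) + (\text{localized virial correction})$, whose time derivative one shows is controlled by the source terms, giving a differential inequality that closes the bootstrap with the sharp power $\|(u-W_{\lambda(t)}-u^*, \partial_t u + \lambda_t (\Lambda W)_\lambda - \partial_t u^*)\|_{\dot H^1\times L^2} = O(t^{9/2})$. The exponent $9/2$ should emerge from: the size of the modulation/interaction source (of order $\lambda^{(d-2)/2} = \lambda^{3/2} \sim t^6$ in amplitude against the scaling of $\dot H^1$), balanced against the growth/decay in the rescaled time, with half-integer powers reflecting the $d=5$ dispersive decay $t^{-(d-1)/2}=t^{-2}$ fed into Strichartz estimates (Theorem \ref{thm;stricest}) on the cone. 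I expect the main obstacle to be two-fold: (i) controlling the slow (power-law, not exponential) rate at which $\lambda(t)\to 0$, which makes the linearized flow only marginally dissipative and requires a careful choice of weighted norms so that the $\Lambda W$-mode modulation and the transfer of energy between $\varepsilon$ and the parameter remain consistent; and (ii) handling the unstable negative eigenmode — removing it is not automatic and demands either the Brouwer-type fixed point argument on the finite-dimensional unstable manifold or a Lyapunov–Schmidt reduction, which is delicate precisely because the background $u^*$ is general rather than tuned. Regularity of the constructed solution (that it lies in the same smooth class as in Theorem \ref{thm:non-deg}, i.e. the statement is about $X^4\times H^4$ data) follows a posteriori by persistence-of-regularity for the $\varepsilon$-equation once the $\dot H^1\times L^2$ bound is in hand, propagating higher Sobolev norms along the flow using that $u^*\in X^4\times H^4$ and $W$ is smooth away from no singularities in the interior of the cone.
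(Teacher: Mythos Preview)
The paper you are working from is a survey and does \emph{not} contain a proof of this theorem: it is simply quoted from Jendrej \cite{moi15p}, so there is no ``paper's own proof'' to compare against. Your sketch is broadly consistent with the strategy of \cite{moi15p}---modulation ansatz $u=W_{\lambda(t)}+u^*+\varepsilon$, orthogonality to $\Lambda W$ to fix $\lambda$, an energy/virial Lyapunov functional for $\varepsilon$, and a shooting argument on the unstable direction---and your heuristic ODE $\lambda_{tt}\sim c\,u^*(0,0)\,\lambda^{1/2}$ in $d=5$ correctly recovers the $t^4$ law.

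Two points to tighten. First, watch the scaling convention: in this paper (see the remark after Theorem~\ref{thm:deux-bulles}) $W_\lambda(x)=\lambda^{-(d-2)/2}W(x/\lambda)$, so $\lambda(t)\to 0^+$ means concentration; your formula $W_\lambda=\lambda^{(d-2)/2}W(\lambda\cdot)$ is the opposite convention and would reverse several signs and exponents in the modulation computation. Second, your derivation of the exponent $9/2$ is hand-wavy: in \cite{moi15p} this rate is not obtained from dispersive Strichartz decay on the cone as you suggest, but from a direct bootstrap on a localized energy functional together with the explicit size of the interaction source $|W_\lambda|^{4/(d-2)}u^*-$(leading term), which in $d=5$ scales like $\lambda^{1/2}$ times a correction, feeding into the Lyapunov inequality to give $\|\varepsilon\|_{\dot H^1\times L^2}\lesssim t\cdot\lambda(t)^{1/2}\cdot(\text{extra gain})$. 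If you intend to actually carry this out, you should trace the source term more carefully rather than appeal to generic $t^{-(d-1)/2}$ linear decay, which is not the mechanism here.
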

\begin{theorem}[\cite{moi15p}]
  \label{thm:deg}
  Let $\nu > 8$. There exists a solution $(u, \partial_t u)$ of {\rm (FNLW)}
  defined on the time interval $(0, T_0)$  such that
  \begin{equation}
    \label{eq:blowup-2}
    \lim_{t \to 0^+}\big\|\big(u(t) - W_{\lambda(t)} - u^*_0,\ \partial_t u(t) - u^*_1\big)\big\|_{\dot{H}^1\times L^2} = 0,
  \end{equation}
  where $\lambda(t) = t^{\nu+1}$, and $(u^*_0, u^*_1)$ is an explicit radial $C^2$ function.
\end{theorem}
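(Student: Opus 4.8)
\emph{Strategy.} The proof is constructive: one first builds an explicit \emph{approximate} blow-up solution concentrating at the prescribed rate and then upgrades it to an exact one by a modulation/compactness argument. Throughout $d=5$ and $p=\frac{d+2}{d-2}=\frac73$ (the constant $\frac{32}{315\pi}$ of Theorem \ref{thm:non-deg} already signals $d=5$; this dimension is a sweet spot where $W\in L^2$ and $z\mapsto|z|^{p-1}z$ is still $C^2$). Write $v_\mu=\mu^{-\frac{d-2}2}v(\cdot/\mu)$ for the $\dot H^1$-invariant rescaling, with $\mu$ the width of the bubble, $\Lambda v=\frac{d-2}2 v+x\cdot\nabla v$, and $\mathcal L_W=-\Delta-pW^{p-1}$ for the operator linearizing \eqref{equ:nlwecrifoc} around $W$. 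Recall the structure of $\mathcal L_W$ on $L^2(\R^5)$: it is self-adjoint, $\mathcal L_W(\Lambda W)=0$ with $\Lambda W\sim c|x|^{-(d-2)}\in L^2$ (this integrability is special to $d\ge5$), the radial kernel equals $\mathbb R\,\Lambda W$, and $\mathcal L_W$ has a single negative eigenvalue $-\omega^2$ with radial eigenfunction $\mathcal Y$ (the lone instability). Fix $\lambda(t)=t^{\nu+1}$ and look for $u$ in the form $u=W_{\lambda(t)}+u^*+(\text{correctors})+g$, where $u^*$ is a fixed regular profile (in the degenerate case chosen explicitly, see below) and $g$ is a small remainder.

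\emph{Step 1: approximate profile.} Since $-\Delta W_\mu=|W_\mu|^{p-1}W_\mu$, the pure-bubble ansatz contributes only the error $\partial_{tt}W_{\lambda(t)}$, which (using $\dot\lambda/\lambda=(\nu+1)/t$ and $\ddot\lambda\lambda-\dot\lambda^2=-(\nu+1)t^{2\nu}$) equals $\frac1{t^2}$ times fixed rescaled profiles built from $\Lambda W$ and $\Lambda^2 W$; to this one adds the superposition error $|W_{\lambda}+u^*|^{p-1}(W_{\lambda}+u^*)-|W_{\lambda}|^{p-1}W_{\lambda}-|u^*|^{p-1}u^*$. I would cancel these iteratively: at stage $k$ add a corrector $\lambda^{-\frac{d-2}2}b_k(t)\zeta_k(x/\lambda)$ plus a far-field term, where $\zeta_k$ solves $\mathcal L_W\zeta_k=(\text{profile of the current inner error})$ in the self-similar variable $y=x/\lambda$, the far-field term solves a Poisson-type equation in $x$, and the two are matched on the overlap region $\lambda\ll|x|\ll1$. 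Solvability of the inner equation requires its source to be orthogonal to $\Lambda W$; the resulting obstruction is precisely the would-be modulation law for $\lambda$, which in the \emph{non-degenerate} case is driven by $u^*(0,0)$ and forces $\lambda\sim t^4$. In the present \emph{degenerate} case $u^*$ is chosen explicitly so that this obstruction vanishes, which frees the rate; $\nu>8$ is the threshold making the resulting hierarchy consistent — thanks to the decay $W_{\lambda}\sim\lambda^{\frac{d-2}2}|x|^{-(d-2)}$ each corrector gains a genuine positive power of $t$, so after finitely many steps $\mathrm{Err}(U):=\partial_{tt}U-\Delta U-|U|^{p-1}U$ satisfies $\|\mathrm{Err}(U)(t)\|_{X}\lesssim t^{N}$ in a suitable weighted Strichartz-dual norm $X$, with $N$ arbitrarily large, while each corrector $\to0$ in $\dot H^1\times L^2$ as $t\to0^+$.

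\emph{Step 2: exact solution.} Write $u=W_{\mu(t)}+u^*+\sum_k Z_k+g$ with a modulation parameter $\mu(t)$ (a priori $\mu(t)=\lambda(t)(1+o(1))$); impose $g\perp\Lambda W_{\mu}$ (and in the refined analysis also orthogonality to $\mathcal Y_\mu$ and its dual direction), which determines $\dot\mu$ through a modulation ODE whose right-hand side is, by the degenerate choice of $u^*$, of lower order than $\dot\lambda/\lambda$, so $\mu$ stays locked to $\lambda=t^{\nu+1}$. For $g=(g,\partial_t g)$ I would run an energy estimate for the linearized energy $\mathcal E_{\mathrm{lin}}(g)=\frac12\|\partial_t g\|_{L^2}^2+\frac12\|\nabla g\|_{L^2}^2-\frac p2\int W_\mu^{p-1}g^2$, coercive on the orthogonal complement of the kernel and the two neutral/unstable directions, corrected by a truncated virial term $\langle\partial_t g,\chi_{R(t)}\Lambda g\rangle$ to produce a decay mechanism; differentiating in $t$ and inserting the bound on $\mathrm{Err}(U)$ from Step 1, the orthogonality relations, and the Strichartz estimate of Theorem \ref{thm;stricest} for the difference of nonlinearities gives a differential inequality closed by bootstrap, \emph{provided} the component of $g$ along the unstable mode $\mathcal Y_\mu$ stays small. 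The latter is not controlled by the energy, so I would neutralize it by a shooting argument: for $t_n\downarrow0$ take the solutions $u_n$ with data at $t=t_n$ equal to $U(t_n)+a\,\mathcal Y_{\lambda(t_n)}$ and select, by the intermediate value theorem, the value $a=a_n$ for which the unstable component of $g_n$ does not exit a small ball on $[t_n,T_0]$. This yields solutions $u_n$ on $[t_n,T_0]$ with $\|g_n(t)\|_{\dot H^1\times L^2}\lesssim t^{\delta}$, $\delta>0$, uniformly in $n$; extracting a weak limit of $g_n(T_0)$ and solving backwards produces $u$ on $(0,T_0)$ with $\|g(t)\|_{\dot H^1\times L^2}\to0$, hence $\|u(t)-W_{\lambda(t)}-u^*(t)\|_{\dot H^1\times L^2}\to0$, and the regularity of $u^*$ at $t=0$ gives the stated limit with $\lambda(t)=t^{\nu+1}$.

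\emph{Main obstacle.} The crux is the remainder estimate under the \emph{rigid} law $\lambda(t)=t^{\nu+1}$: because $\lambda$ is prescribed and cannot be ``adjusted away'' by modulation, the entire stability budget rests on the coercivity of $\mathcal E_{\mathrm{lin}}$ after the orthogonal projections and on the virial correction, whose gain is borderline and must beat simultaneously the slowly decaying bubble tail $W_{\lambda(t)}\sim t^{\frac{d-2}2(\nu+1)}|x|^{-(d-2)}$ and the spatial growth of the correctors $Z_k$; reconciling all of these is exactly what forces $\nu>8$. A second delicate point is running the shooting argument and the modulation in tandem, i.e. verifying that $a\mapsto(\text{exit behavior of the unstable mode})$ is continuous and sign-changing, which itself relies on the a priori trapping furnished by the energy/virial bound.
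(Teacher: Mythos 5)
The survey quotes this theorem from Jendrej~\cite{moi15p} and gives no proof of it, so there is nothing in the text itself to compare your argument against line by line; the only honest test is against what that reference actually does. On that score your outline is faithful to the \cite{moi15p} scheme: bubble plus fixed far-field profile plus a finite hierarchy of correctors obtained by inverting $\mathcal L_W=-\Delta-pW^{p-1}$ orthogonally to $\Lambda W$ in the self-similar variable and matching to an outer radiation problem, with the explicit degenerate $(u^*_0,u^*_1)$ chosen so the modulation obstruction (which in the non-degenerate case pins $\lambda\sim t^4$) vanishes; then modulation, a coercive linearized energy with a virial correction, a one-parameter topological argument killing the unstable eigendirection of the first-order linearized flow, and a compactness step over solutions posed at $t_n\downarrow0$. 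Your identification of $d=5$, $p=7/3$, and the special role of $W,\Lambda W\in L^2$ in that dimension is also correct.

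Two remarks. There is a mild internal inconsistency: you both impose orthogonality of $g$ to ``$\mathcal Y_\mu$ and its dual direction'' via modulation \emph{and} control the unstable $\mathcal Y$-component of $g$ by shooting; these are alternative ways to spend the same degree of freedom, and one should commit to one (the cleaner choice here is to modulate only against $\Lambda W_\mu$ and to kill the unstable component by shooting, which is what you then in fact describe). Also, your explanation for $\nu>8$ --- ``each corrector gains a positive power of $t$'' --- is too coarse to produce the actual numerical threshold: the constraint in \cite{moi15p} emerges from simultaneously bookkeeping the $|x|^{-3}$ bubble tail, the spatial growth of the inner profiles, the loss at the matching scale $\lambda\ll|x|\ll1$, and what the energy-plus-virial estimate can tolerate, and your sketch does not derive the number. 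These are exactly where the real quantitative work of \cite{moi15p} lives; as a road map, though, your proposal is on target and consistent with the cited construction.
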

We refer to the situation of Theorem
\ref{thm:non-deg} as the \emph{non-degenerate case} and to the situation of Theorem
\ref{thm:deg} as the \emph{degenerate case}.
Note that in Theorem
\ref{thm:non-deg} we allow any regular $(u_0^*, u_1^*)$ with $u_0^*(0) > 0$.
The above result might be seen as a first step in a possible classification of all blow-up solutions with a non-degenerate asymptotic profile.

Let us mention that radiality is only a simplifying assumption.
A similar construction should be possible also for non-radial $(u^*_0, u^*_1)$.

\subsection{Description of type II blow up solutions}

Duyckaerts-Kenig-Merle (JEMS,\cite{DKM11}) first studied the  type II blow up solutions
when the kinetic energy of solution for (FNLW) is slightly larger than  the kinetic energy of ground state.
\vskip0.15cm

$\blacksquare$\; If $d=3,4$ or $u$ is radial, and
\vskip-0.15cm
 \begin{equation}\label{equ:eque1.3focscon}
\sup_{t\in[0,T_+)}\Big[\|\nabla u\|_2^2+\|\pa_t
u\|_2^2\Big]<\|\nabla W\|_2^2.
\end{equation}
Then $T_+=\infty$ and the solution scattering forward in time.
\vskip0.15cm
$\blacksquare$\; The threshold $\|\nabla W\|_2^2$ is sharp. Krieger, Schlag and Tataru \cite{KST09}'s result shows:  For $\forall \eta_0>0$, there exists
a Type II blow-up solution such that
\begin{equation}\label{equ:eque1.3focscon-1}
\sup_{t\in[0,T_+)}\Big[\|\nabla u\|_2^2+\|\pa_t
u\|_2^2\Big]<\|\nabla W\|_2^2+\eta_0.
\end{equation}

$\blacksquare$\; Duyckaerts-Kenig-Merle investigate the inverse problem under an appropriate small assumption.

\vskip0.15cm

\noindent\underline{\bf Description theorem of type II non radial blow up solutions}:

\begin{theorem}[Duyckaerts-Kenig-Merle,\cite{DKM12JEMS}]\label{thm:altmove}\rm
Let $d=3$ or  $d=5$ and  $\eta_0$ be small,   the maximal solution $u:~[0,T_+)\times\R^3\to\R$ for {\rm (FNLW)} satisfies that  $u:~[0,T_+)\times\R^3\to\R$
 $T_+<+\infty$ and
\begin{equation}\label{eque1.6}
\sup_{t\in[0,T_+)}\Big[\|\nabla
u(t,\cdot)\|_2^2+\frac{d-2}2\|\pa_tu(t,\cdot)\|_2^2\Big]\leq\|\nabla
W\|_2^2+\eta_0.
\end{equation}
Then there exist $(v_0,v_1)\in\dot{H}^1\times L^2,~\iota_0\in\{\pm1\}$,
small parameter $\ell>0$  and  $x(t),~\la(t)$ with
\begin{equation}\label{eque1.7}
\lim\limits_{t\to T_+}\frac{\la(t)}{T_+-t}=0,~\lim_{t\to
T_+}\frac{x(t)}{T_+-t}=\ell\vec{e}_1,~|\ell|\leq C\eta_0^\frac14,
\end{equation}
 such that when  $t\to T_+$,
\begin{equation}\label{eque1.8}
\big(u(t,\cdot),\pa_t(t,\cdot)\big)-(v_0,v_1)-\Big(\frac{\iota_0}{\la(t)^\frac{d-2}2}W_\ell\Big(0,\frac{x-x(t)}{\la(t)}\Big),
\frac{\iota_0}{\la(t)^\frac{d}2}(\pa_tW_\ell)\Big(0,\frac{x-x(t)}{\la(t)}\Big)\Big)\to 0
\end{equation}
in  $\dot{H}^1\times L^2$, where
\begin{equation}\label{eque1.9}\left\{\begin{aligned}
&W_\ell(t,x)=\Big(1+\frac{|x_1-\ell t|^2}{d(d-2)(1-\ell^2)}+\frac{|\bar{x}|^2}{d(d-2)}\Big)^{-\frac{d-2}2},\\
& W_\ell(t,x)\triangleq W\Big(\frac{x_1-\ell t}{\sqrt{1-\ell^2}},\bar{x}\Big),\;\; \text{\rm Lorentz transform},\\
&\|\nabla v_0\|_2^2+\frac{d-2}2\|v_1\|_2^2\leq \eta_0,\;\; (v_0,v_1)\;\text{\rm to be small initial data}.\end{aligned}\right.
\end{equation}
\end{theorem}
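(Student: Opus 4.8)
The plan is to follow the concentration--compactness scheme of Duyckaerts--Kenig--Merle. First I would invoke the finite speed of propagation (Lemma \ref{fsprop}) to reduce to compactly supported data, so that the blow-up set collapses to a single point, taken after translation to be the origin; then $u$ is bounded in $\dot H^1\times L^2$ on $(0,T_+)$ while $\|u\|_{S([0,T_+))}=+\infty$ by the local theory. Along any sequence $t_n\uparrow T_+$ I would apply the linear profile decomposition (Theorem \ref{thm:inepro}) to $(u(t_n),\pa_tu(t_n))$, rescaled so that the natural concentration scale is normalised to $1$. Since $u$ does not extend past $T_+$, the nonlinear profiles cannot all be dispersive: by the long-time perturbation theory for \eqref{equ:nlwecrifoc}, at least one profile $(V_0^1,V_1^1)$ is nontrivial and its nonlinear evolution fails to scatter on the relevant half-line.

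The next step is to show that there is exactly one profile and to identify it as a boosted ground state. A known lower bound for type II blow-up solutions, together with the kinetic-energy ceiling \eqref{eque1.6}, confines the total kinetic energy near $T_+$ to within $\eta_0+o(1)$ of $\|\nabla W\|_2^2$. The Pythagorean expansion \eqref{lab9} then splits this budget among the profiles; since a non-scattering solution below the threshold is impossible (Theorem \ref{thm:kmw08}), any non-scattering profile already absorbs $\|\nabla W\|_2^2-o_{\eta_0}(1)$ of kinetic energy, so for $\eta_0$ small there is exactly one nontrivial profile and it sits essentially at the threshold. By the sharp Sobolev/variational characterisation \eqref{CarW} and the classification at the critical level (Theorem \ref{thm:classif}, Proposition \ref{P:toW}), that profile must be, up to the symmetries of the equation \emph{including a Lorentz boost}, the ground state, i.e.\ a rescaled translate of $W_\ell$ with $|\ell|$ small. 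Hence for every $t_n\uparrow T_+$, along a subsequence,
\[
\big(u(t_n),\pa_tu(t_n)\big)=\Big(\tfrac{\iota_0}{\la_n^{(d-2)/2}}W_\ell\big(0,\tfrac{\cdot-x_n}{\la_n}\big),\ \tfrac{\iota_0}{\la_n^{d/2}}(\pa_tW_\ell)\big(0,\tfrac{\cdot-x_n}{\la_n}\big)\Big)+(a_n,b_n),
\]
with $\la_n\to0$ and $\|\nabla a_n\|_2^2+\tfrac{d-2}{2}\|b_n\|_2^2\le\eta_0+o(1)$.

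Then I would upgrade these subsequential decompositions to a time-continuous one. Using the implicit function theorem around the manifold of boosted ground states I would define $C^1$ parameters $\la(t),x(t),\ell(t)$ and a sign $\iota_0$ by orthogonality conditions, derive the modulation ODEs for $\dot\la,\dot x,\dot\ell$, and use the coercivity/energy-trapping of the linearised energy near $W$ together with the type II bound to keep the error $\eps(t):=u(t)-(\text{bubble})$ uniformly $\eta_0$-small in $\dot H^1\times L^2$ and to force $\la(t)\to0$. Non-concentration at the self-similar rate, $\la(t)/(T_+-t)\to0$, I would obtain from a localised virial/energy identity on truncated light cones, using the exterior-energy (``channels of energy'') lower bounds for the free wave flow available in the odd dimensions $d\in\{3,5\}$: concentration at the self-similar rate would deposit too much energy outside shrinking cones, contradicting \eqref{eque1.6}. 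The almost-conservation of the momentum $P(u)$, matched against the explicit momentum of a boosted bubble, would pin down $\ell(t)\to\ell$ with $|\ell|\le C\eta_0^{1/4}$ and force the center to move ballistically, so that after a rotation aligning the motion with $\vec e_1$ one gets $x(t)/(T_+-t)\to\ell\vec e_1$. Finally, for the fixed radiation I would split $\eps(t)$ into a part supported well outside the light cone of the origin --- which solves \eqref{equ:nlwecrifoc} with bounded norms on a neighbourhood of $t=T_+$, hence extends continuously to $t=T_+$ --- and an interior part, for which a Cauchy-in-$t$ estimate based on the Strichartz/dispersive bounds (Theorem \ref{thm;stricest}, Lemma \ref{lem:semidises}), the decay of the bubble--radiation interaction, and the modulation control gives $\|\eps(t)-\eps(t')\|_{\dot H^1\times L^2}\to0$; the resulting $(v_0,v_1)$ satisfies $\|\nabla v_0\|_2^2+\tfrac{d-2}{2}\|v_1\|_2^2\le\eta_0$ by \eqref{lab9} and \eqref{eque1.6}.

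The step I expect to be the main obstacle is the third one: the \emph{sharp control in time of the spatial localisation of the energy} in the non-radial regime --- ruling out self-similar concentration ($\la(t)=o(T_+-t)$) while simultaneously showing that a \emph{single} Lorentz parameter $\ell$, rather than a genuinely time-dependent velocity, suffices to track the moving bubble --- all with only $\eta_0$-smallness rather than exact equality at the threshold. This is precisely the difficulty that restricts the present argument to $d\in\{3,5\}$.
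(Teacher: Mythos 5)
Your high-level ingredients are the right ones: profile decomposition at a sequence of times $t_n\uparrow T_+$, a single non-dispersive profile forced by the Pythagorean budget and the lower bound $\geq\|\nabla W\|_2^2$ for type II concentration, channels-of-energy in odd dimension to rule out self-similar concentration, momentum conservation to pin down $\ell$, and a Cauchy-in-$t$ argument for the radiation $(v_0,v_1)$. These are indeed the main inputs of \cite{DKM12JEMS}. But the mechanism you propose for upgrading the subsequential decomposition to a full-time one --- modulation theory --- is both structurally different from the paper's argument and not sound as written.

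You want to invoke the implicit function theorem to define $\lambda(t),x(t),\ell(t)$ for all $t$ near $T_+$, derive modulation ODEs, and close with ``coercivity/energy-trapping of the linearised energy near $W$''. Two problems. First, the implicit function theorem only applies once you already know $\vec u(t)$ stays in a fixed neighbourhood of the modulated bubble manifold for \emph{all} $t$ near $T_+$; but that is essentially what you are trying to prove, and it does not follow from a subsequential profile decomposition, so the bootstrap is circular absent an extra rigidity input. Second, the coercivity you appeal to does not exist: the linearised operator $L=-\Delta-\tfrac{d+2}{d-2}W^{4/(d-2)}$ has a \emph{negative} eigenvalue $-\omega^2$ (this is the $\omega$ of Theorem \ref{thm:scnorest} and the exponentially unstable direction $\mathcal Y$ of \eqref{as.dev}); Lemma \ref{lem:enetra} gives coercivity for $\vec u$ \emph{below the threshold energy}, not for the linearised energy of a perturbation of $W$, and a modulation closure would have to control the unstable mode separately, which your sketch does not. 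Relatedly, Proposition \ref{P:toW} is not the right reference to produce a \emph{boosted} bubble: its hypothesis $E<E(W,0)$ need not hold here and its conclusion forces $\partial_t u_n\to 0$, i.e.\ no boost. The paper's own route (Remark \ref{remk1.1.1} and \cite{DKM12JEMS}) avoids modulation entirely: it extracts $(v_0,v_1)$ first as a weak limit, sets $a=u-v$ with $v$ the nonlinear evolution of $(v_0,v_1)$, proves \emph{weak} convergence of the rescaled $a$ to $\iota_0\vec W_\ell$ along a sequence, and then upgrades weak $\Rightarrow$ strong along that sequence $\Rightarrow$ along every sequence $\Rightarrow$ full convergence, with the channels-of-energy estimates driving each upgrade. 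That chain replaces the modulation ODE and sidesteps both of the issues above.
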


\begin{remark}\label{remk1.1.1}
{\rm (i)} The clue of proof: \; By introducing the regular part of  $u(t,x)$:
$$v(t,x)=S_N(t-T_+)(v_0,v_1)\in C(\R;\dot{H}^1\times L^2),$$
 where $(v_0,v_1)$ is a weak convergent point of $(u,\pa_tu)$.  Then \eqref{eque1.8}
can be reduced to prove the singularity part $a(t,x)=u(t,x)-v(t,x)$
such that   when $t\to T_+$, in the sense of $\dot{H}^1\times L^2$,
\begin{center}
\begin{tabular}{|c|}
  \hline
  $\Big(\la(t)^\frac{d-2}2a\big(t,\la(t)x+x(t)\big),\la(t)^\frac{d}2\pa_ta\big(t,\la(t)x+x(t)\big)\Big)\to
\ell_0\big(W_\ell(0),\pa_tW_\ell(0)\big).$\\
  \hline
\end{tabular}\\
$\Uparrow,~~\forall~t_n'\to T_+,$ \\
\begin{tabular}{|c|}
  \hline
  $\Big(\la(t_n')^\frac{d-2}2a\big(t_n',\la(t_n')x+x(t_n')\big),\la(t_n')^\frac{d}2\pa_ta\big(t_n',\la(t_n')x+x(t_n')\big)\Big)\to
\ell_0\big(W_\ell(0),\pa_tW_\ell(0)\big),$  \\
  \hline
\end{tabular}\\$\Uparrow~~\exists~t_n\to T_+,$ \\
\begin{tabular}{|c|}
  \hline
  $\Big(\la(t_n)^\frac{d-2}2a\big(t_n,\la(t_n)x+x(t_n)\big),\la(t_n)^\frac{d}2\pa_ta\big(t_n,\la(t_n)x+x(t_n)\big)\Big)\to
\ell_0\big(W_\ell(0),\pa_tW_\ell(0)\big),$  \\
  \hline
\end{tabular}\\$\Uparrow$ \\
\begin{tabular}{|c|}
  \hline
  $\Big(\la(t_n)^\frac{d-2}2a\big(t_n,\la(t_n)x+x(t_n)\big),\la(t_n)^\frac{d}2\pa_ta\big(t_n,\la(t_n)x+x(t_n)\big)\Big)\rightharpoonup
\ell_0\big(W_\ell(0),\pa_tW_\ell(0)\big),$  \\
  \hline
\end{tabular}
\end{center}
 for details please refer to \cite{DKM12JEMS}.

\vskip0.2cm

{\rm (ii)}  It seems to be interesting for us  to study the description of type II blow up solutions
of focusing energy critical wave equation in higher dimensions.

\end{remark}

\subsection{Bounds on the speed of type II blow-up}

In sum, for type II blowup solutions,
\begin{theorem}[\cite{DKM11}, \cite{DKM12JEMS}, \cite{Kenig4dWave}]
  Let $ u(t)$ be a radial solution of {\rm (FNLW)} which blows up at $t = T_+$
  by concentration of one bubble of energy. Then there exist $ \vec{u}^*_0 \in \dot{H}^1\times L^2$ and $\lambda(t) \in C([t_0, T_+), (0, +\infty))$
    such that
    \begin{equation}\label{eq:DKM1}
      \lim_{t\to T_+}\| \vec{u}(t) - \vec{W}_{\lambda(t)} -  \vec{u}^*_0\|_{\dot{H}^1\times L^2} = 0,\qquad \lim_{t \to T_+} (T_+ - t)^{-1}\lambda(t) = 0.
    \end{equation}
\end{theorem}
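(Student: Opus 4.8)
The plan is to combine the one‑bubble decomposition supplied by the hypothesis with a modulation‑in‑time argument and the rigidity of the ground state $W$; the first limit in \eqref{eq:DKM1} will come out comparatively softly, while the rate $\lambda(t)=o(T_+-t)$ will be the delicate point. First I would unpack the hypothesis. To say that $u$ blows up \emph{by concentration of one bubble of energy} means that along every sequence $t_n\to T_+$ the Duyckaerts--Kenig--Merle bubble decomposition of $\vec u(t_n)$ — which rests on the linear profile decomposition of Theorem~\ref{thm:inepro} together with the variational rigidity \eqref{CarW} — has exactly one profile, necessarily $\iota_n\vec W_{\mu_n}$ with $\iota_n\in\{\pm1\}$, $\mu_n\to0$, modulo a regular remainder: there is $\vec u^*_0\in\dot H^1\times L^2$ with $\|\vec u(t_n)-\iota_n\vec W_{\mu_n}-\vec u^*_0\|_{\dot H^1\times L^2}\to0$. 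I would then show that $\vec u^*_0$ is intrinsic, i.e.\ independent of the sequence: by finite speed of propagation (Lemma~\ref{fsprop}), for every $R>0$ the restriction of $\vec u(t)$ to the exterior $\{|x|>R\}$ extends past $T_+$ and converges strongly in $\dot H^1\times L^2(\{|x|>R\})$ as $t\to T_+$, and since $\iota_n\vec W_{\mu_n}$ is concentrated at scale $\mu_n\ll R$ it contributes nothing there, so $\vec u^*_0$ must agree on each such exterior region with this canonical strong limit.

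Next I would upgrade the sequential statement to a continuous one. Writing $\vec w(t):=\vec u(t)-\vec u^*_0$ for $t$ near $T_+$, I would define $\lambda(t)$ by (approximately) minimizing $\mu\mapsto\|\vec w(t)-\iota\vec W_\mu\|_{\dot H^1\times L^2}$, equivalently by the orthogonality condition $\langle w(t)-\iota W_{\lambda(t)},\,\Lambda W_{\lambda(t)}\rangle_{\dot H^1}=0$ with $\Lambda W=\tfrac{d-2}{2}W+x\cdot\nabla W$; the implicit function theorem then produces $\lambda\in C([t_0,T_+),(0,\infty))$ as soon as the relevant distance is small. It remains to prove that $\delta(t):=\|\vec w(t)-\iota\vec W_{\lambda(t)}\|_{\dot H^1\times L^2}\to0$ along the whole family and not merely along $(t_n)$. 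If some $t'_k\to T_+$ had $\delta(t'_k)\ge\varepsilon$, I would apply the bubble decomposition to $\vec u(t'_k)$; using that $u$ is of type~II, does not scatter as $t\to T_+$, that $W$ is up to the symmetries of the equation the only non‑scattering object appearing, and that the regular part is forced to be $\vec u^*_0$ by the exterior argument above, one again gets a one‑bubble‑plus‑$\vec u^*_0$ decomposition for a subsequence, contradicting $\delta(t'_k)\ge\varepsilon$. Finally, weak convergence $\vec w(t)-\iota\vec W_{\lambda(t)}\rightharpoonup0$ together with convergence of the $\dot H^1\times L^2$ norms — coming from energy conservation and the almost‑orthogonal Pythagorean expansion, in which the cross terms with the concentrating bubble vanish in the limit — upgrades to strong convergence, which is exactly the first limit in \eqref{eq:DKM1}; in particular $\lambda(t)\to0$.

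The remaining point, $\lambda(t)=o(T_+-t)$, will be the main obstacle. Arguing by contradiction, suppose $\lambda(t_n)\ge c\,(T_+-t_n)$ for some $c>0$ and $t_n\to T_+$, and rescale: $v_n(s,y):=\lambda(t_n)^{(d-2)/2}u\bigl(t_n+\lambda(t_n)s,\lambda(t_n)y\bigr)$ is again a solution of {\rm (FNLW)}, is $O(\delta(t_n))$‑close to $\iota\vec W$ at $s=0$, and ceases to exist at rescaled time at most $c^{-1}$. Such a solution — a small, controlled perturbation of the global, non‑scattering ground state, which has a single unstable mode — cannot concentrate at a self‑similar rate. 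To make this rigorous I would invoke the exterior energy (``channels of energy'') lower bound for the free radial wave flow: in odd dimension a nonzero free radial wave radiates a fixed proportion of its energy into $\{|x|>|t|\}$ as $t\to+\infty$ or $t\to-\infty$. Applying this to the radiation part $v_n-\iota\vec W$ carried by the linear flow, and combining it with a localized virial identity controlling both the unstable mode and the modulation parameter $\lambda'$, I would conclude that the energy not locked inside the bubble must either escape to spatial infinity or force unbounded growth of the modulation parameter — either way contradicting the fact that $v_n$ blows up in bounded rescaled time. In even dimensions the exterior energy bound degenerates and one must insert an extra dyadic/frequency splitting before running the same scheme; for $d=4$ this is precisely what is carried out in \cite{Kenig4dWave}. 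Steps so organized yield \eqref{eq:DKM1}; I expect the decomposition and modulation steps to be essentially bookkeeping once the profile decomposition and the variational rigidity of $W$ are granted, whereas the exclusion of self‑similar‑rate concentration is where the real work lies and where the channels‑of‑energy method is indispensable.
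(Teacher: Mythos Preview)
The paper does not contain a proof of this theorem: it is stated purely as a citation of \cite{DKM11}, \cite{DKM12JEMS}, and \cite{Kenig4dWave}, with no argument supplied (the surrounding text merely introduces the notion of asymptotic profile and moves on). So there is nothing in the paper to compare your proposal against.

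That said, your outline is broadly in the spirit of the cited works. The identification of $\vec u^*_0$ via finite speed of propagation outside the singular point, the continuous modulation by an orthogonality condition, and the use of exterior (channels of) energy estimates to exclude self-similar concentration are exactly the ingredients of the Duyckaerts--Kenig--Merle machinery. One point to be careful about: your contradiction step for $\lambda(t)=o(T_+-t)$ is a bit too casual. In the actual arguments the channels-of-energy lower bound is not applied to ``the radiation part $v_n-\iota\vec W$'' directly but to carefully chosen truncations and differences, after first establishing that the remainder has vanishing energy inside the light cone; the unstable mode is handled by a separate no-return/monotonicity argument rather than by a localized virial identity. Also, your sequential-to-continuous upgrade implicitly assumes the sign $\iota$ is eventually constant, which one must justify (e.g.\ by continuity of $t\mapsto\vec u(t)$ and separation of $\pm W$ in $\dot H^1$). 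These are refinements rather than fatal gaps, but you should be aware that the real proofs are substantially more delicate than your sketch suggests, and the paper under review offers no shortcut.
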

In the following, the function $\vec{u}^*_0$ is called the \emph{asymptotic profile}.
Note that in \cite{DKM12JEMS} a more general, non-radial version of the above theorem was proved for $d \in \{3, 5\}$.

Solutions verifying \eqref{eq:DKM1} were first constructed in dimension $d = 3$ by Krieger, Schlag and Tataru \cite{KST09}, who obtained all possible
polynomial blow-up rates $\lambda(t) \sim (T_+ -t)^{1+\nu}$, $\nu > 0$.
For $d = 4$ smooth solutions blowing up at a particular rate were constructed by Hillairet and Rapha\"el \cite{HR12}.
For $d = 5$ they  proved in \cite{moi15p} that for any radially symmetric asymptotic profile $\vec u^*_0 \in H^4 \times H^3$
such that $u^*_0(0) > 0$, there exists a solution $\vec u(t)$ such that \eqref{eq:DKM1} holds.
For these solutions the concentration speed of the bubble is
\begin{equation}
  \label{eq:con-speed}
  \lambda(t) \sim u^*_0(0)^2 (T_+ - t)^4.
\end{equation}

Jendrej continued the investigation of the relationship between the behaviour of $\vec u^*_0$ at $x = 0$
and possible blow-up speeds, still in the special case when the asymptotic profile $\vec u^*_0$ is sufficiently regular.
He proved the following result.
\begin{theorem}[Bounds on the speed of type II blow-up,\cite{Jen}] \label{thm:loiest}
 Let $d \in \{3, 4, 5\}$ and $s > \frac{d-2}{2}$, $s \geq 1$. Let $\vec u_0^* = (u_0^*, \dot u_0^*) \in H^{s+1} \times H^s$ be a radial function.
  Suppose that $\vec u$ is a radial solution of {\rm(FNLW)} such that
  \begin{equation}
    \label{eq:loi}
    \lim_{t \to T_+}\|\vec u(t) - \vec W_{\lambda(t)} - \vec u_0^*\|_{\dot{H}^1\times L^2} = 0, \qquad \lim_{t \to T_+} \lambda(t) = 0,\qquad T_+ < +\infty.
  \end{equation}
 There exists a constant $C > 0$ depending on $\vec u^*_0$ such that:
  \begin{itemize}
    \item If $d \in \{4, 5\}$, then for $T_+ - t$ sufficiently small there holds
      \begin{equation}
        \label{eq:loi-borne}
        \lambda(t) \leq C (T_+ - t)^\frac{4}{6-d}.
      \end{equation}
  \item If $d = 3$, then there exists a sequence $t_n \to T_+$ such that
      \begin{equation}
        \label{eq:loi-borne-N3}
        \lambda(t_n) \leq C (T_+ - t_n)^\frac{4}{6-d}.
      \end{equation}
  \end{itemize}
\end{theorem}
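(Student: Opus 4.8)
The plan is to combine a modulation analysis near the blow-up time with a localized virial (energy--flux) identity on the backward light cone, and to close the argument by a bootstrap run by contradiction on dyadic time slices approaching $T_+$. \emph{Step 1: extracting the regular part and setting up the modulation.} Since $\vec u_0^*\in H^{s+1}\times H^s$ with $s+1>\tfrac d2$, the solution $\vec u^*$ of {\rm(FNLW)} with data $\vec u_0^*$ prescribed at $t=T_+$ is, by the local theory and persistence of regularity, bounded and Lipschitz in $x$ on a two-sided neighborhood of $(T_+,0)$; Lemma~\ref{fsprop} then lets me localize everything to the cone $\{|x|<T_+-t\}$, where $u^*(t,x)=u^*(T_+,0)+O(|x|+T_+-t)$. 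Hypothesis \eqref{eq:loi} reads $\vec u(t)=\vec W_{\lambda(t)}+\vec u^*(t)+\vec g(t)$ with $\|\vec g(t)\|_{\dot H^1\times L^2}\to0$. I would upgrade $\lambda(t)$ to a genuine modulation parameter---still $C^1$, still tending to $0$, still $C^1$-close to the original---chosen so that $\vec g(t)$ is orthogonal, in the natural bilinear pairing, to the scaling mode $\Lambda W_{\lambda(t)}$ and (if the Jordan block is present) to $(W_{\lambda(t)},0)$, thereby projecting out the non-decaying directions of the linearized operator $L_{\lambda}=-\Delta-\tfrac{d+2}{d-2}W_{\lambda}^{4/(d-2)}$. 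An implicit-function-theorem argument yields such $\lambda$, together with a bound $|\lambda'(t)|\lesssim\lambda(t)\,\bigl(\|\vec g(t)\|_{\dot H^1\times L^2}+\text{(interaction)}\bigr)$.

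\emph{Step 2: the driving relations.} Inserting the decomposition into {\rm(FNLW)}, the forcing felt by $\vec g$ is, inside the bubble, governed to leading order by $\tfrac{d+2}{d-2}W_{\lambda(t)}^{4/(d-2)}\,u^*(t,0)$ (because there $u^*$ is essentially the constant $u^*(t,0)\to u_0^*(0)$) plus the $\lambda''$- and $(\lambda')^2$-terms produced by $\pa_{tt}W_{\lambda(t)}$. Pairing the equation against $\Lambda W_{\lambda(t)}$, truncated at the light-cone scale, gives a second-order modulation relation in which a bubble--tail interaction of the form $c_d\,u^*(t,0)\,\lambda(t)^{\kappa_d}$ (with $\kappa_d>0$ a computable exponent) appears with a fixed sign against the time derivative of a momentum-type quantity built from $\pa_t g$ and $\Lambda W_{\lambda(t)}$. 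In parallel I would introduce the localized scaling--virial functional
\[
\mathcal A(t)=\int_{\R^d}\chi\!\Bigl(\tfrac{x}{T_+-t}\Bigr)\,\pa_t u(t,x)\,\Bigl(\tfrac{d-2}{2}+x\cdot\nabla\Bigr)u(t,x)\,dx
\]
(plus lower-order cutoff corrections), whose time derivative splits into a coercive interior part $\gtrsim(T_+-t)^{-1}\!\int_{|x|<T_+-t}|\pa_t u|^2\,dx$ and flux terms localized to $|x|\sim T_+-t$. Evaluating $\mathcal A$ and $\mathcal A'$ on the decomposition, the bubble supplies the dominant term; matching the power of $\lambda(t)$ it produces against the power of $T_+-t$ coming from the cutoff identifies the threshold relation as precisely $\lambda(t)\sim(T_+-t)^{4/(6-d)}$. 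For $d=5$ this matches the concentration rate \eqref{eq:con-speed} of the solutions constructed in Theorem~\ref{thm:non-deg}, so the bound is sharp in that dimension.

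\emph{Step 3: the contradiction.} Suppose the conclusion fails, i.e.\ $\lambda(t)(T_+-t)^{-4/(6-d)}$ is unbounded along some sequence of dyadic time intervals $[T_+-2^{-n},T_+-2^{-(n+1)}]$. On each such interval the bubble term in the modulation relation (equivalently the bubble contribution to $\mathcal A'$) then dominates every $\vec g$-dependent error; those errors are $o(1)$ by \eqref{eq:loi}, and the bootstrap hypothesis converts that $o(1)$ into a quantity genuinely small compared with the leading term. Integrating the sign-definite relation over the interval forces $\|\vec g\|_{\dot H^1\times L^2}$ to grow, contradicting $\|\vec g(t)\|_{\dot H^1\times L^2}\to0$. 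For $d\in\{4,5\}$ the argument is gap-critical and propagates cleanly from one dyadic slab to the next, yielding \eqref{eq:loi-borne} for all $t$ with $T_+-t$ small. For $d=3$ the corresponding monotonicity is only \emph{almost} monotone: a logarithmically critical error survives, which can be absorbed only along a suitable sequence $t_n\to T_+$, and that is exactly the source of the weaker conclusion \eqref{eq:loi-borne-N3}.

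The hardest point will be the gap between the qualitative smallness $\|\vec g(t)\|_{\dot H^1\times L^2}\to0$ handed to us by the hypothesis and the quantitative control needed to absorb errors in the modulation and virial computations; everything hinges on arranging the virial functional and the modulation projections so that the bubble--tail interaction is weighed against a coercive interior-energy term with the correct sign, and on propagating a quantitative smallness of $\vec g$ through the dyadic bootstrap. Secondary technical obstacles are the failure of $\Lambda W$ to be square-integrable at spatial infinity in low dimensions (forcing the light-cone truncation and a careful accounting of the resulting flux terms) and, when $d\in\{3,5\}$, the non-integer power $4/(d-2)$ in the nonlinearity, which must be estimated through the pointwise decay of $W$ rather than by exact algebraic identities.
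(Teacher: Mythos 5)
The survey paper does not reproduce a proof of Theorem~\ref{thm:loiest}; it only records the statement with a citation to \cite{Jen}. So what you should be judged against is whether your sketch would close, and whether it matches the known strategy. Your overall skeleton --- a modulation decomposition $\vec u = \vec W_{\lambda(t)}+\vec u^*+\vec g$, a truncated scaling virial, and a contradiction on dyadic slices --- is the right family of ideas, and you correctly identify the essential player: the bubble--background interaction near $x=0$, which, upon pairing against the scaling direction, produces a term proportional to $u^*(t,0)\,\lambda^{\kappa_d}$. That is indeed where the exponent must come from.

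There are, however, several genuine gaps. First, the exponent $4/(6-d)$ is asserted by ``matching powers'' rather than derived; but getting it right is precisely the point of the theorem. One must actually compute that the leading interaction scales like $u^*_0(0)\lambda^{(d-2)/2}$ in the energy, hence (after dividing by the $\lambda''$-coefficient) like $\lambda^{(d-4)/2}$ in the second-order modulation equation, and then integrate the resulting differential inequality with the boundary condition $\lambda(T_+)=0$; passing from the inequality $\lvert\lambda'\rvert\lesssim\lambda^{(d-2)/4}$ to the stated upper bound requires a careful sign check that the sketch leaves out, and it is easy to end up with the reverse (lower-bound) conclusion.

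Second, the ``sign-definite relation'' invoked in Step 3 is only available when $u^*_0(0)$ has a definite sign. The theorem does not assume this. When $u^*_0(0)<0$ there are no such solutions (Theorem~\ref{thm:negagt}), so the bound is vacuous; but when $u^*_0(0)=0$ the leading interaction vanishes and the argument as written simply produces $0\le\text{(errors)}$, from which nothing follows. A complete proof either handles $u^*_0(0)=0$ by a separate higher-order expansion or shows that this case also admits no slowly-concentrating solutions; your proposal does not confront this.

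Third, the characterization of the $d=3$ obstruction as ``logarithmically critical'' is not quite right. The source of the dichotomy in the theorem is the failure of $\Lambda W\in L^2$ in low dimensions: for $d=5$ one has $\Lambda W\in L^2$; for $d=4$ the truncated norm $\|\Lambda W\,\mathbf 1_{|x|<R}\|_{L^2}$ diverges logarithmically in $R$; for $d=3$ it diverges like $R^{1/2}$. The logarithmic divergence at $d=4$ can still be absorbed to give a pointwise-in-time bound, while the power-law divergence at $d=3$ cannot, which is why only a sequential bound survives there. Conflating the $d=3$ and $d=4$ divergences is exactly the kind of imprecision that would make the bootstrap fail to close in the hard case. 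Finally, the contradiction is stated as ``integrating the sign-definite relation forces $\|\vec g\|_{\dot H^1\times L^2}$ to grow,'' but you never say which quantity built from $\vec g$ the integral actually controls; the correct formulation is that a projection of $\partial_t g$ onto the truncated scaling mode has both a pointwise $o(1)$ bound (from the hypothesis) and an $O(\lambda^{(d-2)/2})$-sized derivative, and it is the tension between these two facts, integrated over a carefully chosen time window, that produces the bound. As it stands your Step~3 does not make this mechanism explicit, so the argument is not closed.
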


 \begin{theorem}[\cite{Jen}]
    \label{thm:negagt}
    Let $d \in \{3, 4, 5\}$. Let $\vec u_0^* = (u_0^*, \dot u_0^*) \in H^3 \times H^2$ be a radial function such that
    \begin{equation}
      \label{eq:ustar-neg}
        u^*_0(0) < 0.
    \end{equation}
  There exist no radial solutions of {\rm (FNLW)} such that
  \begin{equation}
    \label{eq:neg}
    \lim_{t \to T_+}\|\vec u(t) - \vec W_{\lambda(t)} - \vec u_0^*\|_{\dot{H}^1\times L^2} = 0, \qquad \lim_{t \to T_+} \lambda(t) = 0,\qquad T_+ < +\infty.
  \end{equation}
  \end{theorem}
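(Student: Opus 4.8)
The plan is to argue by contradiction through a modulation analysis: from the hypothesis $u_0^*(0)<0$ I will extract a forcing term of definite, ``repulsive'' sign in the law governing the concentration scale $\lambda(t)$, and show that such a forcing cannot coexist with $\lambda(t)\to0$ at a finite blow-up time $T_+$. Suppose such a solution $\vec u$ exists on $[t_0,T_+)$. Since $\vec u_0^*\in H^3\times H^2$ and $d\le5$, the local Cauchy theory for (FNLW) and persistence of regularity provide a solution $\vec u^*$ of (FNLW) with $\vec u^*(T_+)=\vec u_0^*$, defined on a neighbourhood of $T_+$ and converging there to $\vec u_0^*$ in a topology controlling point values; in particular, as $H^3(\R^d)\hookrightarrow C^0(\R^d)$ for $d\le5$, we have $u^*(t,0)\to u_0^*(0)<0$. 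Write $\vec u(t)=\vec W_{\lambda(t)}+\vec u^*(t)+\vec h(t)$; the hypothesis \eqref{eq:neg} together with $\vec u^*(t)\to\vec u_0^*$ gives $\vec h(t)\to0$ in $\dot H^1\times L^2$, and $\|\vec u(t)\|_{\dot H^1\times L^2}$ stays bounded, so $\vec u$ is a single-bubble type II blow-up. I would then re-modulate $\lambda(t)$ by imposing orthogonality of $\vec h(t)$ to the scaling direction $(0,\Lambda W)_{\lambda(t)}$ (and, if needed, to the unique negative eigendirection $\mathcal{Y}$ of $L\triangleq-\Delta-\tfrac{d+2}{d-2}W^{4/(d-2)}$); this is solvable near $T_+$ by the implicit function theorem since $\|\vec h(t)\|$ is small, and produces a $C^1$ scale comparable to the original.

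Next I would derive the modulation equations. Plugging the decomposition into (FNLW), subtracting the (trivial) equation for $W_{\lambda(t)}$ and the equation satisfied by $u^*$, and projecting onto suitably localised copies of $\Lambda W_{\lambda(t)}$ --- a \emph{truncated} pairing is forced when $d\in\{3,4\}$, where $\Lambda W\notin L^2$, and this is the origin of the dimensional subtleties --- one obtains a first-order system of the schematic form
\begin{align*}
  \dot\lambda(t) &= b(t) + O\big(\|\vec h(t)\|_{\dot H^1\times L^2}^2\big) + (\text{lower order}),\\
  \dot b(t) &= c_d\, u^*(t,0)\,\lambda(t)^{\gamma_d} - \lambda(t)^{-1}\big\langle L_{\lambda(t)}\vec h(t),\,\vec h(t)\big\rangle + (\text{lower order}),
\end{align*}
where $c_d$ is an explicit constant of definite sign and $\gamma_d>0$ an explicit exponent, the leading term of $\dot b$ being proportional to $u^*(t,0)\int_{\R^d}W^{4/(d-2)}\Lambda W$ and coming precisely from the interaction of the concentrating bubble with the value of the regular background at the concentration point. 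The quadratic form $\lambda^{-1}\langle L_\lambda\vec h,\vec h\rangle$ is controlled --- and, crucially, shown to be subordinate to the main forcing --- using the coercivity of $L$ modulo its non-positive directions together with the orthogonality conditions, while the residual $u^*$-dependent errors are rendered negligible by exactly the extra $H^3\times H^2$ regularity of $\vec u_0^*$. (Equivalently, the central identity can be obtained by differentiating a truncated virial functional $V_R(t)=\langle\partial_t u(t),\,\phi_{R(t)}(x\cdot\nabla+\tfrac{d-2}{2})u(t)\rangle$ along the flow, with $R(t)$ taken inside the light cone so that finite speed of propagation and the regularity of $u^*$ there control the boundary terms.)

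Now the contradiction. Because $c_d$ has a fixed sign and $u^*(t,0)\to u_0^*(0)<0$, the forcing $c_d\,u^*(t,0)\lambda^{\gamma_d}$ is sign-definite for $t$ near $T_+$, so it pins the monotonicity of $b$ and $b$ has a limit $b_\infty$. If $b_\infty>0$ then $\dot\lambda>0$ near $T_+$, contradicting $\lambda(t)\to0$; if $b_\infty<0$ then $\lambda(t)\gtrsim T_+-t$, which contradicts the fact that a single-bubble type II blow-up must satisfy $\lambda(t)=o(T_+-t)$ as $t\to T_+$ (Duyckaerts--Kenig--Merle, recalled above). The remaining borderline case $b_\infty=0$ is excluded by reinjecting the coercivity and error bounds into the system: integrating the $b$-equation produces a lower bound $b(t)\gtrsim\int_t^{T_+}\lambda(\tau)^{\gamma_d}\,d\tau$, which together with $b(t)=O(\|\vec h(t)\|)$ and the lower bounds on $\lambda$ coming from the same analysis (cf.\ Theorem~\ref{thm:loiest}) is untenable. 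In every case one reaches a contradiction, hence no such solution exists.

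The main obstacle is the second step: establishing, uniformly as $t\to T_+$, that each error term --- the quadratic form in $\vec h$, the self-interaction of $W_{\lambda(t)}$, and (in $d=3,4$) the slowly decaying tails produced by the light-cone cutoff --- is strictly smaller than the sign-definite forcing $c_d\,u^*(t,0)\lambda^{\gamma_d}$. This is precisely where the stronger hypothesis $\vec u_0^*\in H^3\times H^2$ (in place of merely $\dot H^1\times L^2$) is used, and it is borderline in the lowest dimensions; in $d=3$ this same delicacy is what forces the companion estimate of Theorem~\ref{thm:loiest} to hold only along a subsequence, although the strict sign condition here still delivers the full non-existence statement.
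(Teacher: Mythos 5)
The structure you outline---parametrize $\vec u = \vec W_{\lambda(t)} + \vec u^*(t) + \vec h(t)$, identify the cross term $\frac{d+2}{d-2}W_\lambda^{4/(d-2)}u^*(t,0)$ as the origin of a sign-definite forcing in the law for $\lambda$, and contradict $\lambda(t)\to0$---is indeed the mechanism behind Jendrej's theorem, and your observation that the constant $c_d$ carries a fixed sign (it is proportional to $\int W^{4/(d-2)}\Lambda W$, which one can compute to be strictly negative by scaling) is the right leading-order reason the hypothesis $u_0^*(0)<0$ is repulsive. You are also right that in $d\in\{3,4\}$ one must truncate the scaling pairing because $\Lambda W\notin L^2$, and that the $H^3\times H^2$ regularity of $\vec u_0^*$ is what tames the residual boundary terms on the light cone.

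That said, two points need repair. First, the case analysis on $b_\infty$ is both slightly wrong and more complicated than it needs to be. Your treatment of the ``borderline'' case $b_\infty=0$ invokes a \emph{lower} bound on $\lambda$ allegedly coming from Theorem~\ref{thm:loiest}, but that theorem gives only an \emph{upper} bound $\lambda(t)\le C(T_+-t)^{4/(6-d)}$ (and, in general, no lower bound on the concentration speed can hold, since Krieger--Schlag--Tataru produce arbitrarily fast rates). It is also unnecessary: once one has $\dot b = c_d\,u^*(t,0)\,\lambda^{\gamma_d}+(\text{subdominant})$ with $c_d u_0^*(0)>0$, one gets $\dot b>0$, hence $b(t)<b_\infty$ for all $t$ near $T_+$. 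Combined with $\lambda(t)=-\int_t^{T_+}b(\tau)\,d\tau>0$, this forces $b_\infty\le0$, and from $b<b_\infty\le0$ with $\dot\lambda\approx b$ one gets $\lambda(t)\gtrsim -b_\infty(T_+-t)$ if $b_\infty<0$; whether $b_\infty<0$ (contradicting $\lambda=o(T_+-t)$) or $b_\infty=0$ (forcing $b>0$, incompatible with $\lambda>0$), the contradiction is immediate without any auxiliary bound on $\lambda$.

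Second, and more importantly, the core of the argument---showing that every error term (the quadratic form $\lambda^{-1}\langle L_\lambda\vec h,\vec h\rangle$, the self-interaction of $W_\lambda$, and especially the tails generated by the cutoff in $d=3,4$) is strictly dominated by $c_d\,u^*(t,0)\,\lambda^{\gamma_d}$---is precisely where all the work of the proof resides, and it is left as a declaration rather than an estimate. This is not cosmetic: one only knows $\|\vec h(t)\|\to0$ (no rate), and $\lambda^{\gamma_d}\to0$ as well, so without a quantitative comparison (via coercivity of $L$ together with the light-cone localization and the extra regularity of $u^*$) there is no reason the forcing should win. A convincing proposal would at least record the precise comparison it needs, e.g.\ $\|\vec h(t)\|^2 + \lambda(t)^{(d-2)/2}\|\vec h(t)\| = o\big(\lambda(t)^{\gamma_d}\big)$ or whichever monotone quantity the cutoff virial actually yields, and indicate why the smallness of $\vec h$ together with orthogonality supplies it.
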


\subsection{Soliton resolution conjecture}

From the above analysis, we can obtain the dynamics of the energy solutions to the focusing energy-critical wave equation in some restrictions.
 Soliton resolution conjecture describes such dynamics.
In \cite{Sof06}, Soffer first proposed this conjecture, and lately it was modified as following
\begin{conjecture}[Soliton resolution conjecture]\label{conjecture:soliton}
In the context of equation (NLW), the soliton resolution conjecture predicts that any bounded solution should asymptotically decouple into a finite sum of modulated solitons, a regular part in the finite time blow up case or a free radiation in the global case, plus a residue term that vanishes asymptotically in the energy space as time approaches the maximal time of existence.
\end{conjecture}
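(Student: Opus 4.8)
\medskip
\noindent\emph{A program towards Conjecture \ref{conjecture:soliton}.} In the stated generality this is open, so what follows is a plan rather than a proof; it adapts the concentration-compactness and channels-of-energy machinery developed for the energy-critical equation \eqref{equ:nlwecrifoc}, and one should not expect it to close in the energy-supercritical range, where a satisfactory Cauchy theory is still missing. The realistic target is the energy-critical, finite-energy problem, treated one dimension at a time.

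The first step is to isolate the non-dispersive part. Given a solution $u$ on $[0,T_+)$ with $\sup_{t}\|\vec u(t)\|_{\dot H^1\times L^2}<\infty$: in the global case $T_+=+\infty$, pick $t_n\to\infty$, let $(v_0,v_1)$ be a weak limit of $\vec u(t_n)$, set $v_L(t)=S(t)(v_0,v_1)$ and $b=u-v_L$, so that $v_L$ is the ``free radiation'' of the conjecture; in the finite-time case, using finite speed of propagation (Lemma \ref{fsprop}) one shows that $\vec u(t)$ converges strongly away from the blow-up point to a ``regular part'' $\vec u^*$, and one sets $b=u-u^*$. In either case the conjecture reduces to proving that $\vec b(t)$ converges in $\dot H^1\times L^2$, as $t\to T_+$, to a finite sum $\sum_{j=1}^{J}\big(\lambda_j(t)^{-\frac{d-2}{2}}W_{\ell_j}(0,\tfrac{\cdot-x_j(t)}{\lambda_j(t)}),\,\lambda_j(t)^{-\frac d2}\partial_t W_{\ell_j}(0,\tfrac{\cdot-x_j(t)}{\lambda_j(t)})\big)$ of modulated, Lorentz-boosted ground states with $\lambda_j(t)\to0$ (resp.\ $\lambda_j(t)/(T_+-t)\to0$) and asymptotically separated scales.

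Next I would prove \emph{sequential} resolution. Fixing $t_n\to T_+$, apply the linear profile decomposition in the energy space (Theorem \ref{thm:inepro}) to $\vec b(t_n)$; the long-time perturbation theory for \eqref{equ:nlwecrifoc}, together with the fact that the Strichartz norm of $u$ saturates at $T_+$, forces each non-vanishing profile to be a nonzero solution with the compactness property. By the rigidity arguments recorded in Remark \ref{rem:scablw} and the classification Theorem \ref{thm:classif}, and the Aubin-Talenti characterization \eqref{CarW} of $W$, each such profile equals $W$ up to the symmetries of the equation; the Pythagorean expansion of the energy \eqref{lab9} (and of the momentum) and the decoupling \eqref{lab11} of the nonlinear interactions bound the number of profiles and drive the remainder to zero. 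The exterior-energy (channels-of-energy) estimate for the free wave equation enters here to rule out profiles carrying only ``linear'' energy, so that the linear remainder contributes nothing beyond $v_L$.

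The hard part is upgrading sequential convergence to convergence for \emph{all} $t\to T_+$, with continuously varying parameters $(\lambda_j(t),x_j(t))$ whose ratios diverge. What must be excluded is that energy oscillates between distinct multi-bubble configurations or streams in from spatial infinity; I would rule this out by combining the channels-of-energy rigidity (a nonzero free wave ejects a fixed fraction of its energy through every outgoing light cone) with a virial/Morawetz-type monotonicity (Proposition \ref{prop:Morawetz} and localized variants) and a careful accounting of the radiation shed between consecutive times, in the spirit of a ``no-return'' lemma. The genuine obstacles lie here: the exterior-energy estimate degenerates in even dimensions and for non-radial data, so this step is currently complete only for $d\in\{3,4,5,6\}$ in the radial case and for a few non-radial or equivariant models; and controlling the trajectories $x_j(t)$ and the mutual separation of the bubbles needs a sharp modulation analysis whose error terms are delicate precisely because the nonlinearity is merely H\"older, not Lipschitz, in high dimensions. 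A complete proof for general $p$ and general $d$ without symmetry is beyond current reach.
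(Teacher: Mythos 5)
The statement you were asked about is a \emph{conjecture}, and the paper does not prove it; it only states it (with the precise version given in Conjecture~\ref{C:E1}) and then surveys the partial results. You correctly recognize this, and your ``program'' in fact tracks the approach that underlies the known partial results summarized in the paper: profile decomposition plus rigidity and classification of compact solutions to obtain \emph{sequential} resolution (this is essentially the route of \cite{DJKM} and Theorem~\ref{thm:djkm16}), and then channels-of-energy plus a no-return / monotonicity argument to upgrade to convergence along \emph{all} times (this is the route of \cite{DKM13} and Theorem~\ref{thm:srreect123}). Your identification of the bottleneck --- that the exterior-energy estimate degenerates in even dimensions and for non-radial data, and that continuous-in-time modulation of several bubbles requires a no-return mechanism --- matches the paper's account of why the full conjecture is open.

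One calibration point: the paper's remark after Conjecture~\ref{C:E1} records the full (all-times) radial resolution only for $d=3$, with other dimensions known only along sequences of times; your assertion that the all-times radial statement is ``complete for $d\in\{3,4,5,6\}$'' is ahead of what the paper cites. Also, your first step in the finite-time case (finite speed of propagation yielding a strong limit away from the singular set, hence a regular part $\vec u^*$) is correct and standard, but it relies on the singular set being finite, which itself requires a small-data argument; the paper makes this explicit by including the finiteness of $\mathcal{S}$ as part of the statement rather than as something to be assumed. With these caveats, your proposal is an accurate description of the state of the art and of the obstacles, not a proof --- which is exactly what the statement, being a conjecture, admits.
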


 Now, we give the precise soliton resolution for focusing energy-critical NLW.
Let Lorentz transforms of $W$
\begin{align}
W_{\ell}(t,x)\triangleq W\bigg(\Big(-\frac{t}{\sqrt{1-|\ell|^2}}+\frac{1}{|\ell|^2} \Big(\frac{1}{\sqrt{1-|\ell|^2}}
-1\Big)\ell\cdot x\Big)\ell+x\bigg).\label{travelling_waves}
\end{align}
 Note that
$$W_{\ell}(t,x)=W_{\ell}(0,x-t\ell),\quad\; \ell \in \R^d, \;|\ell|<1,$$
so  that $W_{\ell}$ is a solitary wave traveling at speed $|\ell|$. The energy of $W_{\ell}$ is given by:
\begin{equation}
\label{EQl}
E(\vec{W}_{\ell}(0))=\frac{1}{\sqrt{1-|\ell|^2}}E(\vec{W}(0))\underset{|\ell|\to 1}{\longrightarrow}+\infty.
\end{equation}
It is conjectured that any bounded  solution of focusing energy-critical NLW is a sum of modulated, decoupled traveling waves and a scattering part. More precisely:

\begin{conjecture}[Soliton resolution for general type II blowup solutions] \label{C:E1}
 Let $u$ be a solution of of {\rm (FNLW)} on $[0,T_+)\times \R^d$ such that
 \begin{equation}\label{E2}
  \sup_{t\in[0,T_+)}\left\|\vec{u}(t)\right\|_{\dot{H}^1\times L^2}<\infty.
 \end{equation}

{\rm (i)} If $T_+=+\infty$,
 then there exist a solution $v_{L}$ of the linear wave equation
\begin{equation}
 \label{E3}
 \left\{\begin{aligned}
&( \partial_t^2-\Delta)v_{L}=0,\\
&\vec{v}_L|_{t=0}=(v_0,v_1)\in \dot{H}^1\times L^2,
\end{aligned}\right.
\end{equation}
an integer $J\geq 0$, and for $j\in \{1,\ldots,J\}$, a (nonzero) traveling wave $W_{\ell_j}^j$ ($|\ell_j|<1$),
 and parameters $x_j(t)\in \R^d$, $\lambda_j(t)\in \R^+$  such that
\begin{equation}
\label{expansion}
\lim_{t\to+\infty}
 \vec{u}(t)-\vec{v}_{L}(t)-\sum_{j=1}^J \left(\frac{1}{\lambda_j(t)^{\frac{N-2}{2}}}W_{\ell_j}^j\left(0,\frac{\cdot-x_j(t)}{\lambda_j(t)}\right),\frac{1}{\lambda_j(t)^{\frac{N}{2}}}
 \partial_t W_{\ell_j}^j\left(0,\frac{\cdot-x_j(t)}{\lambda_j(t)}\right)\right)=0
\end{equation}
in $\dot{H}^1\times L^2$ and
\begin{equation*}\left\{
\begin{aligned}
 &\forall j\in \{1,\ldots,J\},\quad \lim_{t\to\infty}\frac{x_j(t)}{t}=\ell_j,\quad \lim_{t\to\infty}\frac{\lambda_j(t)}{t}=0\\
 &\forall j,k\in \{1,\ldots,J\},\quad j\neq k\Longrightarrow \lim_{t\to+\infty}\frac{|x_j(t)-x_k(t)|}{\lambda_j(t)}+\frac{\lambda_j(t)}{\lambda_k(t)}+\frac{\lambda_k(t)}{\lambda_j(t)}=+\infty\\
 &W_{\ell}(t,x)=W\bigg(\Big(-\frac{t}{\sqrt{1-|\ell|^2}}+\frac{1}{|\ell|^2}
 \Big(\frac{1}{\sqrt{1-|\ell|^2}}-1\Big)\ell\cdot x\Big)\ell+x\bigg).
\end{aligned}\right.\end{equation*}

{\rm (ii)} If $T_+<+\infty$, then the following is true. Define the singular set
\begin{equation}
\mathcal{S}\triangleq\Big\{x_{\ast}\in \R^d:\,\|u\|_{L_t^\frac{d+2}{d-2}L_x^{\frac{2(d+2)}{d-2}}
\big(B_{\epsilon}(x_{\ast})\times [T_+-\epsilon,\,T_+)\big)}=\infty,\; {\rm for} \;\forall\;\epsilon>0\Big\}.
\end{equation}
Then $\mathcal{S}$ is a finite set. Let $x_{\ast}\in\mathcal{S}$ be a singular point. Then there exist an integer $J_{\ast}\ge 1$, $r_{\ast}>0$, $\vec{v}_0\in \dot{H}^1\times L^2$,   scales $\lambda_n^j$ with $0<\lambda_n^j\ll T_+-t_n$, positions $c_n^j\in R^d$ satisfying $c_n^j\in B_{\beta (T_+-t)}(x_{\ast})$ for some $\beta\in(0,1)$ with $\ell_j=\lim\limits_{n\to\infty}\frac{c_n^j-x_{\ast}}{T_+-t}$ well defined, and traveling waves $W_{\ell_j}^j$, for $1\leq j\leq J_{\ast}$, such that inside the ball $B_{r_{\ast}}(x_{\ast})$ we have
\begin{align}
\vec{u}(t_n)=\vec{v}_0+&\sum_{j=1}^{J_{\ast}}\,\Big((\lambda_n^j)^{-\frac{d}{2}+1}\,
W_{\ell_j}^j\Big(\frac{x-c_n^j}{\lambda_n^j},\,0\Big),\,(\lambda_n^j)^{-\frac{d}{2}}\,
\partial_tW_{\ell_j}^j\Big(\frac{x-c_n^j}{\lambda_n^j},\,0\Big)\Big)\nonumber\\
&+\,\,o_{\dot{H}^1\times L^2}(1), \quad\,\,{\rm as}\,\,n\to\infty.\label{eq:maindecompositionhaha}
\end{align}
In addition, the parameters $\lambda_n^j,\,c_n^j$ satisfy the pseudo-orthogonality condition
\begin{equation}
\label{ortho123}
\frac{\lambda_n^j}{\lambda_n^{j'}}+\frac{\lambda_n^{j'}}{\lambda_n^j}+\frac{|c^j_n-c^{j'}_n|}{\lambda^j_n}\to\infty,
\end{equation}
as $n\to\infty$, for each $1\leq j\neq j'\leq J_{\ast}$.

\end{conjecture}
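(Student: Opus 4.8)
This is a conjecture; the plan is to pursue the concentration--compactness/rigidity scheme of Duyckaerts-Kenig-Merle reinforced by the \emph{channels of energy} (exterior energy) method, which is known to succeed for radial data in odd dimensions and in several low-dimensional non-radial cases. I would establish the global case $T_+=+\infty$ first and then reduce the finite-time case to it by finite speed of propagation. \emph{Step 1 (sequential decomposition).} Fix any sequence $t_n\to+\infty$. Since $\vec u(t_n)$ is bounded in $\dot H^1\times L^2$ by \eqref{E2}, apply the linear profile decomposition of Theorem~\ref{thm:inepro} to obtain profiles $(V_0^j,V_1^j)$ with parameters $(\lambda_n^j,x_n^j,t_n^j)$ and a remainder with vanishing Strichartz norm; let $U^j$ be the associated nonlinear profiles. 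By the perturbation/stability theory for a superposition of decoupled nonlinear profiles, at least one $U^j$ must fail to scatter (otherwise $\vec u$ would have finite $S$-norm near $+\infty$ and would scatter, contradicting Type~II blow-up), and every non-scattering profile has a trajectory that is pre-compact in $\dot H^1\times L^2$ modulo the translation--dilation group.

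\emph{Step 2 (identification of the bubbles).} The key rigidity statement to be proved is that a finite-energy solution of (FNLW) with pre-compact trajectory modulo symmetries is, up to those symmetries, a boosted ground state $W_\ell$ (the constant $0$ being excluded once the profile is non-scattering). Below the threshold this is forced by Theorem~\ref{thm:kmw08}; at the threshold by Theorem~\ref{thm:classif} (the connecting orbits $W^\pm$ are themselves asymptotic to $W$); above the threshold one uses a virial/Morawetz identity together with the non-existence of nonzero finite-energy self-similar solutions to rule out expansion, and then an exterior-energy lower bound to pin the profile down to exactly one $W_\ell$. The orthogonality \eqref{ortho123} is the profile separation \eqref{lab5}; one gets $x_j(t)/t\to\ell_j$ because a boosted soliton travels at speed $|\ell_j|$ and $\lambda_j(t)/t\to0$ because a soliton-scale concentration cannot escape the light cone; finiteness of $J$ follows from the Pythagorean identity \eqref{lab9}, each traveling wave carrying energy $\geq E(\vec W(0))$. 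Subtracting the $J$ bubbles from $\vec u(t)$ and evolving the difference produces, in the limit, a fixed linear solution $v_L$.

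\emph{Step 3 (uniformity in time, and finite-time blow-up).} To upgrade the subsequential statement to \eqref{expansion}, run a connectedness argument: the set of times admitting a decomposition into a given number of bubbles is open and closed because a quantitative exterior-energy trichotomy forbids bubbles from being created or annihilated in finite time, so $J$ and $v_L$ stabilize. For $T_+<+\infty$, finite speed of propagation gives that the singular set $\mathcal S$ is finite; near $x_\ast\in\mathcal S$ one works inside the backward light cone, extracts the \emph{regular part} $\vec v_0$ as a weak $\dot H^1\times L^2$ limit of $\vec u(t_n)$, and applies Steps~1--2 to the rescaled \emph{singular part} $\vec a=\vec u-\vec v$ (with $\lambda_n^j\ll T_+-t_n$ and $\ell_j=\lim(c_n^j-x_\ast)/(T_+-t_n)$), yielding \eqref{eq:maindecompositionhaha}; an additional truncated exterior-energy estimate adapted to the shrinking cone is needed to show that $\vec a$ leaves no residual energy in the self-similar region of the cone.

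\emph{Main obstacle.} The decisive — and currently unresolved — point is the rigidity of Step~2 in full non-radial, all-dimensional generality: the exterior (channels of energy) lower bound that forces a non-scattering profile to be precisely a superposition of boosted ground states can \emph{degenerate}, since in even dimensions and for non-radial data in certain symmetry classes the free flow radiates no energy past light cones. Overcoming this requires either a sharper monotonicity/virial identity, a two-sided channels argument exploiting $t\to\pm\infty$ simultaneously, or a genuinely new mechanism; establishing such a rigidity is the core difficulty. A second, already hard, obstacle in the non-radial finite-time case is the precise control of the bubble positions $c_n^j$ and velocities $\ell_j$, a difficulty already visible in Theorem~\ref{thm:altmove}.
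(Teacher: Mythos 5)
This statement is a \emph{Conjecture} in the paper (Conjecture~\ref{C:E1}), and the paper offers no proof of it; it only surveys the partial results recorded in the remark that follows (full resolution for radial $d=3$ in \cite{DKM13}; resolution along a \emph{sequence} of times in \cite{DJKM,Kenig4dWave,Casey,JiaKenig}; resolution for small nonradial type II in \cite{DKM12JEMS}). Your proposal is therefore, correctly, not a proof but a roadmap, and you say so explicitly. The roadmap you describe is essentially the known Duyckaerts--Kenig--Merle concentration-compactness/channels-of-energy program, and the \emph{Main obstacle} you single out --- promoting the sequential rigidity/classification of Step~2 to a full-time statement in arbitrary (in particular even) dimensions and without radial symmetry, where the exterior-energy lower bound degenerates --- is exactly the point at which the conjecture remains open. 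So your assessment is accurate: Step~1 and the sequential version of Step~2 are now essentially Theorem~\ref{thm:djkm16} of \cite{DJKM}, and Step~3 (the open/closed stabilization argument upgrading a sequence of times to all times) is where no general mechanism is yet available.

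Two small refinements worth flagging. First, your plan of ``establish $T_+=+\infty$ first and reduce $T_+<+\infty$ to it by finite speed of propagation'' is not how the finite-time case is actually handled even in the known partial results: one does not literally reduce to the global case, but instead performs the profile analysis directly on the backward light cone at $x_\ast\in\mathcal S$, extracting the regular part $\vec v_0$ as a weak limit and working with the singular part $\vec a=\vec u-\vec v$ (precisely as in Remark~\ref{remk1.1.1}); the two cases are analogous but parallel, not one reduced to the other. Second, in Step~1 the dichotomy ``some $U^j$ fails to scatter'' is only needed when $\vec u$ itself does not scatter; for a globally bounded scattering solution the conjecture is trivially satisfied with $J=0$ and $v_L$ the scattering data, so your argument there should be phrased conditionally. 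With those caveats, your proposal faithfully reflects both the state of the art and the genuine difficulty, and it correctly declines to claim a proof.
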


\begin{remark}
The soliton resolution conjecture was proved in \cite{DKM13}, in the radial case for $d=3$ as in Theorem \ref{thm:srreect123} below. For other dimensions in the radial case, soliton resolution is only known along a sequence of times, see \cite{Kenig4dWave,Casey,JiaKenig}. In the nonradial case, the soliton resolution was proved for type II blow up solutions in \cite{DKM12JEMS} for $d=3,\,5$, under an extra smallness condition. For general large data (does not scatter forward in time), it was proved in \cite{DKMprofile} that along a sequence of times, the solution converges locally, after an appropriate rescaling, to a modulated soliton. The proof of \cite{DKMprofile} relies on a compactness/rigidity argument that works for a large class of dispersive equations.

Recently, Duyckaerts-Jia-Kenig-Merle \cite{DJKM} (see Theorem \ref{thm:djkm16} bleow) prove the soliton resolution conjecture for general type II solutions along a sequence of times. This is an important step towards the full soliton resolution in the nonradial case and without any size restrictions.
\end{remark}

If $f$ and $g$ are two positive functions defined in a neighborhood of $\ell\in \R\cup\{\pm\infty\}$, we will write
$$f(t)\ll g(t) \text{ as }t\to \ell \text{ if and only if }
\lim_{t\to \ell}\frac{f(t)}{g(t)}=0.$$
Soliton resolution conjecture for radial solution in $\R^3$ was proved by Duyckaerts-Kenig-Merle\cite{DKM13}.

\begin{theorem}[Soliton resolution conjecture for radial solution in $\R^3$, \cite{DKM13}]\label{thm:srreect123}
 Let $u$ be a radial solution of {\rm(FNLW)} and $T_+=T_+(u)$. Then one of the following holds:
\begin{itemize}
 \item {\rm\bf Type I blow-up:} $T_+<\infty$ and
\begin{equation}
\label{BUp123}
\lim_{t\to T_+} \|(u(t),\partial_tu(t)\|_{\dot{H}^1\times L^2}=+\infty.
\end{equation}
\item {\rm \bf Type II blow-up:} $T_+<\infty$ and there exist $(v_0,v_1)\in\dot{H}^1\times L^2$, an integer
$J\in \mathbb N\setminus\{0\}$, and for all $j\in \{1,\ldots,J\}$, a sign $\iota_j\in \{\pm 1\}$, and a positive function $\lambda_j(t)$ defined for $t$ close to $T_+$ such that
\begin{gather}
\label{hyp_lambda_bup122}
 \lambda_1(t)\ll \lambda_2(t)\ll \ldots \ll\lambda_{J}(t)\ll T_+-t\text{ as }t\to T_+\\
\label{expansion_u_bup122}
\lim_{t\to T_+}
\bigg\|(u(t),\partial_tu(t))-\Big(v_{0}+\sum_{j=1}^J\frac{\iota_j}{\lambda_j^{1/2}(t)}W\Big(\frac{x}{\lambda_{j}(t)}\Big),
v_{1}\Big)\bigg\|_{\dot{H}^1\times L^2}=0.
\end{gather}
\item {\rm\bf Global solution:} $T_+=+\infty$ and there exist a solution $v_{L}$ of the linear wave equation, an integer $\mathbb N\setminus\{0\}$, and for all $j\in \{1,\ldots,J\}$, a sign $\iota_j\in \{\pm 1\}$, and a positive function $\lambda_j(t)$ defined for large $t$ such that
\begin{gather}
\label{hyp_lambda123}
 \lambda_1(t)\ll \lambda_2(t)\ll \ldots \ll\lambda_{J}(t)\ll t\text{ as }t\to +\infty\\
\label{expansion}
\lim_{t\to+\infty}
\bigg\|(u(t),\partial_tu(t))-\Big(v_{L}(t)+\sum_{j=1}^J\frac{\iota_j}{\lambda_j^{1/2}(t)}W\Big(\frac{x}{\lambda_{j}(t)}\Big),
\partial_tv_{L}(t)\Big)\bigg\|_{\dot{H}^1\times L^2}=0.
\end{gather}
\end{itemize}
\end{theorem}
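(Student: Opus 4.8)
The proof is by the \emph{method of channels of energy}. We organize the argument according to the dichotomy $T_+<\infty$ versus $T_+=+\infty$. If $T_+<\infty$ and $\sup_{t\in[0,T_+)}\|(u(t),\partial_tu(t))\|_{\dot H^1\times L^2}=+\infty$ we are in the Type I case and there is nothing to prove; so assume henceforth either $T_+=+\infty$, or $T_+<\infty$ with bounded $\dot H^1\times L^2$ norm. The first task is to peel off a ``free'' part. In the global case one applies the linear profile decomposition (Theorem \ref{thm:inepro}) along a sequence $t_n\to+\infty$, and, using the Strichartz estimate (Theorem \ref{thm;stricest}) and the standard perturbation theory, one extracts a solution $v_L$ of the free wave equation — the radiation — such that, asymptotically, all the energy of $\vec u(t)-\vec v_L(t)$ sits inside the interior cone $\{|x|\le t\}$. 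In the finite-time case one uses finite speed of propagation (Lemma \ref{fsprop}) to reduce to compactly supported data and isolates a regular part $v$, a solution of {\rm (FNLW)} in a neighborhood of $T_+$, so that $\vec a(t):=\vec u(t)-\vec v(t)$ is concentrated, up to $o(1)$ in $\dot H^1\times L^2$, in a shrinking neighborhood of the blow-up point. In both cases the problem is reduced to analyzing the concentrated remainder.

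The engine of the proof is the exterior energy (``channel of energy'') lower bound for the $3$-dimensional radial free wave equation: for every radial $(f,g)\in\dot H^1\times L^2(\R^3)$ and every $R>0$,
\[
\max_{\pm}\ \lim_{t\to\pm\infty}\int_{|x|\ge |t|+R}|\nabla_{t,x}S(t)(f,g)|^2\,dx\ \ge\ \tfrac12\int_{|x|\ge R}\big(|\nabla f|^2+|g|^2\big)\,dx .
\]
In $\R^3$ this estimate is essentially sharp and is tied to the strong Huygens principle (Lemma \ref{hgp}); it has no analogue of comparable strength in even or higher dimensions, which is precisely why the classification is, so far, available only for radial data in $\R^3$. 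Inserting it into the Duhamel formula \eqref{eq:Duhamel} — and using that in the exterior region the nonlinearity $|u|^{4}u$ is small, being controlled through the radial Sobolev inequality \eqref{RadSobEmb} — one obtains a rigidity statement: a nonzero radial solution of {\rm (FNLW)} which is precompact in $\dot H^1\times L^2$ modulo scaling and whose exterior energy tends to zero in \emph{both} time directions must be stationary, hence by the Aubin--Talenti characterization \eqref{CarW} equal to $\pm W_\lambda$ for a fixed $\lambda>0$. This is what forces every concentrating object inside the remainder to be a copy of $W$.

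One then extracts the bubbles along a sequence. Choose $t_n\to T_+$ realizing the maximal concentration of the remainder and apply the profile decomposition to $\vec a(t_n)$; radiality kills the space translations, so the profiles carry only scales $\lambda_n^j$ and time shifts $t_n^j$. The channel-of-energy bound excludes profiles with $|t_n^j|/\lambda_n^j\to\infty$ — such a profile would send a definite amount of energy through the cone, contradicting the concentration of $\vec a$ — so all $t_n^j$ may be taken $0$; the same bound, via the rigidity statement, forces each nonlinear profile to equal $\pm W$ or to vanish. Ordering the surviving scales $\lambda_n^1\ll\cdots\ll\lambda_n^{J}$, peeling them off one at a time, and noting that each bubble carries energy $E(W,0)+o(1)$ so that the uniform energy bound caps $J$, one reaches the expansion \eqref{hyp_lambda_bup122}--\eqref{expansion_u_bup122} (and its global-solution counterpart) \emph{along the sequence} $(t_n)$, with the outermost scale obeying $\lambda_n^{J}\ll T_+-t_n$ (resp.\ $\ll t_n$) — again a no-radiation consequence of the channel of energy.

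The last, and in my view hardest, step is to upgrade the expansion from a sequence of times to all $t\to T_+$ with \emph{continuous} modulation parameters $\lambda_j(t)$. One introduces modulation equations for the parameters, proves they are well defined and $C^1$ for $t$ near $T_+$, and must rule out that the solution approaches the multi-bubble configuration only intermittently. This requires a finite-time refinement of the exterior energy estimate — a quantitative control of the energy flux through truncated light cones on intervals $[t,T_+)$ — together with the non-existence of non-trivial radial ``pure-radiation'' or self-similar solutions concentrating at the tip of the cone. I expect this continuity-in-time analysis, rather than the extraction along a sequence, to be the principal obstacle, and it is the point at which the special features of dimension three (exact Huygens principle, sharp exterior energy bound) are used most essentially.
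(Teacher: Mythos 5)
The paper does not prove Theorem~\ref{thm:srreect123}; it states it as a result of Duyckaerts--Kenig--Merle \cite{DKM13}, so the only honest comparison is against that reference. Your outline does identify the correct architecture of \cite{DKM13}: peel off the Type~I case, isolate a free radiation (or regular part) by profile decomposition and finite speed of propagation, drive the classification with the $3$d radial exterior-energy (channel of energy) estimate, derive a rigidity theorem forcing compact-modulo-scaling objects to be $\pm W$, extract the multi-bubble configuration along a sequence of times, and finally upgrade to continuous time. That is the right skeleton.

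However, there is a concrete error in the central estimate. The exterior-energy lower bound you write,
\[
\max_{\pm}\ \lim_{t\to\pm\infty}\int_{|x|\ge |t|+R}|\nabla_{t,x}S(t)(f,g)|^2\,dx\ \ge\ \tfrac12\int_{|x|\ge R}\big(|\nabla f|^2+|g|^2\big)\,dx,
\]
is \emph{false} for $R>0$. Take radial data with $f(r)=1/r$ and $g=0$ on $r\ge R$ (extended arbitrarily inside). Since $1/r$ is harmonic and static, by finite speed of propagation $S(t)(f,g)=1/r$ on $\{r\ge R+|t|\}$ for all $t$, and the exterior energy $\int_{r\ge R+|t|}|\nabla(1/r)|^2\,dx = (R+|t|)^{-1}\to 0$, while the right-hand side equals $R^{-1}>0$. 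The true statement in \cite{DKM13} involves the orthogonal projection $\pi_R^\perp$ onto the complement of the two-dimensional subspace spanned by $(\mathbf{1}_{r\ge R}/r,\,0)$ and $(0,\,\mathbf{1}_{r\ge R}/r^2)$: one only gets $\tfrac12\|\pi_R^\perp(f,g)\|_{\dot H^1\times L^2(|x|\ge R)}^2$ on the right. The unprojected form is correct only at $R=0$. Tracking this two-dimensional kernel is not cosmetic: it is precisely what makes the rigidity argument (your ``must be stationary, hence $\pm W$'' step) delicate, since the projection must be analyzed together with the nonlinear Duhamel terms.

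Separately, your final step -- upgrading the expansion from a sequence $t_n\to T_+$ to all $t\to T_+$ with continuous $\lambda_j(t)$ -- is described only in outline (``modulation equations'' plus a ``finite-time refinement''). In \cite{DKM13} this is accomplished by a quantitative \emph{no-return} argument: applying the channel-of-energy estimate to differences of nonlinear solutions (not merely to the linear flow) to show that once $\vec u(t)$ enters a sufficiently small $\dot H^1\times L^2$-neighborhood of a decoupled multi-bubble configuration it cannot exit and re-enter it, which then propagates the sequential decomposition to the full limit. As written, your proposal does not indicate which estimate replaces the $t\to\pm\infty$ limits on a finite window, which is exactly where the difficulty concentrates; flagging it as ``the principal obstacle'' is accurate, but a proof cannot stop there.
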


Soliton resolution conjecture along a sequence of times was solved by Duyckaerts-Jia-Kenig-Merle\cite{DJKM}.

\begin{theorem}[Soliton resolution conjecture along a sequence of times,\cite{DJKM}]\label{thm:djkm16}
Let $(u,\pa_tu)\in C([0,T_+),\dot{H}^1\times L^2(\R^d))$ with $u\in L_t^\frac{d+2}{d-2}L_x^{\frac{2(d+2)}{d-2}}(\R^d\times [0,T))$ for any $T<T_+$,
 be a solution to  {\rm (FNLW)} that satisfies
 \begin{equation}
\label{bounded}
\sup_{t\in [0,T_+)}\,\|(u,\pa_tu)(t)\|_{\dot{H}^1\times L^2}<\infty,
\end{equation}
where $T_+$ denotes the maximal existence time of $u$.

{\bf Case I: $T_+<\infty$}.  Define the singular set
\begin{equation}
\mathcal{S}\triangleq\Big\{x_{\ast}\in \R^d:\,\|u\|_{L_t^\frac{d+2}{d-2}L_x^{\frac{2(d+2)}{d-2}}(B_{\epsilon}(x_{\ast})\times [T_+-\epsilon,\,T_+))}
=\infty,\;{\rm for},\,\forall \;\epsilon>0\Big\}.
\end{equation}
Then $\mathcal{S}$ is a finite set. Let $x_{\ast}\in\mathcal{S}$ be a singular point.
Then there exist an integer $J_{\ast}\ge 1$, $r_{\ast}>0$, $\vec{v}_0\in \dot{H}^1\times L^2$,
a time sequence $t_n\uparrow T_+$,  scales $\lambda_n^j$ with $0<\lambda_n^j\ll T_+-t_n$,
 positions $c_n^j\in \R^d$ satisfying $c_n^j\in B_{\beta (T_+-t_n)}(x_{\ast})$
 for some $\beta\in(0,1)$ with $\ell_j=\lim\limits_{n\to\infty}\frac{c_n^j-x_{\ast}}{T_+-t_n}$
 well defined, and traveling waves $W_{\ell_j}^j$, for $1\leq j\leq J_{\ast}$, such that inside the ball $B_{r_{\ast}}(x_{\ast})$ we have
\begin{align}
\vec{u}(t_n)=\vec{v}_0+\sum_{j=1}^{J_{\ast}}&\Big((\lambda_n^j)^{-\frac{d}{2}+1}\, W_{\ell_j}^j\Big(\frac{x-c_n^j}{\lambda_n^j},\,0\Big),
\,(\lambda_n^j)^{-\frac{d}{2}}\, \partial_tW_{\ell_j}^j\Big(\frac{x-c_n^j}{\lambda_n^j},\,0\Big)\Big)\nonumber\\
 &+\,\,o_{\dot{H}^1\times L^2}(1), \quad\,\,{\rm as}\,\,n\to\infty.\label{eq:maindecompositionhaha}
\end{align}
In addition, the parameters $\lambda_n^j,\,c_n^j$ satisfy the pseudo-orthogonality condition
\begin{equation}
\label{ortho}
\frac{\lambda_n^j}{\lambda_n^{j'}}+\frac{\lambda_n^{j'}}{\lambda_n^j}+\frac{|c^j_n-c^{j'}_n|}{\lambda^j_n}\to\infty,
\end{equation}
as $n\to\infty$, for each $1\leq j\neq j'\leq J_{\ast}$.\\

{\bf Case II: $T_+=\infty$.}
There exist a finite energy solution $u^L$ to the linear wave equation
$$ \partial_{tt}u^L-\Delta u^L=0\text{ in }\R^d\times \R,$$
an integer $J_{\ast}\ge 0$ \footnote{If $J_{\ast}=0$, then there is no soliton in the
decomposition below and the solution scatters.}, a time sequence $t_n\uparrow \infty$,
scales $\lambda_n^j$ with $\lambda_n^j>0$ and $\lim\limits_{n\to\infty}\frac{\lambda^j_n}{t_n}=0$,
positions $c_n^j\in \R^d$ satisfying $c_n^j\in B_{\beta t_n}(0)$ for some $\beta\in(0,1)$
with $\ell_j=\lim\limits_{n\to\infty}\frac{c_n^j}{t_n}$ well defined,
and traveling waves $W_{\ell_j}^j$, for $1\leq j\leq J_{\ast}$, such that
\begin{align}
\vec{u}(t_n)=\vec{u}^L(t_n)+\sum_{j=1}^{J_{\ast}}&\Big((\lambda_n^j)^{-\frac{d}{2}+1}\, W_{\ell_j}^j
\Big(\frac{x-c_n^j}{\lambda_n^j},\,0\Big),\,(\lambda_n^j)^{-\frac{d}{2}}\,
\partial_tW_{\ell_j}^j\Big(\frac{x-c_n^j}{\lambda_n^j},\,0\Big)\Big)\nonumber\\
&+\,\,o_{\dot{H}^1\times L^2}(1), \quad\,\,{\rm as}\,\,n\to\infty.\label{eq:maindecompositionhahaGlobal}
\end{align}
In addition, the parameters $\lambda_n^j,\,c_n^j$ satisfy the pseudo-orthogonality condition \eqref{ortho}.
\end{theorem}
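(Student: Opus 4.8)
The plan is to prove Theorem~\ref{thm:djkm16} by the concentration--compactness/rigidity scheme of Kenig--Merle, but carried out \emph{along a well-chosen subsequence of times}, so as to circumvent the failure, in even dimensions and in the non-radial setting, of the sharp exterior (``channel of energy'') estimates that power the full soliton resolution in \cite{DKM13}.

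First I would isolate the radiation. In Case II ($T_+=+\infty$), using the uniform bound \eqref{bounded} and Strichartz theory, one extracts a finite-energy linear solution $u^L$ such that the nonlinear remainder $\vec a(t):=\vec u(t)-\vec u^L(t)$ is \emph{asymptotically non-radiative}, meaning $\big\|S(\cdot-t)\vec a(t)\big\|_{L^{2(d+1)/(d-2)}_{t,x}([t,\infty)\times\R^d)}\to 0$ as $t\to+\infty$ (if no such $u^L$ were needed, $\vec u$ would scatter and $J_\ast=0$). In Case I ($T_+<+\infty$), finite speed of propagation localizes the blow-up to a finite set $\mathcal S$ (finiteness coming from the fact that each concentrating bubble costs energy $\ge E(\vec W(0))$), and near a fixed $x_\ast\in\mathcal S$ the weak-$\dot H^1\times L^2$ limit of $\vec u(t)$ provides the regular part $\vec v_0$, after which $\vec a(t):=\vec u(t)-\vec v_0$ is non-radiative inside a small ball $B_{r_\ast}(x_\ast)$.

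Next I would apply, at a carefully chosen sequence $t_n\uparrow T_+$, the linear profile decomposition of Theorem~\ref{thm:inepro} to $\vec a(t_n)$: up to a subsequence this produces profiles $(V_0^j,V_1^j)$ with scales $\lambda_n^j$ and centers $c_n^j$ obeying the orthogonality \eqref{lab5}, the Pythagorean energy identity \eqref{lab9}, and a remainder $\vec w_n^J$ whose free evolution has Strichartz norm tending to $0$ as $J\to\infty$. The sequence $t_n$ is selected so that a virial/Morawetz functional evaluated on $\vec a(t_n)$ tends to zero, which is possible since that functional has finite total variation; this choice forces every \emph{nonlinear} profile $U^j$ (the solution of {\rm (FNLW)} emanating from $(V_0^j,V_1^j)$) to be non-scattering, indeed non-radiative in both time directions along the relevant sequence of scales --- a scattering profile would have to be reabsorbed into the radiation $u^L$, contradicting the extraction in the previous step.

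The crux is then a rigidity statement: any finite-energy solution of {\rm (FNLW)} that is non-radiative (along a sequence of times, in both directions) equals, up to the symmetries of the equation, a Lorentz-boosted ground state $W_\ell$ with $|\ell|<1$. I would obtain $\ell$ as the conserved momentum-to-energy ratio via a Morawetz identity, then apply the Lorentz transform \eqref{travelling_waves} to reduce to a non-travelling, non-radiative solution and invoke an exterior-energy lower bound for the free wave equation --- which in $\R^d$ for even $d$ is only available along a subsequence of scales, and this is exactly the source of the ``sequence of times'' hypothesis --- to conclude that the associated elliptic profile saturates the Sobolev inequality \eqref{SobolevIn}, hence is $\pm W$ up to scaling and translation by \eqref{CarW}. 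This identifies the profiles $W_{\ell_j}^j$; the Pythagorean identity \eqref{lab9} together with $E(\vec W_{\ell_j}(0))\ge E(\vec W(0))>0$ (see \eqref{EQl}) bounds their number, giving $J_\ast<\infty$, while \eqref{ortho} is the translation of \eqref{lab5}. Finite propagation speed and \eqref{bounded} then yield $\lambda_n^j\ll T_+-t_n$ in Case I and $\lambda_n^j/t_n\to 0$, $c_n^j/t_n\to\ell_j$ with $c_n^j$ in the stated balls in Case II, and a long-time perturbation argument around the now-understood multi-bubble-plus-radiation configuration upgrades the Strichartz-smallness of $\vec w_n^J$ to $\dot H^1\times L^2$-smallness, closing \eqref{eq:maindecompositionhaha} and \eqref{eq:maindecompositionhahaGlobal}. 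The hard part is this rigidity step: handling the non-compactness of the Lorentz boosts and squeezing out enough exterior energy when $d$ is even are precisely what prevent the conclusion from holding in the full limit $t\to T_+$ rather than merely along $t_n\uparrow T_+$.
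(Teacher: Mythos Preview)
The paper is a survey and does not prove Theorem~\ref{thm:djkm16}; it only states the result and refers to \cite{DJKM}. So there is no in-paper proof to compare your proposal against.

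That said, your outline has the right large-scale architecture --- extract the radiation, apply a profile decomposition to the remainder along a well-chosen sequence $t_n$, identify each nonlinear profile as a traveling wave, and count bubbles by energy quantization --- but your account of the rigidity step and of \emph{why} one is forced to work along a subsequence is off. You attribute the ``sequence of times'' restriction to the failure of exterior-energy (channel-of-energy) estimates in even dimensions, and you propose to run the rigidity by Lorentz-boosting each profile to rest and then invoking such an exterior estimate to force Sobolev saturation via \eqref{CarW}. This is not how \cite{DJKM} proceeds, and it is not known to work: the channel-of-energy method is \emph{absent} from \cite{DJKM} precisely because no adequate version is available non-radially or in even $d$, not even along subsequences of scales. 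The actual source of the time sequence is different: the relevant Morawetz/virial inequality yields only an \emph{integrated-in-time} bound, so one picks $t_n$ where the integrand is small; this choice kills the profiles concentrating on the boundary of the light cone (those with $|\ell_j|=1$). The remaining interior profiles are then shown to enjoy the compactness property and are classified as $W_{\ell_j}$ by the rigidity of \cite{DKMprofile}, whose proof is itself virial-based rather than channel-based. In short, your step ``exterior energy $\Rightarrow$ elliptic profile saturates Sobolev $\Rightarrow$ $\pm W$'' short-circuits the genuine difficulty and has no known justification in the non-radial, general-dimension setting required here.
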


\subsection{Blowup at infinity for the critical wave equation}\label{subsec:binf}

\begin{theorem}[\cite{DoKr12P}]\label{thm:notscrsl}
There exists an $\varepsilon_0>0$ such that for any $\delta>0$ and $\mu \in \R$ with $|\mu|\leq \varepsilon_0$ there exists
a $t_0\geq 1$ and an energy class solution $u: [t_0,\infty)\times \R^3\to \R$ of {\rm (FNLW)}
of the form
\[ u(t,x)=t^{\frac{\mu}{2}}W(t^\mu x)+\eta(t,x),\quad |x|\leq t,\: t\geq t_0 \]
and for all $t\geq t_0$
\begin{equation*}\left\{\begin{aligned}
&\|\partial_t u(t,\cdot)\|_{L^2(\R^3\backslash B_t)}+\|\nabla u(t,\cdot)\|_{L^2(\R^3\backslash B_t)}&\leq \delta,
\\
&\|\partial_t \eta(t,\cdot)\|_{L^2(B_t)}+\|\nabla \eta(t,\cdot)\|_{L^2(B_t)}\leq \delta,
\end{aligned}\right.\end{equation*}
 where $B_t:=\{x\in\R^3: |x|<t\}$.
\end{theorem}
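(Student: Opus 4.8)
The plan is to construct the solution by a perturbative argument around the explicit profile $t^{\mu/2}W(t^{\mu}\cdot)$ (with $W$ as in \eqref{defW}), using the Fourier-analytic parametrix machinery that produced the finite-time slow-blowup solutions of Theorem~\ref{thm:exblup}, adapted to the regime $\lambda(t)=t^{\mu}$ with $|\mu|\le\varepsilon_0$ small. One works first \emph{inside the forward light cone} $\{|x|\le t\}$ and glues in the exterior afterwards. The first step is to introduce renormalized coordinates adapted to the scaling, e.g. a rescaled radial variable $R=\lambda(t)|x|=t^{\mu}|x|$ together with a logarithmic time $\tau$ with $d\tau/dt=1/t$, and to write $u(t,x)=\lambda^{1/2}(t)\,v(\tau,R)$. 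In these variables the equation $\partial_t^2u-\Delta u=u^{5}$ becomes a damped/transport-perturbed wave equation for $v$ whose leading spatial part is the linearized operator $\mathcal L:=-\Delta-5W^{4}$ around $W$, and whose extra first-order coefficients are $O(\mu)$. Crucially, the bare profile $v\equiv W$ already solves the equation modulo an error $e_0$ that is small because $\lambda$ varies only polynomially: $e_0$ is $O(\mu)$ times localized functions of $R$, hence $O(t^{-\delta})$ in local energy. One then runs the usual hierarchy: correct $W$ by a finite iteration $v_k=W+\sum_{j=1}^{k}w_j$, each $w_j$ obtained by inverting $\mathcal L$ (modulo its kernel) against the previous error, which buys an extra power of $\mu$ (equivalently $t^{-\epsilon}$), so that after finitely many steps the remaining error $e_k$ is as small as desired, with controlled behaviour in the self-similar region. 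Here one must carefully track the slowly decaying tails $W\sim|x|^{-1}$ and, more delicately, the zero-energy resonance $\Lambda W\sim|x|^{-1}$, which in $\R^3$ is borderline non-$L^2$ and drives most of the bookkeeping.

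Next comes the linear analysis. The operator $\mathcal L=-\Delta-5W^{4}$ on $\R^{3}$ in the radial sector has exactly one negative eigenvalue $-k_0^{2}$ with positive ground state $\phi_0$ (indeed $\langle\mathcal L W,W\rangle=-4\int W^{6}<0$), no eigenvalue at $0$, a zero-energy resonance proportional to $\Lambda W$, and absolutely continuous spectrum $[0,\infty)$. I would construct the associated distorted Fourier transform $\mathcal F$ diagonalizing $\mathcal L$ on its continuous part, record the precise behaviour of the spectral density near $\xi=0$ and $\xi=\infty$, and identify the \emph{transference operator} $\mathcal K\sim[\mathcal F,R\partial_R]$ measuring the failure of $\mathcal F$ to commute with the dilation generated by the time-dependence of the potential $5W_{\lambda(t)}^{4}$. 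Writing $u=\lambda^{1/2}(v_k+\varepsilon)$ and applying $\mathcal F$ turns the equation for $\varepsilon$, for each frequency $\xi$, into a damped ODE $\partial_\tau^{2}\widehat\varepsilon+\xi\,\widehat\varepsilon+(\text{lower order})=\widehat{e_k}+\widehat{\mathcal N}+\mathcal K\widehat\varepsilon$; one inverts this by a Duhamel parametrix, separating the unstable coordinate $\langle\varepsilon,\phi_0\rangle$ from the dispersive part, and sets up a contraction mapping for $\varepsilon$ in a Banach space of functions on $(\tau,\xi)$ with mixed decay-in-$\tau$, weighted-in-$\xi$ norms. The single negative eigenvalue forces a one-parameter family of admissible data: one solves the fixed point on the stable subspace and uses a topological shooting / Lyapunov--Schmidt argument — exploiting continuity of the construction in the parameter $\mu$ — to select data making the unstable component decay; this is where $|\mu|\le\varepsilon_0$ is used quantitatively.

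Finally, the construction yields, for some $t_0\ge 1$, a solution on the cone of the stated form $u(t,x)=t^{\mu/2}W(t^{\mu}x)+\eta(t,x)$ with $\|\nabla\eta(t),\partial_t\eta(t)\|_{L^{2}(B_t)}\le\delta$. To obtain a bona fide energy-class solution on $[t_0,\infty)\times\R^{3}$ one prescribes initial data at $t=t_0$ that coincides with the interior construction on $B_{t_0}$ and is chosen to have energy $<\delta$ on $\R^{3}\setminus B_{t_0}$; by finite speed of propagation (Lemma~\ref{fsprop}, Lemma~\ref{hgp}) the exterior evolution never feels the interior bump, so $\|\nabla u(t),\partial_t u(t)\|_{L^{2}(\R^{3}\setminus B_t)}\le\delta$ persists for all $t\ge t_0$, while the interior and exterior solutions agree on the cone. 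Matching the trace of $\eta$ on the cone to the free exterior radiation is routine. I expect the main obstacle to be the linear part of step two: constructing the parametrix for the wave operator with the moving potential $5W_{\lambda(t)}^{4}$ and controlling the transference operator $\mathcal K$ in the distorted Fourier picture, simultaneously with the $\xi\to 0$ singularity of the spectral measure caused by the non-$L^{2}$ resonance $\Lambda W$ in dimension three; by comparison the instability mode is a genuine but by now standard difficulty dealt with via the shooting argument.
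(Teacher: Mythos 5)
The paper itself gives no proof of Theorem~\ref{thm:notscrsl}; it is a survey statement citing \cite{DoKr12P}, and the actual argument lives in that reference (which in turn builds on the machinery of \cite{KST09}). Your proposal is an accurate reconstruction of the strategy of \cite{DoKr12P}: the renormalized variables $R=t^{\mu}|x|$ and $\tau=\log t$ with the $\lambda^{1/2}v$ ansatz, a finite elliptic iteration producing corrections $w_j$ with errors improved by powers of $\mu$ (exploiting the slowly varying polynomial scaling), the spectral analysis of $\mathcal L=-\Delta-5W^4$ with its single negative eigenvalue, absence of zero eigenvalue, and non-$L^2$ resonance $\Lambda W\sim|x|^{-1}$, the distorted Fourier transform with its transference operator $\mathcal K\sim[\mathcal F,R\partial_R]$, the fixed-point argument for the dispersive error in mixed $(\tau,\xi)$ norms together with a shooting/Lyapunov--Schmidt selection to kill the unstable mode, and finally gluing a small-energy exterior by finite speed of propagation. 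This is the same route as the cited paper, not a genuinely different one, so there is nothing to compare beyond saying you have correctly identified the mechanism. One small remark: in \cite{DoKr12P} the spectral-theoretic heavy lifting (distorted Fourier transform, transference operator, behaviour of the spectral measure at $\xi\to0$ and $\xi\to\infty$) is imported essentially verbatim from \cite{KST09} and need not be reproved; in a full write-up you should make explicit which estimates are being cited and which must be re-derived because the scaling law $\lambda(t)=t^{\mu}$, $t\to\infty$, rather than $\lambda(t)=t^{-1-\nu}$, $t\to0^+$, changes the signs and growth rates in the transport terms generated by $\lambda'/\lambda$. Also, you write the exterior data may be chosen so that ``the exterior evolution never feels the interior bump''; more precisely, by Lemma~\ref{fsprop} one should say the restriction of $u$ to $\R^3\setminus B_t$ depends only on the exterior data (and the trace of the interior construction along the light cone), which is why the smallness of the exterior energy is preserved.
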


It seemed plausible
to expect a strong version of the \emph{soliton resolution conjecture} to hold.

\subsection{Two-bubble solutions}

\begin{theorem}[Two-bubble solutions,\cite{Jen1}]
  \label{thm:deux-bulles} Let $d=6$.
There exists a solution $\vec u: (-\infty, T_0] \to \dot{H}^1\times L^2$ of {\rm(FNLW)} such that
  \begin{equation}
    \label{eq:mainthm}
    \lim_{t\to -\infty}\|\vec u(t) - (\vec W + \vec W_{\frac{1}{\kappa}e^{-\kappa|t|}})\|_{\dot{H}^1\times L^2} = 0,
    \;\;\; \text{with }\kappa \triangleq\sqrt{5/4},
  \end{equation}
  where $\vec{W}=(W,0)$ and
  $$\vec W_\lambda=(W_\lambda,0),~W_\lambda(x)=\frac1{\lambda^2}W(x/\lambda).$$
\end{theorem}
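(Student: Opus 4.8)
The plan is to construct the two-bubble solution by a fixed-point/modulation argument that follows the now-standard bubble-gluing scheme (as in the constructions of Krieger--Schlag--Tataru, Hillairet--Rapha\"el, and Jendrej's own five-dimensional work), adapted to the energy-critical exponent in $d=6$, where the nonlinearity $|u|^{p-1}u$ with $p=2$ is smooth and the critical elliptic profile is $W(x)=(1+|x|^2/24)^{-2}$. The first step is to identify the correct ansatz: one seeks $\vec u(t) = \vec W + \vec W_{\lambda(t)} + \vec g(t)$ with $\lambda(t) \approx \frac{1}{\kappa}e^{-\kappa|t|}$ as $t\to-\infty$, where the scaling rate $\kappa = \sqrt{5/4}$ is not free but is forced: it is the rate at which the interaction between two bubbles at scales $1$ and $\lambda$ (which in $d=6$ is of size $\sim \lambda^{(d-2)/2} = \lambda^2$, since $W\notin L^2$ only fails marginally and the tail $W(x)\sim |x|^{-4}$ controls the overlap) balances against the linearized flow; one derives the reduced ODE for $\lambda$ whose leading behavior is $\ddot\lambda/\lambda \sim -c\,\lambda^2$ for an explicit $c>0$, giving exactly the exponential rate with $\kappa^2 = 5/4$.

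The second step is the linearized analysis around the two-bubble configuration. One writes the equation for the error $\vec g$, which has the schematic form $\partial_t \vec g = \mathcal{L}(t)\vec g + \mathcal{E}(t) + N(\vec g)$, where $\mathcal{L}(t)$ is the (time-dependent) linearization around $W + W_{\lambda(t)}$, $\mathcal{E}(t)$ is the error produced by the ansatz not being an exact solution, and $N$ collects quadratic-and-higher terms in $\vec g$. The key structural fact is that $\mathcal{L}$, near each bubble, is a small perturbation of the linearized operator $-\Delta - \frac{d+2}{d-2}W^{4/(d-2)}$, which has a kernel spanned by the scaling direction $\Lambda W$ and (for the non-radial problem) translations; here radiality is imposed so only the scaling mode $\Lambda W$ and $\Lambda W_\lambda$ matter. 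One must therefore work modulo these directions: introduce modulation parameters (the true scale $\lambda(t)$ and possibly a second parameter for the outer bubble, which is frozen at $W$) and impose orthogonality conditions $\langle \vec g, \text{(null directions)}\rangle = 0$ to kill the non-decaying modes, obtaining modulation equations that pin down $\dot\lambda$.

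The third step is to close the estimates: one sets up a bootstrap on a weighted energy norm for $\vec g$ on a time interval $(-\infty, -T_0]$, using the energy structure (coercivity of the linearized energy modulo the null space, from the variational characterization \eqref{CarW} of $W$) together with a virial/localized-Morawetz correction to control the scaling mode and absorb the slowly-decaying part of $\mathcal{E}$. The error term $\mathcal{E}(t)$ should be of size $\sim \lambda(t)^{2} \sim e^{-2\kappa|t|}$ in the relevant norm, while the bootstrap assumption on $\vec g$ is at the level $o(\lambda(t))$ or better, and the quadratic terms are then genuinely lower order because $d=6$ makes $N$ at least quadratic with no derivative loss. One runs a compactness argument: solve the equation on $[-T_n, -T_0]$ with data $\vec g(-T_n)=0$, obtain uniform bounds, and extract a limit as $T_n\to\infty$; the limit is the desired solution, and the bound \eqref{eq:mainthm} follows from the decay of the bootstrapped norm. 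The main obstacle is the coupling between the modulation equation for $\lambda$ and the dispersive estimate for $\vec g$: because $d=6$ is the dimension where $\Lambda W$ is borderline non-$L^2$ (its $L^2$ norm of the localized piece diverges logarithmically-to-polynomially depending on the cutoff scale), the virial functional used to control the scaling instability is only "almost" coercive, and one must carefully choose the localization scale of the Morawetz multiplier (tied to $\lambda(t)$) so that the error from localization is beaten by $e^{-2\kappa|t|}$; getting the constant $\kappa^2=5/4$ to come out exactly right, and verifying that the sign of the reduced interaction forces the bubbles to separate (rather than collide) as $t\to-\infty$, is the delicate computational heart of the argument.
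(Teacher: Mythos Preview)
The paper under review is a survey and does not give a proof of this theorem; it merely states the result and cites Jendrej \cite{Jen1}. So there is no ``paper's own proof'' to compare against, only the original construction in \cite{Jen1}.

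Your outline is broadly faithful to Jendrej's actual strategy: an approximate two-bubble ansatz, modulation of the inner scale $\lambda(t)$ with orthogonality to the scaling mode $\Lambda W$, a localized/modified energy functional that is coercive modulo the null direction, a bootstrap on $(-\infty,-T_0]$, and a compactness argument solving backward from data at $-T_n$ and passing to the limit. That is indeed the skeleton of \cite{Jen1}.

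There is, however, a concrete error in your description of the reduced dynamics. You write that the modulation equation has leading form $\ddot\lambda/\lambda \sim -c\,\lambda^2$; this would force power-law, not exponential, behavior of $\lambda$. The actual mechanism in $d=6$ is that the interaction between the two same-sign bubbles, projected onto the (rescaled) $\Lambda W$ direction, produces a \emph{linear} restoring term: after the appropriate normalization one obtains $\ddot\lambda \approx \kappa^2\,\lambda$ with $\kappa^2 = 5/4$, whose growing solution as $t\to -\infty$ is exactly $\lambda(t)\sim \tfrac{1}{\kappa}e^{-\kappa|t|}$. Relatedly, your remark that $\Lambda W$ is ``borderline non-$L^2$'' in $d=6$ is off: since $W(x)\sim |x|^{-4}$ one has $\Lambda W\in L^2(\R^6)$; the genuine difficulty in \cite{Jen1} is rather the slow $O(\lambda^2)$ size of the interaction error and the need to design the refined energy/virial functional so that its time derivative beats this rate. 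Fixing the form of the modulation ODE is essential, because it is precisely what singles out the value $\kappa=\sqrt{5/4}$ and the exponential law in \eqref{eq:mainthm}.
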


\begin{remark}
{\rm(i)} More precisely, Jendrej in \cite{Jen1} proved that
  \begin{equation*}
    \big\|\vec u(t) - \big(\vec W + \vec W_{\frac{1}{\kappa}e^{-\kappa|t|}}\big)\big\|_{\dot{H}^1\times L^2}\leq C_1 \cdot e^{-\frac 12 \kappa|t|}
  \end{equation*}
  for some constant $C_1 > 0$.

{\rm(ii)}  The author construct here \emph{pure} two-bubbles, that is the solution approaches a superposition of two stationary states,
with no energy transformed into radiation.
By the conservation of energy and the decoupling of the two bubbles, we necessarily have $E(\vec u(t)) = 2E(\vec W)$.
Pure one-bubble cannot concentrate and is completely classified, see \cite{DM08}.

{\rm(iii)} It was proved in \cite{moi15p-3} (see Theorem \ref{thm:nondeux-bulles} below),
 in any dimension $d \geq 3$, that there exist no solutions $\vec u(t): [t_0, T_+) \to \dot{H}^1\times L^2$
    of (FNLW) such that $\|\vec u(t) - (\vec W_{\mu(t)} - \vec W_{\lambda(t)})\|_{\dot{H}^1\times L^2} \to 0$ with $\lambda(t) \ll \mu(t)$
  as $t \to T_+ \leq +\infty$.

{\rm (iv)} In any dimension $d > 6$ one can expect an analogous result with concentration rate $\lambda(t) \sim |t|^{-\frac{4}{d-6}}$.
\end{remark}

According to Theorem~\ref{thm:srreect123}, for $d = 3$ such a solution has to behave asymptotically as a decoupled superposition
  of stationary states. Such solutions are called (pure) \emph{multi-bubbles} (or $n$-bubbles, where $n$ is the number of bubbles).
  By conservation of energy, if $\vec u(t)$ is an $n$-bubbles, then
  \begin{equation}
    E(\vec u(t)) = n E(\vec W).
  \end{equation}
  The case $n = 1$ in dimension $d \in \{3, 4, 5\}$ was treated by Duyckaerts and Merle \cite{DM08},
  who obtained a complete classification of solutions of (FNLW) at energy level $E(\vec u(t)) = E(\vec W)$.

\begin{theorem}[Nonexistence of radial two-bubbles with opposite signs, \cite{moi15p-3}]
  \label{thm:nondeux-bulles}
  Let $d \geq 3$. There exists no radial solutions $\vec u: [t_0, T_+) \to {\dot{H}^1\times L^2}$ of {\rm (FNLW)} such that
  \begin{equation}
    \label{eq:deux-bulles}
    \lim_{t \to T_+}\|\vec u(t) - \vec W_{\lambda_1(t)} + \vec W_{\lambda_2(t)}\|_{\dot{H}^1\times L^2} = 0
  \end{equation}
  and
  \begin{itemize}
    \item The case $T_+ < +\infty$: $\lambda_1(t)\ll \lambda_2(t)\ll T_+-t\text{ as }t\to T_+$,
    \item The case $T_+ = +\infty$: $\lambda_1(t)\ll \lambda_2(t)\ll t\text{ as }t\to +\infty$.
  \end{itemize}
\end{theorem}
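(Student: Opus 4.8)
The plan is to argue by contradiction with the modulation/variational scheme of Duyckaerts--Merle (the rigidity mechanism behind Theorem \ref{thm:classif}), lifted to two bubbles. Suppose such a solution $\vec u$ exists. Using the symmetry $u\mapsto -u$ we may take the inner (smaller-scale) bubble to carry the sign $+1$ and the outer one the sign $-1$, and write, for $t$ close to $T_+$,
\[
\vec u(t)=\vec W_{\la_1(t)}-\vec W_{\la_2(t)}+\vec g(t),\qquad \mu(t):=\la_1(t)/\la_2(t),
\]
where $\la_1,\la_2$ (together with auxiliary ``velocity'' parameters) are fixed by requiring $\vec g(t)$ to be orthogonal to the scaling direction $\Lambda W_{\la_j}$ of each bubble, $\Lambda$ being the generator of $\dot H^1$-scaling. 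A standard implicit-function-theorem and bootstrap argument makes this decomposition well defined, gives $\|\vec g(t)\|_{\dot H^1\times L^2}+\mu(t)\to 0$ as $t\to T_+$, and produces modulation equations controlling $|\la_j'/\la_j|$.

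Next I would compute the interaction and combine it with energy conservation. Since $W_{\la_1}-W_{\la_2}$ is not an exact solution of $-\Delta V=|V|^{\frac4{d-2}}V$, expanding and using $-\Delta W_{\la_j}=W_{\la_j}^{\frac{d+2}{d-2}}$ and $W_{\la_j}(x)\sim|x|^{-(d-2)}$ one finds that the \emph{interaction energy}
\[
\mathcal B(\mu):=E\big(\vec W_{\la_1}-\vec W_{\la_2}\big)-2E(\vec W)
\]
is \emph{positive}, of size $\mathcal B(\mu)\simeq \mu^{(d-2)/2}$; this positivity is precisely where the opposite-signs hypothesis enters (for equal signs $\mathcal B$ would be negative, consistently with the existence of equal-sign two-bubbles). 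Because the bubbles decouple as $t\to T_+$ and $E$ is conserved, $E(\vec u)\equiv 2E(\vec W)$. Expanding $E(\vec u)$ about the two-bubble configuration — the linear term pairs the small elliptic error with $\vec g$, and the Hessian $D^2E$ is coercive on the orthogonal complement of the kernel \emph{and} the negative directions of the two bubbles, of which the modulation removes only the two kernel directions $\Lambda W_{\la_j}$ — I obtain
\[
\|\vec g(t)\|_{\dot H^1\times L^2}^2+\mathcal B(\mu(t))\ \lesssim\ a_1(t)^2+a_2(t)^2,
\]
where $a_j(t):=\langle\vec g(t),\mathcal Z_{\la_j}\rangle$ is the amplitude of $\vec g$ along the negative eigenmode $\mathcal Z$ of $-\Delta-\tfrac{d+2}{d-2}W^{\frac4{d-2}}$ attached to the $j$-th bubble. (For a single bubble there is no $\mathcal B$ and this reduces to the Duyckaerts--Merle rigidity.)

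The last step would exploit the instability carried by the $a_j$. Projecting the equation for $\vec g$, each $a_j$ splits into stable/unstable parts $a_j^{\pm}$ obeying $\tfrac{d}{ds_j}a_j^{\pm}=\pm\nu\,a_j^{\pm}+R_j$ in the rescaled time $ds_j=dt/\la_j$, with $|R_j|\lesssim\|\vec g\|_{\dot H^1\times L^2}^2+\mathcal B(\mu)+(\text{modulation errors})$. In both regimes $T_+<\infty$ (where $\la_j\ll T_+-t$) and $T_+=\infty$ (where $\la_j\ll t$) one has $\int^{T_+}\la_j^{-1}\,dt=+\infty$, so $s_j\to+\infty$; integrating the stable mode forward and the unstable mode backward from $+\infty$ (legitimate since $a_j^{\pm}\to0$) gives $|a_j(t)|\lesssim\sup_{t'\in[t,T_+)}|R_j(t')|$. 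Inserting the previous display and absorbing the modulation errors, and writing $A(t):=\max_j|a_j(t)|\to0$, this reads $A(t)\lesssim\big(\sup_{[t,T_+)}A\big)^2$; taking the supremum over $[t_1,T_+)$ with $t_1$ close enough to $T_+$ forces $A\equiv0$ near $T_+$, hence $\mathcal B(\mu)\equiv0$, contradicting $\mathcal B(\mu)\simeq\mu^{(d-2)/2}>0$.

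The step I expect to be the main obstacle is the sharp accounting of the interaction. One must choose the orthogonality and velocity parameters so that the coercivity estimate loses \emph{only} $a_1^2+a_2^2$ — which requires controlling the cross-interaction between the two bubbles and the degeneration of the quadratic form as $\mu\to0$ — and so that the contribution of the elliptic error $\mathcal I=W_{\la_1}^{\frac{d+2}{d-2}}-W_{\la_2}^{\frac{d+2}{d-2}}-\big|W_{\la_1}-W_{\la_2}\big|^{\frac4{d-2}}(W_{\la_1}-W_{\la_2})$ to the remainders $R_j$ is $O(\mathcal B(\mu))$ rather than $O(\mathcal B(\mu)^{1/2})$; otherwise the closing inequality degrades from $A\lesssim(\sup A)^2$ to $A\lesssim\sup A$ and does not conclude. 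An alternative to the last two steps is to build a mixed energy--virial (Lyapunov) functional $\mathcal J(t)$ with $|\mathcal J|\lesssim\|\vec g\|_{\dot H^1\times L^2}^2$ and $\tfrac{d}{dt}\mathcal J\gtrsim\la_1^{-1}\mathcal B(\mu)-(\text{errors})$, whose boundedness is incompatible with the divergence of $\int^{T_+}\la_1^{-1}\mathcal B(\mu)\,dt$; the analytic heart — uniform control of the interaction error terms as $\mu\to0$ — is the same.
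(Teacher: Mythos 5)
Your identification of the positive interaction energy for opposite-sign bubbles, $\mathcal B(\mu)\simeq\mu^{(d-2)/2}>0$, is the right key ingredient, and the modulation/coercivity framework you set up is in the correct spirit. The genuine gap is in the dynamical closing via stable/unstable mode projection. Integrating the unstable component $a_j^{+}$ backward from $T_+$ (where it vanishes) does give $|a_j^{+}(t)|\lesssim\sup_{[t,T_+)}|R_j|$, but the analogous bound simply fails for the stable component: the ODE $\frac{d}{ds_j}a_j^{-}=-\nu\, a_j^{-}+R_j$ integrates to
\[
a_j^{-}(s)=e^{-\nu(s-s_0)}a_j^{-}(s_0)+\int_{s_0}^{s}e^{-\nu(s-s')}R_j(s')\,ds',
\]
so the size of $a_j^{-}$ at a given time is governed by its initial value $a_j^{-}(s_0)$, not by $\sup|R_j|$. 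Hence the inequality $A(t)\lesssim\bigl(\sup_{[t,T_+)}A\bigr)^2$ does not follow, and the bootstrap does not close. Concretely, the scenario in which both stable modes start at a small nonzero level and decay exponentially in the rescaled times $s_j$ --- with $\|\vec g\|^2_{\dot H^1\times L^2}+\mathcal B(\mu)\lesssim (a_1^{-})^2+(a_2^{-})^2$ decaying along with them and the modulation equations adjusting $\la_1,\la_2$ accordingly --- is not visibly excluded by anything in your setup, so the vanishing of $\mathcal B(\mu)$ cannot be forced this way.

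The alternative route you sketch at the end is in fact the one that works, and is in substance what Jendrej does in \cite{moi15p-3}: one constructs a localized, mixed energy--virial monotonicity functional $\mathcal J(t)$ with $|\mathcal J|\lesssim\|\vec g\|_{\dot H^1\times L^2}^2$ whose derivative is bounded below by a positive multiple of the interaction term, so that the sign of $\mathcal B(\mu)$ --- which has no analogue for equal-sign bubbles --- supplies the definite monotonicity that mode projection alone cannot. The hard part is precisely what you flag: showing that all interaction error terms in $\frac{d}{dt}\mathcal J$ are genuinely $O(\mathcal B(\mu))$ uniformly as $\mu\to0$, rather than merely $O(\mathcal B(\mu)^{1/2})$, and controlling the degeneration of the coercivity as the bubbles separate. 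I would direct your effort there and abandon the stable/unstable bootstrap as the closing step.
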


\subsection{Global dynamics away from the ground state}
Denote
$$ \vec u:=(|\nabla| u,\dot u) \in L^2_{\rm rad}(\R^d)^2\triangleq\mathcal{H},$$
and
$$\mathcal{S}\triangleq \{W_\lambda\}_{\lambda>0},\;\;~W_{\lambda} = T_\lambda W(x)=\lambda^{d/2-1}W(\lambda x),$$
and
$$J(\varphi) := \int_{\R^{d}} \Bigl[ \frac12 |\nabla \varphi^{2} - \frac{1}{2^*} |\varphi|^{{2^*}} \Bigr] dx.$$

\begin{theorem}[Global dynamics away from the ground state, \cite{KNW}]\label{thm: gdyaway}
There exist a small $\epsilon_*>0$, a neighborhood $\mathcal{B}$ of $\vec{\mathcal{S}}$
within $O(\epsilon_*)$ distance in $\mathcal{H}$, and a continuous functional
$$\mathcal{G}:\{\vec{\varphi}\in\mathcal{H}\setminus\mathcal{B} \mid E(\vec{\varphi})<J(W)+\epsilon_*^2\} \to \{\pm 1\},$$
 such that the following properties hold: For any solution $u$ with $E(\vec u)<J(W)+\epsilon_*^2$ on the maximal existence interval $I(u)$, let
\begin{align*}
  I_0(u):=&\{t\in I(u)\mid \vec u(t)\in \mathcal{B}\},\\
  I_\pm(u) :=& \{t\in I(u) \mid \vec u(t)\not\in\mathcal{B},\ \mathcal{G}(\vec u(t))=\pm 1\}.
\end{align*}
Then $I_0(u)$ is an interval, $I_+(u)$ consists of at most two infinite intervals, and $I_-(u)$ consists of at most two finite intervals. $u(t)$ scatters to $0$ as $t\to\pm\infty$ if and only if $\pm t\in I_+(u)$ for large $t>0$. Moreover, there is a uniform bound $M<\infty$ such that
$$
 \|u\|_{L^q_{t,x}(I_+(u)\times\R^d)}\le M, q:=\frac{2(d+1)}{d-2}.$$
For each $\sigma_1,\sigma_2\in\{\pm\}$, let $A_{\sigma_1,\sigma_2}$ be the collection of initial data $\vec u(0)\in\mathcal{H}$ such that $E(\vec u)<J(W)+\epsilon_*^2$, and for some $T_-<0<T_+$,
$$
 (-\infty,T_-)\cap I(u) \subset I_{\sigma_1}(u), (T_+,\infty)\cap I(u) \subset I_{\sigma_2}(u).$$
Then each of the four sets $A_{\pm,\pm}$ is open and non-empty, exhibiting all possible combinations of scattering to zero/finite time blowup as $t\to\pm\infty$, respectively.
\end{theorem}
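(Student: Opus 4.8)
The plan is to combine the Kenig--Merle concentration--compactness/rigidity scheme with the hyperbolic-dynamics analysis near the ground state of Nakanishi--Schlag, using radiality to localize compactness at the origin and to exclude Lorentz-boosted solitons. First I would record the variational structure: since $J(W)$ is the mountain-pass value attached to the Sobolev inequality \eqref{SobolevIn}, a quantitative Payne--Sattinger type lemma, sharpened to tolerate an excess $\epsilon_*^2$ above the threshold, shows that for $\vec\varphi\in\mathcal H\setminus\mathcal B$ with $E(\vec\varphi)<J(W)+\epsilon_*^2$ the scaling functional $K(\varphi):=\|\nabla\varphi\|_{L^2}^2-\|\varphi\|_{L^{2^*}}^{2^*}$ stays uniformly bounded away from $0$; one then \emph{defines} $\mathcal G(\vec\varphi)$ as the sign of $K(\varphi)$, its continuity being immediate. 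This splits the admissible set into a ``$+$'' region ($K>0$, subcritical type) and a ``$-$'' region ($K<0$, self-focusing type).

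In the ``$+$'' region I would run the Kenig--Merle argument of Theorem~\ref{thm:kmw08} just below the raised threshold: if the uniform bound $\|u\|_{L^q_{t,x}}\le M$ failed, Theorem~\ref{thm:inepro} would yield a critical element with trajectory precompact modulo scaling, and a focusing virial/Morawetz identity localized to the light cone (the focusing counterpart of Proposition~\ref{prop:Morawetz}) would rule it out, using the coercivity supplied by $K>0$. Hence every such solution is global, scatters to $0$, and obeys the uniform bound. In the ``$-$'' region a second-moment identity of Levine type, localized to the backward light cone and using $K\le-\delta<0$ together with finite speed of propagation (Lemma~\ref{fsprop}) to absorb the boundary contributions, makes the moment concave, forcing finite-time blow-up in that time direction.

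The heart of the matter is the behavior near $\vec{\mathcal S}$ and the \textbf{one-pass theorem}. I would introduce modulation coordinates $\vec u=\vec W_\lambda+k_+\,(\text{unstable mode})+k_-\,(\text{stable mode})+(\text{dispersive part})$, exploiting the spectral structure of the linearized operator $-\Delta-\tfrac{d+2}{d-2}W^{4/(d-2)}$ (a single negative eigenvalue $-\omega^2$, kernel generated by the symmetries), and prove an \textbf{ejection lemma}: once the distance to $\mathcal S$ exceeds a fixed size it grows exponentially, and the sign of $K$ along the exit is slaved to the sign of $k_+$. Feeding this into a virial-type functional that is monotone on $\mathcal H\setminus\mathcal B$ and whose variation inside $\mathcal B$ is controlled by the ejection estimate, one shows a trajectory can traverse the tube $\mathcal B$ at most once. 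Consequently $I_0(u)$ is an interval, $\mathcal G(\vec u(t))$ is constant on each component of $I(u)\setminus I_0(u)$, and combining with the previous paragraph, $I_+(u)$ consists of at most two infinite intervals on which $u$ scatters to $0$, $I_-(u)$ of at most two finite intervals on which $u$ blows up, and $u$ scatters to $0$ as $t\to\pm\infty$ if and only if $\pm t\in I_+(u)$ for large $t$.

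Finally, each $A_{\sigma_1,\sigma_2}$ is open by continuous dependence of the flow on the data --- scattering to $0$ is open via the bound $M$, finite-time blow-up with $K<0$ is open via the virial identity, and both are stable under small perturbations by the one-pass analysis --- and nonempty by gluing near $\mathcal S$: starting from the one-bubble connecting solutions $W^{\pm}$ of Theorem~\ref{thm:existec}, which realize transitions across $\mathcal S$ between the scattering and blow-up regimes, and steering the two time-ends independently through the coefficient $k_+$ in the ejection lemma, one produces data in each $A_{\pm,\pm}$. The step I expect to be the main obstacle is the one-pass theorem: one must rule out any return to the tube $\mathcal B$, reconciling the exponential ejection near $\mathcal S$ with a monotone virial functional while handling the intermediate exterior region, where neither the near-$\mathcal S$ expansion nor the clean sign-region estimates apply directly.
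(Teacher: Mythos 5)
Your sketch reproduces the architecture of the cited Krieger--Nakanishi--Schlag paper \cite{KNW} almost exactly (the survey above only states the result, so there is no in-text proof to compare against): the sign functional $\mathcal G=\operatorname{sign}K$ with $K$ the scaling derivative of $J$, made well-defined off $\mathcal B$ by a quantitative Payne--Sattinger lemma; Kenig--Merle concentration--compactness plus a localized virial rigidity argument on the $K>0$ set; a localized Levine concavity argument on the $K<0$ set; a hyperbolic modulation analysis near $\mathcal S$ with an ejection lemma feeding a virial-type ``no-return'' functional, yielding the one-pass theorem; and continuous dependence for openness, with the near-$\mathcal S$ hyperbolicity realizing all four types $A_{\pm,\pm}$. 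This is the right plan.

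Three small points worth tightening. The one-pass theorem is logically prior to the scattering and blow-up arguments, not a final step: without it one cannot assert that $K$ keeps a fixed sign along a trajectory once it leaves $\mathcal B$, and both the Kenig--Merle rigidity step and the Levine concavity rely precisely on that persistence of sign. There is no focusing analogue of the Morawetz estimate \eqref{morawetzes}; the rigidity on the $+$ side comes from the localized virial identity of Lemma~\ref{virial-identity-1} (whose sign is favorable because $K>0$) together with precompactness of the critical element, not from a Morawetz estimate. Finally, the nonemptiness of all four $A_{\pm,\pm}$ is not literally read off from the threshold solutions $W^\pm$, which have energy exactly $J(W)$, nor can a single unstable coefficient $k_+$ steer the two time-ends independently; in \cite{KNW} one takes data near a soliton and tunes the initial coefficients on \emph{both} the unstable and the stable eigenmodes, because forward in time $k_+$ governs ejection while backward in time the stable coefficient $k_-$ is the one that grows.
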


\subsection{Unconditional uniqueness}

Consider energy-critical NLW
\begin{equation}\label{equ:nlwenergy}
\begin{cases}
u_{tt}-\Delta u=\mu|u|^\frac4{d-2}u,\\
(u,\pa_tu)(0)=(u_0,u_1)\in\dot{H}^1\times L^2.
\end{cases}
\end{equation}
It is well known that this equation is  locally well-posedness in the energy space, $u\in C_t(\dot{H}^1)$.
Furthermore, this  solution can be
extended to a global one in the defocusing case (i.e. $\mu=-1$).
However uniqueness holds only with some additional assumptions,
like $u\in L_{t,x}^\frac{2(d+1)}{d-2}$ (strong Strichartz solution).
 This restriction can be related to the choice of spaces involved in the fixed point argument. Therefore, it is  natural to ask whether such assumptions are necessary to provide uniqueness,
 or if uniqueness in the energy class holds. Uniqueness in the energy class for the defocusing case is known to hold
 under an additional a priori assumption of type $\pa_t u\in L_t^1L_x^4$ (or assuming smoothness on $u$, see \cite{struwe99}).
 By {\bf unconditional
uniqueness}, we mean that for given initial data $(u_0,u_1)$, there
exists at most one solution of \eqref{equ:nlwenergy} in the class $C_t \dot H_x^1(I\times \R^d)$,

\begin{center}{\bf Unconditional
uniqueness}\\
\begin{tabular}{|c|c|c|}
  \hline
 $d=3$ & $d=4,5$ & $d\geq6$\\\hline
  Open!  & Planchon \cite{P04} & Bulut-et al \cite{BCLPZ} \\\hline
\end{tabular}
\end{center}
We note that we do not need any assumption on $\pa_tu$.

\begin{remark}
{\rm (i)} We should also stress that the unconditional uniqueness in $d=3$ is
still {\bf open} due to the failure of the endpoint Strichartz estimates
except for the radial case. However, Masmoudi-Planchon in \cite{MP06} proved  an interesting result concerning uniqueness of
weak solutions ($u\in L_t^\infty\dot{H}^1\cap \dot{W}_t^{1,+\infty}(L_x^2)$) to defocusing NLW in $\mathbb R^3$ under a local energy
inequality assumption on the light cone as follows: for all $0\leq s \leq t \leq t_0$
\begin{eqnarray}
  \label{eq:lei1}
  \int_{B(x_0,t_0-t)} e(u(t,x)) dx
 \leq C    \int_{B(x_0,\alpha(t_0-s))} e(u(s,x)) dx,
\end{eqnarray}
and
\begin{equation}
  \label{eq:lei}
 \frac1 {\sqrt 2} \int_{t_1}^{t'} \int_{\partial
 B(x_1,\tau-t_1)}  \frac{|\partial_{K_1} u(\tau)|^2} 2+\frac{|u(\tau)|^6} 6  d\sigma d\tau
  \leq  C  \int_{B(x_1,\alpha(t'-t_1))} e(u(t',x)) dx,
\end{equation}
where  $C \geq 1$, $\alpha \geq 1$ and
$$e(u)=|\partial u|^2/2+|u|^6/6,\;\;  K_1 = \{ |x-x_1| = t -t_1\}.$$

In many cases we can only construct global weak solutions by using compactness arguments,
but their uniqueness is not known.
The weak-strong uniqueness investigation is an attempt to reconcile the weak
and strong viewpoints of solutions.
More precisely, this investigation is to show that any weak solution agrees with the strong solution sharing the same
 initial data if it exists. See, for instance, \cite{struwe99, Struwe-IMRN} for wave and Schr\"odinger equations,
  \cite{Chemin, Chen-Miao-Zhang-1, Germ3, Leray, May} for the Navier-Stokes system,
  \cite{Germ1} for nonhomogeneous Navier-Stokes system and \cite{Germ2} for the isentropic compressible Navier-Stokes system.

{\rm (ii)} In the context of {\bf $\dot H^s$ critical NLS}, the unconditional uniqueness
was first established
by Furioli and Terraneo \cite{FT03} using para-product analysis. A review of the unconditional uniqueness
for both the NLS and NLW can be found in the paper by Furioli, Planchon and Terraneo \cite{FPT03}.
\end{remark}

\subsection{Ill-posedness}

For the ill-posedness, we refer to
Christ, Colliander, Tao \cite{cct1}.




\section{Energy-supcritical NLW}

Let us recall the classical results for the
 nonlinear wave equation(NLW).
\begin{align} \label{equ:nlwesup}
{\rm (NLW)}\;\;\;\;\begin{cases}    \partial_{tt}u-\Delta u+\mu|u|^{p-1}u=0,\quad
(t,x)\in\R\times\R^d,
\\
(u,\pa_tu)(0,x)=(u_0,u_1)(x),\quad x\in\R^d,
\end{cases}
\end{align}
where $u:\R_t\times\R_x^d\to \R$.
 The equation \eqref{equ:nlwesup} can be classified  energy-subcritical, energy-critical
  and  energy-supercritical such as
\begin{equation*}\left\{\begin{aligned}
&p<1+\tfrac4{d-2},\;\; \text{\rm energy-subcritical};\\
&p=1+\tfrac4{d-2},\;\; \text{\rm energy-critical};\\
&p>1+\tfrac4{d-2},\;\; \text{\rm energy-supcritical}.\end{aligned}\right.\end{equation*}
The energy-critical equations have been the most widely studied
instances of NLW, since the rescaling
\begin{equation}
(u,u_t)(t,x)\mapsto~\big(\la^\frac2{p-1}u(\la t,\la
x),\la^{\frac2{p-1}+1}u_t(\la t,\la x)\big), \;\;p=1+\tfrac4{d-2}
\end{equation}
leaves invariant the energy of solutions, which is defined by
$$E(u,u_t)=\frac12\int_{\R^d}\big(|\pa_tu|^2+|\nabla
u|^2\big)\mathrm{d}
x+\frac{\mu}{p+1}\int_{\R^d}|u|^{p+1}\mathrm{d}x,$$ and is a
conserved quantity for equation in \eqref{equ:nlwesup}.
For the defocusing
energy-critical NLW \eqref{equ:nlwesup} with $\mu=1$, Grillakis \cite{Gri90} proved that the Cauchy problem of  \eqref{equ:nlwesup} with the initial  data $\dot
H^1(\mathbb R^3)\times L^2(\mathbb R^3)$ is  globally well-posed,  we  refer the readers to \cite{BG},  \cite{Kapi94},
\cite{ShaStr94} and  \cite{T07} for the scattering theory and the high dimensional case. In particular, Tao
derived a exponential type spacetime bound in \cite{T07}. In the
above papers, their methods rely heavily on the classical finite speed of
propagation (i.e. the monotonic local energy estimate on the light
cone)
\begin{align}\label{}
\int_{|x|\leq R-t}e(t,x)dx\leq \int_{|x|\leq R }e(0,x)dx,~~t>0
\end{align}where\begin{align}\label{}
  e(t,x):=\frac12  | u_t
 |^2  + \frac12  | \nabla u  |^2  + \tfrac{d-2}{2d} |
u|^{\frac{2d}{d-2}},
\end{align}
and Morawetz estimate
\begin{eqnarray}\label{morawetzes}
\int_{\R}\int_{\R^d}\frac{|u|^{2^*}}{|x|} dxdt\leq
C\big(E(u_0,u_1)\big), \;\;\; 2^*\triangleq\tfrac{2d}{d-2}.
\end{eqnarray}
However, the Morawetz estimate fails for the focusing energy-critical NLW.
 Kenig and Merle \cite{KM1} first employed
sophisticated {\bf ``concentrated compactness + rigidity method" } to
obtain the dichotomy-type result under the assumption that $E (u_0,
u_1) < E (W, 0)$, where $W$ denotes the ground state of the elliptic
equation
$$\Delta W+|W|^\frac4{d-2}W=0.$$ Thereafter,  Bulut et.al \cite{BCLPZ}
extended the above result in \cite{KM1} to higher dimensions.
This was proven by making use of minimal counterexamples derived from the
concentration-compactness approach to induction on energy.

\vskip0.2cm

Let $s_c=\frac{d}2-\frac2{p-1}$,  for the $\dot H^{s_c}$-critical NLW \eqref{equ:nlwesup} with
$p\neq1+\tfrac4{d-2}$, both  the Morawetz estimate and energy
conservation fail. It is hard to prove  global well posedness and
scattering of equation \eqref{equ:nlwesup}
 in  the space $\dot H^{s_c}\times \dot{H}^{s_c-1}$. Up to now, we do not know  how to
treat the large-data case  since there does not exist  any a priori control of a
critical norm. The first result in this
direction is due to Kenig-Merle \cite{KM2010}, where they studied
the $\dot H^\frac12$-critical Schr\"odinger equation in $\R^3$. For
the defocusing energy-supercritical NLW in odd dimensions, Kenig and
Merle \cite{KM2011,KM2011D} proved that if the radial solution $u$
is apriorily bounded in the critical Sobolev space, that is
$$(u,u_t)\in L_t^\infty(I; \dot{H}^{s_c}_x(\R^d)\times
\dot{H}^{s_c-1}_x(\R^d)), \quad s_c:=\tfrac{d}2-\tfrac2{p-1}>1,$$ then $u$
is global and scatters.  Later, Killip and Visan \cite{KV} showed
the result in $\R^3$ for the non-radial solutions by making use of
Huygens principle and so called ``localized double Duhamel trick".
We refer to \cite{Blut2012,KV2011,MWZ} for some high dimensional
cases.
 Recently, Duyckaerts, Kenig and Merle \cite{DKM} obtain such result
 for the focusing energy-supercritical NLW with radial solution
in three dimension. Their proof relies on the compactness/rigidity
method, pointwise estimates on compact solutions obtained in
\cite{KM2011}, and the channels of energy method  developed in \cite{DKM11}.
Furthermore, by exploiting the double Duhamel trick, Dodson and
Lawrie\cite{DL14} extend the result in \cite{DKM} to dimension five.

\subsection{Critical norm conjecture}

\begin{conjecture}[{\bf Critical norm conjecture}]\label{con:con1.1}
Let $s_c\geq1$ and $\mu=-1$.  Suppose
$u:~I\times\R^d\to\R$ is a maximal-lifespan solution to
\eqref{equ:nlwesup} such that
\begin{equation}\label{assume1.1}
(u,\pa_tu)\in L_t^\infty(I; \dot{H}_x^{s_c}\times\dot{H}^{s_c-1}(\R^d)).
\end{equation}
 Then $u$ is global and scatters in the sense that  there exist unique $(u_0^{\pm},u_1^{\pm})\in\dot{H}^{s_c}_x(\R^d)\times\dot{H}^{s_c-1}(\R^d))$ such that
        $$\lim_{t\to\pm\infty}\|u(t)-S(t)(u_0^{\pm},u_1^{\pm})\|_{\dot{H}_x^{s_c}(\R^d)}=0.$$
\end{conjecture}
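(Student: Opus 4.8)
We sketch an approach toward Conjecture~\ref{con:con1.1}.

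The plan is to run the Kenig--Merle concentration--compactness/rigidity scheme. Since the case $s_c=1$ is Theorem~\ref{thm:defenrcri}, we may assume $s_c>1$; then there is no coercive conserved quantity (the energy is supercritical) and the a priori bound \eqref{assume1.1} is the only global control available. First I would establish the critical small-data theory: combining the Strichartz estimates of Theorem~\ref{thm;stricest} at regularity $\dot H^{s_c}\times\dot H^{s_c-1}$ with a contraction argument gives local well-posedness, a blow-up criterion, and the fact that data small in $\dot H^{s_c}\times\dot H^{s_c-1}$ produce global solutions that scatter, with finite \emph{scattering norm} $\|u\|_{S(I)}$ (a suitable $L^q_{t,x}(I\times\R^d)$ norm whose exponents are dictated by the $\dot H^{s_c}$-admissible scaling). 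A stability/long-time-perturbation lemma then shows that, \emph{under the standing hypothesis} that $A(u):=\sup_{t\in I}\|(u,\pa_tu)(t)\|_{\dot H^{s_c}\times\dot H^{s_c-1}}<\infty$, the solution is global and scatters as soon as $\|u\|_{S(I)}<\infty$. Setting
$$\Lambda(A_0):=\sup\big\{\,\|u\|_{S(I_u)}:\ u\text{ maximal solution},\ A(u)\le A_0\,\big\},$$
which is finite for $A_0$ small, and arguing by contradiction, we may assume the conjecture fails, so that $A_c:=\sup\{A_0:\Lambda(A_0)<\infty\}$ satisfies $0<A_c<\infty$.

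The next step is the extraction of a minimal non-scattering solution. Choose $u_n$ with $A(u_n)\to A_c$ and $\|u_n\|_{S}\to\infty$; applying a linear profile decomposition in $\dot H^{s_c}\times\dot H^{s_c-1}$ (the analogue of Theorem~\ref{thm:linprohs} for $s_c\ge 1$, with a $\dot H^{s_c}$-admissible Strichartz norm in the remainder estimate, as in \eqref{lab17}), evolving each profile nonlinearly, and invoking the stability lemma together with the asymptotic orthogonality of the evolutions, one shows that a single profile must carry all the mass; this yields a \emph{critical element} $u_c$ with $A(u_c)=A_c$, with $\|u_c\|_{S(I_+)}=+\infty$ on the forward maximal interval $I_+$, and such that the renormalized orbit
$$K=\Big\{\Big(\tfrac{1}{\lambda(t)^{2/(p-1)}}u_c\big(t,\tfrac{\cdot-x(t)}{\lambda(t)}\big),\ \tfrac{1}{\lambda(t)^{2/(p-1)+1}}\pa_tu_c\big(t,\tfrac{\cdot-x(t)}{\lambda(t)}\big)\Big):\ t\in I_+\Big\}$$
is precompact in $\dot H^{s_c}\times\dot H^{s_c-1}$ for suitable $\lambda(t)>0$ and $x(t)\in\R^d$ (in the radial reduction, $x(t)\equiv 0$). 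Using finite speed of propagation (Lemma~\ref{fsprop}), the local theory, and pointwise decay estimates for such compact objects (of the kind produced in the supercritical analysis of Kenig--Merle), one further reduces to the usual dichotomy: either (i) a global, soliton-like solution with $I_+=[0,\infty)$ and $\lambda(t)\gtrsim 1$, or (ii) a finite-time blow-up, with $\lambda(t)\to\infty$ at a controlled rate, concentrating at a single point.

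The rigidity step should eliminate both possibilities and force $u_c\equiv 0$, contradicting $A_c>0$. Since $\mu=-1$, the elliptic equation $-\Delta Q=-|Q|^{p-1}Q$ has no nontrivial $\dot H^{s_c}$ solution, so there is no stationary profile for $u_c$ to resemble. To kill scenario (i) I would run a truncated virial/Morawetz computation tailored to the supercritical regularity: the classical Morawetz identity behind Proposition~\ref{prop:Morawetz} is set at $\dot H^1$, so one uses a spatially localized version and absorbs the error terms using the precompactness of $K$ together with the pointwise decay; in odd dimensions and/or the radial case this is supplemented by the exterior ``channels of energy'' estimates for the free wave flow (built on Huygens' principle, Lemma~\ref{hgp}) to produce a nontrivial lower bound on the outgoing energy flux that contradicts \eqref{assume1.1}. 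For scenario (ii), finite speed of propagation confines $u_c$ to a backward light cone with vertex at the blow-up point; a localized energy identity combined with the self-similar compactness of $K$ shows that the limiting object would have to be a nonzero finite-energy solution supported in a light cone with a forbidden (approximately self-similar) structure, which is ruled out. Hence, if the rigidity step goes through, no critical element exists.

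The step I expect to be the main obstacle is the rigidity argument in full generality. When $s_c>1$ there is no conserved energy, so every monotonicity formula must be localized at the scale $\lambda(t)$ and its error terms controlled purely through the precompactness of $K$ and the pointwise decay of compact solutions---a delicate bookkeeping that is already nontrivial. More seriously, without radial symmetry the channels-of-energy method is far weaker, and the genuine bottleneck (the same one encountered in the nonradial energy-critical theory of \cite{DKM12JEMS}) is the sharp control, uniformly in $t$, of the spatial localization of the (super)critical ``energy'' and of the center $x(t)$. The even-dimensional case, where Huygens' principle and the sharp exterior energy estimates are unavailable, is a further substantial difficulty. It is precisely these points that leave the conjecture open outside the partial (radial, often odd-dimensional) results quoted above.
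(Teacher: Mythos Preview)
This statement is a \emph{conjecture}, not a theorem: the paper does not prove it, but rather surveys the partial results (the tables following the statement) obtained by Kenig--Merle, Killip--Visan, Bulut, Duyckaerts--Kenig--Merle, Dodson--Lawrie, and others. There is therefore no ``paper's own proof'' to compare your proposal against.

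That said, your sketch accurately reflects the strategy behind those partial results: small-data theory and stability at regularity $\dot H^{s_c}\times\dot H^{s_c-1}$, extraction of a minimal almost-periodic solution via the profile decomposition of Theorem~\ref{thm:linprohs}, reduction to the soliton-like/self-similar dichotomy, and rigidity via either localized virial/Morawetz identities or exterior channels-of-energy estimates. You have also correctly located the genuine obstructions that keep the conjecture open in general---the absence of a conserved quantity at the critical regularity forces all monotonicity formulas to be closed using only the precompactness of $K$ and pointwise decay of compact solutions (as in \cite{KM2011,KM2011D}); the channels-of-energy method (from \cite{DKM11,DKM13,DKM}) is sharp only in odd dimensions and, outside the radial case, the control of $x(t)$ and the spatial localization of the critical norm is not available; and the double-Duhamel trick used in \cite{KV,DL14} to gain regularity requires structure (Huygens, radiality, or restricted $p$) that is not present in general. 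Your write-up is an honest outline of the program and its current limits, not a proof; labeling it as such is appropriate.
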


\noindent{Conjecture \ref{con:con1.1}}  implies that
\begin{equation}
 \label{Eqn:ResultKM123}
\limsup_{t\to T_+(u)}\|\vec{u}(t)\|_{\dot{H}^{s_c} (\mathbb{R}^{3}) \times \dot{H}^{s_c-1}(\mathbb{R}^{3})}<\infty
\Longrightarrow u\text{ scatters forward in time.}
\end{equation}

\vskip0.2cm

\noindent\underline{The well-known results on {\bf defocusing Conjecture}}:

\begin{center} \begin{tabular}{|c|c|}
  \hline
  $d=3$ & $p>4$ \\\hline
  \text{radial} & Kenig-Merle \cite{KM2011} \\\hline
  \text{non-radial} & Killip-Visan \cite{KV}, p-1\ \text{even} \\
  \hline
\end{tabular}
\vskip0.3cm

\begin{tabular}{|c|c|c|c|}
  \hline
  $d=4$&$3<p<5$&$p=5$&$p>5$ \\\hline
  \text{radial} & Killip-Visan\cite{KV2011} & & \text{Miao-Wu-Zhang} \\\hline
  \text{non-radial} &  \text{Miao-Wu-Zhang} & \text{Miao-Wu-Zhang}& \text{Open!}\\
  \hline
\end{tabular}

\vskip0.3cm

\begin{tabular}{|c|c|c|c|c|}
  \hline
  $d=5$ &$7/3<p<3$&$p=3$&$3<p<5$&$p\geq5$ \\\hline
  \text{radial} & Killip-Visan\cite{KV2011} & &Open!& Kenig-Merle \cite{KM2011D}\\\hline
  \text{non-radial} & Easy  &Bulut \cite{Blut2012}& \text{Open!}&\text{Open!}\\
  \hline
\end{tabular}

\vskip0.3cm

\begin{tabular}{|c|c|c|c|c|}
  \hline
  $d=6$ &$2<p<7/3$&$7/3<p<3$&$p=3$&$p-1$ even or $p>3$ \\\hline
  \text{radial} & Killip-Visan\cite{KV2011} & &&\\\hline
  \text{non-radial} & Easy  &Easy & Bulut\cite{Blut2012}&Easy \\
  \hline
\end{tabular}

\vskip0.3cm

\begin{tabular}{|c|c|c|c|}
  \hline
  $d\geq7$ &$\frac{4}{d-2}<p-1<\frac{d(d-1)-\sqrt{d^2(d-1)^2-16(d+1)^2}}{2(d+1)}$&$p=3$& other \\\hline
  \text{radial} & Killip-Visan\cite{KV2011} & &  \\\hline
  \text{non-radial} &  Easy & Bulut\cite{Blut2012}& Easy\\
  \hline
\end{tabular}
\end{center}
It is  OPEN: $d=4,5,~p>1+\tfrac{4}{d-3}$!
\vskip 0.2in
\noindent\underline{The well-known results on {\bf focusing Conjecture}}:
\begin{center}
\begin{tabular}{|c|c|c|c|}
  \hline
  & radial & nonradial \\\hline
  $d=3$ & Duyckaerts-Kenig-Merle \cite{DKM}  & open! \\\hline
  $d=5$ & Dodson -Lawrie\cite{DL14} & open! \\\hline
  $d$ \text{even} & open & open! \\
  \hline
 \end{tabular}

\end{center}

\subsection{Nonscattering radial solution} \; Duyckaerts and Roy in \cite{DuRoy} proved the existence of
nonscattering radial solution to supercritical wave equation.

\begin{theorem}[Nonscattering radial solution, Duyckaerts-Roy \cite{DuRoy}]\label{thm:nonscarai}
Assume $d=3$ and $p>5$.
Let $u$ be a solution of $u_{tt}-\Delta u\pm|u|^{p-1}u=0$ with radial data $(u_{0},u_{1}) \in \dot{H}^{s_{c}}(\mathbb{R}^{3}) \times
\dot{H}^{s_{c}-1}(\mathbb{R}^{3})$. Then
\begin{itemize}
\item either $T_{+}(u) <+ \infty$ and
\begin{align}
\lim_{t \rightarrow T_{+}(u)} \left\| ( u(t), \partial_{t} u(t) ) \right\|_{\dot{H}^{s_{c}}(\mathbb{R}^{3}) \times \dot{H}^{s_{c}-1}(\mathbb{R}^{3})}
=+\infty,\label{Eqn:Blowuptype1}
\end{align}
\item or $T_{+}(u) =+ \infty$ and $u$ scatters forward in time.
\end{itemize}
An analogous statement holds for negative times.
\end{theorem}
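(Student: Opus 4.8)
The plan is to establish the following equivalent statement, from which the theorem follows by negating the conclusion and separating the cases $T_+(u)<\infty$ and $T_+(u)=+\infty$: if $u$ is a radial solution of $u_{tt}-\Delta u\pm|u|^{p-1}u=0$ on $[0,T_+)$ with data in $\dot H^{s_c}\times\dot H^{s_c-1}(\mathbb R^3)$, $p>5$ (so $s_c=\tfrac32-\tfrac2{p-1}\in(1,\tfrac32)$ and $p-1>s_c$), and
\[
\sup_{t\in[0,T_+)}\|(u(t),\partial_tu(t))\|_{\dot H^{s_c}\times\dot H^{s_c-1}}<\infty,
\]
then $T_+=+\infty$ and $u$ scatters forward in time. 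The scheme is the concentration--compactness/rigidity method of Kenig--Merle, combined with the channels-of-energy method of Duyckaerts--Kenig--Merle \cite{DKM11} and the radial pointwise bounds of \cite{KM2011}, in the spirit of \cite{DKM} and \cite{DuRoy}. The statement for negative times follows by time reversal.

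First I would set up the local theory at the critical regularity: Strichartz estimates for $\Box$ (Theorem \ref{thm;stricest}), a contraction argument in an appropriate Strichartz space controlling $\dot H^{s_c}$, using the fractional Leibniz/chain rule to estimate $|u|^{p-1}u$ (legitimate since $p-1>s_c$ and $p>1+s_c$); this yields local well-posedness, small-data scattering, a long-time perturbation/stability lemma, and the blow-up criterion that $T_+<\infty$ forces the scattering norm $\|u\|_{S([0,T_+))}$ to be infinite. Arguing by contradiction, if the displayed claim fails there is a critical value $A_c\in(0,\infty)$ such that every radial solution with $\sup_t\|\vec u(t)\|_{\dot H^{s_c}\times\dot H^{s_c-1}}<A_c$ scatters forward in time, while solutions arbitrarily close to the threshold do not. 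Feeding a minimizing sequence into the linear profile decomposition in $\dot H^{s_c}\times\dot H^{s_c-1}$ (the analogue for $s=s_c$ of the decompositions stated above) and using the perturbation lemma, one extracts a critical element $u_c\not\equiv0$: a radial, forward-in-time non-scattering solution with $\sup_t\|\vec u_c(t)\|\le A_c$ whose renormalized trajectory
\[
\Big\{\big(\lambda(t)^{-\frac2{p-1}}u_c\big(t,\tfrac{\cdot}{\lambda(t)}\big),\ \lambda(t)^{-\frac2{p-1}-1}\partial_tu_c\big(t,\tfrac{\cdot}{\lambda(t)}\big)\big):\ t\in[0,T_+)\Big\}
\]
is precompact in $\dot H^{s_c}\times\dot H^{s_c-1}$ for some $\lambda(t)>0$ (radiality eliminates the space-translation parameter).

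The next step is to upgrade the critical element. Since no conservation law is available at the critical level, I would invoke the Kenig--Merle pointwise estimates \cite{KM2011}: compactness modulo scaling together with finite speed of propagation (Lemma \ref{fsprop}) forces $\vec u_c(t)$ to have locally finite energy with uniform bounds, forces the decay $|u_c(t,x)|\lesssim|x|^{-\frac2{p-1}}$ away from the light cone, and constrains $\lambda(t)$ (no wild oscillation; in the finite-time case $\lambda(t)\gtrsim(T_+-t)^{-1}$, and after reversing time in the global case $u_c$ extends to a globally defined compact solution). These are exactly the inputs that make the exterior-energy arguments run. The rigidity step then consists in showing $u_c\equiv0$, a contradiction, in two subcases. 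If $T_+=+\infty$ (hence $u_c$ is global both ways and compact modulo scaling), I would apply the radial exterior energy lower bound of \cite{DKM11} to the free evolutions appearing in the Duhamel representation of $u_c$ outside truncated light cones; together with the decay from the previous step this shows a nonzero $u_c$ must radiate a fixed positive amount of energy through every exterior region, which is incompatible with precompactness of the renormalized trajectory --- and there are no admissible radial stationary profiles (the only radial solution of $\Delta W\pm|W|^{p-1}W=0$ with the matching homogeneity is the singular one $\sim|x|^{-\frac2{p-1}}$, excluded by the pointwise bounds), so $u_c\equiv0$. If $T_+<\infty$, using $\lambda(t)(T_+-t)\to\ell\in(0,\infty]$ and finite speed of propagation to reduce to a self-similar-type regime, the same channels-of-energy mechanism (or a Morawetz-type monotonicity on truncated backward light cones) again forces $\vec u_c\equiv0$, contradicting that $u_c$ is a genuine finite-time blow-up solution. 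Either way the critical element is trivial, proving the claim.

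I expect the genuine obstacle to be the finite-time-blow-up subcase of the rigidity theorem: ruling out a compact, bounded-critical-norm, self-similar-like radial blow-up. Here one cannot lean on energy conservation, so the sharp control of $\lambda(t)$, the decay of $u_c$, and the correct matching of exponents in the channels-of-energy (or virial/Morawetz) estimate must all be extracted purely from the compactness and the structure of the three-dimensional radial wave equation. The technical heart is precisely this interplay between the pointwise estimates of \cite{KM2011} and the exterior-energy lower bounds of \cite{DKM11}.
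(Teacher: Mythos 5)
Your proposed ``equivalent statement'' is not in fact equivalent to the theorem, and this is exactly where the proof fails. You propose to prove: if $\sup_{t\in[0,T_+)}\|\vec u(t)\|_{\dot H^{s_c}\times\dot H^{s_c-1}}<\infty$ then $T_+=\infty$ and $u$ scatters. That is the \emph{limsup} version, which is precisely the prior result \eqref{Eqn:ResultKM123} of Kenig--Merle \cite{KM2011} (defocusing) and Duyckaerts--Kenig--Merle \cite{DKM} (focusing). Theorem \ref{thm:nonscarai}, as the discussion immediately following it makes explicit, is the strictly stronger \emph{liminf} version: if $T_+<\infty$ then the critical norm must tend to infinity \emph{as a full limit}, not merely along some sequence. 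Equivalently, the hypothesis one must handle is only $\liminf_{t\to T_+}\|\vec u(t)\|_{\dot H^{s_c}\times\dot H^{s_c-1}}<\infty$, i.e.\ a single bounded sequence of times $t_n\to T_+$. Negating the conclusion of the theorem and unwinding the dichotomy gives this $\liminf$ statement, not the $\sup$ statement you wrote down; passing from $\liminf$ to $\sup$ is exactly the content that has to be proved.

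The concentration--compactness/rigidity scaffold you describe (profile decomposition at critical regularity, extraction of a compact-modulo-scaling critical element, upgraded regularity and decay via the Kenig--Merle pointwise bounds, channels of energy to rule it out) is accurate as a summary of \cite{KM2011,DKM}, but it assumes uniform-in-time control of the critical norm from the outset, so it cannot be launched from a bound along a mere sequence $t_n\to T_+$. The missing ingredient is the step that converts boundedness along a sequence into either scattering or a contradiction: one applies the linear profile decomposition to $\vec u(t_n)$ and, using the finite speed of propagation, the local Cauchy theory, the long-time perturbation lemma and the \emph{already-known} bounded-norm scattering result as a black box for the nonlinear profiles, argues that in fact the critical norm cannot blow up at times between the $t_n$ either. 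This reduction (the analogue of which at $p=5$ is a key technical step in the soliton-resolution paper \cite{DKM13}) is the genuinely new content of Duyckaerts--Roy; without it you have only re-derived the earlier theorem, not the one stated.
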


\vskip0.2cm

\noindent\underline{\bf Question: How about NLS?}

\vskip 0.1in

Theorem \ref{thm:nonscarai} is equivalent to \eqref{Eqn:ResultKM123}
 with the limit superior replaced by a limit inferior. We do not know any direct application of this qualitative improvement,
 however its analog in the case $p=5$, is crucial in the proof of the soliton resolution conjecture for the energy-critical wave equation
 in \cite{DKM13}.

\vskip0.15cm

Theorem \ref{thm:nonscarai} means exactly that solutions of 3d energy-supercritical NLW are of one of {\bf the following three types}:
scattering solutions, solutions blowing-up in finite time with a critical norm going to infinity at the maximal time of existence,
 and global solutions with a critical norm going to infinity for infinite times.

\vskip0.15cm

In the defocusing case, it is conjectured that all solutions  with initial data in the critical space $\dot{H}^{s_c}$ scatter.
The difficulty of this conjecture is of course the lack of conservation law at the level of crirtical regularity.
The only supercritical dispersive equations for which scattering was proved for all solutions are wave and Schr\"odinger equations
 with defocusing barely supercritical nonlinearities: see e.g \cite{RoyLog,RoyLogSchrod,Shih,TaoLog}
  \footnote{In \cite{RoyLog} the scattering with data in $\tilde{H}^{2}:=\dot{H}^{2}\cap \dot{H}^1 (\mathbb{R}^3)$
   is not explicitely mentionned. However, it can be easily derived from the finite bounds of the $L_{t}^{4}(\R, L_{x}^{12})$
   norm and the $L_{t}^{\infty}(\R, \tilde{H}^2)$ norm of the solution, by using a similar argument to that in \cite{RoyLogSchrod}
    to prove scattering.}.

\vskip0.15cm

In the focusing case, solutions blowing up in finite time are known. One type of finite time blow-up is given by
the ODE $y''=|y|^{p-1}y$, and is believed to be stable: see \cite{DonningerSchorkhuber,DonningerSchorkhuber2}
for stability inside the wave cone.

\subsection{Stability in $H^2\times H^1$ of the ODE blowup profile}

Consider the Cauchy problem for the semilinear wave
equation
\begin{equation}\label{eq:main}
\partial_t^2u - \Delta u=|u|^{p-1}u,\qquad p>3
 \end{equation}
for $x\in \R^3$.

First, recall that
the existence of finite-time blowup is most easily seen by ignoring the
Laplacian in \eqref{eq:main}. The remaining ODE in $t$ can be solved explicitly
which
 leads to the solution
\[ u_1(t,x)=c_p (1-t)^{-\frac{2}{p-1}},\qquad
c_p=\Big(~\tfrac{2(p+1)}{(p-1)^2}~\Big)^\frac{1}{p-1}. \]
Obviously, $u_1$ becomes singular at $t=1$.
One might object that this solution does not have data in $H^2\times H^1(\R^3)$.
However, this
defect is easily fixed by using suitable cut-offs and exploiting finite speed of propagation.
By using symmetries of the equation one can in fact produce a much larger family of blowup
solutions.
For instance, the time translation symmetry immediately yields the one-parameter family
\[ u_T(t,x):=c_p (T-t)^{-\frac{2}{p-1}} \]
and by applying the Lorentz transform $\Lambda_T(a)$,
we can even generate the 4-parameter family
\[ u_{T,a}(t,x)\triangleq u_T(\Lambda_T(a)(t,x)) \]
of explicit
blowup solutions,  Lorentz boosts is as follows: for $a=(a^1,a^2,a^3)\in \R^3$ and $T\in \R$,
we set
$$\Lambda_T(a)=\Lambda_T^3(a^3)\Lambda_T^2(a^2)\Lambda_T^1(a^1),$$
where $\Lambda^j: \R\times \R^3\to \R\times \R^3$, $j\in \{1,2,3\}$, is
defined by
\begin{align*}
\Lambda_T^j(a^j): \left\{ \begin{aligned}
&t \mapsto (t-T)\cosh a^j+x^j \sinh a^j+T, \\
&x^k \mapsto x^k+\delta^{jk}[(t-T)\sinh a^j+x^j \cosh a^j-x^j].
\end{aligned} \right.
\end{align*}
Merle-Zaag in [\cite{MZ03},\cite{MZ05}\cite{MZ2}] proved the stability of the above family of explicit blowup solutions
in the following sense.

\begin{theorem}[Stability in $H^2\times H^1$ of the ODE blowup profile, ]\label{thm:staode}
Fix $p>3$.
There exist constants $M,\delta>0$ such that the following holds.
Suppose $(f,g)\in H^2\times H^1(\R^3)$ satisfy
\[ \|(f,g)-u_{1,0}[0]\|_{H^2\times H^1({B}_{1+\delta}(x_0))}\leq \tfrac{\delta}{M} \]
for some $x_0\in \R^3$.
Then
$T:=T_{f,g}(x_0)\in [1-\delta,1+\delta]$ and there exists an
$a\in {B}_{Mp\delta}(x_0)$ such that
the solution $u: \Gamma(T_{f,g}(x_0),x_0)\to \R$ of ~\eqref{eq:main} with data
$u[0]=(f,g)$
satisfies
\begin{equation*}\left\{\begin{aligned}
&(T-t)^{-s_p+2}\|u[t]-u_{T,a}[t] \|_{\dot H^2\times \dot H^1(\mathbb{B}^3_{T-t}(x_0))}\lesssim
(T-t)^{\frac{1}{p-1}}, \\
&(T-t)^{-s_p+1}\|u[t]-u_{T,a}[t]
\|_{\dot H^1\times L^2(\mathbb{B}^3_{T-t}(x_0))}\lesssim
(T-t)^{\frac{1}{p-1}}, \\
&(T-t)^{-s_p}\|u(t,\cdot)-u_{T,a}(t,\cdot)
\|_{L^2(\mathbb{B}^3_{T-t}(x_0))}\lesssim
(T-t)^{\frac{1}{p-1}},
\end{aligned}\right.\end{equation*}
for all $t\in [0,T)$, where $s_p=\frac32-\frac{2}{p-1}$ is the critical Sobolev exponent.
\end{theorem}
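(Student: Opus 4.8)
The plan is to carry out the self-similar–variables, modulation and energy-functional programme of Merle and Zaag.

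\textbf{Step 1 (reduction and self-similar variables).} By finite speed of propagation the solution on the backward light cone $\Gamma(T_{f,g}(x_0),x_0)$ is determined by the data restricted to $B_{1+\delta}(x_0)$, so after a harmless cut-off I would assume $(f,g)$ is compactly supported and globally $O(\delta/M)$-close to $u_{1,0}[0]$ in $H^2\times H^1(\R^3)$. A continuity argument on the life-span (comparing the solution with the ODE $y''=|y|^{p-1}y$) shows the blow-up time at $x_0$ lies in $[1-\delta,1+\delta]$; call it $T$. On $\Gamma(T,x_0)$ I would introduce
\begin{equation*}
 w(y,s)=(T-t)^{\frac2{p-1}}\,u\big(t,\,x_0+(T-t)y\big),\qquad s=-\log(T-t),\quad |y|<1,
\end{equation*}
so that $w$ solves a damped wave equation on the cylinder $\{|y|<1\}\times(s_0,\infty)$ whose coefficients degenerate at the lateral boundary $|y|=1$; the superconformal hypothesis $p>3=1+\frac4{d-1}$ (equivalently $s_p>\frac12$) is exactly the range in which the boundary terms in the associated energy identity carry a favourable sign. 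Under this change of variables the ODE profile $u_1$ becomes the constant stationary solution $\kappa_0:=c_p$, and the four-parameter family $u_{T,a}$ becomes a smooth $s$-independent family $\{\kappa_a\}$ of stationary solutions with $\kappa_{x_0}=\kappa_0$; an explicit computation gives $\kappa_a(y)=c_p\,(\cosh|a-x_0|-\hat\nu\cdot y\,\sinh|a-x_0|)^{-\frac2{p-1}}$ for a unit vector $\hat\nu$.

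\textbf{Step 2 (linearized structure).} Linearising the self-similar equation around $(\kappa_0,0)$ in the energy space of the unit ball produces a non-self-adjoint operator whose non-decaying part of the spectrum is entirely generated by the symmetries: the eigenvalue $1$, with constant eigenfunction, coming from time-translation of the blow-up time $T$; and the eigenvalue $0$, with eigenfunctions proportional to $y_1,y_2,y_3$, coming from the Lorentz boosts (these are the tangent vectors $\partial_{a_j}\kappa_a|_{a=x_0}$). The remaining spectrum is strictly stable with a spectral gap; this gap is what dictates the decay rate $e^{-s/(p-1)}=(T-t)^{1/(p-1)}$ in the conclusion, and the fact that all the non-decaying modes are symmetry modes is the analytic expression of the stability of ODE blow-up.

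\textbf{Step 3 (modulation, Lyapunov functional, bootstrap).} I would decompose $w(\cdot,s)=\kappa_{a(s)}+q(\cdot,s)$ and fix the path $a(s)$ by requiring $(q,\partial_s q)$ to be orthogonal to the finite-dimensional symmetry subspace; the choice of $T$ as the true blow-up time already suppresses the eigenvalue-$1$ mode, and the orthogonality conditions yield modulation ODEs for $a(s)$ with right-hand sides quadratic in the remainder. The heart of the matter is the weighted energy functional $\mathcal E(w(s))$ of Merle--Zaag, for which $\frac{d}{ds}\mathcal E(w(s))$ is a manifestly non-positive boundary/damping term plus errors controlled by $\|(q,\partial_s q)(s)\|$; coupling its coercivity on the stable subspace with the modulation estimates, a bootstrap over $s\in(s_0,\infty)$ should give
\begin{equation*}
 \big\|(q,\partial_s q)(s)\big\|_{\dot H^1\times L^2(B_1)}\lesssim\delta\,e^{-\frac s{p-1}},\qquad |a(s)-a|\lesssim\delta\,e^{-\frac s{p-1}},
\end{equation*}
for a limit $a\in B_{Mp\delta}(x_0)$. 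Undoing the change of variables turns the first bound into the stated $(T-t)^{-s_p+1}\|u[t]-u_{T,a}[t]\|_{\dot H^1\times L^2(\mathbb{B}^3_{T-t}(x_0))}$ estimate and, at one derivative less, the $L^2$ estimate.

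\textbf{Step 4 (top-order estimate and main obstacle).} To reach the $\dot H^2\times\dot H^1$ bound I would differentiate the self-similar equation once more and run the analogous weighted energy estimate at one higher order for $(\nabla_y q,\partial_s\nabla_y q)$, absorbing the nonlinear commutator terms with the decay from Step 3; here $p>3$ reappears, guaranteeing that $|u|^{p-1}u$ is smooth enough (its second derivative is locally Hölder) to be estimated up to the light cone. The hard part of the whole argument is Step 3: constructing an energy functional in self-similar variables that is at once coercive (so that it controls $\|(q,\partial_s q)\|$, and in Step 4 the top-order norm) and almost monotone, and then interlocking it with the modulation dynamics so that the four symmetry directions are removed exactly while the infinite-dimensional remainder is trapped with the sharp rate. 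The degeneracy of the equation at $|y|=1$ — which forces the weighted spaces and ties the whole scheme to the superconformal range $p>3$ — is the recurring technical difficulty.
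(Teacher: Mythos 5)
The paper does not give a proof of this theorem; it simply cites Merle--Zaag. In fact the attribution is off: the Merle--Zaag series~\cite{MZ03,MZ05,MZ2} treats the subconformal and conformal range ($p\le 1+\frac4{d-1}$, i.e.\ $p\le 3$ in $d=3$) and dimension one, and works in weighted $\dot H^1\times L^2$ spaces. The statement as given — superconformal $p>3$, stability measured in non-degenerate $H^2\times H^1(B_1)$, exponential rate $(T-t)^{1/(p-1)}$ with the Lorentz parameter $a$ — is the Donninger--Schörkhuber result (\cite{DS12},\cite{DonningerSchorkhuber}), whose proof rests on semigroup theory, not a Lyapunov functional.

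Your Step~2 is the correct skeleton: the unstable spectrum of the linearization in self-similar variables consists exactly of the eigenvalue $1$ (from time translation of $T$) and the $d$-fold eigenvalue $0$ (from Lorentz boosts), with a spectral gap below. But Step~3 cannot extract the advertised rate from the Merle--Zaag functional. The monotonicity identity reads
\begin{equation*}
\frac{d}{ds}\,\tilde E(w(s))=-\int_{B_1}\frac{|\partial_s w|^2}{(1-|y|^2)^{3/2}}\,dy,
\end{equation*}
which dissipates only $\partial_s w$ and only in a boundary-degenerate norm. It does not bound $\|q\|_{\dot H^1}+\|\partial_s q\|_{L^2}$ by anything that decays; it gives $\int_0^\infty\|\partial_s w\|^2\,ds<\infty$ and hence convergence of $w(s)$ (by compactness/rigidity, as in the paper's Section~7), but with no rate. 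To convert the spectral gap of Step~2 into the exponential decay of Step~3 one needs a genuine linear decay estimate for the group $e^{sL_0}$ composed with the Riesz projection onto the stable subspace, obtained for instance by a Gearhart--Prüss type resolvent bound. That is a different mechanism from monotonicity of $\tilde E$, and it is precisely what Donninger--Schörkhuber supply. Your proposal states the coercivity but never actually produces a differential inequality of the form $\frac{d}{ds}\mathcal E(q)\le -\omega\,\mathcal E(q)+\text{errors}$ on the stable subspace, so the bootstrap does not close.

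Step~4 has a second gap. The Merle--Zaag functional is intrinsically an $\dot H^1\times L^2$ object, tied to the weight $\rho(y)=(1-|y|^2)^{-1/2}$ (or its $(p,d)$-dependent analogue). Differentiating the self-similar equation once in $y$ introduces commutators with $y\cdot\nabla$ and with the degenerate weight whose sign is uncontrolled, and there is no known monotone quantity at the level of $\dot H^2\times\dot H^1$ in this framework. By contrast, Donninger--Schörkhuber set up the entire linear theory directly in $H^2\times H^1(B_1)$ with the flat measure: they show the linearized operator generates a strongly continuous semigroup there, compute the spectrum, and deduce top-order decay as a consequence. That is the route you should take: replace the Lyapunov functional in Steps~3 and~4 by a semigroup estimate on the stable subspace, retaining the modulation of $(T,a)$ to suppress the four unstable directions.
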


\subsection{Type II blowup solutions}

 Let $d\geq 11$ and let the Joseph-Lundgren exponent be
\begin{equation}
\label{exponentpjl}
p_{JL}=1+\frac{4}{d-4-2\sqrt{d-1}}.
\end{equation}
Then for $p>p_{JL}$, the soliton profile admits an asymptotic expansion \begin{equation}
\label{expansionq}
  Q(r)=\frac{c_{\infty}}{r^{\frac{2}{p-1}}}+\frac{a_1}{r^{\gamma}}+o\left(\frac1{r^{\gamma}}\right), \; \; a_1\neq 0, \;\;\;\Delta Q+Q^p=0,
\end{equation}
  with
 \begin{equation}
 \label{defgamma}
 c_\infty=\left[\frac{2}{p-1}\left(d-2-\frac 2{p-1}\right)\right]^{\frac 1{p-1}}, \ \ \gamma=\frac12(d-2-\sqrt{{\triangle}})>0
 \end{equation}
 and where $${\triangle}=(d-2)^2-4pc_{\infty}^{p-1}\ \ (\triangle>0 \  \mbox{for}\ \ p>p_{JL}).$$ These numbers describing the asymptotic behavior of the soliton are essential in the analysis of type II blow up bubbles as follows:

\begin{theorem}[Type II blow up for the energy super critical wave equation, \cite{Collot}]\label{thm:typeiiesup}
Let $d\geq 11$, $p_{JL}$ be given by \eqref{exponentpjl} and a nonlinearity
\begin{equation}
\label{plarge}  p=2q+1,\ \ q\in \Bbb N^*,\ \ p> p_{JL}.
\end{equation}
Let $\gamma$ be given by \eqref{defgamma} and define:
\begin{equation}
\label{intro:eq:def alpha}
\alpha=\gamma-\frac{2}{p-1}.
\end{equation}
Assume moreover:
\begin{equation}
\label{conditiongamma}
\left(\frac d2-\gamma\right)\notin \Bbb N.
\end{equation}
Pick an integer
\begin{equation}
\label{codnionrionfeell}
 \ell\in \Bbb N \; \; \mbox{with}\; \; \ell>\alpha,
\end{equation}
 and a large enough regularity exponent
$$s_+\in \Bbb N, \; \; s_+\geq s(\ell) \;\; (s(\ell) \to +\infty\; \; \mbox{as}\; \; \ell\to +\infty).$$
 Then there exists a radially symmetric initial data $(u_0, u_1)\in H^{s_+}\times H^{s_+-1}(\Bbb R^d)$
  such that the corresponding solution to {\rm (NLW)} blows up in finite time $0<T<+\infty$ by concentrating the soliton profile:
\begin{equation}
\label{concnenergy}
u(t,r)=\frac{1}{\l(t)^{\frac 2{p-1}}}(Q+\varepsilon)\left(\frac{r}{\l(t)}\right)
\end{equation}
with:\\
\noindent{\em (i) Blow up speed}:
\begin{equation}
\label{Pexciitedlaw}
\l(t)=c(u_0)(1+o_{t\uparrow T}(1))(T-t)^{\frac{\ell}{\alpha}}, \ \ c(u_0)>0;
\end{equation}
\noindent{\em (iii) Asymptotic stability above scaling in renormalized variables}:
\begin{equation}
\label{intro:eq:convergence surcritique}
\lim_{t\uparrow T}\|\varepsilon(t,\cdot),\lambda(\partial_t u)_{\lambda}(t,\cdot)\|_{\dot{H}^s\times \dot{H}^{s-1}}=0  \ \ \mbox{for all}\ \ s_c<s\leq s_+;
\end{equation}
\noindent{\em (iv) Boundedness below scaling}:
\begin{equation}
\label{intro:eq:bornitude sous critique}
\limsup_{t\uparrow T}\|u(t),\partial_t u(t)\|_{\dot{H}^s\times \dot{H}^{s-1}}<+\infty  \ \ \mbox{for all}\ \ 1\leq s<s_c;
\end{equation}
\noindent{\em (v) Behavior of the critical norms}:
\begin{equation}
\label{intro:eq:comportement norme critique}
\|u(t)\|_{\dot{H}^{s_c}}=\left[c(d,p)\sqrt{\ell}+o_{t\uparrow T}(1)\right]\sqrt{|\log(T-t)|},
\end{equation}

\begin{equation}
\label{intro:eq:comportement norme critique 2}
\limsup_{t\uparrow T} \|\partial_t u(t)\|_{\dot{H}^{s_c-1}}<+\infty.
\end{equation}

\end{theorem}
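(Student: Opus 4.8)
The plan is to realize the blow-up as a soliton bubbling off at a controlled rate, following the scheme developed by Merle--Rapha\"el--Rodnianski for supercritical parabolic and Schr\"odinger problems, adapted here to the hyperbolic flow. First I would pass to renormalized variables $u(t,r)=\la(t)^{-\frac{2}{p-1}}v(s,y)$ with $y=r/\la(t)$ and $ds/dt=1/\la(t)$, so that finite-time concentration corresponds to $\la(s)\to 0$ as $s\to+\infty$. Linearizing the equation about the soliton $Q$ of \eqref{expansionq} yields the Schr\"odinger operator $\mathcal H=-\Delta-pQ^{p-1}$ and the scaling generator $\Lambda=\frac{2}{p-1}+y\cdot\nabla$, with $\mathcal H(\Lambda Q)=0$. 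The hypothesis $p>p_{JL}$ is exactly what guarantees, via \eqref{defgamma}, that $\mathcal H$ has no oscillation at infinity and that $\Lambda Q$ decays like $r^{-\gamma}$ with $\gamma>\frac{2}{p-1}$; at this stage I would record the weighted Hardy and coercivity inequalities for $\mathcal H$ and its iterates, on which all later estimates rest, and note that the arithmetic condition $(\frac d2-\gamma)\notin\mathbb N$ is imposed precisely so that the profiles constructed next have clean (nonresonant, logarithm-free) asymptotics.

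Next I would construct an approximate blow-up profile $Q_b$ depending on modulation parameters $b=(b_1,\dots,b_\ell)$, of the form $Q_b=Q+\sum_{i=1}^{\ell}b_iT_i+(\text{terms quadratic and higher in }b)$, where the radial profiles $T_i$ are generated iteratively by solving $\mathcal H T_{i+1}=-\Lambda T_i+(\text{lower order})$ from $T_0=Q$. Each $T_i$ carries a slowly growing tail $\sim r^{2i-\gamma}$, which forces cut-offs at the self-similar scale $|y|\sim b_1^{-1/2}$ and splits the analysis into an inner soliton zone and a self-similar zone. Prescribing the formal laws $-\la_s/\la=b_1$, $(b_1)_s=-b_1^2+\cdots$ and $(b_i)_s=(\text{linear combinations of }b_1b_j)+\cdots$, one checks that $Q_b$ solves the renormalized equation up to an error $\Psi_b$ small in the norms used below. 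Solving this finite-dimensional system in the ``$\ell$-excited'' regime produces $b_1(s)\sim c_\ell\, s^{-1}$ with $c_\ell=\ell/(\ell-\al)>1$ (here $\al=\gamma-\frac{2}{p-1}$), hence $\la(s)\sim s^{-c_\ell}$, and integrating $ds/dt=1/\la$ gives the advertised rate $\la(t)=c(u_0)(1+o(1))(T-t)^{c_\ell/(c_\ell-1)}=c(u_0)(1+o(1))(T-t)^{\ell/\al}$.

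I would then write the solution as $u(t,r)=\la(t)^{-\frac{2}{p-1}}(Q_{b(t)}+\eps)(r/\la(t))$, fixing $(b,\la)$ by orthogonality conditions that project $\eps$ off the generalized kernel directions of $\mathcal H$. The remainder $\eps$ is controlled by an adapted \emph{supercritical} energy, schematically $\mathcal E_{s_+}(\eps)\approx\|\mathcal H^{k}\eps\|_{L^2}^2+\|\mathcal H^{k-1}\partial_s\eps\|_{L^2}^2$ with $s_+\sim 2k$, together with lower-order pieces, for which one proves a Lyapunov inequality of the form $\frac{d}{ds}\big(\mathcal E_{s_+}/\la^{c_0}\big)\lesssim(\text{controlled terms})$ for a suitable $c_0>0$, exploiting the repulsivity of $\mathcal H$, weighted Hardy inequalities, the algebraic gain built into $Q_b$, and the fact that the flux of supercritical energy towards $y=\infty$ is dominated by $\la$-weighted norms; the smoothness of the nonlinearity (recall $p=2q+1$) is used at the top order. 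Finite speed of propagation transfers this interior control to genuine Sobolev bounds on $\R^d$, which yields \eqref{intro:eq:convergence surcritique} and \eqref{intro:eq:bornitude sous critique}, while a direct computation on $Q_b$ gives the logarithmic growth of the critical norm \eqref{intro:eq:comportement norme critique} and the bound \eqref{intro:eq:comportement norme critique 2}. Since the modulation system carries $\ell-1$ unstable directions, I would close the bootstrap by a Brouwer-type topological argument, choosing $(u_0,u_1)\in H^{s_+}\times H^{s_+-1}(\R^d)$ so that these directions are annihilated; the corresponding $\eps$ then remains in the bootstrap regime throughout $[0,T)$, and the decomposition \eqref{concnenergy} follows.

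The hard part is the third step. Unlike the parabolic or dispersive-smoothing supercritical analogues, the wave evolution provides no regularization, so the supercritical energy must be manufactured from conjugated powers of $\mathcal H$ and shown to be simultaneously coercive and almost monotone along the \emph{second-order} linearized flow, while the nonlinear interaction $|Q_b+\eps|^{p-1}(Q_b+\eps)-|Q_b|^{p-1}Q_b$ must be estimated at the maximal order $s_+$ in spite of the slow decay $Q\sim r^{-2/(p-1)}$. Matching the inner and self-similar zones on the light cone and reconciling the many error terms in the modulation equations with a single clean self-similar law $\la(t)\sim(T-t)^{\ell/\al}$ --- rather than one with a logarithmic correction --- is the other delicate point, and it is precisely here that $p>p_{JL}$ and $(\frac d2-\gamma)\notin\mathbb N$ enter in an essential way.
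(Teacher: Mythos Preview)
Your sketch is faithful to the actual proof in \cite{Collot}, and the present survey does not give a proof of its own: it merely states the result and remarks that ``the proof of Theorem \ref{thm:typeiiesup} relies on an explicit construction of blow up solutions,'' before quoting the companion codimension statement (Theorem \ref{thm:thmmain2}). So there is no paper-proof to compare against beyond that one-line comment, and your outline --- renormalization about $Q$, iterated tail profiles $T_i$ solving $\mathcal H T_{i+1}=-\Lambda T_i+\cdots$, the modulation ODEs yielding $\lambda(t)\sim (T-t)^{\ell/\alpha}$, a high-order Lyapunov/energy built from powers of $\mathcal H$, and a Brouwer argument to kill the $\ell-1$ unstable directions --- is exactly the Merle--Rapha\"el--Rodnianski machinery that Collot implements, including the role of $p>p_{JL}$ for coercivity of $\mathcal H$ and of the nonresonance condition $\frac d2-\gamma\notin\mathbb N$ for logarithm-free asymptotics.
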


The proof of Theorem \ref{thm:typeiiesup} relies on an explicit construction of blow up solutions. It allows us to find a whole set of initial data leading to such a blow up, and to investigate its topological properties:

\begin{theorem} \label{thm:thmmain2}
We keep the notations and assumptions of Theorem \ref{thm:typeiiesup}. Let a slightly supercritical regularity exponent $\sigma=\sigma(\ell)$ satisfying:
$$
0<\sigma-s_c\ll 1 \ \ \text{small enough}.
$$
There exists a locally Lipschitz manifold of codimension $\ell-1$ in the Banach space $\dot{H}^{\sigma}\cap \dot{H}^{s_+}\times \dot{H}^{\sigma-1}\cap \dot{H}^{s_+-1}$ of initial data leading to the blow up scenario described by Theorem \ref{thm:typeiiesup}. We point out that as $\alpha>2$, the codimension satisfies $\ell-1>2$.
\end{theorem}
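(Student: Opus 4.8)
The plan is to produce, by a modulation/construction scheme, a whole set of data driving the blow-up of Theorem~\ref{thm:typeiiesup}, and then to analyze its topology. Pass to renormalized variables $y=r/\lambda(t)$, $\frac{ds}{dt}=\frac1{\lambda(t)}$, and look for
\[
u(t,r)=\frac1{\lambda(t)^{2/(p-1)}}\bigl(Q+\varepsilon\bigr)(s,y),
\]
with the scale $\lambda(s)$ pinned down by an orthogonality condition imposed on $\varepsilon$. Linearizing the flow around $Q$ gives the operator $\mathcal L\varepsilon=-\Delta\varepsilon-pQ^{p-1}\varepsilon$ and, in the natural wave variables $(\varepsilon,\partial_s\varepsilon)$, a Hamiltonian $\mathcal H$. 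The structural input that makes everything work is the spectral picture of $\mathcal L$: since $p>p_{JL}$ and $Q$ has the slowly decaying tail described by \eqref{expansionq}, $\mathcal L$ has only finitely many non-negative eigenvalues, and after quotienting by the scaling direction $\Lambda Q$ one is left with exactly $\ell-1$ genuinely unstable directions, whose instability exponents are dictated by $\alpha$ and the chosen integer $\ell$. These $\ell-1$ directions are the ones that will have to be tuned, and hence the source of the codimension.

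First I would build the approximate blow-up profile as a slowly modulated ansatz $Q+\sum_{i\ge1}b_i(s)\,T_i(y)$, where the profiles $T_i$ solve a hierarchy of elliptic equations obtained by formally cancelling successive errors and are matched to the far-field expansion $c_\infty y^{-2/(p-1)}+a_1y^{-\gamma}+\cdots$; the parameters $(b_i)$ then satisfy a finite-dimensional dynamical system whose distinguished solution reproduces the law $\lambda(t)\sim (T-t)^{\ell/\alpha}$. One must estimate the error left by this ansatz in the whole family of norms $\dot H^s\times\dot H^{s-1}$ for $s_c<s\le s_+$, with special care in the region $y\gtrsim1$ where $Q$ decays only polynomially --- this is exactly where the precise exponent $\gamma$, the Joseph--Lundgren condition, and the non-resonance hypothesis $\bigl(\tfrac d2-\gamma\bigr)\notin\mathbb{N}$ are used.

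Then comes the bootstrap. I would define a shrinking trapped set in which the error $\varepsilon$, measured in an adapted norm combining the high-Sobolev energy at level $s_+$ with a virial/Morawetz correction (indispensable because there is no conservation law at the critical regularity), together with the modulation parameters $b_i$ and the $\ell-1$ unstable coordinates, all remain controlled at their prescribed rates. The heart of the argument is: (i) a monotonicity estimate for the adapted norm, using the repulsivity of the conjugated Hamiltonian outside the soliton core and the smallness of $\sigma-s_c$; (ii) the modulation ODEs governing $\lambda$ and the $b_i$; and (iii) a finite-dimensional reduction showing that, as long as the solution stays in the trapped set, the $\ell-1$ unstable modes obey an essentially linear unstable ODE. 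A Brouwer-type topological argument then gives that the subset of the $(\ell-1)$-dimensional ball of initial unstable coordinates for which the solution never exits the trapped set is nonempty (its complement admits no retraction onto the boundary), so the blow-up scenario holds on a set of codimension $\ell-1$ in the data space.

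To upgrade this to a \emph{locally Lipschitz manifold}, I would keep track of the quantitative dependence of the exit mechanism on the data: the map assigning to an element of the trapped set its renormalized unstable coordinates is Lipschitz on $\dot H^{\sigma}\cap\dot H^{s_+}\times\dot H^{\sigma-1}\cap\dot H^{s_+-1}$, so the good set is the Lipschitz graph of the $\ell-1$ unstable directions over the complementary (stable plus modulation) directions; this is the asserted codimension-$(\ell-1)$ manifold, and since $\ell$ is an integer with $\ell>\alpha>2$ its codimension satisfies $\ell-1>2$. \textbf{The main obstacle} is to close step~(iii) together with the high-regularity energy estimate simultaneously: one has to control the error at a \emph{supercritical} Sobolev level without any conservation law, absorb the finitely many unstable modes, and cope with the slow decay of $Q$ --- and the entire construction (the role of $p_{JL}$, of the tail \eqref{expansionq}, and of the integer $\ell$) is arranged precisely so that these three requirements become compatible.
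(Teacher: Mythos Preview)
The paper does not contain a proof of this theorem. This is a survey, and Theorem~\ref{thm:thmmain2} is quoted from Collot~\cite{Collot} as a stated result; the only remark the paper makes is the single sentence preceding the statement, namely that the proof of Theorem~\ref{thm:typeiiesup} ``relies on an explicit construction of blow up solutions'' which ``allows us to find a whole set of initial data leading to such a blow up, and to investigate its topological properties.'' There is therefore nothing in the paper to compare your sketch against.

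That said, your outline is a faithful summary of the strategy in the original source: the modulated ansatz around $Q$, the finite number of unstable directions coming from the spectral analysis of $\mathcal L=-\Delta-pQ^{p-1}$ under the Joseph--Lundgren condition, the high-regularity energy with Morawetz correction to close the bootstrap at a supercritical level, the Brouwer argument to select the $\ell-1$ unstable parameters, and the Lipschitz dependence yielding the manifold structure. If anything, one should be careful about the claim that ``after quotienting by the scaling direction $\Lambda Q$ one is left with exactly $\ell-1$ genuinely unstable directions'': the precise count of unstable directions and its link to the integer $\ell$ chosen in \eqref{codnionrionfeell} is a delicate point in Collot's construction, and your sketch does not explain \emph{why} the number of directions to be tuned equals $\ell-1$ rather than some other spectral quantity. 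But since the present paper offers no proof at all, your proposal goes well beyond what is required here.
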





\section{Energy-subcritical NLW}

Consider energy subcritical NLW equation
\begin{align} \label{equ:waveequ-5.1}
{\rm (NLW)}\quad \begin{cases}    \partial_{tt}u-\Delta u= \mu|u|^{p-1}u,\quad
(t,x)\in\R\times\R^d,
\\
(u,\pa_tu)(0,x)=(u_0,u_1)(x),
\end{cases}
\end{align}
where $u:\R_t\times\R_x^d\to \R$, $s_c=\frac{d}2-\frac2{p-1}<1.$
\vskip0.15cm

\underline{\bf History of defocusing energy-subcritical NLW: $\mu=-1,~s_c<1.$}
\begin{enumerate}
\item  For $d=3$ and $1<p<p_c=5$, J\"orgen in 1961  proved the global existence of smoothing solution of
\eqref{equ:waveequ-5.1}\cite{Jor}.
\item For higher dimensions with $4\le d\le 9$,  Brenner, Wahl and Pecher proved the global existence of smoothing solution of
\eqref{equ:waveequ-5.1}\cite{BW,Pech,Wa}.
\item  Ginibre-Velo proved the global well posedness of energy solution of
\eqref{equ:waveequ-5.1} with intial data $(u_0,u_1)\in H^1\times L^2$  \cite{GV1,GV2}.
\end{enumerate}

\subsection{Critical norm conjecture}

 The methods employed in the energy-supercritical NLW
 also lead to the study of the energy-subcritical
NLW. In fact, using  the channels of energy method pioneered in
\cite{DKM11,DKM13}, Shen \cite{Shen} proved the analog result of
\cite{KM2011} for $2<p<4$ with $d=3$ in both defocusing and focusing
case. However, the channels of energy  method does not work so
effectively for $p\leq2$. Recently, by virial based rigidity
argument and improving addition regularity for the minimal
counterexamples,  Dodson and Lawrie \cite{DL} extended the result of
\cite{Shen} to $\sqrt{2}<p\leq2$. How to establish  monotonicity
formulae at a different regularity matching with the critical
 conservation laws
is a difficulty intrinsic to the nonlinear wave equation. In order
to  utilize the monotonicity formulae, one needs to  improve the
regularity for the almost periodic solutions. In \cite{DL,Shen},
they used the double Duhamel trick to show that almost periodic
solutions belong to energy space $\dot H^1_x(\R^3)\times
L^2_x(\R^3)$. The main difficult is that the decay rate of the
linear solution is not enough to guarantee the double Duhamel
formulae converges. However, the weighted decay available from
radial Sobolev embedding can supply the additional decay to
guarantee the double Duhamel formulae converges. Thus, one need the
radial assumption in \cite{DL,Shen}.
 This is different
 from cubic Schr\"odinger equation \cite{KM2010}, where no radial assumption is made. This is due to the Lin-Strauss Morawetz inequality
$$\int_{I}\int_{\R^{3}}\frac{|u(t,x)|^4}{|x|}\;dx\;dt\lesssim\sup_{t\in I}\|u(t,x)\|_{\dot H^{\frac12}_x(\R^3)}^2,$$
which is scaling critical with the cubic Schr\"odinger equation.

\begin{conjecture}[{\bf Critical norm conjecture}]\label{con:con1.123}
Let $s_c>0$ and $\mu=-1$.  Suppose
$u:~I\times\R^d\to\R$ is a maximal-lifespan solution to
\eqref{equ:waveequ-5.1} such that
\begin{equation}\label{assume1.112}
(u,\pa_tu)\in L_t^\infty(I; \dot{H}_x^{s_c}\times\dot{H}^{s_c-1}).
\end{equation} Then $u$ is global and scatters in the sense that  there exist unique
$(u_0^{\pm},u_1^{\pm})\in\dot{H}^{s_c}_x(\R^d)\times \dot{H}^{s_c-1}_x(\R^d)$
such that
        $$\lim_{t\to\pm\infty}\|u(t)-S(t)(u_0^{\pm},u_1^{\pm})\|_{\dot{H}_x^{s_c}(\R^d)}=0.$$
\end{conjecture}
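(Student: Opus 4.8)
The program is the Kenig--Merle \emph{concentration--compactness/rigidity} method, in the spirit of Shen \cite{Shen} and Dodson--Lawrie \cite{DL} but with the radial hypothesis removed, for the inter-critical range $1+\tfrac4d<p<1+\tfrac4{d-2}$ (equivalently $0<s_c<1$). First, the Strichartz estimate (Theorem~\ref{thm;stricest}) at an $\dot H^{s_c}$-admissible pair yields local well-posedness, small-data scattering, and a stability/long-time perturbation lemma in $\dot H^{s_c}\times\dot H^{s_c-1}$. If Conjecture~\ref{con:con1.123} failed, there would be a finite critical size $L_c>0$ below which all solutions have finite scattering norm and above which some do not. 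Running a linear profile decomposition in $\dot H^{s_c}\times\dot H^{s_c-1}$ --- built from the Strichartz framework with respect to scaling and space-time translations when $s_c\neq\tfrac12$, and with the rescaled Lorentz boosts adjoined when $s_c=\tfrac12$ as in Theorem~\ref{thm:lpds1} --- together with the stability lemma and the usual ``only one profile survives'' dichotomy, one extracts a nonzero critical element $u_c$, bounded by $L_c$ in the critical norm, with infinite scattering norm on its maximal interval $I_{\max}$, whose orbit $\{(\la(t)^{-(d/2-s_c)}u_c(t,\tfrac{\cdot-x(t)}{\la(t)}),\,\la(t)^{-(d/2-s_c)-1}\pa_tu_c(t,\tfrac{\cdot-x(t)}{\la(t)})):t\in I_{\max}\}$ is precompact in $\dot H^{s_c}\times\dot H^{s_c-1}$ for suitable $\la(t)>0$, $x(t)\in\R^d$.

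By a local-constancy argument for $\la$ and rescaling, one then reduces to the standard scenarios: (I) finite-time blow-up, $T_+<\infty$, with $\la(t)\to+\infty$ (or $\la(t)\gtrsim (T_+-t)^{-1}$ along a sequence); (II) a global soliton-like solution with $\la(t)\sim 1$; and (III) a global low-to-high frequency cascade with $\la(t)\to+\infty$ sublinearly. In every scenario the rigidity statement to be established is $u_c\equiv 0$, which contradicts $u_c\neq 0$ and closes the argument.

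The crux is the lack of a coercive conservation law at the critical scaling: since $s_c<1$ the energy $E(u_c)$ is a \emph{supercritical} quantity which is a priori infinite, so no monotonicity formula is directly available. The only known remedy is to \emph{upgrade} the critical element to the energy space, i.e.\ to prove $u_c\in\dot H^1\times L^2$ with finite energy, via the double Duhamel trick applied to the precompact orbit. But the dispersive decay of the linear wave group from the Shatah--Struwe estimate, of order $|t|^{-(d-1)(\frac12-\frac1q)}$, is not strong enough to force the double Duhamel integral to converge at the regularity required to pass from $\dot H^{s_c}$ up to $\dot H^1$; in the radial case this is repaired by the weighted embedding $\||x|^{\frac d2-1}u\|_{L^\infty_x}\lesssim\|u\|_{\dot H^1_x}$, which provides the extra spatial decay of $u_c$ away from the origin needed for convergence. \emph{Removing radiality at this point is the main open obstacle:} one must either establish the regularity upgrade by a genuinely non-radial mechanism, or else bypass $\dot H^1$ altogether by constructing a virial/Morawetz-type monotonicity formula that is itself scaling-critical for $\dot H^{s_c}$. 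The absence of a Lin--Strauss-type Morawetz inequality for NLW that is critical at regularity $s_c<1$ --- in contrast to the $\dot H^{1/2}$-critical NLS, where $\int_I\!\int_{\R^3}|u|^4/|x|\,dx\,dt\lesssim\sup_t\|u(t)\|_{\dot H^{1/2}_x}^2$ --- is precisely the difficulty intrinsic to the wave equation.

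Granting the regularity upgrade, the rigidity can be completed along classical lines. In Scenario~(I), an energy-space almost-periodic solution concentrating as $\la(t)\to+\infty$ would funnel a fixed positive amount of (nonnegative, since $\mu=-1$) energy into a single point of the backward light cone; finite speed of propagation (Lemma~\ref{fsprop}, Lemma~\ref{hgp}) together with the decay of the compact orbit near the cone forces the energy outside any small ball to vanish, hence $E(u_c)=0$ and $u_c\equiv 0$. In Scenarios~(II)--(III) one uses the defocusing sign: the Morawetz bound $\int_{I}\!\int_{\R^d}|u_c|^{p+1}/|x|\,dx\,dt\le C(E)$ (Proposition~\ref{prop:Morawetz}), combined with precompactness of the orbit and the resulting lower bound $\int_{|x|\le R}|u_c(t,x)|^{p+1}\,dx\gtrsim 1$ after recentering, is incompatible with $\la(t)\sim 1$ or $\la(t)\to\infty$ once the spatial center is controlled --- namely $x(t)/t\to 0$ (or boundedness of $\dot x(t)$), which in the non-radial case again demands the delicate sharp-in-time control of the space localization of the energy developed by Duyckaerts--Kenig--Merle. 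This forces $u_c\equiv 0$ in all cases, the desired contradiction, so $L_c=+\infty$: every solution bounded in $\dot H^{s_c}\times\dot H^{s_c-1}$ on its maximal interval has finite scattering norm, is global, and scatters, the asymptotic states $(u_0^\pm,u_1^\pm)$ being unique because two free waves with the same $\dot H^{s_c}\times\dot H^{s_c-1}$ limit coincide. The single genuine obstruction is the non-radial additional-regularity step of the third paragraph.
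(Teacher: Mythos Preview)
Your proposed strategy aligns with the paper's own treatment: Conjecture~\ref{con:con1.123} is presented as \emph{open}, and the paper does not prove it but only sketches (following \cite{DL}) the radial $d=3$ case via concentration--compactness, reduction to an almost periodic solution, a double-Duhamel regularity upgrade to $\dot H^1\times L^2$, and a virial-based rigidity. You correctly pinpoint both the scheme and the genuine obstruction --- the non-radial additional-regularity step where radial Sobolev embedding is currently indispensable --- exactly as the paper itself flags it.
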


\vskip0.2cm

\begin{center}Defocusing/focusing\\
\begin{tabular}{|c|c|c|}
  \hline
 & $\sqrt{2}<p\leq2$ & $2<p<4$\\\hline
  $d=3$  & Dodson-Lawrie \cite{DL} & Shen \cite{Shen} \\\hline
  $d\geq4$ & & \\
  \hline
\end{tabular}
\end{center}

\vskip0.2cm

\noindent\underline{\bf The clue for the proof in \cite{DL})} \vskip0.2cm

The proof follows the concentration-compactness approach  to induction on energy. We
argue by contradiction. The failure of scattering result would
imply the existence of very special class of solutions. On the other
hand, these solutions have so many good properties that they do not
exist. Thus we get a contradiction. While we will make some further
reductions later, the main property of the special counterexamples
is almost periodicity modulo symmetries:

\vskip0.2cm
{\bf Step 1}\; Reduction to almost periodic solution

\begin{theorem}[Reduction to two enemies]
Suppose that Conjecture \ref{con:con1.123} for $\dot{H}^{\frac12}$-critical wave equation fails,
then there exists a minimal $\dot H^{\frac12}$-energy, maximal-lifespan solution $u: ~I\times\R^3\to\R$ to
\eqref{equ:waveequ-5.1}, which is almost periodic modulo symmetries,
$\|u\|_{L_{t,x}^{4}(I\times\R^3)}=+\infty$.  The almost periodic modulo symmetries means: for $\forall \eta>0$,
there exist $C(\eta)>0$ and $N(t)$ defined in $I$ such that
\begin{align*}&\left\{\begin{aligned}
&\int_{|x|\geq\frac{C(\eta)}{N(t)}}\big||\nabla|^{\frac12}u\big|^2dx+\int_{|\xi|\geq
C(\eta)N(t)}|\xi|\cdot\big|\hat{u}(t,\xi)\big|^2d\xi<\eta,\\
&\int_{|x|\geq\frac{C(\eta)}{N(t)}}\big||\nabla|^{-\frac12}u_t\big|^2dx+\int_{|\xi|\geq
C(\eta)N(t)}|\xi|^{-1}\cdot\big|\hat{u}_t(t,\xi)\big|^2d\xi<\eta.\end{aligned}\right.
\end{align*}
Moreover,
\begin{enumerate}
\item[{\rm (i)}] $N(t)\equiv1,\quad \forall~t\geq0$.\;\quad\qquad \qquad{\rm (soliton-like
solution)}
\item[{\rm (ii)}] $N(t)\leq1$ and $\varlimsup\limits_{t\to+\infty}N(t)=0$. \quad
{\rm (Energy-cascade)}.
\end{enumerate}
\end{theorem}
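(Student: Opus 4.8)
The plan is to run the Kenig--Merle concentration-compactness scheme in the form adapted by Dodson--Lawrie to the $\dot H^{1/2}$-critical (cubic) wave equation; since there is no conserved quantity at the scaling-critical regularity, the induction is carried out on the critical norm itself. For $0<E\le\infty$ set
$$\mathcal{L}(E):=\sup\Big\{\|u\|_{L^4_{t,x}(I\times\R^3)}:\ u\text{ maximal solution of }\eqref{equ:waveequ-5.1},\ \sup_{t\in I}\big\|(u,\partial_tu)(t)\big\|_{\dot H^{1/2}\times\dot H^{-1/2}}\le E\Big\},$$
and let $E_c:=\sup\{E:\mathcal L(E)<\infty\}$. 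The small-data theory (a fixed point in the Strichartz spaces provided by Theorem~\ref{thm;stricest}) gives $E_c>0$, while the assumed failure of Conjecture~\ref{con:con1.123} gives $E_c<\infty$. The goal is to produce a solution that realizes $E_c$ and is as rigid as possible.

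First I would establish a Palais--Smale condition modulo symmetries. Take solutions $u_n$ with $\sup_t\|(u_n,\partial_tu_n)(t)\|_{\dot H^{1/2}\times\dot H^{-1/2}}\to E_c$ and $\|u_n\|_{L^4_{t,x}}\to\infty$; translating in time we may arrange that the $L^4$ norm is infinite on each side of $0$. Apply Ramos' linear profile decomposition, Theorem~\ref{thm:lpds1}, to the bounded sequence $(u_n,\partial_tu_n)(0)$ in $\dot H^{1/2}\times\dot H^{-1/2}$, obtaining profiles $(\phi_0^j,\phi_1^j)$, symmetry parameters (including the rescaled Lorentz transforms $T_{w_j^n}^{l_j^n}$), a remainder satisfying \eqref{equ:reamturnzer}, and the asymptotic Pythagorean expansion \eqref{lab18123}. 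To each profile I associate its nonlinear evolution: if the time parameter $t_j^n\to\pm\infty$ the nonlinear profile is defined through the wave operator (scattering), and if $t_j^n$ stays bounded one solves the Cauchy problem directly; one then forms the nonlinear superposition $\sum_j V_n^j+S(t)(R_{0,n}^N,R_{1,n}^N)$ and invokes the long-time perturbation (stability) lemma for \eqref{equ:waveequ-5.1}. If every nonlinear profile had finite scattering norm, the perturbation lemma would force $\|u_n\|_{L^4_{t,x}}$ to be bounded, a contradiction; hence some nonlinear profile, say $j=1$, has infinite scattering norm. By \eqref{lab18123}, a second nonzero profile would make $\sup_t\|(V^1,\partial_tV^1)\|_{\dot H^{1/2}\times\dot H^{-1/2}}$ strictly below $E_c$, contradicting the choice of $E_c$. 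Therefore there is exactly one profile, the remainder vanishes, and after undoing the symmetries $(u_n,\partial_tu_n)(0)$ converges, modulo translations, dilations and (rescaled) Lorentz boosts, to the data of a solution $u_c$ with $\sup_t\|(u_c,\partial_tu_c)(t)\|_{\dot H^{1/2}\times\dot H^{-1/2}}=E_c$ and $\|u_c\|_{L^4_{t,x}}=\infty$ in both time directions.

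Next, applying this Palais--Smale property to $(u_c,\partial_tu_c)(\tau_n)$ for an arbitrary sequence $\tau_n$ in the lifespan shows that the orbit of $u_c$ is precompact in $\dot H^{1/2}\times\dot H^{-1/2}$ modulo the symmetry group; in the radial class the spatial and boost parameters can be dropped, leaving a function $N(t)>0$ for which the rescaled family is precompact. Translating precompactness into the frequency-localization tail bounds stated in the theorem is then routine (Arzel\`a--Ascoli and uniform smallness of tails). Finally, to pass to the two normalized scenarios I would rule out finite-time blowup first: for $s_c<1$ a minimal almost-periodic solution with $T_+<\infty$ has $N(t)\gtrsim(T_+-t)^{-1}$, and a scaling/compactness argument together with the a priori control of the subcritical norm forces $u_c\equiv0$; with $I=\R$ in hand, a rescaling-and-covering argument (choosing times where $N$ is comparable to its infimum over dyadic time-blocks, in the style of Killip--Tao--Visan) reduces to either $N(t)\equiv1$ (the \emph{soliton-like} solution) or $N(t)\le1$ with $N(t)\to0$ as $t\to+\infty$ (the \emph{energy-cascade} solution).

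The main obstacle is twofold. First, Ramos' decomposition carries genuine Lorentz-boosted profiles, so the construction of the corresponding nonlinear profiles and their decoupling (the nonlinear analogue of \eqref{lab11}) must be carried out in the boosted frame; this is exactly the feature that confines the argument to radial data, where the boosts can ultimately be discarded. Second---and this is the issue special to NLW at a regularity $s_c\neq1$---there is no coercive conserved quantity, so every step that in the energy-critical theory relies on energy conservation must instead be driven by virial/monotonicity identities at the critical level, and making the relevant (double Duhamel) expressions converge requires the additional decay furnished by the radial Sobolev embedding, which is precisely where the radiality hypothesis and the improved regularity of the minimal solution enter.
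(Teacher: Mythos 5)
Your proposal follows the standard Kenig--Merle / Killip--Tao--Visan concentration-compactness scheme that the paper itself only cites (attributing the reduction to Dodson--Lawrie \cite{DL}), so in outline you are reproducing the intended argument: induct on the critical norm, extract a Palais--Smale sequence, apply a linear profile decomposition at the $\dot H^{1/2}\times\dot H^{-1/2}$ level, decouple the nonlinear profiles via a stability lemma, pass to a minimal element, upgrade to precompactness modulo scaling (hence the frequency-localization tail bounds), and then rescale-and-cover to normalize $N(t)$ into the soliton and cascade cases. Two points deserve comment. First, invoking Ramos' full decomposition (Theorem~\ref{thm:lpds1}) and then arguing that the Lorentz parameters can be discarded for radial data is a legitimate but circuitous route; Dodson--Lawrie work directly with a radial $\dot H^{1/2}$ profile decomposition in which translations and boosts never appear, which both simplifies the nonlinear profile construction and makes the absence of $x(t)$ in the statement transparent. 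Second, and more substantively, the step in your sketch where finite-time blowup is precluded -- needed so that the list has only two enemies rather than three -- is underdeveloped. You write that compactness gives $N(t)\gtrsim (T_+-t)^{-1}$ and that ``a scaling/compactness argument together with the a priori control of the subcritical norm'' forces the solution to vanish; but there is no a priori control of any subcritical norm at this stage, and since $\dot H^{1/2}$ is exactly scale-invariant the shrinking-support argument gives nothing directly. What is actually used is the additional regularity of the compact solution (the double Duhamel argument with the radial Sobolev weight gives $\vec u(t)\in\dot H^1\times L^2$ with a quantitative bound in $N(t)$, as in Theorem~\ref{ewzzx3drw}), after which finite speed of propagation and energy considerations can be brought to bear. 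You do mention double Duhamel and radial Sobolev embedding, but only in passing under the heading of ``monotonicity formulae''; logically, that regularity gain is a prerequisite for ruling out $T_+<\infty$, so it belongs squarely inside this reduction rather than downstream of it.
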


\underline{The properties of almost periodic solution}:

\begin{lemma}[No-waste Duhamel formula] Let $u:~(T_-,T_+)\times\R^3\to\R$
be an almost  periodic solution, then
\begin{equation}\label{nwd3dradw}
\vec{u}(t)\triangleq\big(u,u_t\big)(t)=\lim_{T\to
T_\pm}\int_t^TS(t-s)(0,\pm|u|^2u)(s)ds,
\end{equation}
where the limit is to be understood in the weak $L^2$ topology.
\end{lemma}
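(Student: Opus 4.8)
The plan is to run the forward/backward Duhamel argument (the wave-equation version of the scheme of Tao and of Killip--Visan). Fix $t\in(T_-,T_+)$. For $t<\tau<T_+$ the ordinary Duhamel formula on $[t,\tau]$ reads
\[
\vec u(t)=\vec S(t-\tau)\,\vec u(\tau)+\int_t^\tau \vec S(t-s)\big(0,\pm|u|^2u\big)(s)\,ds ,
\]
where $\vec S(\sigma)$ is the $2\times 2$ matrix propagator of the free wave equation and the Duhamel integral converges thanks to the local Strichartz theory on $[t,\tau]$. Hence the lemma reduces to proving that $\vec S(t-\tau)\,\vec u(\tau)\rightharpoonup 0$ as $\tau\nearrow T_+$, and symmetrically as $\tau\searrow T_-$. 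It is enough to test against $\psi$ ranging over the dense class of Schwartz pairs with $\widehat\psi$ supported in a fixed annulus $\{c\le|\xi|\le C\}$; using the adjoint of the wave group (which is again an isometry at the $\dot H^{1/2}$ level) one rewrites $\langle \vec S(t-\tau)\,\vec u(\tau),\psi\rangle=\langle \vec u(\tau),\Phi(\tau-t)\rangle$, where $\Phi(\sigma)$ is once more a free wave, with the same fixed frequency support as $\psi$ and with $\dot H^{-1/2}\times\dot H^{1/2}$ norm independent of $\sigma$.

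It remains to show that this pairing tends to $0$, and here I would invoke the quantitative almost periodicity of $u$ (uniform concentration of $\vec u(t)$ in space and frequency at scale $N(t)$), separating the possible behaviours of $N(\tau)$ along a subsequence $\tau_n\to T_+$. If $N(\tau_n)\to\infty$ --- which is forced whenever $T_+<\infty$, since then $N(t)\gtrsim (T_+-t)^{-1}$ --- then all but an $\eta$-fraction of the $\dot H^{1/2}\times\dot H^{-1/2}$ norm of $\vec u(\tau_n)$ sits at frequencies $\gtrsim_\eta N(\tau_n)$, which eventually exceed $C$, so the pairing with the annulus-supported $\Phi(\tau_n-t)$ (whose dual norm is constant) is $O(\eta)$; then let $n\to\infty$ and $\eta\to 0$. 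If $N(\tau_n)\to 0$ (the energy-cascade regime, which forces $T_+=+\infty$), the roles of high and low frequency are exchanged: all but an $\eta$-fraction of $\vec u(\tau_n)$ lives at frequencies $\lesssim_\eta N(\tau_n)\to 0$, hence below $c$, and is frequency-disjoint from $\Phi$ up to $O(\eta)$. Finally, if $N(\tau_n)$ stays in a fixed compact subset of $(0,\infty)$ --- which again forces $T_+=+\infty$, so $\tau_n-t\to+\infty$ --- then $\Phi(\tau_n-t)$ is a free wave at large time, and I would use the three-dimensional dispersive decay $\|\Phi(\sigma)\|_{L^\infty_x}\lesssim\langle\sigma\rangle^{-1}$ for annulus-frequency Schwartz data (Lemma~\ref{lem:semidises}) together with the spatial concentration of $\vec u(\tau_n)$ in a ball of fixed radius $\lesssim_\eta 1$: in the radial case this ball is centred at the origin, so the decay of $\||\nabla|^{\pm1/2}\Phi(\tau_n-t)\|_{L^\infty}$ against the finite ball gives that the pairing is $O(\eta)+o(1)$. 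Combining the three cases yields $\vec S(t-\tau)\,\vec u(\tau)\rightharpoonup 0$; running the same argument downward from $t$ handles the limit at $T_-$. (The weak limit in the statement should be read as weak convergence tested against Schwartz pairs, since $u(t)\in\dot H^{1/2}$ need not lie in $L^2$.)

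The step I expect to be the real obstacle is the last, soliton-like regime, where one must genuinely exploit dispersion of the free wave $\Phi$ over the long elapsed time $\tau-t\to+\infty$ while the profile $\vec u(\tau)$ does not disperse at all; in the radial case this is controlled because the profile stays centred at the origin, but one must still make the ``$O(\eta)$-truncation versus $o(1)$-dispersive-tail'' split uniform in $n$ before sending $\eta\to0$. If one wanted to drop radiality, an a priori bound $|x(t)|=o(t)$ on the spatial centre of concentration would be required, to prevent the profile from riding along the light cone of $\Phi$; establishing this is a separate, more delicate matter tied to the Lorentz symmetry and to a truncated momentum identity.
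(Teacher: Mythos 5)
The paper states this lemma without proof, as part of a sketch of the argument of Dodson--Lawrie \cite{DL}, so there is no in-paper proof to compare against; your proposal is the standard ``no-waste Duhamel'' argument of the almost-periodic framework (Tao, Killip--Visan), and it is correct. Your three-way split on $N(\tau)$ (frequency escape when $N\to 0$ or $N\to\infty$, dispersion of a frequency-annulus free wave against the spatially concentrated radial profile when $N$ stays bounded), together with the order of limits (first $\tau\to T_\pm$ for fixed $\eta$, then $\eta\to 0$), closes the argument, and you have correctly flagged the two delicate points: the frequency-escape cases actually use the low-frequency tightness $\int_{|\xi|\leq N(t)/C(\eta)}|\xi|\,|\hat u(t,\xi)|^2\,d\xi<\eta$, which the paper's displayed definition of almost periodicity omits but which is built into the usual compactness-modulo-symmetries statement; and in the quasi-soliton regime it is precisely the radial centring that keeps $\vec{u}(\tau)$ off the outgoing light cone of the dispersed test wave $\Phi(\tau-t)$, so that in a non-radial setting one would indeed need a separate $|x(\tau)|=o(\tau)$ estimate before this step goes through.
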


\begin{proposition}[Negative energy implies that the finite time blow-up]\label{propnegebf}
Let $u:~(T_-,T_+)\times\R^3\to\R$ be the   maximal-lifespan solution for  3d focuing cubic wave equation
with  $E(u_0,u_1)\leq0$.  Then either $u\equiv0$, or
$u$ blows up both forward and backward in finite time ( i.e. $T_->-\infty$ and $T_+<\infty$).

\end{proposition}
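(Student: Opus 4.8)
The plan is to run the classical convexity (``Levine--Glassey'') argument on the squared $L^2$-norm $I(t):=\|u(t)\|_{L^2_x}^2$. By time reversal it suffices to show that if $u\not\equiv 0$ then $T_+<+\infty$, so assume $u\not\equiv 0$. A preliminary point is that $I(t)$ must be finite and of class $C^2$: for the solutions considered here this is automatic (in the minimal/almost-periodic counterexample setting it follows from almost periodicity modulo symmetries, e.g.\ through the no-waste Duhamel formula), and in general one first uses finite speed of propagation (Lemma~\ref{fsprop}) to replace the data by compactly supported data — after which $u(t)$ has compact support and, since $\dot H^{1/2}(\R^3)\hookrightarrow L^3$, lies in $L^2_x$ — taking care that the localization is chosen so as not to spoil the sign condition $E\le 0$.

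Next I would compute, using $\partial_t^2 u=\Delta u+|u|^2u$ and conservation of $E$ to eliminate the nonlinear term,
$$I'(t)=2\int u\,\partial_tu\,dx,\qquad I''(t)=6\|\partial_tu(t)\|_{L^2}^2+2\|\nabla u(t)\|_{L^2}^2-8E(u_0,u_1).$$
Since $E(u_0,u_1)\le 0$, this gives $I''\ge 6\|\partial_tu\|_{L^2}^2\ge 0$, while Cauchy--Schwarz gives $(I')^2\le 4I\|\partial_tu\|_{L^2}^2$; combining, $I\,I''\ge\tfrac32(I')^2$, i.e.\ $(I^{-1/2})''=-\tfrac12 I^{-5/2}\big(I\,I''-\tfrac32(I')^2\big)\le 0$, so $I^{-1/2}$ is positive and concave on $[0,T_+)$.

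The next step is to produce a time $t_1$ with $I'(t_1)>0$. If $E<0$ this is immediate, since $I''\ge -8E>0$ forces $I'(t)\ge I'(0)-8Et\to+\infty$. If $E=0$, suppose instead $I'(t)\le 0$ for all $t$; then $I'$ is nondecreasing and bounded above by $0$, hence $I'(t)\uparrow 0$ (a negative limit would push $I\to-\infty$), so $\int_0^\infty I''<\infty$ and in particular $\int_0^\infty\|\nabla u(t)\|_{L^2}^2\,dt<\infty$. But then $I$ is nonincreasing, so $I(t)\le I(0)$, and from $E=0$ together with $\dot H^1\hookrightarrow L^6$ and $\|f\|_4\le\|f\|_2^{1/4}\|f\|_6^{3/4}$ one gets $\tfrac12\|\nabla u(t)\|_{L^2}^2\le\tfrac14\|u(t)\|_{L^4}^4\le C\,I(0)^{1/2}\|\nabla u(t)\|_{L^2}^3$, so for every $t$ either $\|\nabla u(t)\|_{L^2}=0$ or $\|\nabla u(t)\|_{L^2}\ge c_0>0$. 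By continuity and connectedness of $[0,T_+)$, either $\|\nabla u(t)\|_{L^2}\equiv 0$ — then $u(t)$ is spatially constant, hence $\equiv 0$, so $u\equiv 0$ by uniqueness, contradicting our assumption — or $\|\nabla u(t)\|_{L^2}\ge c_0$ for all $t$, contradicting the integrability just obtained. Hence $t_1$ with $I'(t_1)>0$ exists (if only $I'(t_1)\ge 0$, monotonicity of $I'$ and nontriviality of $u$ give a later time with $I'>0$). Finally, $I^{-1/2}$ being concave with $(I^{-1/2})'(t_1)=-\tfrac12 I(t_1)^{-3/2}I'(t_1)<0$, concavity forces $I^{-1/2}$ to vanish at $t^\ast:=t_1+2I(t_1)/I'(t_1)<\infty$, i.e.\ $\|u(t)\|_{L^2}\to+\infty$ as $t\uparrow t^\ast$, so $u$ cannot be continued past $t^\ast$ and $T_+\le t^\ast<+\infty$. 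The backward statement follows by applying the above to $t\mapsto u(-t,\cdot)$.

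The differential identities and the convexity estimate are routine; I expect the two genuine difficulties to be the following. First, justifying that $I(t)=\|u(t)\|_{L^2_x}^2$ is finite and twice differentiable along the flow: the critical data class $\dot H^{1/2}\times\dot H^{-1/2}$ does not embed into $L^2\times H^{-1}$, so one genuinely needs either the a priori $L^2_x$-regularity of the special solutions or a careful finite-speed-of-propagation reduction (including the check that the localization keeps $E\le 0$ and that the resulting localized blow-up time can be forced to occur inside the corresponding light cone). Second, the borderline case $E=0$, where $\|u(t)\|_{L^2}^2$ is not convex enough to blow up by itself; the argument above circumvents this by exploiting the quantitative lower bound $\|\nabla u(t)\|_{L^2}\ge c_0$ that comes from the Sobolev inequality together with the (decreasing, hence bounded) behaviour of $I$, but this is exactly the delicate point of the proof.
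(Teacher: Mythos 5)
Your argument is the standard Levine--Glassey convexity argument, which is certainly what the paper has in mind here (the proposition is stated without proof; it is essentially Levine's result \cite{Le74} adapted to the cubic 3d wave equation). The computations are all correct: $I''=6\|\partial_tu\|_{L^2}^2+2\|\nabla u\|_{L^2}^2-8E$ follows from eliminating $\int u^4$ via energy conservation, the Cauchy--Schwarz step gives $I\,I''\ge \tfrac32(I')^2$ when $E\le0$, and the conclusion that $I^{-1/2}$ is concave and forced to vanish once it acquires negative slope is exactly right. Your treatment of the borderline case $E=0$ is also correct and is the non-routine part: the Sobolev interpolation $\|u\|_4^4\lesssim \|u\|_2\|\nabla u\|_2^3$, combined with $\|u\|_2\le I(0)^{1/2}$ (from $I'\le 0$), produces the lower bound $\|\nabla u(t)\|_2\ge c_0$ whenever $u\not\equiv 0$, which contradicts the integrability $\int_0^\infty\|\nabla u\|_2^2\,dt<\infty$ obtained from $I'\uparrow 0$. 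That dichotomy argument correctly replaces the ``easy'' $E<0$ step and is where the statement ``either $u\equiv 0$ or blowup'' really comes from.

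You are also right to isolate the finiteness of $I(t)=\|u(t)\|_{L^2}^2$ as the genuine technical obstacle: the natural classes here, $\dot{H}^{1/2}\times\dot{H}^{-1/2}$ and (after the additional regularity step) $\dot{H}^1\times L^2$, do not embed into $L^2\times H^{-1}$, so $I$ is not automatically finite. Two remarks on your proposed remedies. First, the finite-speed-of-propagation truncation, as you sketch it, only cleanly preserves the sign of the energy when $E<0$ (taking $R$ large so the truncated energy stays negative); when $E=0$ a brutal truncation can push the energy above zero, and the correct route in that regime is the \emph{localized} convexity functional $y_R(t)=\int u^2\phi(x/R)\,dx$ with error terms $O(r(R))$ controlled via finite speed of propagation, exactly as the paper does for the energy-critical case in Theorem~\ref{thm:fiblwo}, Case~2 — the global $I(t)$ is replaced by $y_R$ throughout and the argument is run on a time interval of length $\sim R$. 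Second, for the almost-periodic solutions to which the proposition is actually applied in Step~2 and Step~4, the needed decay is not a trivial consequence of the no-waste Duhamel formula alone; it requires the additional regularity/decay of Theorem~\ref{ewzzx3drw} and Proposition~\ref{gdzzx3drw}, together with a localization as above (or, in the radial setting of \cite{DL,Shen}, radial Sobolev/Hardy weighted bounds). None of this affects the correctness of the convexity core of your argument; it is just that the ``preliminary point'' is where the real work hides, and you have correctly put your finger on it.
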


\vskip0.2cm

{\bf Step 2}. Additional regularity and preclude energy-critical cascade.
\begin{theorem}[Additional regularity]\label{ewzzx3drw} Let $T_-<0$, and
$u:~(T_-,+\infty)\times\R^3\to\R$ be an almost periodic solution with $N(t)\leq1$, then
 $\vec{u}(t)\in \dot H^1\times L^2$ with
\begin{equation}\label{ewzzxgj3dw}
\|\vec u(t)\|_{\dot H^1\times L^2}\lesssim N(t)^\frac12.
\end{equation}
\end{theorem}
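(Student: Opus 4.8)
The plan is to run the \emph{double Duhamel trick} of Kenig--Merle and Killip--Visan in the wave setting, following \cite{DL,Shen}. At the $\dot H^{1/2}$-critical level ($p=3$, $d=3$) there is no conservation law to exploit, so the extra regularity must be extracted from the representation of $\vec u(t)$ as a Duhamel integral in \emph{both} time directions. By the no-waste Duhamel formula \eqref{nwd3dradw}, for every $t\in(T_-,+\infty)$,
\begin{equation*}
|\nabla| u(t)=\int_t^{+\infty}\sin\big((t-s)|\nabla|\big)F(s)\,ds
=-\int_{T_-}^{t}\sin\big((t-s')|\nabla|\big)F(s')\,ds',\qquad F:=\pm|u|^2u,
\end{equation*}
and analogously $u_t(t)=\int_t^{+\infty}\cos\big((t-s)|\nabla|\big)F(s)\,ds=-\int_{T_-}^t\cos\big((t-s')|\nabla|\big)F(s')\,ds'$, the integrals understood in the weak $L^2$ sense. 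Pairing the forward and backward formulas gives, for a Littlewood--Paley piece $P_N$,
\begin{equation*}
\big\||\nabla|P_Nu(t)\big\|_{L^2}^2
=-\int_t^{+\infty}\!\!\int_{T_-}^{t}\big\langle P_NF(s),\,\sin\big((s-t)|\nabla|\big)\sin\big((t-s')|\nabla|\big)P_NF(s')\big\rangle\,ds'\,ds,
\end{equation*}
and a product-to-sum identity reduces the kernel to $\cos\big((s-s')|\nabla|\big)$ and $\cos\big((s+s'-2t)|\nabla|\big)$; the analogous identity holds for $\|P_Nu_t(t)\|_{L^2}^2$. Almost periodicity together with $N(t)\le1$ gives the base fact $\sup_t\|P_{\ge N}\vec u(t)\|_{\dot H^{1/2}\times\dot H^{-1/2}}\to0$ as $N\to\infty$, and the target bound $\|\vec u(t)\|_{\dot H^1\times L^2}\lesssim N(t)^{1/2}$ is dictated by the scaling $u_\lambda(t,x)=\lambda u(\lambda t,\lambda x)$, under which compactness modulo symmetries pins the characteristic frequency at $N(t)$; since $\|P_{\le N(t)}\vec u(t)\|_{\dot H^1\times L^2}\lesssim N(t)^{1/2}\|\vec u(t)\|_{\dot H^{1/2}\times\dot H^{-1/2}}$ by Bernstein, only the high-frequency part is at issue.

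The two analytic inputs are the dispersive estimate for the half-wave group (Lemma~\ref{lem:semidises}, rescaled to frequency $N$), which in $\R^3$ gives $\|\cos(\tau|\nabla|)P_Ng\|_{L^\infty_x}\lesssim N^2|\tau|^{-1}\|g\|_{L^1_x}$ for $|\tau|\gtrsim N^{-1}$, and a quantitative frequency localization of the nonlinearity: since the Fourier support of $u^3$ is the triple sumset of that of $u$, the piece $P_{\ge N}F(s)$ only sees contributions with at least one input frequency $\gtrsim N$, so $\|P_{\ge N}F(s)\|_{L^1_x}\lesssim\|P_{\gtrsim N}u(s)\|_{L^3_x}\|u(s)\|_{L^3_x}^2\lesssim\|P_{\gtrsim N}u(s)\|_{\dot H^{1/2}}$ (using $\|u(s)\|_{L^3}\lesssim\|u(s)\|_{\dot H^{1/2}}\lesssim1$ by Sobolev), which is small once $N\gg C(\eta)N(s)$ by the almost-periodicity bounds. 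Inserting these into the double integral, writing $|s-s'|=(s-t)+(t-s')$, and running a bootstrap --- each pass gaining a fixed amount of regularity, iterated up from $s_c=\tfrac12$ towards $1$, the Littlewood--Paley pieces then summed against the appropriate powers of $N(t)$ --- would produce $\vec u(t)\in\dot H^1\times L^2$ with the stated bound.

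The main obstacle is that this scheme does not close \emph{as stated} when $d=3$: the free decay $|\tau|^{-(d-1)/2}=|\tau|^{-1}$ sits exactly at the borderline, and $\int_t^{+\infty}\!\int_{T_-}^t\big((s-t)+(t-s')\big)^{-1}\,ds'\,ds$ diverges logarithmically, so the time integration in the double Duhamel pairing is not absolutely convergent. This is precisely the point where radiality must enter: the radial Sobolev embeddings in $\R^3$, $|u(r)|\lesssim r^{-1}\|u\|_{\dot H^{1/2}}$ and $|u(r)|\lesssim r^{-1/2}\|u\|_{\dot H^1}$, give honest spatial decay of $u(s)$ outside the ball of radius $\sim1/N(s)$ (over and above the mere frequency concentration), which upgrades the bound on $F(s)$ from $L^1_x$ to a weighted space $\langle x\rangle^{-\theta}L^1_x$ with $\theta>0$ to spare; carrying that weight through the dispersive estimate turns the borderline $|\tau|^{-1}$ into a strictly integrable decay in time and makes the double integral converge. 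Once this quantitative gain is installed the bootstrap above runs without further difficulty, and one keeps track throughout of the powers of $N(t)$ (using $N(t)\le1$ and the local constancy of $N(t)$ on the natural time scale $|s-t|\lesssim N(s)^{-1}$) to arrive at the sharp exponent $N(t)^{1/2}$.
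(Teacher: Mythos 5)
Your proposal matches the strategy the paper attributes to Dodson--Lawrie \cite{DL} and Shen \cite{Shen}: the double Duhamel pairing in both time directions, the observation that the free three-dimensional wave decay $|\tau|^{-1}$ is logarithmically borderline, and the rescue via radial Sobolev embedding supplying weighted spatial decay on $u(s)$ that converts $|\tau|^{-1}$ into an integrable kernel. This is essentially the same route as the paper, which only sketches the argument at the level you do and defers the quantitative details to \cite{DL,Shen}; you are also right that the radial hypothesis, implicit in the theorem's context, is the load-bearing ingredient that makes the double Duhamel integral converge.
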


\begin{remark}\; Exponent $1/2$ is fully determined by scaling invariant of (NLW).  In fact,
$$\frac{u}{(\Delta t)^2}\simeq u^3\Longrightarrow~u\simeq (\Delta
t)^{-1}\simeq N(t),$$
hence
$$\int_{\R^3}|\nabla u|^2dx\simeq \frac{|u|^2}{(\Delta x)^2}(\Delta
x)^3\simeq |u|^2(\Delta x)\simeq N(t).$$
\end{remark}
\vskip0.2cm

 \underline{\bf To preclude  energy-cascade solution:} \;  By making use of Theorem \ref{ewzzx3drw}
and Sobolev embedding theorem, we have
\begin{equation*}
\begin{cases}
\varlimsup\limits_{t\to\infty}\|\vec u(t)\|_{\dot H^1\times
L^2}\lesssim
\varlimsup\limits_{t\to\infty} N(t)^\frac12=0,\\
\|u(t)\|_{L^4}^2\lesssim\|u\|_{\dot H^\frac34}^2\lesssim \|u\|_{\dot
H^\frac12}\|u\|_{\dot H^1}\to0,
\end{cases}\Longrightarrow~E(u_0,u_1)=0.
\end{equation*}
This  fact together with Proposition \ref{propnegebf} precludes the energy-cascade solution.
Therefore, to remove energy-cascade solution can be reduced to Theorem \ref{ewzzx3drw}.

\vskip0.2cm

{\bf Step 3}.   Additional compactness  and to preclude quasi-soliton solution.
\vskip.15cm

Due to $N(t)\equiv1$, this almost periodic solution enjoys compactness modulo symmetries.

\begin{proposition}[Compactness and more additional regularity]\label{gdzzx3drw}
Let  $u:~\R\times\R^3\to\R$ be a almost periodic solution with $N(t)\equiv1$, then
$$~K\triangleq\big\{\vec{u}(t):~t\in\R\big\}$$
is a  precompact in $(\dot
H^\frac12\times\dot H^{-\frac12})\cap(\dot H^1\times L^2)$
This fact can be reduced to  prove $\vec u\in\dot
H^{1+}\times\dot H^{0+}$.
\end{proposition}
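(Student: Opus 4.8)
\noindent\emph{Proof strategy.} The plan is to first reduce the precompactness assertion to the regularity bound $\vec u\in L^\infty_t(\dot H^{1+\eps}\times\dot H^{\eps})$ for some $\eps>0$, and then to prove that bound by a double Duhamel argument. Since $N(t)\equiv 1$, Theorem \ref{ewzzx3drw} already gives a uniform bound $\sup_{t\in\R}\|\vec u(t)\|_{\dot H^1\times L^2}<\infty$, while the almost periodicity hypothesis (with $N(t)\equiv 1$ and centering at the origin) gives, for each $\eta>0$, a radius $C(\eta)$ controlling uniformly in $t$ the high‑frequency masses $\int_{|\xi|\ge C(\eta)}|\xi|\,|\hat u(t,\xi)|^2\,d\xi+\int_{|\xi|\ge C(\eta)}|\xi|^{-1}\,|\hat u_t(t,\xi)|^2\,d\xi$ and the spatial tails $\int_{|x|\ge C(\eta)}\big(\big||\nabla|^{1/2}u\big|^2+\big||\nabla|^{-1/2}u_t\big|^2\big)\,dx$. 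By the standard characterization of precompact subsets of (homogeneous) Sobolev spaces on $\R^3$ — tightness in physical space together with tightness at the high end of the frequency scale, the low‑frequency tightness being automatic once those two hold — precompactness of $\{\vec u(t)\}$ in $\dot H^{1/2}\times\dot H^{-1/2}$ follows at once. For precompactness in $\dot H^1\times L^2$ it remains to show that (i) $\int_{|\xi|\ge N}|\xi|^2|\hat u|^2\,d\xi+\int_{|\xi|\ge N}|\hat u_t|^2\,d\xi\to0$ uniformly in $t$ as $N\to\infty$, and (ii) $\|\nabla u\|_{L^2(|x|\ge R)}^2+\|u_t\|_{L^2(|x|\ge R)}^2\to0$ uniformly in $t$ as $R\to\infty$. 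Item (i) is immediate from a uniform $\dot H^{1+\eps}\times\dot H^\eps$ bound; item (ii) follows by a Littlewood--Paley splitting, discarding the frequencies $\gtrsim N$ by that same $\eps$ of regularity and transferring the spatial tightness of $|\nabla|^{1/2}u$ to $\nabla u$ through a kernel estimate for $\nabla P_{\le N}$. Hence everything reduces to $\vec u\in L^\infty_t(\dot H^{1+\eps}\times\dot H^{\eps})$.

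To establish this additional regularity we would run a double Duhamel bootstrap. From almost periodicity together with the $\dot H^1\times L^2$ bound and the Strichartz estimates of Theorem \ref{thm;stricest}, $u$ lies in a whole range of spacetime Lebesgue norms interpolating those dictated by $\dot H^{1/2}$ and $\dot H^1$ data, so that the cubic term $|u|^2u$ lies in suitable dual Strichartz spaces. Fix a frequency cutoff $N$. Using the no‑waste Duhamel formula \eqref{nwd3dradw} in the forward and backward directions simultaneously, a quantity of the form $\big\|P_{>N}\vec u(t)\big\|_{\dot H^{1+\eps}\times\dot H^{\eps}}^2$ may be written as the pairing
\[
\big\|P_{>N}\vec u(t)\big\|_{\dot H^{1+\eps}\times \dot H^{\eps}}^2
=\Big\langle\, P_{>N}\int_{-\infty}^{t} S(t-s)\big(0,\pm|u|^2u\big)(s)\,ds\ ,\ \ P_{>N}\int_{t}^{+\infty} S(t-s')\big(0,\pm|u|^2u\big)(s')\,ds'\,\Big\rangle ,
\]
which we would estimate by inserting the frequency‑localized dispersive decay of Lemma \ref{lem:semidises} for $S(t-s)$ between the two nonlinear factors; the projections $P_{>N}$ supply a gain $\lesssim N^{-\delta}$ for some fixed $\delta>0$. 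Iterating this finitely many times upgrades $L^\infty_t\dot H^1$ to $L^\infty_t\dot H^{1+\eps}$ (and likewise $L^\infty_t L^2$ to $L^\infty_t\dot H^{\eps}$), with all bounds uniform in $t$ since every input is.

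The main obstacle is the convergence of that double integral. In three space dimensions the free wave propagator decays only like $|t-s|^{-1}$, which is exactly borderline against a cubic nonlinearity built from $\dot H^1$ data, so the time integral cannot be made absolutely convergent by a naive estimate — precisely the difficulty recorded for the analogous arguments in \cite{DL,Shen}. We would overcome it by exploiting radiality: the radial Sobolev embedding $\big\||x|^{1/2}u(t,\cdot)\big\|_{L^\infty_x(\R^3)}\lesssim\|u(t)\|_{\dot H^1}$ furnishes extra spatial decay of $u$, hence of $|u|^2u$, which (combined with finite speed of propagation) feeds through the dispersive estimate as effective temporal decay faster than $|t-s|^{-1}$, tipping the relevant time integral into absolute convergence and producing the small factor needed for the bootstrap. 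Two further points require care: formula \eqref{nwd3dradw} holds only in the weak $L^2$ sense, so the pairing above must first be set up on frequency‑ and space‑truncated pieces and then passed to the limit; and since all Sobolev spaces involved are homogeneous, low‑frequency interactions arising in the iteration have to be absorbed using the already available $\dot H^{1/2}$ and $\dot H^1$ control rather than generated. Once $\vec u\in L^\infty_t(\dot H^{1+\eps}\times\dot H^{\eps})$ is in hand, the reduction of the first paragraph yields precompactness of $K$ in $(\dot H^{1/2}\times\dot H^{-1/2})\cap(\dot H^1\times L^2)$.
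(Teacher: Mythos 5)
The paper does not prove this proposition: it is stated as one of the two key facts (together with Theorem~\ref{ewzzx3drw}) whose proofs are deferred to \cite{DL,Shen}, and the section closes by noting that everything reduces to those two statements. There is therefore no paper proof to compare against, only the reduction asserted in the statement itself and the method of the cited references.

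Your reduction of precompactness to $\vec u\in L^\infty_t(\dot H^{1+\eps}\times\dot H^\eps)$ is correct and handles every required piece: almost periodicity with $N(t)\equiv 1$ gives spatial and high-frequency tightness at the $\dot H^{1/2}\times\dot H^{-1/2}$ level (and low-frequency tightness then comes for free, as you note); the extra derivative gives high-frequency tightness at the $\dot H^1\times L^2$ level; low-frequency tightness at $\dot H^1\times L^2$ is inherited from the $\dot H^{1/2}$ bound since $\int_{|\xi|\le\eta}|\xi|^2|\hat u|^2\le\eta\|u\|_{\dot H^{1/2}}^2$; and spatial tightness at the full-derivative level follows from the Littlewood--Paley split you describe. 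The strategy you then sketch for the regularity bound — pair the forward and backward no-waste Duhamel formulas \eqref{nwd3dradw}, use radiality and the radial Sobolev embedding to beat the borderline $|t-s|^{-1}$ dispersive decay, bootstrap in $\eps$ — is the double-Duhamel mechanism of \cite{DL,Shen}, and you correctly flag both why radiality is indispensable (the paper itself records this) and why the weak convergence in \eqref{nwd3dradw} forces a truncation before the pairing is formed. What the proposal does not supply is the frequency-localized estimate that actually makes the double time integral absolutely convergent and yields a summable gain in the frequency parameter; that estimate is the entire technical burden of the proposition, and it is where radiality, spatial tightness, and almost periodicity must all be used quantitatively. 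One imprecision worth flagging: the phrase ``the projections $P_{>N}$ supply a gain $\lesssim N^{-\delta}$'' is not right as stated — a pure Littlewood--Paley/Bernstein estimate on $P_{>N}(u^3)$ from the $\dot H^1$ bound in three dimensions gains nothing, so the smallness must emerge from the time integral combined with the spatial decay from radiality, not from the frequency projection alone. As written, then, this is a correct and well-motivated strategy, but the core of the argument remains to be carried out.
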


\begin{lemma}\label{vir3drw}
Let $\vec u\in\dot H^1\times L^2$ be the solution for 3d cubic wave equation,
then for any $R>0$,
\begin{align}
&\frac{d}{dt}\big\langle u_t,
\chi_R(u+ru_r)\big\rangle\nonumber\\
=&-\frac1{|S^2|}E(\vec
u)(t)+\int_0^\infty
(1-\chi_R)\Big(\frac12u_t^2+\frac12u_r^2\mp\frac14u^4\Big)r^2dr\nonumber\\
&-\int_0^\infty\Big(\frac12u_t^2+\frac12u_r^2\pm\frac14u^4\Big)r\chi_R'
r^2dr-\int_0^\infty uu_rr\chi_R'rdr,
\label{equlem1.23dw}
\end{align}
where
$$\langle f,g\rangle\triangleq \int_0^\infty
f(r)g(r)r^2dr,~\;\;~\chi_R(r)=\chi(r/R),~\;\;~\chi\in C_c^\infty(\R).$$
\end{lemma}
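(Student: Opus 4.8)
The plan is to prove \eqref{equlem1.23dw} by a direct virial computation: differentiate the functional $t\mapsto\langle u_t,\chi_R(u+ru_r)\rangle$, substitute the radial form of the equation, and integrate by parts in $r$, being careful to retain \emph{every} term produced by the cutoff $\chi_R$. I would first carry this out for a smooth, rapidly decaying radial solution (e.g.\ with Schwartz data), and only at the end pass to a general $\vec u\in\dot H^1\times L^2$ by approximation. Throughout I use that for radial functions $\Delta u=u_{rr}+\tfrac2r u_r$, that the equation reads $u_{tt}=u_{rr}+\tfrac2r u_r\pm u^3$ (top sign $=$ focusing), and that $\langle f,g\rangle=\int_0^\infty fg\,r^2dr$.

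Concretely, split $\frac{d}{dt}\langle u_t,\chi_R(u+ru_r)\rangle=\langle u_{tt},\chi_R(u+ru_r)\rangle+\langle u_t,\chi_R(u_t+ru_{rt})\rangle$. In the second bracket, write $r^3u_tu_{rt}=\tfrac12 r^3\partial_r(u_t^2)$ and integrate by parts; the $r=0,\infty$ boundary contributions vanish for the smooth solution, and the bracket collapses to $-\tfrac12\int_0^\infty\chi_R u_t^2 r^2dr-\tfrac12\int_0^\infty\chi_R' u_t^2 r^3dr$. In the first bracket, one integration by parts turns the Laplacian piece into $-\int_0^\infty r^2u_r\,\partial_r\big(\chi_R(u+ru_r)\big)dr$; expanding $\partial_r(\chi_R(u+ru_r))$, using $\tfrac12 r^3\chi_R\partial_r(u_r^2)$ inside and integrating by parts once more, and treating the nonlinear piece via $\tfrac14 r^3\chi_R\partial_r(u^4)$, every resulting integral reduces to one of the ``bulk'' shapes $\int\chi_R u_t^2r^2$, $\int\chi_R u_r^2r^2$, $\int\chi_R u^4r^2$ or one of the ``error'' shapes $\int\chi_R' u_t^2r^3$, $\int\chi_R' u_r^2r^3$, $\int\chi_R' u^4r^3$, $\int\chi_R' uu_r r^2$ (the latter supported on $r\sim R$).

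Collecting the bulk terms gives exactly $-\int_0^\infty\chi_R\big(\tfrac12 u_t^2+\tfrac12 u_r^2\mp\tfrac14 u^4\big)r^2dr$; writing $\chi_R=1-(1-\chi_R)$ and recalling that in radial coordinates $E(\vec u)(t)=|S^2|\int_0^\infty\big(\tfrac12 u_t^2+\tfrac12 u_r^2\mp\tfrac14 u^4\big)r^2dr$, this equals $-\tfrac1{|S^2|}E(\vec u)(t)+\int_0^\infty(1-\chi_R)\big(\tfrac12 u_t^2+\tfrac12 u_r^2\mp\tfrac14 u^4\big)r^2dr$, the first two terms on the right of \eqref{equlem1.23dw}. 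The error terms assemble (after matching signs) into $-\int_0^\infty\big(\tfrac12 u_t^2+\tfrac12 u_r^2\pm\tfrac14 u^4\big)r\chi_R'\,r^2dr-\int_0^\infty uu_r\,r\chi_R'\,r\,dr$, which is the last line. Passing to the general case: since $\chi_R$ has compact support and $\chi_R r$ is bounded, each integral is dominated by $\|\vec u(t)\|_{\dot H^1\times L^2}^2$ together with $\int_{r\lesssim R}u^4 r^2dr$ (finite by radial Sobolev embedding), so both sides are continuous under $\dot H^1\times L^2$ approximation of the data and the identity persists.

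The algebra itself is routine; I expect the only delicate point to be the last step — justifying, for merely $\dot H^1$ radial data, that the $r=0$ boundary term in the dilation integration by parts (a multiple of $r^3\chi_R u_r^2$) really is negligible, which is where one leans on the pointwise radial bound $|u(r)|\lesssim r^{-1/2}\|u\|_{\dot H^1}$ and a limiting argument along a suitable sequence $r\to 0$. A secondary thing to watch is the bookkeeping of the $\pm/\mp$ signs, so that the identity as written is verified simultaneously in the focusing and defocusing cases.
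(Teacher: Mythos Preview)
Your proposal is correct and follows essentially the same approach as the paper: differentiate the virial functional, substitute the radial equation $u_{tt}=u_{rr}+\tfrac{2}{r}u_r\pm u^3$, and integrate by parts in $r$ to isolate the energy density plus cutoff errors. The only cosmetic difference is that the paper splits the Laplacian contribution into separate $\partial_r^2$ and $\tfrac{2}{r}\partial_r$ pieces ($I_{11}$, $I_{12}$) rather than using the divergence form $(r^2u_r)_r$ as you do, and the paper omits the density argument you sketch at the end.
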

\begin{proof}[\rm Proof]
\begin{align*}
\frac{d}{dt}\big\langle u_t, \chi_R(u+ru_r)\big\rangle
=&\int u_{tt}\chi_R(u+ru_r)r^2dr+\int
u_t\chi_R(u_t+r\partial_ru_t)r^2dr\\
\triangleq&I_1+I_2.
\end{align*}

\noindent\underline{\bf To calculate  $I_2$:}
\begin{align*}
I_2=&\int |u_t|^2\chi_Rr^2dr+\int u_t\partial_ru_t\chi_Rr^3dr\\
=&\int |u_t|^2\chi_Rr^2dr-\frac12\int
|u_t|^2\partial_r\big(\chi_Rr^3\big)dr\\
=&-\frac12\int |u_t|^2\chi_Rr^2dr-\frac12\int |u_t|^2\chi_R'r^3dr.
\end{align*}

\noindent\underline{\bf  To calculate  $I_1$:}  Making use of
$$u_{tt}=\Delta u\pm u^3=\partial_{r}^2u+2r^{-1}\partial_ru\pm
u^3,$$ and taking it into  $I_1$, one easily verifies
that
\begin{align*}
I_1=&\int \partial_{r}^2u\chi_R(u+ru_r)r^2dr+2\int
\partial_ru\chi_R(u+ru_r)rdr\pm\int u^3\chi_R(u+ru_r)r^2dr\\
\triangleq&I_{11}+I_{12}\pm I_{13}.
\end{align*}

\noindent\underline{\bf To calculate   $I_{13}$:}
\begin{align*}
I_{13}=&\int \chi_R|u|^4r^2dr+\int u^3u_r\chi_Rr^3dr\\
=&\int \chi_R|u|^4r^2dr-\frac14\int
|u|^4\partial_r\big(\chi_Rr^3\big)dr\\
=&\frac14\int \chi_R|u|^4r^2dr-\frac14\int |u|^4\chi_R'r^3dr.
\end{align*}

\noindent\underline{\bf  To calculate  $I_{11}+I_{12}$:}
\begin{align*}
I_{11}=&\int \partial_{r}^2u u\chi_Rr^2dr+\int
\partial_{r}^2uu_r\chi_Rr^3dr\\
=&-\int|u_r|^2\chi_Rr^2dr-\int
u_ru\partial_r\big(\chi_Rr^2\big)dr-\frac12\int|u_r|^2\big(\chi_Rr^3\big)_rdr\\
=&-\frac52\int|u_r|^2\chi_Rr^2dr+\frac12\int|u|^2\partial_r^2\big(\chi_Rr^2\big)dr-\frac12\int|u_r|^2\chi_R'r^3dr,\\
I_{12}=&2\int
\partial_ruu\chi_Rrdr+2\int
|u_r|^2\chi_Rr^2dr\\
=&-\int |u|^2\big(\chi_Rr\big)_rdr+2\int
|u_r|^2\chi_Rr^2dr,\\
I_{11}+I_{12}=&-\frac12\int|u_r|^2\chi_Rr^2dr+\frac12\int|u|^2\partial_r\big(\chi_R'r^2\big)dr-\frac12\int|u_r|^2\chi_R'r^3dr\\
=&-\frac12\int|u_r|^2\chi_Rr^2dr-\int
uu_r\chi_R'r^2dr-\frac12\int|u_r|^2\chi_R'r^3dr.
\end{align*}
Hence,
\begin{align*}
&\frac{d}{dt}\big\langle u_t, \chi_R(u+ru_r)\big\rangle\\
=&-\int\Big[\frac12|u_r|^2+\frac12|u_t|^2\mp\frac14|u|^4\Big]\chi_Rr^2dr\\
&-\int\Big[\frac12|u_r|^2+\frac12|u_t|^2\pm\frac14|u|^4\Big]\chi_R'r^3dr-\int
uu_r\partial_r\chi_R'r^2dr\\
=&-\frac1{|{S}^2|}E(\vec u)+\int(1-\chi_R)\Big[\frac12|u_r|^2+\frac12|u_t|^2\mp\frac14|u|^4\Big]r^2dr\\
&-\int\Big[\frac12|u_r|^2+\frac12|u_t|^2\pm\frac14|u|^4\Big]\chi_R'r^3dr-\int
uu_r\chi_R'r^2dr.
\end{align*}
\end{proof}

\noindent{\bf \underline{To preclude quasi-soliton solution:}}
$${\bf  Claim}: \;\;\forall~\eta>0,\;~E(\vec{u})\leq C\eta.$$
This together with Propsition \ref{propnegebf} can kill  the  quasi-soliton solution.

\vskip0.2cm

{\bf Step 4}.  We shall use Propositin \ref{gdzzx3drw} and Lemma \ref{vir3drw}
to prove {\bf the above  Claim}.
\vskip0.15cm

First, compactness implies that for $\forall~
\eta>0$, there exists $R_0=R_0(\eta)$ such that when $R\geq R_0(\eta)$,
\begin{equation}\left\{
\begin{aligned}
&\int_R^\infty\big[u_t^2+u_r^2\big](t)r^2dr<\eta,\\
&\dot H^\frac12\cap\dot H^1\hookrightarrow\dot
H^\frac34\hookrightarrow L^4.
\end{aligned}\right. \Longrightarrow~\int_R^\infty
u^4(t)r^2dr<\eta, \;\; \forall \; t\in\mathbb R.\label{radial-embeding-0}
\end{equation}
Note that
\begin{equation}
\int_R^\infty f(r)^2dr+Rf(R)^2=-\int_R^\infty
f_r(r)f(r)rdr,~f\in\dot H^1
\end{equation}
it follows that
$$\int_R^\infty f(r)^2dr\leq\int_R^\infty
f_r(r)^2r^2dr\buildrel{\eqref{radial-embeding-0}}\over{=\!\!\Longrightarrow}~\int_R^\infty u^2(r,t)dr<\eta.$$
In fact,   it suffice to prove
$$\int_0^\infty u^2dr\lesssim
\int_0^\infty\Big|\frac{u}{r}\Big|^2r^2dr\lesssim\|u\|_{\dot
H^1}^2<\infty,$$
by  the Hardy inequality.   This further implies that
\begin{equation}\label{radial-embeding-1}
\left\{\begin{aligned} &\Big|\int_0^\infty
(1-\chi_R)\Big(\frac12u_t^2+\frac12u_r^2\pm\frac14u^4\Big)r^2dr\Big|\leq
C\eta,\\
&\big|\int_0^\infty\Big(\frac12u_t^2+\frac12u_r^2\pm\frac14u^4\Big)r\chi_R'
r^2dr \Big|\lesssim C\eta,\\
&\Big| \int_0^\infty uu_rr\chi_R'rdr\Big|\lesssim C\eta.
\end{aligned}\right.
\end{equation}
Therefore, we integrate \eqref{equlem1.23dw} with respect to  $r$ from  $0$ to  $T$ to see
\begin{align*}
E(\vec u)\leq& C\eta+\frac1T\Big|\big\langle
u_t(t),~\chi_R(u+ru_r)(t)\big\rangle\Big|_0^T\\
\leq&C\eta+\frac{C}T\int_0^T|u_t(T)|\cdot|u(T)|r^2dr+\frac{C}{T}\int_0^T|u_t(0)|\cdot|u(0)|r^2dr\\
&+\frac{C}T\int_0^T|u_t(T)|\cdot|u_r(T)|r^3dr+\frac{C}T\int_0^T|u_t(0)|\cdot|u_r(0)|r^3dr,
\end{align*}
where we choose $R=T$ with $T\gg R_0(\eta)$.

on the other hand,
\begin{align*}
\frac{1}T\int_0^T|u_t|\cdot|u|r^2dr\leq&\frac1T\Big(\int_0^T|u_t|^2r^2dr\Big)^\frac12
\Big(\int_0^T|u|^3r^2dr\Big)^\frac13\Big(\int_0^Tr^2dr\Big)^\frac16\\
\leq&\frac{C}{T^\frac12}\|u_t\|_2\|u\|_{\dot H^\frac12},\\
\frac{1}T\int_0^T|u_t|\cdot|u_r|r^3dr\leq&\frac{R(\eta)}T\int_0^{R(\eta)}|u_t|\cdot|u_r|r^2dr+\frac{1}T\int_{R(\eta)}^T|u_t|\cdot|u_r|r^3dr\\
\leq&\frac{R(\eta)}T\|u_t\|_2\|u\|_{\dot
H^1}+\Big(\int_{R(\eta)}^\infty
u_t^2r^2dr\Big)^\frac12\Big(\int_{R(\eta)}^\infty
u_r^2r^2dr\Big)^\frac12\\
\leq&\frac{R(\eta)}T+\eta.
\end{align*}
Letting $T\to+\infty$, we prove the {\bf Claim}, this completes the Theorem.

\vskip 0.2cm

\noindent{\bf \underline{\bf In conclusion}} From the above induction, we only need to prove  Theorem \ref{ewzzx3drw}
and Proposition \ref{gdzzx3drw}.

\vskip 0.45cm

\subsection{Low regularity}

\begin{conjecture}[{\bf Low regularity conjecture}]\label{con:conlrg}
Let $s_c\geq0$, $\mu=-1$ and $(u_0,u_1)\in \dot{H}^{s_c}\times \dot{H}^{s_c-1}$. Then $u$ is global and scatters in the sense that  there exist unique $(u_0^{\pm},u_1^{\pm})\in\dot{H}^{s_c}_x(\R^d)\times \dot{H}^{s_c-1}$ such that
        $$\lim_{t\to\pm\infty}\|u(t)-S(t)(u_0^{\pm},u_1^{\pm})\|_{\dot{H}_x^{s_c}(\R^d)}=0.$$
\end{conjecture}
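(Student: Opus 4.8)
\begin{remark}[Proof proposal]
The plan is to run the concentration--compactness/rigidity scheme of Kenig--Merle at the scaling-critical regularity $\dot H^{s_c}\times\dot H^{s_c-1}$, in place of the energy space. The first reduction is to the Critical Norm Conjecture (Conjectures~\ref{con:con1.1} and~\ref{con:con1.123}): if every maximal-lifespan solution with $(u,\pa_tu)\in L^\infty_t(I;\dot H^{s_c}\times\dot H^{s_c-1})$ were known to be global and to scatter, it would only remain to upgrade the mere membership $(u_0,u_1)\in\dot H^{s_c}\times\dot H^{s_c-1}$ to an a~priori bound on the critical norm along the flow. Because there is no conservation law at regularity $s_c$ when $s_c\notin\{0,1\}$, this upgrade is exactly where the difficulty lies, so in practice one must attack both statements at once.

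\medskip
\noindent\textbf{Step 1 (critical element).} Assume the conclusion fails for some $d\ge 2$ and some $p$ with $s_c\ge 0$. Using the Strichartz estimate of Theorem~\ref{thm;stricest}, the small-data theory, a stability/long-time-perturbation lemma, and the linear profile decomposition in $\dot H^{s_c}\times\dot H^{s_c-1}$ (Theorem~\ref{thm:linprohs} for $s_c\ge 1$, Theorem~\ref{thm:lpds1} for $s_c=\tfrac12$, and the analogous decomposition otherwise), one would extract a nonzero minimal-$\dot H^{s_c}$ solution $u$ that is almost periodic modulo the symmetries of \eqref{equ:waveequ-5.1} and has infinite scattering norm. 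Reducing the modulation parameter $N(t)$ then leaves a short list of enemies: a soliton-like solution with $N(t)\equiv 1$; a self-similar solution; and a frequency cascade with $N(t)\to 0$ along a sequence of times.

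\medskip
\noindent\textbf{Step 2 (additional regularity).} Using the no-waste Duhamel formula for almost periodic solutions together with the double Duhamel trick, fed by the dispersive estimate \eqref{dispersive-estimate-1} and the Strichartz bounds, one would bootstrap the critical element into the energy space and beyond: show $\vec u(t)\in\dot H^1\times L^2$ with a quantitative bound of the shape $\|\vec u(t)\|_{\dot H^1\times L^2}\lesssim N(t)^{s_c}$ (the analogue of Theorem~\ref{ewzzx3drw}), and in the soliton-like case push to $\dot H^{1+}\times\dot H^{0+}$ using precompactness of the orbit (the analogue of Proposition~\ref{gdzzx3drw}).

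\medskip
\noindent\textbf{Step 3 (rigidity).} Once the enemy lies in $\dot H^1\times L^2$ one has the conserved energy $E(u_0,u_1)$ (and momentum) at one's disposal. The cascade is killed by letting $N(t)\to 0$: then $\|\vec u(t)\|_{\dot H^1\times L^2}\to 0$, hence by Sobolev embedding $E(\vec u(t))\to 0$, so $E(u_0,u_1)=0$ and the defocusing negative-energy rigidity (cf.~Proposition~\ref{propnegebf}) forces $u\equiv 0$, a contradiction. The self-similar solution is excluded by a scaling-adapted virial identity. The soliton-like solution is excluded by a localized virial/Morawetz estimate: integrating the identity of Lemma~\ref{vir3drw} (and its higher-dimensional counterpart) on $[0,T]$, controlling the exterior error terms by the compactness of the orbit as in the ``Claim'' above, and letting $T\to\infty$ forces $E(\vec u)=0$, hence $u\equiv 0$.

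\medskip
The hard part will be Step~2, together with the absence of a monotonicity formula at regularity $s_c$ compatible with the conservation laws available at regularity $1$ (energy) and $0$ (mass). The double Duhamel trick requires the frequency-localized pieces of $\vec u$ to decay fast enough both in time and in space; the plain dispersive decay $|t|^{-(d-1)(1/2-1/q)}$ in \eqref{dispersive-estimate-1} is only borderline for this, so one must supply extra decay --- typically the weighted gain from radial Sobolev embedding, or a refined incoming/outgoing decomposition. For this reason the argument is currently confined to $d=3$, radial data, and $\sqrt2<p<4$ (Dodson--Lawrie \cite{DL}, Shen \cite{Shen}); removing the radial hypothesis, reaching general $p$ with $s_c\ge 0$ (in particular the small-$p$ range where the channels-of-energy method degrades), and treating the self-similar and cascade enemies in higher dimensions are the obstacles that remain.
\end{remark}
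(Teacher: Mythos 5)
This statement is a \emph{conjecture} in the paper, not a theorem: the paper offers no proof of it, and it is open in essentially full generality. Your proposal is therefore not a proof but an outline of the standard concentration--compactness/rigidity strategy, and you are explicit about this at the end. As an outline it is accurate: it matches the route taken in the partial results the paper surveys (Shen~\cite{Shen}, Dodson--Lawrie~\cite{DL}, Dodson~\cite{Dod1,Dod2}), and you correctly isolate the two real obstructions --- (i) the reduction from initial data merely in $\dot H^{s_c}\times\dot H^{s_c-1}$ to a uniform-in-time $\dot H^{s_c}$ bound, for which no conservation law exists unless $s_c\in\{0,1\}$; and (ii) the additional-regularity step, where the bare dispersive decay \eqref{dispersive-estimate-1} is borderline for the double Duhamel argument, so one currently needs radiality and a restricted range of $p$. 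One further caveat worth adding: even granting an a~priori critical-norm bound, the reduction to Conjectures~\ref{con:con1.1}/\ref{con:con1.123} is itself only known in the ranges tabulated in the paper, so ``attack both statements at once'' understates the situation --- both are open for most $(d,p)$. With that clarification, your proposal faithfully reflects the state of the art; it is not, and cannot at present be, a proof.
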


\vskip 0.2cm

\noindent \underline{\bf Low regularity problem}, where we ask what is the minimal $s$  to ensure that
(NLW) has either  a local solution  or a  global solution for which the scattering hold?

{\rm (i)}   Miao-Zhang \cite{Miao-Zhang-Wave} studied  the  Cauchy
problem for the semilinear wave equation
\begin{equation}\label{miao-zhang-1}\left\{\begin{aligned}
&u_{tt}- \triangle u=-|u|^{\rho-1}u,\;\;\;(t, x)\in\R\times\R^d,\;\;d\ge3,\\
&u(x,0)=\phi(x)\in (\dot H^s\cap L^{\rho+1})(\R^d),\;\; u_t(x, 0)=\psi(x)\in \dot H^{s-1}(\R^d),
\end{aligned}\right.\end{equation}
and proved the global well-posedness of low regularity solution for \eqref{miao-zhang-1} in $\dot H^s\times \dot H^{s-1}$ with $s\ge \alpha(\rho)$,
where
\begin{equation*}\alpha(\rho)\triangleq
\left\{\begin{aligned}
& \frac{2(\rho-1)^2-[d+2-\rho(d-2)][d+1-\rho(d-1)]}
{2(\rho-1)[d+1-\rho(d-3)]}, \\
& k_0(d)\le\rho<\frac{d-1}{d-3},\;  k_0(d)=\frac{(d+1)^2}{(d-1)^2+4}, \;\;d\ge3;
\end{aligned}\right.\end{equation*}

\begin{equation*}\alpha(\rho)\triangleq
\left\{\begin{aligned}
&\frac{4(\rho-1)+[d+2-\rho(d-2)](d\rho-d-4)}
{2(\rho-1)[d+4-\rho(d-2)]},\\
& \frac{d-1}{d-3}\le\rho<\frac{d+2}{d-2},\;\; d=5;
\end{aligned}\right.\qquad\qquad\end{equation*}
\begin{equation*}\alpha(\rho)\triangleq
\left\{\begin{aligned}
&\frac{2\rho(\rho-1)+[d+2-\rho(d-2)](d\rho-d-\rho-2)}
{2(\rho-1)[d+2-\rho(d-3)]},\\
&\frac{d-1}{d-3}\le\rho<\min(\frac{d+2}{d-2},\frac d{d-3}), \;\;\;d\ge6.
\end{aligned}\right.\quad \end{equation*}
\vskip 0.15cm
\vskip 0.15cm
{\rm (ii)} In \cite{Shen1}, Shen proved that for $d=3$, and
$$p\in\big(\tfrac{11}3,5\big),\; ~s_0(p)=\tfrac{2+(5-p)s_c}{7-p}\in(s_c,1),$$
 then for $(u_0,u_1)\in (\dot{H}^{s_c}\cap\dot{H}^s)\times
(\dot{H}^{s_c-1}\cap\dot{H}^{s-1})$, the solution $u$ to $u_{tt}-\Delta u+|u|^{p-1}u=0$ with initial data $(u_0,u_1)$ is global.

\vskip 0.15cm

{\rm (iii)} The most interesting problem is 3d cubic wave equation
\begin{equation}\label{1.1}
\begin{cases}
u_{tt}-\Delta u=-u^3,\quad (t,x)\in\R\times\R^3,\\
(u,\pa_tu)(0)=(u_0,u_1),\quad\; x\in\R^3.
\end{cases}
\end{equation}

\begin{center} The well-known results on the cubic wave equation in $\mathbb R^3$ \\
\begin{tabular}{|c|c|c|}
  \hline
 $s>3/4$  & $s=3/4$ & $s>\frac7{10} \;{\rm or} \;\frac{13}{18}$ \\\hline
 KPV\cite{kenponcevega}, Gallagher-Planchon
\cite{gallagplanch} & Bahouri-Chemin \cite{bahchemin} &Roy \cite{triroy}, \cite{triroy1}  \\\hline
\end{tabular}
\end{center}
\vskip0.15cm

\begin{theorem}[Dodson \cite{Dod1} ]\label{thm:t1.5}
Suppose there exists a positive constant $\varepsilon > 0$ such that
\begin{equation}\label{1.10}
\| u_{0} \|_{\dot{H}^{1/2 + \varepsilon}(\mathbf{R}^{3})} + \| |x|^{2 \varepsilon} u_{0} \|_{\dot{H}^{1/2 + \varepsilon}(\mathbf{R}^{3})} \leq A < \infty,
\end{equation}
and
\begin{equation}\label{1.11}
\| u_{1} \|_{\dot{H}^{-1/2 + \varepsilon}(\mathbf{R}^{3})} + \| |x|^{2 \varepsilon} u_{1}
\|_{\dot{H}^{-1/2 + \varepsilon}(\mathbf{R}^{3})} \leq A < \infty.
\end{equation}
Then $(\ref{1.1})$ has a global solution and there exists some $C(A, \varepsilon) < \infty$ such that

\begin{equation}\label{1.12}
\int_{\mathbf{R}} \int (u(t,x))^{4} dx dt \leq C(A, \varepsilon),
\end{equation}
which proves that $u$ scatters both forward and backward in time.
\end{theorem}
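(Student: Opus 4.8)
The plan is to reduce the statement to a single global spacetime bound and then to extract that bound from an almost-conserved \emph{conformal charge}, using crucially that for $d=3$ the cubic nonlinearity is \emph{conformally invariant} ($p=3=\frac{d+3}{d-1}$) and that the weights $|x|^{2\varepsilon}$ in \eqref{1.10}--\eqref{1.11} are exactly calibrated to a fractional power of the conformal weight $|x|^2$.

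\textbf{Step 1: local theory and reduction.} At regularity $s=\frac12+\varepsilon$, which is subcritical relative to the scaling-critical index $s_c=\frac12$ of \eqref{1.1}, the Strichartz estimates of Theorem~\ref{thm;stricest} give: local well-posedness in $\dot H^{s}\times\dot H^{s-1}$; small-data global well-posedness and scattering; and a continuation criterion asserting that a solution on its maximal interval $I$ is global and scatters in both time directions as soon as $\|u\|_{L^4_{t,x}(I\times\R^3)}<\infty$ — indeed, finiteness of this norm lets one show that the Duhamel tail $\int_0^{\pm\infty}\frac{\sin(s|\nabla|)}{|\nabla|}u^3(s)\,ds$ converges in $\dot H^{s}$, and its time derivative in $\dot H^{s-1}$, by dual Strichartz and persistence of regularity. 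Hence everything follows once \eqref{1.12} is established with $C=C(A,\varepsilon)$. Moreover, by finite speed of propagation together with the decay encoded in the weighted hypotheses, the portion of the evolution outside a sufficiently large light cone is a small perturbation of a free wave, so it suffices to control $\|u\|_{L^4_{t,x}}$ on the remaining, conically truncated region.

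\textbf{Step 2: the conformal charge and its mollification.} For the conformally invariant equation \eqref{1.1}, the charge $Q(t)$ associated with the conformal Killing vector field $K_0=(t^2+|x|^2)\partial_t+2t\,x\cdot\nabla+2t$ is \emph{nonnegative} in the defocusing case and, for data satisfying $|x|u_0\in\dot H^1$, $|x|u_1\in L^2$, it is finite and \emph{exactly conserved}; its finiteness alone forces $\int_\R\int_{\R^3}|u|^4\,dx\,dt\lesssim Q(0)$ and the decay $\|\nabla u(t)\|_{L^2}^2+\|\partial_tu(t)\|_{L^2}^2\lesssim |t|^{-2}$ (equivalently, the Penrose conformal compactification turns \eqref{1.1} into a defocusing cubic Klein--Gordon equation on a \emph{bounded} conformal-time slab of the Einstein cylinder $\R_\tau\times S^3$, on which global control is automatic). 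Since our data lies only in $\dot H^{1/2+\varepsilon}$, $Q$ itself is infinite; following the $I$-method philosophy I would instead work with the mollified surrogate $Q_N(t)$ built from $I_Nu$, where $I_N$ is a smooth Fourier multiplier equal to $1$ on frequencies $\lesssim N$ and decaying above. Interpolating the conformal-charge level (weight $|x|$, regularity $1$) against the critical level (no weight, regularity $\frac12$) shows that \eqref{1.10}--\eqref{1.11} give $Q_N(0)\lesssim N^{\sigma}A^2$ for an explicit $\sigma=\sigma(\varepsilon)>0$, while a commutator computation yields the \emph{almost-conservation law} $\big|Q_N(t)-Q_N(0)\big|\lesssim N^{-\theta}\big(1+\|u\|_{L^4_{t,x}}\big)^{C}$ for some $\theta=\theta(\varepsilon)>0$.

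\textbf{Step 3: bootstrap and the main obstacle.} On a long but finite interval $I$, assuming $\|u\|_{L^4_{t,x}(I\times\R^3)}\le 2C_0$, Step 2 controls $Q_N$ throughout $I$, and feeding this back into the conformal Morawetz-type identity for the truncated flow recovers $\|I_Nu\|_{L^4_{t,x}(I\times\R^3)}\lesssim A$; estimating the high-frequency remainder $(1-I_N)u$ by the subcritical local theory then gives $\|u\|_{L^4_{t,x}(I\times\R^3)}\le C_0$, so that a continuity/scaling argument in $|I|$, optimizing $N=N(A,\varepsilon)$, yields \eqref{1.12}, hence global existence and scattering via Step~1. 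The hard part is the commutator estimate underlying the almost-conservation law: the conformal multiplier carries the \emph{unbounded} weight $t^2+|x|^2$, which interacts badly both with the Littlewood--Paley truncation $I_N$ and with the rough component $u_1\in\dot H^{-1/2+\varepsilon}$, and the resulting error terms can be absorbed only by weighted Strichartz and Hardy-type inequalities in which precisely the quantities $\||x|^{2\varepsilon}u_0\|_{\dot H^{1/2+\varepsilon}}$ and $\||x|^{2\varepsilon}u_1\|_{\dot H^{-1/2+\varepsilon}}$ enter — this sharp matching between the assumed weight $|x|^{2\varepsilon}$ and the conformal weight $|x|^2$ is the technical heart of the proof.
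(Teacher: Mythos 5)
The survey you are reading does not prove Theorem~\ref{thm:t1.5}; it records it as a citation to Dodson's paper \cite{Dod1}, so there is no in-text proof to match against. Judged on its own and against Dodson's published argument, your Step~1 and your structural observation are correct and are the right starting point: $p=3$ in $d=3$ is the conformal exponent, so the multiplier $K_0=(t^2+|x|^2)\partial_t+2t\,x\cdot\nabla+2t$ yields a conserved, sign-definite conformal charge, and the hypotheses \eqref{1.10}--\eqref{1.11} sit a fractional distance between the critical space (weight $0$, regularity $\tfrac12$) and the conformal-energy space (weight $|x|$, regularity $1$). But the mechanism you then build on this observation has a real gap, and it is not the mechanism Dodson uses.

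You propose to truncate the conformal charge in frequency and prove an $I$-method almost-conservation law $|Q_N(t)-Q_N(0)|\lesssim N^{-\theta}$. You flag yourself that the commutator of the Fourier multiplier $I_N$ with the unbounded weight $t^2+|x|^2$ is ``the technical heart'' --- but that is exactly the step the proposal leaves unproved, and it is not routine. Littlewood--Paley cutoffs do not interact benignly with multiplication by $|x|^2$ or $t^2$: the commutator $[I_N,|x|^2]$ is not smoothing at high frequency, and the error terms it generates, particularly when fed through the rough component $u_1\in\dot H^{-1/2+\eps}$, are not visibly absorbable by the weighted norms in \eqref{1.10}--\eqref{1.11}. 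Even your stated bound $Q_N(0)\lesssim N^\sigma A^2$ already requires commuting $|x|$ through $I_N$ and $|\nabla|$, so the same obstruction appears at $t=0$. As written, the argument reduces the theorem to two unproven weight--frequency commutator estimates, so it cannot be accepted as a proof. Dodson's argument sidesteps this obstruction by changing variables rather than mollifying the charge: after the finite-speed-of-propagation and small-data splitting of your Step~1, he passes to hyperbolic coordinates inside the truncated light cone, under which the conformal charge becomes an honest energy for a cubic Klein--Gordon type equation, the weight $t^2+|x|^2$ is absorbed into the Jacobian, and the hypotheses \eqref{1.10}--\eqref{1.11} translate into low-regularity bounds in the new time variable; the $L^4_{t,x}$ control then follows from energy and Morawetz-type estimates for the transformed equation, with no commutator against an unbounded weight ever arising. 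Note finally that the statement as transcribed in this survey omits the hypothesis that $(u_0,u_1)$ be radial, but Dodson's \cite{Dod1} is stated and proved for radial data and the transformed-variable estimates use spherical symmetry essentially; your write-up should record that assumption.
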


\begin{theorem}[Dodson \cite{Dod2} ]\label{t1.6}
The Cauchy problem $(\ref{1.1})$ is globally well-posed and scattering
 for radial initial data  $(u_{0}, u_1) \in \dot B_{1,1}^{2}(\mathbf{R}^{3})\times \in \dot B_{1,1}^{1}(\mathbf{R}^{3}))$.
 Moreover,
\begin{equation}\label{1.15}
\| u \|_{L_{t,x}^{4}(\mathbf{R} \times \mathbf{R}^{3})} \leq C(\| u_{0} \|_{\dot B_{1,1}^{2}}, \| u_{1} \|_{\dot B_{1,1}^{1}}).
\end{equation}
\end{theorem}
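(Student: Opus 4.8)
The plan is to exploit the embedding $\dot{B}^2_{1,1}\times\dot{B}^1_{1,1}(\R^3)\hookrightarrow\dot{H}^{1/2}\times\dot{H}^{-1/2}(\R^3)$ — obtained by summing the Bernstein inequality $2^{j/2}\|P_jf\|_{L^2_x}\lesssim 2^{2j}\|P_jf\|_{L^1_x}$ over the $\ell^1$ Besov norm — so that the statement becomes a \emph{quantitative} version of global well-posedness and scattering for the $\dot{H}^{1/2}$-critical defocusing cubic wave equation. First I would set up the local well-posedness, small-data scattering, stability and persistence-of-regularity theory in $\dot{H}^{1/2}\times\dot{H}^{-1/2}$ with auxiliary Strichartz space $L^4_{t,x}$: the pair $(4,4)$ is sharp wave-admissible for $d=3$ and obeys the $\dot{H}^{1/2}$-scaling relation $\tfrac14+\tfrac34=1$, so Theorem~\ref{thm;stricest} applies. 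This gives a maximal radial solution $u$, and the whole problem reduces to the a priori bound $\sup_{t\in I_{\max}}\|\vec u(t)\|_{\dot{H}^{1/2}\times\dot{H}^{-1/2}}<\infty$ with bound controlled by $\|(u_0,u_1)\|_{\dot{B}^2_{1,1}\times\dot{B}^1_{1,1}}$; once this is known, since $p=3\in(2,4)$ one may invoke Shen's resolution of the critical-norm conjecture for the radial $3$d cubic wave equation \cite{Shen} to get that $u$ is global and scatters in $\dot{H}^{1/2}\times\dot{H}^{-1/2}$ with a finite $L^4_{t,x}$ bound, and propagating the Besov regularity (a critical-regularity fixed point, easier thanks to the $\ell^1$ summability) upgrades this to \eqref{1.15} and scattering in $\dot{B}^2_{1,1}\times\dot{B}^1_{1,1}$.

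The heart of the argument is therefore to produce the a priori critical-norm bound, which I would do by a long-time Strichartz estimate in the spirit of \cite{Dod1}. Decompose the data dyadically; the $\dot{B}^2_{1,1}$ norm lets one choose $N_0=N_0(A)$ so that the high-frequency tail $(P_{>N_0}u_0,P_{>N_0}u_1)$ is small in the critical norm, while the low-frequency part $(P_{\le N_0}u_0,P_{\le N_0}u_1)$ lies in $\dot{H}^1\times L^2$ and hence generates a global solution that scatters (energy-subcritical defocusing theory, together with the Morawetz estimate of Proposition~\ref{prop:Morawetz}). The difficulty is that a naive perturbation of the low-frequency flow by the high frequencies is circular: the $L^4_{t,x}$ norm of the low part and the admissible size of the high part depend on $N_0$ in competing ways. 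To break this, on any interval $I$ on which $\|u\|_{L^4_{t,x}(I)}=L$ I would prove frequency-localized Strichartz bounds of the form $\|P_N u\|_{L^q_tL^r_x(I)}\lesssim_A$ (summable weights in $N$), \emph{uniformly in} $|I|$, by a bootstrap fed by a Morawetz/interaction estimate and — crucially — by the weighted pointwise decay available for radial functions.

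With the long-time Strichartz bound in hand I would run a double Duhamel argument: the radial weighted decay makes the Duhamel-squared integral converge despite the borderline $|t|^{-1}$ decay of the linear flow in $L^\infty_x$, which yields that $\|\vec u(t)\|_{\dot{H}^{1/2}\times\dot{H}^{-1/2}}$ cannot grow, i.e. the desired a priori bound. Equivalently, one could phrase the endgame as a concentration--compactness/rigidity argument: a minimal non-scattering radial solution would be almost periodic modulo scaling, giving a soliton-like enemy (killed, after the additional-regularity step in the spirit of Theorem~\ref{ewzzx3drw}, by a virial identity analogous to Lemma~\ref{vir3drw} together with the defocusing Morawetz bound, forcing $E(\vec u)=0$ hence $\vec u\equiv0$) and a frequency-cascade enemy (killed since $\|\vec u(t)\|_{\dot H^1\times L^2}\lesssim N(t)^{1/2}\to0$ forces the coercive energy to vanish, hence $\vec u\equiv0$).

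I expect the main obstacle to be the long-time Strichartz estimate at this critical Besov regularity. At regularity $\dot{H}^{1/2}$ there is no conserved quantity to play the role that the energy plays in the energy-critical case, so the radial weighted decay (radial Sobolev embedding) and the $\ell^1$-Besov summability must together substitute for it; carrying the bootstrap through so that every constant ultimately depends only on $\|(u_0,u_1)\|_{\dot{B}^2_{1,1}\times\dot{B}^1_{1,1}}$ is delicate. Radial symmetry is used essentially — it supplies the extra spatial decay that makes the double Duhamel integral converge — and is not expected to be removable by these methods, just as in \cite{DL,Shen}.
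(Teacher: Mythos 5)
The survey states Theorem~\ref{t1.6} without proof, so there is no ``paper's proof'' to compare against line by line; but your strategy can be measured against the known architecture of the theorem and of the conditional results it leans on, and measured that way the proposal has a genuine gap where the whole difficulty lives.

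Your overall reduction is logically sound: $\dot B^2_{1,1}\times\dot B^1_{1,1}\hookrightarrow \dot H^{1/2}\times\dot H^{-1/2}$, and since $p=3\in(2,4)$, Shen's resolution of the radial $\dot H^{1/2}$-critical norm conjecture~\cite{Shen} would deliver global existence, scattering and a quantitative $L^4_{t,x}$ bound \emph{provided} one has the a priori estimate $\sup_{t\in I_{\max}}\|\vec u(t)\|_{\dot H^{1/2}\times\dot H^{-1/2}}\le C(\|(u_0,u_1)\|_{\dot B^2_{1,1}\times\dot B^1_{1,1}})$. The trouble is that this a priori estimate \emph{is} the theorem; as the survey itself emphasizes immediately after the statement, this is the first large-data scattering result in a scale-invariant space whose norm is not controlled by any conserved quantity. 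Your proposal places the entire burden on that bound and then sketches it with the very tools — long-time Strichartz estimates, double Duhamel, radial weighted decay, and Morawetz — that are used in~\cite{DL,Shen,Dod1} \emph{only after} the a priori bound or extra regularity/decay has been assumed: the long-time Strichartz and additional-regularity machinery (e.g.\ Theorem~\ref{ewzzx3drw}) is built on the compactness modulo symmetries of a minimal non-scattering solution, which presupposes the uniform $\dot H^{1/2}$ bound. Using those tools to \emph{derive} the bound from Besov data is circular as written. Worse, the Morawetz estimate you propose to feed the bootstrap is $\iint |u|^4/|x|\,dx\,dt\lesssim E(u)$, which sits at the $\dot H^1$ energy level and is not scale-critical for $p=3$; the scale-critical Lin--Strauss Morawetz $\iint |u|^4/|x|\lesssim\sup_t\|u\|_{\dot H^{1/2}}^2$ is an NLS phenomenon, as the survey explicitly remarks in Section~4.1, and there is no wave-equation analogue at regularity $1/2$.

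The missing structural ingredient in your sketch is the conformal invariance of the cubic wave equation in three space dimensions. For $d=3$ the conformal power is $p_{\mathrm{conf}}=\tfrac{d+3}{d-1}=3$, so \eqref{1.1} is exactly the conformally invariant NLW and $\dot H^{1/2}$ is the conformal scale. This is not a coincidence one can ignore: Dodson's argument (already visible in the weighted hypotheses of Theorem~\ref{thm:t1.5} from~\cite{Dod1}, and essential in~\cite{Dod2}) exploits a hyperbolic/conformal change of variables together with the almost-conserved conformal energy, and the role of the $\dot B^2_{1,1}$ hypothesis is precisely to give the spatial $L^1$-type integrability (via $\|P_N u_0\|_{L^1}\lesssim N^{-2}c_N$ with $\sum_N c_N<\infty$) that makes the conformal transform of the data land in a manageable space. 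That is the mechanism which substitutes for the missing conservation law, and your decomposition does not invoke it at all; in its absence I do not see how the inner bootstrap of your ``long-time Strichartz'' step closes, and indeed you flag this yourself as the main obstacle without supplying a way through it. To repair the proposal you would need to replace the Morawetz/double-Duhamel core by a conformal-energy/hyperbolic-coordinate argument in which the $\ell^1$ Besov summability enters concretely, rather than by reducing to Shen's conditional theorem, which does not become available until the very estimate you are trying to prove is already in hand.
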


This is the first result in which large data scattering was proved for initial data in a scale - invariant space for which the norm was not controlled by a conserved quantity.

\subsection{Description of local singularity for type I blow up bubble}

 From finite speed of propagation, using the constant in space solution
$$
u(t,x)=\frac{C}{(T-t)^{\frac{2}{p-1}}}
$$
one can construct solutions blowing up like the ODE $u_{tt}=u^p$. They are called type I blow up and correspond to a {\it complete} blow up
$$\lim_{t\to T}\| u(t),\partial_tu(t)\|_{\dot{H}^1\times L^2}\to +\infty.$$
In the subconformal case ($s_c\leq \frac{1}{2}$) the recent works by Merle and Zaag \cite{MZ03,MZ05,MZ2} give in particular a complete description of the local singularity, being always a type I blow up bubble, and we refer to this monumental series of works for complete references on the history of the problem. Recently also, general upper bounds on the blow up rates have been obtained \cite{HZ}, \cite{KV} in the superconformal, energy subcritical case ($\frac{1}{2}<s_c<1$).



\section{Nonlinear Klein-Gordon equation}

Let {$\square=\partial_{tt}-\Delta$}, we
consider
\begin{align*}
\square u+u+f(u)= 0, \;\;\text{{(NLKG)}}.
\end{align*}

{Assumptions: }
\begin{enumerate}
\item {Structure condition}: $F$ satisfies
$ \frac{\partial F(z)}{\partial \overline{z}}=f(z)$, $F(0)=0. $

\item {Smooth} and {growth conditions}: For $\frac{4}{d}<p_1<p_2<\frac{4}{d-2}$
\begin{equation*}
|f(u)-f(v)|\le C(|u|^{p_1}+|v|^{p_1}+|u|^{p_2}+|v|^{p_2})|u-v|,\;\;
f(0)=0.
\end{equation*}

\item {Repulsive condition}. If $
V(u)=\frac{F(u)}{|u|^2}:\; \C\rightarrow\R $ then
\begin{align}
\partial_{|z|}V(z)=2{\rm Re}\Big[\partial_zV(z)\frac{z}{|z|}\Big]\ge0,
\label{4.1}\end{align} which means that $ V(u)$ is
{non-decreasing} function on $|u|$.
\\
\end{enumerate}

\underline{\bf The classical Morawetz estimate}
\begin{align}
\int\int_{\R^{d+1}}\frac{G(u)}{|x|}dxdt\le CE(u),
\label{4.2}\end{align}
where
\begin{align}
G(z)={\rm Re}\big(\partial_zV(z)|z|^2z\big)={\rm Re}\big(\overline{z}f(z)\big)-F(z).
\label{4.3}\end{align}

\begin{remark}\small
{\rm (i)} As we know, the  scattering theory for defocuing (NLKG) is based on the classical
Morawetz estimate \eqref{4.2}, from which we can
deduce the {\bf decay estimate} of the solution.
This ensures the potential energy has no concentration.

{\rm (ii)} The introduction of $V(z)$ makes us to compare (NLKG) with
 the following linear equation
 $$\square u+u+V(x)u=0. $$
{\bf Berestyeki-Lions} have shown that the above equations
have the {\bf traveling wave solution} under the
condition $V(z)<V(0)$, which shows that the wave
operator is not surjection. The current study is
constrained under
\begin{align}
\partial_{|z|}V(z)\ge C \min(|z|^{-1},|z|^{\gamma}),\quad \gamma>0
\label{4.3}\end{align} i.e.,  $V(u)$ is not {\bf flat}
at $u=0$, and diverse at $u=\infty$. As for the general case
$$V(z)\ge V(0),\quad \forall z\in\C$$
it is still {\bf an open problem} whether one can establish
the scattering theory of {\rm (NLKG)}.
\end{remark}

\vskip0.12cm
\underline{\bf Generalized  Morawetz inequality}
\vskip0.2cm

Under the {\bf repulsive condition},
Nakanishi derived the {\bf modified
Morawetz estimate} independent of the form of nonlinearity by
time-space  multiplier argument, which is
suitable  for different dimensions(including $d=1,2$)!
The arguments are based on

\begin{enumerate}
\item {\bf Generalized multiplier $(h\cdot \mathcal{D}u+qu)$}.
\vskip0.2cm

\item  The Hardy inequality  and {\bf generalized Gagliardo-Nirenberg inequality}
\begin{align*}
\int_{\R^d}\kappa^2|u|^pdx\le\|u\|_q^{p-2}\int_{\R^d}(\kappa^2|\nabla
u+i\lambda u|^2+|u\nabla\kappa|^2)dx,
\end{align*}
where $p>2,  q=\frac{d(p-2)}{2}$.
\end{enumerate}

 Assume that $f(u)$ satisfies the {\bf structure, growth}
and {\bf repulsive conditions}, and
$$2+\frac{4}{d}<p<\infty,\;\; d\le 2;\quad \text{or}\quad 2+\frac{4}{d}<p <2+\frac{4}{d-2},\;\; d\ge3, $$
then the energy solution $u$ of
(NLKG)  satisfies that
 \begin{align}
\iint_{|x|\le t}\frac{|u|^p}{|t|}dxdt\le CE(u)^{\frac{p}{2}},   \label{4.5}
\end{align}
where $C=C(d,p)$ is {\bf independent of
the growth exponents $p_1$ or $p_2$} in nonlinearity.

 The {modified Morawetz estimate} breaks through the {\bf restriction on
dimension}. For  the critical case $p=2^*$ with  $d\ge3$, we have to use the Gagliardo-Nirenberg inequality
 in hyperbolic space to establish the following  Morawetz  estimate:
\begin{align}
\iint_{\R^{d+1}}\frac{|u|^{2^*}}{t+|x|}dxdt\le C(E),\quad d\ge 3
\label{4.11}\end{align}

\vskip0.2cm

\noindent\underline {\bf Scattering theory and its induction}.\;  In order to
simply the statement, we introduce the notation
\begin{equation}
\overrightarrow{u}= (u,(I-\Delta)^{-\frac{1}{2}}\dot{u}) \label{4.12}\end{equation}
Then $\overrightarrow{u}(t)\in\dot H^1(\R^d)$ has the following
conservation
\begin{align*}
E(u,t)\triangleq\int_{\R^d}\big(
|\nabla\overrightarrow{u}(t)|^2+|\overrightarrow{u}(t)|^2+F(u(t))
\big)dx =E(u,0).\label{4.13}\end{align*}
Obviously, the above energy is consistent with the original form
\begin{align*}
E(u,t)=&
\int_{\R^d}\big( |\nabla u|^2+|\nabla\sqrt{1-\Delta}^{-1}\dot{u}|^2+|u|^2\\
&\qquad \qquad \;\;\;+|\sqrt{1-\Delta}^{-1}\dot{u}|^2 + F(u)\big)dx\\
=&\int_{\R^d}\left(|\nabla u(t)|^2+|\dot{u}(t)|^2+|u(t)|^2+F(u(t))
\right)dx.
\end{align*}
\vskip0.2cm

\begin{theorem}[Brenner-Ginibre-Velo-Nakanishi]\label{NLKG-scattering}
Let $d\geq 1,\; f:\C\rightarrow\C$ satisfy the
{\bf structure, growth} and {\bf repulsive
conditions}. Then {\bf (NLKG)} has the unique global energy solution ${u}\in C(\R;H^1(\R^d))\cap C^1(\R;L^2(\R^d))$  with
\begin{equation}
\left\{ \aligned
&\|u\|_{{L^\rho(\R;B^{\frac{1}{2}}_{\rho,2}(\R^d))}}\le  C(E_0),\quad \qquad \qquad\qquad d\le 2, \\
&\|u\|_{{L^\rho(\R;B^{\frac{1}{2}}_{\rho,2}(\R^d))\cap
 L^\zeta(\R;B^{\frac{1}{2}}_{\zeta,2}(\R^d))}}\le C(E_0),\quad \; d\geq 3,\endaligned
\right. \label{4.14}
\end{equation}
where
$$\rho=2+\frac{4}{d}, \;\;\;\zeta=2+\frac{4}{d-1}$$
and $E_0 = E(u,0)$ is the energy of the solution $u$ at $t=0$.

Moreover, by \eqref{4.14}, the solution $u$
scatters, i.e.,   there exists a unique solution
$v_\pm$ of the following free Klein-Gordon equation
\begin{align*}
\square v +v= 0
\end{align*}
such that
\begin{align*}
\|\overrightarrow{u}(t)-\overrightarrow{v}_\pm(t)\|_{H^1(\R^d)\times L^2(\R^d)}\rightarrow
0,\quad\text{as}\;\; t\rightarrow \pm\infty
\end{align*}
and the corresponding wave operator
\begin{align*}
\Omega_{\pm}:\; \overrightarrow{v}^{\pm}(0)\triangleq v^\pm(x)\rightarrow\overrightarrow{u}(0),\quad
H^1(\R^d)\times L^2(\R^d)\rightarrow H^1(\R^d)\times L^2(\R^d)
\end{align*}
is a {homeomorphism} from $H^1(\R^d)\times L^2(\R^d)$ to itself.
\end{theorem}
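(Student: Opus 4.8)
The plan is to run the classical \emph{no-concentration} scheme for global scattering of a defocusing dispersive equation, which for (NLKG) is due to Brenner and Ginibre--Velo and was completed by Nakanishi: local theory $\Rightarrow$ global existence from the coercive conservation law $\Rightarrow$ an a priori spacetime bound from the generalized Morawetz estimate $\Rightarrow$ upgrade to the full Strichartz bound \eqref{4.14} $\Rightarrow$ scattering and invertibility of the wave operators. First I would set up the local Cauchy theory: using the Klein--Gordon Strichartz estimates (same admissibility as for the wave equation, the $\tfrac12$ loss of derivatives being exactly what is recorded by the Besov spaces $B^{1/2}_{\rho,2}$ and $B^{1/2}_{\zeta,2}$) together with the growth hypothesis $|f(u)-f(v)|\lesssim(|u|^{p_1}+|v|^{p_1}+|u|^{p_2}+|v|^{p_2})|u-v|$ for $\tfrac4d<p_1\le p_2<\tfrac4{d-2}$, a contraction mapping argument on a short time interval yields a unique local solution $u\in C_t(H^1)\cap C^1_t(L^2)\cap L^\rho_t B^{1/2}_{\rho,2}\cap L^\zeta_t B^{1/2}_{\zeta,2}$; persistence of regularity and a standard stability (perturbation) lemma come along with it. The thresholds are arranged so that $p_1$ is mass--supercritical and $p_2$ is energy--subcritical, so the fixed point closes.

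Next I would obtain global existence. Since $F$ satisfies the structure and repulsive conditions, $\partial_{|z|}V(z)\ge0$ and $F(0)=0$ force $F(u)\ge0$, so the conserved energy $E(u,t)=\int_{\R^d}(|\nabla u|^2+|\dot u|^2+|u|^2+F(u))\,dx$ controls $\|\overrightarrow u(t)\|_{H^1\times L^2}^2$; hence the $H^1\times L^2$ norm stays bounded and the local solution extends to all of $\R$. The heart of the argument is the a priori spacetime bound: from the generalized Morawetz inequality, proved via the multiplier $h\cdot\mathcal D u+qu$ and the generalized Gagliardo--Nirenberg and Hardy inequalities, one has $\iint_{|x|\le t}\frac{|u|^p}{|t|}\,dx\,dt\le CE(u)^{p/2}$ for $2+\tfrac4d<p$ (any $d\le2$, and $<2^*$ for $d\ge3$), and in the energy--critical case $p=2^*$, $d\ge3$, the hyperbolic--space version $\iint_{\R^{d+1}}\frac{|u|^{2^*}}{t+|x|}\,dx\,dt\le C(E)$. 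One then converts this weighted bound into an \emph{unweighted} global $L^q_{t,x}$ bound by splitting into $|x|\le t$ and $|x|>t$, controlling the exterior region through finite propagation speed and energy decay; and finally bootstraps: partition $\R$ into finitely many intervals on which the scattering norm is small, and on each interval invoke the Strichartz estimate and the nonlinear estimate to bound $\|u\|_{L^\rho_t B^{1/2}_{\rho,2}\cap L^\zeta_t B^{1/2}_{\zeta,2}}$, then sum to get \eqref{4.14} with constant $C(E_0)$. In low dimensions $d=1,2$, where there is no usable dispersive decay and no endpoint Strichartz pair, the conversion step must instead exploit the non--flatness $\partial_{|z|}V(z)\gtrsim\min(|z|^{-1},|z|^{\gamma})$ to squeeze the needed spacetime integrability out of the potential energy density.

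Having \eqref{4.14}, scattering follows in the usual way: the Duhamel tail $\int_t^{\pm\infty}U(t-s)(0,f(u(s)))\,ds$ converges in $H^1\times L^2$ by Strichartz, so $\|\overrightarrow u(t)-\overrightarrow v_\pm(t)\|_{H^1\times L^2}\to0$, which is asymptotic completeness; here $U$ denotes the free Klein--Gordon propagator acting on $\overrightarrow{\,\cdot\,}$. Conversely, given scattering data $\overrightarrow v_\pm(0)$, solving the integral equation backward from $t=\pm\infty$ is a contraction for large $|t|$ (the free evolution has small Strichartz norm near infinity) and globalizes via the a priori bound, producing $u$ with $\Omega_\pm\overrightarrow v_\pm(0)=\overrightarrow u(0)$; thus $\Omega_\pm$ is onto, and it is injective by uniqueness of the Cauchy problem. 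Continuity of $\Omega_\pm$ and of $\Omega_\pm^{-1}$ on $H^1\times L^2$ follows from the stability estimates of the local and global theory. Hence $\Omega_\pm$ is a homeomorphism of $H^1\times L^2$, as claimed.

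The step I expect to be the main obstacle is the one converting the Morawetz information — which is recorded at low regularity and in a weighted $L^p_{t,x}$ norm, and which is genuinely weak when $d\le2$ — into the full Strichartz--space bound \eqref{4.14}, \emph{uniformly} across $d=1,2$ (no dispersion, no endpoint) and in the energy--critical case $p=2^*$, where the hyperbolic Morawetz estimate must be combined with a frequency--localized induction--on--energy argument in Besov spaces to rule out energy concentration. Everything else is routine once this nonconcentration/upgrade mechanism is in place.
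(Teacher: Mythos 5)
Your proposal follows essentially the same route the paper itself sketches for this result: local theory via Klein--Gordon Strichartz, global existence from the coercive conserved energy (repulsive condition $\Rightarrow F\ge0$), the Nakanishi generalized Morawetz bound obtained from the multiplier $h\cdot\mathcal D u+qu$ together with the generalized Gagliardo--Nirenberg and Hardy inequalities (and its hyperbolic--space variant in the critical case), conversion to unweighted spacetime bounds, a bootstrap in the Strichartz/Besov spaces to get \eqref{4.14}, and then the standard Duhamel/Cauchy--criterion argument plus backward contraction and uniqueness to build and invert the wave operators. The paper does not give a detailed proof of this theorem --- it presents precisely these ingredients and refers to Brenner, Ginibre--Velo, and Nakanishi --- so your sketch is correct and matches the intended argument. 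One technical inaccuracy worth correcting: the Klein--Gordon Strichartz estimates are \emph{not} simply ``same admissibility as for the wave equation with a $\tfrac12$ loss''; the operator $e^{it\langle\nabla\rangle}$ interpolates between Schr\"odinger-type decay (low frequencies) and wave-type decay (high frequencies), and that is exactly why the theorem needs both exponents $\rho=2+\tfrac4d$ (Schr\"odinger-admissible) and $\zeta=2+\tfrac4{d-1}$ (wave-admissible) and why the two work in the pair of Besov norms in \eqref{4.14}. Also note that, as the paper's own remark indicates, the scattering theory under the bare repulsivity $\partial_{|z|}V\ge0$ is open, and the actual results require the non-flatness lower bound $\partial_{|z|}V\gtrsim\min(|z|^{-1},|z|^\gamma)$; your appeal to that condition in the $d\le2$ conversion step is thus not an extra gap in your proof but a hypothesis that is implicitly part of the theorem as used here.
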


For the defocusing  energy-critical NLKG, refer to Miao's  Book \cite{MC}.

\subsection{Focusing case}

First, we consider the
{\bf Energy-subcritical} case:

\begin{equation}\label{equ:nlkgsub}
\begin{cases}
u_{tt}-\Delta u+u-|u|^{p-1}u=0,\\
(u,\pa_tu)(0)=(u_0,u_1)\in H^1(\R^d)\times L^2(\R^d).
\end{cases}
\end{equation}

\noindent\underline{{\bf Motivation:}}
\vskip0.2cm

(1) It is obvious that
$u(x,t)=Q(x)$ is the solution of \eqref{equ:nlkgsub} with $(u,\pa_tu)(0)=(Q(x),0)$,
where  $Q$ is the ground state
of elliptic equation
\begin{align*}
-\Delta Q+Q=|Q|^{p-1}Q,\quad p<1+\tfrac{4}{d-2}.
\end{align*}
From the viewpoint of ``{\bf phase transition}",  we
conjecture that $u(t,x)=Q$ belongs to
 the boundary of scattering domain.

\vskip0.12cm

(2) Payne and Sattinger  proved  {\bf  the global existence/finite time blowup dichotomy} for
solutions of energies of (NLKG) on bounded smooth domain in $\mathbb R^d$ below the ground-state energy,  by the sign of the functional
\begin{equation}
K_{1,0}(u)\triangleq\int\big(|\nabla u|^2+|u|^2-|u|^{p+1}\big)dx.
\end{equation}
It is easy to observe that their argument applies to the whole space
$\R^d$ as soon as one has the local well-posedness for (NLKG) in the energy
space.

\vskip0.12cm

(3) A simple computation shows
\begin{align*}
\frac{d}{dt}\int_{\mathbb{R}^d}\big(x\cdot\nabla+\nabla\cdot x)u
u_tdx=-\Big(\int_{\R^d}\big(2|\nabla
u|^2-\tfrac{d(p-1)}{p+1}|u|^{p+1}\big)dx\Big)=-K_{d,-2}(u),
\end{align*}
which is useful for the {\bf scattering theory}.

\vskip0.2cm

\noindent\underline{{\bf Variational setting:}}
\vskip0.2cm

To state the main results, we need to introduce some notation
and assumptions for the variational setting.

\vskip0.12cm
Let the static energy be
\begin{equation}\label{static}
J(\phi)=\frac12\int_{\mathbb{R}^d}\big(|\nabla
\phi|^2+|\phi|^2\big)dx-\frac{1}{p+1}\int_{\mathbb{R}^d}|\phi|^{p+1}dx.
\end{equation}
The scaling derivative of the {\bf static energy} is denoted by
\begin{align*}
& K_{\alpha,\beta}(\phi)=\frac{d}{d\lambda}\big |_{\lambda=0}J\left(e^{\lambda\alpha}\phi(e^{-\beta\lambda}x)\right)\\
=&\int_{\R^d}\left(\frac{2\alpha+(d-2)\beta}{2}|\nabla\phi|^2+\frac{2\alpha+d\beta
}{2} |\phi|^2-\frac{(p+1)\alpha+pd}{p+1} |\phi|^{p+1}\right)\; dx\\
=&\int_{\mathbb{R}^d}\big(\alpha\varphi-\beta
x\cdot\nabla\varphi\big)\big(-\Delta
\varphi+\varphi-|\varphi|^{p-1}\varphi\big)dx.
\end{align*}

\vskip0.2cm

Let
\begin{align*}
\Omega\triangleq\left\{ (\alpha,\beta)\; \Big|\; \alpha\ge0, \;
(\alpha,\beta)\neq(0,0), { 2\alpha+(d-2)\beta\ge0, \atop
2\alpha+d\beta
\geq0, }  \right\}
\end{align*}
we consider the {\bf constrained minimization} problem
$$m_{\alpha,\beta}=\inf\big\{J(\varphi)~|~\varphi\in
H^1(\R^d)\backslash \{0\},~K_{\alpha,\beta}=0\big\}.$$

\vskip0.12cm
\begin{prop}[Parameter independent]\label{prop-Variational setting-1}
If $(\alpha,\beta)\in\Omega$, then $m_{\alpha,\beta}$ is
independent of $(\alpha,\beta)$ and it is attained
$$m_{\alpha,\beta}=J(Q),$$
where $Q$ is the {\bf ground state} of the elliptic equation
$$-\Delta Q+Q=|Q|^{p-1}Q.$$
\end{prop}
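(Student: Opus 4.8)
The plan is to establish the two claims --- independence of $m_{\alpha,\beta}$ from $(\alpha,\beta)\in\Omega$, and $m_{\alpha,\beta}=J(Q)$ --- together, by showing that any minimizer for the constrained problem $m_{\alpha,\beta}$ must solve the stationary equation $-\Delta Q+Q=|Q|^{p-1}Q$, hence be (a translate of) the ground state. First I would record the easy inequality $m_{\alpha,\beta}\le J(Q)$: since $Q$ is a critical point of $J$, all of its Pohozaev/virial identities hold, which is precisely the statement $K_{\alpha,\beta}(Q)=0$ for every $(\alpha,\beta)$; thus $Q$ is an admissible competitor in the definition of $m_{\alpha,\beta}$. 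For the distinguished choice $(\alpha,\beta)=(1,0)$ one has $K_{1,0}(\varphi)=\|\varphi\|_{H^1}^2-\|\varphi\|_{p+1}^{p+1}$, so on the Nehari manifold $\{K_{1,0}=0\}$ the energy collapses to $J(\varphi)=\tfrac{p-1}{2(p+1)}\|\varphi\|_{H^1}^2>0$; this is the classical Payne-Sattinger / Nehari picture and I would use $m_{1,0}=J(Q)>0$ as an anchor.

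Next I would reduce $m_{\alpha,\beta}$ to a scaling-invariant min-max. For $\varphi\neq0$ set $\varphi_\lambda:=e^{\alpha\lambda}\varphi(e^{-\beta\lambda}\,\cdot\,)$ and $j_\varphi(\lambda):=J(\varphi_\lambda)$; then $j_\varphi'(\lambda)=K_{\alpha,\beta}(\varphi_\lambda)$. Using $\alpha\ge0$, $2\alpha+(d-2)\beta\ge0$ and $2\alpha+d\beta\ge0$ --- exactly the conditions defining $\Omega$ --- together with subcriticality of $p$, one checks that $\lambda\mapsto j_\varphi(\lambda)$ has a unique critical point $\lambda_\ast(\varphi)$ which is non-degenerate (for most $(\alpha,\beta)$ it is the global maximum; on the boundary directions such as $(\alpha,\beta)=(d,-2)$ one simply verifies $j_\varphi''(\lambda_\ast)\neq0$ by hand). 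Since $\varphi\mapsto\varphi_{\lambda_\ast(\varphi)}$ sweeps out all of $\{K_{\alpha,\beta}=0\}$, this gives $m_{\alpha,\beta}=\inf_{\varphi\neq0}j_\varphi(\lambda_\ast(\varphi))$, a mountain-pass level, and a Gagliardo-Nirenberg bound keeping minimizing sequences bounded away from $0$ in $H^1$ shows $m_{\alpha,\beta}>0$.

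Then I would prove existence of a minimizer $\varphi_\ast$. In the min-max formulation one may take the minimizing sequence radial (Schwarz symmetrization only decreases $\sup_\lambda J(\varphi_\lambda)$), and since $p$ is subcritical the embedding $H^1_{\mathrm{rad}}(\R^d)\hookrightarrow L^{p+1}$ is compact, so a Strauss / concentration-compactness argument produces a nonzero limit attaining the infimum. With $\varphi_\ast$ in hand, Lagrange multipliers give $J'(\varphi_\ast)=\eta\,K_{\alpha,\beta}'(\varphi_\ast)$; pairing with the infinitesimal generator of the scaling yields $0=K_{\alpha,\beta}(\varphi_\ast)=\eta\,j_{\varphi_\ast}''(0)$, and since $(\alpha,\beta)\in\Omega$ forces $j_{\varphi_\ast}''(0)\neq0$, we get $\eta=0$. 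Hence $\varphi_\ast$ solves $-\Delta\varphi_\ast+\varphi_\ast=|\varphi_\ast|^{p-1}\varphi_\ast$, so $J(\varphi_\ast)\ge\inf\{J(\phi):\phi\neq0,\ J'(\phi)=0\}=J(Q)$ by the least-energy characterization of the ground state and uniqueness of the positive radial solution. Combined with the first step, $m_{\alpha,\beta}=J(Q)$ for every $(\alpha,\beta)\in\Omega$, which in particular gives the asserted independence.

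The main obstacle, I expect, is twofold. First, the existence of the minimizer: because $H^1(\R^d)\hookrightarrow L^{p+1}$ is not compact, one must genuinely exploit radial symmetry (or run a full concentration-compactness dichotomy) and rule out vanishing via the constrained Gagliardo-Nirenberg bound. Second, the vanishing of the Lagrange multiplier, i.e. the non-degeneracy $j_{\varphi_\ast}''(0)\neq0$: this is the one place where the precise shape of $\Omega$ is really used, and the boundary directions (notably $(\alpha,\beta)=(d,-2)$, and for $p=1+\tfrac4d$ certain degenerate combinations of the scaling weights) have to be treated separately. Everything else --- the Pohozaev identities, the monotonicity/shape of $j_\varphi$, and the weak lower semicontinuity of $J$ under the radial compact embedding --- is routine.
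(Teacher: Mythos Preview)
The paper does not supply its own proof of this proposition: it is a survey, and the result is simply quoted from Ibrahim--Masmoudi--Nakanishi \cite{IMN} (the remark at the end of the section refers the reader to \cite{IMN} and \cite{KS10}). So there is nothing to compare against directly; your outline must be judged on its own merits and against the argument in \cite{IMN}.

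Your strategy --- the fibering map $j_\varphi(\lambda)=J(\varphi_\lambda)$, existence of a radial minimizer by Strauss compactness, and then killing the Lagrange multiplier via $0=K_{\alpha,\beta}(\varphi_\ast)=\eta\,j_{\varphi_\ast}''(0)$ --- is a legitimate route and is close in spirit to the Nehari/mountain-pass arguments in \cite{IMN,KS10}. The easy direction $m_{\alpha,\beta}\le J(Q)$ and the anchor $m_{1,0}=J(Q)$ are fine.

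Two cautionary remarks. First, the non-degeneracy $j_{\varphi_\ast}''(0)\neq0$ is more delicate than you indicate: writing $a=2\alpha+(d-2)\beta$, $b=2\alpha+d\beta$, $c=(p+1)\alpha+d\beta$, one computes on the constraint $j''_{\varphi_\ast}(0)=\tfrac{a(a-c)}2\|\nabla\varphi_\ast\|_2^2+\tfrac{b(b-c)}2\|\varphi_\ast\|_2^2$, and the sign analysis uses both $(\alpha,\beta)\in\Omega$ \emph{and} $p>1+\tfrac4d$; degenerate directions (e.g.\ $d=2$, $\alpha=0$) genuinely require a separate treatment --- note that Lemma \ref{Lem-Variational setting-3} in the paper already excludes $(d,\alpha)=(2,0)$. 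Second, the approach in \cite{IMN} is somewhat cleaner for the \emph{independence} statement: rather than producing a minimizer for each $(\alpha,\beta)$, they introduce $H_{\alpha,\beta}=J-\bar\mu^{-1}K_{\alpha,\beta}$, rewrite $m_{\alpha,\beta}=\inf\{H_{\alpha,\beta}(\varphi):K_{\alpha,\beta}(\varphi)\le0,\ \varphi\neq0\}$, and compare the constraint sets for different $(\alpha,\beta)$ directly via the scaling flow; existence of the minimizer (and its identification with $Q$) is then a separate, one-time argument. Your approach buys a self-contained variational proof at the cost of redoing the compactness/multiplier step for every $(\alpha,\beta)$.
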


\vskip0.12cm

The solutions start from the following subsets of the energy space:
\begin{align*}
 K_{\alpha, \beta}^{\pm}=\left\{ (u_0,u_1)\;
\Big|\;  E(u_0,u_1)<E(Q,0)=J(Q),\;\; K_{\alpha, \beta}(u){\geq \atop
< }0 \right\}.
\end{align*}

\begin{prop}[Parameter independence of the splitting]\label{prop-Variational setting-2}
For $(\alpha,\beta)\in\Omega,$ $K_{\alpha, \beta}^{\pm}$ is
{\bf independent} of $(\alpha,\beta)$.
\end{prop}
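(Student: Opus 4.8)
The plan is to peel off the part of the statement that does not involve $(\al,\beta)$, reduce everything to the sign of the single functional $K_{\al,\beta}$ on a fixed sublevel set of the static energy $J$, and then conclude by a connectedness argument on the parameter region $\Omega$ together with Proposition~\ref{prop-Variational setting-1}.

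First I would record that $E(u_0,u_1)=J(u_0)+\tfrac12\|u_1\|_2^2$ and that $K_{\al,\beta}(u)$ depends on the first component $u_0$ only. Hence the constraint $E(u_0,u_1)<E(Q,0)=J(Q)$ in the definition of $K_{\al,\beta}^{\pm}$ is independent of $(\al,\beta)$ and forces $J(u_0)<J(Q)$; conversely, any $\varphi\in H^1$ with $J(\varphi)<J(Q)$ arises as such a $u_0$, e.g.\ with $u_1=0$. So it suffices to prove that for every $\varphi\in H^1(\R^d)$ with $J(\varphi)<J(Q)$ the validity of ``$K_{\al,\beta}(\varphi)\ge0$'' does not depend on $(\al,\beta)\in\Omega$. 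The case $\varphi=0$ is trivial, since $K_{\al,\beta}(0)=0$ for all $(\al,\beta)$, so I may assume $\varphi\neq0$.

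The heart of the argument is then two short observations. \emph{Non-vanishing:} for $\varphi\neq0$ with $J(\varphi)<J(Q)$ one has $K_{\al,\beta}(\varphi)\neq0$ for every $(\al,\beta)\in\Omega$, since otherwise $\varphi$ would be an admissible competitor in the constrained minimization defining $m_{\al,\beta}$, forcing $J(\varphi)\ge m_{\al,\beta}=J(Q)$ by Proposition~\ref{prop-Variational setting-1}, a contradiction. \emph{Connectedness:} the explicit formula shows that $(\al,\beta)\mapsto K_{\al,\beta}(\varphi)$ is linear, in particular continuous, and $\Omega$ --- the intersection of the half-planes $\al\ge0$, $2\al+(d-2)\beta\ge0$, $2\al+d\beta\ge0$ with the origin removed --- is path-connected: for $d\ge3$ the constraint $2\al+d\beta\ge0$ is the binding one among the lower bounds on $\beta$, so $\Omega$ is the angular sector $\{\al\ge0,\ \beta\ge-2\al/d\}$ minus its apex, which is manifestly connected (the low-dimensional cases are analogous). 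A continuous nowhere-vanishing real function on a connected set has constant sign, so $\operatorname{sign}K_{\al,\beta}(\varphi)$, and hence the truth of $K_{\al,\beta}(\varphi)\ge0$, is constant in $(\al,\beta)\in\Omega$. Combining the two observations gives the reduced statement, and therefore the Proposition.

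The real content is carried entirely by Proposition~\ref{prop-Variational setting-1} (existence of a minimizer for $m_{\al,\beta}$, the Lagrange multiplier computation, and the identification of the minimizer with the ground state $Q$): once that $(\al,\beta)$-independence of $m_{\al,\beta}$ is in hand, the present statement is soft. The only genuinely new points are book-keeping ones --- the splitting $E=J+\tfrac12\|u_1\|_2^2$ with its nonnegative kinetic term, and the elementary connectedness of $\Omega$ --- so I do not expect a serious obstacle here. If one preferred to avoid topology, the same conclusion follows by working along the fibering curve $\la\mapsto J\big(e^{\la\al}\varphi(e^{-\la\beta}\cdot)\big)$, whose derivative equals $K_{\al,\beta}$ of the rescaled function and which is bounded below by $m_{\al,\beta}$ at each of its critical points; but the connectedness route is shorter and cleaner.
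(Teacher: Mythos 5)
Your argument is correct, and it is in fact the standard proof of this statement (the paper itself does not spell out a proof, as this is survey material; the argument appears in the cited reference \cite{IMN}). The two load-bearing observations are exactly as you say: (a) the non-vanishing of $K_{\al,\beta}$ on $\{J<J(Q)\}\setminus\{0\}$ is a direct corollary of Proposition~\ref{prop-Variational setting-1}, since a nontrivial zero of $K_{\al,\beta}$ at sub-threshold static energy would be an admissible competitor for $m_{\al,\beta}=J(Q)$; and (b) on the path-connected sector $\Omega$ the map $(\al,\beta)\mapsto K_{\al,\beta}(\varphi)$ is affine, so a nowhere-vanishing sign is constant. The preliminary reduction via $E(u_0,u_1)=J(u_0)+\tfrac12\|u_1\|_2^2$ and the observation that $K_{\al,\beta}$ sees only the $u_0$-component are exactly the right bookkeeping, and your treatment of $\varphi=0$ (which lands in $K^+_{\al,\beta}$ for every $(\al,\beta)$ because the inequality there is $\geq 0$ rather than $>0$) is a point worth making explicit, as you do. One small remark: the displayed formula for $K_{\al,\beta}$ in this lecture note has a typo in the coefficient of the potential term, which should read $\tfrac{(p+1)\al+d\beta}{p+1}$ rather than $\tfrac{(p+1)\al+pd}{p+1}$; recomputing $\tfrac{d}{d\lambda}\big|_{\lambda=0}J(e^{\la\al}\phi(e^{-\beta\la}\cdot))$ confirms this, and with the corrected coefficient $K_{\al,\beta}$ is indeed linear in $(\al,\beta)$ as your argument requires.
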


\begin{lemma}[Uniform bounds on $K$] \label{Lem-Variational setting-3} Assume that  $(\alpha,\beta)\in\Omega$ and
$(d,\alpha)\neq(2,0)$, then $\exists
\delta=\delta(\alpha,\beta,d,p)>0$, s.t. $\forall \varphi\in
H^1,~J(\varphi)<m,$ we have
$$K_{\alpha,\beta}\geq\min\{\bar{\mu}(m-J(\varphi)),\delta
K_{\alpha,\beta}^Q(\varphi)\},~\text{or}~
K_{\alpha,\beta}\leq-\bar{\mu}(m-J(\varphi)),$$
where ${\bar{\mu}=\max\{2\alpha+(d-2)\beta,2\alpha+d\beta\}}$
and  the quadratic part
$$K_{\alpha,\beta}^Q(\varphi)=\frac{2\alpha+(d-2)\beta}{2}\|\nabla\varphi\|_2^2+\frac{2\alpha+d\beta}{2}\|\varphi\|_2^2.$$
\end{lemma}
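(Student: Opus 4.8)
The plan is to work with the scaling $\varphi\mapsto\varphi^{\lambda}:=e^{\alpha\lambda}\varphi(e^{-\beta\lambda}\,\cdot\,)$, along which $\frac{d}{d\lambda}J(\varphi^{\lambda})=K_{\alpha,\beta}(\varphi^{\lambda})$; writing $\mu_{1}=2\alpha+(d-2)\beta$, $\mu_{2}=2\alpha+d\beta$, $\mu_{3}=(p+1)\alpha+d\beta$, $A=\|\nabla\varphi\|_{2}^{2}$, $B=\|\varphi\|_{2}^{2}$, $C=\|\varphi\|_{p+1}^{p+1}$, one has $J(\varphi^{\lambda})=\frac12Ae^{\mu_{1}\lambda}+\frac12Be^{\mu_{2}\lambda}-\frac1{p+1}Ce^{\mu_{3}\lambda}$ and $K_{\alpha,\beta}(\varphi^{\lambda})=K_{\alpha,\beta}^{Q}(\varphi^{\lambda})-K_{\alpha,\beta}^{N}(\varphi^{\lambda})$, $K_{\alpha,\beta}^{N}(\varphi):=\frac{\mu_{3}}{p+1}C$, are exponential sums in $\lambda$ of the same shape. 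From $(\alpha,\beta)\in\Omega$, $(d,\alpha)\neq(2,0)$ and the standing range $1+\frac4d<p<1+\frac4{d-2}$ one checks $\mu_{1}>0$, $\mu_{1},\mu_{2}\ge0$ and $\mu_{3}\ge\bar\mu:=\max(\mu_{1},\mu_{2})$. The structural key is that $H_{\alpha,\beta}:=J-\frac1{\bar\mu}K_{\alpha,\beta}$ has only nonnegative coefficients in $A,B,C$, so $\lambda\mapsto H_{\alpha,\beta}(\varphi^{\lambda})$ is nondecreasing; scaling any $\varphi$ with $K_{\alpha,\beta}(\varphi)<0$ down to the first $\lambda_{0}<0$ with $K_{\alpha,\beta}(\varphi^{\lambda_{0}})=0$ (which exists because $\mu_{3}>\mu_{1},\mu_{2}$ forces the quadratic part to dominate as $\lambda\to-\infty$) gives in particular $m=m_{\alpha,\beta}=\inf\{H_{\alpha,\beta}(\psi):\psi\neq0,\ K_{\alpha,\beta}(\psi)\le0\}$.

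If $K_{\alpha,\beta}(\varphi)<0$, take such a $\lambda_{0}$; then $m\le J(\varphi^{\lambda_{0}})=H_{\alpha,\beta}(\varphi^{\lambda_{0}})\le H_{\alpha,\beta}(\varphi)=J(\varphi)-\frac1{\bar\mu}K_{\alpha,\beta}(\varphi)$, which rearranges to $K_{\alpha,\beta}(\varphi)\le-\bar\mu\,(m-J(\varphi))$, the second alternative.

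If $K_{\alpha,\beta}(\varphi)\ge0$ and $\varphi\neq0$, then necessarily $K_{\alpha,\beta}(\varphi)>0$ ($K_{\alpha,\beta}(\varphi)=0$ would give $J(\varphi)\ge m$). I would first record the a priori bound: $K_{\alpha,\beta}(\varphi)\ge0$ and $J(\varphi)<m$ control $\|\nabla\varphi\|_{2}$ — and $\|\varphi\|_{2}$ away from the degenerate directions — this being where $\mu_{3}>\bar\mu$, hence $p>1+\frac4d$, is used; consequently either $K_{\alpha,\beta}^{Q}(\varphi)\le C_{1}(\alpha,\beta,d,p)$, or $K_{\alpha,\beta}^{Q}(\varphi)$ is large and Gagliardo--Nirenberg makes $K_{\alpha,\beta}^{N}(\varphi)$ of strictly lower order, so that $K_{\alpha,\beta}(\varphi)\ge\frac12K_{\alpha,\beta}^{Q}(\varphi)$. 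On the remaining (bounded) part one argues by contradiction: a sequence $\varphi_{n}$ with $J(\varphi_{n})<m$, $0\le K_{\alpha,\beta}(\varphi_{n})<\bar\mu(m-J(\varphi_{n}))$ and $K_{\alpha,\beta}(\varphi_{n})<\frac1nK_{\alpha,\beta}^{Q}(\varphi_{n})$ has $K_{\alpha,\beta}(\varphi_{n})\to0$. If $K_{\alpha,\beta}^{Q}(\varphi_{n})\to0$ along a subsequence, Gagliardo--Nirenberg (with $p+1<2^{*}$, so the relevant power exceeds $1$) gives $K_{\alpha,\beta}(\varphi_{n})\ge\frac12K_{\alpha,\beta}^{Q}(\varphi_{n})$ for large $n$, a contradiction; otherwise $K_{\alpha,\beta}^{Q}(\varphi_{n})\ge\varepsilon_{1}>0$, so $\|\varphi_{n}\|_{p+1}$ stays bounded below, a short computation from $K_{\alpha,\beta}(\varphi_{n})\to0$ gives that the first zero $\lambda_{1}^{n}>0$ of $\lambda\mapsto K_{\alpha,\beta}(\varphi_{n}^{\lambda})$ tends to $0$, and since $\lambda\mapsto e^{-\bar\mu\lambda}K_{\alpha,\beta}(\varphi_{n}^{\lambda})$ is nonincreasing,
\begin{equation*}
m-J(\varphi_{n})\le J(\varphi_{n}^{\lambda_{1}^{n}})-J(\varphi_{n})=\int_{0}^{\lambda_{1}^{n}}K_{\alpha,\beta}(\varphi_{n}^{\lambda})\,d\lambda\le\frac{e^{\bar\mu\lambda_{1}^{n}}-1}{\bar\mu}\,K_{\alpha,\beta}(\varphi_{n})\le\frac1{\bar\mu}K_{\alpha,\beta}(\varphi_{n})
\end{equation*}
for $n$ large, contradicting $K_{\alpha,\beta}(\varphi_{n})<\bar\mu(m-J(\varphi_{n}))$. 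Together with $K_{\alpha,\beta}^{Q}(\varphi)\le C_{1}$ this yields $K_{\alpha,\beta}(\varphi)\ge\min\{\bar\mu(m-J(\varphi)),\delta K_{\alpha,\beta}^{Q}(\varphi)\}$ for a suitable $\delta=\delta(\alpha,\beta,d,p)>0$.

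The hard part will be the bookkeeping in the positive case, and especially the degenerate scaling directions on $\partial\Omega$ — e.g. $(\alpha,\beta)=(d,-2)$, where $\mu_{2}=0$ and $K_{\alpha,\beta}^{Q}$ controls only $\|\nabla\varphi\|_{2}$: there both the a priori bound and the lower-order-ness of $K_{\alpha,\beta}^{N}$ rely essentially on $p>1+\frac4d$, and every inequality must be carried through $K_{\alpha,\beta}^{Q}$ rather than through the full $H^{1}$ norm. The second delicate point is obtaining the sharp constant $\bar\mu$ in the lower bound (rather than some smaller positive multiple of $m-J(\varphi)$); this is exactly what forces the combination $H_{\alpha,\beta}=J-\bar\mu^{-1}K_{\alpha,\beta}$ in the negative case and the refinement $\lambda_{1}^{n}\to0$ in the positive case.
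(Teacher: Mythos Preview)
The paper does not include a proof of this lemma; it is stated as part of the survey of the Ibrahim--Masmoudi--Nakanishi variational machinery, with the reference \cite{IMN} for the original arguments. So there is no in-paper proof to compare against directly.

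Your approach is the standard one from \cite{IMN}: the combination $H_{\alpha,\beta}=J-\bar\mu^{-1}K_{\alpha,\beta}$ together with the monotonicity of $\lambda\mapsto H_{\alpha,\beta}(\varphi^{\lambda})$ handles the negative alternative cleanly, and the positive alternative is treated by a dichotomy on the size of $K^{Q}_{\alpha,\beta}$ combined with Gagliardo--Nirenberg. The skeleton is correct and the contradiction variant you give for the bounded-$K^{Q}$ region is a legitimate alternative to the direct estimate in \cite{IMN}.

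Two points to tighten. First, the claim $\mu_{1}>0$ is false when $d=1$ (take $\beta=2\alpha>0$, which lies in $\Omega$ and gives $\mu_{1}=0$, $\mu_{2}=4\alpha$); for $d\ge2$ the hypothesis $(d,\alpha)\neq(2,0)$ does force $\mu_{1}>0$. Your proof survives this --- nowhere do you actually need $\mu_{1}>0$ rather than $\mu_{1}\ge0$ --- but the bookkeeping should be adjusted. Second, the ``short computation'' showing $\lambda_{1}^{n}\to0$ is the most delicate step and deserves more than a parenthesis: a clean way is to differentiate $g_{n}(\lambda):=e^{-\bar\mu\lambda}K_{\alpha,\beta}(\varphi_{n}^{\lambda})$ and observe that $g_{n}'(\lambda)\le -c$ for some $c>0$ depending on the uniform lower bound for $C_{n}$ (when $\mu_{3}>\bar\mu$) or for $A_{n}$ (when $\mu_{3}=\bar\mu$, i.e.\ $\alpha=0$), so that $\lambda_{1}^{n}\lesssim K_{\alpha,\beta}(\varphi_{n})\to0$. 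Those lower bounds require combining $K^{Q}(\varphi_{n})\ge\varepsilon_{1}$, $K(\varphi_{n})\ge0$, $J(\varphi_{n})<m$ and Gagliardo--Nirenberg, exactly as in the degenerate boundary cases you flag at the end. Once this is written out carefully the argument is complete.
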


\begin{theorem}[Focusing energy-subcritical, Ibrahim, Masmoudi and Nakanishi \cite{IMN}]
\label{theorem-Variational setting-4}
 For $(\al,\beta)\in\Omega.$
\begin{enumerate}
\item If $(u(0),\dot{u}(0))\in {K_{\al,\beta}^-,}$ then the solution $u$
 extends {\bf neither} for $t \to \infty$ {\bf nor} for $t \to-\infty$ as the unique strong
solution in $H^1\times L^2$.
\item If $(u(0),\dot{u}(0))\in {K_{\al,\beta}^+},$ then the solution
$u$ {\bf scatters} both in $t\to\pm\infty$ in the energy space. In other
words, $u$ is a {\bf global} solution and there are ${v_\pm}$ satisfying
\begin{align}
v_{tt}-\Delta v+v=&0,\\
\|(u,\dot{u})-(v_\pm,\dot{v}_\pm)\|_{H^1\times L^2}\to&
0,~\text{as}~t\to\pm\infty.
\end{align}
\end{enumerate}
\end{theorem}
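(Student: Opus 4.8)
\noindent The plan is to run the Kenig--Merle concentration--compactness/rigidity program for \eqref{equ:nlkgsub}, feeding in the variational structure of Propositions~\ref{prop-Variational setting-1}--\ref{prop-Variational setting-2} and Lemma~\ref{Lem-Variational setting-3}. First one records the local well-posedness of \eqref{equ:nlkgsub} in $H^1\times L^2$, with existence time depending only on $\|(u_0,u_1)\|_{H^1\times L^2}$, together with small-data global existence and scattering; these follow from Strichartz estimates for the Klein--Gordon propagator and a contraction argument in the mixed spaces $L^\rho B^{1/2}_{\rho,2}\cap L^\zeta B^{1/2}_{\zeta,2}$, $\rho=2+\tfrac4d$, $\zeta=2+\tfrac4{d-1}$, as in Theorem~\ref{NLKG-scattering}. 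The decisive preliminary observation is that $K_{\al,\beta}^{\pm}$ are invariant under the flow on the whole maximal interval: by conservation of energy $J(u(t))=E(u_0,u_1)-\tfrac12\|\pa_tu(t)\|_2^2<J(Q)$ for all $t$, so Lemma~\ref{Lem-Variational setting-3} applies at every time, and since $K_{\al,\beta}(u(t))$ is continuous in $t$ and the lemma forbids it from occupying the gap between the two alternatives, its sign is locked; with Proposition~\ref{prop-Variational setting-2} this gives the desired trapping. In particular, on $K_{\al,\beta}^+$, testing with $(\al,\beta)=(1,0)$ gives $\|u(t)\|_{L^{p+1}}^{p+1}\le\|\nabla u(t)\|_2^2+\|u(t)\|_2^2$, hence $\|(u,\pa_tu)(t)\|_{H^1\times L^2}^2\lesssim E(u_0,u_1)$, so solutions starting in $K^+$ are global.

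\emph{Blow-up on $K^-$.} Since $u_0\in H^1\subset L^2$, set $y(t)=\|u(t)\|_2^2$; then $y''=2\|\pa_tu\|_2^2-2K_{1,0}(u)$, while on $K_{1,0}^-$ Lemma~\ref{Lem-Variational setting-3} yields the uniform bound $-K_{1,0}(u(t))\ge2\big(J(Q)-E(u_0,u_1)\big)>0$ and $\|u(t)\|_{H^1}$ bounded away from $0$. Inserting this into a concavity argument of Levine/Payne--Sattinger type---bounding $(y')^2\le4y\|\pa_tu\|_2^2$ by Cauchy--Schwarz and invoking the energy identity---one shows that $y^{-\sigma}$ for a suitable $\sigma>0$ is positive, concave and has strictly negative derivative, hence reaches $0$ in finite time; the same holds for $t<0$. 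By the blow-up criterion this forces $T_\pm(u)<\infty$, which is (1).

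\emph{Critical element on $K^+$.} For (2) we argue by contradiction. Since small data scatter and solutions in $K^+$ are global with uniformly bounded $H^1\times L^2$ norm, there is a threshold $0<E_c\le J(Q)$ such that scattering holds for all solutions in $K^+$ of energy $<E_c$ but fails at the level $E_c$. A linear profile decomposition for the Klein--Gordon evolution in the above Strichartz spaces (the analogue of Theorem~\ref{thm:inepro}), the nonlinear profile construction, a long-time perturbation lemma, and the stability of $K^+$ under these weak limits yield a \emph{critical element}: a global solution $u_c$ with $E(u_c)=E_c$, $(u_c,\pa_tu_c)\in K^+$, infinite Strichartz norm over $[0,\infty)$, whose trajectory $\{(u_c,\pa_tu_c)(t):t\ge0\}$ is precompact in $H^1\times L^2$ modulo spatial translations $t\mapsto x(t)$ (there is no scaling parameter, because the mass term kills dilation invariance). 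By the variational characterization of $Q$ in Proposition~\ref{prop-Variational setting-1}, $u_c$ is not a translate of $Q$; hence, applying Lemma~\ref{Lem-Variational setting-3} with $(\al,\beta)=(d,-2)\in\Omega$ and using that the compact trajectory stays away from $0$ in $\dot H^1$, one gets $K_{d,-2}(u_c(t))\ge\delta_1>0$ uniformly in $t$.

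\emph{Rigidity and the main obstacle.} One then runs the virial argument based on the identity
\[
\frac{d}{dt}\big\langle\pa_tu_c,\,(x\cdot\nabla+\nabla\cdot x)u_c\big\rangle=-K_{d,-2}(u_c)\le-\delta_1,
\]
localized by a suitable space cutoff adapted to the compactness scale: the truncation errors are $o(1)$ uniformly in $t$, so the (bounded) truncated virial functional would have derivative bounded above by a negative constant for all large $t$, hence tend to $-\infty$---a contradiction---provided the centre of mass drifts sublinearly in time. Proving this last fact is the heart of the argument and is where Lorentz invariance of \eqref{equ:nlkgsub} enters: one first conjugates $u_c$ by a Lorentz boost to reduce to a critical element of zero momentum $P(u_c)=0$ (checking that membership in $K^+$ and the compactness persist), and then, for a zero-momentum compact solution, a companion virial/Morawetz estimate together with conservation of momentum yields the sublinear bound on the translation parameter $x(t)$. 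The contradiction eliminates the critical element, so every solution in $K^+$ scatters, giving (2). The hard step is thus the rigidity---specifically this control of $x(t)$: unlike the energy-critical wave equation there is no dilation symmetry to exploit, and one must extract the needed information from Lorentz invariance while remaining inside $K^+$; the other technically heavy ingredient is the profile and nonlinear-profile analysis in the two-exponent Strichartz framework together with the verification that the decompositions respect the sets $K^{\pm}$.
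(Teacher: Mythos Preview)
The paper does not supply its own proof of this theorem: it is stated as a result of Ibrahim--Masmoudi--Nakanishi, and the reader is referred to \cite{IMN} and to the monograph \cite{KS10} (see the remark closing Section~5). So there is no in-paper argument to compare against. Your outline is precisely the IMN strategy---Payne--Sattinger/Levine concavity on $K^-$, and Kenig--Merle concentration-compactness plus virial rigidity through $K_{d,-2}$ on $K^+$, with the zero-momentum reduction carried out via Lorentz boosts---so in that sense you are reproducing the intended proof rather than offering an alternative.

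Two places deserve tightening. First, the blow-up step: with positive energy $0<E(u_0,u_1)<J(Q)$ the bare Levine concavity is not immediate. From Lemma~\ref{Lem-Variational setting-3} with $(\alpha,\beta)=(1,0)$ one gets $-K_{1,0}(u(t))\ge 2\big(J(Q)-E(u_0,u_1)\big)+\|\pa_tu(t)\|_2^2$, hence $y''\ge 4\|\pa_tu\|_2^2+c_0$; together with $(y')^2\le 4y\|\pa_tu\|_2^2$ this yields only $yy''\ge (y')^2+c_0y$, which is not directly $yy''\ge(1+\sigma)(y')^2$. One first uses $y''\ge c_0$ to force $y'\to+\infty$, and then feeds the energy identity $y''=(p+3)\|\pa_tu\|_2^2+(p-1)\|u\|_{H^1}^2-2(p+1)E$ back in to obtain the needed strict concavity of $y^{-\sigma}$ for $\sigma=\tfrac{p-1}{4}$ at large times. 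This is routine (and in \cite{IMN}), but your sentence ``one shows that $y^{-\sigma}$ \dots is concave'' hides exactly this two-step maneuver.

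Second, in the rigidity you invoke $K_{d,-2}(u_c(t))\ge\delta_1>0$ from Lemma~\ref{Lem-Variational setting-3}. Note that for $(\alpha,\beta)=(d,-2)$ one has $2\alpha+d\beta=0$, so the lower alternative in that lemma is $K\ge\min\{0,\delta K^Q\}=0$, which by itself gives no strict positivity. The actual uniform lower bound in \cite{IMN} comes from the precompactness of the trajectory together with the fact that $K_{d,-2}$ vanishes on the set $\{J<m\}$ only at $0$; equivalently, one uses a different $(\alpha,\beta)$ with $\bar\mu>0$ plus Proposition~\ref{prop-Variational setting-2} to transfer the bound. Make that transfer explicit.
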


\vskip0.2cm

\noindent\underline{\bf Energy-critical NLKG}
\vskip0.2cm

Let $ d\geq 3$, we consider
\begin{equation}\label{cri-NLKG}
\left\{ \aligned
&u_{tt}-\Delta u+u=|u|^{\frac{4}{d-2}}u, \\
&u(0)=u_0(x)\in H^1,\\
& u_t(0)=u_1(x)\in L^2,
\endaligned
\right.
\end{equation}
where $ u:\R^{1+d}\mapsto\R. $

\vskip0.2cm
It is obvious that $u(t,x) = e^{it}Q$ is the
solution of energy-critical NLKG, where $Q$ is the
{\bf ground state} of elliptic equation
\begin{align*}
-\Delta Q=|Q|^{\frac4{d-2}}Q.
\end{align*}
From the viewpoint of phase transition,  we conjecture that $u(t,x)
= e^{it}Q$  corresponds to the boundary of {\bf scattering domain}.

Let the {\bf static energy} be
\begin{equation}\label{static}
J(\phi)=\frac12\int_{\mathbb{R}^d}\big(|\nabla
\phi|^2+|\phi|^2\big)dx-\frac{1}{p+1}\int_{\mathbb{R}^d}|\phi|^{p+1}dx.
\end{equation}
In the critical case, we also need
the  {\bf modified static energy}
\begin{equation}\label{modestatic}
{J^{(0)}(\phi)}:=\frac12\int_{\mathbb{R}^d}|\nabla
\phi|^2dx-\frac{1}{p+1}\int_{\mathbb{R}^d}|\phi|^{p+1}dx.
\end{equation}
The {\bf scaling derivative} of the static energy is denoted by
\begin{align*}
& {K_{\alpha,\beta}(\phi)}=\frac{d}{d\lambda}\big |_{\lambda=0}J\left(e^{\lambda\alpha}\phi(e^{-\beta\lambda}x)\right)\\
=&\int_{\R^d}\left(\frac{2\alpha+(d-2)\beta}{2}|\nabla\phi|^2+\frac{2\alpha+d\beta
}{2} |\phi|^2-\frac{(p+1)\alpha+pd}{p+1} |\phi|^{p+1}\right)\; dx\\
=&\int_{\mathbb{R}^d}\big(\alpha\varphi-\beta
x\cdot\nabla\varphi\big)\big(-\Delta
\varphi+\varphi-|\varphi|^{p-1}\varphi\big)dx.
\end{align*}

\vskip0.15cm

\noindent\underline{\bf Variational Characterization of
Ground State $Q$}

\vskip0.2cm
The ground state $Q$ has the following
characterization
\begin{align*}
m_{\al,\beta}\triangleq& \inf\left\{ J(\phi) \; \big| \; \phi\in H^1
\setminus\{0\},\; K_{\alpha, \beta}(\phi)=0\;
\right\}\\
=&\inf\left\{ J^{(0)}(\phi) \; \big| \; \phi\in \dot{H}^1
\setminus\{0\},\; -\Delta \phi=|\phi|^{\frac{4}{d-2}}\phi\;
\right\}\\
=&J^{(0)}(Q).
\end{align*}
where {\bf $(\alpha, \beta)\in \Omega=\left\{ (\alpha,\beta)\; \Big|\; \alpha\ge0, \;
(\alpha,\beta)\neq(0,0), { 2\alpha+(d-2)\beta\ge0, \atop
2\alpha+d\beta
\geq0, }  \right\}$}

\vskip0.15cm
The solutions start from the following subsets of the energy space:
\begin{align*}
 K_{\alpha, \beta}^{\pm}=\left\{ (u_0,u_1)\;
\Big|\;  E(u_0,u_1)<J^{(0)}(Q),\;\; K_{\alpha, \beta}(u){\geq \atop
< }0 \right\}.
\end{align*}

\begin{prop}[Parameter independence of the splitting]\label{prop-Variational setting-5}
For $(\al,\beta)\in\Lambda,$ $K_{\alpha, \beta}^{\pm}$ is
{\bf independent} of $(\al,\beta)$.
\end{prop}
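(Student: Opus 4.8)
The plan is to reduce the statement to the single assertion that, for every $\varphi\in H^1(\R^d)$ whose static energy satisfies $J(\varphi)<J^{(0)}(Q)=:m$, the \emph{sign} of $K_{\alpha,\beta}(\varphi)$ is the same for all $(\alpha,\beta)$ in the cone $\Omega$ (the set written $\Lambda$ in the statement). Indeed, the defining inequality $E(u_0,u_1)<J^{(0)}(Q)$ in the definition of $K_{\alpha,\beta}^{\pm}$ is already parameter-free, and $E(u_0,u_1)$ exceeds $J(u_0)$ by exactly the nonnegative kinetic contribution of $u_1$, so $E(u_0,u_1)<m$ forces $J(u_0)<m$. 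Thus the only thing to verify is that the dichotomy $\{K_{\alpha,\beta}(u_0)\ge 0\}$ versus $\{K_{\alpha,\beta}(u_0)<0\}$ is stable under changing the weights $(\alpha,\beta)$, for initial data already satisfying $J(u_0)<m$.

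First I would dispose of the trivial case $u_0=0$: then $K_{\alpha,\beta}(0)=0$ for all $(\alpha,\beta)$, so $(0,u_1)$ lies in $K^{+}_{\alpha,\beta}$ independently of the parameters. Assume now $u_0\neq0$ and $J(u_0)<m$. The structural observation is that $(\alpha,\beta)\mapsto K_{\alpha,\beta}(u_0)$ is \emph{linear}, hence continuous, in $(\alpha,\beta)$; from the formula in the text it equals
\[
\alpha\Big(\|\nabla u_0\|_{2}^2+\|u_0\|_{2}^2-\|u_0\|_{p+1}^{p+1}\Big)
+\beta\Big(\tfrac{d-2}{2}\|\nabla u_0\|_{2}^2+\tfrac{d}{2}\|u_0\|_{2}^2-\tfrac{pd}{p+1}\|u_0\|_{p+1}^{p+1}\Big).
\]
Next, $\Omega$ is a nondegenerate (two-dimensional, since $d\ge3$) convex cone with its apex removed, hence path-connected. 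Finally, suppose for contradiction that $K_{\alpha,\beta}(u_0)$ vanished at some $(\alpha_*,\beta_*)\in\Omega$. Then $(\alpha_*,\beta_*)\neq(0,0)$, so the variational characterization $m_{\alpha_*,\beta_*}=J^{(0)}(Q)=m$ recalled just before the proposition (the critical analog of Proposition \ref{prop-Variational setting-1}) gives $J(u_0)\ge m_{\alpha_*,\beta_*}=m$, contradicting $J(u_0)<m$. Therefore $(\alpha,\beta)\mapsto K_{\alpha,\beta}(u_0)$ is a continuous, nowhere-vanishing real function on the connected set $\Omega$, so it has constant sign there; equivalently, $u_0$ lies in $K^{+}_{\alpha,\beta}$ (resp. $K^{-}_{\alpha,\beta}$) simultaneously for all $(\alpha,\beta)\in\Omega$. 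Combined with the trivial case, this shows $K^{+}_{\alpha,\beta}$ and $K^{-}_{\alpha,\beta}$ do not depend on $(\alpha,\beta)$, which is the claim.

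The main obstacle is not in this soft connectedness argument but in the input it consumes, namely the identity $m_{\alpha,\beta}=J^{(0)}(Q)$ for all $(\alpha,\beta)\in\Omega$. Establishing this in the energy-critical case is the delicate point: one must pass from the full static energy $J$ to the modified static energy $J^{(0)}$ and to the Aubin--Talenti extremals $W$, run the scaling flow $\lambda\mapsto e^{\alpha\lambda}\varphi(e^{-\beta\lambda}x)$ to bring an admissible $\varphi$ onto the constraint surface $K_{\alpha,\beta}=0$ while keeping $J$ under control, and handle the boundary of $\Omega$ where the quadratic part $K^{Q}_{\alpha,\beta}$ from Lemma \ref{Lem-Variational setting-3} degenerates --- precisely where $d\ge3$ and the exclusion of $(d,\alpha)=(2,0)$ are used. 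Once that characterization is available, the present proposition follows exactly as in the subcritical case (Proposition \ref{prop-Variational setting-2}).
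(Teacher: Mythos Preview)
The paper does not supply a proof of this proposition; it is stated as part of a survey section on the focusing Klein--Gordon equation, with attribution to Ibrahim--Masmoudi--Nakanishi \cite{IMN}. Your argument is correct and is precisely the standard one: linearity of $(\alpha,\beta)\mapsto K_{\alpha,\beta}(\varphi)$, connectedness of the parameter cone $\Omega$, and the variational characterization $m_{\alpha,\beta}=J^{(0)}(Q)$ to exclude zeros of $K_{\alpha,\beta}(\varphi)$ under the constraint $J(\varphi)<m$. You also correctly flag that the substantive input is the parameter-independence of $m_{\alpha,\beta}$ itself (the critical analogue of Proposition~\ref{prop-Variational setting-1}), which the paper likewise states without proof.
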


\begin{lemma}[Uniform bounds on $K$] \label{Lem-Variational setting-6}Assume that
$(\al,\beta)\in\Omega,$ then $\exists
\delta=\delta(\alpha,\beta,d,p)>0$, s.t. $\forall \varphi\in
H^1,~J(\varphi)<m$, we have
$${ K_{\alpha,\beta}\geq\min\{\bar{\mu}(m-J(\varphi)),\delta
K_{\alpha,\beta}^Q(\varphi)\},~\text{or}~
K_{\alpha,\beta}\leq-\bar{\mu}(m-J(\varphi)),}$$
where ${\bar{\mu}=\max\{2\alpha+(d-2)\beta,2\alpha+d\beta\}}$ and
 the quadratic part
 $$K_{\alpha,\beta}^Q(\varphi)=\frac{2\alpha+(d-2)\beta}{2}\|\nabla\varphi\|_2^2+\frac{2\alpha+d\beta}{2}\|\varphi\|_2^2.$$
\end{lemma}

\begin{theorem}[Focusing energy-critical NLKG, Ibrahim, Masmoudi and Nakanishi \cite{IMN}]
\label{Theorem-Variational setting-7}
 For $(\al,\beta)\in\Omega.$
\begin{enumerate}
\item If $(u(0),\dot{u}(0))\in K_{\al,\beta}^-$,
 then the solution $u$ extends {\bf neither} for $t \to \infty$ nor for $t \to-\infty$ as the unique strong
solution in $H^1\times L^2$.
\item If $(u(0),\dot{u}(0))\in K_{\al,\beta}^+$, then the solution
$u$ {\bf scatters} both in $t\to\pm\infty$ in the energy space. In other
words, $u$ is a {\bf global} solution and there are $v_\pm$ satisfying
\begin{align}
v_{tt}-\Delta v+v=&0,\\
\|(u,\dot{u})-(v_\pm,\dot{v}_\pm)\|_{H^1\times L^2}\to&
0,~\text{as}~t\to\pm\infty.
\end{align}
\end{enumerate}
\end{theorem}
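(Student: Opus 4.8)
The plan is to follow the Kenig--Merle concentration--compactness/rigidity scheme adapted to the Klein--Gordon flow, glued to the variational structure carried by the functionals $K_{\alpha,\beta}$. \textbf{Step 1 (Variational trapping).} First I would show that $K_{\alpha,\beta}^{\pm}$ are invariant under the NLKG flow. The point is Lemma \ref{Lem-Variational setting-6}: if $J(u(t))<m=J(Q)$ then $K_{\alpha,\beta}(u(t))$ is either $\ge\min\{\bar\mu(m-J(u(t))),\delta K_{\alpha,\beta}^Q(u(t))\}$ or $\le-\bar\mu(m-J(u(t)))$, so $K_{\alpha,\beta}$ cannot cross the forbidden strip along the (continuous in $H^1\times L^2$) flow, hence its sign is conserved; Proposition \ref{prop-Variational setting-5} makes this independent of $(\alpha,\beta)$. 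On $K^+$, taking $(\alpha,\beta)=(1,0)$ and combining $E=E(u_0,u_1)<m$ with $K_{1,0}(u(t))>0$ gives a uniform bound $\sup_t\|\vec u(t)\|_{H^1\times L^2}\lesssim 1$, together with a strictly positive lower bound on the distance to the ground-state family; on $K^-$ one instead has $K_{\alpha,\beta}(u(t))\le-\bar\mu(m-J(u(t)))<0$ for all $t$.

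\textbf{Step 2 (Blow-up in $K^-$).} For data in $K^-$ I would run the Payne--Sattinger/Levine convexity argument. With $y(t)$ a light-cone truncation of $\|u(t)\|_{L^2}^2$ one computes $y''(t)=2\|\dot u(t)\|_2^2-2K_{2,0}(u(t))+(\text{truncation error})$; using that $K_{2,0}$ stays negative and bounded away from $0$, one absorbs the truncation error via the repulsive/variational bounds and obtains a differential inequality of concavity type for $y^{-\sigma}$, forcing $T_\pm<\infty$. The truncation (rather than a bare $L^2$ identity) is needed because global $L^2$ control is not automatic at the critical exponent.

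\textbf{Step 3 (Scattering in $K^+$).} This is the core. I would first set up small-data scattering in the critical Strichartz space $L^{\frac{2(d+1)}{d-2}}_{t,x}$ and a long-time perturbation lemma for NLKG, then argue by contradiction: there is a threshold $E_c<m$ below which every $K^+$ solution scatters, and at $E_c$ a non-scattering solution. To produce a critical element I need a linear profile decomposition for the free Klein--Gordon group. The genuinely new feature versus the pure-wave case is the built-in length scale: profiles with scale $\lambda_n\to 0$ see NLKG as the \emph{energy-critical focusing wave equation}, while profiles with $\lambda_n\sim 1$ feel the mass and evolve by NLKG. So the decomposition must split into ``Klein--Gordon profiles'' (fixed scale, strictly smaller energy, hence scattering by the inductive hypothesis) and ``wave profiles'' (vanishing scale); for the latter the $K^+$ constraint is translated, via the modified static energy $J^{(0)}$ and Proposition \ref{prop-Variational setting-1}, into the sub-ground-state condition $\|\nabla\cdot\|_2<\|\nabla W\|_2$, so they scatter by the Kenig--Merle dichotomy Theorem \ref{thm:kmw08} and its higher-dimensional versions. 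Feeding the nonlinear profiles back, the orthogonality identities (of the type \eqref{lab9}, \eqref{lab11}) force at most one nontrivial profile, i.e. a critical element $u_c$ that is almost periodic modulo the NLKG symmetries, with scale function $N(t)$ (and, for the wave-type alternative, a Lorentz parameter handled by a modulation argument à la Kenig--Merle).

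\textbf{Step 4 (Rigidity).} Finally I would kill $u_c$. The mass term pins the scale: $N(t)\to 0$ would give extra regularity placing $u_c$ in the energy-subcritical regime, contradicting minimality, and $N(t)\to\infty$ would force energy leakage; hence $N(t)\sim 1$ and the trajectory of $\vec u_c$ is precompact in $H^1\times L^2$. A localized virial quantity $\tfrac{d}{dt}\int\chi_R\,\dot u_c\,(x\cdot\nabla u_c+\tfrac{d}{2}u_c)\,dx$ then has leading term $-K_{2,-2}(u_c(t))$ (or the appropriate $K_{\alpha,\beta}$), and by Step 1 this is bounded below by a positive constant while the tail terms vanish by compactness; integrating over $I$ forces $|I|<\infty$, i.e. finite-time blow-up, which contradicts the uniform $H^1\times L^2$ bound of the $K^+$ trapping. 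Hence $u_c\equiv 0$, a contradiction, and scattering holds on $K^+$.

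\textbf{Main obstacle.} The hard part is Step 3: constructing the two-scale (Klein--Gordon \emph{and} wave) profile decomposition and verifying that the $K^+$ constraint correctly descends to the ``below $W$'' constraint for the vanishing-scale wave profiles, so that one may invoke the energy-critical wave scattering theorem; handling the non-conservation of momentum in the profiles' reference frames (forcing an extra modulation/boost analysis) is the other delicate point.
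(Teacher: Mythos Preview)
The paper does not actually prove this theorem: it is stated as a result of Ibrahim--Masmoudi--Nakanishi, and immediately after it the authors write ``One can refer this part to the book by Nakanishi and Schlag \cite{KS10} or see Miao's lecture on nonlinear Klein-Gordon equation.'' There is therefore no in-paper proof to compare your proposal against.

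That said, your outline is an accurate reconstruction of the Ibrahim--Masmoudi--Nakanishi strategy. The variational trapping via Lemma~\ref{Lem-Variational setting-6} and Proposition~\ref{prop-Variational setting-5}, the Payne--Sattinger/Levine convexity for $K^-$, and the concentration--compactness/rigidity scheme for $K^+$ are all correct. You have also correctly identified the genuine novelty relative to the pure wave case: the two-scale profile decomposition in which profiles concentrating at scale $\lambda_n\to 0$ lose the mass and are governed by the focusing energy-critical wave equation (handled via $J^{(0)}$ and Theorem~\ref{thm:kmw08}), while profiles at scale $\sim 1$ remain Klein--Gordon and are handled inductively. This is exactly the mechanism in \cite{IMN}, and your identification of it as the main obstacle is apt. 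One small correction to your Step~4: for NLKG the symmetry group does \emph{not} include scaling, so the almost-periodic solution has no scale parameter $N(t)$ to control; compactness of the trajectory in $H^1\times L^2$ comes directly from the profile reduction, and the rigidity is a pure virial argument without the finite-time/infinite-time dichotomy of the wave case.
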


\begin{remark}
One can refer this part to the book by Nakanishi and Schlag \cite{KS10} or see Miao's lecture on nonlinear Klein-Gordon equation.
\end{remark}



\section{Proof of defocusing energy-critical NLW}\label{sec:defenecri}

\begin{theorem}\label{T:gopher} Let  $d\geq3$. Given $(u_0,u_1)\in\dot H^1(\R^d)\times L^2(\R^d)$.
Then, there is a unique global strong solution $u$ to
\begin{equation}\label{equ:nlwdef}
{\rm (NLW)}\;\;\;\;\begin{cases}
\pa_{tt}u-\Delta u+|u|^\frac4{d-2}u=0,\\
(u,\pa_tu)(0)=(u_0,u_1).
\end{cases}
\end{equation}
Moreover, the solution $u$  obeys the estimate
\begin{equation}\label{Gophergoal123}
\int_{\R}\int_{\R^d}|u(t,x)|^{\frac{2(d+1)}{d-2}}\,dx\,dt\leq C( \|(u_0,u_1)\|_{\dot H^1_x\times L^2_x} ).
\end{equation}
And so the solution scatters.

\end{theorem}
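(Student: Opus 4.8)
\noindent The plan is to follow the classical Grillakis--Shatah--Struwe strategy, exactly as sketched in the ``Outline of proof'' after Theorem~\ref{thm:defenrcri}. First I would set up the local theory: by the Strichartz estimate (Theorem~\ref{thm;stricest}) and a contraction argument in $C_t\dot H^1_x\cap C^1_tL^2_x\cap L^{q_0}_tL^{r_0}_x$ with $q_0=r_0=\frac{2(d+1)}{d-2}$, the Cauchy problem \eqref{equ:nlwdef} is locally well-posed, small data give global solutions that scatter, and any solution with finite $L^{\frac{2(d+1)}{d-2}}_{t,x}([0,T))$ norm extends past $T$ (so proving \eqref{Gophergoal123} for global solutions yields both global existence and scattering). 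Using the finite propagation speed (Lemma~\ref{fsprop}) I would then reduce, by truncation and approximation, to smooth, compactly supported data, for which it suffices to exclude finite-time blow-up.

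The heart of the argument is the non-concentration of energy. Assuming blow-up first occurs at time $T^*<\infty$ at a point $x_0$, I would work inside the truncated backward light cone with vertex $(T^*,x_0)$. Multiplying the equation by $\partial_t u$ and integrating over a spacetime slab of the cone yields the energy flux identity: the flux through the mantle between times $t_1$ and $t_2$ is a nonnegative quantity equal to the difference of the local energies $E(u(t_1);B_{T^*-t_1}(x_0))-E(u(t_2);B_{T^*-t_2}(x_0))$, so the local energy is nonincreasing, converges, and the flux tends to $0$ as $t_1,t_2\uparrow T^*$. Then, multiplying by the scaling vector field $(t-T^*)\partial_t u+(x-x_0)\cdot\nabla u+\frac{d-2}{2}u$ and integrating over the truncated cone, I would derive a localized virial (Morawetz-type) identity that controls $\iint \frac{|u|^{2d/(d-2)}}{T^*-t}$ over the cone by the flux plus boundary terms which vanish; combined with the flux decay this forces $E(u(t);B_{T^*-t}(x_0))\to 0$ as $t\uparrow T^*$.

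Next I would convert this non-concentration into global regularity. A rescaling argument around $(T^*,x_0)$ either extracts a nontrivial limiting solution of zero energy (a contradiction), or — more hands-on, in the spirit of Shatah--Struwe — one slices the cone into thin sub-cones and runs the Strichartz estimates using the smallness of the local energy as the perturbative parameter, showing $\|u\|_{L^{2(d+1)/(d-2)}_{t,x}}$ stays finite near the tip, contradicting the blow-up criterion of the local theory. Hence smooth, compactly supported data are global; a compactness/approximation argument built on the energy inequality \eqref{energy1} then upgrades this to global existence and uniqueness of the finite-energy solution for arbitrary $(u_0,u_1)\in\dot H^1\times L^2$. It then remains to establish the global spacetime bound \eqref{Gophergoal123}: in the radial case this follows directly from the Morawetz estimate (Proposition~\ref{prop:Morawetz}) together with the radial Sobolev embedding, as in the Remark after Theorem~\ref{thm:defenrcri}; in the non-radial case I would combine finite speed of propagation with the conformal (scaling) change of variables to reduce the global-in-time integrability to a Strichartz bound on a bounded spacetime region and then sum over a family of light cones. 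Finally, \eqref{Gophergoal123} together with the Duhamel formula \eqref{eq:Duhamel} and Strichartz estimates produces the scattering states with \eqref{equ:scate}.

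The main obstacle is the non-radial case of the non-concentration step and its conversion into a Strichartz bound: one needs a sharp control of the spatial localization of the energy near the tip of the light cone, precisely the difficulty flagged elsewhere in the paper (the ``sharp control in time of the space localization of the energy'' appearing in the remark after Theorem~\ref{thm:classif}). An alternative route, which the authors carry out in \cite{MZZ}, is the Kenig--Merle concentration-compactness/induction-on-energy scheme, where the analogous obstacle becomes the rigidity theorem ruling out the minimal almost-periodic ``critical element,'' again powered by the Morawetz estimate.
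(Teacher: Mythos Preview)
Your proposal is correct and follows essentially the same approach as the paper's own sketch in Section~\ref{sec:defenecri}: reduction to compactly supported smooth data via finite speed of propagation, the energy flux identity and the conformal/scaling multiplier identity (the paper's ``Basic Facts I--III'') to obtain non-concentration of the potential energy at the tip of the cone, Strichartz-based continuation to get global regularity, and then scattering by showing $\int|u(t)|^{2^*}dx\to 0$ via the conformal identity integrated over the light cone. The only cosmetic difference is the coefficient in your scaling multiplier ($\tfrac{d-2}{2}$ versus the paper's $\tfrac{d-1}{2}$), but either normalization works for the non-concentration argument.
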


\vskip 0.1in

{\bf Notation}: Let $z=(x,t)$ denote the space-time
coordinate
\begin{enumerate}
\item Cone $K(z_0)$ and cut-off of Cone $K^T_S$:
\begin{align*}
{K(z_0)} = &  \{z=(x,t) \;\big|\; |x-x_0|\leq t_0-t, 0\leq t\leq t_0\}\\
{K^T_S} = & \{z=(x,t)\;\big|\; (x,t)\in K(z_0), \;
S\leq t\leq T\}
\end{align*}

\item Lateral Surface $M^T_S(z_0)$ and $\partial K^T_S$:
\begin{align*}{M^T_S(z_0)} = &  \{z=(x,t) \;\big|\; |x-x_0| = t_0-t, S\leq t\leq T\}\\
{\partial K^T_S} = &  D_S(z_0) \cup D_T(z_0) \cup
M^T_S
\end{align*}

\item Section: ${D_t(z_0)} =  \{z=(x,t) \;\big|\; (x,t)\in K(z_0),\; t\; \text{fixed}\}$

\vskip 0.1in
\item ST Slab: ${Q^T_S} = \{z=(x,t) \;\big|\; (x,t)\in Q, S\leq t\leq
T\}$ for any $Q \subset \R^d\times\R$ and $S\leq T$
\end{enumerate}

\vskip0.15cm

For any given function $u$ on $K(z_0)$, we introduce ($2^\ast=\frac{2d}{d-2}$)

\vskip0.15cm
\begin{enumerate}
\item  {\bf Energy-Momentum density}
$$e(u)=  \left( \frac12 |u_t|^2 + \frac12 |\nabla u|^2 + \frac{1}{2^*} |u|^{2^*}, -(u_t \nabla u)\right)$$

\item {\bf Local energy}
$${E\left(u, D_T(z_0)\right)} = \int_{ D_T(z_0)} \left(\frac12 |u_t|^2 + \frac12 |\nabla u|^2 + \frac{1}{2^*} |u|^{2^*}\right) \; dx$$

\item {\bf Energy flux density}
$$dz_0(u) =\frac12 \left|\frac{y}{|y|}u_t -\nabla u\right|^2 +   \frac{1}{2^*} |u|^{2^*}, \;\; y=x-x_0$$

\item {\bf Emission energy from side $M^T_S$}
$${\rm Flux}(u, M^T_S(z_0))=   \int_{M^T_S(z_0)} dz_0(u)\; d\sigma$$

\end{enumerate}

\vskip0.1cm
\noindent\underline{\bf Energy conservative and its local form-Basic Fact I}:

\vskip0.2cm

\noindent Multiplying (NLW) with
$\partial_t u$ ({\bf generation element
of time translation invariance}),
$$\text{div}_{t,x}e(u) = \left(\frac12 |u_t|^2 + \frac12 |\nabla u|^2 + \frac{1}{2^*} |u|^{2^*}\right)_t - \text{div}(\nabla u u_t)=0,$$
then integrating on $K(z_0)$ and
$\R^d$, we can obtain
$$E(u, D_T(z_0)) + \frac{1}{\sqrt{2}} \text{Flux}(u, M^T_S(z_0)) = E(u, D_S(z_0))),$$
$$E(u,u_t) = \int_{\R^d} \left(\frac12 |u_t|^2 + \frac12 |\nabla u|^2 + \frac{1}{2^*} |u|^{2^*}\right)\; dx = E(u_0, u_1).$$

\vskip0.2cm
\noindent\underline{\bf The local form of conformal
identity- Basic Fact II}:
\vskip0.2cm

\noindent Multiplying  \eqref{equ:nlwdef} with
$(t\partial_t + x\cdot\nabla + \frac{d-1}{2})u$,
which is the generation element of the {\bf scaling transformation} $u_{\varepsilon}(t,x)\rightarrow
\varepsilon^{\frac{d-1}{2}}u(\varepsilon t, \varepsilon x)$, we can obtain
\begin{align}
 & \text{div}_{t,x} \left(tQ_0 + \frac{d-1}{2} u_t u, -tP_0\right) +
R_0 =0 \label{add3-1}\\
&\text{or}\nonumber\\
 & \partial_t (tQ_0 +\frac{d-1}{2}u_t u) -
\text{div}(tP_0) + R_0 =0\label{add3-2}
\end{align}
where
 \begin{align*}
Q_0 = & \frac12 |u_t|^2 + \frac12 |\nabla u|^2 + \frac{1}{2^*} |u|^{2^*} + u_t (\frac{x}{t} \cdot \nabla u),\quad R_0 =\frac{|u|^{2^*}}{d}\\
P_0 = & \frac{x}{t}\left(\frac{|u_t|^2 - |\nabla u|^2}{2} -
\frac{|u|^{2^*}}{2^*}\right) + \nabla u \left(u_t + \frac{x}{t}\cdot
\nabla u + \frac{d-1}{2} \frac{u}{t}\right).
\end{align*}

\vskip0.15cm
\noindent\underline{\bf Non concentration of potential energy-Basic Fact III}:
\vskip0.15cm

 If {$u(t,x)$} is
the regularized solution on {$K(z_0)\backslash
\{z_0\}$} ($u\in C^2((0, t_0)\times \R^d) $), then we have
\begin{align*}
\lim_{s\rightarrow t_0} \int_{D_S(z_0)} |u|^{2^*}\; dx =0.
\end{align*}

\noindent\underline{{\bf The  sketch of proof on existence of global smooth solution}}:
\vskip 0.2cm

{\bf Step 1}: By the {\bf finite propagation
speed},  we can  induce the above problem  to the problem
with {\bf compact supported data} $(u_0(x), u_1(x))$.
\vskip 0.15cm

{\bf Step 2}: {\bf Nonlinear estimate}
\begin{align*}
\big\||u|^{2^*-2}u\big\|_{L^{q'}\left([s, \tau];
B^{1/2}_{q'}(D_t(z_0))\right)} \leq \sup_{t\in [s,\tau]}
\big\|u\big\|^{\frac{2^*-2}{d-1}}_{L^{2^*}\left(D_t(z_0)\right)}
\big\|u\big\|^{1+\frac{d-2}{d-1}(2^*-2)}_{L^q\left([s,\tau];
B^{1/2}_{q}(D_t(z_0))\right)}.
\end{align*}
This estimate together with the Strichartz estimates implies that
\begin{align*}
\big\|u\big\|_{L^q\left([s,\tau]; \dot B^{1/2}_{q}(D_t(z_0))\right)}
\leq C(E_0) + \varepsilon \big\|u\big\|^{\gamma}_{L^q\left([s,\tau];
\dot B^{1/2}_{q}(D_t(z_0))\right)},
\end{align*}
where
$$q=\frac{2(d+1)}{d-1}, \quad \gamma>1.$$

\vskip0.15cm
 {\bf Step 3: Continuous argument} (Compactness $\Longrightarrow$ GWP)
\vskip 0.2cm

 By the {compactness} and the {finite propagation speed}, it suffices to
 show that $u(t,x)$ is smooth on the
 neighborhood region of $(t_0, x_0)$.

\vskip0.2cm

\underline{\bf The proof of scattering result}
\vskip0.2cm

{\bf Step 1: Induction}. Given $(u,u_t)\in C(\R;\dot H^1)\times C(\R;L^2(\R^d))$, it suffices to show that
 \begin{align*}
(u,u_t)\in L^q(\R;\dot B^{\frac{1}{2}}_q(\R^d))\times L^q(\R;\dot
B^{-\frac{1}{2}}_q(\R^d)).
\end{align*}
By  LWP thery, we know that
\begin{align*}
(u,u_t)\in L^q_{loc}(\R;\dot B^{\frac{1}{2}}_q(\R^d))\times
L^q_{loc}(\R;\dot B^{-\frac{1}{2}}_q(\R^d)),
\end{align*}
it suffices to show for sufficiently large $T_0>0$
\begin{align}
(u,u_t)\in L^q( {[T_0,\infty)};\dot
B^{\frac{1}{2}}_q(\R^d))\times L^q(
{[T_0,\infty)};\dot B^{-\frac{1}{2}}_q(\R^d)).
\label{star1}\end{align}

\vskip0.2cm

{\bf Step 2}: We can further reduce
\eqref{star1} in essence to prove
\begin{align}
\lim_{t\rightarrow +\infty} g(t)\triangleq \frac{1}{2^*}\int_{\R^d}
|u|^{2^*}dx=0. \label{star2}\end{align}

In fact, by {Strichartz estimate} and
{nonlinear estimates}, we have
\begin{align*}
\|u\|_{L^q([T_0,T];\dot B^{\frac{1}{2}}_q) }
& \lesssim E(u_0,u_1)^\frac{1}{2}+  {\sup_{T_0\le t\le T}\|u\|_{2^*}^{(1-\alpha)(2^*-2)}}\|u\|^{1+\alpha(2^*-2)}_{L^q([T_0,T];\dot B^{\frac{1}{2}}_q)} \\
& \lesssim
 E(u_0,u_1)^\frac{1}{2}+ {\varepsilon_0^{(1-\alpha)(2^*-2)}}\|u\|^{1+\alpha(2^*-2)}_{L^q([T_0,T];\dot
B^{\frac{1}{2}}_q)},
\end{align*}
where $\alpha=\frac{d-2}{d-1}$. By the  {continuous
argument}, we can show that
\begin{align*}
\|u\|_{L^q([T_0,T];\dot B^{\frac{1}{2}}_q(\R^d))}\le
2CE(u_0,u_1)^\frac{1}{2} \Rightarrow \|u\|_{L^q([T_0,\infty);\dot
B^{\frac{1}{2}}_q(\R^d))} < \infty
\end{align*}

\vskip0.2cm

 {\bf Step 3}: Proof of  \eqref{star2} ( {FPS} and  {conformal scaling
transform.}). \vskip 0.15cm

 First of all, for sufficiently large $R>R_0$, we have
\begin{align*}
\int_{|x|\ge R}e(u_0,u_1)dx \le \frac{\varepsilon_0}{8}, \quad
e(u,u_t)=\frac{1}{2}|\nabla u|^2+\frac{1}{2}|
u_t|^2+\frac{1}{2^*}|u|^{2^*}.
\end{align*}
Now we integrate the following identity
\begin{align*}
\partial_t(e(u,u_t))-\nabla\cdot(\nabla uu_t)=0,
\end{align*}
over the outside of the forward light cone, and obtain
\begin{align*}
\int_{{|x|\ge
R+t}}e(u,u_t)dx+\frac{1}{\sqrt{2}}Flux(u,M_0^t) =\int_{
{|x|\ge R}}e(u_0,u_1)dx \le \frac{\varepsilon_0}{8},
\end{align*}
where
\begin{align*}
Flux(u,
M_0^t)=\int_{M_0^t}\left(\frac{1}{2}\big|\frac{x}{|x|}u_t+\nabla
u\big|^2+\frac{1}{2^*}|u|^{2^*}\right)d\sigma\ge0,
\end{align*}
 which implies that
\begin{align*}
\frac{1}{2^*}\int_{ { |x|\ge
R+t}}|u(t,x)|^{2^*}dx\le\frac{\varepsilon_0}{4}.
\end{align*}
Now it suffices to show that for any $t\geq T_0$
\begin{align*}
\frac{1}{2^*}\int_{ { |x|\le
R+t}}|u(t,x)|^{2^*}dx\le\frac{\varepsilon_0}{4}.
\end{align*}
By the  {\bf time translation invariance} ($t\rightarrow
t+R$), we only need to show that for any $t\ge T_0$
\begin{align}
\frac{1}{2^*}\int_{ { |x|\le
t}}|u(t,x)|^{2^*}dx\le\frac{\varepsilon_0}{2}.
\label{star3}\end{align}
 Integrating  the local form of conformal
identity \eqref{add3-2} over the cone
$M^T_{\varepsilon T}$, we can obtain
\eqref{star3}.



\section{Proof of sacttering/blowup dichotomy (Theorem \ref{thm:kmw08})}\label{sec:focener}

For this section, we refer to Kenig's lecture notes \cite{Kenig01,Kenig02} and Kenig-Merle \cite{KM1}.
To describe the proof of Theorem \ref{thm:kmw08}, we start out by a brief  review of the local Cauchy problem.
To do this, we introduce some definitions and background materials.
\begin{definition}[Strong solution]\label{def: def1.1}  Let $I$ be a nonempty time interval including $t_0$.
 A function $u:~I\times\R^d\to\R$ is a strong solution to problem \eqref{equ:nlwecrifoc},
if
$$(u,u_t)\in C_t^0(J;\dot H^1(\R^d)\times\dot L^2(\R^d), \;\text{and}\; u\in
L_{t,x}^\frac{2(d+1)}{d-2}(J\times\R^d)$$
for any compact $J\subset
I$ and for each $t\in I$ such that
\begin{equation}\label{duhamel}
{u(t)\choose \dot{u}(t)} = V_0(t-t_0){u_0(x) \choose u_1(x)}
-\int^{t}_{t_0}V_0(t-s){0 \choose f(u(s))} \mathrm{d}s,
\end{equation}
where
\begin{equation}\label{v0tdefin}
V_0(t) = {\dot{K}(t)\quad K(t) \choose \ddot{K}(t)\quad \dot{K}(t)},
\quad K(t)=\frac{\sin(t\omega)}{\omega},\quad
\omega=\big(-\Delta\big)^{1/2},
\end{equation} and the dot denotes the time derivative.
 We refer to the interval $I$ as
the lifespan of $u$. We say that $u$ is a maximal-lifespan solution
if the solution cannot be extended to any strictly larger interval.
We say that $u$ is a global solution if $I=\R.$
\end{definition}

The solution lies in the space $L_{t,x}^\frac{2(d+1)}{d-2}(I\times\R^d)$ locally in time is a natural fact
since by Strichartz estimate,
the linear flow always lies in this space.  We define the scattering
size of a solution $u$ to problem \eqref{equ:nlwecrifoc} on a time interval
$I$ by
\begin{equation}\label{scattersize1.1}
 \|u\|_{S(I)}:=\|u\|_{
L_{t,x}^\frac{2(d+1)}{d-2}(I\times\R^d)}.
\end{equation}
Standard arguments show that if a solution $u$ of problem \eqref{equ:nlwecrifoc} is
global, with $S_{\R}(u)<+\infty$, then it scatters, i.e.  there
exist unique
$(v_0^\pm,v_1^\pm)\in\dot H^1(\R^d)\times L^2(\R^d)$ such
that
\begin{equation}\label{1.2.1}
\bigg\|{u(t)\choose \dot{u}(t)}-V_0(t){v_0^\pm\choose
v_1^\pm}\bigg\|_{\dot H^1(\R^d)\times L^2(\R^d)}
\longrightarrow 0,\quad \text{as}\quad t\longrightarrow \pm\infty.
\end{equation}

\vskip0.1cm

 The notion closely associated with
scattering is the definition of blowup:
\begin{definition}[Blowup]\label{def1.2.1} We call that a  maximal-lifespan solution
$u:I\times\R^d\to\mathbb{R}$ of problem \eqref{equ:nlwecrifoc} blows up forward in time if there
exists a time $\tilde{t}_0\in I$ such that $\|u\|_{S([\tilde{t}_0,\sup
I))}=+\infty$. Similarly, $u(t,x)$ blows up backward in time if there
exists a time $\tilde t_0\in I$ such that
$\|u\|_{S(\inf I,\tilde{t}_0]}=+\infty$.
\end{definition}

\subsection{Local Cauchy problem}

 We first consider
the associated linear problem,
\begin{equation} \label{linearwave}
\left\{ \aligned
    &w_{tt}-\Delta w =h,  \\
    &(w(0),\pa_tw(0))=(w_0,w_1)\in\dot H^1(\R^d)\times L^2(\R^d).
\endaligned
\right.
\end{equation}

 As is well known, the solution is given by
 \begin{align*}
w(x,t)=&\cos(t\sqrt{-\Delta})w_0+\frac{\sin(t\sqrt{-\Delta})}{\sqrt{-\Delta}}w_1
 +\int_0^t\frac{\sin((t-s)\sqrt{-\Delta})}{\sqrt{-\Delta}}h(s)ds\\
\triangleq& S(t)(w_0,w_1)+\int_0^t\frac{\sin((t-s)\sqrt{-\Delta})}{\sqrt{-\Delta}}h(s)ds.
 \end{align*}

\underline{\bf The normal Strichartz norms}
\vskip0.12cm
\begin{equation}\label{equ:siwi}
\|f\|_{S(I)}:=\|f\|_{L_{t,x}^\frac{2(d+1)}{d-2}(I\times\R^d)},\;\;
~\|f\|_{W(I)}:=\|f\|_{L_{t,x}^\frac{2(d+1)}{d-1}(I\times\R^d)}.
\end{equation}

\begin{theorem}[Strichartz estimates, \cite{GiV95}] We have
\begin{align}
&\sup_{t}\|(w(t),\pa_tw(t))\|_{\dot H^1\times L^2}+\|D^\frac12w\|_{W(\R)}+\|\pa_tD^{-\frac12}w\|_{W(\R)}\nonumber\\
&+\|w\|_{S(\R)}+\|w\|_{L_t^\frac{d+2}{d-2}L_x^\frac{2(d+2)}{d-2}}\nonumber\\
\leq&C\|(w_0,w_1)\|_{\dot H^1\times L^2}+C\|D^\frac12h\|_{L_{t,x}^\frac{2(d+1)}{d+3}}\label{Strichartz-one}
\end{align}
\end{theorem}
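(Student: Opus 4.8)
The plan is to deduce every estimate on the left-hand side from the abstract $TT^\ast$ (Keel--Tao) machinery applied to the half-wave propagator $e^{\pm it|\nabla|}$, fed by the dispersive bound already recorded in Lemma~\ref{lem:semidises}. The first step is to work one dyadic frequency block at a time: for the unit-frequency piece, the semiclassical estimate \eqref{equ:wavedis} with $h\sim 1$ gives the fixed-time decay $\|e^{it|\nabla|}\Delta_0 f\|_{L^\infty_x}\lesssim|t|^{-(d-1)/2}\|\Delta_0 f\|_{L^1_x}$, which interpolated with the trivial $L^2\to L^2$ bound yields $\|e^{it|\nabla|}\Delta_0 f\|_{L^r_x}\lesssim|t|^{-(d-1)(1/2-1/r)}\|f\|_{L^{r'}_x}$ for $2\le r\le\infty$. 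Together with unitarity of $e^{it|\nabla|}$ on $L^2$, this is exactly the decay hypothesis of the Keel--Tao theorem, so for every wave-admissible pair $(q,r)$ with $q,r<\infty$ one gets $\|e^{it|\nabla|}\Delta_0 f\|_{L^q_tL^r_x}\lesssim\|f\|_{L^2_x}$; rescaling gives the same for $\Delta_j$ with the natural power of $2^j$ on the right.

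The second step is to remove the frequency cutoff and to distribute derivatives correctly. Since all target exponents obey $2\le q,r<\infty$, the Littlewood--Paley square-function characterization of $L^r_x$ together with Minkowski's inequality in $\ell^2_j$ permits summing the dyadic pieces. Writing the free solution as $w=\cos(t|\nabla|)w_0+|\nabla|^{-1}\sin(t|\nabla|)w_1$ and matching the number of derivatives to the scaling-gap condition \eqref{equ:sgapcon}: the pairs defining $\|w\|_{S(\R)}$ and $\|w\|_{L^{(d+2)/(d-2)}_tL^{2(d+2)/(d-2)}_x}$ are both $\dot H^1$-admissible, so these norms are controlled by $\|(w_0,w_1)\|_{\dot H^1\times L^2}$; the pair defining $W$ is $\dot H^{1/2}$-admissible, so $\|D^{1/2}w\|_{W(\R)}+\|\pa_tD^{-1/2}w\|_{W(\R)}$ is controlled by the same norm; and $\sup_t\|(w(t),\pa_tw(t))\|_{\dot H^1\times L^2}$ is just conservation of energy for the linear flow.

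For the inhomogeneous term I would use Duhamel's formula and split into the \emph{untruncated} retarded estimate and the Christ--Kiselev correction. Composing the homogeneous estimate of the first step with its adjoint (now matching the ``double'' scaling-gap condition in \eqref{equ:sgapcon}) gives $\big\|\int_{\R}\tfrac{\sin((t-s)|\nabla|)}{|\nabla|}h(s)\,ds\big\|_X\lesssim\|D^{1/2}h\|_{L^{2(d+1)/(d+3)}_{t,x}}$ for $X$ any of the five norm spaces, since $L^{2(d+1)/(d+3)}_{t,x}$ is precisely the dual of the $W$-pair space up to the half-derivative. The retarded integral $\int_0^t$ is then recovered from the untruncated one by the Christ--Kiselev lemma, applicable because every exponent here is strictly off the endpoint (in particular $(q,r,d)\ne(2,\infty,3)$ and $q>q'$ strictly in each relevant pair). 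The main obstacle, I expect, is not any single estimate but the bookkeeping of the second step: one must verify that each of the five left-hand norms really does sit at an admissible pair of the correct regularity, and carry the Littlewood--Paley summation consistently across the several regularity levels involved, keeping the derivative counts aligned with \eqref{equ:sgapcon} for all terms simultaneously.
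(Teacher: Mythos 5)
The paper does not actually prove this theorem: it is cited to Ginibre--Velo \cite{GiV95} (and it is the same Lemma 2.1 of Kenig--Merle), so there is no ``paper proof'' to compare against. Evaluating your proposal on its own, the homogeneous half is sound and is indeed the standard modern route: the semiclassical dispersive bound of Lemma~\ref{lem:semidises}, unitarity on $L^2$, the $TT^*$ machinery per dyadic block, and Littlewood--Paley summation (valid here because every spatial exponent involved is in $[2,\infty)$). Your identification of the regularity levels is also correct: $S(\R)$ and $L_t^{\frac{d+2}{d-2}}L_x^{\frac{2(d+2)}{d-2}}$ sit at $\dot H^1$-scaling, the $W$-norms with $D^{\pm1/2}$ sit at $\dot H^{1/2}$-scaling, and $\sup_t\|(w,\pa_tw)\|_{\dot H^1\times L^2}$ is energy conservation.

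The gap is in the inhomogeneous step. You write that Christ--Kiselev applies ``because \ldots $q>q'$ strictly in each relevant pair,'' but the relevant Christ--Kiselev condition for passing from the untruncated operator to $\int_0^t$ is $q>\tilde q'$, where $\tilde q'=\tfrac{2(d+1)}{d+3}$ is the time exponent of the forcing. For the target $L_t^{\frac{d+2}{d-2}}L_x^{\frac{2(d+2)}{d-2}}$ this reads $\tfrac{d+2}{d-2}>\tfrac{2(d+1)}{d+3}$, equivalently $d^2-7d-10<0$, which holds for $3\le d\le 8$ but \emph{fails} for $d\ge 9$. (If you meant the weaker reading $q>2$, it fails already for $d\ge 7$, since $q=\tfrac{d+2}{d-2}\le 2$ there.) So your argument does not close the retarded estimate for that target pair in high dimensions. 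The estimate is nonetheless true in all $d\ge 3$, but it requires a genuinely different argument for the retarded operator in that regime --- e.g.\ the direct Besov-kernel estimates of Ginibre--Velo, or the non-diagonal inhomogeneous Strichartz theory (Foschi), neither of which reduces to Christ--Kiselev. As long as you restrict to $3\le d\le 8$ (and in particular $d\in\{3,4,5\}$, the range actually used in the Kenig--Merle argument the paper reproduces), your proposal is a complete and correct sketch; for larger $d$ the last step needs to be replaced.
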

Because of the appearance of $D^\frac12$ in these estimates, we also need to use the
following version of the chain rule for fractional derivatives (see \cite{KPV93}).

\begin{lemma}[Chain rule for fractional derivatives]\label{lem:frderi}
Assume $F\in C^2,~F(0)=F'(0)=0,$ and that for all $a,b$, we have
\begin{align*}
|F'(a+b)|\leq C(|F'(a)|+|F'(b)|),~|F''(a+b)|\leq C(|F''(a)|+|F''(b)|).
\end{align*}
Then, for
$$\al\in(0,1),~\frac1p=\frac1{p_1}+\frac1{p_2}=\frac1{r_1}+\frac1{r_2}+\frac1{r_3},$$
we have
\begin{align}\|D^\al F(u)\|_{L^p}\leq C\|F'(u)\|_{L^{p_1}}\|D^\al u\|_{L^{p_2}},\label{Strichartz-two}
\end{align}
\begin{align}
&\|D^\al(F(u)-F(v))\|_{L^p}\leq C\big(\|F'(u)\|_{L^{p_1}}+\|F'(v)\|_{L^{p_1}}\big)\|D^\al(u-v)\|_{L^{p_2}}\nonumber\\
&+C\big(\|F''(u)\|_{L^{r_1}}+\|F''(v)\|_{L^{r_1}}\big)\cdot(\|D^\al u\|_{L^{r_2}}+\|D^\al v\|_{L^{r_2}})\|u-v\|_{L^{r_3}}.\label{Strichartz-three}
\end{align}
\end{lemma}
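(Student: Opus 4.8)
The plan is to prove \eqref{Strichartz-two} by a Littlewood--Paley / square-function argument in the spirit of Christ--Weinstein and Kenig--Ponce--Vega \cite{KPV93}, and then to deduce the difference estimate \eqref{Strichartz-three} from it together with a fractional Leibniz rule.

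\emph{Step 1: localization and the mean value theorem.} For $0<\al<1$ and $1<q<\infty$ one has the square-function characterization $\|D^\al g\|_{L^q}\simeq\big\|\big(\sum_{k\in\Z}2^{2k\al}|\Delta_k g|^2\big)^{1/2}\big\|_{L^q}$, and, since the Littlewood--Paley kernels have vanishing integral, $\Delta_k g(x)=\int\psi_k(x-y)\big(g(y)-g(x)\big)\,dy$ with $\psi_k(z)=2^{kd}\psi(2^kz)$, $\psi$ Schwartz. Taking $g=F(u)$ and applying the mean value theorem,
\[
\abs{F(u(y))-F(u(x))}\le\Big(\sup_{\theta\in[0,1]}\abs{F'\big(u(x)+\theta(u(y)-u(x))\big)}\Big)\abs{u(x)-u(y)}\,.
\]
The structural hypothesis $\abs{F'(a+b)}\le C(\abs{F'(a)}+\abs{F'(b)})$, together with the homogeneity-type comparison $\abs{F'(\lambda v)}\le C\abs{F'(v)}$ for $\lambda\in[0,1]$ available for the power nonlinearities of interest, bounds the supremum by $C\big(\abs{F'(u(x))}+\abs{F'(u(y))}\big)$; hence
\[
\abs{\Delta_k F(u)(x)}\lesssim\int\abs{\psi_k(x-y)}\big(\abs{F'(u(x))}+\abs{F'(u(y))}\big)\abs{u(x)-u(y)}\,dy\,.
\]

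\emph{Step 2: the diagonal and off-diagonal contributions.} Split the right-hand side into the \emph{diagonal} piece carrying $\abs{F'(u(x))}$ and the \emph{off-diagonal} piece carrying $\abs{F'(u(y))}$. In the diagonal piece, $\abs{F'(u(x))}$ factors out of the $y$-integral and the $k$-sum, and the remaining square function is again a square-function characterization of $D^\al u$; taking $L^p$ norms and H\"older's inequality with $\frac1p=\frac1{p_1}+\frac1{p_2}$ produces $\|F'(u)\|_{L^{p_1}}\|D^\al u\|_{L^{p_2}}$. For the off-diagonal piece one introduces an auxiliary exponent $\rho>1$, applies H\"older in $y$ inside the integral, and arrives at a pointwise bound of the form $\big(M\abs{F'(u)}^{\rho}\big)^{1/\rho}(x)$ times (a square function of $u$), where $M$ is the Hardy--Littlewood maximal operator; the Fefferman--Stein vector-valued maximal inequality---valid because $\rho$ may be taken close to $1$ and $1<p_1,p_2<\infty$---then again yields $\|F'(u)\|_{L^{p_1}}\|D^\al u\|_{L^{p_2}}$. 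Combining the two contributions gives \eqref{Strichartz-two}.

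\emph{Step 3: the difference estimate.} Write $F(u)-F(v)=(u-v)\,g$, where $g:=\int_0^1F'\big(\theta u+(1-\theta)v\big)\,d\theta$, and apply the fractional Leibniz rule (proved by the same paraproduct/Littlewood--Paley method)
\[
\|D^\al(fg)\|_{L^p}\lesssim\|D^\al f\|_{L^{p_2}}\|g\|_{L^{p_1}}+\|f\|_{L^{r_3}}\|D^\al g\|_{L^{q}},\qquad\tfrac1q=\tfrac1{r_1}+\tfrac1{r_2},
\]
with $f=u-v$. The quasi-subadditivity of $F'$ gives $\|g\|_{L^{p_1}}\le C(\|F'(u)\|_{L^{p_1}}+\|F'(v)\|_{L^{p_1}})$. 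For $\|D^\al g\|_{L^q}$, differentiate under the $\theta$-integral and apply \eqref{Strichartz-two} to the function $F'$---which satisfies the hypotheses of the lemma thanks to the assumed bound on $F''$---at the argument $\theta u+(1-\theta)v$; using $\abs{F''(\theta u+(1-\theta)v)}\lesssim\abs{F''(u)}+\abs{F''(v)}$, the linearity of $D^\al$, and integrating in $\theta$ gives $\|D^\al g\|_{L^q}\lesssim(\|F''(u)\|_{L^{r_1}}+\|F''(v)\|_{L^{r_1}})(\|D^\al u\|_{L^{r_2}}+\|D^\al v\|_{L^{r_2}})$. Combining the two contributions yields \eqref{Strichartz-three}.

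The only genuinely analytic point---and the main obstacle---is the off-diagonal term in Step 2: the naive pointwise estimate there loses, and one must run the Fefferman--Stein vector-valued maximal inequality with a carefully chosen auxiliary exponent, which is precisely where the hypothesis $1<p,p_1,p_2<\infty$ is consumed. Everything else is bookkeeping with the mean value theorem and the structural assumptions on $F'$ and $F''$.
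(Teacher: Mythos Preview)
The paper does not give its own proof of this lemma; it merely states the result and refers the reader to Kenig--Ponce--Vega \cite{KPV93}. Your sketch is precisely the Christ--Weinstein / KPV argument that reference contains, so in spirit it matches exactly what the paper invokes.

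One point worth flagging: in Step~1 you need $\sup_{\theta\in[0,1]}|F'((1-\theta)u(x)+\theta u(y))|\lesssim |F'(u(x))|+|F'(u(y))|$, and you obtain this by combining the quasi-subadditivity hypothesis with an additional monotonicity bound $|F'(\lambda v)|\le C|F'(v)|$ for $|\lambda|\le 1$. As you note, the latter is \emph{not} a consequence of the stated hypotheses (indeed, quasi-subadditivity plus $F'(0)=0$ gives the reverse inequality $|F'(v)|\le 2C|F'(v/2)|$). You are right that it holds for the power nonlinearities $F(u)=|u|^{p-1}u$ actually used in the paper, so the argument is fine for the intended application; but as a proof of the lemma exactly as stated it has this small gap. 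The same issue recurs in Step~3 when you bound $\|F''(\theta u+(1-\theta)v)\|_{L^{r_1}}$. In the KPV paper the hypotheses are formulated slightly differently so that this step goes through cleanly; here the lemma is stated informally with the power-type case in mind, and your caveat is the honest way to handle it.
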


\noindent \underline{\bf Local theory}\;  Using these estimates, we obtain
\begin{theorem}[Local theory, \cite{GiV95,Kapi94,KM1}]\label{thm:local}
 Assume $$(u_0,u_1)\in\dot{H}^1\times L^2,~\|(u_0,u_1)\|_{\dot H^1\times L^2}\leq A.$$
  Then, there exists $\delta=\delta(A)$ such that if
$$\|S(t)(u_0,u_1)\|_{S(I)}\leq \delta,$$
there exists a unique solution to \eqref{equ:nlwecrifoc} in $I\times\R^d$ s.t. $(u,\pa_t)\in C(I;\dot H^1\times L^2)$, and
\begin{equation}
\|D^\frac12u(t)\|_{W(I)}+\|\pa_tD^{-\frac12} u\|_{W(I)}\leq C(A),~\text{and}~\|u\|_{S(I)}\leq 2\delta.
\end{equation}
Moreover, $(u_0,u_1)\mapsto~(u,\pa_tu)\in C(I;\dot H^1\times L^2)$ is Lipschitz.
\end{theorem}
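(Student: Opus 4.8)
The plan is to build $u$ as a fixed point of the Duhamel map
$$\Phi(u)(t)\triangleq S(t)(u_0,u_1)-\int_0^t\frac{\sin\big((t-s)\sqrt{-\Delta}\big)}{\sqrt{-\Delta}}\,f(u(s))\,ds,\qquad f(u)=|u|^{\frac4{d-2}}u,$$
inside the complete metric space
$$B\triangleq\Big\{u:\ \|u\|_{S(I)}\le 2\delta,\ \|D^{\frac12}u\|_{W(I)}+\|\pa_tD^{-\frac12}u\|_{W(I)}\le C_0A\Big\},$$
equipped with the distance $\rho(u,v)=\|u-v\|_{S(I)}+\|D^{\frac12}(u-v)\|_{W(I)}+\|\pa_tD^{-\frac12}(u-v)\|_{W(I)}$, where $C_0$ is a fixed large multiple of the Strichartz constant of \eqref{Strichartz-one}. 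The free evolution is controlled directly: the hypothesis gives $\|S(t)(u_0,u_1)\|_{S(I)}\le\delta$, and \eqref{Strichartz-one} bounds the remaining linear norms of $S(t)(u_0,u_1)$ by $\tfrac12 C_0A$.

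The heart of the matter is the nonlinear estimate. Applying \eqref{Strichartz-one} to the Duhamel term, then the fractional chain rule \eqref{Strichartz-two} (with $F=f$, so $F'(u)\sim|u|^{\frac4{d-2}}$) together with H\"older in spacetime using the exponents $p_1=\frac{d+1}2$ and $p_2=\frac{2(d+1)}{d-1}$, which satisfy $\frac1{p_1}+\frac1{p_2}=\frac{d+3}{2(d+1)}$ — exactly the reciprocal of the dual Strichartz exponent $\frac{2(d+1)}{d+3}$ — one obtains
$$\Big\|D^{\frac12}f(u)\Big\|_{L_{t,x}^{\frac{2(d+1)}{d+3}}}\lesssim \|u\|_{S(I)}^{\frac4{d-2}}\,\|D^{\frac12}u\|_{W(I)}.$$
Hence $\|\Phi(u)\|_{S(I)}\le\delta+C(2\delta)^{\frac4{d-2}}C_0A$ and the linear Strichartz norms of $\Phi(u)$ are at most $\tfrac12 C_0A+C(2\delta)^{\frac4{d-2}}C_0A$; choosing $\delta=\delta(A)$ small (this uses $\frac4{d-2}>1$, valid in the range $3\le d\le5$) makes $\Phi$ map $B$ into itself. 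For the contraction one argues the same way with the difference estimate \eqref{Strichartz-three}, which is available because for $3\le d\le5$ the nonlinearity $f$ is twice differentiable; with the analogous choice of exponents this yields $\rho(\Phi(u),\Phi(v))\le C\delta^{\frac4{d-2}}\rho(u,v)$, a strict contraction after shrinking $\delta$ once more. The Banach fixed point theorem then produces a unique $u\in B$ solving \eqref{duhamel}, with $\|u\|_{S(I)}\le2\delta$ and $\|D^{\frac12}u\|_{W(I)}+\|\pa_tD^{-\frac12}u\|_{W(I)}\le C(A)$.

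The remaining assertions follow routinely from these bounds. The continuity $(u,\pa_tu)\in C(I;\dot H^1\times L^2)$ comes from the Duhamel representation \eqref{duhamel}: $t\mapsto V_0(t-t_0)(u_0,u_1)$ is strongly continuous into $\dot H^1\times L^2$, and the Duhamel integral is continuous in time because $D^{\frac12}f(u)\in L_{t,x}^{2(d+1)/(d+3)}$ over every compact subinterval, with norm tending to $0$ on shrinking intervals by dominated convergence. Uniqueness in the full class $C(I;\dot H^1\times L^2)\cap L_{t,x}^{2(d+1)/(d-2)}(I\times\R^d)$, rather than merely inside $B$, is obtained by partitioning $I$ into finitely many subintervals on each of which $\|u\|_{S(\cdot)}$ lies below the contraction threshold, applying the local uniqueness on each piece, and gluing. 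Finally, the Lipschitz dependence on the data is read off from the very same difference estimates: for two data pairs both satisfying the smallness hypothesis, $\rho(u,\tilde u)\le C(A)\,\|(u_0-\tilde u_0,u_1-\tilde u_1)\|_{\dot H^1\times L^2}$, so in particular the solution map into $C(I;\dot H^1\times L^2)$ is Lipschitz.

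The only real effort is the exponent bookkeeping — verifying that $p_1,p_2$ above are admissible inputs to \eqref{Strichartz-two}--\eqref{Strichartz-three} and that the output is precisely the dual Strichartz norm occurring in \eqref{Strichartz-one}; once this arithmetic is set up, both the self-mapping and the contraction are immediate. The one genuinely delicate point, invisible in the range $3\le d\le5$ treated here but relevant if one wanted the statement for $d\ge7$, is that then $\frac4{d-2}<1$ so $f$ is only H\"older continuous and $F\notin C^2$; the difference estimate \eqref{Strichartz-three} must be replaced by a direct bound $|f(u)-f(v)|\lesssim\big(|u|^{\frac4{d-2}}+|v|^{\frac4{d-2}}\big)|u-v|$ distributed carefully among Strichartz norms, possibly with exotic Strichartz pairs. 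This does not arise for the dimensions under consideration, so the argument above is complete as stated.
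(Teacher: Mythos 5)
Your proof is correct and follows the same contraction-mapping strategy that the paper implicitly sets up by presenting the Strichartz estimate \eqref{Strichartz-one} and the fractional chain rule of Lemma~\ref{lem:frderi} immediately before the theorem: you apply these with the exponents $p_1=\tfrac{d+1}{2}$, $p_2=\tfrac{2(d+1)}{d-1}$ to close the self-map and contraction estimates in the $S(I)$ and $W(I)$ norms, and the exponent bookkeeping checks out. Your observation that the direct argument closes only for $3\le d\le 5$ (where $\tfrac4{d-2}>1$ and $f\in C^2$) is consistent with the fact that Theorem~\ref{thm:kmw08}, for which this local theory is used, is stated precisely in that range.
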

\vskip0.15cm

\begin{remark}\label{remark-local-theory-one}
{\rm (i)} ({\bf Small data theory}) From Strichartz estimate and Sobolev embedding, $\exists~\tilde\delta$, s.t.
 if $\|(u_0,u_1)\|_{\dot H^1\times L^2}\leq \tilde\delta$, the hypothesis of Theorem \ref{thm:local} holds for $I=\R$.
 Moreover, $u$ scatters in the following sense: there exists $(u_0^{\pm},u_1^\pm)\in \dot H^1\times L^2$ such that
$$\lim_{t\to\pm\infty}\|(u(t),\pa_tu(t))-S(t)(u_0^\pm,u_1^\pm)\|_{\dot H^1\times L^2}=0.$$

\vskip0.15cm
{\rm (ii)} ({\bf Maximal life interval}) It is easy to see that given $(u_0,u_1)\in\dot{H}^1\times L^2$, there exists time interval $I$ such that
$\|S(t)(u_0,u_1)\|_{S(I)}\leq\delta,$ so the Theorem \ref{thm:local} applies on $I$.
Moreover,  here exists a maximal interval $I^*=(-T_-(u_0),T_+(u_0))$ where
$$u\in C(I';\dot H^1)\cap\big\{\nabla u\in S(I')\big\}~\text{of all}~I'\subset\subset I^*~\text{is defined}.$$
We call $I^*$ the maximal interval of existence.

\vskip0.15cm

{\rm (iii)} ({\bf Blowup criterion}) If $T_+(u_0)<+\infty$, we must have
\begin{equation}\label{nlsblowup}
\|u\|_{S[0,T_+(u_0,u_1))}=+\infty.
\end{equation}

\vskip0.15cm

{\rm (iv)}({\bf Energy conservation}) For $t\in I^*$, we have
$$E(u_0,u_1)\triangleq\frac12\int\big(|\nabla u_0|^2+u_1^2\big)dx-\frac1{2^\ast}\int|u_0|^{2^\ast}dx=E(u(t),\pa_tu(t)).$$

\vskip0.15cm

{\rm (v)} ({\bf Momentum conservation}) For $t\in I^*$, we have
$$\int\nabla u(t)\cdot \pa_tu(t)dx=\int\nabla u_0\cdot u_1dx,$$
which is  a very important conserved quantity in
the energy space, and is crucial to be able to treat
non-radial data.

\end{remark}

\begin{proof}[{\bf Proof of blowup criterion}]  If \eqref{nlsblowup} is not true, then
$$M:=\|u\|_{S[0,T_+(u_0,u_1))}<+\infty.$$
For $\varepsilon>0$ to be chosen, partition $[0,T_+(u_0,u_1))=\bigcup\limits_{j=1}^{\gamma(\varepsilon,M)}I_j$, so that
$$\|u\|_{S(I_j)}\leq\varepsilon.$$
If we write $I_j=[t_j,t_{j+1})$, using Strichartz estimate, we have
$$\sup_{t\in I_j}\|(u,\pa_tu)(t)\|_{\dot{H}^1\times L^2}+\|D^\frac12u\|_{W(I_j)}\leq C\big\|(u_0,u_1)\big\|_{\dot{H}^1\times L^2}+C
\|u\|_{S(I_j)}^\frac4{d-2}\|D^\frac12u\|_{W(I_j)}.$$
If $C\varepsilon^\frac4{d-2}\leq\frac12$, we can show by continuous argument
$$\sup_{t\in[0,T_+(u_0,u_1))}\|(u,\pa_tu)(t)\|_{\dot{H}^1\times L^2}+\|D^\frac12u\|_{W([0,T_+(u_0,u_1)))}\leq C(M).$$
Choose now $\{t_n\}\nearrow T_+(u_0,u_1)$ and show again using Strichartz estimate
$$\Big\|S(t-t_n)\big(u(t_n),\pa_tu(t_n)\big)\Big\|_{S(t_n,T_+(u_0,u_1))}\leq \frac{\delta}2.$$
But then by continuous, we have for some $\varepsilon_0>0$
$$\Big\|S(t-t_n)\big(u(t_n),\pa_tu(t_n)\big)\Big\|_{S(t_n,T_+(u_0,u_1)+\varepsilon_0)}\leq\delta,$$
which, by the Theorem \ref{thm:local} contradicts the definition of $T_+(u_0,u_1)$.
\end{proof}

\noindent \underline{\bf Perturbation Theorem}\; We next turn to another fundamental result in the ``local Cauchy theory",
the so called ``Perturbation Theorem".  More
precisely, given an approximate equation
\begin{equation*}
\pa_{tt}\tilde u-\Delta\tilde u=|\tilde u|^\frac4{d-2}\tilde u+e
\end{equation*}
to (FNLW), with $e$ small in sense of suitable time-space norm  and
$(\tilde{u}_0,\tilde{u}_1)$ is close to $(u_0,u_1)$ in energy space,
is it possible to show that solution $u$ to (FNLW) stays very close
to $\tilde{u}$? Note that the question of continuous dependence on
the data corresponds to the case $e=0$.

\begin{theorem}[Stability Lemma]\label{thm:pertur}
 Assume $\tilde u$ is a near
solution on $I\times\R^d$
\begin{equation}\label{eque1.1}
\pa_t^2\tilde u-\Delta \tilde u=F(\tilde u)+e
\end{equation}
such that

\vskip0.12cm

{\rm (i)}  Finite energy  Strichartz solution:
$$\big\|(\tilde u,\pa_t\tilde u)\big\|_{L_t^\infty(I;\dot{H}^1\times
L^2)}+\big\||\nabla|^\frac12\tilde{u}\big\|_{L_{t,x}^{\frac{2(d+1)}{d-1}}}\leq
E.$$

\vskip0.12cm
{\rm (ii)}  The finite energy of difference of initial data:

$$\big\|(\tilde u_0-u_0,\tilde u_1-u_1)\big\|_{\dot{H}^1\times
L^2}\leq E'.$$

\vskip0.12cm

{\rm (iii)} Smallness perturbation:
\begin{align}\label{eque1.4}
\big\|S(t)(\tilde u_0-u_0,\tilde u_1-u_1)\big\|_{S(I)}\leq&
\varepsilon\\\label{eque1.5}
\big\||\nabla|^\frac12e\big\|_{L_{t,x}^{\frac{2(d+1)}{d+3}}}\leq&\varepsilon.
\end{align}
\vskip0.12cm

\noindent Then there exists $\varepsilon_0=\varepsilon_0(d,E,E')$ such that if
$\varepsilon\in(0,\varepsilon_0),$ for {\rm(FNLW)} with initial
data
$(u_0,u_1)$, there exists a unique solution $u$ on $I\times\R^d$ with the properties
\begin{align}\label{eque1.6}
\|\tilde u-u\|_{S(I)}\leq
C(d,E,E')\cdot\varepsilon^c,\\\label{eque1.7}\|\tilde
u-u\|_{\dot{S}^1(I)}\leq C(d,E,E')\cdot E',
\end{align}
where $c$ satisfies: when $d\in\{3,4,5\}$, $c=1;$ when $d\geq6$,
$c=c(d)\in(0,1)$; and
$$\|u\|_{\dot S^1(I)}\triangleq\sup_{(q,r)\in\Lambda}\big\{\|u\|_{L_t^q\dot
B_{r,2}^{1-\beta(r)}(I\times\R^d)}+\|\pa_tu\|_{L_t^q\dot
B_{r,2}^{-\beta(r)}(I\times\R^d)}\big\},$$ where
$\beta(r)=\frac{d+1}2\Big(\frac12-\frac1r\Big),$
$$\Lambda:=\Big\{(q,r):~\frac2q=(d-1)\Big(\frac12-\frac1r\Big),~q,r\geq2,~(q,r,d)\neq(2,\infty,3)\Big\}.$$
\end{theorem}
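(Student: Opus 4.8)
The plan is to run the standard long-time perturbation scheme: reduce to a one-step estimate on a short interval on which $\tilde u$ has small scattering norm, then iterate it over a finite partition of $I$ whose cardinality is controlled by $E$ and $E'$. Set $w:=u-\tilde u$; subtracting the two equations, $w$ solves
\[
\Box w=\big(F(\tilde u+w)-F(\tilde u)\big)-e,\qquad \big(w,\pa_tw\big)(t_0)=\big(u_0-\tilde u_0,\;u_1-\tilde u_1\big),\qquad F(z):=|z|^{4/(d-2)}z,
\]
and the solution $u$ of {\rm(FNLW)} exists on a subinterval of $I$ containing $t_0$ precisely as long as $w$ does, so it suffices to construct $w$ with the stated bounds. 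Using hypothesis (i), the Strichartz estimate \eqref{Strichartz-one}, and the fractional chain rule \eqref{Strichartz-two} applied to the equation satisfied by $\tilde u$, a continuity argument first gives $\|\tilde u\|_{S(I)}+\||\nabla|^{1/2}\tilde u\|_{W(I)}\le M=M(d,E)$; then $I$ is split as $I=\bigcup_{j=0}^{J-1}I_j$ with $I_j=[t_j,t_{j+1}]$ and $J=J(d,E,E')$, so that on each $I_j$ one has $\|\tilde u\|_{S(I_j)}+\||\nabla|^{1/2}\tilde u\|_{W(I_j)}\le\delta_0$, where $\delta_0=\delta_0(d,E')$ is a small constant to be fixed.

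On a single $I_j$ one uses the Duhamel representation for $w$ together with the Strichartz estimate \eqref{Strichartz-one} in the $S(I_j)$-norm and in the $\dot S^1(I_j)$-norm. The nonlinear difference $|\nabla|^{1/2}\big(F(\tilde u+w)-F(\tilde u)\big)$ is estimated in $L^{2(d+1)/(d+3)}_{t,x}$ by the fractional-derivative difference estimate \eqref{Strichartz-three} (for $d\in\{3,4,5\}$, where $F\in C^2$): writing $u=\tilde u+w$, $v=\tilde u$, distributing the half-derivative, and splitting $F'(\tilde u+w)$ and $F''(\tilde u+w)$ into a contribution of $\tilde u$ — which is $\delta_0$-small in $S(I_j)$ and in $\||\nabla|^{1/2}\cdot\|_{W(I_j)}$ — and a contribution of $w$, which always carries a positive power of $\|w\|_{S(I_j)}$. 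One is led to a coupled pair of inequalities of the schematic form
\[
\|w\|_{\dot S^1(I_j)}\lesssim \|\vec w(t_j)\|_{\dot H^1\times L^2}+\eps_\ast(I_j)+\big(\delta_0+\|w\|_{S(I_j)}\big)^{4/(d-2)}\|w\|_{\dot S^1(I_j)},
\]
\[
\|w\|_{S(I_j)}\lesssim \big\|S(\cdot-t_j)\vec w(t_j)\big\|_{S(I_j)}+\eps_\ast(I_j)+\Phi\big(\delta_0,\|w\|_{S(I_j)},\|w\|_{\dot S^1(I_j)}\big),
\]
with $\eps_\ast(I_j):=\eps+\||\nabla|^{1/2}e\|_{L^{2(d+1)/(d+3)}_{t,x}(I_j)}$ and $\Phi$ collecting the nonlinear terms; the care needed is in arranging $\Phi$ so that every summand is multiplied either by a small power of $\delta_0$ or by a positive power of $\|w\|_{S(I_j)}$. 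For $\delta_0$ small, a continuity argument in $t$ — legitimate since $\|w\|_{S(\{t_j\})}=0$ — closes these to give $\|w\|_{\dot S^1(I_j)}\lesssim \|\vec w(t_j)\|_{\dot H^1\times L^2}+\eps_\ast(I_j)$ and $\|w\|_{S(I_j)}$ small, controlled by the free evolution of $\vec w(t_j)$ plus $\eps_\ast(I_j)$; in particular $\|\vec w(t_{j+1})\|_{\dot H^1\times L^2}\lesssim \|\vec w(t_j)\|_{\dot H^1\times L^2}+\eps_\ast(I_j)$.

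The iteration is then a bookkeeping exercise. Inductively one gets $\|\vec w(t_j)\|_{\dot H^1\times L^2}\le C_j(d)(E'+\eps)$; running Strichartz from $t_0$ across $\bigcup_{k\le j}I_k$ bounds $\|S(\cdot-t_j)\vec w(t_j)\|_{S(I_j)}$ by $\|S(\cdot)\vec w(t_0)\|_{S(I)}+\||\nabla|^{1/2}e\|_{L^{2(d+1)/(d+3)}}+\sum_{k<j}\||\nabla|^{1/2}\big(F(\tilde u+w)-F(\tilde u)\big)\|_{L^{2(d+1)/(d+3)}(I_k)}$, each summand being $\le C_j(d)\,\eps^{c}$ via the per-interval bounds, with $c=1$ for $d\in\{3,4,5\}$. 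Choosing $\eps_0=\eps_0(d,E,E')$ small enough that all the accumulated constants (which grow like $(\mathrm{const})^{J}$) times $\eps_0^{c}$ stay below the thresholds of the local theory (Theorem \ref{thm:local}) and of the per-interval bootstrap, one concludes that $u$ extends to all of $I$ with $\|\tilde u-u\|_{\dot S^1(I)}\lesssim_{d,E,E'}E'$, and, summing the $S$-bounds over $j$, $\|\tilde u-u\|_{S(I)}\lesssim_{d,E,E'}\eps^{c}$; uniqueness is inherited from Theorem \ref{thm:local}. This gives the two asserted estimates.

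The technical heart — and the source of the dimensional dichotomy in $c$ — is the fractional-derivative estimate for the nonlinear difference, because the wave $S$-norm Strichartz estimate inherently forces a half-derivative onto $F(\tilde u+w)-F(\tilde u)$, which must then be distributed across factors that are merely bounded rather than small. For $d\in\{3,4,5\}$ one has $p=1+\tfrac4{d-2}\ge2$, so $F\in C^2$ with $F''$ of homogeneity $\tfrac4{d-2}-1\ge0$, Lemma \ref{lem:frderi} applies verbatim, and $c=1$. For $d\ge6$, $F$ is only $C^{1,4/(d-2)}$, estimate \eqref{Strichartz-three} is unavailable, and one must pass to Besov-valued Strichartz norms $\dot B^{1-\beta(r)}_{r,2}$ (which is exactly why $\dot S^1$ is defined with Besov spaces in the statement), invoke a Besov-space fractional product rule, and absorb a loss, yielding $c=c(d)\in(0,1)$; this follows the higher-dimensional treatment in \cite{BCLPZ}. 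I expect this low-regularity nonlinear bookkeeping — together with keeping the accumulated constants finite through the iteration — to be the main obstacle; by contrast, the roles of the hypotheses are structurally clear: (i) fixes the number $J$ of subintervals, (iii) furnishes the smallness $\|S(\cdot)\vec w(t_0)\|_{S(I)}\le\eps$ that makes $\|w\|_S$ small, and (ii) supplies the (possibly large) bound $E'$ on $\|w\|_{\dot S^1}$ that must be carried through the nonlinear estimates.
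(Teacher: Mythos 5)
The paper states Theorem~\ref{thm:pertur} without proof (the argument is standard and is carried out in Kenig--Merle \cite{KM1} for $d\in\{3,4,5\}$ and in Bulut et al.\ \cite{BCLPZ} for $d\ge 6$); there is therefore no in-text proof to compare against. Your sketch is the correct long-time perturbation scheme, and the structure is sound: subtract the equations to get an equation for $w=u-\tilde u$, subdivide $I$ into finitely many pieces on which $\tilde u$ is small in the relevant spacetime norms, close a coupled bootstrap for $\|w\|_{S(I_j)}$ and $\|w\|_{\dot S^1(I_j)}$ using Strichartz and the fractional chain/difference rules, and iterate, keeping track of constants that grow like $C^J$ and choosing $\eps_0$ accordingly. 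The identification of the dimensional dichotomy in $c$ with the failure of $F\in C^2$ for $d\ge 6$ (so that \eqref{Strichartz-three} is unavailable and one must pass to Besov-valued Strichartz norms with an attendant Hölder loss) is also correct, and correctly attributed.

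One small point of order: you claim a continuity argument on $I$ directly yields $\|\tilde u\|_{S(I)}\le M(d,E)$ before subdividing, but the Strichartz--chain-rule estimate
\[
\|\tilde u\|_{S(I)}\lesssim E+1+\|\tilde u\|_{S(I)}^{\frac{4}{d-2}}\,\big\||\nabla|^{1/2}\tilde u\big\|_{W(I)}
\]
does not close when $E$ is large. The correct order is to first subdivide $I$ using the \emph{a priori} bound $\||\nabla|^{1/2}\tilde u\|_{W(I)}\le E$ from hypothesis (i), so that $\||\nabla|^{1/2}\tilde u\|_{W(I_j)}$ is small, bootstrap $\|\tilde u\|_{S(I_j)}\lesssim E+1$ on each piece, sum, and then re-subdivide to make both $\|\tilde u\|_{S(I_j)}$ and $\||\nabla|^{1/2}\tilde u\|_{W(I_j)}$ small. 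This is a matter of sequencing rather than a genuine gap, but as written the first step is circular. The claimed dependence $\delta_0=\delta_0(d,E')$ is also unnecessary (a universal $\delta_0(d)$ suffices; $E'$ enters only through the number of iterations and the final constants), though it does no harm. Otherwise the argument is the expected one and would go through once these details are spelled out.
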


\subsection{Variational estimates}

\begin{lemma}[Coercivity Lemma]\label{lem:coe}
Assume that $E(v_1,v_2)\leq(1-\delta_0)E(W)$,~$\delta_0\in(0,1)$. Then
\begin{enumerate}
\item If $\|\nabla v_1\|_{L^2}<\|\nabla W\|_{L^2}$, then there exists $\delta_1=\delta_1(\delta_0)>0$ such that
\begin{align*}
&{\rm (i)} \quad\|\nabla v_1\|_{L^2}\leq(1-\delta_1)\|\nabla W\|_{L^2};\\
&{\rm (ii)} \quad\int_{\mathbb R^d}\big(\|\nabla v_1|^2-|v_1|^\frac{2d}{d-2}\big)\;dx\geq\delta_1\|\nabla v_1\|_{L^2}^2;\\
&{\rm (iii)} \quad c\|(v_1,v_2)\|_{\dot{H}^1\times L^2}^2\leq E(v_1,v_2)\leq\frac12 \|(v_1,v_2)\|_{\dot{H}^1\times L^2}^2.
\end{align*}

\item If $\|\nabla v_1\|_{L^2}>\|\nabla W\|_{L^2}$,  then there exists $\delta_2=\delta_2(\delta_0)>0$ such that
$$\|\nabla v_1\|_{L^2}\geq(1+\delta_2)\|\nabla W\|_{L^2}.$$
\end{enumerate}
\end{lemma}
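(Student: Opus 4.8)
The plan is to reduce both cases of the lemma to the study of a single scalar function of one variable. Write $2^{*}=\tfrac{2d}{d-2}$ and interpret $E(W)$ in the hypothesis as $E(W,0)$. First I would record the algebraic consequences of the Aubin--Talenti characterization \eqref{SobolevIn}--\eqref{CarW} together with the elliptic equation \eqref{Ell}: multiplying $-\Delta W=|W|^{4/(d-2)}W$ by $W$ gives $\|\nabla W\|_{2}^{2}=\|W\|_{2^{*}}^{2^{*}}$; since $W$ saturates the Sobolev inequality, this yields $C_{d}^{2^{*}}\|\nabla W\|_{2}^{2^{*}-2}=1$, i.e. $C_{d}^{2^{*}}=\|\nabla W\|_{2}^{2-2^{*}}$; and hence $E(W,0)=\big(\tfrac12-\tfrac{d-2}{2d}\big)\|\nabla W\|_{2}^{2}=\tfrac1d\|\nabla W\|_{2}^{2}$.

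Next, for an arbitrary $(v_{1},v_{2})\in\dot H^{1}\times L^{2}$ set $y:=\|\nabla v_{1}\|_{2}^{2}/\|\nabla W\|_{2}^{2}$ and use the sharp Sobolev inequality with the normalization above to get $\|v_{1}\|_{2^{*}}^{2^{*}}\le C_{d}^{2^{*}}\|\nabla v_{1}\|_{2}^{2^{*}}=\|\nabla W\|_{2}^{2}\,y^{d/(d-2)}$. Inserting this into the definition of the energy gives the pointwise lower bound
\begin{equation*}
E(v_{1},v_{2})\ \ge\ \tfrac12\|v_{2}\|_{2}^{2}+\|\nabla W\|_{2}^{2}\,g(y),\qquad g(y):=\tfrac{y}{2}-\tfrac{d-2}{2d}\,y^{\frac{d}{d-2}}.
\end{equation*}
A one-line computation gives $g'(y)=\tfrac12\big(1-y^{2/(d-2)}\big)$, so $g$ is strictly increasing on $[0,1]$, strictly decreasing on $[1,\infty)$, with $g(0)=0$, $g(1)=\tfrac1d=E(W,0)/\|\nabla W\|_{2}^{2}$, and with a unique positive zero $y^{*}=(d/(d-2))^{(d-2)/2}>1$; in particular $g\ge 0$ on $[0,y^{*}]$. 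The hypothesis $E(v_{1},v_{2})\le(1-\delta_{0})E(W,0)$ thus forces $g(y)\le(1-\delta_{0})/d=(1-\delta_{0})g(1)$.

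In case (1) we have $y<1$; since $g$ is a strictly increasing bijection of $[0,1]$ onto $[0,1/d]$ there is $y_{0}=y_{0}(\delta_{0})\in(0,1)$ with $g(y_{0})=(1-\delta_{0})/d$, and the bound above gives $y\le y_{0}$, which is (i) with $1-\delta_{1}=\sqrt{y_{0}}$. For (ii) one estimates $\|\nabla v_{1}\|_{2}^{2}-\|v_{1}\|_{2^{*}}^{2^{*}}\ge\|\nabla v_{1}\|_{2}^{2}\big(1-y^{2/(d-2)}\big)\ge\big(1-y_{0}^{2/(d-2)}\big)\|\nabla v_{1}\|_{2}^{2}$, positive since $y_{0}<1$. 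For (iii) the upper bound is immediate because the potential term has a favorable sign, while the lower bound follows from $E(v_{1},v_{2})\ge\tfrac12\|v_{2}\|_{2}^{2}+\big(\tfrac12-\tfrac{d-2}{2d}y_{0}^{2/(d-2)}\big)\|\nabla v_{1}\|_{2}^{2}$ together with $\tfrac{d-2}{2d}y_{0}^{2/(d-2)}<\tfrac{d-2}{2d}<\tfrac12$. Taking $\delta_{1}$ (resp. $c$) to be the smallest of the constants produced — all depending only on $\delta_{0}$ and $d$ — completes part (1). In case (2) we have $y>1$; on $(1,y^{*}]$ the function $g$ decreases onto $[0,1/d)$ and $g<0$ beyond $y^{*}$, so $g(y)\le(1-\delta_{0})/d$ forces $y\ge y_{1}$, where $y_{1}=y_{1}(\delta_{0})\in(1,y^{*})$ solves $g(y_{1})=(1-\delta_{0})/d$; hence $\|\nabla v_{1}\|_{2}\ge\sqrt{y_{1}}\,\|\nabla W\|_{2}$, i.e. $\delta_{2}=\sqrt{y_{1}}-1>0$.

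I do not expect a serious analytic obstacle: the whole lemma is a variational consequence of the sharp Sobolev inequality already quoted in the excerpt. The points requiring care are the correct normalizations linking $C_{d}$, $\|\nabla W\|_{2}$ and $E(W,0)$; the sign and monotonicity analysis of $g$, especially in case (2), where $g$ eventually becomes negative so one must argue using only its behavior on the interval $(1,y^{*})$ rather than all of $(1,\infty)$; and the bookkeeping needed to guarantee that $\delta_{1},\delta_{2},c$ can be chosen to depend only on $\delta_{0}$ and $d$, which follows from the continuity and strict monotonicity of $g$ on the relevant intervals.
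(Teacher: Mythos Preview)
Your proposal is correct and follows essentially the same approach as the paper: both reduce the lemma to a one-variable analysis of the scalar function obtained by bounding the energy below via the sharp Sobolev inequality. The only cosmetic difference is that you work with the normalized variable $y=\|\nabla v_1\|_2^2/\|\nabla W\|_2^2$ and the function $g(y)=\tfrac{y}{2}-\tfrac{d-2}{2d}y^{d/(d-2)}$, whereas the paper uses the unnormalized variable $\bar y=\|\nabla v_1\|_2^2$ and $f(y)=\tfrac12 y-\tfrac{C_d^{2^*}}{2^*}y^{2^*/2}$; these are the same function up to the rescaling $g(y)=f(\|\nabla W\|_2^2 y)/\|\nabla W\|_2^2$, and your slightly more explicit handling of case (2) (tracking the zero $y^*$ of $g$) is in fact cleaner than the paper's ``by the same way'' remark.
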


\begin{proof}
 Let
\begin{equation}
f(y)=\frac12y-\frac{C_d^{2^\ast}}{2^\ast}y^{2^\ast/2},\quad ~\bar y\triangleq\|\nabla v_1\|_2^2,
\quad ~2^\ast\triangleq\frac{2d}{d-2},
\end{equation}
where $C_d$ is the sharp constant of Sobolev inequality
$$\|g(x)\|_{2^*}\le C_d\|\nabla g\|_2, \;\quad \; \big(~ \|W(x)\|_{2^*}=C_d\|\nabla W(x)\|_2~\big).$$
\begin{center}
\includegraphics[width=3in,height=2in]{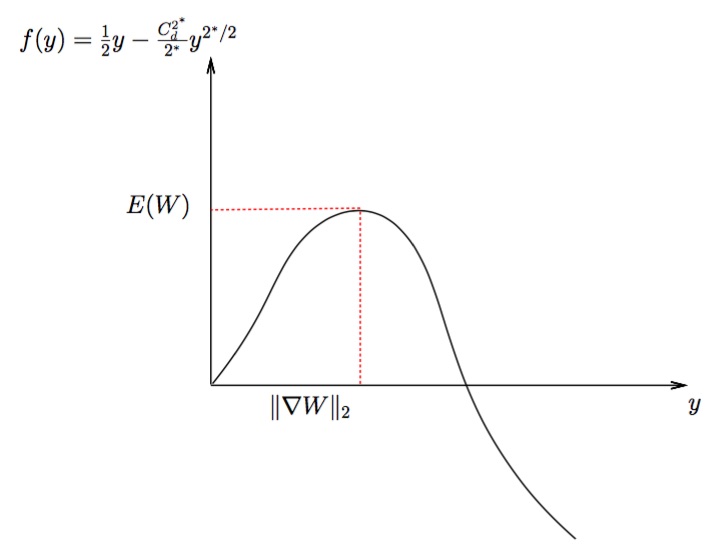}
\end{center}
Note that $f(0)=0,~f(y)>0$ for $y$ near $0$, $y>0$, and that
$$f'(y)=\frac12-\frac{C_d^{2^\ast}}{2}y^{\frac{2^\ast}2-1},$$
so that
$$f'(y)=0\;\;\Longleftrightarrow y=y_c=\frac1{C_d^d}=\|\nabla W\|_2^2.$$
Also
\begin{equation*}
\left\{\begin{aligned}
&f(\bar{y})=f(\|\nabla v_1\|^2)\le E(v_1,v_2),\quad \text{\rm (sharp Sobolev inequality)}\\
&f(\|\nabla W\|_2^2)=f(y_c)=\frac1{dC_d^d}=E(W,0).\end{aligned}\right.\end{equation*}

Since $f\ge 0$ is  strictly
increasing in $[0,y_c]=[0,\|\nabla W\|^2_2]$,  $f''(y_c)<0$, so $f$ arrive the maximal at
$y=y_c=\|\nabla W\|_2^2$. Note that
$$E(v_1,v_2)=(1-\delta_0)E(W,0)<E(W,0),$$
 there exists $\delta_1=\delta(\delta_0)$ such that
$$f^{-1}(E(v_1,v_2))=f^{-1}((1-\delta_0)E(W,0))=(1-\delta_1)\|\nabla W\|_2^2.$$
Hence
$$\|\nabla v_1\|_2^2=\bar y\le f^{-1}(E(v_1,v_2))=f^{-1}((1-\delta_0)E(W,0))\le
(1-\delta_1)\|\nabla W\|_2^2.$$
 This shows (1)(i). By the same way, we conclude (2).

 For (1)(ii) note that
\begin{align*}
\int_{\mathbb R^d}\big(|\nabla v_1|^2-|v_1|^{2^\ast}\big)dx\geq&\int_{\mathbb R^d}|\nabla v_1|^2dx
-C_d^{2^\ast}\Big(\int_{\mathbb R^d}|\nabla v_1|^2dx\Big)^{2^{\ast}/2}\\
=&\Big[1-C_d^{2^\ast}\Big(\int_{\mathbb R^d}|\nabla v_1|^2dx\Big)^{2/d-2}\Big]
\int_{\mathbb R^d} |\nabla v_1|^2dx\\
\geq&\Big[1-C_d^{2^\ast}(1-\delta_1)^{2/d-2}\Big(\int_{\mathbb R^d}|\nabla W|^2dx\Big)^{2/d-2}\Big]\int_{\mathbb R^d} |\nabla v_1|^2dx\\
=&\Big[1-(1-\delta_1)^{2/d-2}\Big]\int_{\mathbb R^d} |\nabla v_1|^2dx,
\end{align*}
which gives (1)(ii).

For (1)(iii), the upper bound is obviously, and the lower bound follows from (1)(ii).

\end{proof}

From this static Lemma, we obtain dynamic consequence.

\begin{lemma}[Energy trapping]\label{lem:enetra} Let $u$ be a solution of {\rm (FNLW)} with maximal interval $I$,
$$E(u_0,u_1)<E(W,0).$$
Choose $\delta_0>0$ such that $E(u_0,u_1)\leq(1-\delta_0)E(W,0).$ Then, for $t\in I,$ we have
\begin{enumerate}
\item if $\|\nabla u_0\|_2<\|\nabla W\|_2$,  then there exists $\delta_1=\delta_1(\delta_0)>0$ such that

\begin{align*}
&{\rm (i)} \;\;
\|\nabla u(t)\|_2^2\leq(1-\delta_1)\|\nabla W\|_2^2,~E(u_0,u_1)\geq0;\\
&{\rm (ii)}\;\; \int_{\mathbb R^d}\big(|\nabla u(t)|^2-|u(t)|^{2^\ast}\big)\geq\delta_1\int|\nabla u|^2, \text{\rm (``coercivity")};\\
&{\rm (iii)} \;\; E(u,\pa_tu)\simeq_{\delta_0}\|\nabla (u(t),\pa_tu(t))\|_{\dot H^1\times L^2}^2
\simeq_{\delta_0}\|\nabla (u_0,u_1)\|_{\dot H^1\times L^2}^2.
\end{align*}

\item If $\|\nabla u_0\|_{L^2}>\|\nabla W\|_{L^2}$,  then there exists $\delta_2=\delta_2(\delta_0)>0$ such that
$$\|\nabla u(t)\|_{L^2}\geq(1+\delta_2)\|\nabla W\|_{L^2}.$$
\end{enumerate}
\end{lemma}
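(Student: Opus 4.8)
The plan is to deduce Lemma~\ref{lem:enetra} from the static estimate Lemma~\ref{lem:coe} together with conservation of energy (Remark~\ref{remark-local-theory-one}(iv)) and a continuity argument; no new PDE input is needed. First I would fix $t\in I$ and apply Lemma~\ref{lem:coe} with $(v_1,v_2)=(u(t),\partial_tu(t))$. Energy conservation gives $E(u(t),\partial_tu(t))=E(u_0,u_1)\le(1-\delta_0)E(W,0)$, so the hypothesis of Lemma~\ref{lem:coe} holds uniformly in $t$ with the \emph{same} $\delta_0$.

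The only point to settle is which of the alternatives $\|\nabla u(t)\|_2<\|\nabla W\|_2$ or $\|\nabla u(t)\|_2>\|\nabla W\|_2$ holds at time $t$. Here I would use that $t\mapsto\|\nabla u(t)\|_{L^2}$ is continuous on $I$ (since $u\in C(I;\dot H^1)$ by the local theory), that $I$ is an interval hence connected, and that the borderline value is never attained: if $\|\nabla u(t)\|_2^2=\|\nabla W\|_2^2$ for some $t$, then with $f$ as in the proof of Lemma~\ref{lem:coe},
$$E(W,0)=f\big(\|\nabla W\|_2^2\big)=f\big(\|\nabla u(t)\|_2^2\big)\le E(u(t),\partial_tu(t))=E(u_0,u_1)<E(W,0),$$
a contradiction (this is exactly Remark~\ref{rem:scablw}(i)). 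Hence the sign of $\|\nabla u(t)\|_2-\|\nabla W\|_2$ is constant on $I$, determined by $t=0$: in case (1) we are in alternative (1) of Lemma~\ref{lem:coe} for every $t$, and in case (2) in alternative (2) for every $t$.

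Given this, assertion (1)(i) is Lemma~\ref{lem:coe}(1)(i) at time $t$, and $E(u_0,u_1)\ge0$ follows from the algebraic identity
$$E(u_0,u_1)=\tfrac12\|\partial_tu\|_2^2+\tfrac1{2^\ast}\Big(\int|\nabla u|^2-\int|u|^{2^\ast}\Big)+\Big(\tfrac12-\tfrac1{2^\ast}\Big)\int|\nabla u|^2\ge0$$
together with Lemma~\ref{lem:coe}(1)(ii). Assertion (1)(ii) is Lemma~\ref{lem:coe}(1)(ii) at time $t$. For (1)(iii), the same identity gives the lower bound $E(u(t),\partial_tu(t))\gtrsim_{\delta_0}\|(u(t),\partial_tu(t))\|_{\dot H^1\times L^2}^2$ once the negative term is absorbed via (1)(ii), while the upper bound $E(u(t),\partial_tu(t))\le\tfrac12\|(u(t),\partial_tu(t))\|_{\dot H^1\times L^2}^2$ is immediate from the definition of $E$; chaining these equivalences at $t$ and at $t=0$ (and using conservation of $E$) yields $E(u,\partial_tu)\simeq_{\delta_0}\|(u(t),\partial_tu(t))\|^2\simeq_{\delta_0}\|(u_0,u_1)\|^2$. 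Finally case (2) is immediate from Lemma~\ref{lem:coe}(2) applied at each $t$.

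The main (and essentially only) obstacle is the connectedness/continuity step: one must know a priori that the solution stays in $C(I;\dot H^1)$, so that $t\mapsto\|\nabla u(t)\|_2$ is continuous, and one must rule out the equality case $\|\nabla u(t)\|_2=\|\nabla W\|_2$; both are handled as above, and this is precisely the content of ``energy trapping''. Everything else is a direct transcription of Lemma~\ref{lem:coe} along the flow, with the displayed identity for $E$ doing the bookkeeping. Note that no use is made of $W\in L^2$, so the argument is dimension-free for $d\ge3$.
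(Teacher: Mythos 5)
Your proposal is correct and is exactly the standard argument; the paper itself does not spell out a proof (it just says ``From this static Lemma, we obtain dynamic consequence''), and what you have written is the intended derivation: conservation of energy makes Lemma~\ref{lem:coe} applicable at each $t\in I$, continuity of $t\mapsto\|\nabla u(t)\|_2$ on the connected interval $I$ plus the strict inequality $E(u_0,u_1)<E(W,0)$ (which, via the function $f$ from the proof of Lemma~\ref{lem:coe}, rules out the equality $\|\nabla u(t)\|_2=\|\nabla W\|_2$, as in Remark~\ref{rem:scablw}(i)) shows the gradient stays on the same side of $\|\nabla W\|_2$ for all $t$, and then (1)(i)--(iii) and (2) are read off from Lemma~\ref{lem:coe} at each time together with your displayed decomposition of the energy.
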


\subsection{Proof of finite-time blowup}

In this subsection, we prove (ii) in Theorem \ref{thm:kmw08}.

\begin{theorem}[Finite time blowup]\label{thm:fiblwo}
Assume $(u_0,u_1)\in\dot{H}^1\times L^2$ with
$$E(u_0,u_1)<E(W,0)~\text{and}~\|\nabla u_0\|_{L^2}>\|\nabla W\|_{L^2}.$$
Then, the solution $u$ to \eqref{equ:nlwecrifoc} blows up in finite time in both directions.
\end{theorem}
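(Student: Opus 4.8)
The plan is to run the concavity argument of Levine and Kenig--Merle. Since a finite maximal forward time $T_+$ already yields blowup — indeed $\|u\|_{S([0,T_+))}=+\infty$ by \eqref{nlsblowup} — it suffices to rule out $T_+=+\infty$; the backward statement then follows from the time-reversal symmetry $u(t,x)\mapsto u(-t,x)$, which preserves both $E(u_0,u_1)$ and $\|\nabla u_0\|_{L^2}$. Moreover, by Lemma~\ref{fsprop} we may reduce to compactly supported data: truncating $(u_0,u_1)$ at a large radius preserves the strict inequalities $E<E(W,0)$ and $\|\nabla u_0\|_{L^2}>\|\nabla W\|_{L^2}$ (dominated convergence), the exterior region then carries arbitrarily small energy, hence small $\dot H^1\times L^2$ norm by the coercivity in Lemma~\ref{lem:coe}, so that no blowup can occur there, while finite speed of propagation identifies the truncated solution with $u$ in the interior light cone. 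We therefore assume from now on that $u_0,u_1$ are supported in $\{|x|\le R_0\}$, so that $u(t,\cdot)$ is supported in $\{|x|\le R_0+|t|\}$ and, in particular, $u(t)\in L^2$ on the (assumed infinite) interval of existence.

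The key input is a coercivity estimate drawn from the energy trapping Lemma~\ref{lem:enetra}(2): in the regime $\|\nabla u_0\|_{L^2}>\|\nabla W\|_{L^2}$ one has $\|\nabla u(t)\|_{L^2}^2\ge(1+\delta_2)\|\nabla W\|_{L^2}^2$ for all $t$. Writing $\int |u(t)|^{\frac{2d}{d-2}}$ in terms of the conserved energy $E=E(u_0,u_1)\le(1-\delta_0)E(W,0)$ (possibly negative), a one-line computation produces a constant $c_0=c_0(d,\delta_0,\delta_2)>0$ with
\begin{equation*}
\int_{\R^d}\Bigl(|u(t)|^{\frac{2d}{d-2}}-|\nabla u(t)|^2\Bigr)\,dx\;\ge\;\frac{d}{d-2}\int_{\R^d}|\pa_t u(t)|^2\,dx+c_0,\qquad \forall\,t\ge0.
\end{equation*}

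Now set $y(t)\triangleq\int_{\R^d}|u(t,x)|^2\,dx<\infty$. Differentiating twice, using $\pa_{tt}u=\Delta u+|u|^{\frac4{d-2}}u$ and integrating by parts (all justified by approximating the data by smooth, compactly supported functions and passing to the limit), and then substituting the previous estimate, one obtains
\begin{align*}
y''(t)&=\frac{4(d-1)}{d-2}\int_{\R^d}|\pa_t u|^2\,dx+2\Bigl(\int_{\R^d}\bigl(|u|^{\frac{2d}{d-2}}-|\nabla u|^2\bigr)\,dx-\frac{d}{d-2}\int_{\R^d}|\pa_t u|^2\,dx\Bigr)\\
&\ge\;\frac{4(d-1)}{d-2}\int_{\R^d}|\pa_t u|^2\,dx+2c_0\;\ge\;2c_0\,>\,0.
\end{align*}
Combining this with the Cauchy--Schwarz bound $(y'(t))^2=4\bigl(\int u\,\pa_t u\bigr)^2\le 4\,y(t)\int|\pa_t u|^2$ and choosing $\alpha\triangleq\frac1{d-2}$ (so that $\frac{4(d-1)}{d-2}=4(1+\alpha)$), we get
\begin{equation*}
y(t)\,y''(t)-(1+\alpha)(y'(t))^2\;\ge\;y(t)\Bigl[\Bigl(\tfrac{4(d-1)}{d-2}-4(1+\alpha)\Bigr)\!\int_{\R^d}|\pa_t u|^2\,dx+2c_0\Bigr]=2c_0\,y(t)>0.
\end{equation*}

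To finish: energy trapping forbids $u(t)\equiv0$, hence $y(t)>0$ for all $t$; and since $y''\ge 2c_0>0$ the derivative $y'$ is eventually positive, say $y'(t)>0$ for $t\ge t_1$. The last inequality means precisely that $h\triangleq y^{-\alpha}$ is strictly concave on $[0,\infty)$, while $h'(t)=-\alpha y^{-\alpha-1}y'(t)<0$ for $t\ge t_1$; therefore $h(t)\le h(t_1)+h'(t_1)(t-t_1)\to-\infty$ as $t\to+\infty$, contradicting $h>0$. Thus $T_+<+\infty$, and by time reversal $T_->-\infty$, proving Theorem~\ref{thm:fiblwo}. I expect the only delicate point to be the reduction to compactly supported data via finite speed of propagation — making sure the blowup of the truncated solution is actually located in the region where it coincides with $u$ — together with the (routine but necessary) justification of differentiating $y$ twice for merely energy-class solutions; once the coercivity estimate is extracted from Lemma~\ref{lem:enetra}, the algebraic heart of the proof, namely the concavity of $y^{-1/(d-2)}$, is immediate.
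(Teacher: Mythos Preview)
Your concavity argument for compactly supported data is essentially the paper's Case~1, and the coercivity bound you extract from Lemma~\ref{lem:enetra}(2) and the conserved energy is correct. The gap is exactly where you flag it: the reduction to compactly supported data does not go through in the direction you need.

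Here is the problem. If you truncate the data and call the resulting solution $\tilde u$, finite speed of propagation (Lemma~\ref{fsprop}) gives $\tilde u=u$ only on the \emph{shrinking} cone $\{|x|\le R_0-|t|\}$; on the annular region $\{R_0-|t|\le |x|\le 2R_0+|t|\}$ the two solutions are unrelated. Your concavity argument shows $\tilde u$ blows up at some finite $T^{\rm int}$, but gives no information on \emph{where} the singularity forms, so it may well sit in that annulus and say nothing about $u$. Worse, for $u_0\notin L^2$ the quantity $y(0)=\int|\chi_{R_0}u_0|^2$ grows like $R_0^2$, which pushes the ODE blow-up time $T^{\rm int}$ to be of order $R_0$ or larger --- precisely the regime where the shrinking cone has already collapsed. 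So one cannot force $T^{\rm int}<R_0$ by taking $R_0$ large, and the argument stalls.

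The paper does \emph{not} truncate the data. Instead it keeps the original solution $u$, localizes the functional via $y_R(t)=\int u^2\phi(x/R)\,dx$, and uses Lemma~\ref{lem:fisma} to show that the cutoff errors $O(r(R))$ are $\le\eps_1$ on the time window $0\le t\le R/2$. This produces
\[
y_R''(t)\ge \frac{4(d-1)}{d-2}\int|\pa_t u|^2\phi(x/R)\,dx+\frac{\widetilde{\widetilde\delta}_0}{2},\qquad 0\le t\le R/2,
\]
together with the initial bounds $y_R(0)\le CM_0^2A^2+\eps_1R^2$ and $|y_R'(0)|\le CM_0A^2+\eps_1R$. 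The delicate point you sensed is now quantitative: one must choose $\eps_1$ small and $R$ large so that the resulting ODE $f'\ge Bf^{(d-1)/(d-2)}$ blows up \emph{before} time $R/4$, i.e.\ while the error control is still valid. The paper carries this out explicitly; your write-up would need to replace the data truncation by this localized-functional argument to close the general $\dot H^1\times L^2$ case.
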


\begin{proof} Without loss of generality,  it suffices to consider  $t>0$.
By contradiction, we assume that the solution $u$ exists for all
$t>0$.

\vskip0.20cm

\noindent\underline{\bf Case 1: $u_0\in L^2(\R^d).$}\; By Lemma \ref{lem:enetra}, we have
\begin{align*}
\int_{\mathbb R^d}|\nabla u(t)|^2\geq (1+\delta_1)\int_{\mathbb R^d}|\nabla W|^2,\quad ~t\in I,
\end{align*}
and
$E(W,0)\geq E(u(t),\pa_tu)+\tilde\delta_0$  implies that
$$\int_{\mathbb R^d}|u(t)|^{2^\ast}\geq\frac{d}{d-2}\int_{\mathbb R^d}|\pa_tu(t)|^2+\frac{d}{d-2}
\int_{\mathbb R^d}|\nabla u(t)|^2-2^\ast E(W,0)+2^\ast\tilde\delta_0.$$
Let $y(t)=\int_{\R^d} |u(t)|^2dx$, so that
$$y'(t)=2\int_{\R^d} u(t)\pa_tu(t)dx,\quad ~y''(t)=2\int_{\R^d}\big(|\pa_tu|^2-|\nabla u|^2+|u(t)|^{2^\ast}\big)dx.$$
Thus,
\begin{align*}
y''(t)\geq&2\int_{\mathbb R^d}|\pa_tu|^2dx+\frac{2d}{d-2}\int_{\mathbb R^d}|\pa_tu|^2dx-2\cdot2^\ast E(W,0)\\
&+\tilde\delta_0+\frac{2d}{d-2}\int_{\mathbb R^d}|\nabla u(t)|^2dx-2\int_{\mathbb R^d}|\nabla u(t)|^2dx\\
=&\frac{4(d-1)}{d-2}\int_{\mathbb R^d}|\pa_tu|^2dx+\frac4{d-2}\Big(\int_{\mathbb R^d}|\nabla u(t)|^2dx
-\int_{\mathbb R^d}|\nabla W|^2dx\Big)+\tilde \delta_0\\
\geq&\frac{4(d-1)}{d-2}\int_{\mathbb R^d}|\pa_tu|^2dx+\tilde \delta_0.
\end{align*}
If $I\cap[0,+\infty)=[0,+\infty)$, there exists $t_0>0$ so that
$$y'(t_0)>0,\;\;\;~y'(t)>0,\qquad~t>t_0.$$
For $t>t_0,$ we have
\begin{equation}
y(t)y''(t)\geq\frac{4(d-1)}{d-2}\int|\pa_tu|^2\cdot\int|u|^2\geq\frac{d-1}{d-2}y'(t)^2,
\end{equation}
so that
$$\frac{y''(t)}{y'(t)}\geq\frac{d-1}{d-2}\frac{y'(t)}{y(t)},$$
or
$$y'(t)\geq C_0y(t)^\frac{d-1}{d-2},~\forall~t>t_0.$$
This implies
$$y(t)^{-\frac1{d-2}}\leq y(t_0)^{-\frac1{d-2}}-C_0(t-t_0).$$
Hence, for large $t$, $y(t)<0$ which contradicts with $y(t)\geq0$.

\vskip 0.1in

\underline{\bf Case 2: $u_0\in\dot{H}^1(\R^d).$} \; The proof follows by using, in addition, localization and finite speed of propagation.

\vskip0.2cm

Let $A=\|(u_0,u_1)\|_{\dot{H}^1\times L^2}>0$. From finite speed of propagation and local theory (see Lemma \ref{lem:fisma} below), there exists $\eps_0>0$ such that, for $0<\eps<\eps_0$, there
exists $M_0=M_0(\eps)$, with
\[
\int_{|x|\geq M_0+t} \Big(|\nabla u(x,t)|^2 + |\pa_t u(x,t)|^2
+ |u(x,t)|^{2^*} + \frac{|u(x,t)|^2}{|x|^2}\Big) dx \leq\eps,
\]
for $t\in[0,T_+(u_0,u_1))$. Assume that $T_+(u_0,u_1)=+\infty$ to
reach a contradiction.

Let $f(\tau)$ be a solution to the differential
inequality ($f\geq 0$)
\begin{equation}\label{eq:7.5}
f'(\tau) \geq B f(\tau)^{\frac{d-1}{d-2}}, \;\;\; f(0) = 1.
\end{equation}
Then, the time of blow-up for $f$ is $\tau_*$, with $\tau_*\leq K_d
B^{-1}$.

Let $\phi\in C_0^\infty(B_2)$ such that $\phi\equiv 1$ on $|x|<1$, $0\leq\phi\leq 1$.
Consider now, for $R$ large,
$$y_R(t)=\int
u^2(x,t)\phi(x/R)dx.$$
Then, $y'_R(t) = 2\int_{\R^d} u\, \pa_t u\,
\phi(x/R)dx$,  and
\[
y''_R(t) = 2 \left[ \int_{\R^d} \Big(|\pa_t u|^2 - |\nabla_x u|^2
+ |u|^{2^*} \Big)dx \right] + O(r(R)),
\]
with
$$r(R)=\int_{|x|\geq R}\Big(\frac{|u|^2}{|x|^2}+|u|^{2^\ast}+|\nabla u|^2+|\pa_tu|^2\Big)\;dx.$$
Arguing as in the proof of case 1, we find that
\[
y''_R(t) \geq \frac{4(d-1)}{d-2}\int_{\mathbb R^d} |\pa_t u)|^2 \phi(x/R)dx + \widetilde{\widetilde{\delta}}_0 + O(r(R)).
\]
Choose now $\eps_1$ so small, and $M_0=M_0(\eps_1)$, as above, so that,
for $R>M_0/2$, $O(r(R))\leq\eps_1$, $\eps_1\leq\widetilde{\widetilde{\delta}}_0/2$.
We then have, for $0<t<R/2$,
 \begin{equation}\label{eq:7.6}\left\{\begin{aligned}
 y''_R(t) &\geq \widetilde{\widetilde{\delta}}_0/2, \\
 y''_R(t) &\geq \frac{4(d-1)}{d-2} \int_{\mathbb R^d} |\pa_t u|^2 \phi(x/R)dx.
\end{aligned}\right.
 \end{equation}
It is easy to verify that
 \begin{equation}\label{eq:7.7}
 y_R(0) \leq C M_0^2 A^2 + \eps_1 R^2, \; \;
|y_R'(0)| \leq C M_0 A^2 + \eps_1 R,
 \end{equation}
where $A=\|(u_0,u_1)\|_{\dot H^1\times L^2}$. Let
$$
T = \frac{4CM_0 A^2+2\eps_1 R+2R\sqrt{\eps_1}}{\widetilde{\widetilde{\delta}}_0},
$$
then, (if $T<R/2$)
 \begin{align*}
 y'_R(T) \geq& T \frac{\widetilde{\widetilde{\delta}}_0}{2} + y'_R(0)
 \geq 2CM_0 A^2 +\eps_1 R + R \sqrt{\eps_1} - C M_0 A^2 -\eps_1 R  \\
=& CM_0 A^2 + R \sqrt{\eps_1}.
 \end{align*}
Thus, there exists $0<t_1<T$ such that $y_R'(t_1)=CM_0
A^2+R\sqrt{\eps_1}$, and for $0<t<t_1$, we have $y'_R(t)<CM_0 A^2 +
R\sqrt{\eps_1}$. Note that, in light of \eqref{eq:7.6},
$y'_R(t)>y'_R(t_1)>0$, $t>t_1$ ($t<R/2$) and also
 $$
y_R(t_1) \leq y_R(0) + \int_0^{t_1} y'_R(\tau) d\tau\leq y_R(0) + t_1
(CM_0 A^2+R\sqrt{\eps_1}) = y_R(0) + t_1 y'_R(t_1).
$$

For sufficient small $\eps_1$, we claim
$$T=\frac{4CM_0A^2}{\widetilde{\widetilde{\delta}}_0} +
\frac{2\eps_1 R}{\widetilde{\widetilde{\delta}}_0} +
\frac{2\sqrt{\eps_1}R}{\widetilde{\widetilde{\delta}}_0}\le \frac{1}{8K_d} R. $$
In fact, we first
choose $\eps_1$ so small that
$\frac{2\eps_1}{\widetilde{\widetilde{\delta}}_0}
+\frac{2\sqrt{\eps_1}}{\widetilde{\widetilde{\delta}}_0}\leq
\frac{1}{32K_d}$, where $K_d$ is the constant defined at the
beginning of the proof, and $R$ so large that
$\frac{4CM_0A^2}{\widetilde{\widetilde{\delta}}_0}\leq\frac{1}{16
K_d}R$. We then prove the claim, and also ensure
$T\leq\frac{R}{8}$. Thus,
\[
y_R(t_1) \leq CM_0^2A^2 +\eps_1R^2 + \frac{R}{8K_d}y_R'(t_1).
\]
If we now use the argument in the proof of Case 1,
for the function
$$\widetilde{y}_R(\tau)=y_R(t_1+\tau),\;\;\;\;0\leq\tau\leq R/4,$$
 in light of \eqref{eq:7.6}, we see that, for
$0<\tau<R/4$, we have
$$\Big(\log\widetilde{y}'_R(\tau)\Big)'
\geq \Big(\frac{(d-1)}{(d-2)} \log\widetilde{y}_R(\tau)\Big)'.$$
So that, by integration,
\[
\frac{\widetilde{y}'_R(\tau)}{\widetilde{y}'_R(0)}
\geq \left[ \frac{\widetilde{y}_R(\tau)}{\widetilde{y}_R(0)}
\right] ^{\frac{(d-1)}{(d-2)}} \quad \text{for } 0\leq\tau\leq R/4.
\]
Thus, if $f(\tau)=\frac{\widetilde{y}_R(\tau)}{\widetilde{y}_R(0)}$ and
$B=\frac{\widetilde{y}'_R(0)}{\widetilde{y}_R(0)}=
\frac{y'_R(t_1)}{y_R(t_1)}$, we have that $f$ is a solution of \eqref{eq:7.5} for $0\leq\tau\leq R/4.$ Thus, we must have
\[
\frac{R}{4} \leq \frac{y_R(t_1)}{y'_R(t_1)} K_d
\leq \frac{K_d(CM_0^2 A^2 + \eps_1 R^2)}{y_R'(t_1)} +\frac{R}{8},
\]
or
\begin{align*}
\frac{1}{8} \leq& \frac{K_d(CM_0^2 A^2 + \eps_1 R^2)}{CM_0A^2 R+\sqrt{\eps_1}R^2}
 = \frac{K_d[CM_0^2 A^2/R^2 + \eps_1]}{[CM_0A^2/R+\sqrt{\eps_1}]}  \\
\leq& K_d M_0/R + K_d \sqrt{\eps_1}.
\end{align*}
By taking $K_d\sqrt{\eps_1}<\frac{1}{32}$, and $\frac{K_d M_0}{R}<\frac{1}{32}$ we
reach a contradiction, which gives the proof of   Theorem \ref{thm:kmw08} (ii).

\end{proof}

\subsection{Concentration-compactness procedure}

We proceed ``concentration-compactness"  in dispersive framework developed by Kenig-Merle with profile decomposition
of Bahouri-G\'{e}rard \cite{BG} for $d=3$ and Bulut\cite{Bu2010} for $d\geq4$.
 Thus, arguing by contradiction, we find a number $E_c$, with $0<\eta_0\leq
E_c<E(W, 0)$ with the property that if
$$E(u_0, u_1)<E_c,\;\; \|\nabla u_0\|_2<\|\nabla W\|_2,$$
then $\|u\|_{S(I)}<\infty$. And $E_c$ is optimal with this property. We will see that this leads to
a contradiction. We have:

\begin{prop}[Critical element]\label{prop:crit}
There exists
$$(u_{0,c},u_{1,c})\in\dot H^1\times L^2,\;\;~\|\nabla u_{0,c}\|_2<\|\nabla W\|_2,~ E(u_{0,c},u_{1,c})=E_c,$$
such that for the corresponding solution $u_c$, we have $$\|u_c\|_{S(I)}=+\infty.$$
\end{prop}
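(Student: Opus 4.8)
The plan is to run the standard concentration-compactness scheme of Kenig-Merle. Define $E_c$ as the supremum of energies $E$ such that every solution with $E(u_0,u_1)<E$ and $\|\nabla u_0\|_2<\|\nabla W\|_2$ satisfies $\|u\|_{S(I)}<\infty$; by the small-data theory (Remark \ref{remark-local-theory-one}(i)) we have $E_c>0$, and we are arguing under the assumption that $E_c<E(W,0)$. The goal is to extract a minimal, non-scattering solution whose trajectory is precompact in $\dot H^1\times L^2$ modulo the scaling and translation symmetries. First I would pick a sequence of solutions $u_n$ with $E(u_{0,n},u_{1,n})\to E_c$, $\|\nabla u_{0,n}\|_2<\|\nabla W\|_2$, and $\|u_n\|_{S(I_n)}\to\infty$. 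By the energy-trapping Lemma \ref{lem:enetra}, the data $(u_{0,n},u_{1,n})$ is bounded in $\dot H^1\times L^2$, so Theorem \ref{thm:inepro} (linear profile decomposition in the energy space; for $d=3$ Bahouri-G\'erard, for $d\ge4$ Bulut) applies to yield the decomposition \eqref{lab7} into profiles $V^j_n$, a remainder $w^\ell_n$ with small Strichartz norm \eqref{lab8}, and the Pythagorean expansion \eqref{lab9}.

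The heart of the argument is then a reductio: I would show that only one profile can be nontrivial. Using the energy Pythagorean decomposition \eqref{lab9} together with the (analogous) decoupling of the potential term --- which follows from \eqref{lab11} and the orthogonality \eqref{lab5} --- one gets $\sum_j E_{\mathrm{lin}}(V^j_0,V^j_1)\le E_c+o(1)$, and each profile inherits $\|\nabla V^j_0\|_2<\|\nabla W\|_2$ by the variational trapping (the sub-threshold condition is preserved for each piece since the kinetic energies also decouple). If two or more profiles were nontrivial, each would have energy strictly below $E_c$, hence by definition of $E_c$ each associated nonlinear profile $U^j$ scatters with $\|U^j\|_{S(\R)}<\infty$. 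Assembling the nonlinear profiles via the long-time perturbation theorem (Theorem \ref{thm:pertur}) --- with the orthogonality \eqref{lab5} ensuring the cross terms are negligible and \eqref{lab8} handling the linear remainder --- one concludes $\|u_n\|_{S(I_n)}$ is bounded, contradicting $\|u_n\|_{S(I_n)}\to\infty$. Therefore exactly one profile survives; call it $(V_0,V_1)$, so that (after passing to a subsequence and undoing the symmetry parameters) $(u_{0,n},u_{1,n})\to(\tilde u_0,\tilde u_1)$ strongly in $\dot H^1\times L^2$ modulo symmetries, with $E(\tilde u_0,\tilde u_1)=E_c$ (no energy is lost to the remainder, again by \eqref{lab9} and the perturbation theorem applied to rule out a small surviving remainder), $\|\nabla\tilde u_0\|_2<\|\nabla W\|_2$, and the corresponding solution $u_c$ has $\|u_c\|_{S(I)}=+\infty$. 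This gives $(u_{0,c},u_{1,c}):=(\tilde u_0,\tilde u_1)$ and proves the proposition.

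The \textbf{main obstacle} is the careful bookkeeping in the one-profile reduction: one must simultaneously propagate the sub-threshold constraint $\|\nabla\,\cdot\,\|_2<\|\nabla W\|_2$ through the profile decomposition (so that each profile's nonlinear flow is governed by the definition of $E_c$), control the nonlinear profiles when their time-translation parameters $t^j_n/\lambda^j_n$ tend to $\pm\infty$ (in which case the nonlinear profile is defined by solving the equation with prescribed scattering data, using the existence of wave operators from the small-data theory near $t=\mp\infty$), and verify that the approximate solution $\sum_j U^j_n + S(t)w^\ell_n$ genuinely satisfies the smallness hypotheses \eqref{eque1.4}--\eqref{eque1.5} of Theorem \ref{thm:pertur} uniformly in $n$ for $\ell$ large. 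A secondary subtlety, needed for the compactness of the trajectory $K$ in Remark \ref{rem:scablw} (Step 1) rather than for the bare statement here, is that the same argument applied at \emph{every} time $t\in I_+$ produces, after rescaling and translating by parameters $\lambda(t),x(t)$, a precompact set; but for the Proposition as stated it suffices to produce $(u_{0,c},u_{1,c})$ with the stated three properties, and the precompactness of $K$ then follows by the identical extraction applied along arbitrary time sequences. Everything else --- boundedness of the profile data, decoupling of norms, the perturbation estimates --- is routine given Theorems \ref{thm:inepro}, \ref{thm:pertur}, and Lemmas \ref{lem:coe}, \ref{lem:enetra}.
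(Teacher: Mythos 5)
Your proposal is correct and follows the standard Kenig--Merle concentration--compactness extraction that the paper itself cites without reproducing: take a minimizing sequence of non-scattering solutions with energies approaching $E_c$, apply the energy-space linear profile decomposition (Theorem \ref{thm:inepro}), use the Pythagorean decoupling \eqref{lab9} together with the energy-trapping Lemma \ref{lem:enetra} to pass the sub-threshold condition to each profile, and invoke the perturbation Theorem \ref{thm:pertur} to show that multiple nontrivial profiles (or a surviving remainder carrying energy) would force $\|u_n\|_{S(I_n)}$ to be bounded, a contradiction. You also correctly flag the two genuine technical points that make the one-profile reduction nontrivial — propagating $\|\nabla\,\cdot\,\|_2<\|\nabla W\|_2$ through the nonlinear profiles even when $t^j_n/\lambda^j_n\to\pm\infty$ (by passing through the scattering data and then invoking trapping), and verifying the hypotheses of the perturbation theorem uniformly in $n$ — so no essential step is missing.
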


\begin{prop}[Compactness] For any $u_c$ as in the above Proposition, with (say)
$$\|u_c\|_{S(I_+)}=+\infty,~\;\;~ I+=
I\cap[0,+\infty),$$
there exist $x(t)\in \mathbb R^d,~\la(t)\in\R^+,~t\in I_+$, such that
\begin{equation*}
K=\Big\{\vec{v}(x,t)=\Big(\frac1{\la(t)^\frac{d-2}2}u\Big(\frac{x-x(t)}{\la(t)},t\Big),
\frac1{\la(t)^\frac{d}2}\pa_tu\Big(\frac{x-x(t)}{\la(t)},t\Big)\Big),~t\in I_+\Big\}
\end{equation*}
has compact closure in $\dot H^1\times L^2.$
\end{prop}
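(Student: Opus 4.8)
The plan is to prove the equivalent statement of sequential precompactness: it suffices to show that for every sequence $t_n\in I_+$ there exist $\lambda_n>0$ and $x_n\in\R^d$ such that, along a subsequence,
\[
\Big(\lambda_n^{\frac{d-2}2}u_c\big(\lambda_n(\cdot)+x_n,\,t_n\big),\ \lambda_n^{\frac d2}\pa_tu_c\big(\lambda_n(\cdot)+x_n,\,t_n\big)\Big)\longrightarrow (V_0,V_1)\quad\text{in }\dot H^1\times L^2
\]
for some $(V_0,V_1)$; the functions $\lambda(t),x(t)$ are then obtained by a measurable selection from this property. If $t_n$ has a subsequence converging to an interior point of $I_+$, continuity of the flow $\vec u_c(\cdot)\in C(I;\dot H^1\times L^2)$ gives the claim with $\lambda_n\equiv1$, $x_n\equiv0$, so we may assume $t_n\to\sup I_+$. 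By the energy-trapping Lemma~\ref{lem:enetra}(1)(iii) (using $E(u_{0,c},u_{1,c})=E_c<E(W,0)$ and $\|\nabla u_{0,c}\|_2<\|\nabla W\|_2$) the sequence $\vec u_c(t_n)=(u_c(t_n),\pa_tu_c(t_n))$ is bounded in $\dot H^1\times L^2$.

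First I would apply the linear profile decomposition of Theorem~\ref{thm:inepro} to $\vec u_c(t_n)$, obtaining profiles $(V_0^j,V_1^j)$, parameters $(\lambda_n^j,x_n^j,t_n^j)$ with the orthogonality \eqref{lab5}, and remainders $\vec w_n^\ell$ whose linear evolution satisfies the Strichartz smallness \eqref{lab8}. Using the Pythagorean expansion \eqref{lab9}, together with the corresponding (almost orthogonal) decomposition of the potential energy $\int|u_c(t_n)|^{2^*}dx$, one obtains the asymptotic energy decoupling $\sum_j E(V_0^j,V_1^j)+\lim_n E(\vec w_n^\ell)\le E_c$, and — crucially — the variational Lemmas~\ref{lem:coe} and \ref{lem:enetra} force every profile and every remainder to stay strictly below the ground-state threshold, $\|\nabla V_0^j\|_2<\|\nabla W\|_2$ and $\limsup_n\|\nabla w_{0,n}^\ell\|_2<\|\nabla W\|_2$, so that each $E(V_0^j,V_1^j)\ge0$ and each term of the sum is nonnegative.

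Next I would run the standard dichotomy. Associate to each profile the nonlinear solution $U^j$ of {\rm (FNLW)} with data $(V_0^j,V_1^j)$ (or evolved from the limit of $-t_n^j/\lambda_n^j$ when that is finite). Suppose that either there are two or more nontrivial profiles, or there is a single profile with $E(V_0^1,V_1^1)<E_c$, or $\limsup_n\|\vec w_n^\ell\|_{\dot H^1\times L^2}\not\to0$ as $\ell\to\infty$. Then, since each $(V_0^j,V_1^j)$ satisfies $E(V_0^j,V_1^j)<E_c$ and $\|\nabla V_0^j\|_2<\|\nabla W\|_2$, the definition of $E_c$ in Proposition~\ref{prop:crit} yields $\|U^j\|_{S(\R)}<\infty$ for every $j$; in the subcase $|t_n^j|/\lambda_n^j\to\infty$ the same conclusion follows because the nonlinear evolution of $(V_0^j,V_1^j)$ is then, on the relevant half-line, a small perturbation of its dispersing linear flow. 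Building the approximate solution $\tilde u=\sum_{j\le\ell}U_n^j$ (suitably rescaled and translated), using the parameter orthogonality \eqref{lab5} and the product smallness \eqref{lab11} to control the cross terms, the vanishing \eqref{lab8} of the linear remainder, and the stability/perturbation Theorem~\ref{thm:pertur}, I would conclude $\|u_c\|_{S([t_n,\sup I_+))}<\infty$ for $n$ large, contradicting $\|u_c\|_{S(I_+)}=+\infty$. Hence $\ell=1$, there is exactly one profile, $E(V_0^1,V_1^1)=E_c$, $\|\vec w_n^1\|_{\dot H^1\times L^2}\to0$, and running the same scattering argument on the lone profile forces $t_n^1/\lambda_n^1$ to stay bounded, so after passing to a subsequence its limit $t^*$ may be absorbed by replacing $(V_0^1,V_1^1)$ with $\vec U^1(t^*)$.

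Therefore, with $\lambda_n:=\lambda_n^1$ and $x_n:=x_n^1$, the rescaled data $\vec u_c(t_n)$ converge strongly in $\dot H^1\times L^2$ to $\vec U^1(t^*)$, which proves the desired sequential precompactness and hence that $K$ has compact closure; the existence of $x(t),\lambda(t)$ follows. The main obstacle is the bookkeeping of the dichotomy step: verifying the below-threshold property of every profile through the two Pythagorean identities \eqref{lab9} and the variational Lemmas~\ref{lem:coe}--\ref{lem:enetra}, establishing the nonlinear-profile approximation lemma (that $U_n^j$ approximates the rescaled nonlinear evolution of the $j$-th piece in the Strichartz norm), and ruling out all the ways the scattering sizes of the nonlinear profiles could fail to bound $\|u_c\|_{S}$ — in particular the non-compact time-translation subcase $|t_n^j|/\lambda_n^j\to\infty$ and, in the nonradial setting, the handling of the translation parameters $x_n^j$, where the momentum conservation of Remark~\ref{remark-local-theory-one}(v) enters.
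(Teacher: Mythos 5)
Your proof sketch follows the standard Kenig--Merle concentration--compactness argument that the paper itself delegates to \cite{KM1} and Kenig's lecture notes \cite{Kenig01,Kenig02}: reduction to sequential precompactness, linear profile decomposition, energy decoupling combined with the variational trapping Lemma~\ref{lem:enetra} to keep each profile strictly below the ground-state threshold, and the dichotomy via the perturbation Theorem~\ref{thm:pertur} to rule out more than one nontrivial profile or a nonvanishing remainder. The overall route is the same as the cited proof, and the main steps are in the right place.

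A few points worth tightening. First, the energy decoupling requires not only the Pythagorean identity \eqref{lab9} for $\dot H^1\times L^2$, but also the corresponding decoupling of the potential term $\int|u_c(t_n)|^{2^*}\,dx$; this does follow from \eqref{lab5} and \eqref{lab11}, but it is not contained in the statement of Theorem~\ref{thm:inepro} as given and deserves an explicit line, since it is what guarantees each profile has $E(V_0^j,V_1^j)\le E_c$. Second, your appeal to ``measurable selection'' at the outset is the standard way to pass from the sequential statement to an actual function $(\lambda(t),x(t))$, but in the references one typically defines the modulation parameters more concretely (e.g.\ through a scale at which a fixed portion of the $\dot H^1$ or $L^{2^*}$ norm concentrates) and then checks the sequential criterion against that choice; this avoids measurability issues and also makes the continuity of $\lambda(t),x(t)$ asserted in the subsequent Remark visible. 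Third, the remark about momentum conservation is a red herring for \emph{this} proposition: the translation parameters $x_n^j$ cause no trouble in the compactness argument because they are simply absorbed into $x(t)$; Theorem~\ref{zeromome} and the Lorentz/trace-lemma machinery only enter in the rigidity step, not here. Aside from these, the dichotomy and the treatment of the non-compact time-translation subcase $|t_n^j|/\lambda_n^j\to\infty$ are handled correctly.
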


\begin{remark}
\begin{itemize}
\item[(1)] Because of the continuity of $u(t)\in\dot H^1$ with respect to $t\in I$,  we can
construct $\la(t),~x(t)$ continuous in $[0, T_+(u_{0,c},u_{1,c}))$ with $\la(t)>0.$

\item[(2)] Because of scaling and the compactness of $\bar K$ above, if $T_+(u_{0,c},u_{1,c})<\infty$,
one always has that
$$\la(t)\geq\frac{C_0(K)}{T_+(u_{0, c},u_{1,c})-t}.$$

\item[(3)] If $T_+(u_{0,c},u_{1,c})=+\infty$, we can always find another (possibly different) critical
element $v_c$ with a corresponding $\tilde \la(t)$ such  that $\tilde\la(t)\geq A_0>0$ for $t\in[0, T_+(v_{0,c},v_{1,c})).$
(Again by compactness of $\bar K$.)

\item[(4)] One can use the ``profile decomposition" to also show that there exists
a nondecreasing function $g:~(0,E_c]\to~[0,+\infty)$ so that if $$\|\nabla u_0\|_2<\|\nabla W\|_2~\text{and}~
E(u_{0,c},,u_{1,c})\leq E_c-\eta,$$ then $\|u\|_{S(\R)}\leq g(\eta)$.
\end{itemize}

\end{remark}

Up to here, we have used, in this step, only variational argument and ``general arguments".
To proceed further we need to use specific features of (NLW) to establish fine
properties of critical elements.

The first one is a consequence of the finite speed of propagation and the
compactness of $\bar K$.

\begin{lemma}[Compact support set]\label{compset} Let $u_c(t)$ be a critical element, with $T_+(u_{0,c},u_{1,c})<+\infty$. (We can assume, by scaling, that $T_+(u_{0,c},u_{1,c})=1$.)
Then there exists $\bar x\in\R^d$ such that
$${\rm supp}(u_c(\cdot,t),\pa_tu_c(\cdot,t))\subset B(\bar x,1-t),~0<t<1.$$
After translation, we can take $\bar{x}=0.$
\end{lemma}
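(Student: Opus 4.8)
The plan is to exploit the finite speed of propagation together with the precompactness of the set $\bar K$ in $\dot H^1\times L^2$. The key mechanism is that a critical element with $T_+(u_{0,c},u_{1,c})=1$ must concentrate at a single spatial point as $t\to 1^-$: otherwise the compactness of $\bar K$ would force the $\dot H^1\times L^2$ norm of $\vec u_c(t)$ to leak out of every fixed ball, contradicting the fact (from the compactness proposition) that $\vec u_c(t)$ stays in a compact set modulated only by the scaling $\lambda(t)$ and translation $x(t)$, where (by Remark (2) following the compactness proposition) $\lambda(t)\geq C_0(K)/(1-t)\to\infty$.

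First I would set up the concentration point. Since $\lambda(t)\to\infty$ as $t\to 1^-$, for any $\varepsilon>0$ there is $\rho(\varepsilon)>0$ such that
\[
\int_{|x-x(t)|\geq \rho(\varepsilon)/\lambda(t)}\Big(|\nabla u_c(t)|^2+|\partial_t u_c(t)|^2+|u_c(t)|^{2^*}\Big)\,dx<\varepsilon,\qquad t\in[0,1),
\]
by precompactness of $\bar K$ and a change of variables. Because $\lambda(t)\to\infty$, the radius $\rho(\varepsilon)/\lambda(t)\to 0$, so almost all of the energy of $\vec u_c(t)$ is contained in the shrinking ball $B(x(t),\rho(\varepsilon)/\lambda(t))$. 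Next I would show that $x(t)$ converges as $t\to1^-$. The standard argument: if $x(t_n)$ and $x(t_m)$ were separated by more than $\rho(\varepsilon)/\lambda(t_n)+\rho(\varepsilon)/\lambda(t_m)$ for some sequences, one combines the localization above with finite speed of propagation to derive that the two concentration balls are at controlled distance; quantitatively, finite speed of propagation forces $|x(t)-x(s)|\lesssim |t-s|+(\text{small in }\varepsilon)$ for $t,s$ near $1$, and letting $\varepsilon\to0$ shows $x(t)$ is Cauchy as $t\to1^-$. Call the limit $\bar x$; after translation assume $\bar x=0$.

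Then I would upgrade this to the claimed support statement. Fix $t_0\in(0,1)$ and a point $y$ with $|y|>1-t_0$. Choose $\delta>0$ with $|y|>1-t_0+2\delta$. For $t$ close to $1$, the solution $\vec u_c(t)$ is supported (up to energy $<\varepsilon$) in $B(x(t),\rho(\varepsilon)/\lambda(t))\subset B(0,\delta)$ once $t$ is near enough to $1$ and $\varepsilon$ small. By the finite propagation speed (Lemma \ref{fsprop}, comparing $u_c$ with the zero solution outside the support at time $t$ close to $1$), the data $\vec u_c(t_0)$ and $\vec 0$ agree on $B(0,\delta+(1-t_0-\delta))^c=B(0,1-t_0)^c$ — running the solution backward from time $t\approx 1$ to time $t_0$, the influence region of $B(0,\delta)$ reaches only $B(0,\delta+(1-t))\subset B(0,|y|)$, so $y$ lies outside it. Hence $\vec u_c(t_0)$ vanishes near $y$. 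Since $y$ with $|y|>1-t_0$ was arbitrary, $\operatorname{supp}(\vec u_c(\cdot,t_0))\subset \overline{B(0,1-t_0)}$. The honest extraction of the energy-$\varepsilon$ tail into an exact support statement requires one to send $\varepsilon\to 0$: for each $\varepsilon$ one gets that the energy of $\vec u_c(t_0)$ outside $B(0,1-t_0+c\varepsilon^{1/2})$ is $O(\varepsilon)$ (using that finite propagation speed is exact, only the localization of the data is approximate), and then $\varepsilon\to 0$ yields the exact support claim in the closed ball.

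The main obstacle is the rigorous control of the motion of $x(t)$ near the blow-up time: proving $\lim_{t\to1^-}x(t)$ exists requires carefully combining the shrinking-ball localization (which needs $\lambda(t)\to\infty$, i.e.\ Remark (2)) with the exact finite speed of propagation, and ruling out the possibility that $x(t)$ oscillates on a scale larger than $1-t$ but still compatible with the energy being concentrated. One must also be slightly careful that $\lambda(t),x(t)$ can be taken continuous (Remark (1)) so that these limiting arguments along $t\to1^-$ rather than just along subsequences are legitimate; this is where the continuity refinement of the modulation parameters is used.
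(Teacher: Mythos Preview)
Your approach is correct and uses the same two ingredients as the paper: (i) concentration of $\vec u_c(t)$ in a shrinking ball (from precompactness of $\bar K$ together with $\lambda(t)\geq C_0/(1-t)\to\infty$), and (ii) backward finite speed of propagation to transport this information to every earlier time. The small--data--outside version of finite speed of propagation you need is exactly Lemma~\ref{lem:fisma}; your acknowledgement that one must pass $\varepsilon\to 0$ to turn the approximate localization into an exact support statement is precisely how the paper handles the tails.

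There is one structural difference worth noting. You spend effort arguing that the concentration center converges as $t\to 1^-$ (your Cauchy argument $|x(t)-x(s)|\lesssim |t-s|+\rho(\varepsilon)/\lambda(t)+\rho(\varepsilon)/\lambda(s)$ is correct, though it deserves to be written out more carefully). The paper takes a shortcut: it only proves that the concentration center $-x(t)/\lambda(t)$ stays \emph{bounded} (by contradiction: unboundedness plus backward propagation would force $(u_{0,c},u_{1,c})=0$), then extracts a subsequence $t_n\uparrow 1$ along which $-x(t_n)/\lambda(t_n)\to \bar x$. The backward propagation estimate~\eqref{compset1.2} already holds for every fixed earlier time $s$, so using just this single subsequence one gets $\operatorname{supp}\vec u_c(\cdot,s)\subset \overline{B(\bar x,1-s)}$ for all $s\in[0,1)$, and full convergence of the center is never needed (it follows a posteriori, since two distinct subsequential limits would give support in disjoint balls for $s$ close to $1$, contradicting $E>0$). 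This subsequence route is a bit more economical, but your direct convergence argument is a perfectly valid alternative.
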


In order to prove Lemma \ref{compset}, we will need the following consequence of the
finite speed of propagation:
\begin{lemma}\label{lem:fisma}
Let $\|(u_0,u_1)\|_{\dot H^1\times L^2}\leq A.$ If for some $M>0$ and $0<\varepsilon<\varepsilon_0(A)$, we have
$$\int_{|x|\geq M}\Big(|\nabla u_0|^2+|u_1|^2+\frac{|u_0|^2}{|x|^2}\Big)dx\leq\varepsilon,\;\; t\in(0,T_+(u_{0,c},u_{1,c})).$$
Then we have
$$\int_{|x|\geq \frac32M+t}\Big(|\nabla u(t)|^2+|\pa_tu(t)|^2+|u(t)|^{2^\ast}+\frac{|u(t)|^2}{|x|^2}\Big)dx\leq C\varepsilon.$$
\end{lemma}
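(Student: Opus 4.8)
The plan is a standard truncation argument combining the small‑data global theory with finite speed of propagation. First I would fix a radial cutoff $\chi\in C^\infty(\R^d)$ with $0\le\chi\le1$, $\chi\equiv0$ on $\{|x|\le M\}$, $\chi\equiv1$ on $\{|x|\ge\tfrac32 M\}$, and $|\nabla\chi|\lesssim M^{-1}$, and set $(\tilde u_0,\tilde u_1)\triangleq(\chi u_0,\chi u_1)$. Since $\nabla\chi$ is supported in the annulus $\{M\le|x|\le\tfrac32 M\}$, where $M^{-1}\lesssim|x|^{-1}$, the hypothesis gives
\[
\|\nabla(\chi u_0)\|_{L^2}\le\|\nabla u_0\|_{L^2(|x|\ge M)}+\|(\nabla\chi)u_0\|_{L^2}\lesssim\|\nabla u_0\|_{L^2(|x|\ge M)}+\Big\|\tfrac{u_0}{|x|}\Big\|_{L^2(|x|\ge M)}\lesssim\sqrt\varepsilon,
\]
together with $\|\chi u_1\|_{L^2}\le\|u_1\|_{L^2(|x|\ge M)}\le\sqrt\varepsilon$, hence $\|(\tilde u_0,\tilde u_1)\|_{\dot H^1\times L^2}\lesssim\sqrt\varepsilon$. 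This is exactly why the exterior Hardy term $|u_0|^2/|x|^2$ is built into the hypothesis: controlling the commutator term $(\nabla\chi)u_0$ by $\|u_0\|_{\dot H^1}$ alone would cost an $A$‑dependent constant and would not be localized near $\{|x|\sim M\}$.

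Next I would invoke the small‑data theory of Remark \ref{remark-local-theory-one}(i): choosing $\varepsilon_0(A)$ small enough that $\sqrt\varepsilon$ lies below the small‑data threshold, the solution $\tilde u$ of {\rm(FNLW)} with data $(\tilde u_0,\tilde u_1)$ is global, and by the Strichartz estimates of Theorem \ref{thm:local},
\[
\sup_{t\in\R}\big\|(\tilde u,\pa_t\tilde u)(t)\big\|_{\dot H^1\times L^2}+\|\tilde u\|_{S(\R)}\lesssim\|(\tilde u_0,\tilde u_1)\|_{\dot H^1\times L^2}\lesssim\sqrt\varepsilon.
\]
Combining this with the Sobolev inequality $\|\tilde u(t)\|_{L^{2^*}}\lesssim\|\nabla\tilde u(t)\|_{L^2}$, the Hardy inequality $\||x|^{-1}\tilde u(t)\|_{L^2}\lesssim\|\nabla\tilde u(t)\|_{L^2}$, and $2^*>2$ (so $(\sqrt\varepsilon)^{2^*}\lesssim\varepsilon$ once $\varepsilon\le1$), I would conclude that for every $t$,
\[
\int_{\R^d}\Big(|\nabla\tilde u(t)|^2+|\pa_t\tilde u(t)|^2+|\tilde u(t)|^{2^*}+\frac{|\tilde u(t)|^2}{|x|^2}\Big)\,dx\lesssim\varepsilon.
\]

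Finally I would appeal to finite speed of propagation, Lemma \ref{fsprop}(2): since $(\tilde u_0,\tilde u_1)=(u_0,u_1)$ on $B(0,\tfrac32 M)^c$, and both $u$ and $\tilde u$ have finite $S(J)$‑norm on every compact subinterval $J$ of their intervals of existence (so the lemma applies), one obtains $u(t,x)=\tilde u(t,x)$ for $|x|\ge\tfrac32 M+t$ and all $t\in(0,T_+(u_0,u_1))$. Restricting the last displayed integral for $\tilde u$ to $\{|x|\ge\tfrac32 M+t\}$ then yields precisely the asserted bound; the weight $|x|^{-2}$ needs no special attention, being controlled globally via Hardy. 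There is no essential obstacle here: the only points requiring care are the choice of the $\tfrac32 M$ buffer, so that $\chi\equiv1$ exactly on the region where Lemma \ref{fsprop}(2) is invoked, and the commutator estimate in the first paragraph, which is what forces the Hardy weight into the hypothesis.
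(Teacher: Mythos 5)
Your proof is correct and follows exactly the same strategy as the paper's: truncate the data outside $B(0,M)$ with a cutoff supported in $\{|x|\ge M\}$ equal to $1$ on $\{|x|\ge\tfrac32 M\}$, use the Hardy term in the hypothesis to control the commutator and conclude the truncated data is small in $\dot H^1\times L^2$, invoke the small-data global theory, and finish with finite speed of propagation. You have simply spelled out the commutator bound, the role of the $3/2$ buffer, and the Sobolev/Hardy steps that the paper leaves implicit.
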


\begin{proof}
  Indeed, we choose $\psi_M(x)\in C^\infty_c(\mathbb R^d)$ such that
\begin{equation*}
\psi_M=\begin{cases}
1\quad |x|\geq\frac3
2M,\\
0\quad |x|\leq M,
\end{cases}\big(u_{0,M},u_{1,M}\big)\triangleq\big(\psi_Mu_0,\psi_Mu_1\big).
\end{equation*}
From our assumptions, we have
$$\|(u_{0,M},u_{1,M})\|_{\dot H^1\times L^2}\leq C\varepsilon.$$
If $C\varepsilon_0<\tilde\delta$, where $\tilde\delta$ is as in the ``local Cauchy theory",
the corresponding solution $u_M$ of (FNLW) has maximal interval $\R$ and
$$\sup_{t\in\R}\|(u_M(t),\pa_tu_M(t))\|_{\dot H^1\times L^2}\leq 2C\varepsilon.$$  But, by finite speed of propagation,
$$u_M\equiv u,~|x|\geq\frac32M+t,~ t\in[0, T_+(u_0, u_1)),$$
 this  proves Lemma \ref{lem:fisma}.
\end{proof}

\begin{proof}[\underline{\bf Proof of Lemma \ref{compset}}]

{\bf Step 1:}   Since  $\la(t)\geq C_0(K)/(1-t)$, we claim that, for any $R_0>0$,
\begin{equation}
\lim_{t\nearrow1}\int_{|x+\frac{x(t)}{\la(t)}|\geq R_0}\Big(|\nabla u_c(x,t)|^2+|\pa_tu_c(x,t)|^2+\frac{|u_c(x,t)|^2}{|x|^2}\Big)dx=0.
\end{equation}
Indeed, if
$$\vec{v}(x,t)=\Big(\frac1{\la(t)^\frac{d-2}2}u_c\Big(\frac{x-x(t)}{\la(t)},t\Big),
\frac1{\la(t)^\frac{d}2}\pa_tu_c\Big(\frac{x-x(t)}{\la(t)},t\Big)\Big),$$
then, by the compactness of $\bar K$ and the fact that $\la(t)\to+\infty$ (as $t\to1$),
$$\int_{|x+\frac{x(t)}{\la(t)}|\geq R_0}\big(|\nabla u_c(x,t)|^2+|\pa_tu_c(x,t)|^2\big)dx=\int_{|y|\geq\la(t)R_0}|\vec{v}(y,t)|^2dy\to0,~t\to1.$$
Because
of this fact, using  Lemma \ref{lem:fisma} with respect to
 backward in time, we have, for each $s\in[0,1),~R_0>0$,
\begin{equation}\label{compset1.2}
\lim_{t\nearrow1}\int_{|x+\frac{x(t)}{\la(t)}|\geq \frac32R_0+(t-s)}\big(|\nabla u_c(x,s)|^2+|\pa_tu_c(x,s)|^2\big)dx=0.
\end{equation}
\vskip0.15cm

{\bf Step 2:} We next show that
\begin{equation}\label{compset1.3}
\frac{|x(t)|}{\la(t)}\leq M,~t\in[0,1).
\end{equation}
If not, we can find $t_n\uparrow1$, so that $|x(t_n)|/\la(t_n)\to+\infty.$
 Then, for all $R>0$,
$$\big\{ |x|\leq R \big\}\subset\Big\{\frac{|x+x(t_n)|}{\la(t_n)}\geq\frac32R_0+t_n\Big\}~\text{for}~n~\text{large},$$
so that, passing to the limit in $n$, for $s=0$, we obtain
$$\int_{|x|\leq R}\Big(|\nabla u_{0,c}|^2+|u_{1,c}|^2\Big)dx=0,$$
a contradiction.

\vskip0.15cm

{\bf Step 3:} Finally, pick $t_n\uparrow1$ so that $x(t_n)/\la(t_n)\to-\bar x.$ Observe that, for every $\eta_0>0,$
for $n$ large enough, for all $s\in[0,1)$,
$$\{|x-\bar x|\geq1+\eta_0-s\}\subset\Big\{\big|x+\tfrac{x(t_n)}{\la(t_n)}\big|\geq\frac32R_0+(t_n-s)\Big\}$$
for some $R_0=R_0(\eta_0)>0$. From this we conclude that
$$\int_{|x-\bar x|\geq 1+\eta_0-s}\Big(|\nabla u(x,s)|^2+|\pa_su(x,s)|^2\Big)dx=0,$$
which gives the Lemma.

\end{proof}

\subsection{Zero momentum of critical element}
We next turn to a
result which is fundamental for us to be able to treat non-radial data.

\begin{theorem}[Zero momentum:~orthogonality for critical elements]\label{zeromome}
Let $(u_{0,c},u_{1,c})$ be as in Proposition \ref{prop:crit}, with $\la(t),~x(t)$ continuous. Assume that
$$\text{\rm either}\;\;~T_+(u_{0,c},u_{1,c})<\infty~\;\;\;\;\text{\rm or}\;\;\;~T_+(u_{0,c},u_{1,c})=\infty,~\la(t)\geq A_0>0.$$
Then
\begin{equation}\label{zerom}
\int_{\mathbb R^d}\nabla u_{0,c}\cdot u_{1,c}dx=0.
\end{equation}
\end{theorem}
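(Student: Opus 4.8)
The plan is to argue by contradiction using the Lorentz invariance \eqref{travelling_waves} of (FNLW) and the optimality of the critical energy $E_c$. Suppose $P:=\int_{\R^d}\nabla u_{0,c}\cdot u_{1,c}\,dx\neq 0$. By momentum conservation (Remark \ref{remark-local-theory-one}(v)) the momentum of $u_c(t)$ equals $P$ for every $t$ in its maximal interval. For a velocity $\ell\in\R^d$ with $|\ell|<1$ small and parallel to $P$, let $z_\ell$ be the Lorentz transform of $u_c$ given by the formula in \eqref{travelling_waves}; since the d'Alembertian $\Box$ and the power nonlinearity $|u|^{4/(d-2)}u$ are Lorentz invariant, $z_\ell$ is again a solution of (FNLW), and the underlying spacetime change of variables $\Lambda_\ell$ is a proper Lorentz transformation, hence has Jacobian $1$ and satisfies $z_\ell=u_c\circ\Lambda_\ell$ pointwise on the image of the domain of $u_c$.

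The key computation is the transformation law for the energy--momentum four-vector. Applying the divergence theorem to the conservation law $\partial_t e(u_c)-\operatorname{div}(\nabla u_c\,\partial_t u_c)=0$ (the same local energy identity used in Section \ref{sec:defenecri}) on the wedge bounded by $\{t=0\}$ and the spacelike hyperplane $\Lambda_\ell(\{t=0\})$, and using positivity of the energy flux through the lateral light cone, one gets
\begin{equation}\label{eq:EMlaw}
E\big(z_\ell(0),\partial_tz_\ell(0)\big)=\frac{E_c+\ell\cdot P}{\sqrt{1-|\ell|^2}}.
\end{equation}
Choosing $\ell$ a small multiple of $P$ with the sign that makes $\ell\cdot P<0$, \eqref{eq:EMlaw} gives $E(z_\ell(0),\partial_tz_\ell(0))<E_c$ for $|\ell|$ small. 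Moreover $\ell\mapsto(z_\ell(0),\partial_tz_\ell(0))$ is continuous into $\dot H^1\times L^2$ with value $(u_{0,c},u_{1,c})$ at $\ell=0$, so $\|\nabla z_\ell(0)\|_{L^2}\to\|\nabla u_{0,c}\|_{L^2}<\|\nabla W\|_{L^2}$; since $E(z_\ell(0),\partial_tz_\ell(0))<E(W,0)$, the dichotomy of Remark \ref{rem:scablw}(i) rules out $\|\nabla z_\ell(0)\|_{L^2}=\|\nabla W\|_{L^2}$, so $\|\nabla z_\ell(0)\|_{L^2}<\|\nabla W\|_{L^2}$ for all small $\ell$.

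We now close the argument. By the optimality of $E_c$, any solution with energy $<E_c$ whose data has $\dot H^1$-norm $<\|\nabla W\|_{L^2}$ has finite scattering norm; applied to $z_\ell$ this gives $\|z_\ell\|_{S(\R)}<\infty$. On the other hand, $z_\ell=u_c\circ\Lambda_\ell$ with $|\det\Lambda_\ell|=1$, so for any spacetime region $\Omega$ one has $\|z_\ell\|_{L^{2(d+1)/(d-2)}_{t,x}(\Omega)}=\|u_c\|_{L^{2(d+1)/(d-2)}_{t,x}(\Lambda_\ell(\Omega))}$; taking $\Omega=\Lambda_\ell^{-1}(I_+\times\R^d)$ and using $\|u_c\|_{S(I_+)}=+\infty$ yields $\|z_\ell\|_{S}=+\infty$, a contradiction. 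Hence $P=0$.

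The point that requires real care — and exactly where the two hypotheses enter — is the legitimacy of the Lorentz transform: one must ensure that $z_\ell$ is a genuine finite-energy Strichartz solution near $t=0$ (so that $(z_\ell(0),\partial_tz_\ell(0))\in\dot H^1\times L^2$ and the local theory and identity \eqref{eq:EMlaw} apply), which amounts to controlling $u_c$ on the tilted wedge, including at spatial infinity where the divergence theorem could pick up boundary terms. When $T_+<\infty$ this is free from finite speed of propagation: after rescaling $T_+=1$ and translating, Lemma \ref{compset} gives $\operatorname{supp}(u_c(t),\partial_tu_c(t))\subset B(0,1-t)$, whose $\Lambda_\ell$-image is again a solid light cone, so all integrals live over compact sets. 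When $T_+=+\infty$ and $\lambda(t)\ge A_0>0$, compactness of the set $K$ together with the lower bound on $\lambda(t)$ and the energy-trapping bound (Lemma \ref{lem:enetra}(iii)) give a uniform-in-$t$ bound on $\|(u_c(t),\partial_tu_c(t))\|_{\dot H^1\times L^2}$ and a uniform spatial localization of its bulk; via Lemma \ref{lem:fisma} one truncates $(u_{0,c},u_{1,c})$ outside a large ball, Lorentz-transforms the compactly supported pieces, and passes to the limit to recover $(z_\ell(0),\partial_tz_\ell(0))\in\dot H^1\times L^2$ and \eqref{eq:EMlaw}. This approximation — in particular the control of the tail of $u_c$ inside the slanted wedge — is the main obstacle; the rest of the proof is the soft argument above.
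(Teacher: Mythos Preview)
Your overall strategy is the paper's: assume $P\neq 0$, Lorentz--boost to push the energy strictly below $E_c$, and contradict the optimality of $E_c$. In the case $T_+<\infty$ your argument is essentially the paper's; your contradiction via Lorentz invariance of the isotropic $L^{2(d+1)/(d-2)}_{t,x}$ norm is a clean variant (in the paper the contradiction is phrased as: the boosted solution still collapses at a finite time, while the energy and gradient constraints force $\|z_a\|_{S}<\infty$, hence global existence).

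For $T_+=+\infty$, however, there is a genuine gap. The tilted hyperplane $\Lambda_\ell(\{t=0\})$ reaches arbitrarily large times as $|x|\to\infty$, and $|u_c|^{4/(d-2)}u_c\notin L^1_tL^2_x$ on $[0,\infty)$, so neither $(z_\ell(0),\partial_t z_\ell(0))\in\dot H^1\times L^2$ nor your energy--momentum identity can be obtained by the divergence theorem with a tail estimate. Truncating the initial data does not cure this: the truncated solution has support growing like $M+|t|$, so its Lorentz trace at $t=0$ still samples times of order $|\ell|\,M\to\infty$, and you have no uniform-in-$M$ control to pass to the limit. The paper does \emph{not} take a limit. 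It first \emph{rescales}, setting $u_R(x,t)=R^{(d-2)/2}u_c(Rx,Rt)$, so that only a fixed compact time window is needed; on that window the Trace Lemma (Lemma \ref{lem:trace}) gives $z_{a,R}(t_0,\cdot)\in\dot H^1_{\rm loc}$. It then multiplies by a fixed spatial cut-off $\theta(x)$ and, averaging over $t_0\in(1,2)$ and using compactness of $K$ together with $\la(t)\geq A_0$, obtains $E\big(\theta z_{a,R}(t_0),\partial_t(\theta z_{a,R})(t_0)\big)<E_c-\tfrac{a_1\gamma}{2}$ with a gap \emph{uniform in $R$}. The (FNLW) solution $v$ with this data then satisfies $\|v\|_{S(\R)}\le g\big(\tfrac{a_1\gamma}{2}\big)$ by the remark following the Compactness Proposition, while finite speed of propagation forces $v=z_{a,R}$ on a region whose $S$-norm equals $\|u_R\|_{L^{2(d+1)/(d-2)}_{t,x}([0,1]\times\{|x|<1\})}\to\infty$ as $R\to\infty$. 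The rescaling step and the Trace Lemma are precisely the two ingredients your sketch is missing.
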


\begin{remark}
Note that in the radial case this is automatic by Plancherel Theorem.
For non-radial case, we need a further linear estimate.
\end{remark}

\begin{lemma}[Trace Lemma]\label{lem:trace} Let $w$ solve the linear wave equation
\begin{equation} \label{linearwave-1}
\left\{ \aligned
    &w_{tt}-\Delta w =h\in L_t^1L_x^2,  \\
    &(w(0),\pa_tw(0))=(w_0,w_1)\in\dot H^1(\R^d)\times L^2(\R^d).
\endaligned
\right.
\end{equation}
Then, for $|a|\leq\frac14$, we have
$$\sup_t\Big\|\nabla_{t,x}w\big(\frac{x_1-at}{\sqrt{1-a^2}},x',\frac{t-ax_1}{\sqrt{1-a^2}}\big)\Big\|_{L^2(dx_1dx')}\leq C\Big[\|(w_0,w_1)\|_{\dot H^1\times L^2}+\|h\|_{L_t^1L_x^2}\Big].$$
\end{lemma}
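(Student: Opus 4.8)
\textbf{Proof proposal for the Trace Lemma (Lemma \ref{lem:trace}).}

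The plan is to reduce the inhomogeneous estimate to the homogeneous one via the Duhamel formula, and to prove the homogeneous estimate by going to the Fourier side and recognizing the composed function as a restriction of the free solution to a spacelike hyperplane, obtained from $\{t=0\}$ by a Lorentz boost of speed $|a|\le \tfrac14<1$. First I would write $w = S(t)(w_0,w_1) + \int_0^t \frac{\sin((t-s)\sqrt{-\Delta})}{\sqrt{-\Delta}}h(s)\,ds$, so that $\nabla_{t,x}w$ evaluated at the boosted point $\Phi_a(x_1,x',t)\triangleq\big(\tfrac{x_1-at}{\sqrt{1-a^2}},x',\tfrac{t-ax_1}{\sqrt{1-a^2}}\big)$ splits into a free piece plus a superposition of free evolutions with data $(0,h(s))$ started at time $s$. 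Minkowski's integral inequality in $s$ then shows it suffices to prove, uniformly in the boost,
\begin{equation}\label{eq:trace-homog}
\sup_{t}\big\|\nabla_{t,x}\big(S(\cdot)(g_0,g_1)\big)\circ\Phi_a\big\|_{L^2_{x_1,x'}}\le C\big(\|g_0\|_{\dot H^1}+\|g_1\|_{L^2}\big),
\end{equation}
with $C$ independent of $a$ for $|a|\le\tfrac14$; the contribution of the Duhamel term with data $(0,h(s))$ is bounded by $C\|h(s)\|_{L^2}$ and then integrated in $s$ to give the $L^1_tL^2_x$ norm.

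For \eqref{eq:trace-homog} I would write $S(t)(g_0,g_1)(y)=\int_{\R^d} e^{iy\cdot\xi}\big(\cos(t|\xi|)\hat g_0(\xi)+\tfrac{\sin(t|\xi|)}{|\xi|}\hat g_1(\xi)\big)\,d\xi$, so that $\nabla_{t,x}$ of it, composed with the affine map $\Phi_a$, is a sum of terms of the form $\int e^{i\Phi_a(x)\cdot(\xi,\pm|\xi|)} m(\xi)\,d\xi$ where $m(\xi)$ is $\hat g_0$ or $\hat g_1$ times a symbol of order exactly $1$ in $\xi$ (coming from $\nabla_{t,x}$). Since $\Phi_a$ is linear, $\Phi_a(x)\cdot(\xi,\pm|\xi|) = x\cdot \Lambda_a^{\!\top}(\xi,\pm|\xi|)$ where $\Lambda_a$ is the Lorentz matrix; the key point is that $\Lambda_a$ maps the light cone $\{|\xi|=\pm|\xi|\}$ to itself, so $\Lambda_a^{\!\top}(\xi,\pm|\xi|)=(\eta,\pm|\eta|)$ with $\eta=\eta_a(\xi)$ a smooth change of variables on $\R^d$ whose Jacobian is bounded above and below uniformly for $|a|\le\tfrac14$ (this is exactly where $|a|$ bounded away from $1$ is used). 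Changing variables $\xi\mapsto\eta$ reduces the boosted integral to an ordinary free evolution in the variable $x$, and then for each fixed value of the "time" component one applies the standard trace/Strichartz bound: the $L^2_x$ norm of $\int e^{ix\cdot\eta}\,\widetilde m(\eta)\,d\eta$ with $\widetilde m$ of the form $|\eta|\hat g(\eta_a^{-1}(\eta))\times(\text{bounded})$ is controlled by $\|\,|\eta|\hat g\,\|_{L^2}=\|g\|_{\dot H^1}$ (resp.\ $\|g\|_{L^2}$ for the $g_1$-term after the symbol of order $1$ is absorbed into the $\tfrac1{|\xi|}$). Taking the supremum over $t$ on the left and tracking that all constants depend only on the uniform bounds on $\Lambda_a$ and its inverse yields \eqref{eq:trace-homog}.

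The main obstacle is bookkeeping the symbol classes and the change of variables carefully: one must verify that after composing with $\Phi_a$ and changing variables to $\eta$, the resulting multiplier is genuinely $\hat g_0$ (or $\hat g_1$) multiplied by a symbol that is $O(|\eta|)$ (resp.\ $O(1)$) with constants uniform in $a$ for $|a|\le\tfrac14$, and in particular that no loss of derivatives occurs near $\xi=0$ or $\xi=\infty$ — this is why the Lorentz invariance of the light cone (hence the cleanness of the phase) is essential, rather than trying to estimate a generic Fourier integral operator with a curved phase. A secondary technical point is justifying the interchange of the $s$-integral (Duhamel) with the $L^2_{x_1,x'}$ norm and the composition with $\Phi_a$, which is handled by Minkowski's inequality together with the density of Schwartz data; once \eqref{eq:trace-homog} is in hand for the free flow, the inhomogeneous statement follows immediately.
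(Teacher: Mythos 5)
Your proposal is correct and follows essentially the same path as the paper: reduce to the homogeneous half-wave propagator $U(t)f$, write the boosted restriction as a Fourier integral, change variables in frequency (precisely $\eta_1=\frac{\xi_1-a|\xi|}{\sqrt{1-a^2}},\ \eta'=\xi'$, whose Jacobian $\frac{1-a\xi_1/|\xi|}{\sqrt{1-a^2}}$ is bounded above and below for $|a|\le\frac14$), and conclude by Plancherel since the $t$-dependent phase has modulus one. Your Lorentz-cone framing is just a conceptual restatement of why that change of variables is clean; the computation coincides with the paper's.
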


\begin{proof}
 Let $v(x,t)=U(t)f$ be given by
$\hat{v}(\xi,t)=e^{it|\xi|}\hat{f}(\xi)$, with $f\in L^2$. We
will show that
$$
\sup_t\Big\|v\left(\frac{x_1-at}{\sqrt{1-a^2}},x',\frac{t
-ax_1}{\sqrt{1-a^2}} \right)\Big\|_{L^2(dx_1dx')}\leq C\|f\|_{L^2},
$$
which easily implies the desired estimate. But
 \begin{align*}
 v(x,t)=&\int_{\R^d} e^{ix.\xi}e^{it|\xi|}\hat{f}(\xi)d\xi=
\int_{\R^d} e^{ix_1\xi_1}e^{it\abs{\xi}}e^{ix'.\xi'}\hat{f}(\xi)d\xi_1d\xi'\\
=& \int_{\R^d}e^{ix_1\xi_1}e^{it\sqrt{\xi_1^2+\abs{\xi'}^2}}e^{ix'.\xi'}\hat{f}(\xi_1,\xi')d\xi_1d\xi',
 \end{align*}
 so that
 \begin{align*}
 & v\left(\frac{x_1-at}{\sqrt{1-a^2}},x',\frac{t -ax_1}{\sqrt{1-a^2}} \right)\\
=&\int_{\R^d}
e^{i(x_1-at)\xi_1/\sqrt{1-a^2}}e^{i(t-ax_1)\sqrt{\xi_1^2+\abs{\xi'}^2}/\sqrt{1-a^2}}e^{ix'.\xi'}\hat{f}(\xi)d\xi_1d\xi'\\
=&\int_{\R^d}
e^{ix_1(\xi_1-a\abs{\xi})/\sqrt{1-a^2}}e^{ix'.\xi'}e^{-iat\xi_1/\sqrt{1-a^2}}e^{it\abs{\xi}/\sqrt{1-a^2}}\hat{f}(\xi)d\xi_1
d\xi'\\
=&\int_{\R^d}
e^{ix_1(\xi_1-a\abs{\xi})/\sqrt{1-a^2}}e^{ix'.\xi'}\hat{g}_t(\xi)d\xi_1d\xi',
 \end{align*}
where
$\hat{g}_t(\xi)=e^{-iat\xi_1/\sqrt{1-a^2}}e^{it\abs{\xi}/\sqrt{1-a^2}}\hat{f}(\xi)$.
We now define
$$\eta_1=\frac{\xi_1-a\abs{\xi}}{\sqrt{1-a^2}},\;\;\eta'=\xi'$$
and compute
 \begin{align*}
 \left\lvert \frac{d\eta}{d\xi} \right\rvert =& \det \left(
\begin{array}{ccccc}
\frac{1-a\xi_1/\abs{\xi}}{\sqrt{1-a^2}}
\frac{-a\xi_2/\abs{\xi}}{\sqrt{1-a^2}} & \ldots & \ldots &
\frac{-a\xi_d/\abs{\xi}}{\sqrt{1-a^2}} \\
0 & 1 & 0 & \ldots & 0 \\
0 & 0 & 1 & \ldots & 0 \\
\vdots & \vdots & \vdots & \ddots  & \vdots \\
0 & 0 & 0 & \ldots & 1
\end{array}
\right) \\
=& \left(\frac{1-a\xi_1/\abs{\xi}}{\sqrt{1-a^2}}\right)\approx
1\,\text{ for }\,\abs{a}\leq 1/4.
 \end{align*}
The result now follows from Plancherel's Theorem.

\end{proof}

\begin{remark}\label{r:2.3}
A density argument in fact shows that
$$
t\mapsto w\left(\frac{x_1-at}{\sqrt{1-a^2}},x',\frac{t-ax_1}{\sqrt{1-a^2}} \right)\in
C\left(\R;\dot{H}^1\left(\R^d,dx_1,d\bar{x}'\right) \right),
$$
and similarly for $\pa_tw$.
\end{remark}

 Note that if $u$ is a solution
of (FNLW), with maximal interval $I$ and $I'\subset\subset I, u\in L_{I'}^\frac{d+2}{d-2}L_x^\frac{2(d+2)}{d-2}$,
 and
so $|u|^\frac4{d-2}u\in L_{I'}^1L_x^2$. Thus, the conclusion of the Lemma \ref{lem:trace}
applies, provided the integration is restricted to
$$\Big(\frac{t-ax_1}{\sqrt{1-a^2}},\frac{x_1-at}{\sqrt{1-a^2}},x'\Big)\in I'\times\R^d.$$

\vskip0.15cm

\underline{\bf The proof of Theorem \ref{zeromome}}.

\vskip0.25cm

{\bf Case 1: $T_+(u_{0,c},u_{1,c})<+\infty$}.\; By scaling, we may assume $T_+(u_{0,c},u_{1,c})=1$.
By Lemma \ref{compset}, we have in this situation,
$${\rm supp}(u_c(\cdot,t),\pa_tu_c(\cdot,t))\subset B(0,1-t),~0<t<1,$$
 Assume,
to argue by contradiction, that
$$\int_{\mathbb R^d}\pa_{x_1}(u_{0,c})u_{1,c}dx=\gamma>0.$$
This equality will leads to  a contradiction.
For convenience, set
$$u(x,t)=u_c(x,1+t),~-1<t<0,$$
which satisfies the following  properties
\begin{equation*}
\left\{\begin{aligned}
&{\rm supp}~u(t,\cdot)\subset B(0,|t|), \;\;\;\; \gamma=\int_{\mathbb R^d}\pa_{x_1}u(t)u_t(t)dx,\quad\;t\in [-1,0);\\
&E(u(t),\partial_tu(t))=E_c,\;\; \int_{\mathbb R^d}|\nabla u(t)|^2dx\le (1-\bar\delta)\|\nabla W\|_2^2,\;\;\;t\in [-1,0),
\end{aligned}\right.\end{equation*}
by energy trapping Lemma \ref{lem:enetra}.
Note that
$${\rm supp}~u(t,\cdot)\subset B(0,-t), \;\; {\rm supp}~u_t(t,\cdot)\subset B(0,-t), \quad\; -1\le t<0,$$
For $a\in(0,1/4)$,  we
consider the {\bf Lorentz transformation}
\begin{equation}
z_a(x_1,x', t)=u\Big(\frac{x_1-at}{\sqrt{1-a^2}},x', \frac{t-ax_1}{\sqrt{1-a^2}}\Big),
\end{equation}
where $x=(x_1,x')\in \mathbb R^d, t\in [-1, 0)$ and $s=(t-ax_1)/\sqrt{1-a^2}$ is such that $-1\le s<0$.

 Note that, for this range of $s$ and $y=(y_1,y')$ such that$(y,s)\in {\rm supp} u$, we have$|y|\ge |s|$.
 Thus, if
 $$y_1=\frac{x_1-at}{\sqrt{1-a^2}}, \; y'=x'\;\;\Longrightarrow\;\Big\{(x_1,x')| x_1^2+|x'|^2\le t\Big\}\subset{\rm supp}(z_a,\dot{z}_a).$$
Fix  now {\bf $t\in[-1/2,0)$} and $x_1^2+|x'|^2\le t^2$, we have
$$ 0>\frac{(1-a)t}{\sqrt{1-a^2}}\ge \frac{t-ax_1}{\sqrt{1-a^2}} \ge \frac{(1+a)t}{\sqrt{1-a^2}}\ge -\frac12\frac{1+a^2}{\sqrt{1-a^2}} \ge-1$$
Hence, for such $(x,t)$, $z_a$ is defined and satisfies
$$z_a(x,t)=0,\;\;\nabla z_a(x,t)=0,\;\;\partial_tz_a(x,t)=0,\;\;\;-\frac12\le t<0, \;x_1^2+|x'|^2= t^2.$$
{\bf We extend $z_a(\cdot,t)$ to be zero for $|x|\ge t, -\frac12\le t<0$}.
An elementary calculation shows that if $u$ is  a
regular solution to (NLW) in $\mathbb R^d\times[-1,0)$,
then the resulting $z_d$  is  in the energy space and solves our equation (NLW) in $\mathbb R^d\times[-1/2,0)$.

The above easy calculation shows that
$${{\rm supp} z_a(t,\cdot)\subset B(0, |t|),\quad z_a\not\equiv0}$$
so that $T_+=0$ is the {\bf final time} of existence for $z_a$.
 A lengthy
calculation shows that
{\begin{equation*}
\lim_{a\downarrow0}\frac{E(z_a,\pa_tz_a)(-\frac12,\cdot)-E(u_{0,c},u_{1,c})}{a}=-\int_{\mathbb R^d}\pa_{x_1}(u_{0,c})u_{1,c}dx\triangleq-\gamma<0.\end{equation*}}
and  for small {{\bf a}}  such that
{\begin{equation*}
\left\{\begin{aligned}
& E(z_a,\pa_tz_a)(t_0,\cdot)<E(u_{0,c}, u_{1,c})=E_c,\\
&\int_{\mathbb R^d}|\nabla z_a(t_0)|^2dx<\int_{\mathbb R^d}|\nabla W|^2dx, \end{aligned}\right.
\;\quad{\text{for some}~t_0\in\big[-\frac12,-\frac14\big]}.\end{equation*}}
But, for $a$ small the above facts impliy that  $z_a$ should be global solution!
This {\bf contradicts}  with the fact  the {\bf final time of existence} of $z_a$ is {finite}!
and so {\bf contradicts}  with the {\bf definition of $E_c$} by taking $a>0$ small!

\vskip 0.1in

{\bf Case 2: $T_+(u_{0,c},u_{1,c})=+\infty$.} \;\;
In this case we have ${\la(t)\geq A_0>0}$.  The {\bf finiteness of the
energy} of $z_a$ is {\bf unclear}, because of the lack of the {\bf support property}. We instead
do a {\bf renormalization}. For large $R$, we first rescale $u_c$  as $u_R(x, t)=
R^\frac{d-2}2u_c(Rx,Rt)$, and for small {{$a$}}, define
$${z_{a,R}(t,x_1,x')\triangleq u_R\Big(\frac{t-ax_1}{\sqrt{1-a^2}},\frac{x_1-at}{\sqrt{1-a^2}},x'\Big).}$$
We assume that
$${\int_{\mathbb R^d}\pa_{x_1}(u_{0,c})u_{1,c}dx=\gamma>0}$$
and hope {\bf to obtain a  contradiction}.
We can prove (by integration in $t_0\in(1,2)$) that if
$${h(t_0)=\theta(x)z_{a,R}(t_0,x_1,x'),}$$
with {$\theta$ a fixed cut-off} function, for some $a_1$ small and $R$ large, we have, for some
${t_0\in(1, 2)}$, that
$${E(h(t_0),\pa_th(t_0))<E_c-\frac{a_1\gamma}2\quad~\text{and}\quad ~\int_{\mathbb R^d}|\nabla h(t_0)|^2dx<\int_{\mathbb R^d}|\nabla W|^2dx.}$$

We then let $v$ be the solution of (NLW) with data $h(t_0,\cdot)$, i.e.
\begin{equation*} \left\{ \aligned
    &{v_{tt}-\Delta v =-|v|^{\frac4{d-2}} v },    \;\qquad\; (t,x)\in \R \times \R^d , \\
   &  v(t_0)  = \theta(x)z_{a, R}(x, t_0), \quad\;     v_t(t_0)=  \theta(x)\dot{z}_{a, R}(x,t_0).
\endaligned
\right.
\end{equation*}
By the properties of critical element, we
know that
\begin{equation}{\|v\|_{S(\R)}\leq g\big(\frac{a_1\gamma}2\big)<\infty,}\quad \text{for $R$ large}. \label{fact-1}\end{equation}
 On the other hand, since ${\|u_c\|_{S(0,+\infty)}=+\infty}$,
we have that
\begin{equation}{\|u_R\|_{L_{t,x}^\frac{2(d+1)}{d-2}([0,1]\times\{|x|<1\})}\to+\infty,\quad ~\text{as}\quad ~R\to\infty.}\label{fact-2}\end{equation}
But, by {finite speed of propagation}, we have that $v=z_{a,R}$ on a large set and, after
a change of variables to undo the { Lorentz transformation}, we reach a {contradiction}
from  {\bf \eqref{fact-1} and \eqref{fact-2}}.

\subsection{Rigidity theorem}

To finish Theorem \ref{thm:kmw08}, we are reduced to:

\begin{theorem}[Rigidity Theorem]\label{thm:rigthm}
Assume that
$E(u_0,u_1)<E(W,0)$, and $\|\nabla u_0\|_2<\|\nabla W\|_2$.
Let $u$ be the corresponding solution of (FNLW), and
let $I_+=[0,T_+(u_0,u_1))$. Suppose that:

\vskip0.15cm

{\rm(i)}\;  $\int_{\mathbb R^d} \nabla u_0u_1dx=0.$

\vskip0.15cm
{\rm(ii)}\;  There exist $x(t),~\la(t)$ such that \begin{equation*}
K=\Big\{\vec{v}(x,t)=\Big(\la(t)^{-\frac{d-2}2}u_c\Big(\frac{x-x(t)}{\la(t)},t\Big),
\la(t)^{-\frac{d}2}\pa_tu_c\Big(\frac{x-x(t)}{\la(t)},t\Big)\Big),~t\in I_+\Big\}
\end{equation*}
has {\bf compact closure} in $\dot H^1\times L^2.$
\vskip0.15cm

{\rm(iii)} \; $x(t),~\la(t)$ are continuous, $\la(t)>0.$
\begin{equation*}\left\{\begin{aligned}
&\text{If}\; {T_+(u_0,u_1)<\infty},\;\text{we have}
\; {\la(t)\geq\frac{C}{T_+-t},~{\rm supp}(u,\pa_tu)\subset B(0,T_+-t);}\\
&\text{if}\;  {T_+(u_0,u_1)=\infty},\; \text{we have}
\;{x(0)=0,~\la(0)=1,~\la(t)\geq A_0>0.}\end{aligned}\right.\end{equation*}
 Then  ${T_+<\infty}$ is impossible,  and if ${T_+=\infty},~\Longrightarrow\;u\equiv0$.
\end{theorem}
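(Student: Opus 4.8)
The plan is to prove the Rigidity Theorem by establishing a virial-type identity for the compact solution and exploiting the coercivity from the energy-trapping lemma (Lemma \ref{lem:enetra}) together with the vanishing of momentum. I treat the two cases separately but with a unified mechanism.

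\textbf{Case $T_+ = \infty$.} First I would control the trajectory $\lambda(t), x(t)$: since $\lambda(t) \geq A_0 > 0$ and $\bar K$ is compact, a standard argument shows $\lambda(t) \lesssim 1$ and $|x'(t)| \lesssim \lambda(t)$, $|\lambda'(t)| \lesssim \lambda(t)$, and most importantly that $|x(t)| = o(t)$ as $t \to \infty$ (otherwise the solution would ``escape to spatial infinity'' contradicting that a fixed fraction of its energy is localized near $x(t)$ while Huygens/finite-speed forces the energy to be concentrated in $|x| \lesssim t$). With this in hand, I would introduce the truncated virial functional $z_R(t) = \int \psi_R(x)\, u(t,x)\, \partial_t u(t,x)\, dx$ together with the momentum-weighted correction $\int \psi_R\, x\cdot\nabla u\, \partial_t u\, dx$ — the point of the momentum assumption (i) is precisely that the boundary terms coming from $x(t)$ moving are negligible. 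Differentiating and using the equation, the main term is $-\int(|\partial_t u|^2 - \tfrac{2}{d}(|\nabla u|^2 - |u|^{2^*}))$ plus terms supported in $|x| \sim R$ which, by compactness of $\bar K$ and the bound $|x(t)|/R \to 0$ (after choosing $R = R(t) \to \infty$ slowly), are $o$ of the main term. By the coercivity (ii) of Lemma \ref{lem:enetra}, $\int(|\nabla u|^2 - |u|^{2^*}) \geq \delta_1 \|\nabla u\|_2^2 \gtrsim \delta_1 E(u_0,u_1)$. If $E(u_0,u_1) > 0$, integrating the resulting differential inequality on $[0,T]$ forces $|z_{R(T)}(T)|$ to grow at least linearly in $T$, while $|z_R(t)| \lesssim \|\vec u(t)\|_{\dot H^1\times L^2}^2 \cdot R(t)$ grows slower if $R(t)$ is chosen sublinearly — contradiction. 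Hence $E(u_0,u_1) = 0$, which by the coercivity $E \simeq \|\vec u\|_{\dot H^1\times L^2}^2$ gives $\vec u \equiv 0$.

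\textbf{Case $T_+ < \infty$.} Here the self-similar localization $\mathrm{supp}(u,\partial_t u) \subset B(0, T_+ - t)$ and $\lambda(t) \geq C/(T_+-t)$ are the key extra structure. I would run essentially the same virial computation but now \emph{without} the cutoff (the solution is already compactly supported, so $z(t) = \int u\, \partial_t u\, dx + \int x\cdot\nabla u\,\partial_t u\, dx$ is well-defined and all boundary terms vanish). The differential inequality becomes $\frac{d}{dt}(\text{something}) \leq -c\, E(u_0,u_1) \leq 0$, combined with the a priori bound $|z(t)| \lesssim (T_+ - t)\, \|\vec u(t)\|_{\dot H^1 \times L^2}^2 \to 0$ as $t \to T_+$ using Hardy's inequality on $B(0,T_+-t)$ together with $\|\vec u(t)\|_{\dot H^1\times L^2}$ bounded. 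A scaling analysis of the virial identity, integrating from $0$ to $T_+$, shows this is only consistent with $E(u_0,u_1) \leq 0$; but by Lemma \ref{lem:enetra}(iii), $E(u_0,u_1) \geq c\|\nabla(u_0,u_1)\|^2_{\dot H^1 \times L^2} \geq 0$ with equality only if $\vec u \equiv 0$, which contradicts $T_+ < \infty$. Thus $T_+ < \infty$ is impossible.

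\textbf{Main obstacle.} The delicate point — and the one requiring the most care — is the control of the spatial center $x(t)$ in the global case: showing $|x(t)| = o(t)$ so that the cutoff errors in the virial identity genuinely vanish. This is exactly where the non-radial difficulty lies and where the zero-momentum property (Theorem \ref{zeromome}) is indispensable; without it, the drift of $x(t)$ could a priori contribute a term of the same size as the coercive main term. I would handle it by combining the compactness of $\bar K$ (which confines a fixed fraction of the $\dot H^1 \times L^2$ mass to $|x - x(t)| \lesssim \lambda(t)$) with finite speed of propagation applied backwards in time, as in Lemma \ref{lem:fisma} and Lemma \ref{compset}, to pin down the trajectory; the momentum vanishing then ensures the virial functional, once corrected by the $x\cdot\nabla u$ term, has the clean monotonicity needed to close the contradiction.
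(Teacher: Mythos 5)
Your argument for the global case $T_+ = \infty$ is in the right spirit and essentially matches the paper's strategy: the paper also runs a virial computation with the multiplier $x\cdot\nabla u + \tfrac{d-1}{2}u$ cut off at scale $R$, uses compactness of $\bar K$ to control error terms, and uses the zero-momentum condition to prevent the centre $x(t)$ from drifting. The paper packages this as a pincer between two explicit time-scale bounds (Lemma~\ref{spatcon} shows the escape time $t_0(R,\eps)$ satisfies $t_0\leq CR$; Lemma~\ref{timecont} shows $t_0\geq C_0R/\eps$) rather than by first proving $|x(t)|=o(t)$ and then choosing $R(t)$ sublinearly, but these are implementations of the same mechanism and either would close the contradiction.

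However, your treatment of the finite-time case $T_+<\infty$ has a genuine gap, and it is precisely the gap the paper devotes half the section to filling. Set $z(t)=\int x\cdot\nabla u\,\pa_tu\,dx+\tfrac{d-1}{2}\int u\,\pa_tu\,dx$. As you say, coercivity gives $z'(t)\leq -cE$, and the support bound plus Hardy gives $|z(t)|\lesssim (T_+-t)\,E$, so $z(t)\to 0$. Integrating from $0$ to $T_+$ yields $z(0)\geq cE\,T_+$. But your own a priori bound also gives $z(0)\lesssim T_+\,E$, with a possibly larger constant. These two inequalities are mutually consistent; they do not force $E\leq 0$. There is no contradiction here, and no ``scaling analysis'' extracts one, because the virial multiplier is dimensionally matched to the kinetic energy on both sides. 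The underlying obstruction is the absence of an $L^2$ conservation law for the wave equation: for NLS the analogous step closes because mass conservation pins $\int |u|^2\,dx$, whereas for NLW the functional $z$ is free to take any value of order $ET_+$ at $t=0$ (the problem is time-translation invariant, so there is no constraint at a specific $t$). The paper acknowledges this explicitly and, after establishing the intermediate pinching $\lambda(t)\simeq 1/(T_+-t)$ by exactly the computation you describe (that part of your argument is correct and is the paper's Step~1--2), introduces self-similar coordinates $y=x/(1-t)$, $s=\log(1-t)^{-1}$, a Lyapunov functional $\tilde E(w(s;\delta))$ in the style of Giga--Kohn and Merle--Zaag, two successive integral improvements via pigeonholing, a compactness extraction to a nonzero stationary solution $w^\ast$ of a degenerate elliptic equation on $B_1$, and finally a unique-continuation argument (Trudinger regularity, a desingularization $r=a^2$, an even extension past the degenerate boundary, and Carleman estimates) to conclude $w^\ast\equiv 0$, contradiction. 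None of this machinery appears in your proposal, and the virial identity alone cannot replace it.

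In short: the global case is substantively correct; the finite-time case requires an entirely separate argument that you have not supplied, and the step ``integrating from $0$ to $T_+$ shows this is only consistent with $E\leq 0$'' is false.
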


\noindent\underline{\bf Virial identity}
This Rigidity Theorem provides the contradiction for the proof of Theorem \ref{thm:kmw08}.
For the proof we need some known identities (see
\cite{KM1,SS98}).

\begin{lemma}[Algebraic identity]\label{virial-identity-1} Let
$${r(R)=r(t,R)=\int_{|x|\geq R}\Big(|\nabla u|^2+|\pa_tu|^2+|u|^{2^\ast}+\frac{|u|^2}{|x|^2}\Big)dx.}$$
Let $u$ be a solution of {\rm (FNLW)}, {$t\in I,~\phi_R(x)=\phi(x/R),$ $\psi_R(x)=x\phi(x/R)$}, where {$\phi\in C_0^\infty(B_2),~\phi\equiv1$ on $|x|\leq1.$} Then:
\vskip 0.1in
{\rm(i)} {$\pa_t\big(\int_{\mathbb R^d}\psi_R\nabla u\pa_tudx\big)
=-\frac{d}2\int_{\mathbb R^d}|\pa_tu|^2dx+\frac{d-2}2\int_{\mathbb R^d}\big(|\nabla u|^2-|u|^{2^\ast}\big)dx+O(r(R)).$}
\vskip0.1in
{\rm(ii)}{$\pa_t\big(\int_{\mathbb R^d}\phi_R u\pa_tudx\big)=\int_{\mathbb R^d}|\pa_tu|^2dx
-\int_{\mathbb R^d}|\nabla u|^2dx+\int_{\mathbb R^d}|u|^{2^\ast}dx+O(r(R)).$}
\vskip0.1in
{\rm(iii)}{$\pa_t\big(\int_{\mathbb R^d}\psi_R\big[\frac12|\nabla u|^2
+\frac12|\pa_tu|^2-\frac1{2^\ast}|u|^{2^\ast}\big]dx\big)=-\int_{\mathbb R^d}\nabla u\pa_tudx+O(r(R)).$}
\end{lemma}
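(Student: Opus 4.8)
\textbf{Proof proposal for Lemma~\ref{virial-identity-1}.}

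The plan is to derive all three identities from the local conservation laws of (FNLW), namely the stress-energy tensor identities, by pairing them against the cutoff weights $\phi_R$ and $\psi_R$ and carefully tracking the error terms that arise when the derivatives fall on the cutoffs. Throughout, one uses the equation $\partial_{tt}u-\Delta u=-|u|^{\frac4{d-2}}u$ and integration by parts in $x$; no boundary terms appear since everything is compactly supported in $x$ for each fixed $t$ (or decays fast enough), so the only subtlety is the region where $\nabla\phi_R$ and derivatives of $\psi_R$ are nonzero, i.e. $R\le|x|\le 2R$, and there we bound every term by the quantity $r(R)$ using Cauchy--Schwarz and Hardy's inequality.

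First I would establish (ii), which is the cleanest. Differentiating $\int \phi_R u\,\partial_t u\,dx$ in $t$ gives $\int\phi_R|\partial_t u|^2\,dx+\int\phi_R u\,\partial_{tt}u\,dx$. Substitute $\partial_{tt}u=\Delta u-|u|^{\frac4{d-2}}u$; the term $\int\phi_R u\,\Delta u\,dx=-\int\phi_R|\nabla u|^2\,dx-\int u\nabla u\cdot\nabla\phi_R\,dx$ after one integration by parts, and $\int u\nabla u\cdot\nabla\phi_R\,dx=O(r(R))$ since $|\nabla\phi_R|\lesssim R^{-1}\mathbf 1_{\{R\le|x|\le 2R\}}$ and $R^{-1}|u|\lesssim |u|/|x|$ there. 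The potential term contributes $-\int\phi_R|u|^{2^*}\,dx=-\int|u|^{2^*}\,dx+O(r(R))$ since $1-\phi_R$ is supported in $|x|\ge R$; likewise $\int\phi_R|\nabla u|^2=\int|\nabla u|^2+O(r(R))$ and similarly for $|\partial_t u|^2$. Collecting terms yields (ii). Next, for (i), differentiate $\int\psi_R\cdot\nabla u\,\partial_t u\,dx$ in $t$: one gets $\int\psi_R\cdot\nabla(\partial_t u)\,\partial_t u\,dx+\int\psi_R\cdot\nabla u\,\partial_{tt}u\,dx$. The first is $-\frac12\int(\operatorname{div}\psi_R)|\partial_t u|^2\,dx$, and $\operatorname{div}\psi_R=d\phi_R+\frac{x}{R}\cdot\nabla\phi(x/R)=d+O(\mathbf 1_{\{R\le|x|\le 2R\}})$, giving $-\frac d2\int|\partial_t u|^2+O(r(R))$. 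For the second, replace $\partial_{tt}u$ by $\Delta u-|u|^{\frac4{d-2}}u$; the Laplacian term $\int\psi_R\cdot\nabla u\,\Delta u\,dx$ is handled by integrating by parts and using the standard Rellich-type identity $\int(\psi\cdot\nabla u)\Delta u=\int\left(\frac12\operatorname{div}\psi|\nabla u|^2-\partial_j\psi_k\,\partial_j u\,\partial_k u\right)dx$, which with $\psi=\psi_R$ produces $\frac{d-2}{2}\int|\nabla u|^2+O(r(R))$; the nonlinear term $-\int\psi_R\cdot\nabla u\,|u|^{\frac4{d-2}}u\,dx=-\frac{d-2}{2d}\cdot\frac{2d}{d-2}\int|u|^{2^*}\dots$ — more precisely one writes $\nabla u\cdot|u|^{\frac4{d-2}}u=\frac{d-2}{2d}\nabla(|u|^{2^*})$ and integrates by parts, getting $\frac{d-2}{2}\int|u|^{2^*}\cdot\frac{1}{d}\operatorname{div}\psi_R\,dx=-\frac{d-2}{2}\int|u|^{2^*}+O(r(R))$ with the correct sign. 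Combining gives (i). Finally (iii): differentiate the local energy density weighted by $\psi_R$ and use the pointwise conservation law $\partial_t\big(\frac12|\nabla u|^2+\frac12|\partial_t u|^2-\frac1{2^*}|u|^{2^*}\big)=\operatorname{div}(\partial_t u\,\nabla u)$, which follows by multiplying (FNLW) by $\partial_t u$; then $\partial_t\int\psi_R(\cdots)dx=\int\psi_R\operatorname{div}(\partial_t u\nabla u)\,dx=-\int(\operatorname{div}\psi_R\text{-type weights})\partial_t u\,\nabla u$, and since $\operatorname{div}$ applied to the vector $\psi_R$ equals $d+O(\mathbf 1)$ componentwise one isolates $-\int\nabla u\,\partial_t u\,dx+O(r(R))$ — here one must be slightly careful that $\psi_R$ is vector-valued so the natural pairing gives $-\sum_j\int\partial_j(\psi_R)_j$-type terms, but the leading contribution is $-\int\partial_t u\,\nabla u\,dx$ as stated.

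The main obstacle, and the only place requiring genuine care rather than bookkeeping, is verifying that \emph{every} error term is actually controlled by $r(R)$ and not merely by $\sup$-norms on the annulus. The borderline cases are the terms of the form $\int_{R\le|x|\le 2R}u\,\nabla u\cdot\nabla\phi_R\,dx$ and $\int_{R\le|x|\le 2R}u^2\,|\nabla^2\phi_R|\,dx$: for the first one writes $|\nabla\phi_R|\lesssim R^{-1}\lesssim |x|^{-1}$ on the annulus, so it is bounded by $\big(\int_{|x|\ge R}\frac{|u|^2}{|x|^2}\big)^{1/2}\big(\int_{|x|\ge R}|\nabla u|^2\big)^{1/2}\le r(R)$; the second, which appears in (iii) when a second derivative hits $\psi_R$, is bounded by $R^{-2}\int_{R\le|x|\le 2R}|u|^2\lesssim\int_{|x|\ge R}\frac{|u|^2}{|x|^2}\le r(R)$. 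This is precisely why the weight $|u|^2/|x|^2$ is included in the definition of $r(R)$, and pointing this out is essentially the whole content of the ``$O(r(R))$'' claims; once it is in place, each identity follows by assembling the pieces above. One should also note that these computations are first carried out for smooth solutions and then extended to general finite-energy Strichartz solutions by the density/stability theory (Theorem~\ref{thm:pertur}), which guarantees the identities persist in the limit.
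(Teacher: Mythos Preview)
Your approach is correct and is the standard multiplier computation; the paper does not actually prove this lemma but simply cites \cite{KM1,SS98}, and what you wrote is essentially what one finds there. There is, however, one concrete slip you should correct: you have written the equation with the \emph{defocusing} sign, $\partial_{tt}u=\Delta u-|u|^{4/(d-2)}u$, whereas (FNLW) is focusing, $\partial_{tt}u=\Delta u+|u|^{4/(d-2)}u$. This sign propagates. In (ii) you produce a potential contribution $-\int\phi_R|u|^{2^*}$ and then write ``Collecting terms yields (ii)'', but (ii) has $+\int|u|^{2^*}$; with the focusing sign the term is $+\int\phi_R|u|^{2^*}$ and the collection is correct. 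Likewise in (i): from your $-\int\psi_R\cdot\nabla u\,|u|^{4/(d-2)}u\,dx$, integration by parts gives $+\tfrac{d-2}{2d}\int(\operatorname{div}\psi_R)|u|^{2^*}=+\tfrac{d-2}{2}\int|u|^{2^*}+O(r(R))$, not the $-\tfrac{d-2}{2}\int|u|^{2^*}$ you assert ``with the correct sign''; again the focusing sign repairs this. For (iii), the density $\tfrac12|\nabla u|^2+\tfrac12|\partial_t u|^2-\tfrac{1}{2^*}|u|^{2^*}$ you use is already the focusing energy density, and its local conservation law $\partial_t e=\operatorname{div}(\partial_t u\,\nabla u)$ does hold, but it comes from multiplying the \emph{focusing} equation by $\partial_t u$, not the equation you stated. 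Once the sign is fixed throughout, your argument is complete; your explanation of why the $|u|^2/|x|^2$ weight in $r(R)$ is needed to absorb the cutoff errors on the annulus is exactly the substantive point.
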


\vskip0.12cm
\noindent\underline{\bf The Proof of Rigidity theorem \ref{thm:rigthm}}

\vskip0.2cm
\noindent{\bf Case 1: $T_+(u_0,u_1)=+\infty$}

\vskip0.2cm
We start out the proof of case 1, without loss of generality, we may assume $x(t)$, $\lambda(t)$ continuous
such that
$$\lambda(t)\ge A_0>0, \;\; x(0)=0,\;\; \lambda(0)=1.$$
Assume that $(u_0,u_1)\neq(0,0)$ such that
\begin{equation}\label{nlw-rig1.1} ~E=E(u_0,u_1)\le (1-\delta_0)E(W,0), \quad\;\|\nabla u_0\|_2<\|\nabla W\|_2,\end{equation}
then, from {variational estimates},
\begin{equation}\label{nlw-rig1.2}
{\sup_{t>0}\|(\nabla u(t),\partial_tu(t))\|_{\dot H^1\times L^2}\leq CE, \quad E>0.}
\end{equation}
We also have
\begin{equation}\label{nlw-rig1.3}\left\{\begin{aligned}
&{\int_{\mathbb R^d}\big(|\nabla u(t)|^2-|u(t)|^{2^\ast}\big)dx\geq C_{\bar\delta}\int_{\mathbb R^d}|\nabla u(t)|^2dx,\quad \bar\delta=\bar\delta(\delta_0),\;~t>0}\\
&{\frac12\int_{\mathbb R^d}|\pa_tu(t)|^2dx+\frac12\int_{\mathbb R^d}\big(|\nabla u(t)|^2-|u(t)|^{2^\ast}\big)dx\geq C_{\bar\delta}E,\;\;\quad \;~t>0.}\end{aligned}\right.
\end{equation}
The {compactness of $\bar K$} and the fact that ${\la(t)\geq A_0>0}$ show that, given $\epsilon>0,$
we can find $R_0(\epsilon)>0$ so that, for all $t>0$, we have
\begin{equation}\label{nlw-rig1.4}
{\int_{\big|x+\frac{x(t)}{\la(t)}\big|\geq R_0(\epsilon)}\Big(|\nabla u|^2
+|\pa_tu|^2+|u|^{2^\ast}+\frac{|u|^2}{|x|^2}\Big)dx\leq \epsilon E.}
\end{equation}

The proof of this case is accomplished through two lemmas.

\begin{lemma}[Control-lemma]\label{spatcon} There exist $\epsilon_1,~C>0$ such that,
if $\epsilon\in(0,\epsilon_1),\exists R_0(\varepsilon)>0$, for any $~R>2R_0(\epsilon)$,there exists {$t_0=t_0(R,\epsilon)$}
with $0<t_0\leq CR,$ such that for $0<t<t_0$, we have
$${\frac{|x(t)|}{\la(t)}<R-R_0(\epsilon)~~\text{\rm and}~~\frac{|x(t_0)|}{\la(t_0)}=R-R_0(\epsilon)> R_0(\epsilon).}$$
\end{lemma}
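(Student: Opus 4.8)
The plan is to run a ``self-improving'' argument, following Kenig--Merle, that exploits the finite speed of propagation together with the conservation of the (nonzero) energy $E$ and the fact that $\lambda(t)\ge A_0>0$. Fix $\epsilon\in(0,\epsilon_1)$ with $\epsilon_1$ small to be chosen, and let $R_0(\epsilon)$ be the radius from \eqref{nlw-rig1.4}, so that at every time $t>0$ the energy-momentum density of $u$ is concentrated, up to mass $\epsilon E$, in the ball $\{|x+x(t)/\lambda(t)|\le R_0(\epsilon)\}$. First I would argue by contradiction: suppose that for some $R>2R_0(\epsilon)$ one has $|x(t)|/\lambda(t)<R-R_0(\epsilon)$ for \emph{all} $t>0$ (the alternative, that this fails at some finite first time $t_0$, is exactly the conclusion of the Lemma, and the continuity of $x(t),\lambda(t)$ together with $x(0)=0,\lambda(0)=1$ guarantees that this $t_0$, if it exists, is positive and satisfies $|x(t_0)|/\lambda(t_0)=R-R_0(\epsilon)$, which for $R>2R_0$ is $>R_0(\epsilon)$; so the real content is the bound $t_0\le CR$, and the contradiction argument produces it).

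Under the contradiction hypothesis, the spatial localization \eqref{nlw-rig1.4} combined with $|x(t)/\lambda(t)|<R-R_0(\epsilon)$ shows that at every time $t>0$ essentially all of the conserved energy of $u(t)$ lives in the fixed ball $\{|x|\le R\}$:
\begin{equation*}
\int_{|x|\ge R}\Big(|\nabla u|^2+|\pa_tu|^2+|u|^{2^\ast}+\tfrac{|u|^2}{|x|^2}\Big)(t)\,dx\le \epsilon E,\qquad \forall\, t>0.
\end{equation*}
Now I would test the equation with the truncated multiplier $\psi_R\nabla u\,\pa_t u$ of Lemma \ref{virial-identity-1}(i). Writing $z(t)=\int \psi_R\nabla u\,\pa_t u\,dx$, part (i) gives
\begin{equation*}
z'(t)=-\tfrac d2\int|\pa_tu|^2+\tfrac{d-2}2\int\big(|\nabla u|^2-|u|^{2^\ast}\big)+O(r(t,R)),
\end{equation*}
and $|r(t,R)|\le \epsilon E$ for all $t$. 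By the energy-trapping/coercivity bounds \eqref{nlw-rig1.3}, the first two terms together are bounded below by a fixed positive multiple $c_{\bar\delta}E$ of the energy (this is where we combine $\int(|\nabla u|^2-|u|^{2^\ast})\ge C_{\bar\delta}\int|\nabla u|^2$ with $\tfrac12\int|\pa_tu|^2+\tfrac12\int(|\nabla u|^2-|u|^{2^\ast})\ge C_{\bar\delta}E$, rearranged so that the $-\tfrac d2\int|\pa_tu|^2$ term is absorbed). Hence, after shrinking $\epsilon_1$ so that $\epsilon E\le \tfrac12 c_{\bar\delta}E$, we get $z'(t)\ge \tfrac12 c_{\bar\delta}E>0$ for all $t>0$, and therefore $z(t)\ge z(0)+\tfrac12 c_{\bar\delta}E\,t\to+\infty$. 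On the other hand, by Cauchy--Schwarz and the support/localization properties,
\begin{equation*}
|z(t)|\le \|\psi_R\|_{L^\infty}\|\nabla u(t)\|_{L^2}\|\pa_tu(t)\|_{L^2}\lesssim R\,E,
\end{equation*}
using $|\psi_R(x)|=|x\phi(x/R)|\lesssim R$ and the uniform bound \eqref{nlw-rig1.2}. Combining the lower bound $z(t)\gtrsim E\,t$ with the upper bound $|z(t)|\lesssim RE$ forces $t\lesssim R$; taking $t$ large enough (still finite, of order $R$) contradicts this. This contradiction shows that the first time $t_0$ with $|x(t_0)|/\lambda(t_0)=R-R_0(\epsilon)$ must occur, and the same numerology gives $t_0\le CR$ for an absolute constant $C$ (depending on $d$, $\delta_0$ but not on $R$ or $\epsilon$).

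The main obstacle I expect is bookkeeping the coercivity: one has to be careful that the coefficient $-\tfrac d2$ in front of $\int|\pa_tu|^2$ does not destroy positivity, and that the error term $O(r(t,R))$ is genuinely controlled \emph{uniformly in $t$} by $\epsilon E$ rather than by something that could grow. Both are handled by the energy-trapping Lemma \ref{lem:enetra} (which gives $E>0$ and the two displayed coercivity inequalities with a gap independent of $t$) and by the compactness of $\bar K$ with $\lambda(t)\ge A_0$ (which is precisely what makes $R_0(\epsilon)$, and hence $r(t,R)\le\epsilon E$, time-independent). A secondary technical point is that Lemma \ref{virial-identity-1} is stated for solutions and all the integrals are finite on $I_+=[0,\infty)$ by \eqref{nlw-rig1.2}; one should also record that $z(t)$ is $C^1$ so that integrating $z'$ is legitimate, which follows from the local Cauchy theory and the Strichartz bounds.
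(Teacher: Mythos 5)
Your overall strategy (contradiction, a virial-type quantity whose derivative has a definite sign and whose size is at most $O(RE)$) is the right one and matches the paper, but the choice of multiplier is wrong in a way that breaks the sign argument, and this is exactly where you admit the proof is delicate. You take $z(t)=\int_{\R^d}\psi_R\nabla u\,\pa_tu\,dx$, which by Lemma~\ref{virial-identity-1}(i) gives
\begin{equation*}
z'(t)=-\frac d2\int|\pa_tu|^2\,dx+\frac{d-2}{2}\int\bigl(|\nabla u|^2-|u|^{2^\ast}\bigr)\,dx+O(r(R)),
\end{equation*}
and you claim the coercivity estimates \eqref{nlw-rig1.3} make this bounded below by a positive multiple of $E$. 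That is not true: the two terms have opposite signs, and nothing in \eqref{nlw-rig1.3} (or anywhere else) bounds $\int|\pa_tu|^2$ by the potential-energy defect $\int(|\nabla u|^2-|u|^{2^\ast})$. Concretely, at a time where $\|\pa_tu\|_{L^2}$ is large compared with $\|\nabla u\|_{L^2}$, the display above is strictly negative; if instead $\pa_tu$ nearly vanishes it is strictly positive. So $z'(t)$ has no definite sign, and the ``self-improving'' step collapses. (Algebraically, any attempt to write $-\frac d2\int|\pa_tu|^2+\frac{d-2}{2}\int(|\nabla u|^2-|u|^{2^\ast})=\alpha E+\beta\int|\pa_tu|^2+\gamma\int(|\nabla u|^2-|u|^{2^\ast})$ forces $\alpha=0$, so there is no ``energy part'' to give a lower bound.)

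The paper fixes this by using the full scaling multiplier, combining parts (i) and (ii) of Lemma~\ref{virial-identity-1}: one sets
\begin{equation*}
z_R(t)=\int_{\R^d}\Bigl[\psi_R\nabla u\,\pa_tu+\frac{d-1}{2}\phi_R\,u\,\pa_tu\Bigr]dx,
\end{equation*}
for which
\begin{equation*}
z_R'(t)=-\frac12\int|\pa_tu|^2\,dx-\frac12\int\bigl(|\nabla u|^2-|u|^{2^\ast}\bigr)\,dx+O(r(R)).
\end{equation*}
Here both main terms have the \emph{same} (negative) sign; the second line of \eqref{nlw-rig1.3} gives $\frac12\int|\pa_tu|^2+\frac12\int(|\nabla u|^2-|u|^{2^\ast})\ge C_{\bar\delta}E$, so once $\epsilon$ is small enough (relative to $C_{\bar\delta}$) one obtains $z_R'(t)\le-\tilde C E/2<0$ uniformly in $t$. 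The rest of your argument (the $|z_R(t)|\lesssim RE$ bound, the integration in $t$, the choice of $C$) then goes through exactly as you wrote it, just with the opposite sign of monotonicity. In short: you need the $\frac{d-1}{2}\phi_R\,u\,\pa_tu$ correction term; without it the coercivity does not apply and there is no monotonicity to exploit.

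Two smaller points. First, your parenthetical remark that the continuity of $x(t),\lambda(t)$ with $x(0)=0,\lambda(0)=1$ automatically produces a first crossing time $t_0$ is fine, but the Lemma's real content, as you note, is the quantitative bound $t_0\le CR$; the contradiction hypothesis should be stated as in the paper, namely that $|x(t)|/\lambda(t)<R-R_0(\epsilon)$ for all $t\in(0,CR)$ with $C$ large, rather than for all $t>0$ (using $t_0\le T_+=+\infty$ as you do is harmless but loses the uniformity that delivers $t_0\le CR$ cleanly from the numerology). Second, your observation that $|r(t,R)|\le\epsilon E$ uniformly requires the contradiction hypothesis plus \eqref{nlw-rig1.4} to place $|x+x(t)/\lambda(t)|\ge R_0(\epsilon)$ for $|x|\ge R$ is correct and is the same argument as in the paper.
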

Note that in the radial case, since we can take {$x(t)\equiv0$}, a {contradiction}
follows directly from the above Lemma. This will be the analog of the {local virial identity}
proof for the corresponding case of (NLS).

\qquad For the {non-radial case} we also need:
\begin{lemma}[$t_0$ control]\label{timecont}
There exists {$\epsilon_2\in(0,\epsilon_1),~R_1(\epsilon)>0,~C_0>0$} so that if ${R>R_1(\epsilon)}$, for ${0<\epsilon<\epsilon_2}$, we have
$${t_0(R,\epsilon)\geq C_0R/\epsilon.}$$
\end{lemma}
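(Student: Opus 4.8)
The plan is to combine the vanishing of the momentum with the localised energy--moment identity of Lemma~\ref{virial-identity-1}(iii) to show that the concentration centre $c(t):=-x(t)/\la(t)$ of the critical element drifts at most at ``speed'' $O(\epsilon)$, so that reaching $|c(t_0)|=R-R_0(\epsilon)$ forces $t_0\gtrsim R/\epsilon$. Fix $\epsilon\in(0,\epsilon_1)$ and the associated radius $R_0(\epsilon)>0$ from \eqref{nlw-rig1.4}, and for $R>2R_0(\epsilon)$ introduce
$$L_R(t)\triangleq\int_{\R^d}\psi_R(x)\Big[\tfrac12|\nabla u(x,t)|^2+\tfrac12|\pa_tu(x,t)|^2-\tfrac1{2^\ast}|u(x,t)|^{2^\ast}\Big]dx,\qquad \psi_R(x)=x\phi(x/R).$$
By Lemma~\ref{virial-identity-1}(iii) and the conservation of momentum $\int_{\R^d}\nabla u\,\pa_tu\,dx=\int_{\R^d}\nabla u_0\,u_1\,dx=0$ (hypothesis (i) of Theorem~\ref{thm:rigthm}), one gets $|\pa_tL_R(t)|\le C\,r(t,R)$. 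For $0<t<t_0(R,\epsilon)$ Lemma~\ref{spatcon} gives $|c(t)|<R-R_0(\epsilon)$, hence $\{|x|\ge R\}\subset\{|x-c(t)|\ge R_0(\epsilon)\}$ and therefore $r(t,R)\le C\epsilon E$ by \eqref{nlw-rig1.4}; integrating in time, $|L_R(t_0)-L_R(0)|\le C\epsilon E\,t_0$.

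Next I would evaluate the two endpoints. Energy conservation gives $\int_{\R^d}\big[\tfrac12|\nabla u|^2+\tfrac12|\pa_tu|^2-\tfrac1{2^\ast}|u|^{2^\ast}\big]dx=E$ for every $t$, while Lemma~\ref{lem:enetra} yields the uniform bound $\int_{\R^d}\big(|\nabla u|^2+|\pa_tu|^2+|u|^{2^\ast}\big)dx\lesssim_{\delta_0}E$ together with $E>0$ (since $(u_0,u_1)\neq 0$). Splitting the $L_R(t_0)$--integral into the region $|x-c(t_0)|<R_0(\epsilon)$, where $\psi_R(x)=x$ because $|x|\le R$, and its complement, using $|\psi_R|\le 2R$ and \eqref{nlw-rig1.4} on the complement and the concentration of $\vec u(t_0)$ near $c(t_0)$ at scale $\le A_0^{-1}\le R_0(\epsilon)$ on the main part, I expect
$$L_R(t_0)=c(t_0)\,E+O\big(\epsilon E R\big)+O\big(R_0(\epsilon)E\big),\qquad |L_R(0)|\le C\big(\epsilon E R+R_0(\epsilon)E\big),$$
the latter because $c(0)=-x(0)/\la(0)=0$ and $\vec u(0)$ is concentrated near the origin. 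Since $|c(t_0)|=R-R_0(\epsilon)$ this gives $|L_R(t_0)|\ge (R-R_0(\epsilon))E-C\epsilon E R-CR_0(\epsilon)E$. Combining the three displays: choosing first $\epsilon<\epsilon_2$ small and then $R>R_1(\epsilon):=C'R_0(\epsilon)$ large, the left side is $\ge\tfrac18RE$, so $\tfrac18RE\le|L_R(0)|+C\epsilon Et_0\le C''\epsilon ER+C\epsilon Et_0$, whence $t_0\ge C_0R/\epsilon$ with $C_0>0$ absolute. (Fed back into the bound $t_0\le CR$ of Lemma~\ref{spatcon} this yields the contradiction finishing Theorem~\ref{thm:rigthm} in Case~1.)

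The step I expect to be the main obstacle is the bookkeeping of the two scales $R_0(\epsilon)$ and $R$: all error terms bounded only by a constant multiple of $R_0(\epsilon)E$ --- coming from the finite spread of the bubble and from $L_R(0)$ --- must be absorbed into the main term of size $(R-R_0(\epsilon))E$, which is possible only by freezing $\epsilon$ first, then fixing $R_0(\epsilon)$, and finally sending $R\to\infty$. This requires extracting cleanly, from the precompactness of $\bar K$ and the lower bound $\la(t)\ge A_0$, the quantitative fact that $\vec u(t)$ is concentrated in $|x-c(t)|<R_0(\epsilon)$ up to $L^2$-mass $\le\epsilon E$ uniformly in $t$; and, since the density $\tfrac12|\nabla u|^2+\tfrac12|\pa_tu|^2-\tfrac1{2^\ast}|u|^{2^\ast}$ is sign-indefinite, every absolute-value estimate must be run with $\tfrac12|\nabla u|^2+\tfrac12|\pa_tu|^2+\tfrac1{2^\ast}|u|^{2^\ast}$, to which \eqref{nlw-rig1.4} applies directly.
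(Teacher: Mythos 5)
Your proposal is correct and follows essentially the same route as the paper: you use the same functional $y_R(t)=\int\psi_R\big(\tfrac12|\pa_tu|^2+\tfrac12|\nabla u|^2-\tfrac1{2^\ast}|u|^{2^\ast}\big)dx$ (which you call $L_R$), the identity of Lemma~\ref{virial-identity-1}(iii) combined with vanishing momentum to bound $|y_R(t_0)-y_R(0)|\lesssim\epsilon E\,t_0$, the same small bound on $|y_R(0)|$ from $x(0)=0$ and \eqref{nlw-rig1.4}, and the same decomposition at time $t_0$ into the region $|x+x(t_0)/\la(t_0)|\le R_0(\epsilon)$ (where $\psi_R(x)=x$) plus a small tail to extract $|y_R(t_0)|\gtrsim ER$. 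Your closing remarks on the sign-indefinite density and the order of choosing $\epsilon$, $R_0(\epsilon)$, $R$ are exactly the right cautions and match the paper's bookkeeping.
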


From the above two Lemmas, we have, for
$${\epsilon\in(0,\epsilon_1),\quad\Longrightarrow\; ~R>2R_0(\epsilon),\quad ~t_0(R,\epsilon)\leq CR,}$$
while for
$${\epsilon\in(0,\epsilon_2),\quad\Longrightarrow\; ~R>R_1(\epsilon),\quad ~t_0(R,\epsilon)\geq\frac{C_0R}{\epsilon}.}$$
Thus, for ${\epsilon\in(0,\epsilon_2),~R>\max\{R_1(\epsilon),2R_0(\epsilon)\}}$, we have
$${\frac{C_0R}{\epsilon}\leq t_0(R,\epsilon)\leq CR.}$$
This is
clearly a {contradiction} for {$\epsilon$ small}. Therefore,{$(u_0,u_1)\equiv0$} in the case {$T_+(u_0,u_1)=\infty.$}
\vskip 0.1in
\qquad Thus, it suffices to show the above two Lemmas.

\begin{proof}[{\bf Proof of Lemma \ref{spatcon}:}]
If not, since $x(0)=0,~\la(0)=1$,  and  both $\la(t),~x(t)$ continuous, we have
$${\forall~t\in(0,CR)\quad ~\text{with}~C~\text{large},\quad ~\frac{|x(t)|}{\la(t)}<R-R_0(\epsilon).}$$
Let
$${z_R(t)=\int_{\mathbb R^d}\Big[\psi_R\nabla u\pa_tu+\frac{d-1}2\phi_Ru\pa_tu\Big]dx
=\int_{\mathbb R^d}\Big[\phi_R|x|\Big(\frac{x}{|x|}\cdot \nabla+\frac{d-1}{2|x|}\Big)u\pa_tu\Big]dx.}$$
Then
$${z_R'(t)=-\frac12\int_{\mathbb R^d}|\pa_tu|^2dx-\frac12\int_{\mathbb R^d}\big(|\nabla u|^2-|u|^{2^\ast}\big)dx+O(r(R)).}$$
But, for $|x|>R,~t\in(0,CR)$, we have by \eqref{nlw-rig1.4} and  Lemma \ref{lem:enetra}
$${\Big|x+\frac{x(t)}{\la(t)}\Big|\geq R_0(\epsilon)}, \;\Longrightarrow\; {|r(R)|\leq\epsilon E.}
\;\;\Longrightarrow {z_R'(t)\leq-\tilde{C}E/2}, \;\;\text{\bf for $\epsilon$ small}.$$
 {On the other hand}, we  have
${|z_R(t)|\leq C_1RE.}$
Integrating in $t$ we obtain $${\frac12CR\tilde{C}
E\leq z_R(0)-z_R(CR)\leq2C_1RE,\Longrightarrow~}\text{a {contradiction} for $C$ large.}$$
\end{proof}

\begin{proof}[{\bf Proof of Lemma \ref{timecont}:}]

For $t\in[0,t_0],$ set
$${y_R(t)=\int_{\mathbb R^d}\psi_R\Big(\frac12|\pa_tu|^2+\frac12|\nabla u|^2-\frac1{2^\ast}|u|^{2^\ast}\Big)dx,
\quad \psi_R(x)=x\phi\big(\frac{x}R\big).}$$
By Lemma \ref{spatcon}, we have
$${\Big|x+\frac{x(t)}{\la(t)}\Big|\ge |x|-(R-R_0(\epsilon))\ge R_0(\epsilon),\quad ~\forall~|x|>R.}$$
Conservation of momentum and (iii) in Lemma \ref{virial-identity-1} imply that
$${\int_{\mathbb R^d} \nabla u_0u_1dx=0=\int_{\mathbb R^d}\nabla u(t)\pa_tu(t)dx\;\; ~\text{and}~y_R'(t)=O(r(R)),}$$
and hence
\begin{equation}\label{nlw-rig1.5}
 r(R)\le E\epsilon, \;\Longrightarrow\;{|y_R(t_0)-y_R(0)|\leq\tilde C\epsilon Et_0.}
\end{equation}
However, noting that $x(0)=0$ and compactness \eqref{nlw-rig1.4}, we have
\begin{equation}\label{nlw-rig1.6}
{|y_R(0)|\leq\tilde CR_0(\epsilon)E+O(Rr(R_0(\epsilon))\leq\tilde CE(R_0(\epsilon)+\epsilon R).}
\end{equation}
Also
\begin{align*}
{|y_R(t_0)|}\geq&{\Big|\int_{\big|x+\frac{x(t_0)}{\la(t_0)}\big|\leq R_0(\epsilon)}\psi_R\big(\frac12|\pa_tu|^2
+\frac12|\nabla u|^2-\frac1{2^\ast}|u|^{2^\ast}\big)dx\Big|}\\
&{-\Big|\int_{\big|x+\frac{x(t_0)}{\la(t_0)}\big|> R_0(\epsilon)}\psi_R\big(\frac12|\pa_tu|^2+\frac12|\nabla u|^2-\frac1{2^\ast}|u|^{2^\ast}\big)dx\Big|.}
\end{align*}
In the  {first} integral, {$|x|\leq R$} implies that {$\psi_R(x)=x$}.
The {second integral} is bounded
by {$2R\epsilon E$}. Thus,
$$|y_R(t_0)|\geq{\Big|\int_{|x+\frac{x(t_0)}{\la(t_0)}|\leq R_0(\epsilon)}x
\Big(\frac12|\pa_tu(t_0)|^2+\frac12|\nabla u(t_0)|^2-\frac1{2^\ast}|u(t_0)|^{2^\ast}\Big)dx\Big|}-
{2R\epsilon E}.$$
The integral on the right equals
\begin{align*}
&-{\frac{x(t_0)}{\la(t_0)}\int_{|x+\frac{x(t_0)}{\la(t_0)}|\leq R_0(\epsilon)}
\Big(\frac12|\pa_tu(t_0)|^2+\frac12|\nabla u|^2-\frac1{2^\ast}|u(t_0)|^{2^\ast}\Big)dx}\\
+&{\int_{|x+\frac{x(t_0)}{\la(t_0)}|\leq R_0(\epsilon)}\Big(x+\frac{x(t_0)}{\la(t_0)}\Big)
\cdot\Big(\frac12|\pa_tu(t_0)|^2+\frac12|\nabla u(t_0)|^2-\frac1{2^\ast}|u(t_0)|^{2^\ast}\Big)dx},
\end{align*}
so that its absolute value is greater than or equal to
$${(R-R_0(\epsilon))E-\tilde C(R-R_0(\epsilon))\epsilon E}-{\tilde CR_0(\epsilon)E.}$$
Thus,
$${{|y_R(t_0)|}\geq {E(R-R_0(\epsilon))(1-\tilde C\epsilon)}-}{\tilde{C}R_0(\epsilon)E-2R\epsilon E}\geq {ER/4},$$
for $R$ large, $\epsilon$ small. But then, this together with \eqref{nlw-rig1.5} and \eqref{nlw-rig1.6} shows
$${ER/4-\tilde CE[R_0(\epsilon)+\epsilon R]\leq|y_R(t_0)|-|y_R(0)|\leq|y_R(t_0)-y_R(0)|\leq\tilde C\epsilon Et_0,}$$
 which yields the
Lemma \ref{timecont} for $\epsilon$ small, $R$ large.
\end{proof}

\noindent\underline{\bf Case 2: $T_+(u_0,u_1)<\infty$}

\vskip0.15cm

We may assume $T_+(u_0,u_1)=1$. Then,
$${\rm supp}(u,\pa_tu)(t,\cdot)\subset B(0,1-t)~\text{and}~\la(t)\geq\frac{C}{1-t}.$$
For (FNLW) we cannot use the conservation of the $L^2$-norm as in the
(NLS) case and a new approach is needed. The first step is:
\begin{lemma}[Upper bound of $\la(t)$] Let $u$ be as in the Rigidity Theorem \ref{thm:rigthm}, with $T_+(u_0, u_1)=1$. Then there
exists $C>0$ so that
\begin{equation}
\la(t)\leq\frac{C}{1-t}.
\end{equation}
\end{lemma}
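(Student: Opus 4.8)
The lower bound $\lambda(t)\ge C(1-t)^{-1}$ is already recorded (it follows from the precompactness of $\overline K$ together with the scaling of the equation, as in the remark after the Compactness Proposition), so it suffices to establish the upper bound. I would argue by contradiction: if no such constant exists, then one can find a sequence $t_n\nearrow 1$ with
\[
\mu_n:=\lambda(t_n)\,(1-t_n)\longrightarrow +\infty .
\]
The plan is to rescale $u$ at its natural concentration scale $\lambda(t_n)^{-1}$ around $x(t_n)$, extract a limiting solution $V$ of (FNLW), and then show that $V$ must blow up in finite rescaled time — which forces $u$ to blow up strictly before $t=1$ — or else $V$ is a nontrivial global critical element with bounded concentration scale; both alternatives are ruled out by results already in hand.

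Extraction of the profile: write $\vec v_n:=\vec v(\cdot,t_n)$ with $\vec v$ the rescaled solution from the Compactness Proposition. The sequence $(\vec v_n)_n$ lies in the precompact set $\overline K\subset\dot H^1\times L^2$, so after passing to a subsequence $\vec v_n\to\vec v_\infty$ strongly in $\dot H^1\times L^2$. Because the conserved energy is invariant under the scaling, $E(\vec v_\infty)=E_c$; by the energy-trapping Lemma~\ref{lem:enetra} applied to $\vec u(t_n)$ and passed to the limit, $\|\nabla v_{\infty,0}\|_2^2\le(1-\delta_1)\|\nabla W\|_2^2<\|\nabla W\|_2^2$; and by the coercivity bound $2E_c=2E(\vec v_\infty)\le\|\vec v_\infty\|_{\dot H^1\times L^2}^2$ of Lemma~\ref{lem:coe} together with $E_c\ge\eta_0>0$, the limit is nontrivial, $\vec v_\infty\neq 0$.

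Propagation: let $V$ be the solution of (FNLW) with data $\vec v_\infty$ and maximal forward time $\tau_+\in(0,+\infty]$, and put $\tilde u_n(x,t):=\lambda(t_n)^{(d-2)/2}V\big(\lambda(t_n)(x-x(t_n)),\lambda(t_n)(t-t_n)\big)$, an exact solution of (FNLW) whose data at $t=t_n$ un-rescales to $\vec v_\infty$, so that $\|\vec{\tilde u}_n(t_n)-\vec u(t_n)\|_{\dot H^1\times L^2}=\|\vec v_\infty-\vec v_n\|_{\dot H^1\times L^2}\to 0$ (this also bounds $\|S(\cdot)(\vec{\tilde u}_n(t_n)-\vec u(t_n))\|_{S}$ by Strichartz). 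First one checks $\|V\|_{S(0,\tau_+)}=+\infty$: otherwise $V$ scatters, the quantities in hypothesis~(i) of the Stability Lemma~\ref{thm:pertur} are bounded uniformly in $n$, and applying that lemma on $[t_n,1)$ — which corresponds to the rescaled time interval $[0,\mu_n)\subset[0,\tau_+)$ — yields $\|u\|_{S([t_n,1))}<\infty$, contradicting the blow-up criterion. Hence $V$ is itself a critical element at the minimal energy $E_c$. If $\tau_+<\infty$, then comparing $u$ with $\tilde u_n$ through Lemma~\ref{thm:pertur} (now with $u$ in the role of the background solution near $t_n$) shows that $u$ must cease to exist with finite Strichartz norm at the rescaled blow-up time, i.e. $T_+(u_0,u_1)\le t_n+\tau_+/\lambda(t_n)=t_n+\tau_+(1-t_n)/\mu_n<1$ for $n$ large, contradicting $T_+=1$. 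If $\tau_+=+\infty$, then $V$ is a nontrivial global critical element with $\|V\|_{S(\R)}=+\infty$; by the remark after the Compactness Proposition it can be replaced by a critical element whose scaling parameter is bounded below, which by Theorem~\ref{zeromome} has zero momentum and therefore, by Case~1 (the case $T_+=+\infty$) of the Rigidity Theorem~\ref{thm:rigthm}, whose proof is already complete, must be identically $0$ — contradicting $\|V\|_{S(\R)}=+\infty$. In every case a contradiction results, which establishes the upper bound.

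The main obstacle, and the only genuinely non-formal point, is ensuring that $\vec v_\infty$ has \emph{strictly} sub-threshold kinetic energy $\|\nabla v_{\infty,0}\|_2<\|\nabla W\|_2$: this is exactly what makes $V$ a legitimate critical element and permits the use of coercivity and of Case~1 of the Rigidity Theorem, and it rests on the \emph{uniform} strict bound of Lemma~\ref{lem:enetra}, not on a mere limiting inequality. A secondary, more bookkeeping-type difficulty is tracking the time intervals in the Stability Lemma as $t\to 1$ (choosing the subsequence so that $[0,\mu_n)$ genuinely lies inside $[0,\tau_+)$, and checking in the global case that the replacement critical element satisfies all hypotheses of Theorems~\ref{zeromome} and~\ref{thm:rigthm}).
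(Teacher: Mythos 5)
Your approach (profile extraction from $\overline K$ + propagation + perturbation theory) is genuinely different from the paper's, which is a short, direct virial argument.  The paper introduces $z(t)=\int x\cdot\nabla u\,\partial_t u+\frac{d-1}{2}\int u\,\partial_t u$, shows $z'(t)\le -CE$ via the algebraic identities of Lemma~\ref{virial-identity-1} (clean, no $r(R)$ error term, because $u$ has compact support) and $z(t)\to 0$ as $t\to 1$ (support in $B(0,1-t)$ plus bounded energy), hence $z(t)\ge CE(1-t)$; it then uses the zero momentum and the precompactness of $\overline K$ together with $\lambda(t_n)(1-t_n)\to\infty$ to show $z(t_n)/(1-t_n)\to 0$, a contradiction.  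No profile limits and no perturbation lemma enter at all.

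The first branch of your dichotomy ($\tau_+=+\infty$: rule $V$ out via Theorem~\ref{zeromome} and the already-proved Case~1 of the Rigidity Theorem) is plausible and internally consistent, and you correctly identify that it rests on the \emph{uniform} strict bound $\|\nabla u(t)\|_2^2\le(1-\delta_1)\|\nabla W\|_2^2$ from Lemma~\ref{lem:enetra}, which survives the strong limit and makes $V$ a legitimate sub-threshold critical element.

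However, the branch $\tau_+<\infty$ is broken, and I do not see a repair that is not circular.  You claim that Lemma~\ref{thm:pertur} ``with $u$ in the role of the background solution near $t_n$'' forces $T_+(u)\le t_n+\tau_+/\lambda(t_n)<1$.  But Lemma~\ref{thm:pertur} requires the background solution to have a \emph{finite} Strichartz norm on the whole interval, and $\|u\|_{S([t_n,1))}=+\infty$ by the blowup criterion; the rescaled background $\tilde u_n$ likewise has $\|\tilde u_n\|_{S([t_n,\,t_n+\tau_+/\lambda(t_n)))}=+\infty$ since $V$ blows up at $\tau_+$.  Shrinking to $[t_n,\,t_n+(1-\delta)\tau_+/\lambda(t_n)]$ gives a finite Strichartz norm, but not one uniform in $\delta$, so the perturbation cannot be closed as $\delta\to 0$, and in any case the conclusion of Lemma~\ref{thm:pertur} is \emph{existence} of the nearby solution, not non-existence of the background; it cannot be used to show that $u$ ``must cease to exist.''  More fundamentally, the maximal existence time is only lower semicontinuous, so $\vec v_n\to\vec v_\infty$ strongly with lifetimes $\mu_n\to\infty$ is perfectly compatible with $\tau_+(V)<\infty$: perturbation alone does not exclude this scenario.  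If instead one tries to kill it by applying the Rigidity Theorem to the new critical element $V$, that is circular: $V$ is a critical element with $T_+(V)<\infty$, which is precisely the hypothesis of Case~2, and the very Lemma you are proving is the first step of the Case~2 argument.  This gap is the real reason the paper sidesteps the profile machinery here and argues directly with the virial functional, which sees only the original solution $u$ and its compactness properties and never needs to propagate a limiting profile forward in time.
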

{\bf {Remark:}} Thus, in this case
$${\la(t)\simeq\frac{1}{1-t}.}$$

\begin{proof}
If not, we can find $t_n\nearrow1$ so that $\la(t_n)(1-t_n)\to+\infty$.

{\bf Step 1:} We claim that
\begin{equation}\label{nlw-rig1.7}
z(t)\geq CE\cdot (1-t),\;\;~t\in[0,1),
\end{equation}
where
$$z(t)=\int x\cdot\nabla u\pa_tu+\frac{d-1}2\int u\pa_tu=
\int |x|\Big(\frac{x}{|x|}\cdot\nabla+\frac{d-1}{2|x|}\Big)u\pa_tu.$$
We recall that $z(t)$ is well-defined since
$${\rm supp}(u,\pa_tu)(t,\cdot)\subset B(0,1-t).$$
Then, for $t\in(0,1),$  by variational estimate\eqref{nlw-rig1.3} and $E(u_0,u_1)=E>0$, we have
\begin{equation}\label{nlw-rig1.8}\left\{\begin{aligned}
&{z'(t)=-\frac12\int_{\mathbb R^d}|\pa_tu|^2dx-\frac12\int_{\mathbb R^d}\big(|\nabla u|^2-|u|^{2^\ast}\big)dx\leq -CE,}\\
&{\sup_{t\in(0,1)}\|(u(t),\pa_tu)\|_{\dot H^1\times L^2}\leq CE.}\end{aligned}\right.\end{equation}
From the support properties of $u$, it is easy to
see that $${|z(t)|\leq(1-t)\|(u(t),\pa_tu(t))\|_{\dot H^1\times L^2}\longrightarrow0,\quad ~\text{as}~t\longrightarrow1,}$$
so that, this together with \eqref{nlw-rig1.8} and integrating in $t$ gives
$$z(t)\geq CE(1-t),~t\in[0,1).$$

{\bf Step 2:} We will next show that
$${\frac{z(t_n)}{1-t_n}\to0\;\;~\text{as}\;\;~n\to\infty},$$
this yields a contradiction with the first step.

Since
$$\int_{\mathbb R^d} \nabla u(t)\pa_tu(t)dx=0, $$
we have
$${\frac{z(t_n)}{1-t_n}=\int_{\mathbb R^d}\Big(x+\frac{x(t_n)}{\la(t_n)}\Big)\cdot\frac{\nabla u(t_n)\pa_tu(t_n)}{1-t_n}dx
+\frac{d-1}2\int_{\mathbb R^d}\frac{u(t_n)\pa_tu(t_n)}{1-t_n}dx.}$$
Note that, for $\epsilon>0$ given, we have by compactness theorem
\begin{align*}
&\int_{|x+\frac{x(t_n)}{\la(t_n)}|\leq\epsilon(1-t_n)}\Big[\Big|x+\frac{x(t_n)}{\la(t_n)}\Big|\cdot|\nabla u(t_n)|\cdot|\pa_tu(t_n)|
+|u(t_n)|\cdot|\pa_tu(t_n)|\Big]dx\\
\leq& C\epsilon E(1-t_n).\end{align*}

Now we consider the outsider region. First we will show that
\begin{equation}\label{nlw-rig1.9}
{\frac{|x(t_n)|}{\la(t_n)}\leq2(1-t_n).}
\end{equation}
If not, ${B\big(-\frac{x(t_n)}{\la(t_n)},(1-t_n)\big)\cap B(0,(1-t_n))=\emptyset}$~$\Longrightarrow$
\begin{equation}\label{nlw-rig1.9-add}
~{\int_{B(-x(t_n)/\la(t_n),(1-t_n))}\big(|\nabla u(t_n)|^2+|\pa_tu(t_n)|^2\big)dx\equiv0,}\end{equation}
while from the definition of compactness
\begin{align*}
&{\int_{|x+\frac{x(t_n)}{\la(t_n)}|\geq(1-t_n)}\big(|\nabla u(t_n)|^2+|\pa_tu(t_n)|^2\big)dx}\\
=&\int_{|y|\geq\la(t_n)(1-t_n)}\Big|\nabla_{t,x} u\Big(\frac{y-x(t_n)}{\la(t_n)},t_n\Big)\Big|^2\frac{dy}{\la(t_n)^d}
{\longrightarrow0},~\text{as}~n\longrightarrow\infty.
\end{align*}
This together with \eqref{nlw-rig1.9-add} {contradicts $E>0$.} Then, \eqref{nlw-rig1.9} and the Hardy inequality imply that
\begin{align*}
&\frac1{1-t_n}\int_{|x+\frac{x(t_n)}{\la(t_n)}|\geq\epsilon(1-t_n)}\Big|x+\frac{x(t_n)}{\la(t_n)}
\Big|\cdot\Big[|\nabla u(t_n)|\cdot|\pa_tu(t_n)|\\
&\qquad\qquad\qquad\qquad\qquad\quad+\Big|x+\frac{x(t_n)}{\la(t_n)}\Big|^{-1}|u(t_n)|\cdot|\pa_tu(t_n)|\Big|\Big]dx\\
\leq&3\int_{|x+\frac{x(t_n)}{\la(t_n)}|\geq\epsilon(1-t_n)}\Big||\nabla u(t_n)|\cdot|\pa_tu(t_n)|+\Big|x+\frac{x(t_n)}{\la(t_n)}\Big|^{-1}|u(t_n)|\cdot|\pa_tu(t_n)|\Big|dx\\
=&{(3+C)\int_{|y|\geq\epsilon(1-t_n)\la(t_n)}\big(|\nabla u|^2+|\pa_tu|^2\big)\Big(\frac{y-x(t_n)}{\la(t_n)},t_n\Big)\frac{dy}{\la(t_n)^d}}
\to0
\end{align*}
by the compactness of $\bar K$ and the fact that $\la(t_n)(1-t_n)\to\infty$.
And using {Hardy's inequality} (centered at $-x(t_n)/\la(t_n)$), the proof
is {concluded.}
\end{proof}

\begin{prop}[Self-similar solution]  Let $u$ be as in the Rigidity Theorem \ref{thm:rigthm}, with $T_+(u_0, u_1)=1$,
$${\rm supp}(u,\pa_tu)(t,\cdot)\subset B(0,1-t).$$
Then
$$K=\big\{(1-t)^\frac{d-2}2u((1-t)x,t),(1-t)^\frac{d}2\pa_tu((1-t)x,t\in[0,1))\big\}$$
is precompact in $\dot H^1\times L^2.$
\end{prop}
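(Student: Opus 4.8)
The plan is to reduce the claim to the compactness of the orbit $K$ already supplied by the Rigidity Theorem, by showing that the self-similar scale $(1-t)$ agrees with $1/\lambda(t)$ up to a bounded factor and that the translation $x(t)$ stays in a fixed ball. Throughout write
$$\mu(t)\triangleq\lambda(t)(1-t),\qquad \vec v_B(t)\triangleq\Big(\lambda(t)^{-\frac{d-2}{2}}u\big(\tfrac{\cdot-x(t)}{\lambda(t)},t\big),\ \lambda(t)^{-\frac{d}{2}}\partial_tu\big(\tfrac{\cdot-x(t)}{\lambda(t)},t\big)\Big)\in K,$$
and recall that $\overline K$ is compact in $\dot H^1\times L^2$, that $E(\vec v_B(t))=E(u(t),\partial_tu(t))=E(u_0,u_1)=:E$ by scaling and translation invariance of the energy, and that $E>0$ (otherwise coercivity would force $u\equiv0$, contradicting $T_+=1$).

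First I would record that $\mu(t)$ is bounded above and below on $[0,1)$: the lower bound $\lambda(t)\ge C_0/(1-t)$ is the finite-speed-of-propagation consequence noted after the Compactness Proposition, while the upper bound $\lambda(t)\le C/(1-t)$ is the lemma just proved; after relabelling, $0<c_0\le\mu(t)\le c_1<\infty$ for all $t$. Next, since $\operatorname{supp}(u,\partial_tu)(t,\cdot)\subset B(0,1-t)$, the pair $\vec v_B(t)$ is supported in $B(x(t),\mu(t))\subset\{|y|\le|x(t)|+c_1\}$, hence for $|x(t)|>R+c_1$ its support lies entirely in $\{|y|>R\}$. On the other hand, the Energy Trapping Lemma \ref{lem:enetra} gives $\|\nabla v_B^{(1)}(t)\|_2=\|\nabla u(t)\|_2<\|\nabla W\|_2$, so the Coercivity Lemma \ref{lem:coe}(1)(iii) yields $\|\vec v_B(t)\|_{\dot H^1\times L^2}^2\ge 2E$ for all $t$. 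Since $\overline K$ is compact, for $\varepsilon\in(0,2E)$ there is $R_\varepsilon$ with $\int_{|y|>R_\varepsilon}(|\nabla v_B^{(1)}|^2+|v_B^{(2)}|^2)\,dy<\varepsilon$ for every $t$; combining the two displays forces $|x(t)|\le R_\varepsilon+c_1=:M$ for all $t\in[0,1)$.

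Finally, a direct change of variables shows that, with $\vec v_A(t)\triangleq\big((1-t)^{\frac{d-2}{2}}u((1-t)\cdot,t),\,(1-t)^{\frac{d}{2}}\partial_tu((1-t)\cdot,t)\big)$,
$$\vec v_A(z,t)=\Big(\mu(t)^{\frac{d-2}{2}}v_B^{(1)}\big(\mu(t)z+x(t),t\big),\ \mu(t)^{\frac{d}{2}}v_B^{(2)}\big(\mu(t)z+x(t),t\big)\Big)=\mathcal T\big(\vec v_B(t),\mu(t),x(t)\big),$$
where $\mathcal T(\vec v,\mu,a)\triangleq\big(\mu^{\frac{d-2}{2}}v^{(1)}(\mu\cdot+a),\,\mu^{\frac{d}{2}}v^{(2)}(\mu\cdot+a)\big)$. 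The map $\mathcal T$ is continuous from $(\dot H^1\times L^2)\times[c_0,c_1]\times\overline{B(0,M)}$ into $\dot H^1\times L^2$, because the dilations $f\mapsto\mu^{\frac{d-2}{2}}f(\mu\cdot)$ and translations $f\mapsto f(\cdot+a)$ act isometrically and strongly continuously on $\dot H^1$, and likewise the $L^2$-invariant dilations $g\mapsto\mu^{\frac d2}g(\mu\cdot)$ and translations act strongly continuously on $L^2$. Hence $\mathcal T\big(\overline K\times[c_0,c_1]\times\overline{B(0,M)}\big)$ is compact, and it contains $\{\vec v_A(t):t\in[0,1)\}$; therefore $K$ is precompact in $\dot H^1\times L^2$.

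The only genuinely delicate input, apart from invoking the upper bound on $\lambda(t)$ established immediately before, is the boundedness of the translation parameter $x(t)$: the Rigidity Theorem's compactness statement only asserts the existence of \emph{some} $x(t)$, so one must exploit the explicit support localization in $B(0,1-t)$ together with the uniform, coercivity-based nondegeneracy of the $\dot H^1\times L^2$ norm to rule out $x(t)$ escaping to infinity. Once this is in hand, the conclusion is a routine consequence of the continuity of the dilation/translation action on the energy space.
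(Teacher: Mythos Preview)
Your argument is correct and follows essentially the same route as the paper: use the two-sided bound $c_0\le\lambda(t)(1-t)\le c_1$, deduce from the support condition and the positivity of the energy that the translation parameter stays in a fixed ball, and then conclude by the continuity of the dilation/translation action on $\dot H^1\times L^2$ applied to the compact orbit $\overline K$. The only cosmetic differences are that the paper first rescales and then bounds the (rescaled) translation parameter, whereas you bound the original $x(t)$ first and then apply a single joint dilation--translation map $\mathcal T$; and your justification of $|x(t)|\le M$ via coercivity is spelled out more explicitly than the paper's one-line appeal to ``$E>0$, compactness and preservation of energy''.
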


\begin{proof}

Note that the fact that if $\overline{K}_1$ is compact in $L^2(\R^d)^{d+1}$, then
$$K_2:=\{\la^\frac{d}2\vec{v}(\la x):~\vec{v}\in K_1,~c_0\leq\la\leq c_1\}$$
also has $\overline{K}_2$ compact. This together with the condition (iii) of  Theorem  \ref{thm:rigthm}, combining this fact with
$c_0\leq (1-t)\la(t)\leq c_1$,
we get that the set
\begin{equation}\label{nlw-rig1.10}
\big\{\vec{v}(x,t)=(1-t)^\frac{d}2(\nabla u,\pa_tu)((1-t)(x-x(t)),t),~t\in[0,1)\big\}
\end{equation}
has compact closure in  $L^2(\R^d)^{d+1}$.

 Let now
$$\tilde v(x,t)=(1-t)^\frac{d}2(\nabla u,\pa_tu)((1-t)x,t),$$
so that $\tilde v(x,t)=\vec{v}(x+x(t),t)$ satisfies
$${\rm supp}\vec{v}(\cdot,t)\subset\{x:~|x-x(t)|\leq1\}.$$
The fact that $E>0$, the compactness of $\{\vec{v}(\cdot,t)\}$ and preservation of energy
imply that $|x(t)|\leq M$. This together with \eqref{nlw-rig1.10} and the fact that if
$$K_3=\{\vec{v}(x+x_0,t):~|x_0|\leq M\},$$
then $\overline{K_3}$ is compact, gives the Proposition.
\end{proof}

\vskip0.2cm

\noindent\underline{\bf Self-similar variables and the end of proof of rigidity theorem}

\vskip0.2cm
Since the lack of the $L^2$ conservation law,
we cannot use the conservation of the $L^2$-norm as in the
(NLS). At this point we introduce a new idea, inspired by the works of Giga-Kohn\cite{GK89} in the parabolic case and
Merle-Zaag\cite{MZ03} in the hyperbolic case, who studied the equations
$$(\pa_t^2-\Delta)u=|u|^{p-1}u,~1<p<1+\tfrac4{d-2},$$
in the radial case. In our case, {$p=1+\frac4{d-2}>1+\frac4{d-1}$}.
We thus introduce {self-similar variables} as following
$${y=\frac{x}{1-t},\quad ~s=\log\frac1{1-t},}\;\;\; \Longrightarrow\;\;\; x=(1-t)y=e^{-s}y,\; t=1-e^{-s}$$
and define
$${w(y,s;0)=(1-t)^\frac{d-2}2u(x,t)=e^{-\frac{(d-2)s}2}u(e^{-s}y,1-e^{-s}),}$$
which is defined for $0\leq s<\infty$ with
$${{\rm supp}w(\cdot,s;0)\subset\big\{y:\; e^{-s}|y|\leq e^{-s}\big\}=\big\{y:\; |y|\leq1\big\}.}$$
\vskip0.2cm

We will also
consider, for $\delta>0$, $u_\delta(x,t)=u(x,t+\delta)$ which also solves (FNLW) and its corresponding
$w$, which we will denote by $w(y,s;\delta)$.
Thus, we set
\begin{equation*}\left\{\begin{aligned}
&{y=\frac{x}{1+\delta-t},\quad\;~s=\log\frac1{1+\delta-t},}\\
&{w(y,s;\delta)=(1+\delta-t)^\frac{d-2}2u(x,t)=e^{-\frac{(d-2)s}2}u(e^{-s}y,1+\delta-e^{-s}).}
\end{aligned}\right.\end{equation*}
Here $w(y,s;\delta)$ is defined for $0\leq s<-\log\delta$
and satisfies
$${{\rm supp}~w(\cdot,s;\delta)\subset\Big\{|y|\leq\frac{e^{-s}-\delta}{e^{-s}}=\frac{1-t}{1+\delta-t}\leq1-\delta\Big\}.}$$
Hence $w(y,s;\delta)$ solves the equation in their domain
\begin{align*}
{\pa_s^2w=}&{\frac1\rho{\rm div}\big(\rho\nabla w-\rho(y\cdot\nabla w)y\big)-\frac{d(d-2)}4w+|w|^{\frac4{d-2}}w}\\
&{-2y\cdot\nabla\pa_sw-(d-1)\pa_sw,}\quad \text{where}\; \; {\rho(y)=(1-|y|^2)^{-1/2}.}
\end{align*}
Note that the elliptic part of this operator degenerates. In fact,
$${\frac1\rho{\rm div}\big(\rho\nabla w-\rho(y\cdot\nabla w)y\big)=\frac1\rho{\rm div}\big(\rho(I-y\otimes y)\nabla w\big),}$$
which is {elliptic} with smooth coefficients for {$|y|<1$}, but {degenerates} at {$|y|=1$}.

This new equation gives us a new set of formulas.
{ \bf The reason for introducing $\delta>0$ is that, on the supp $w(\cdot,s,\delta)$, the inequality $(1-|y|^2)\ge\delta$ holds,
 so we stay away from the degeneracy}.
  Now we  collect some straightforward bounds on $w(\cdot, \cdot, \delta)$ with $\delta>0$: \; $w\in H_0^1(B_1)$ with
\begin{equation}\label{self-simil1.1}
{\int_{B_1}\big(|\nabla w|^2+|\pa_sw|^2+|w|^{2^\ast}\big)dx\leq C.}
\end{equation}
Moreover, by Hardy's inequality for $H_0^1(B_1)$-functions, we have
\begin{equation}\label{self-simil1.2}
{\int_{B_1}\frac{|w(y)|^2}{(1-|y|^2)^2}dy\leq C, \quad\;  \text{uniformly in} \;\; \delta>0,\;~0<s<-\log\delta.}
\end{equation}
 Next, following \cite{MZ03}, we
introduce an energy, which will provide us with a Lyapunov functional for $w$.
\begin{align*}
{\tilde E(w(s;\delta))=}&{\frac12\int_{B_1}\big(|\nabla w|^2+|\pa_sw|^2-(y\cdot\nabla w)^2\big)\frac{dy}{(1-|y|^2)^\frac12}}\\
&{+\int_{B_1}\Big(\frac{d(d-2)}8w^2-\frac{d-2}{2d}|w|^{2^\ast}\Big)\frac{dy}{(1-|y|^2)^\frac12}<\infty, \quad \forall \;\delta>0.}
\end{align*}

For $\delta>0$, we have the following new formulas:
\begin{lemma} \label{lyapunov-1}  Let ~$0<s_1<s_2<\log1/\delta$,  we have
{\rm(i)}
$${\tilde E(w(s_2))-\tilde E(w(s_1))=\int_{s_1}^{s_2}\int_{B_1}\frac{|\pa_sw|^2}{(1-|y|^2)^{3/2}}dyds,}
\quad \Longrightarrow\;\; \tilde E \;  \nearrow. $$
{\rm(ii)} \begin{align*}
&{\frac12\int_{B_1}\big(\pa_sw)\cdot w-\frac{d+1}2w^2\big)\frac{dy}{(1-|y|^2)^{1/2}}\Big|_{s_1}^{s_2}}\\
=&{-\int_{s_1}^{s_2}\tilde E(w(s))ds+\frac1d\int_{s_1}^{s_2}\int_{B_1}\frac{|w|^{2^\ast}}{(1-|y|^2)^{1/2}}dsdy}\\
&{+\int_{s_1}^{s_2}\int_{B_1}\Big(|\pa_sw|^2+\pa_sw\cdot (y\cdot\nabla) w+\frac{\pa_sw\cdot w|y|^2}{1-|y|^2}\Big)\frac{dy}{(1-|y|^2)^{1/2}}.}
\end{align*}
{\rm(iii)} ${\lim\limits_{s\to\log1/\delta}\tilde E(w(s))=E(u_0,u_1)=E,}$ so that, by {\rm (i)},
$$-\frac{C}{\delta^{1/2}}\le {\tilde E(w(s))\leq E\;\;\; ~\text{for}\;\;\; ~0\leq s<\log1/\delta.}$$
\end{lemma}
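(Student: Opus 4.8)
The plan is to obtain all three statements by multiplier identities for the equation satisfied by $w(y,s;\delta)$, using in an essential way that $\delta>0$ forces $\mathrm{supp}\,w(\cdot,s;\delta)\subset\{|y|\le 1-\delta e^{s}\}\subset\{|y|\le 1-\delta\}$, so that every integration by parts is carried out strictly inside $B_1$, where $\rho(y)=(1-|y|^2)^{-1/2}$ and the degenerate operator $\tfrac1\rho\mathrm{div}\big(\rho(I-y\otimes y)\nabla\,\cdot\,\big)$ have smooth bounded coefficients and produce no boundary contributions. Since the critical element is only a strong Strichartz solution in $C_t\dot H^1\cap L^{2(d+1)/(d-2)}_{t,x}$, I would first note that the $\delta$-truncated $w$ has, by the local theory, enough regularity on compact subsets of $B_1\times[0,\log\tfrac1\delta)$ to legitimize the computations (or run them on a smooth approximation and pass to the limit), together with the uniform bounds \eqref{self-simil1.1}, \eqref{self-simil1.2}; this is the Kenig--Merle/Merle--Zaag set-up.

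For (i), multiply the $w$-equation by $\rho\,\partial_s w$ and integrate over $B_1$. Equivalently, differentiate $\tilde E(w(s))$ in $s$ and substitute $\partial_s^2 w$ from the equation: after integrating the elliptic term by parts, the terms $\int\rho\nabla w\cdot\nabla\partial_s w$, $\int\rho(y\cdot\nabla w)(y\cdot\nabla\partial_s w)$, $\int\rho w\,\partial_s w$ and $\int\rho|w|^{4/(d-2)}w\,\partial_s w$ coming from $\tilde E$ cancel exactly against their counterparts from the substitution, leaving only the transport terms $-2\int\rho\,\partial_s w\,(y\cdot\nabla\partial_s w)-(d-1)\int\rho|\partial_s w|^2$. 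Writing $\partial_s w\,(y\cdot\nabla\partial_s w)=\tfrac12\,y\cdot\nabla|\partial_s w|^2$ and integrating by parts gives $\int_{B_1}\big(\mathrm{div}(\rho y)-(d-1)\rho\big)|\partial_s w|^2$, and the elementary identity $\mathrm{div}(\rho y)=d\rho+|y|^2(1-|y|^2)^{-3/2}$, hence $\mathrm{div}(\rho y)-(d-1)\rho=(1-|y|^2)^{-3/2}$, yields precisely $\int_{B_1}|\partial_s w|^2(1-|y|^2)^{-3/2}\,dy\ge 0$. Integrating over $[s_1,s_2]$ gives (i), and monotonicity of $\tilde E$ is immediate.

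For (ii), compute $\frac{d}{ds}$ of $\frac12\int_{B_1}\rho\big(\partial_s w\cdot w-\tfrac{d+1}2 w^2\big)$ and again substitute $\partial_s^2 w$. After integrating the elliptic term by parts one uses the algebraic identity $\tfrac12\int\rho\big(|\nabla w|^2-(y\cdot\nabla w)^2\big)+\tfrac{d(d-2)}8\int\rho w^2=\tilde E-\tfrac12\int\rho|\partial_s w|^2+\tfrac{d-2}{2d}\int\rho|w|^{2^\ast}$ together with $\tfrac12-\tfrac{d-2}{2d}=\tfrac1d$ to rewrite the result as $-\tilde E+\int\rho|\partial_s w|^2+\tfrac1d\int\rho|w|^{2^\ast}-\int\rho w\,(y\cdot\nabla\partial_s w)-d\int\rho w\,\partial_s w$. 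Finally, writing $w\,(y\cdot\nabla\partial_s w)=y\cdot\nabla(w\,\partial_s w)-\partial_s w\,(y\cdot\nabla w)$ and integrating by parts via $\mathrm{div}(\rho y)$ converts $-\int\rho w\,(y\cdot\nabla\partial_s w)-d\int\rho w\,\partial_s w$ into exactly $\int\rho\,\partial_s w\,(y\cdot\nabla w)+\int\rho\,\partial_s w\,w\,\tfrac{|y|^2}{1-|y|^2}$; integration in $s$ then gives (ii).

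For (iii), change variables $x=e^{-s}y$ to recognize that the physical energy $E\big(u(t),\partial_t u(t)\big)$ on the cone section equals, in the $w$-variables, $\tfrac12\int\big|\tfrac{d-2}2 w+y\cdot\nabla w+\partial_s w\big|^2+\tfrac12\int|\nabla w|^2-\tfrac{d-2}{2d}\int|w|^{2^\ast}$, which is constant equal to $E(u_0,u_1)$ by conservation of energy. As $s\to\log\tfrac1\delta$ (i.e. $t\to T_+$) the support collapses, $\{|y|\le 1-\delta e^{s}\}\to\{0\}$, so $\rho\to1$ uniformly and, using \eqref{self-simil1.1} and Poincar\'e's inequality on the small ball (which gives $\|w\|_{L^2}^2\lesssim(1-\delta e^{s})^2\to0$, whence also $\|y\cdot\nabla w\|_{L^2}\to0$), all the terms other than $\tfrac12\int|\partial_s w|^2$ in both $\tilde E$ and in $E(u(t),\partial_tu(t))$ vanish; hence $\tilde E(w(s))\to E(u_0,u_1)=E$, proving the limit in (iii). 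Combining with (i): $\tilde E$ is nondecreasing, so $\tilde E(w(s))\le E$ for $0\le s<\log\tfrac1\delta$, while $\tilde E(w(s))\ge\tilde E(w(0;\delta))$, and since $\mathrm{supp}\,w(\cdot,0;\delta)\subset\{|y|\le 1-\delta\}$ forces $\rho\le C\delta^{-1/2}$ there, the energy-trapping bound $\|(u,\partial_tu)\|_{\dot H^1\times L^2}\lesssim E$ gives $\tilde E(w(0;\delta))\ge -C\delta^{-1/2}$, completing (iii).

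I expect the main obstacle to be bookkeeping: making all the boundary-free integrations by parts and the large cancellations in (i)--(ii) precise, the one genuinely non-obvious point being the identity $\mathrm{div}(\rho y)-(d-1)\rho=(1-|y|^2)^{-3/2}$ which produces the clean positive sign in (i); and in (iii), carefully justifying that the weighted self-similar energy converges to the physical energy as $t\to T_+$ despite the degeneracy of $\rho$ at $|y|=1$ — this works only because the support shrinks to the vertex of the cone — as well as handling the merely $\dot H^1$ regularity of the critical element by an approximation argument before differentiating $\tilde E$ in $s$.
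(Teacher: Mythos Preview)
The paper does not actually supply a proof of this lemma; it states the three identities as ``new formulas'' inherited from the Merle--Zaag and Kenig--Merle self-similar framework and immediately uses them. Your proposal therefore cannot be compared to a proof in the paper, but it is the standard derivation and is essentially correct: the multiplier $\rho\,\partial_s w$ for (i), the multiplier $\rho\,w$ for (ii), and a change of variables plus support collapse for (iii) are exactly how these identities are obtained in \cite{KM1,MZ03}. In particular your key computation $\mathrm{div}(\rho y)-(d-1)\rho=(1-|y|^2)^{-3/2}$ is correct and is precisely what produces the nonnegative integrand in (i).

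One imprecision in your argument for (iii): it is not true that ``all the terms other than $\tfrac12\int|\partial_s w|^2$'' vanish as $s\to\log\tfrac1\delta$. The terms $\tfrac12\int|\nabla w|^2$ and $\tfrac{d-2}{2d}\int|w|^{2^\ast}$ equal $\tfrac12\int|\nabla_x u|^2$ and $\tfrac{d-2}{2d}\int|u|^{2^\ast}$ respectively and do \emph{not} tend to zero (indeed $E>0$). What is true is that these terms appear in both $\tilde E(w(s))$ and in the physical energy $E(u(t),\partial_tu(t))$, the weight $\rho$ tends to $1$ uniformly on the shrinking support, and the \emph{remaining} terms---those involving $w^2$, $(y\cdot\nabla w)^2$, and the cross terms coming from expanding $|\partial_s w+\tfrac{d-2}{2}w+y\cdot\nabla w|^2$---all tend to zero by your Poincar\'e/support-size argument. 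So the correct statement is $\tilde E(w(s))-E(u(t),\partial_tu(t))\to 0$, not that both reduce to the single term $\tfrac12\int|\partial_s w|^2$. With that correction your proof of (iii), including the lower bound via $\rho\le C\delta^{-1/2}$ on $\mathrm{supp}\,w(\cdot,0;\delta)$, is fine.
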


\vskip0.2cm

\noindent\underline{\bf  The first improvement }
\vskip0.2cm

By \eqref{self-simil1.1} and the support property of $w$, we have
$$\int_0^1\int_{B_1}\frac{|\pa_sw|^2}{1-|y|^2}dyds\leq\frac{C}\delta.$$
Inspired by the above Lemma (i), we obtain:

\begin{lemma}[First improvement] \label{lyapunov-2}
$${\int_0^1\int_{B_1}\frac{|\pa_sw|^2}{1-|y|^2}dyds\leq C\log1/\delta.}$$
\end{lemma}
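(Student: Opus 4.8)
\emph{The plan.} I would reduce the estimate to a lower bound on the Lyapunov functional at $s=0$. Since $0<1-|y|^2\le 1$ on $B_1$, one has the pointwise domination $(1-|y|^2)^{-1}\le(1-|y|^2)^{-3/2}$, so it suffices to bound $\int_0^1\int_{B_1}\frac{|\partial_s w|^2}{(1-|y|^2)^{3/2}}\,dy\,ds$. By the Lyapunov identity \ref{lyapunov-1}(i) (applicable since $[0,1]\subset[0,\log(1/\delta))$ once $\delta$ is small) this equals $\tilde E(w(1;\delta))-\tilde E(w(0;\delta))$, and by \ref{lyapunov-1}(iii) the map $s\mapsto\tilde E(w(s;\delta))$ is nondecreasing with limit $E$ as $s\to\log(1/\delta)$, so $\tilde E(w(1;\delta))\le E$. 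Hence the whole lemma reduces to proving
\[
\tilde E(w(0;\delta))\ \ge\ -C\log(1/\delta).
\]

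\emph{The lower bound.} In $\tilde E(w)$ the quadratic part is nonnegative: $|\nabla w|^2-(y\cdot\nabla w)^2\ge(1-|y|^2)|\nabla w|^2\ge0$, while the $|\partial_s w|^2$ and $w^2$ terms have positive coefficients. Dropping it gives
\[
\tilde E(w(0;\delta))\ \ge\ -\frac{d-2}{2d}\int_{B_1}\frac{|w(y,0;\delta)|^{2^\ast}}{(1-|y|^2)^{1/2}}\,dy ,
\]
so it remains to show the weighted critical potential energy on the right is $\lesssim\log(1/\delta)$. Here I would combine the three a priori bounds available at $s=0$: the uniform bound $\int_{B_1}(|\nabla w|^2+|w|^{2^\ast})\,dy\le C$ from \eqref{self-simil1.1}, the boundary Hardy bound $\int_{B_1}\frac{|w|^2}{(1-|y|^2)^2}\,dy\le C$ from \eqref{self-simil1.2}, and the finite-speed localization $\operatorname{supp}w(\cdot,0;\delta)\subset\{|y|\le 1-\delta\}$. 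Decompose $B_1$ into the dyadic boundary shells $A_k=\{\,2^{-k-1}<1-|y|\le 2^{-k}\,\}$: only the $k\le K:=\lceil\log_2(1/\delta)\rceil$ of them meet $\operatorname{supp}w(\cdot,0;\delta)$, the weight $(1-|y|^2)^{-1/2}$ is $\sim 2^{k/2}$ on $A_k$, and the Hardy bound yields $\int_{A_k}|w|^2\lesssim 2^{-2k}$ while $\int_{A_k}|\nabla w|^2\le C$. A localized Sobolev inequality on each $A_k$ — multiply $w$ by a cutoff adapted to the shell and extend by zero, using that $w$ vanishes on $\partial B_1$ as well as on $\{1-|y|<\delta\}$ — then controls $\int_{A_k}|w|^{2^\ast}$, and summing the resulting shell contributions over the $\sim\log(1/\delta)$ active shells gives the claimed bound. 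Inserting $\tilde E(w(0;\delta))\ge -C\log(1/\delta)$ and $\tilde E(w(1;\delta))\le E$ into the first step finishes the proof.

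\emph{The main difficulty.} The crux is the weighted estimate $\int_{B_1}|w|^{2^\ast}(1-|y|^2)^{-1/2}\,dy\lesssim\log(1/\delta)$, which is genuinely borderline: at the critical Sobolev exponent $2^\ast=\frac{2d}{d-2}$ there is no interpolation gain, so the singular weight cannot be absorbed by a H\"older argument, and the obvious estimates are too crude — putting $(1-|y|^2)^{-1}\le\delta^{-1}$ on the support gives only $\delta^{-1}$, and running the monotonicity argument above with the crude lower bound $\tilde E(w(0;\delta))\ge-C\delta^{-1/2}$ of \ref{lyapunov-1}(iii) gives only $\delta^{-1/2}$. The logarithm is exactly the number of dyadic shells between $1-|y|\sim 1$ and $1-|y|\sim\delta$, so the delicate point is to show that each shell contributes only $O(1)$; this is precisely where the Hardy bound \eqref{self-simil1.2} and the support localization $\{|y|\le 1-\delta\}$ must be used in tandem to rule out an $L^{2^\ast}$-concentration of $w(\cdot,0;\delta)$ against $\partial B_1$ that would let a single shell dominate. (The remaining factor $\log(1/\delta)$ is then removed by a further bootstrap — feeding this bound back through the identity \ref{lyapunov-1}(ii) — in the next step.)
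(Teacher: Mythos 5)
Your reduction steps are fine: dominating $(1-|y|^2)^{-1}$ by $(1-|y|^2)^{-3/2}$, applying Lemma~\ref{lyapunov-1}(i), observing $\tilde E(w(1;\delta))\le E$ by monotonicity and (iii), and dropping the nonnegative quadratic part of $\tilde E$ (nonnegative since $|\nabla w|^2-(y\cdot\nabla w)^2\ge(1-|y|^2)|\nabla w|^2\ge 0$). But after these reductions you have replaced the lemma by the claim
\[
\int_{B_1}\frac{|w(y,0;\delta)|^{2^\ast}}{(1-|y|^2)^{1/2}}\,dy\ \lesssim\ \log(1/\delta),
\]
and this claim is not provable from the a priori bounds \eqref{self-simil1.1}--\eqref{self-simil1.2} together with ${\rm supp}\,w(\cdot,0;\delta)\subset\{|y|\le 1-\delta\}$. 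Your shell accounting is off: with a cutoff $\phi_k$ at scale $2^{-k}$ adapted to $A_k$, the localized Sobolev inequality gives $\|\nabla(\phi_k w)\|_{L^2}^2\lesssim\int_{A_k'}|\nabla w|^2 + 2^{2k}\int_{A_k'}|w|^2\lesssim 1$ after Hardy, hence only $\int_{A_k}|w|^{2^\ast}\lesssim 1$ per shell — there is no $2^{-k/2}$ gain, since the critical Sobolev embedding is scale-invariant and cannot see the shell thickness. Multiplying by the weight $(1-|y|^2)^{-1/2}\sim 2^{k/2}$ and summing over $k\lesssim\log_2(1/\delta)$ yields $\sum_k 2^{k/2}\sim\delta^{-1/2}$, i.e.\ exactly the crude bound of Lemma~\ref{lyapunov-1}(iii) and no improvement. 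The obstruction is genuine: a bubble $\lambda^{(d-2)/2}W(\lambda(y-y_0))$ with $\lambda\sim 1/\delta$ and $y_0$ at distance $\sim\lambda^{-1}$ from $\partial B_1$ (cut off to vanish on $\{|y|\ge 1-\delta\}$) satisfies all the available a priori bounds, yet has $\int |w|^{2^\ast}(1-|y|^2)^{-1/2}\sim\delta^{-1/2}$, so the log bound cannot hold for general initial data.

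The missing idea is to change the Lyapunov functional rather than the integrand. The paper does not estimate $\tilde E(w(0;\delta))$ at all; instead it writes $-2\int\frac{|\partial_s w|^2}{1-|y|^2}\,dy$ directly as $\frac{d}{ds}$ of a weighted energy in which the weight is $-\log(1-|y|^2)$, plus error terms that are controlled by Cauchy–Schwarz and \eqref{self-simil1.1}. Integrating in $s\in[0,1]$, the boundary terms carry the weight $-\log(1-|y|^2)$, which is $\lesssim\log(1/\delta)$ on ${\rm supp}\,w(\cdot,s;\delta)$ by the finite-speed support property — no dyadic decomposition and no critical Sobolev estimate is needed. The logarithm comes from $-\log(1-|y|^2)\lesssim\log(1/\delta)$ on $\{|y|\le 1-\delta\}$, not from counting shells. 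Your step of passing to the more singular weight $(1-|y|^2)^{-3/2}$ throws away precisely this room, which is why the argument stalls at $\delta^{-1/2}$.
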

{\bf {Proof:}} Notice that
\begin{align*}
{-2\int_{\mathbb R^d}\frac{|\pa_sw|^2}{1-|y|^2}dy}
=&{\frac{d}{ds}\Big\{\int_{\mathbb R^d}\Big[\frac12|\pa_sw|^2+\frac12\big(|\nabla w|^2-(y\cdot\nabla w)^2\big)+\frac{d(d-2)}8w^2}\\
&\qquad\qquad\qquad\;\qquad{-\frac{d-2}{2d}|w|^{2^\ast}\Big]\cdot(-\log(1-|y|^2))dy\Big\}}\\
&+\int_{\mathbb R^d}[\log(1-|y|^2)+2]y\cdot\nabla w\pa_sw-\log(1-|y|^2)(\pa_sw)^2dy\\
&-2\int_{\mathbb R^d}|\pa_sw|^2dy.
\end{align*}
We next integrate in $s$, between $0$ and $1$, and drop the term next to last term by
sign. The proof is finished by using {Cauchy-Schwartz} and the {support property} of
$w(\cdot,\delta)$.

\begin{corollary}\label{cor-3.18}
$${\rm (a).}\qquad  {\int_0^1\int_{B_1}\frac{|w|^{2^*}}{(1-|y|^2)^{1/2}}dyds\leq C(\log1/\delta)^\frac12;}$$
$${\rm(b).}\qquad {\tilde E(w(1))\geq-C(\log1/\delta)^\frac12.}\quad\qquad \qquad\quad $$

\end{corollary}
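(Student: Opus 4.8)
The plan is to integrate the almost-conservation identity of Lemma~\ref{lyapunov-1}(ii) over $s\in[0,1]$ and solve it for the weighted potential integral in \textup{(a)}. Abbreviate the virial-type quantity by
$$G(s)=\tfrac12\int_{B_1}\Big(\pa_sw\cdot w-\tfrac{d+1}2w^2\Big)(1-|y|^2)^{-1/2}\,dy;$$
then Lemma~\ref{lyapunov-1}(ii) with $s_1=0$, $s_2=1$ exhibits $\tfrac1d\int_0^1\!\int_{B_1}|w|^{2^\ast}(1-|y|^2)^{-1/2}\,dyds$ as the sum of the boundary difference $G(1)-G(0)$, the time average $\int_0^1\tilde E(w(s))\,ds$, and minus the three ``error'' integrals $\int_0^1\!\int_{B_1}\big(|\pa_sw|^2+\pa_sw\,(y\cdot\nabla)w+\pa_sw\,w\,|y|^2(1-|y|^2)^{-1}\big)(1-|y|^2)^{-1/2}\,dyds$. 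Part \textup{(a)} will follow once each of these three groups is bounded by $C(\log1/\delta)^{1/2}$, and part \textup{(b)} will then be a soft consequence.

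For the time average, Lemma~\ref{lyapunov-1}(iii) gives $\tilde E(w(s))\le E$ on $[0,\log1/\delta)$, hence $\int_0^1\tilde E(w(s))\,ds\le E$ once $\delta$ is small enough that $1<\log1/\delta$. For the error integrals the engine is Lemma~\ref{lyapunov-2} together with the a priori bounds \eqref{self-simil1.1}--\eqref{self-simil1.2}: I would write each integrand as $(1-|y|^2)^{-1/2}|\pa_sw|$ times one of $|\pa_sw|$, $|\nabla w|$, $|w|(1-|y|^2)^{-1}$ (using $|y\cdot\nabla w|\le|\nabla w|$ and $|y|^2\le1$), and apply Cauchy--Schwarz in $(y,s)$. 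The factor $\big(\int_0^1\!\int_{B_1}|\pa_sw|^2(1-|y|^2)^{-1}\big)^{1/2}$ is $\lesssim(\log1/\delta)^{1/2}$ by Lemma~\ref{lyapunov-2}, while the complementary factors $\int_0^1\!\int_{B_1}|\pa_sw|^2$, $\int_0^1\!\int_{B_1}|\nabla w|^2$ and $\int_0^1\!\int_{B_1}|w|^2(1-|y|^2)^{-2}$ are all $O(1)$ by integrating \eqref{self-simil1.1}--\eqref{self-simil1.2} over $s\in[0,1]$. Thus the three error integrals are $\le C(\log1/\delta)^{1/2}$.

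The only genuine subtlety is the boundary difference $G(1)-G(0)$: Lemma~\ref{lyapunov-2} controls $\int\!\int|\pa_sw|^2(1-|y|^2)^{-1}$ only after integration in $s$, so it is useless at the single instants $s=0,1$, and pairing $\pa_sw$ with a weight there would only give the much too weak bound $O(\delta^{-1/2})$. The remedy is to distribute the weight the other way: in $\int_{B_1}|\pa_sw|\,|w|(1-|y|^2)^{-1/2}\,dy$ keep $|\pa_sw|$ unweighted and put the whole weight on $w$, obtaining $\le\big(\int_{B_1}|\pa_sw|^2\big)^{1/2}\big(\int_{B_1}|w|^2(1-|y|^2)^{-1}\big)^{1/2}$ and bounding the first factor by \eqref{self-simil1.1} and the second by \eqref{self-simil1.2} (since $(1-|y|^2)^{-1}\le(1-|y|^2)^{-2}$ on $B_1$); similarly $\int_{B_1}w^2(1-|y|^2)^{-1/2}\,dy\le\int_{B_1}w^2(1-|y|^2)^{-2}\,dy\le C$. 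This yields $|G(s)|\le C$ uniformly in $s$ and $\delta$, so $|G(1)-G(0)|\le C$, and combining the three estimates proves \textup{(a)} for $\delta$ small.

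Finally, \textup{(b)} follows from \textup{(a)} and the monotonicity in Lemma~\ref{lyapunov-1}(i). Since $|\nabla w|^2-(y\cdot\nabla w)^2\ge0$ for $|y|\le1$ and $w^2\ge0$, the quadratic part of $\tilde E$ is nonnegative, so $\tilde E(w(s))\ge-\tfrac{d-2}{2d}\int_{B_1}|w(s)|^{2^\ast}(1-|y|^2)^{-1/2}\,dy$ pointwise in $s$; integrating over $[0,1]$ and using \textup{(a)} gives $\int_0^1\tilde E(w(s))\,ds\ge-C(\log1/\delta)^{1/2}$, and since $\tilde E$ is nondecreasing, $\tilde E(w(1))\ge\int_0^1\tilde E(w(s))\,ds\ge-C(\log1/\delta)^{1/2}$.
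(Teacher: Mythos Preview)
Your proof is correct and follows essentially the same approach as the paper: for (a) you integrate Lemma~\ref{lyapunov-1}(ii) over $[0,1]$, bound $\int_0^1\tilde E$ via (iii), and control the cross terms by Cauchy--Schwarz with Lemma~\ref{lyapunov-2}; for (b) you lower-bound $\int_0^1\tilde E(w(s))\,ds$ via (a) and invoke monotonicity from (i). Your treatment is in fact more explicit than the paper's terse sketch---in particular your careful handling of the boundary terms $G(0),G(1)$ (placing the weight on $w$ rather than $\partial_s w$) and your explicit observation that the quadratic part of $\tilde E$ is nonnegative are details the paper suppresses but which are needed to make the argument go through.
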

\begin{proof}
{Part (a)} follows from  Lemma \ref{lyapunov-1}(ii), (iii), {Cauchy-Schwartz} and
Lemma \ref{lyapunov-2}. Note that we obtain the power ${1/2}$ on the right hand side by Cauchy-
Schwartz.

\vskip0.15cm

 To prove {Part (b)}, we first consider $\int_0^1\tilde{E}(w(s))ds$, which is bounded from below
 by  $-C(\log1/\delta)^\frac12$ by Part (a). The monotonicity of $\tilde{E}(s)$ in  Lemma \ref{lyapunov-1}(i)  shows
$$\tilde{E}(w(1))\ge{\int_0^1\tilde E(w(s))ds\geq-C(\log1/\delta)^\frac12.}$$
\end{proof}

\vskip0.2cm

\noindent\underline{\bf  The second improvement }
\vskip0.2cm

\begin{lemma}[Second improvement] \label{lyapunov-3}
$${\int_1^{\log1/\delta}\int_{B_1}\frac{|\pa_sw|^2}{(1-|y|^2)^{3/2}}dyds\leq C(\log1/\delta)^\frac12.}$$
\end{lemma}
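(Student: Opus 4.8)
The statement to prove is the ``second improvement'' bound
\[
\int_1^{\log 1/\delta}\int_{B_1}\frac{|\pa_sw|^2}{(1-|y|^2)^{3/2}}\,dy\,ds\leq C(\log 1/\delta)^{1/2},
\]
which is the natural sharpening of Lemma~\ref{lyapunov-2} by one extra power of $(1-|y|^2)^{-1}$, obtained by paying only the factor $(\log 1/\delta)^{1/2}$ rather than $\log 1/\delta$. The plan is to exploit the exact Lyapunov identity of Lemma~\ref{lyapunov-1}(i), namely
\[
\tilde E(w(s_2))-\tilde E(w(s_1))=\int_{s_1}^{s_2}\int_{B_1}\frac{|\pa_sw|^2}{(1-|y|^2)^{3/2}}\,dy\,ds,
\]
applied on the interval $[1,\log 1/\delta)$. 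This immediately gives
\[
\int_1^{\log 1/\delta}\int_{B_1}\frac{|\pa_sw|^2}{(1-|y|^2)^{3/2}}\,dy\,ds
=\tilde E\big(w(\log 1/\delta)\big)-\tilde E(w(1)).
\]
So the entire problem reduces to controlling the two boundary values of $\tilde E$: an upper bound for $\tilde E(w(s))$ as $s\to\log 1/\delta$ and a lower bound for $\tilde E(w(1))$.

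\textbf{Key steps.} First I would invoke Lemma~\ref{lyapunov-1}(iii): $\lim_{s\to\log 1/\delta}\tilde E(w(s))=E(u_0,u_1)=E$, which is a fixed finite constant independent of $\delta$; together with the monotonicity from part~(i) this also re-proves the two-sided bound $-C\delta^{-1/2}\le\tilde E(w(s))\le E$, but here we only need the \emph{upper} endpoint $\tilde E(w(\log 1/\delta))=E$. Second, and this is where the gain happens, I would use Corollary~\ref{cor-3.18}(b), which already asserts precisely $\tilde E(w(1))\geq -C(\log 1/\delta)^{1/2}$; recall this came from integrating the Lyapunov monotonicity on $[0,1]$ and bounding $\int_0^1\tilde E(w(s))\,ds$ from below via part~(a), i.e.\ via the first improvement (Lemma~\ref{lyapunov-2}) and Cauchy--Schwarz, which is the source of the $(\log 1/\delta)^{1/2}$ (as opposed to $\log 1/\delta$) power. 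Combining,
\[
\int_1^{\log 1/\delta}\int_{B_1}\frac{|\pa_sw|^2}{(1-|y|^2)^{3/2}}\,dy\,ds
= E-\tilde E(w(1))\le E+C(\log 1/\delta)^{1/2}\le C'(\log 1/\delta)^{1/2}
\]
for $\delta$ small, since $(\log 1/\delta)^{1/2}\to\infty$. The monotonicity also guarantees the left side is nonnegative, so no sign issue arises, and all integrals converge because we work on $s\le \log 1/\delta$ where ${\rm supp}\,w(\cdot,s;\delta)\subset\{1-|y|^2\ge\delta\}$ keeps us away from the degeneracy.

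\textbf{Main obstacle.} Given that Corollary~\ref{cor-3.18} is already established in the excerpt, the conceptual work is done, and the proof is essentially a two-line combination of Lemma~\ref{lyapunov-1}(i),(iii) and Corollary~\ref{cor-3.18}(b); the only care needed is to make sure the Lyapunov identity is applied on $[1,\log 1/\delta)$ rather than $[0,\log 1/\delta)$, so that the bad lower bound $-C\delta^{-1/2}$ at $s=0$ is never used and is replaced by the much better $-C(\log 1/\delta)^{1/2}$ at $s=1$. The real difficulty, therefore, lies upstream in Corollary~\ref{cor-3.18}(b)/Lemma~\ref{lyapunov-2} (the first improvement and the $L^{2^*}$ spacetime bound of the self-similar solution), where the extra logarithmic weight $-\log(1-|y|^2)$ is introduced and Cauchy--Schwarz together with the support property $1-|y|^2\geq\delta$ converts a $\delta^{-1}$ loss into a $\log(1/\delta)$, and then a square-root of that; here I simply cite those results.
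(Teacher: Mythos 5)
Your proposal is correct and follows essentially the same argument as the paper: apply the Lyapunov identity of Lemma~\ref{lyapunov-1}(i) on the interval starting at $s=1$, bound $\tilde E$ from above by $E$ via part~(iii), and bound $\tilde E(w(1))$ from below via Corollary~\ref{cor-3.18}(b). (The paper's displayed proof actually only integrates up to $s_2=(\log 1/\delta)^{3/4}$ — which is what the subsequent pigeonhole argument uses — whereas you push $s_2$ all the way to the endpoint $\log 1/\delta$ using the limit in part~(iii) plus monotonicity; both give the stated bound.)
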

{\bf{Proof:}} Using Lemma \ref{lyapunov-1} (i), (iii) and  Corollary \ref{cor-3.18} (b), we have
$${\int_1^{(\log1/\delta)^{3/4}}\int_{B_1}\frac{|\pa_sw|^2}{(1-|y|^2)^{3/2}}dyds
=\tilde{E}\Big(w\Big((\log1/\delta)^{3/4}\Big)\Big)-\tilde{E}(w(1))\le E+C(\log1/\delta)^{\frac12}.}$$

\begin{corollary}\label{cor-3.20}
There exists $\bar{s}_\delta\in(1,(\log1/\delta)^{3/4})$ such that
$${\int_{\bar{s}_\delta}^{\bar s_\delta+(\log1/\delta)^{1/8}}\int_{B_1}\frac{|\pa_sw|^2}{(1-|y|^2)^{3/2}}dyds\leq\frac{C}{(\log1/\delta)^{1/8}}.}$$
\end{corollary}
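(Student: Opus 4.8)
The plan is to deduce Corollary~\ref{cor-3.20} from the \emph{second improvement} (Lemma~\ref{lyapunov-3}) by a short averaging (pigeonhole) argument over consecutive subintervals of length $(\log1/\delta)^{1/8}$. Throughout I write $L:=\log(1/\delta)$, which we may assume is large since $\delta>0$ is small, and abbreviate $I(s):=\int_{B_1}\frac{|\pa_sw|^2}{(1-|y|^2)^{3/2}}\,dy$. The only input needed is the bound on the shorter range $[1,L^{3/4}]$, which is exactly what the proof of Lemma~\ref{lyapunov-3} provides: by the monotonicity of $\tilde E$ and the formula for $\tilde E'$ in Lemma~\ref{lyapunov-1}(i), by $\tilde E(w(L^{3/4}))\le E$ (Lemma~\ref{lyapunov-1}(iii) together with that monotonicity), and by the lower bound $\tilde E(w(1))\ge -CL^{1/2}$ from Corollary~\ref{cor-3.18}(b), one has
$$\int_1^{L^{3/4}}I(s)\,ds=\tilde E\big(w(L^{3/4})\big)-\tilde E(w(1))\le E+CL^{1/2}\le C_0L^{1/2}.$$

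Next I would partition the interval $[2,L^{3/4}]$ (fixing any left endpoint strictly larger than $1$ avoids a trivial endpoint issue below) into $N:=\big\lfloor(L^{3/4}-2)/L^{1/8}\big\rfloor$ consecutive closed subintervals $J_k:=\big[2+(k-1)L^{1/8},\,2+kL^{1/8}\big]$, $k=1,\dots,N$, each of length exactly $L^{1/8}$ and all contained in $[2,L^{3/4}]\subset[1,L^{3/4}]$; the leftover piece of length $<L^{1/8}$ is discarded. For $L$ large, $N\ge\tfrac12L^{3/4-1/8}=\tfrac12L^{5/8}$. Suppose, for contradiction, that $\int_{J_k}I(s)\,ds>C_1L^{-1/8}$ for every $k$, where $C_1$ is a constant to be fixed. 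Summing over $k$ and using additivity of the integral gives
$$\int_1^{L^{3/4}}I(s)\,ds\ \ge\ \sum_{k=1}^N\int_{J_k}I(s)\,ds\ >\ N\,C_1L^{-1/8}\ \ge\ \tfrac12C_1L^{5/8}L^{-1/8}\ =\ \tfrac12C_1L^{1/2},$$
which contradicts the estimate of the previous paragraph as soon as $C_1>2C_0$. Hence there is an index $k_0\in\{1,\dots,N\}$ with $\int_{J_{k_0}}I(s)\,ds\le C_1L^{-1/8}$. Setting $\bar s_\delta:=2+(k_0-1)L^{1/8}$, we have $\bar s_\delta\in[2,L^{3/4})\subset(1,L^{3/4})$ and $\bar s_\delta+L^{1/8}=2+k_0L^{1/8}\le2+NL^{1/8}\le L^{3/4}$, so that $[\bar s_\delta,\bar s_\delta+L^{1/8}]=J_{k_0}$ and
$$\int_{\bar s_\delta}^{\bar s_\delta+L^{1/8}}\int_{B_1}\frac{|\pa_sw|^2}{(1-|y|^2)^{3/2}}\,dy\,ds\le\frac{C_1}{(\log1/\delta)^{1/8}},$$
which is precisely the assertion with $C=C_1$.

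There is no genuine obstacle here: the whole argument is a mean-value/pigeonhole step, and the only points that need a moment's care are bookkeeping ones — checking that one only ever uses the controlling integral over $[1,L^{3/4}]$ rather than over the full range $[1,-\log\delta)$; verifying that the number of subintervals is $\gtrsim L^{5/8}$, so that the exponents combine as $1/2-5/8=-1/8$ and the target power of $\log1/\delta$ comes out correctly; and the cosmetic point of starting the partition away from $s=1$ so that $\bar s_\delta$ lies in the open interval $(1,L^{3/4})$. All constants depend only on $d$ (through $E$ and the constants in Lemmas~\ref{lyapunov-1}--\ref{lyapunov-3} and Corollary~\ref{cor-3.18}) and are independent of $\delta$.
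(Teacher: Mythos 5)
Your argument is correct and follows the paper's own route: split $(1,(\log1/\delta)^{3/4})$ into $\sim(\log1/\delta)^{5/8}$ consecutive subintervals of length $(\log1/\delta)^{1/8}$, apply the integral bound $\lesssim(\log1/\delta)^{1/2}$ from Lemma~\ref{lyapunov-3}, and pigeonhole. The only differences are cosmetic bookkeeping (starting the partition at $s=2$ so $\bar s_\delta$ lies in the open interval, and re-deriving the $L^{1/2}$ bound over $[1,L^{3/4}]$ rather than quoting it over $[1,\log1/\delta]$), neither of which changes the substance.
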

{\bf {Proof:}} Split ${(1,(\log1/\delta)^{3/4})}\subset(1,\log1/\delta)$ into  {$(\log1/\delta)^{5/8}$} disjoint
intervals of length{$(\log1/\delta)^{1/8}$}. Then Corollary follows from {Lemma \ref{lyapunov-3}}
 and {pigeonhole principle.}

\begin{remark}\;In Corollary \ref{cor-3.20}, the length of the $s$ interval tends to infinity,
while the bound goes to zero.\end{remark}

 It is easy to see that if ${\bar{s}_\delta\in(1,(\log1/\delta)^{3/4})}$ and
${\bar{s}_\delta=-\log(1+\delta-\bar t_\delta)}$, then
$${\Big|\frac{1-\bar t_\delta}{1+\delta-\bar t_\delta}-1\Big|\leq C\delta^\frac14\longrightarrow0,\; \;\;\text{as}\;\;\;
\delta\longrightarrow0.}$$
From this and the compactness of $\bar K$, we can find $\delta_j\to0$, so that
$$w(y,\bar s_{\delta_j}+s;\delta_j)\longrightarrow {w^\ast(y,s)\in C([0,S];\dot H_0^1\times L^2)},\;\;\text{for}
\;\;s\in[0,S],$$
 and $w^\ast$ solves our self-similar equation in $B_1\times[0,S]$ as
  \begin{align*}
{\pa_s^2w=}&{\frac1\rho{\rm div}\big(\rho\nabla w-\rho(y\cdot\nabla w)y\big)-\frac{d(d-2)}4w+|w|^{\frac4{d-2}}w}\\
&{-2y\cdot\nabla\pa_sw-(d-1)\pa_sw,}\quad \text{where}\; \; {\rho(y)=(1-|y|^2)^{-1/2}.}
\end{align*}
   The  Corollary
\ref{cor-3.20} shows
that $w$ must be independent of $s$. Also,  $E>0$ and our coercivity
estimates show that ${w^\ast\not\equiv0}$. Thus, ${w^\ast\in H_0^1(B_1)}$ solves
the {degenerate elliptic equation}:

$${\frac1\rho{\rm div}\big(\rho\nabla w^\ast-\rho(y\cdot\nabla w^\ast)y\big)-\frac{d(d-2)}4w^\ast
+|w^\ast|^\frac4{d-2}w^\ast=0.}$$

\vskip0.2cm

We next point out that $w^\ast$ satisfies the additional (crucial) estimates:
$${\int_{B_1}\frac{|w^*|^{2^\ast}}{(1-|y|^2)^{1/2}}dy+\int_{B_1}\frac{|\nabla w^\ast|^2-(y\cdot\nabla w^\ast)^2}{(1-|y|^2)^{1/2}}dy<\infty}.$$
Indeed, for the first estimate it suffices to show that, uniformly for  $j$ large, we have
$${\int_{\bar{s}_{\delta_j}}^{\bar s_{\delta_j}+\delta}\int_{B_1}\frac{|w(y,s;\delta_j)|^{2^\ast}}{(1-|y|^2)^{1/2}}dyds\leq C.}$$
But this  follows from (ii) above, together with the choice of $\bar s_{\delta_j}$, by the Corollary \ref{cor-3.20},
Cauchy-Schwartz and (iii).

\vskip0.2cm

 The proof of the second estimate follows from the first
one, (iii) and the formula for $\tilde E$.

\begin{remark}\;The conclusion of the proof is obtained by showing that a {$w^\ast$ in $H^1_
0(B_1)$,}
solving the {degenerate elliptic equation} with the additional bounds above, must
be zero. This will follow from a {unique continuation argument}.
\end{remark}
\vskip0.15cm

 Recall that, for
${|y|\leq1-\eta_0,~\eta_0>0}$, the linear operator is {uniformly elliptic}, with smooth coefficients
and that the non-linearity is critical.

\vskip0.15cm
 An argument going back to Trudinger's \cite{Tr68} shows that
$w^\ast$ is bounded on $\{|y|\leq1-\eta_0\}$ for each $\eta_0>0$. Thus, if we show that {$w^\ast\equiv0$
near $|y|=1$}, the standard {Carleman unique continuation} principle \cite{Ho85} will show
that {$w^\ast\equiv0$.}

\vskip0.15cm
Near $|y|=1$, our equation is modeled (in variables $(z,r)\in\R^{d-1}\times\R,~r>0$, near $r=0$) by
$${r^\frac12\pa_r(r^\frac12\pa_rw^\ast)+\Delta_zw^\ast+cw^\ast+|w^\ast|^\frac4{d-2}w^\ast=0.}$$

Our information on $w$ translates into $w^\ast\in H^1_0((0, 1]\times(|z|<1))$ and our crucial
additional estimates are:
$${\int_0^1\int_{|z|<1}|w^\ast(r,z)|^{2^\ast}\frac{dr}{r^{1/2}}dz+\int_0^1\int_{|z|<1}|\nabla_zw^\ast(r,z)|^2\frac{dr}{r^{1/2}}dz<\infty.}$$
\vskip0.15cm

We take advantage of the degeneracy of the equation. We ``{desingularize}"
the problem by letting $r=a^2,$ setting $v(a,z)=w^\ast(a^2,z)$, so that
$${\pa_av(a,z)=2a\pa_rw^\ast(r,z)=2r^\frac12\pa_rw^\ast(r,z).}$$
Our equation becomes:
$${\pa_a^2v+\Delta_zv+cv+|v|^\frac4{d-2}v=0,~0<a<1,~|z|<1,~v\big|_{a=0}=0,}$$
and the bounds give:
\begin{equation*}\left\{\begin{aligned}
&{\int_0^1\int_{|z|<1}|\nabla_zv(a,z)|^2dadz=\int_0^1\int_{|z|<1}|\nabla_zw^\ast(r,z)|^2\frac{dr}{r^{1/2}}dz<\infty,}\\
&{\int_0^1\int_{|z|<1}|\pa_av(a,z)|^2\frac{da}{a}dz=\int_0^1\int_{|z|<1}|\pa_rw^\ast(r,z)|^2drdz<\infty.}\end{aligned}\right.
\end{equation*}
Thus, ${v\in H_0^1((0,1]\times B_1)}$, but from the additional bound we see that
$${\pa_av(a,z)|_{a=0}\equiv0.}$$

\vskip0.15cm

We then extend $v$ by
$0$ to $a<0$ and see that the extension is an {$H^1$} solution to the same equation. By
{Trudinger's argument}, it is bounded.

\vskip0.3cm

 But since it vanishes for $a<0$, by Carleman's
{unique continuation theorem}, $v\equiv0$. Hence, {$w^\ast\equiv0$}, giving our {contradiction}.


\begin{center}

\end{center}

\end{document}